% File: volumeQuadratic_2019_August.tex
%
%      incorporates modifications of Elise in the Theorem on SV const and in Table 2
%      based on volumeQuadratic_2019_May.tex
%      based on volumeQuadratic_2019_April.tex
%      based on volumeQuadratic_2019_February.tex
%      based on volumeQuadratic_2018_October.tex
%      based on volumeQuadratic_2018_February.tex
%      based volumeQuadratic_2017_September_18.tex

% Last change: 22 August 2019: change the date at line 27 !!!
% Previous series of changes: May 2019 by A.Z.
% Previous series of changes: 10 February 2019 by A.Z.
% Previous changes: 25 Oct 2017 by V.D. after 11 Oct by E.G. after 2 Oct by AZ after Sep 30 by E.G after Sep 26 after,Sep 19, after Sep 18, 2017 after Sep 7, 2017 by E.G. after suggestions of P. Zograf

% Masur-Veech volumes, frequencies of simple closed geodesics and intersection numbers of moduli spaces of curves
%      by V.Delecroix, E.Goujard, P.Zograf, A.Zorich

\documentclass{amsart}
\usepackage{amssymb,amsmath}
\usepackage{color}
\usepackage{mathtools}
\usepackage{bbm}
\usepackage{ulem}

\date{August 22, 2019}

% \dedicatory{
% In memory of Maryam Mirzakhani
% }

\numberwithin{equation}{section}

%------------------------------------------------
%       Substitutions specific for this paper
%
\newcommand\cZt{\mathcal{X}}
\newcommand{\Cut}{\operatorname{Cut}}   % operation of cutting a ribbon graph by an edge
\newcommand{\ribbongraph}{G}
     % number of zeroes of Abelian differential
\newcommand{\poles}{p}
\newcommand{\Graph}{\Gamma}
\newcommand{\Separating}{\Delta}
\newcommand{\Petal}{\Gamma}
\newcommand{\cyl}{\mathit{cyl}}
\newcommand{\separatingQ}{\chi_{\Graph}}
\newcommand{\carea}{c_{\mathit{area}}}
\newcommand{\zeroes}{\ell} % number of simple zeroes quadratic differential principal strata = 4g-4+p
\newcommand{\dprinc}{d} % dimension principal stratum quad 6g-6+p
\newcommand{\dVolMV}{d\!\Vol}

\newcommand{\elow}{\varepsilon_{\mathit{below}}(g)}
\newcommand{\eup}{\varepsilon_{\mathit{above}}(g)}

\newcommand{\VSumH}{S^H}
\newcommand{\VSumZ}{S^\zeta}

\newlength{\halfbls}\setlength{\halfbls}{.5\baselineskip}

%-----------------------------------
%   \cX  = draw X calligraphically.
%
\newcommand{\cD}{\mathcal{D}}
\newcommand{\cI}{\mathcal{I}}
\newcommand{\cG}{\mathcal{G}}

\newcommand{\cL}{\mathcal{L}}
\newcommand{\cM}{\mathcal{M}}
\newcommand{\cML}{\mathcal{ML}}
\newcommand{\cN}{\mathcal{N}}
\newcommand{\cP}{\mathcal{P}}
\newcommand{\cQ}{\mathcal{Q}}
\newcommand{\cR}{\mathcal{R}}
\newcommand{\cS}{\mathcal{S}}
\newcommand{\cT}{\mathcal{T}}
\newcommand{\cY}{\mathcal{Y}}
\newcommand{\cZ}{\mathcal{Z}}
\newcommand{\cST}{\mathcal{ST}}
\newcommand{\cSTgn}{\mathcal{ST}{\hspace*{-3pt}}_{g,n}}

%-----------------------------------
%       Blackboard symbols
%
\newcommand{\E}{{\mathbb E}}
\newcommand{\N}{{\mathbb N}}
\newcommand{\Z}{{\mathbb Z}}

\newcommand{\Q}{{\mathbb Q}}
\newcommand{\R}{{\mathbb R}}
\newcommand{\C}{{\mathbb C}}

%-----------------------------------
%       Common mathematical notations
%
\newcommand{\Area}{\operatorname{Area}}
\newcommand{\Vol}{\operatorname{Vol}}
\newcommand{\Aut}{\operatorname{Aut}}
\newcommand{\CP}{{\mathbb C}\!\operatorname{P}^1}
\newcommand{\card}{\operatorname{card}}
\newcommand{\Mod}{\operatorname{Mod}}
\newcommand{\Stab}{\operatorname{Stab}}
\newcommand{\Sym}{\operatorname{Sym}}

%-----------------------------------
%       Groups
%

\newcommand{\SLZ}{\operatorname{SL}(2,{\mathbb Z})}

\newcommand{\GL}{\operatorname{GL}(2,\mathbb{R})}

%------------------------------------------------
%       Redefinitions
%
\renewcommand{\epsilon}{\varepsilon}

%-----------------------------------
%       Theorems and such
%
\newtheorem{Theorem}{Theorem}[section]
\newtheorem{CondTheorem}[Theorem]{Conditional Theorem}
\newtheorem{Proposition}[Theorem]{Proposition}
\newtheorem{Lemma}[Theorem]{Lemma}
\newtheorem{Corollary}[Theorem]{Corollary}
\newtheorem{Conjecture}[Theorem]{Conjecture}
\newtheorem{Guess}[Theorem]{Guess}

\newtheorem*{NNTheorem}{Theorem}

\newtheorem*{NNLemma}{Lemma}

%-------------------
\theoremstyle{definition}
\newtheorem{Definition}[Theorem]{Definition}
\newtheorem{Convention}[Theorem]{Convention}

%-------------------
\theoremstyle{remark}
\newtheorem{Example}[Theorem]{Example}
\newtheorem{Remark}[Theorem]{Remark}

\newtheorem*{NNRemark}{Remark}

\title[Masur--Veech volumes, simple closed geodesics, intersection numbers]
{Masur-Veech volumes, frequencies of simple closed geodesics and intersection numbers of moduli spaces of curves
}

% First author
\author[V.~Delecroix]{Vincent Delecroix}
\address{
LaBRI,
Domaine universitaire,
351 cours de la Lib\'eration, 33405 Talence, FRANCE
}
\email{20100.delecroix@gmail.com}

% Second author
\author[\'E.~Goujard]{\'Elise Goujard}
\thanks{Research of the second author was partially supported by
PEPS}
\address{
Institut de Math\'ematiques de Bordeaux,
Universit\'e de Bordeaux,
351 cours de la Lib\'eration, 33405 Talence, FRANCE
}
\email{elise.goujard@gmail.com}

% Third author
\author[P.~G.~Zograf]{Peter~Zograf}
\thanks{The research of Section~\ref{s:2:correlators} was supported by the Russian Science Foundation grant 19-71-30002.
}
\address{
St.~Petersburg Department, Steklov Math. Institute, Fontanka 27,
St. Petersburg 191023, and Chebyshev Laboratory,
St. Petersburg State University, 14th
Line V.O. 29B, St.Petersburg 199178 Russia}
\email{zograf@pdmi.ras.ru}

% Fourth author
\author[A.~Zorich]{Anton Zorich}
\thanks{This material is based upon work supported by the NSF Grant DMS-1440140 while
part of the authors were in residence at the MSRI during
the Fall 2019 semester}
\address{
Center for Advanced Studies, Skoltech;
Institut de Math\'ematiques de Jussieu --
Paris Rive Gauche,
%B\^atiment Sophie Germain,
Case 7012,
8 Place Aur\'elie Nemours,
75205 PARIS Cedex 13, France}
\email{anton.zorich@gmail.com}

%\GraphInit[vstyle = Shade]
%\renewcommand*{\VertexInnerSep}{10pt}
   %
%\tikzset{
%LabelStyle/.style = {fill=white},
%}

\begin{document}

\begin{abstract}
We express the Masur--Veech volume $\Vol\cQ_{g,n}$ and
the area Siegel--Veech constant $\carea(\cQ_{g,n})$ of
the moduli space $\cQ_{g,n}$ of meromorphic quadratic
differential with $n$ simple poles and no other poles as
polynomials in the intersection numbers of $\psi$-classes
supported on the boundary cycles of the
Deligne--Mumford compactification $\overline{\cM}_{g,n}$.
The formulae obtained in this article are derived from
lattice point count involving the Kontsevich volume
polynomials $N_{g,n}(b_1, \ldots, b_n)$ that also appear
in Mirzakhani's recursion for the Weil--Petersson volumes
of the moduli space $\cM_{g,n}(b_1,\dots,b_n)$ of
bordered hyperbolic Riemann surfaces.

A similar formula for the Masur--Veech volume
$\Vol\cQ_{g}$ (though without explicit evaluation) was
obtained earlier by M.~Mirzakhani through completely
different approach. We prove further result: up to a
normalization factor depending only on $g$ and $n$ (which
we compute explicitly), the density of the orbit
$\Mod_{g,n}\cdot\gamma$ of any simple closed multicurve
$\gamma$ inside the ambient set $\cML_{g,n}(\Z)$ of
integral measured laminations computed by Mirzakhani,
coincides with the density of square-tiled surfaces
having horizontal cylinder decomposition associated to
$\gamma$ among all square-tiled surfaces in $\cQ_{g,n}$.

We study the resulting densities (or, equivalently,
volume contributions) in more detail in the special case
when $n=0$. In particular, we compute explicitly the
asymptotic frequencies of separating and non-separating
simple closed geodesics on a closed hyperbolic surface of
genus $g$ for all small genera $g$ and we show that in
large genera the separating closed geodesics are
$\sqrt{\frac{2}{3\pi g}}\cdot\frac{1}{4^g}$
times less frequent.

We conclude with detailed conjectural description of
combinatorial geometry of a random simple closed
multicurve on a surface of large genus and of a random
square-tiled surface of large genus. This description is
conditional to the conjectural asymptotic formula for the
Masur--Veech volume $\Vol\cQ_g$ in large genera and to the
conjectural uniform asymptotic formula for certain sums
of intersection numbers of $\psi$-classes in large genera.
\end{abstract}

\maketitle

\tableofcontents

\newpage

%######################################################################
%######################################################################
%######################################################################
\section{Introduction and statements of main theorems}
\label{s:intro}

\subsection{Masur--Veech volume of the moduli space of
quadratic differentials}
\label{ss:MV:volume}
Consider the moduli space $\cM_{g,n}$ of complex curves of
genus $g$ with $n$ distinct labeled marked points. The
total space $\cQ_{g,n}$ of the cotangent bundle over
$\cM_{g,n}$ can be identified with the moduli space of
pairs $(C,q)$, where $C\in\cM_{g,n}$ is a smooth complex
curve and $q$ is a meromorphic quadratic differential on
$C$ with simple poles at the marked points and no other
poles. (In the case $n=0$ the quadratic differential $q$ is
holomorphic.) Thus, the \textit{moduli space of quadratic
differentials} $\cQ_{g,n}$ is endowed with the canonical
symplectic structure. The induced volume element $\dVolMV$
on $\cQ_{g,n}$ is called the \textit{Masur--Veech volume
element}. (In Section~\ref{ss:background:strata}
we provide alternative more common definition
of the Masur--Veech volume element and explain why
two definitions are equivalent.)

A meromorphic quadratic differential $q$ defines a flat
metric $|q|$ on the complex curve $C$. The resulting metric
has conical singularities at zeroes and simple poles of
$q$. When all poles of $q$ (if any) are simple, and when
$C$ is a smooth compact curve, the resulting total flat
area
$$
\Area(C,q)=\int_C |q|
$$
is finite; it is strictly positive as soon as $q$ is not
identically zero. Consider the following subsets in $\cQ_{g,n}$:
\begin{align}
\label{eq:ball:of:radius:1:2:in:Q:g:n}
\cQ^{\Area\le\frac{1}{2}}_{g,n}
&=
\left\{(C,q)\in\cQ_{g,n}\,|\, \Area(C,q) \le \tfrac{1}{2}\right\}\,,
\\
\label{eq:sphere:of:radius:1:2:in:Q:g:n}
\cQ^{\Area=\frac{1}{2}}_{g,n}
&=
\left\{(C,q)\in\cQ_{g,n}\,|\, \Area(C,q) = \tfrac{1}{2}\right\}\,.
\end{align}
These subsets might be seen as analogs of a ``ball
(respectively a sphere) of radius $\tfrac{1}{2}$'' in
$\cQ_{g,n}$. However, this analogy is not quite adequate
since none of these two subsets is compact. By the
independent results of H.~Masur~\cite{Masur:82} and
W.~Veech~\cite{Veech:Gauss:measures}, the volume $\Vol
\cQ^{\Area\le\frac{1}{2}}_{g,n}$ is finite.

The Masur--Veech volume element $\dVolMV$ in $\cQ_{g,n}$
induces the canonical volume element
$\dVolMV_1$ on any noncritical
level hypersurface of any real-valued function $f$. In
particular, it induces the canonical volume element
$\dVolMV_1$ on the level hypersurface
$\cQ^{\Area=\frac{1}{2}}_{g,n}$
satisfying
\begin{equation}
\label{eq:def:Vol:Q:g:n}
\Vol_1 \cQ^{\Area=\frac{1}{2}}_{g,n}
=
(12g-12+4n)\cdot\Vol \cQ^{\Area\le\frac{1}{2}}_{g,n}\,.
\end{equation}
Here $12g-12+4n=\dim_{\R}\cQ_{g,n}=2\dim_{\C}\cQ_{g,n}$.
The above formula can be considered as the definition of
the expression on the left hand side.

\begin{Convention}
\label{conv:MV:volume:notation}
The Masur--Veech volume of the total space $\cQ_{g,n}$ is,
clearly, infinite. Following the common convention,
speaking of the \textit{Masur--Veech volume of the moduli
space} $\cQ_{g,n}$ we always mean the
quantity~\eqref{eq:def:Vol:Q:g:n} and we use the
abbreviated notation $\Vol \cQ_{g,n}$ for this quantity.
\end{Convention}

\begin{Remark}
The choice of ``radius $\tfrac{1}{2}$'' instead of ``radius
$1$'' in~\eqref{eq:def:Vol:Q:g:n} might seem unexpected. We
explain the reasons for this choice in
Section~\ref{ss:background:strata} where we discuss the
Masur--Veech volume element in more details. Further
details on natural normalizations of the Masur--Veech
volume can be found in appendix~A
in~\cite{DGZZ:meanders:and:equidistribution}.
\end{Remark}

%------------------------------------------------------------
\subsection{Square-tiled surfaces, simple
closed multicurves and stable graphs}
\label{ss:Square:tiled:surfaces:and:associated:multicurves}

We have already mentioned that a meromorphic quadratic
differential $q$ defines a flat metric with conical
singularities on the complex curve $C$. One can construct
this kind of flat surfaces by assembling together identical
\textit{polarized oriented} flat squares. Namely, we assume that we
know which pair of opposite sides of each square is
horizontal; the remaining pair of sides is vertical. Gluing
the squares we identify sides to sides respecting the
polarization and the orientation.

\begin{figure}[htb]
   %
   %  Multicurve associated to a square-tiled surface
   %
\includegraphics{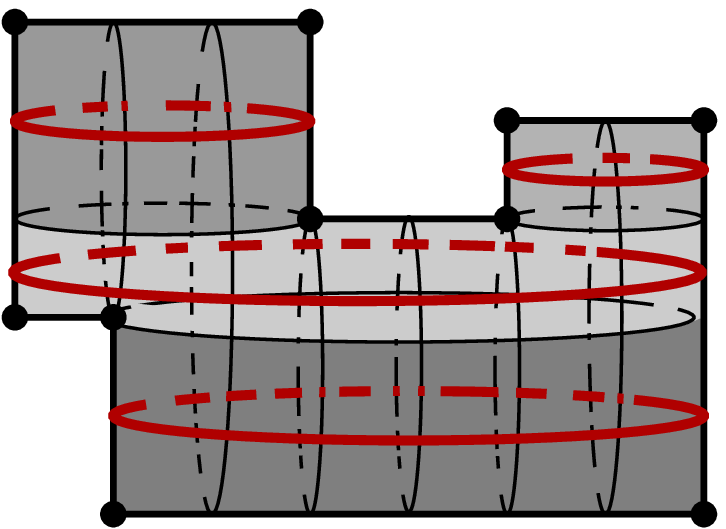}
\includegraphics{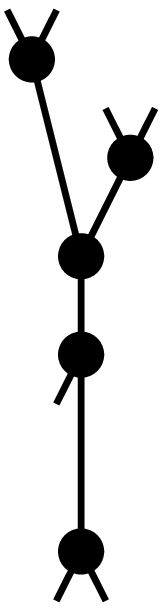}

\begin{picture}(0,0)(175,10)
\put(2,-13){$2\gamma_1$}
\put(113,-23){$\gamma_2$}
\put(1.5,-46){$\phantom{2}\gamma_3$}
\put(23,-77){$2\gamma_4$}
\end{picture}

\includegraphics{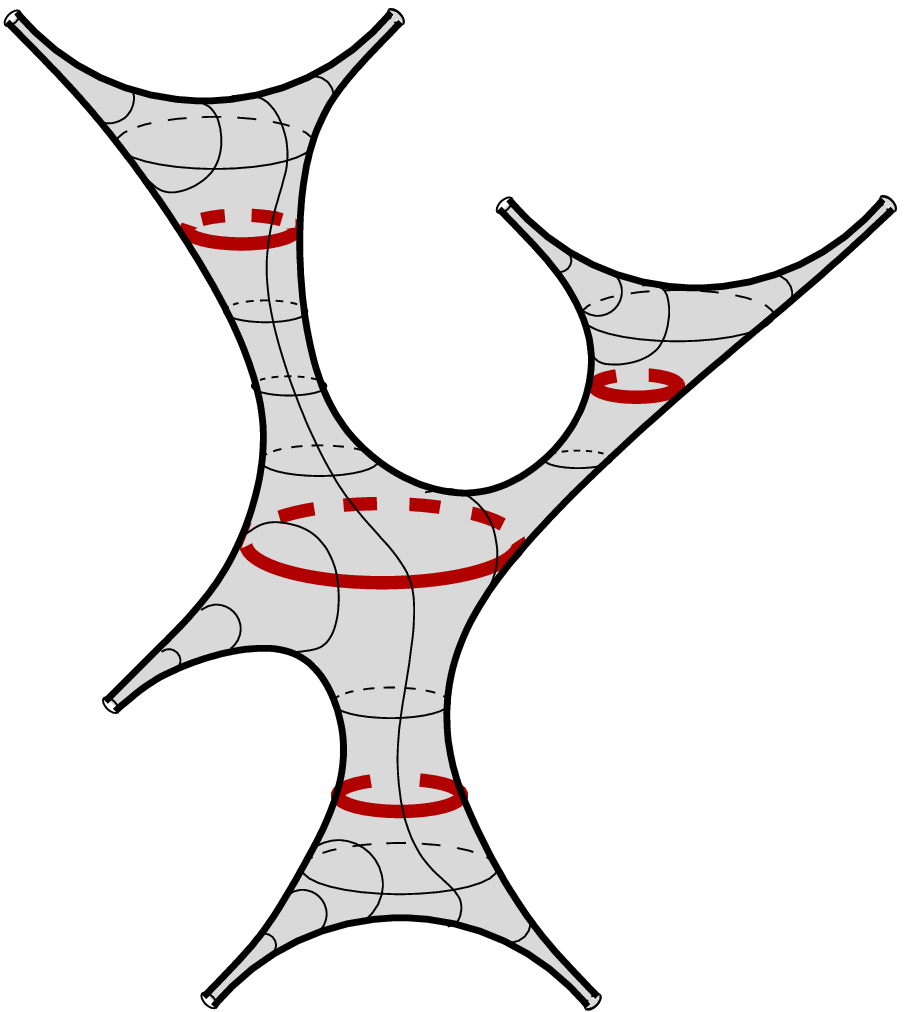}

\begin{picture}(0,0)(-26,-6)% (0,0)(-24,-4)
\put(4,-20){$2\gamma_1$}
\put(53.5,-36.5){$\gamma_2$}
\put(15,-54){$\gamma_3$}
\put(20,-82){$2\gamma_4$}
\end{picture}
\vspace{95pt}
\caption{
\label{fig:square:tiled:surface:and:associated:multicurve}
Square-tiled surface in $\cQ_{0,7}$,
and associated multicurve and
stable graph
}
\end{figure}

Suppose that the resulting closed \textit{square-tiled
surface} has genus $g$ and $n$ conical singularities with
angle $\pi$, i.e. $n$ vertices adjacent to only two
squares. For example, the square-tiled surfaces in
Figure~\ref{fig:square:tiled:surface:and:associated:multicurve}
has genus $g=0$ and $n=7$ conical singularities with angle
$\pi$. Consider the complex coordinate $z$ in each square
and a quadratic differential $(dz)^2$. It is easy to check
that the resulting square-tiled surface inherits the
complex structure and globally defined meromorphic
quadratic differential $q$ having simple poles at $n$
conical singularities with angle $\pi$ and no other poles.
Thus, any square-tiled surface of genus $g$ having $n$
conical singularities with angle $\pi$ canonically defines
a point $(C,q)\in\cQ_{g,n}$. Fixing the size of the square
once and forever and considering all resulting square-tiled
surfaces in $\cQ_{g,n}$ we get a discrete subset $\cSTgn$
in $\cQ_{g,n}$.

Define $\cSTgn(N)\subset\cSTgn$ to be the subset of
square-tiled surfaces in $\cQ_{g,n}$
tiled with at most $N$ identical squares.
We shall see in Section~\ref{ss:background:strata} that
square-tiled surfaces form a lattice in period coordinates
of $\cQ_{g,n}$, which justifies the following alternative
definition of the Masur--Veech volume\footnote{See
Section~\ref{ss:background:strata}
for justification of this normalization.}:
\begin{equation}
\label{eq:Vol:sq:tiled}
\Vol\cQ_{g,n}
= 2(6g-6+2n)\cdot
\lim_{N\to+\infty}
\frac{\card(\cSTgn(2N))}{N^{d}}\,,
\end{equation}
where $d=6g-6+2n=\dim_{\C}\cQ_{g,n}$.
In this formula we assume that all conical singularities
of square-tiled surfaces are labeled.
\smallskip

%-----------------------------------------------------------
\noindent\textbf{Multicurve associated to a cylinder decomposition.}
Any square-tiled surface admits a decomposition into
maximal horizontal cylinders filled with isometric closed
regular flat geodesics. Every such maximal horizontal
cylinder has at least one conical singularity on each of
the two boundary components. The square-tiled surface in
Figure~\ref{fig:square:tiled:surface:and:associated:multicurve}
has four maximal horizontal cylinders which are represented
in the picture by different shades. For every maximal
horizontal cylinder choose the corresponding waist curve
$\gamma_i$.

By construction each resulting simple closed curve
$\gamma_i$ is nonperiferal (i.e. it does not bound a
topological disc without punctures or with a single
puncture) and different $\gamma_i, \gamma_j$ are not freely
homotopic on the underlying $n$-punctured topological
surface. In other words, pinching simultaneously all waist
curves $\gamma_i$ we get a legal stable curve in the
Deligne--Mumford compactification $\overline{\cM}_{g,n}$.

We encode the number of circular horizontal bands of
squares contained in the corresponding maximal horizontal
cylinder by the integer weight $H_i$ associated to the
curve $\gamma_i$. The above observation implies that the
resulting formal linear combination $\gamma=\sum
H_i\gamma_i$ is a simple closed integral multicurve in the
space $\cML_{g,n}(\Z)$ of measured laminations. For
example, the simple closed multicurve associated to the
square-tiled surface as in
Figure~\ref{fig:square:tiled:surface:and:associated:multicurve}
has the form $2\gamma_1+\gamma_2+\gamma_3+2\gamma_4$.

Given a simple closed integral multicurve $\gamma$ in
$\cML_{g,n}(\Z)$ consider the subset
$\cSTgn(\gamma)\subset\cSTgn$ of those square-tiled
surfaces, for which the associated horizontal multicurve is
in the same $\Mod_{g,n}$-orbit as $\gamma$ (i.e. it is
homeomorphic to $\gamma$ by a homeomorphism sending $n$
marked points to $n$ marked points and preserving their
labeling). Denote by $\Vol(\gamma)$ the contribution to
$\Vol\cQ_{g,n}$ of square-tiled surfaces from the subset
$\cSTgn(\gamma)\subset\cSTgn$:
$$
\Vol(\gamma)
= 2(6g-6+2n)\cdot
\lim_{N\to+\infty}
\frac{\card(\cSTgn(2N)\cap\cSTgn(\gamma))}{N^{d}}\,.
$$
The results in~\cite{DGZZ:meanders:and:equidistribution}
imply that for any $\gamma$ in $\cML_{g,n}(\Z)$ the above
limit exists, is strictly positive, and that
\begin{equation}
\label{eq:Vol:Q:as:sum:of:Vol:gamma}
\Vol\cQ_{g,n}
=\sum_{[\gamma]\in\mathcal{O}} \Vol(\gamma)\,,
\end{equation}
where similarly to~\eqref{eq:b:g:n:as:sum:of:c:gamma} the
sum is taken over representatives $[\gamma]$ of all orbits
$\mathcal{O}$ of the mapping class group $\Mod_{g,n}$ in
$\cML_{g,n}(\Z)$.
Formula~\eqref{eq:Vol:Q:as:sum:of:Vol:gamma} allows to
interpret the ratio $\Vol(\gamma)/\Vol\cQ_{g,n}$ as the
``asymptotic probability'' to get a square-tiled surface in
$\cSTgn(\gamma)$ taking a random square-tiled surface in
$\cSTgn(N)$ as $N\to+\infty$.
\smallskip

%-----------------------------------------------------------
\noindent\textbf{Stable graph associated to a multicurve.}
Given a simple closed integral multicurve $\gamma=\sum
H_i\gamma_i$ on a topological surface $S_{g,n}$ of genus
$g$ with $n$ punctures as above define the associated
\textit{reduced} multicurve $\gamma_{\mathit{reduced}}$ as
$\gamma_{\mathit{reduced}}=\sum \gamma_i$. Here we assume
that $\gamma_i$ and $\gamma_j$ are pairwise non-isotopic
for $i\neq j$.

Following M.~Kontsevich~\cite{Kontsevich} we assign to any
multicurve $\gamma$ a \textit{stable graph}
$\Gamma(\gamma)=\Gamma(\gamma_{\mathit{reduced}})$. We
provide a detailed formal definition of a stable graph in
Appendix~\ref{s:stable:graphs} limiting ourselves to a more
intuitive description in the current section.

The stable graph $\Gamma(\gamma)$ is a decorated graph dual
to $\gamma_{reduced}$. It consists of vertices, edges, and
``half-edges'' also called ``legs''. Vertices of
$\Gamma(\gamma)$ represent the connected components of the
complement $S_{g,n}\setminus\gamma_{\mathit{reduced}}$.
Each vertex is decorated with the integer number recording
the genus of the corresponding connected component of
$S_{g,n}\setminus\gamma_{reduced}$. Edges of
$\Gamma(\gamma)$ are in the natural bijective
correspondence with curves $\gamma_i$; an edge joins a
vertex to itself when on both sides of the corresponding
simple closed curve $\gamma_i$ we have the same connected
component of $S_{g,n}\setminus\gamma_{reduced}$. Finally,
the $n$ punctures are encoded by $n$ \textit{legs}. The
right picture in
Figure~\ref{fig:square:tiled:surface:and:associated:multicurve}
provides an example of the stable graph associated to the
multicurve $\gamma$.

Pinching a complex curve in $\cM_{g,n}$ by all components
of a reduced multicurve $\gamma_{reduced}$ we get a stable
curve in the Deligne--Mumford compactification
$\overline{\mathcal{M}}_{g,n}$. In this way stable graphs
encode the boundary cycles of
$\overline{\mathcal{M}}_{g,n}$. In particular, the set
$\cG_{g,n}$ of all stable graphs is finite. It is in the
natural bijective correspondence with boundary cycles of
$\overline{\mathcal{M}}_{g,n}$ or, equivalently, with
$\Mod_{g,n}$-orbits of reduced multicurves in
$\cML_{g,n}(\Z)$. Table~\ref{tab:2:0} in
Section~\ref{ss:intro:Masur:Veech:volumes}
and Table~\ref{tab:1:2} in
Appendix~\ref{a:1:2} list all stable graphs in $\cG_{2,0}$ and
$\cG_{1,2}$ respectively.

%----------------------------------------------------------------------
\subsection{Ribbon graphs, intersection numbers and volume polynomials}
\label{ss:intro:volume:polynomials}

In this section we introduce multivariate polynomials
$N_{g,n}(b_1, \ldots, b_n)$ that appear in different
contexts. They are an essential ingredient to our formula
for the Masur--Veech volume.

Let $g$ be a nonnegative integer and $n$ a positive integer. Let the
pair $(g,n)$ be different from $(0,1)$ and $(0,2)$. Let
$d_1,\dots,d_n$ be an ordered partition of $3g - 3 + n$ into a sum of
nonnegative integers, $|d|=d_1+\dots+d_n=3g-3+n$, let $\boldsymbol{d}$
be a multiindex $(d_1,\dots,d_n)$ and let $b^{2\boldsymbol{d}}$
denote $b_1^{2d_1}\cdot\cdots\cdot b_n^{2d_n}$.

Define the following homogeneous polynomial
$N_{g,n}(b_1,\dots,b_n)$ of degree $3g-3 + n$ in variables
$b_1,\dots,b_n$ in the following way.
\begin{equation}
\label{eq:N:g:n}
N_{g,n}(b_1,\dots,b_n)=
\sum_{|d|=3g-3+n}c_{\boldsymbol{d}} b^{2\boldsymbol{d}}\,,
\end{equation}
where
\begin{equation}
\label{eq:c:subscript:d}
c_{\boldsymbol{d}}=\frac{1}{2^{5g-6+2n}\, \boldsymbol{d}!}\,
\langle \psi_1^{d_1} \dots \psi_n^{d_n}\rangle
\end{equation}
\begin{equation}
\label{eq:correlator}
\langle \psi_1^{d_1} \dots \psi_n^{d_n}\rangle
=\int_{\overline{\cM}_{g,n}} \psi_1^{d_1}\dots\psi_n^{d_n}\,,
\end{equation}
and $\boldsymbol{d}!=d_1!\cdots d_n!$. Note that $N_{g,n}(b_1,\dots,b_n)$
contains only even powers of $b_i$, where $i=1,\dots,n$.
For small $g$ and $n$ we get:
$$
\begin{array}{ll}
N_{0,3}(b_1,b_2,b_3)&=1\\
[-\halfbls] &\\
N_{0,4}(b_1,b_2,b_3,b_4)&=\cfrac{1}{4}(b_1^2+b_2^2+b_3^2+b_4^2)\\
[-\halfbls] &\\
N_{1,1}(b_1)&=\cfrac{1}{48}(b_1^2)\\
[-\halfbls] &\\
N_{1,2}(b_1,b_2)&=\cfrac{1}{384}(b_1^2+b_2^2)(b_1^2+b_2^2)
\end{array}
$$

\begin{NNTheorem}[Kontsevich]
Consider a collection of positive integers $b_1,\dots, b_n$
such that $\sum_{i=1}^n b_i$ is even.
The weighted count of genus $g$ connected trivalent metric ribbon graphs $G$
with integer edges and with $n$ labeled boundary components of lengths $b_1,\dots,b_n$
is equal to $N_{g,n}(b_1,\dots,b_n)$ up to the lower order terms:
$$
\sum_{G \in \cR_{g,n}} \frac{1}{|\Aut(G)|}\, N_G(b_1,\dots,b_n)
=N_{g,n}(b_1,\dots,b_n)+\text{lower order terms}\,,
\,
$$
where $\cR_{g,n}$ denotes the set of (nonisomorphic)
trivalent ribbon graphs $G$ of genus $g$ and with $n$
boundary components.
\end{NNTheorem}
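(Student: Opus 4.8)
The plan is to realise $N_G$ as a lattice-point count inside Kontsevich's combinatorial model of moduli space, to extract its leading term as the volume of a polytope, and then to identify the weighted sum of these volumes with Kontsevich's symplectic-volume evaluation of the $\psi$-intersection numbers.

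First I would set up the count explicitly. For a connected trivalent ribbon graph $G$ of genus $g$ with $n$ labelled boundary components, Euler's relation gives $E(G)=6g-6+3n$ edges and $V(G)=4g-4+2n$ vertices. A metric on $G$ with integer edges is a point $\ell\in\Z_{>0}^{E}$, and requiring the $i$-th boundary component to have total length $b_i=\sum_{e\in\partial_i}\ell_e$ (edges counted with multiplicity) imposes $n$ linear equations; $N_G(b_1,\dots,b_n)$ is the number of such $\ell$. Since each edge borders exactly two boundary arcs, $\sum_i b_i=2\sum_e\ell_e$, which is why $\sum_i b_i$ must be even. Invoking the Jenkins--Strebel theorem, the space of metric ribbon graphs with fixed perimeters $b$ is homeomorphic to $\cM_{g,n}$, the perimeter map is surjective with the trivalent graphs indexing the top-dimensional cells, and the fibre polytope $P_G(b)=\{\ell\in\R_{>0}^{E}:\sum_{e\in\partial_i}\ell_e=b_i\}$ has dimension $E-n=6g-6+2n=\dim_{\R}\cM_{g,n}$.

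The next step is Ehrhart theory. On a single cell, $N_G(b)$ counts lattice points in $P_G(b)$, so it is a quasi-polynomial in $b$ whose top-degree part is the relative volume of $P_G(b)$ --- a homogeneous polynomial of degree $6g-6+2n$ --- while the remaining terms are precisely the ``lower order terms'' of the statement. Summing the leading terms, $\sum_{G\in\cR_{g,n}}\frac{1}{|\Aut(G)|}\operatorname{vol}P_G(b)$ equals, cell by cell, the total volume of the combinatorial moduli space $\cM^{\mathrm{comb}}_{g,n}(b)$ with respect to Kontsevich's two-form $\Omega=\tfrac12\sum_{i=1}^n b_i^2\,\omega_i$, whose class is $\tfrac12\sum_i b_i^2\psi_i$; the non-trivalent cells have positive codimension and contribute nothing to the volume. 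Kontsevich's main identity then evaluates this as $\int \Omega^{3g-3+n}/(3g-3+n)!$, a polynomial in $b$ with coefficients the intersection numbers $\langle\psi_1^{d_1}\cdots\psi_n^{d_n}\rangle$, which after fixing the normalisation is exactly $N_{g,n}(b)$.

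The heart of the matter, and the main obstacle, is Kontsevich's identity together with the power-of-two bookkeeping that produces the constant $2^{5g-6+2n}$ in $c_{\boldsymbol{d}}$. This constant splits naturally into two contributions: a factor $2^{3g-3+n}$ from the $\tfrac12$ in each of the $3g-3+n$ copies of $\Omega$, and a factor $2^{2g-3+n}$ from the ratio between the lattice (counting) measure on each fibre $P_G(b)$ and Kontsevich's symplectic volume form in edge coordinates. Establishing the latter requires the local, per-cell computation at the core of Kontsevich's proof: building explicit piecewise-polynomial representatives $\omega_i$ of the $\psi$-classes on $\cM^{\mathrm{comb}}_{g,n}$ and showing that $\Omega^{3g-3+n}/(3g-3+n)!$ restricts on each trivalent cell to a fixed power of $2$ times Lebesgue measure in the $\ell_e$. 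I would pin down the bookkeeping against the base cases, where $N_{0,3}=1$ (with $2^{5g-6+2n}=1$), $N_{0,4}=\tfrac14\sum_i b_i^2$ (with $2^{5g-6+2n}=4$ and $\langle\psi_i\rangle_{0,4}=1$), and $N_{1,1}=b_1^2/48$ (with $2^{5g-6+2n}=2$ and $\langle\psi_1\rangle_{1,1}=1/24$) fix the two powers of $2$ unambiguously.
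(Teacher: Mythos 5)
The paper does not prove this statement: it is imported verbatim as Kontsevich's theorem, and the sentence immediately after it refers the reader to \cite{Kontsevich}, while the Remark that follows points to \cite{Norbury} and \cite{Chapman:Mulase:Safnuk} for the quasi-polynomial structure of the lower-order terms. So there is no internal proof to compare against; the relevant benchmark is Kontsevich's original argument, and your sketch is essentially that argument rather than an alternative to it. Your setup (Euler count $E=6g-6+3n$, fibre dimension $6g-6+2n$, the parity constraint $\sum_i b_i = 2\sum_e\ell_e$), the Ehrhart-type identification of the leading term of $N_G$ with a polytope volume, the reduction to the top-dimensional (trivalent) cells of the Jenkins--Strebel decomposition, and the evaluation via $\Omega=\tfrac12\sum_i b_i^2\omega_i$ with $[\omega_i]=\psi_i$ are exactly the steps of \cite{Kontsevich}. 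The factorisation $2^{5g-6+2n}=2^{3g-3+n}\cdot 2^{2g-3+n}$ is consistent with the normalisation~\eqref{eq:c:subscript:d}, and your three base cases do match the displayed values of $N_{0,3}$, $N_{0,4}$, $N_{1,1}$ below~\eqref{eq:N:g:n}.

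Two points of care. First, as you flag yourself, the per-cell comparison of $\Omega^{3g-3+n}/(3g-3+n)!$ with the lattice-normalised Lebesgue measure in edge coordinates is the actual content of Kontsevich's proof; the base cases $(0,3)$, $(0,4)$, $(1,1)$ pin down the exponents $\alpha,\beta,\gamma$ only \emph{after} one has shown that this ratio is a universal constant of the form $2^{\alpha g+\beta n+\gamma}$ on every trivalent cell, so they are a consistency check, not a substitute for that computation. Second, the ``relative volume'' in the Ehrhart step must be taken with respect to the affine lattice $\{\ell\in\Z^{E}:\sum_{e\in\partial_i}\ell_e=b_i\}$; since the perimeter map sends $\Z^{E}$ into the index-two sublattice of even-sum vectors in $\Z^n$, this normalisation differs from the measure induced by Lebesgue measure on $\R^{E}$ by a factor of $2$, which is part of the $2^{2g-3+n}$ and is easy to lose in the bookkeeping.
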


This Theorem is a part of Kontsevich's
proof~\cite{Kontsevich} of Witten's
conjecture~\cite{Witten}.

\begin{Remark}
P.~Norbury~\cite{Norbury} and
K.~Chapman--M.~Mulase--B.~Safnuk~\cite{Chapman:Mulase:Safnuk}
refined the count of Kontsevich proving that the function
counting lattice points in the moduli space $\cM_{g,n}$
corresponding to ramified covers of the sphere over three
points (the so-called \textit{dessins d'enfants}) is a
quasi-polynomial in variables $b_i$. In other terms, when
considering all ribbon graphs (and not only trivalent ones)
the lower order terms in Kontsevich's theorem form a
quasi-polynomial. This quasi-polynomiality of the
expression on the left hand side endows the notion of
``lower order terms'' with natural formal sense.
\end{Remark}

\begin{Remark}
\label{rm:N:g:n:V:g:n}
Up to a normalization constant depending only on $g$ and
$n$ (given by the explicit
formula~\eqref{eq:Vgn:through:Ngn}), the polynomial
$N_{g,n}(b_1,\dots,b_n)$ coincides with the top homogeneous
part of Mirzakhani's volume polynomial
$V_{g,n}(b_1,\dots,b_n)$ providing the Weil--Petersson
volume of the moduli space of bordered Riemann
surfaces~\cite{Mirzakhani:simple:geodesics:and:volumes}.
This relation between correlators $\langle \psi_1^{d_1}
\ldots \psi_n^{d_n}\rangle$ and Weil--Petersson volumes is
one of the key elements of Mirzakhani's alternative proof
of Witten's
conjecture~\cite{Mirzakhani:volumes:and:intersection:theory}.
\end{Remark}

We also use the following common notation for the
intersection numbers~\eqref{eq:correlator}. Given an
ordered partition $d_1+\dots+d_n=3g-3+n$ of $3g - 3 + n$
into a sum of nonnegative integers we define

\begin{equation}
\label{eq:tau:correlators}
\langle \tau_{d_1} \dots \tau_{d_n}\rangle
=\int_{\overline{\cM}_{g,n}} \psi_1^{d_1}\dots\psi_n^{d_n}\,.
\end{equation}

%----------------------------------------------------------------------
\subsection{Formula for the Masur--Veech volumes}
\label{ss:intro:Masur:Veech:volumes}

Following~\cite{AEZ:genus:0} we consider the following
linear operators $\cY(\boldsymbol{H})$ and $\cZ$ on the spaces of
polynomials in variables $b_1,b_2,\dots$.

The operator $\cY(\boldsymbol{H})$ is defined for any collection of
strictly positive integer parameters $H_1, H_2, \dots$. It
is defined on monomials as
\begin{equation}
\label{eq:cV}
\cY(\boldsymbol{H})\ :\quad
\prod_{i=1}^{k} b_i^{m_i} \longmapsto
\prod_{i=1}^{k} \frac{m_i!}{H_i^{m_i+1}}\,,
\end{equation}
and extended to arbitrary polynomials by linearity.

Operator $\cZ$ is defined on monomials as
\begin{equation}
\label{eq:cZ}
\cZ\ :\quad
\prod_{i=1}^{k} b_i^{m_i} \longmapsto
\prod_{i=1}^{k} \big(m_i!\cdot \zeta(m_i+1)\big)\,,
\end{equation}
and extended to arbitrary polynomials by linearity.
In the above
formula $\zeta$ is the Riemann zeta function
$$
\zeta(s) = \sum_{n \geq 1} \frac{1}{n^s}\,,
$$
so for any collection of strictly positive integers
$(m_1,\dots,m_k)$ one has
$$
\cZ\left(\prod_{i=1}^{k} b_i^{m_i}\right)
=\sum_{\boldsymbol{H}\in\N^k}
\cY(\boldsymbol{H})\left(\prod_{i=1}^{k} b_i^{m_i}\right)\,.
$$

\begin{Remark}
For even integers $2m$ we have
$$
\zeta(2m) = (-1)^{m+1} \frac{B_{2m} (2\pi)^{2m}}{2\, (2m)!}
$$
where $B_{2m}$ are the Bernoulli numbers. Consider a homogeneous
polynomial of degree $2m$ with rational coefficients, such
that all powers of all variables in each monomial are odd.
Observation above implies that the value of $\cZ$ on any
such polynomial is a rational number multiplied by
$\pi^{2m}$.
\end{Remark}

% Finally, given $v$ a vertex in $\Graph$ we denote by $H_v =
% \alpha^{-1}(v)$ the set of half-edges adjacent to the
% vertex $v$ in $\Graph$. Let also $\boldsymbol{b}_v$ be the
% subcollection of the $n_v$ elements $b_h$ for $h \in
% H_v$.

Given a stable graph $\Graph$ denote by $V(\Gamma)$ the set
of its vertices and by $E(\Gamma)$ the set of its edges. To
each stable graph $\Gamma\in\cG_{g,n}$ we associate the
following homogeneous polynomial $P_\Gamma$
of degree $6g-6+2n$. To
every edge $e\in E(\Gamma)$ we assign a formal variable
$b_e$. Given a vertex $v\in V(\Gamma)$ denote by $g_v$ the
integer number decorating $v$ and denote by $n_v$ the
valency of $v$, where the legs adjacent to $v$ are counted
towards the valency of $v$. Take a small neighborhood of
$v$ in $\Gamma$. We associate to each half-edge (``germ''
of edge) $e$ adjacent to $v$ the monomial $b_e$; we
associate $0$ to each leg. We denote by $\boldsymbol{b}_v$
the resulting collection of size $n_v$. If some edge $e$ is
a loop joining $v$ to itself, $b_e$ would be present in
$\boldsymbol{b}_v$ twice; if an edge $e$ joins $v$ to a
distinct vertex, $b_e$ would be present in
$\boldsymbol{b}_v$ once; all the other entries of
$\boldsymbol{b}_v$ correspond to legs; they are represented
by zeroes. To each vertex $v\in E(\Gamma)$ we associate the
polynomial $N_{g_v,n_v}(\boldsymbol{b}_v)$, where $N_{g,v}$
is defined in~\eqref{eq:N:g:n}. We associate to the stable
graph $\Gamma$ the polynomial obtained as the product
$\prod b_e$ over all edges $e\in E(\Graph)$ multiplied by
the product $\prod N_{g_v,n_v}(\boldsymbol{b}_v)$ over all
$v\in V(\Graph)$. We define $P_\Gamma$ as follows:
\begin{multline}
\label{eq:P:Gamma}
P_\Gamma(\boldsymbol{b})
=
\frac{2^{6g-5+2n} \cdot (4g-4+n)!}{(6g-7+2n)!}\cdot
\\
\frac{1}{2^{|V(\Graph)|-1}} \cdot
\frac{1}{|\operatorname{Aut}(\Graph)|}
\cdot
\prod_{e\in E(\Graph)}b_e\cdot
\prod_{v\in V(\Graph)}
N_{g_v,n_v}(\boldsymbol{b}_v)
\,.
\end{multline}

Tables in Appendices~\ref{a:2:0}
and~\ref{a:1:2} list polynomials associated to all stable
graphs in $\cG_{2,0}$ and in $\cG_{1,2}$.

\begin{Theorem}
\label{th:volume}
The Masur--Veech volume $\Vol \cQ_{g,n}$ of the moduli
space of meromorphic quadratic differentials with $n$ simple poles has
the following value:
\begin{equation}
\label{eq:square:tiled:volume}
\Vol \cQ_{g,n}
= \sum_{\Graph \in \cG_{g,n}} \Vol(\Gamma)\,,
\end{equation}
where the contribution of an individual stable graph
$\Gamma$ has the form
\begin{equation}
\label{eq:volume:contribution:of:stable:graph}
\Vol(\Gamma)=\cZ(P_\Gamma)
\end{equation}
\end{Theorem}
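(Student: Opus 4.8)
The plan is to combine the decomposition \eqref{eq:Vol:Q:as:sum:of:Vol:gamma} of $\Vol\cQ_{g,n}$ into contributions of individual topological types of multicurves with an explicit count of square-tiled surfaces of a prescribed combinatorial type. Since the $\Mod_{g,n}$-orbits of reduced multicurves are in bijection with $\cG_{g,n}$, and the orbits with a fixed reduced type $\Gamma$ are parametrized, modulo $\Aut(\Gamma)$, by the integer weight vectors $\boldsymbol{H}=(H_e)_{e\in E(\Gamma)}\in\N^{E(\Gamma)}$ recording the cylinder heights, I would first rewrite \eqref{eq:Vol:Q:as:sum:of:Vol:gamma} as $\Vol\cQ_{g,n}=\sum_{\Gamma\in\cG_{g,n}}\Vol(\Gamma)$ with $\Vol(\Gamma)=\sum_{\boldsymbol{H}}\Vol(\gamma_{\boldsymbol H})$, where $\gamma_{\boldsymbol H}=\sum_e H_e\gamma_e$ and $\gamma_e$ is the core curve attached to the edge $e$. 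This reduces the theorem to (A) an explicit formula for the contribution $\Vol(\gamma_{\boldsymbol H})$ of a single weighted type, and (B) summation of these contributions over all heights.

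For (A) I would use the combinatorial model of a square-tiled surface in $\cSTgn(\gamma_{\boldsymbol H})$ read off from its maximal horizontal cylinder decomposition: each edge $e$ of $\Gamma$ is a cylinder of integer circumference $b_e\ge1$ and prescribed height $H_e$, and each vertex $v$ carries the metric ribbon graph formed by the horizontal saddle connections of the complementary subsurface, whose $n_v$ boundary components have the perimeters $\boldsymbol{b}_v$ induced by the adjacent edges (a loop contributing $b_e$ twice, a leg contributing $0$). For fixed $\boldsymbol H$ and fixed circumferences $\boldsymbol b$ the number of such surfaces factorizes: each cylinder contributes its $b_e$ integer twist parameters in $\Z/b_e\Z$, giving $\prod_{e}b_e$, and each vertex contributes the weighted number of integer trivalent metric ribbon graphs of type $(g_v,n_v)$ with boundary perimeters $\boldsymbol b_v$. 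By Kontsevich's theorem above this last count equals $N_{g_v,n_v}(\boldsymbol b_v)$ up to lower-order terms, and the total area equals $\sum_e b_e H_e$.

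I would then extract $\Vol(\gamma_{\boldsymbol H})$ from \eqref{eq:Vol:sq:tiled}. The weight $\prod_e b_e\cdot\prod_v N_{g_v,n_v}(\boldsymbol b_v)$ is homogeneous of degree $6g-6+2n-|E(\Gamma)|$ in the $|E(\Gamma)|$ variables $b_e$, so summing it over $\boldsymbol b\in\N^{E(\Gamma)}$ subject to $\sum_e b_e H_e\le 2N$ is a standard lattice-point asymptotic: with $d=6g-6+2n$ the sum grows like $(2N)^{d}$ times the integral of this weight over the simplex $\{\boldsymbol b\in\R_{\ge0}^{E(\Gamma)}:\sum_e H_e b_e\le1\}$; by the identity $m!/H^{m+1}=\int_0^\infty b^{m}e^{-Hb}\,db$ that integral equals $\tfrac{1}{d!}\,\cY(\boldsymbol H)\big(\prod_e b_e\prod_v N_{g_v,n_v}(\boldsymbol b_v)\big)$, with $\cY(\boldsymbol H)$ the operator of \eqref{eq:cV}. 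The lower-order terms of Kontsevich's count have strictly smaller degree in $\boldsymbol b$ and hence contribute $o(N^{d})$, so they drop out in the limit. Multiplying by the normalization $2(6g-6+2n)$ then expresses $\Vol(\gamma_{\boldsymbol H})$ as an explicit constant times $\cY(\boldsymbol H)\big(\prod_e b_e\prod_v N_{g_v,n_v}(\boldsymbol b_v)\big)$. Finally, for (B) I would sum over $\boldsymbol H\in\N^{E(\Gamma)}$ and invoke the relation $\sum_{\boldsymbol H}\cY(\boldsymbol H)=\cZ$ recorded after \eqref{eq:cZ}; this is legitimate because every variable $b_e$ occurs with odd, hence strictly positive, exponent in the weight, so each geometric sum over $H_e$ converges to a value of $\zeta$ at an even integer. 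The outcome is $\Vol(\Gamma)=\cZ(P_\Gamma)$ once the overall constant is folded into the definition \eqref{eq:P:Gamma} of $P_\Gamma$.

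The main obstacle is pinning down this overall constant, that is, the factorial prefactor and the factor $2^{-(|V(\Gamma)|-1)}$ in \eqref{eq:P:Gamma}. The prefactor comes from reconciling three normalizations: the $\tfrac12$-radius convention underlying \eqref{eq:Vol:sq:tiled}, Kontsevich's normalization of $N_{g,n}$ (the factor $2^{5g-6+2n}$ in the coefficients $c_{\boldsymbol d}$), and the simplex volume $1/d!$ produced by the area constraint. The factor $2^{-(|V(\Gamma)|-1)}$ reflects the data specific to quadratic rather than abelian differentials: assembling the cylinders and the complementary ribbon graphs into a surface with a globally consistent horizontal foliation involves one $\Z/2$ choice of local orientation per vertex subject to a single global relation, yielding $2^{|V(\Gamma)|-1}$ indistinguishable assemblies that must be divided out. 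A secondary technical point is the bookkeeping of $\Aut(\Gamma)$ and of the stabilizers of individual weight vectors $\boldsymbol H$ when passing between the sum over orbits in \eqref{eq:Vol:Q:as:sum:of:Vol:gamma} and the sum over all $\boldsymbol H\in\N^{E(\Gamma)}$; this is precisely what the factor $1/|\Aut(\Gamma)|$ in \eqref{eq:P:Gamma} absorbs.
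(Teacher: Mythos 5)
Your overall strategy coincides with the paper's: decompose $\Vol\cQ_{g,n}$ over stable graphs and height vectors, count square-tiled surfaces of a fixed type as (twists) $\times$ (Kontsevich counts of metric ribbon graphs at the singular layers), evaluate the resulting lattice-point sum over perimeters by a simplex-volume asymptotic, and sum over $\boldsymbol{H}$ using $\sum_{\boldsymbol{H}}\cY(\boldsymbol{H})=\cZ$. However, your account of the factor $2^{-(|V(\Gamma)|-1)}$ is a genuine gap, and the mechanism you propose for it is wrong. There is no $\Z/2$ choice of local orientation per vertex producing $2^{|V(\Gamma)|-1}$ indistinguishable assemblies to divide out: the assembly data (perimeters, heights, twists, metric ribbon graphs at the vertices) determines the square-tiled surface, with overcounting governed solely by $\Aut(\Gamma)$ and by the labeling of the zeroes. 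The correct origin of the factor, and the way the paper derives it, is a \emph{parity constraint on the perimeters}: the boundary of the ribbon graph at each singular layer $v$ traverses every edge of that graph exactly twice, so the sum $\sum_{e\in E_v(\Gamma)} b_e$ of the perimeters adjacent to $v$ must be even. Hence the admissible perimeter vectors form a sublattice $\mathbb{L}^k\subset\Z^k$ of index $2^{|V(\Gamma)|-1}$ (Corollary~\ref{cor:criterion}), and the lattice-point count of Lemma~\ref{lm:evaluation:for:monomial} must be performed over $\mathbb{L}^k$, which rescales the leading coefficient by $|\Z^k:\mathbb{L}^k|^{-1}$. Your version sums over all of $\N^{E(\Gamma)}$ --- including perimeter vectors for which no square-tiled surface of type $\Gamma$ exists at all --- and then compensates by an invented symmetry; the fact that the two factors agree numerically does not make the argument valid.

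Two further steps you take for granted also require proof. First, Kontsevich's theorem as quoted applies to \emph{trivalent} ribbon graphs, while the singular layers of a surface in $\cQ_{g,n}$ carry $n$ univalent vertices at the simple poles; the identification of the count of such graphs (with legs assigned perimeter $0$) with $N_{g_v,n_v}(\boldsymbol{b}_v)$ up to lower-order terms is exactly Proposition~\ref{pr:count:with:leaves}, which the paper proves by an induction on the number of leaves reducing to the string equation. Second, the normalization~\eqref{eq:Vol:sq:tiled} presumes labeled conical singularities, so the count of surfaces built from ribbon graphs with unlabeled trivalent vertices must be multiplied by $(4g-4+n)!$; this labeling factor is precisely the $(4g-4+n)!$ appearing in~\eqref{eq:P:Gamma} and is absent from your list of the ``three normalizations'' to be reconciled.
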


\begin{Remark}
The contribution~\eqref{eq:volume:contribution:of:stable:graph}
of any individual stable graph
has the following natural interpretation.
We have seen that stable graphs $\Gamma$ in $\cG_{g,n}$
are in natural bijective correspondence
with $\Mod_{g,n}$-orbits of \textit{reduced} multicurves
$\gamma_{\mathit{reduced}}=\gamma_1+\gamma_2+\dots$,
where simple closed curves $\gamma_i$ and $\gamma_j$
are not isotopic for any $i\neq j$.
Let $\Gamma\in\cG_{g,n}$, let $k=|V(\Gamma)|$, let
$\gamma_{\mathit{reduced}}=\gamma_1+\dots+\gamma_k$ be the reduced multicurve
associated to $\Gamma$. Let $\gamma_{\boldsymbol{H}}
=\gamma(\Gamma,\boldsymbol{H})=
H_1\gamma_1+\dots H_k\gamma_k$, where
$\boldsymbol{H}=(H_1,\dots,H_k)\in\N^k$.
We have
\begin{equation}
\label{eq:Vol:Gamma}
\Vol(\Gamma)=
\sum_{H\in\N^k}
\Vol\big(\gamma(\Gamma,\boldsymbol{H})\big)
=
\sum_{H\in\N^k}
\Vol(\gamma_{\boldsymbol{H}})\,,
\end{equation}
where the contribution
$\Vol\big(\gamma(\Gamma,\boldsymbol{H})\big)$ of
square-tiled surfaces with the horizontal cylinder
decomposition of type $\gamma(\Gamma,\boldsymbol{H})$
to $\Vol\cQ_{g,n}$ is given
by the formula:
\begin{equation}
\label{eq:contribution:of:gamma:to:volume}
\Vol\big(\gamma(\Gamma,\boldsymbol{H})\big)
=\Vol(\gamma_{\boldsymbol{H}})
=\cY(\boldsymbol{H})(P_\Gamma)\,.
\end{equation}
\end{Remark}

Table~\ref{tab:2:0} bellow illustrates computation of
polynomials $P_\Gamma$ and of the contributions
$\Vol(\Gamma)$ to the Masur--Veech volume $\Vol\cQ_{g,n}$
in the particular case of $(g,n)=(2,0)$. To make the
calculation traceable, we follow the structure of
formula~\eqref{eq:P:Gamma}.
The first numerical factor $128/5$ represents the factor
$\frac{2^{6g-5+2n} \cdot (4g-4+n)!}{(6g-7+2n)!}$. It is
common for all stable graphs in $\cG_{2,0}$. The second
numerical factor in the first line of each calculation
in Table~\ref{tab:2:0} is $1/2^{|V(\Graph)|-1}$, where $|V(\Graph)|$ is the
number of vertices of the corresponding stable graph
$\Gamma$ (equivalently ---
the number of connected components of the complement to the
associated reduced multicurve).
The third numerical factor is
$\cfrac{1}{|\Aut(\Graph)|}$. Recall that neither
vertices nor edges of $\Graph$ are labeled. We first evaluate the
order of the corresponding automorphism group
$|\Aut(\Graph)|$ (this group respects the decoration
of the graph), and only then we associate to edges of $\Graph$
variables $b_1, \dots, b_k$ in an arbitrary way.

% In the right column of the table below we present the
% numerical value obtained after application of the operator
% $\cZ$ to the resulting homogeneous polynomial.

\begin{table}
$$
\hspace*{20pt}
\begin{array}{llr}

%------------------------------------ \cQ(1^4) 1.1 line 1 below

\includegraphics{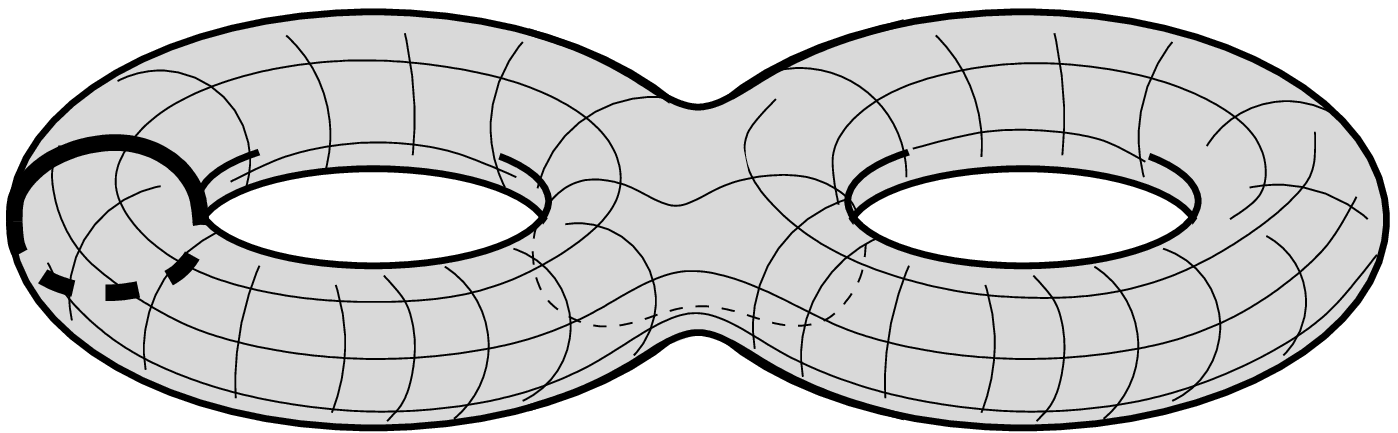}
\begin{picture}(0,0)(0,0)
\put(-24,0){$b_1$}
\end{picture}
\hspace*{68pt} % 70
\vspace*{5pt}

&
\frac{128}{5}\cdot
1\cdot
\frac{1}{2}\cdot
b_1 \cdot
N_{1,2}(b_1,b_1)=

&
\\

%------------------------------------ \cQ(1^4) 1.1 line 2 below

\includegraphics{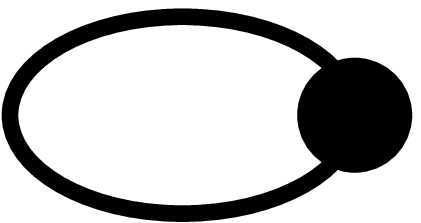}
\begin{picture}(0,0)(0,0)
\put(-10,-10.5){$b_1$}
\put(30,-10.5){$1$}
\end{picture}
\vspace*{5pt}

&
\hspace*{7pt}
=\frac{64}{5}\cdot b_1 \left(\frac{1}{384}(2b_1^2)(2b_1^2)\right)
=\frac{2}{15}\cdot b_1^5\xmapsto{\cZ}
&
\frac{2}{15}\cdot \big(5!\cdot \zeta(6)\big)
\\

%------------------------------------ \cQ(1^4) 1.1 line 3 below

&&
=\frac{16}{945}\cdot\pi^6

\vspace*{5pt}\\\hline&&\\
%------------------------------------ \cQ(1^4) 1.2 line 1 below

\includegraphics{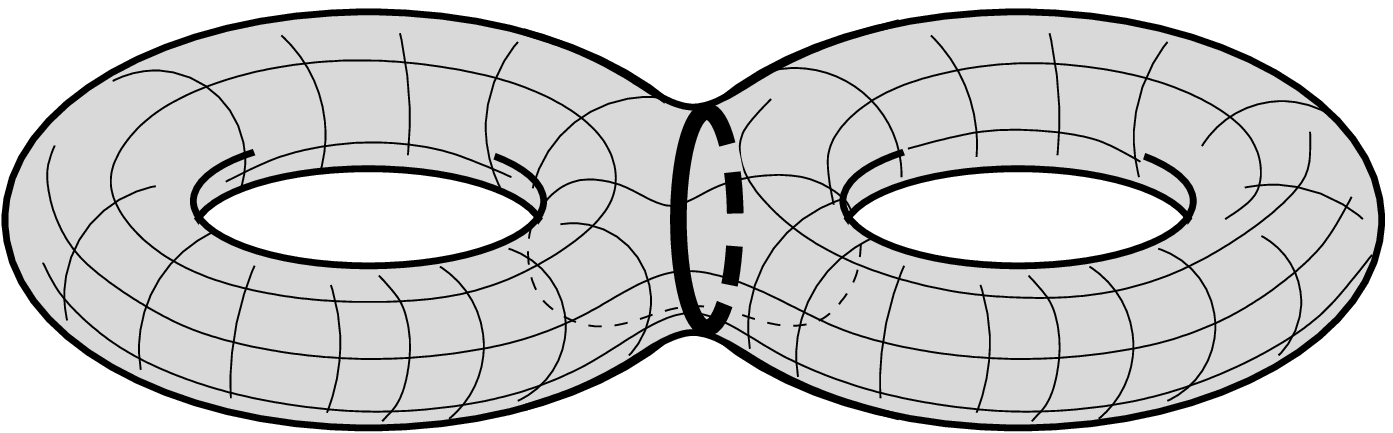}
\begin{picture}(0,0)(0,0)
\put(23,-14){$b_1$}
\end{picture}
\vspace*{5pt}

&
\frac{128}{5}\cdot
\frac{1}{2}\cdot
\frac{1}{2}\cdot
b_1 \cdot N_{1,1}(b_1)\cdot N_{1,1}(b_1)=

&
\\

%------------------------------------ \cQ(1^4) 1.2 line 2 below

\includegraphics{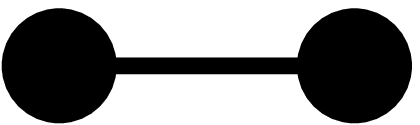}
\begin{picture}(0,0)(0,0)
\put(23,-20){$b_1$}
\put(6,-14){$1$}
\put(41,-14){$1$}
\end{picture}
\vspace*{5pt}

&
\hspace*{6pt}

=\frac{32}{5}\cdot b_1\cdot \left(\frac{1}{48}b_1^2\right)\left(\frac{1}{48}b_1^2\right)
=\frac{1}{360}\cdot b_1^5\xmapsto{\cZ}
&

\frac{1}{360}\cdot \big(5!\cdot \zeta(6)\big)

\\

%------------------------------------ \cQ(1^4) 1.2 line 3 below
&&

=\frac{1}{2835}\cdot\pi^6

\vspace*{5pt}\\\hline&&\\
%------------------------------------ \cQ(1^4) 2.1 line 1 below

\includegraphics{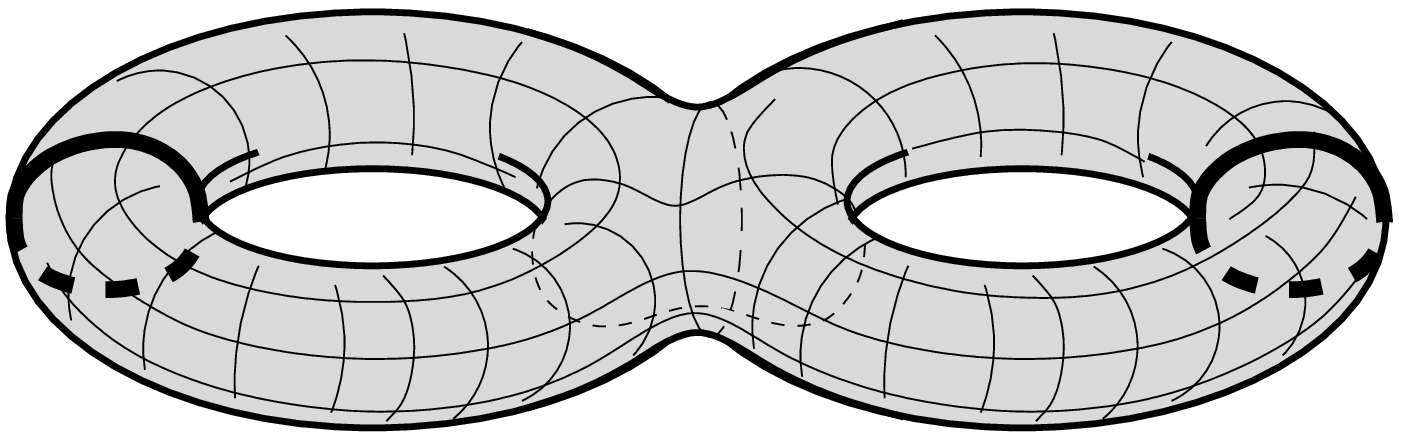}
\begin{picture}(0,0)(0,0)
\put(-24,0){$b_1$}
\put(66,0){$b_2$}
\end{picture}
\vspace*{5pt}

&
\frac{128}{5}\cdot
1 \cdot
\frac{1}{8}\cdot
b_1 b_2 \cdot
N_{0,4}(b_1,b_1,b_2,b_2)=

&

\\

%------------------------------------ \cQ(1^4) 2.1 line 2 below

\includegraphics{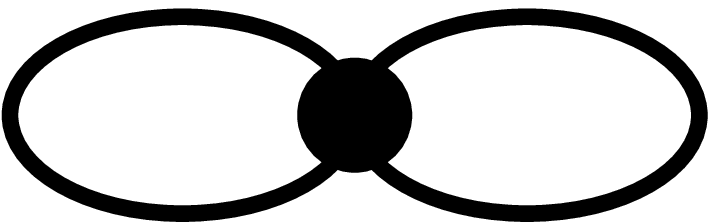}
\begin{picture}(0,0)(0,0)
\put(-9,-19){$b_1$}
\put(52,-19){$b_2$}
\put(23,-30){$0$}
\end{picture}
\vspace*{5pt}

&
\hspace*{20pt}
=\frac{16}{5}\cdot b_1 b_2\cdot\left(\frac{1}{4}(2b_1^2+2b_2^2)\right)=

&\\
%------------------------------------ \cQ(1^4) 2.1 line 3 below

&

\hspace*{80pt}
=\frac{8}{5}(b_1^3 b_2+b_1 b_2^3)\xmapsto{\cZ}

&

\frac{8}{5}\!\cdot\! 2\!\cdot\! 3!\zeta(4)\!\cdot\!1!\zeta(2)

\vspace*{5pt}\\ &&

=\frac{8}{225}\cdot \pi^6

\vspace*{5pt}\\\hline&&\\
%------------------------------------ \cQ(1^4) 2.2 line 1 below

\includegraphics{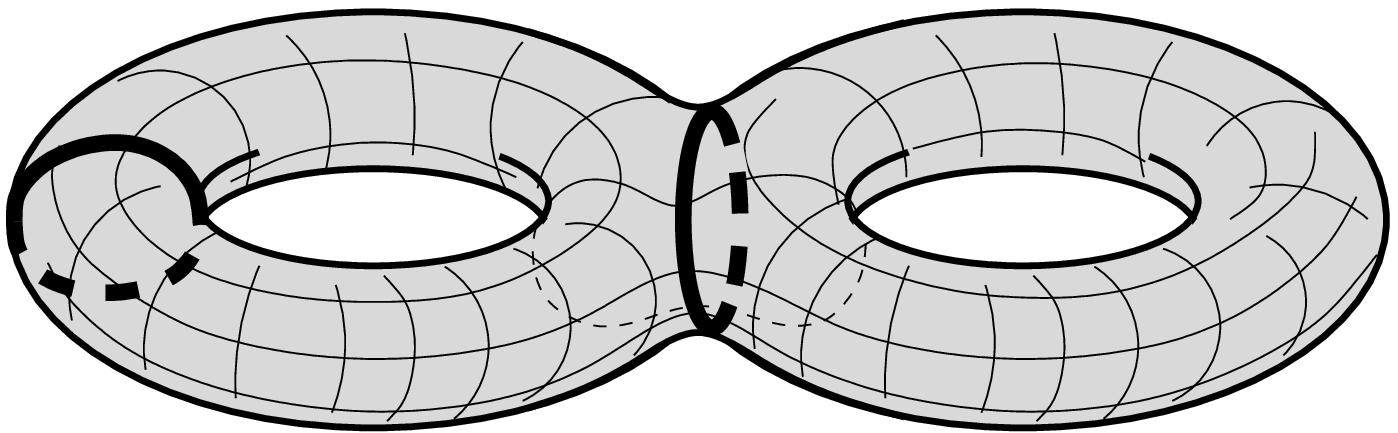}
\begin{picture}(0,0)(0,0)
\put(-24,0){$b_1$}
\put(23,-14){$b_2$}
\end{picture}
\vspace*{5pt}

&

\frac{128}{5}\!\cdot\!
\frac{1}{2}\!\cdot\!
\frac{1}{2}\!\cdot\!
b_1 b_2\!\cdot\! N_{0,3}(b_1,b_1,b_2)\!\cdot\! N_{1,1}(b_2)

&

\\

%------------------------------------ \cQ(1^4) 2.2 line 2 below

\includegraphics{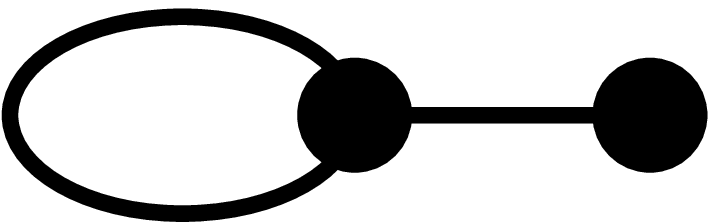}
\begin{picture}(0,0)(0,0)
\put(-19,-10){$b_1$}
\put(23,-16){$b_2$}
\put(6,-10){$0$}
\put(41,-10){$1$}
\end{picture}
\vspace*{5pt}

&
=\frac{32}{5}\cdot b_1 b_2\cdot\big(1\big) \cdot \left(\frac{1}{48} b_2^2\right)
=\frac{2}{15}\cdot b_1 b_2^3\xmapsto{\cZ}
&
\frac{2}{15}\!\cdot\! 1!\zeta(2)\!\cdot\!3!\zeta(4)
\\

%------------------------------------ \cQ(1^4) 2.2 line 3 below

&&
=\frac{1}{675}\cdot\pi^6

\vspace*{5pt}\\\hline&&\\
%------------------------------------ \cQ(1^4) 3.1 line 1 below

\includegraphics{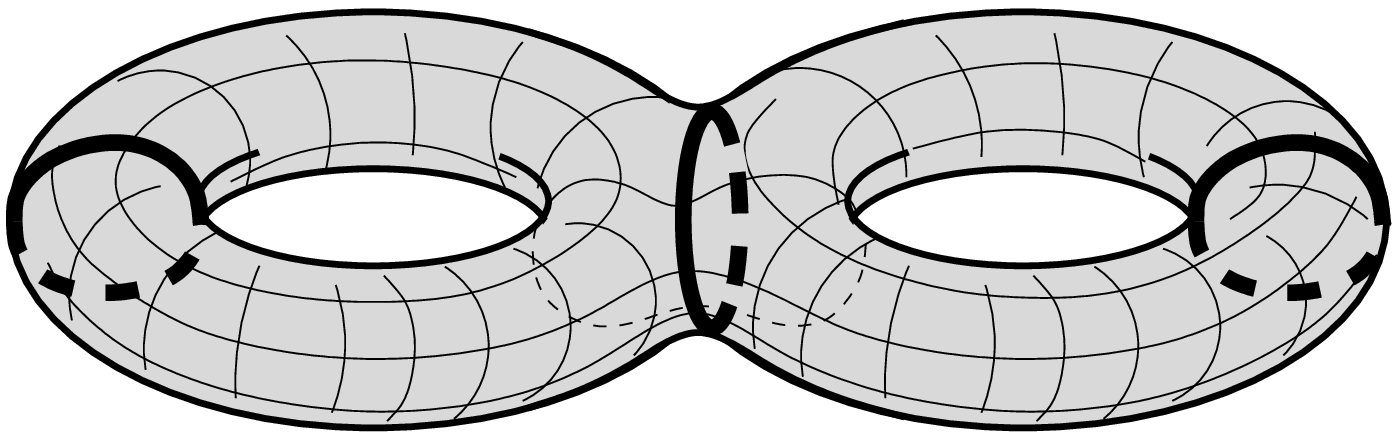}
\begin{picture}(0,0)(0,0)
\put(-24,0){$b_1$}
\put(23,-14){$b_2$}
\put(66,0){$b_3$}
\end{picture}
\vspace*{5pt}

&
\frac{128}{5}\!\cdot\!
\frac{1}{2}\!\cdot\!
\frac{1}{8}\!\cdot\!
b_1 b_2 b_3 \!\cdot\!
N_{0,3}(b_1,b_1,b_2)\cdot

&

\\

%------------------------------------ \cQ(1^4) 3.1 line 2 below

\includegraphics{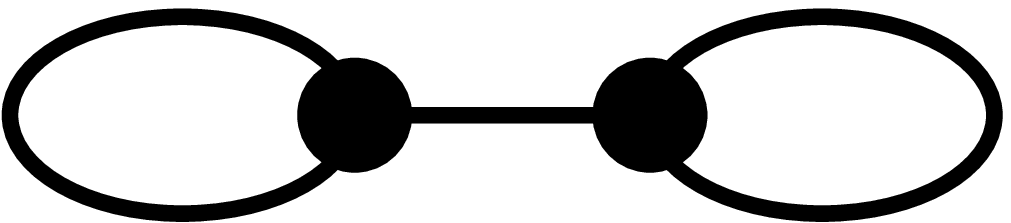}
\begin{picture}(0,0)(0,0)
\put(-18,-10){$b_1$}
\put(23,-16){$b_2$}
\put(63,-10){$b_3$}
\put(5.5,-10){$0$}
\put(42,-10){$0$}
\end{picture}
\vspace*{5pt}

&

\cdot N_{0,3}(b_2,b_3,b_3)
=\frac{8}{5}\!\cdot\! b_1 b_2 b_3\!\cdot\! (1) \!\cdot\! (1)
\xmapsto{\cZ}

&
\frac{8}{5}\cdot \left(1!\,\zeta(2)\right)^3
\\

%------------------------------------ \cQ(1^4) 3.1 line 3 below

&&
=\frac{1}{135}\cdot\pi^6

\vspace*{5pt}\\\hline&&\\
%------------------------------------ \cQ(1^4) 3.2 line 1 below

\includegraphics{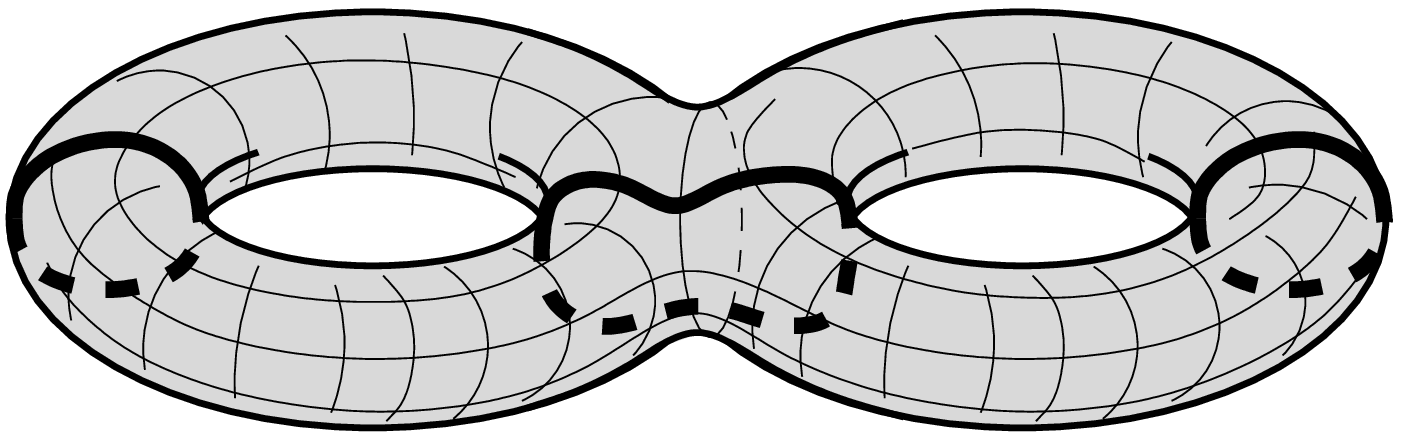}
\begin{picture}(0,0)(0,0)
\put(-24,0){$b_1$}
\put(20,12){$b_2$}
\put(66,0){$b_3$}
\end{picture}
\vspace*{5pt}

&

\frac{128}{5}\!\cdot\!
\frac{1}{2}\!\cdot\!
\frac{1}{12}\!\cdot\!
b_1 b_2 b_3 \!\cdot\!
N_{0,3}(b_1,b_1,b_2)\cdot

&

\\
%------------------------------------ \cQ(1^4) 3.2 line 2 below

\includegraphics{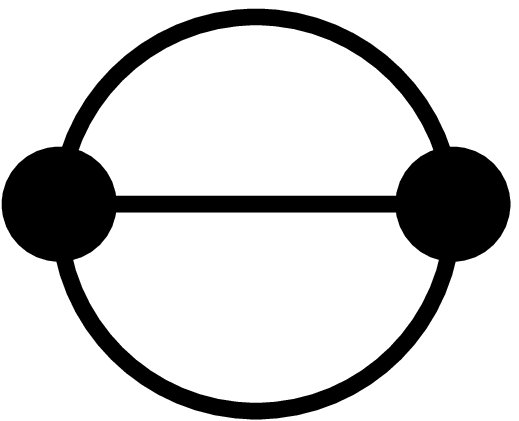}
\begin{picture}(0,0)(0,0)
\put(4,-16){$b_1$}
\put(27.5,-16){$b_2$}
\put(43,-16){$b_3$}
\put(31,2){$0$}
\put(31,-35){$0$}
\end{picture}
\vspace*{5pt}

&

\cdot N_{0,3}(b_2,b_3,b_3)
=\frac{16}{15}\!\cdot\! b_1 b_2 b_3\!\cdot\! (1) \!\cdot\! (1)
\xmapsto{\cZ}

&
\frac{16}{15}\cdot \left(1!\,\zeta(2)\right)^3
\\

%------------------------------------ \cQ(1^4) 3.2 line 3 below

&&
=\frac{2}{405}\cdot\pi^6

\vspace*{20pt}
\\%\hline
\end{array}
$$
\caption{
\label{tab:2:0}
Computation of $\Vol\cQ_{2,0}$.
Left column represents
stable graphs $\Gamma\in\cG_{2,0}$
and associated multicurves;
middle column gives
associated polynomials
$P_\Gamma$; right column
provides $\Vol(\Gamma)$.
}
\end{table}

Taking the sum of the six contribution
we obtain the answer:

\begin{multline*}
\Vol \cQ_2=\Vol\cQ(1^4)=
\left(
\left(\frac{16}{945}+\frac{1}{2835}\right)
+
\left(\frac{8}{225}+\frac{1}{675}\right)
+
\left(\frac{1}{135}+\frac{2}{405}\right)
\right) \pi^6
=\\=
\left(\frac{7}{405}+\frac{1}{27}+\frac{1}{81}\right) \pi^6
=
\frac{1}{15} \pi^6\,.
\end{multline*}
It matches the value found in~\cite{Goujard:volumes}
by implementing the method of A.~Eskin and A.~Okounkov~\cite{Eskin:Okounkov:pillowcase}.

%-----------------------------------------------------------

\begin{Remark}
In genus $0$, the formula simplifies considerably.
It was conjectured by M.~Konstevich and proved by
J.~Athreya, A.~Eskin and A.~Zorich in~\cite{AEZ:genus:0}
that for all $n \geq 4$
\begin{equation}
\label{eq:vol:genus:0}
\Vol\cQ_{0,n} = \frac{\pi^{2n-6}}{2^{n-5}}\,.
\end{equation}

Note that in genus $0$ all correlators of $\psi$-classes
admit closed explicit expression. Rewriting all polynomials
$N_{0,n_v}(\boldsymbol{b}_v)$ for all stable graphs in
$\cG_{0,n}$ in formula~\eqref{eq:square:tiled:volume} for
$\Vol\cQ_{0,n}$ in terms of the corresponding multinomial
coefficients we get formula originally obtained
in~\cite{AEZ:Dedicata}. A lot of technique from this
article is borrowed from~\cite{AEZ:Dedicata}. Note,
however, that the proof of~\eqref{eq:vol:genus:0} is
indirect and is based on analytic
Riemann--Roch--Hierzebruch formula and on fine comparison
of asymptotics of determinants of flat and hyperbolic
Laplacians as $(X,q)$ in $\cQ_{0,n}$ approaches the
boundary. It is a challenge to
derive~\eqref{eq:vol:genus:0}
from~\eqref{eq:square:tiled:volume} directly.
\end{Remark}

Note also the following feature of
Formula~\eqref{eq:square:tiled:volume} which distinguishes
it from approach of
Eskin--Okounkov~\cite{Eskin:Okounkov:Inventiones},
\cite{Eskin:Okounkov:pillowcase} based on quasimodularity
of certain generating function or from approach of
Chen--M\"oeller--Sauvaget--Zagier~\cite{Chen:Moeller:Sauvaget:Zagier}
based on recurrence relation.
Formula~\eqref{eq:square:tiled:volume} allows to analyze
the contribution of individual stable graphs to
$\Vol\cQ_{g,n}$. In particular, it allows to study
statistics of random square-tiled surfaces. It also implies
the following asymptotic lower bound for the Masur--Veech
volume $\Vol\cQ_g$ and a conjectural asymptotic value.

\begin{Theorem}
\label{th:asymptotic:lower:bound}
The following asymptotic inequality holds
\begin{equation}
\Vol\cQ_g
\ge\sqrt{\frac{2}{3\pi g}}
\cdot\left(\frac{8}{3}\right)^{4g-4}
\cdot\left(1+O\left(\frac{1}{g}\right)\right)
\quad\text{as }g\to+\infty\,.
\end{equation}
\end{Theorem}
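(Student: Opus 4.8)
The plan is to bound $\Vol\cQ_g$ from below by the contribution of a single, well-chosen stable graph and then to estimate that one contribution. Since every $\Vol(\Gamma)$ is strictly positive (as recorded after~\eqref{eq:Vol:Q:as:sum:of:Vol:gamma}) and $\Vol\cQ_g=\sum_{\Gamma\in\cG_{g,0}}\Vol(\Gamma)$ by Theorem~\ref{th:volume}, for any fixed $\Gamma$ we have $\Vol\cQ_g\ge\Vol(\Gamma)=\cZ(P_\Gamma)$. I would take $\Gamma_1$ to be the one-vertex graph carrying a single loop, with the vertex decorated by genus $g-1$ (so $n_v=2$ and $|\Aut(\Gamma_1)|=2$); this is the first graph of Table~\ref{tab:2:0} in the case $g=2$. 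Because the loop contributes one variable $b_1$ counted twice, $N_{g-1,2}(b_1,b_1)=2^{-(5g-7)}\,T_{g-1}\,b_1^{6g-8}$, where $T_h:=\sum_{d_1+d_2=3h-1}\langle\tau_{d_1}\tau_{d_2}\rangle_h/(d_1!\,d_2!)$. Substituting into~\eqref{eq:P:Gamma} and collecting powers of $2$ gives $P_{\Gamma_1}=2^{g+1}\,\frac{(4g-4)!}{(6g-7)!}\,T_{g-1}\,b_1^{6g-7}$; applying $\cZ$ and using $\cZ(b_1^{6g-7})=(6g-7)!\,\zeta(6g-6)$ cancels the factorial and yields the exact identity $\Vol(\Gamma_1)=2^{g+1}(4g-4)!\,\zeta(6g-6)\,T_{g-1}$.

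Next I would determine the asymptotics of $T_h$ as $h\to\infty$. Inserting the large-genus two-point asymptotics $\langle\tau_{d_1}\tau_{d_2}\rangle_h\sim\frac{(6h-1)!!}{24^h\,h!\,(2d_1+1)!!\,(2d_2+1)!!}$ together with the elementary identity $d!\,(2d+1)!!=(2d+1)!/2^{d}$, the defining sum becomes $\frac{(6h-1)!!}{24^h\,h!}\,2^{3h-1}\sum_{d_1+d_2=3h-1}\frac{1}{(2d_1+1)!\,(2d_2+1)!}$. The inner sum is the coefficient of $x^{6h}$ in $\sinh^2 x=\tfrac12(\cosh 2x-1)$, namely $2^{6h-1}/(6h)!$. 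Combined with $(6h)!=2^{3h}(3h)!\,(6h-1)!!$, this collapses to $T_h\sim\dfrac{2^{3h-2}}{3^{h}\,h!\,(3h)!}$.

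For the final assembly I would substitute $h=g-1$, use $\zeta(6g-6)\to1$ and $\frac{(4g-4)!}{(g-1)!\,(3g-3)!}=\binom{4g-4}{g-1}$, so that the two identities combine into $\Vol(\Gamma_1)\sim 2^{4g-4}\,3^{-(g-1)}\binom{4g-4}{g-1}$. The central-binomial estimate $\binom{4m}{m}\sim\sqrt{\tfrac{2}{3\pi m}}\,(256/27)^{m}$ with $m=g-1$ then collapses the exponential factor, since $16\cdot 256/(3\cdot 27)=4096/81=(8/3)^4$, leaving $\Vol(\Gamma_1)\sim\sqrt{\tfrac{2}{3\pi g}}\,(8/3)^{4g-4}\bigl(1+O(1/g)\bigr)$; as $\Vol\cQ_g\ge\Vol(\Gamma_1)$, this is exactly the asserted inequality.

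The delicate point is the two-point asymptotics used in the second paragraph: one needs control uniform in the range $d_1+d_2=3h-1$ strong enough that the error terms survive summation over $(d_1,d_2)$ without degrading the leading constant. For the present lower bound it in fact suffices to have the one-sided estimate $\langle\tau_{d_1}\tau_{d_2}\rangle_h\ge\frac{(6h-1)!!}{24^h\,h!\,(2d_1+1)!!\,(2d_2+1)!!}\bigl(1-o(1)\bigr)$ uniformly, which is available from Mirzakhani--Zograf-type bounds on intersection numbers. The matching upper estimate—needed only to see that $\Gamma_1$ already accounts for the conjectural leading term of $\Vol\cQ_g$, and hence that the bound ought to be asymptotically sharp—is where the full uniform asymptotic formula, conjectural in general, would enter; this is the main obstacle to upgrading the inequality to an equality.
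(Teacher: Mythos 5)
Your proposal is correct and follows essentially the same route as the paper: both bound $\Vol\cQ_g$ from below by the contribution of the single-vertex, single-loop stable graph $\Graph_1(g)$, reduce that contribution to the weighted sum of $2$-correlators $\sum_{d_1+d_2}\langle\tau_{d_1}\tau_{d_2}\rangle/(d_1!\,d_2!)$, evaluate the sum via the odd-index binomial identity (your coefficient-of-$x^{6h}$-in-$\sinh^2 x$ computation is the same identity $\sum_k\binom{6g}{2k+1}=2^{6g-1}$ used in the paper), and conclude with Stirling's estimate for $\binom{4g}{g}$ together with $\tfrac{16}{3}\cdot\tfrac{256}{27}=(8/3)^4$. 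The uniform control of $2$-correlators that you import as a ``Mirzakhani--Zograf-type bound'' is precisely what the paper proves itself: Proposition~\ref{pr:main:bounds} gives the two-sided uniform bound $1-\tfrac{2}{6g-1}\le a_{g,k}\le 1$ for all $k$, derived from Zograf's explicit recursion for differences of normalized $2$-correlators, so the one-sided estimate you correctly identify as sufficient is indeed available and your argument closes.
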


\begin{Conjecture}
\label{conj:Vol:Qg}
The Masur--Veech volume of the moduli space of holomorphic
quadratic differentials has the following large genus
asymptotics:
\begin{equation}
\label{eq:Vol:Qg}
\Vol\cQ_g
\overset{?}{=}
\frac{4}{\pi}
\cdot\left(\frac{8}{3}\right)^{4g-4}
\cdot\left(1+O\left(\frac{1}{g}\right)\right)
\quad\text{as }g\to+\infty\,.
\end{equation}
\end{Conjecture}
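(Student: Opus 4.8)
The plan is to prove the asymptotic equality conditionally on a uniform large-genus estimate for the $\psi$-class intersection numbers, and then to show that, granting such an estimate, the entire stable-graph sum of Theorem~\ref{th:volume} concentrates and evaluates to the stated right-hand side. The starting point is the exact identity $\Vol\cQ_g=\sum_{\Graph\in\cG_{g,0}}\cZ(P_\Graph)$ provided by~\eqref{eq:square:tiled:volume}. A preliminary structural observation makes the sum tractable: in every monomial of $\prod_{e}b_e\cdot\prod_{v}N_{g_v,n_v}(\boldsymbol{b}_v)$ each edge variable $b_e$ occupies exactly one slot of $\prod_e b_e$ and two slots of $\prod_v N_{g_v,n_v}$ (one at each endpoint, or twice at one vertex for a loop), and since $N_{g_v,n_v}$ carries only even powers, $b_e$ always appears to an \emph{odd} power. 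Consequently $\cZ$ turns each monomial into a rational multiple of $\pi^{6g-6}$, and the first step is to record, for every $\Graph$, a closed combinatorial expression for the rational part of $\cZ(P_\Graph)$, assembled from the coefficients $c_{\boldsymbol d}$ (hence the correlators $\langle\tau_{d_1}\cdots\tau_{d_n}\rangle$) of the vertex factors, the automorphism weight $1/|\Aut(\Graph)|$, and the double factorials and $\zeta(2k)$ values produced by~\eqref{eq:cZ}, all multiplied by the common prefactor $\tfrac{2^{6g-5}(4g-4)!}{(6g-7)!}$ of~\eqref{eq:P:Gamma}.

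The second step is to feed in the key analytic input, namely the uniform asymptotics
$$
\langle\tau_{d_1}\cdots\tau_{d_n}\rangle
=\frac{(6g+2n-5)!!}{24^{g}\,g!\,\prod_{j=1}^{n}(2d_j+1)!!}
\bigl(1+o(1)\bigr)
$$
as $g\to+\infty$, valid uniformly in the partition $(d_1,\dots,d_n)$; this is precisely the conjectural uniform estimate for sums of intersection numbers referred to in the introduction. Substituting it into each vertex factor $N_{g_v,n_v}$ converts the rational part of $\cZ(P_\Graph)$ into an explicit, asymptotically evaluable weight depending only on the combinatorial type of $\Graph$ — the genera $g_v$, the valences $n_v$, the edge set, and $|\Aut(\Graph)|$ — in which the $\boldsymbol{d}$-sums defining the $N_{g_v,n_v}$ collapse, after the $\zeta$-substitution, into ratios of Gamma functions and double factorials amenable to Stirling analysis.

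The third step is the genuinely new content relative to a one-sided bound: determining the dominant family and controlling everything outside it. Retaining the contribution of a single distinguished stable graph already reproduces the lower bound of Theorem~\ref{th:asymptotic:lower:bound}, namely $\sqrt{\tfrac{2}{3\pi g}}\,\bigl(\tfrac{8}{3}\bigr)^{4g-4}$; the content of the conjecture is that the full sum exceeds this by a factor of order $\sqrt{g}$ with the precise constant $\tfrac{4}{\pi}$. I expect this enhancement to arise from the one-parameter family of one-vertex graphs $\Graph_k$ consisting of a single genus-$(g-k)$ vertex carrying $k$ loops, $1\le k\le g$: after the asymptotic substitution their weights form a sum of double-factorial ratios whose summand peaks at a definite proportion of loops and has width of order $\sqrt{g}$, so that a Laplace/saddle evaluation should supply both the $\sqrt{g}$ and the constant $\tfrac{4}{\pi}\bigl(\tfrac{8}{3}\bigr)^{4g-4}$. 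One must then prove the complementary upper bound: that all stable graphs with two or more vertices, as well as the $\Graph_k$ with $k$ far from the saddle, together contribute only at relative order $O(1/g)$.

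The main obstacle is exactly where the statement remains conjectural, and it is twofold. First, the intersection-number asymptotics must hold \emph{uniformly} across all vertex types $N_{g_v,n_v}$ occurring in $\cG_{g,0}$, including vertices of small genus but large valence, since these factors are multiplied together over the graph and any non-uniform error would be amplified; this uniformity is essential because the number of stable graphs in $\cG_{g,0}$ grows super-exponentially in $g$. Second, even granting uniformity, passing from the concentration of the dominant family to the two-sided equality requires a clean majorization of the entire graph sum that simultaneously controls the $\zeta$-factors, the automorphism weights, and the combinatorial explosion of graph types, so that the tail of $\sum_{\Graph\in\cG_{g,0}}\cZ(P_\Graph)$ is provably negligible against the leading term — and not merely bounded below by it.
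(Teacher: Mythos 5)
You are attempting to prove Conjecture~\ref{conj:Vol:Qg}, and the paper itself never proves it --- not even conditionally. What the paper actually establishes is: (i) the unconditional lower bound of Theorem~\ref{th:asymptotic:lower:bound}, coming from the single one-loop graph $\Graph_1(g)$ via the uniform $2$-correlator bounds of Proposition~\ref{pr:main:bounds}; and (ii) Conditional Theorem~\ref{cond:th:sum:over:one:cylinder:graphs}, which shows, \emph{assuming} Conjectures~\ref{conj:introduction:sum:of:correlators} and~\ref{conj:uniform:bound:for:error:term}, that the one-vertex graphs $\Petal_k(g)$ with $1\le k\le C\log(g)$ already sum to $\frac{4}{\pi}\left(\frac{8}{3}\right)^{4g-4}$. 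The step you yourself call the genuinely new content --- a majorization showing that all stable graphs with two or more vertices, together with the off-peak $\Petal_k$, contribute only at relative order $O(1/g)$ --- is exactly the open content of the conjecture, and your proposal leaves it as an expectation rather than an argument. It also cannot be closed with the tools you invoke: $|\cG_{g,0}|$ grows super-exponentially in $g$, while the uniform correlator asymptotics you feed in (your displayed formula is the double-factorial form of~\eqref{eq:ansatz}) are only conjectured uniformly for valences $n\le C\log(g)$ with $C<2$; the paper notes right after Conjecture~\ref{conj:normalized:correlators:log:g:strong} that this normalization already fails for the ratio of~\eqref{eq:max:bracket:tau} and~\eqref{eq:other:bracket:tau} once $n$ grows faster than $\log g$. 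So your plan has no mechanism for controlling vertices of large valence, and the proposal restates the conjecture rather than proving it.

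Separately, your quantitative picture of the dominant family is wrong, and carrying out your step three literally would not produce $\frac{4}{\pi}$. By Corollary~\ref{cor:Vol:g:k}, $\Vol(\Petal_k(g))\sim\sqrt{\tfrac{2}{\pi}}\left(\tfrac{8}{3}\right)^{4g-4}(3g-3)^{-1/2}\,\lambda^{k-1}/(k-1)!$ with $\lambda=\tfrac{1}{2}\big(\log(6g-6)+\gamma\big)$: the summand in $k$ is a Poisson profile peaked at $k\sim\tfrac{1}{2}\log g$ with width of order $\sqrt{\log g}$ (Appendix~\ref{ss:Approximation:by:Poisson}), not at ``a definite proportion of loops'' with width of order $\sqrt{g}$. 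The enhancement of the full sum over the $k=1$ term by a factor of order $\sqrt{g}$ arises from $\sum_k \lambda^{k-1}/(k-1)!=e^{\lambda}\sim e^{\gamma/2}\sqrt{6g-6}$, i.e., from summing an exponential series in $\log g$, not from a Laplace/saddle evaluation at $k\asymp g$ --- indeed a saddle at $k$ proportional to $g$ would require correlator asymptotics at vertex valence of order $g$, precisely where they break down. This is why the paper truncates at $k\le C\log g$ with $\tfrac{1}{2}<C<2$ and routes the computation through the multiple harmonic zeta sums $Z_k(3g-3)$ with weights $1/(2^{k-1}k!)$ and the constants $B_j$ of Appendix~\ref{s:Two:harmonic:sums}; note also that the zeta-factors produced by $\cZ$ are not negligible for the constant --- they are responsible for $B_j$ rather than $A_j$, i.e., for $\log(2m)$ rather than $\log(m)$, and hence for the precise value $\frac{4}{\pi}$ rather than $\frac{2\sqrt{2}}{\pi}$ --- whereas your sketch discards them.
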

Certain finer conjectures are discussed in
Section~\ref{ss:large:genera} below.

\begin{Remark}
\label{rm:Vol:power:of:pi}
By construction, the polynomial $P_\Gamma(\boldsymbol{b})$
associated to a stable graph $\Graph\in\cG_{g,n}$
by expression~\eqref{eq:P:Gamma} is a
homogeneous polynomial of degree $6g-6+2n-|E(\Gamma)|$.
Moreover, each variable $b_e$ in each monomial appears with
an odd power. In particular,
Formula~\eqref{eq:square:tiled:volume} implies that the
Masur--Veech volume $\Vol\cQ_{g,n}$ is a rational multiple
of $\pi^{6g-6+2n}$ generalizing analogous result proved by
A.~Eskin and A.~Okounkov for the Masur--Veech volumes of
the strata of Abelian
differentials~\cite{Eskin:Okounkov:Inventiones}. Actually,
the contribution $\Vol(\Gamma)$ of each stable graph
$\Gamma\in\cG_{g,n}$ to $\Vol\cQ_{g,n}$ is already a
rational multiple of $\pi^{6g-6+2n}$. Moreover, using the
refined version of $N_{g,n}(b_1, \ldots, b_n)$ due to
Norbury~\cite{Norbury} expressing the counting functions of
ribbon graphs as quasi-polynomials in $b_i$, one can even
show that the generating series of square-tiled surfaces
corresponding a given stable graph is a quasi-modular form.
This result develops the results of
Eskin-Okounkov~\cite{Eskin:Okounkov:pillowcase} that says
that in each stratum, the generating series for the count
of pillowcase covers (in the sense of A.~Eskin and A.~Okounkov)
is a quasimodular form and the analogous result of
Ph.~Engel~\cite{Engel1}, \cite{Engel2} for the count of square-tiled
surfaces.
\end{Remark}

\begin{Remark}
\label{rm:ref:to:Engel}
As it was already mentioned in Remark~\ref{rm:N:g:n:V:g:n},
up to a normalization constant given by the explicit
formula~\eqref{eq:Vgn:through:Ngn} and depending only on
$g$ and $n$, the polynomial $N_{g,n}(b_1,\dots,b_n)$
coincides with the top homogeneous part of Mirzakhani's
volume polynomial $V_{g,n}(b_1,\dots,b_n)$ providing the
Weil--Petersson volume of the moduli space of bordered
Riemann
surfaces~\cite{Mirzakhani:simple:geodesics:and:volumes}.
Note that the classical Weil--Petersson volume of
$\cM_{g,n}$ corresponds to the constant term of
$V_{g,n}(b_1,\dots,b_n)$ when the lengths of all boundary
components are equal to zero. To compute the Masur--Veech
volume we use the top homogeneous parts of polynomials
$V_{g,n}(b_1,\dots,b_n)$. In this sense we use Mirzakhani's
volume polynomials in the opposite regime when the lengths
$b_i$ of all boundary components tend to infinity.
\end{Remark}

\begin{Remark}
\label{rm:to:complete}
Formulae~\eqref{eq:P:Gamma}--\eqref{eq:volume:contribution:of:stable:graph}
admit a generalization allowing to express Masur--Veech
volumes of those strata $\cQ(d_1,\dots, d_n)$, for which
all zeroes have odd degrees $d_i$, in terms of intersection
numbers of $\psi$-classes with the combinatorial cycles in
$\overline{\cM}_{g,n}$ associated to the strata (denoted by
$W_{m_*,n}$ in \cite{Arbarello:Cornalba}, where $m_*$ is the sequence of
multiplicities of the zeroes). In this more general case
the formula requires additional correction subtracting the
contribution of those quadratic differentials which
degenerate to squares of globally defined Abelian
differentials. This generalization is a work in progress.
\end{Remark}

\begin{Remark}
Note that the contribution of square-tiled surfaces having
fixed number of cylinders to the Masur--Veech volume of
more general strata of quadratic differentials might have a
much more sophisticated arithmetic nature.
In~\cite{DGZZ:one:cylinder:Yoccoz:volume} we study in
detail the contribution of square-tiled surfaces having a
single maximal horizontal cylinder to the Masur--Veech
volume of any stratum of Abelian or quadratic
differentials.
\end{Remark}

%---------------------------------------------------------------------
\subsection{Siegel--Veech constants}
\label{ss:intro:Siegel:Veech:constants}
We now turn to a formula for the Siegel--Veech constants of
$\cQ_{g,n}$. We first recall the definition of
Siegel--Veech constants that involves the flat geometry of
quadratic differentials.

Let $(C,q)$ be a meromorphic quadratic differential in
$\cQ_{g,n}$. The differential $q$ naturally defines a
Riemannian metric $|q|$. The metric is flat; it has conical
singularities exactly at the zeros and poles of $q$. This
metric allows to define geodesics and we say that a
geodesic is \textit{regular} if it does not pass through
the singularities of $q$. Closed regular flat geodesics
appear in families composed of parallel closed geodesics of
the same length. Each such family fills a maximal flat
cylinder $\mathit{cyl}$ having a conical singularity
(possibly the same) at each of the two boundary components.
By the \textit{width} (or, sometimes, by a
\textit{perimeter}) $w$ of a cylinder, we call the length
of a closed geodesic in the corresponding family. The
\textit{height} $h$ of a maximal cylinder is defined as the
distance between the boundary components. In particular,
the flat area of the cylinder is the product $h\cdot w$.

For any given flat surface $S=(C,q)$ and any $L\in\R$, the
number of maximal cylinders in $S$ filled with regular closed
geodesics of  bounded  length  $w(cyl)\le L$ is finite.
Thus, for any $S=(C,q)$ and any $L>0$ the following quantity is well-defined:
\begin{equation}
\label{eq:N:area}
N_{\mathit{area}}(S,L):=
\frac{1}{\Area(S)}
\sum_{\substack{
\mathit{cyl}\subset S\\
w(\mathit{cyl})\le L}}
\Area(cyl)\,.
\end{equation}

We already discussed in Section~\ref{ss:MV:volume} that the
Masur--Veech volume element $\dVolMV$ in $\cQ_{g,n}$
induces the canonical volume element $\dVolMV_1$ on any
noncritical level hypersurface of any real-valued function
$f$. In particular, it induces the canonical volume element
$\dVolMV_1$ on the level hypersurface
$\cQ^{\Area=a}_{g,n}$. It follows from the independent
results of H.~Masur~\cite{Masur:82} and
W.~Veech~\cite{Veech:Gauss:measures}, that the induced
volume $\Vol_1\cQ^{\Area=a}_{g,n}$ is finite.
The following theorem is a special case
of the fundamental result of W.~Veech, \cite{Veech:Siegel}
developed by Y.~Vorobets in~\cite{Vorobets}.

\begin{NNTheorem}[W.~Veech; Ya.~Vorobets]
Let $(g,n)$ be a pair of nonnegative integers such that
$2g+n>3$. There exists a strictly positive constant
$\carea(\cQ_{g,n})$ such that for any strictly
positive numbers
$a$ and $L$ the following holds:
\begin{equation}
\label{eq:SV:constant:definition}
\frac{a}{\pi L^2}\int_{\cQ^{\Area=a}_{g,n}}
N_{\mathit{area}}(S,L)\,d\Vol_1(S)
=
\Vol_1\cQ^{\Area=a}_{g,n}
\ \cdot\ \carea(\cQ_{g,n})\,.
\end{equation}
\end{NNTheorem}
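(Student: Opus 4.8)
The plan is to realize the left-hand side as the value of an $\SL$-invariant linear functional on test functions on $\R^2$, and then to invoke the classification of such functionals. First I would remove the dependence on $a$ by the scaling $q\mapsto\lambda q$, which multiplies $\Area(S)$ by $\lambda$, rescales every holonomy vector by $\sqrt\lambda$, and preserves the ratio $\Area(\cyl)/\Area(S)$. Choosing $\lambda$ so that $a$ becomes $\tfrac12$ shows that it suffices to prove the identity on the single level hypersurface $\cQ^{\Area=\frac{1}{2}}_{g,n}$ for all $L>0$, with the factor $a/(\pi L^2)$ absorbing the scaling.

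Next, for a compactly supported test function $\varphi$ on $\R^2$ I would introduce the area-weighted Siegel--Veech transform
\[
\widehat\varphi(S)=\frac{1}{\Area(S)}\sum_{\cyl\subset S}\Area(\cyl)\,\varphi\big(\mathrm{hol}(\cyl)\big),
\]
where $\mathrm{hol}(\cyl)\in\R^2$ is the holonomy vector of the core geodesic, so that $|\mathrm{hol}(\cyl)|$ is the width $w(\cyl)$. Taking $\varphi_L$ to be the indicator of the disc of radius $L$ gives exactly $N_{\mathit{area}}(S,L)=\widehat\varphi_L(S)$. The object of interest is then the functional $\mathcal L(\varphi)=\int_{\cQ^{\Area=\frac{1}{2}}_{g,n}}\widehat\varphi(S)\,\dVolMV_1(S)$.

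The core of the argument is invariance. Because the Masur--Veech measure $\dVolMV$ is $\SL$-invariant and holonomy vectors transform equivariantly, $\mathrm{hol}\big(\cyl(g\cdot S)\big)=g\cdot\mathrm{hol}(\cyl(S))$, one checks that $\mathcal L(\varphi\circ g^{-1})=\mathcal L(\varphi)$ for every $g\in\SL$. Thus $\varphi\mapsto\mathcal L(\varphi)$ is a positive $\SL$-invariant functional; granting the integrability discussed below it defines a Radon measure on $\R^2$. Since the core curve of a cylinder has positive length, no cylinder contributes holonomy at the origin, so this measure lives on $\R^2\setminus\{0\}$. As $\SL$ acts transitively on $\R^2\setminus\{0\}$ with unimodular stabilizer, the only invariant Radon measure there is a multiple of Lebesgue measure; hence $\mathcal L(\varphi)=C\int_{\R^2}\varphi\,dx\,dy$ for a constant $C=C(g,n)$. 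Specializing to $\varphi_L$ yields $\mathcal L(\varphi_L)=C\pi L^2$, and dividing by $\Vol_1\cQ^{\Area=\frac{1}{2}}_{g,n}$ (together with the $a$-scaling normalization) produces the asserted identity with $\carea(\cQ_{g,n})$ equal to $C$ divided by the total volume.

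Finally, positivity of $\carea$ follows because $\widehat\varphi_L\ge 0$ is not identically zero: every flat surface carries cylinders, and square-tiled surfaces in particular have cylinders of bounded width in a set of directions of positive measure, so $\mathcal L(\varphi_L)>0$. The hard part is analytic rather than algebraic. One must show that $\widehat\varphi_L$ is genuinely integrable over $\cQ^{\Area=\frac{1}{2}}_{g,n}$, i.e. that $\mathcal L(\varphi_L)<\infty$; this is precisely what finiteness of the Masur--Veech volume alone does not furnish, and it requires the quantitative control on the number of short cylinders near the boundary of moduli space established by Veech and refined by Vorobets. The extraction of a \emph{single} constant, rather than one per ergodic component, then rests on the ergodicity of the $\SL$-action on the connected space $\cQ^{\Area=\frac{1}{2}}_{g,n}$.
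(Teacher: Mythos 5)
The paper itself contains no proof of this statement: it is imported as a fundamental result of Veech, developed by Vorobets, and is used as a black box (the citations \cite{Veech:Siegel} and \cite{Vorobets} stand in for the argument). So the only meaningful comparison is with the classical argument those references contain, and your proposal is essentially a correct reconstruction of it: the area-weighted Siegel--Veech transform, equivariance of holonomy vectors combined with $\SL$-invariance of the Masur--Veech measure, the classification of $\SL$-invariant Radon measures on $\R^2\setminus\{0\}$ (unique up to scale, since the stabilizer of a nonzero vector is unipotent, hence unimodular, and $\SL$ acts transitively), and identification of the constant by testing against the indicator of a disc of radius $L$. You also correctly isolate where the real analytic content lies: integrability of $N_{\mathit{area}}(S,L)$ over the level hypersurface, i.e.\ quantitative control of surfaces with short cylinders near the boundary of moduli space, which is exactly the input supplied by Veech and Vorobets and which your sketch legitimately treats as cited. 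The scaling reduction to $\Area=\tfrac12$ is also sound; the prefactor $a/(\pi L^2)$ is precisely what makes the identity scale-invariant.

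Two small corrections. First, ergodicity of the $\SL$-action is not needed for this averaged statement: the functional $\mathcal{L}$ is defined by integration against the single global measure $\dVolMV_1$, and the invariant-measure classification already forces $\mathcal{L}(\varphi)=C\int_{\R^2}\varphi$ with one constant $C$; no ergodic decomposition enters. Ergodicity (and considerably more) is what underlies the pointwise almost-everywhere asymptotics of Eskin--Masur, equation~\eqref{eq:SV:asymptotics}, not the Siegel--Veech identity itself. Second, for strict positivity of $\carea$ you only need a positive-measure set of surfaces carrying a cylinder of width at most $L$; this follows from the fact that a cylinder persists, with continuously varying holonomy, under small deformations in period coordinates, so the stronger claim that every flat surface carries cylinders (true, by Masur's density theorem for periodic directions) is more than is required.
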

This formula is called the \textit{Siegel--Veech formula}, and the
corresponding  constant $\carea(\cQ_{g,n})$ is called the
\textit{Siegel--Veech constant}.

Eskin and Masur~\cite{Eskin:Masur} proved
that for almost all $S=(C,q)$ in $\cQ_{g,n}$
(with respect to the Masur--Veech measure)
\begin{equation}
\label{eq:SV:asymptotics}
\lim_{L\to+\infty}
\Area(S) \cdot \frac{N_{\mathit{area}}(S,L)}{\pi L^2}=
\carea(\cQ_{g,n})\,.
\end{equation}

\begin{Remark}
Beyond its geometrical relevance, let us mention that the
area Siegel--Veech constant is the most important
ingredient in the Eskin-Kontsevich-Zorich formula for the
sum of the Lyapunov exponents of the Hodge bundle along the
Teichm\"uller geodesic flow~\cite{Eskin:Kontsevich:Zorich}.
\end{Remark}

An edge of a connected graph is called a \textit{bridge} if
the operation of removing this edge breaks the graph into
two connected components.
We define the following function $\separatingQ: E(\Graph)
\to \{\tfrac{1}{2}, 1\}$ on the set of edges of
any connected graph $\Gamma$:
$$
\separatingQ(e)=
\begin{cases}
\tfrac{1}{2}&\text{if the edge $e$ is a bridge\,,}\\
1  &\text{otherwise}
\end{cases}\,.
$$

We define the following operator $\partial_{\Graph}$ on
polynomials $P$ in variables $b_e$ associated to the edges
of stable graphs $\Graph\in\cG_{g,n}$. For every
$e\in E(\Gamma)$ let
\begin{equation}
\label{eq:operator:D:e}
\partial^e_{\Graph} P
:=\separatingQ(e)\, b_e\, \left.\frac{\partial P}{\partial b_e}\right|_{b_e=0}\,,
\end{equation}
and let
\begin{equation}
\label{eq:operator:D}
\partial_{\Graph} P
:=
\sum_{e \in E(\Graph)}
\partial^e_{\Graph} P\,.
\end{equation}

\begin{Theorem}
\label{th:carea}
Let $g,n$ be nonnegative integers satisfying $2g+n\ge 4$.
The Siegel--Veech constant $\carea(\cQ_{g,n})$
satisfies the following relation:
\begin{equation}
\label{eq:carea}
\Vol(\cQ_{g,n})
 \cdot
\carea(\cQ_{g,n})
=
\frac{3}{\pi^2}
\cdot
\sum_{\Graph \in \cG_{g,n}}
\cZ\left(\partial_{\Graph}
P_\Gamma\right)\,.
\end{equation}
\end{Theorem}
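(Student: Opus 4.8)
The plan is to parallel the proof of Theorem~\ref{th:volume}, tracking the extra bookkeeping needed for the Siegel--Veech constant. Recall that $N_{\mathit{area}}(S,L)$ in~\eqref{eq:N:area} sums the areas of maximal horizontal cylinders of width at most $L$. For a square-tiled surface the horizontal cylinders are exactly the ones whose waist curves form the associated multicurve $\gamma=\sum H_i\gamma_i$, and the width of the cylinder around $\gamma_i$ equals the number of squares in one horizontal band times the (fixed) square side. Thus the Siegel--Veech formula, applied to the equidistributed counting measure on square-tiled surfaces, becomes a \emph{weighted} lattice-point count: the same count that produces $\Vol\cQ_{g,n}$, but with each configuration weighted by the total relative area of its cylinders. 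First I would make this precise using~\eqref{eq:SV:asymptotics} and the equidistribution results of~\cite{DGZZ:meanders:and:equidistribution}, rewriting $\Vol(\cQ_{g,n})\cdot\carea(\cQ_{g,n})$ as $\sum_{\Graph}$ of a cylinder-weighted analogue of $\Vol(\Gamma)=\cZ(P_\Gamma)$.

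Next I would identify the correct weight edge by edge. In the contribution $\Vol\big(\gamma(\Gamma,\boldsymbol H)\big)=\cY(\boldsymbol H)(P_\Gamma)$ of~\eqref{eq:contribution:of:gamma:to:volume}, the factor $b_e$ in $P_\Gamma$ records the waist length $w_e$ of the cylinder around the edge $e$, and the operator $\cY(\boldsymbol H)$ of~\eqref{eq:cV} integrates over heights $H_e$. The relative area contributed by the cylinder of edge $e$ is proportional to $w_e\cdot h_e$ divided by the total area; in period coordinates the marginal contribution of a single edge is extracted by replacing the factor $b_e$ by $b_e^2\,\partial/\partial b_e$ and then setting the remaining occurrence of $b_e$ to zero, because the area of that cylinder is $w_e h_e$ and the height integration turns one extra power of $w_e=b_e$ into the derivative weight. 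This is exactly the content of the operator $\partial^e_{\Graph}$ in~\eqref{eq:operator:D:e}, and summing over all cylinders gives $\partial_{\Graph}$ in~\eqref{eq:operator:D}. I would therefore show that the cylinder-weighted analogue of $\cZ(P_\Gamma)$ is precisely $\cZ(\partial_{\Graph}P_\Gamma)$ up to the universal prefactor.

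The remaining ingredient is the edge-weight function $\separatingQ(e)\in\{\tfrac12,1\}$ and the overall constant $\tfrac{3}{\pi^2}$. The factor $\tfrac12$ on bridges arises because a separating curve $\gamma_i$ (an edge that is a bridge of $\Graph$) bounds distinct components on its two sides, so the associated cylinder is counted with a symmetry factor differing from that of a non-separating curve; concretely it reflects the difference between a half-edge appearing once versus the doubling that occurs for non-bridge edges in the genus/connectivity bookkeeping of $\boldsymbol{b}_v$. I would verify this by comparing, for each edge type, the order of the relevant automorphism and the multiplicity with which the cylinder's area enters the Siegel--Veech count, pinning down $\separatingQ$. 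The constant $\tfrac{3}{\pi^2}$ comes from the normalization in the Siegel--Veech formula~\eqref{eq:SV:constant:definition}: tracking the factor $a/(\pi L^2)$ together with the relation~\eqref{eq:def:Vol:Q:g:n} between $\Vol$ and $\Vol_1$, and using $\zeta(2)=\pi^2/6$ to convert the height summation, produces exactly $3/\pi^2$.

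The main obstacle, I expect, is Step~2: rigorously justifying that extracting the area of a single cylinder corresponds analytically to the operator $b_e^2\,\partial_{b_e}|_{b_e=0}$ acting on $P_\Gamma$. This requires a careful analysis of how the width and height variables sit in the period coordinates and how the Siegel--Veech weight interacts with the lattice-point asymptotics that define $\cY(\boldsymbol H)$; in particular one must confirm that setting $b_e=0$ after differentiation correctly removes the overcounting of the distinguished cylinder while leaving the other cylinders' contributions, given by $\prod_{e'}b_{e'}\prod_v N_{g_v,n_v}$, intact. Establishing that this localization is compatible with the limit $N\to\infty$, and that the resulting sum still converges and telescopes into $\cZ(\partial_{\Graph}P_\Gamma)$, is where the real work lies; the determination of $\separatingQ$ and of $\tfrac{3}{\pi^2}$ are then bookkeeping that can be checked against the explicit low-genus tables.
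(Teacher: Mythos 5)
There is a genuine gap, and it sits exactly where your argument starts: the identification of the weight. For a square-tiled surface in $\cQ_{g,n}$ the maximal horizontal cylinders fill the whole surface up to the singular layers, so ``the total relative area of its cylinders'' is identically equal to $1$; weighting the lattice-point count by it would simply reproduce $\Vol\cQ_{g,n}$, not $\Vol\cQ_{g,n}\cdot\carea(\cQ_{g,n})$. The correct weight attached to a square-tiled surface is not its cylinders' area but the sum of their moduli, $M(S)=\sum_i h_i/w_i$. The paper's proof rests on the theorem of Eskin--Kontsevich--Zorich (Theorem~4 in~\cite{Eskin:Kontsevich:Zorich}) and its corollary due to Chen--Eskin (Appendix~A of~\cite{Chen}), which state that
\begin{equation*}
\carea=\frac{3}{\pi^2}\cdot\lim_{N\to\infty}\frac{\cD\cST(2N)}{\cST(2N)},
\qquad\text{where}\quad
\cD\cST(N)=\sum_{S}M(S)\,.
\end{equation*}
This is a substantive input (it encapsulates the Siegel--Veech averaging over the $\SLZ$-orbits), and the factor $3/\pi^2$ is imported from it wholesale; it is not recovered by ``tracking $a/(\pi L^2)$'' and $\zeta(2)$ in~\eqref{eq:SV:constant:definition}, as you suggest.

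Once the weight $H_i/b_i$ (not $b_iH_i$) is in place, the derivation of $\partial_\Graph$ is quite different from your ``$b_e^2\,\partial/\partial b_e$'' heuristic. Inserting the factor $H_i/b_i$ into the sum~\eqref{eq:last} replaces the prefactor $b_i$ by $H_i$; since $b_iH_i\le 2N$, every monomial of $\Pi_\Graph=\prod_v N_{g_v,n_v}$ that still contains $b_i$ contributes only $o(N^{\dprinc})$, so at leading order $\Pi_\Graph$ is evaluated at $b_i=0$ --- this is the analytic origin of the restriction $|_{b_e=0}$ in~\eqref{eq:operator:D:e}. One then swaps the names of $b_i$ and $H_i$ to recognize a sum of the same shape as the volume count. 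Finally, $\separatingQ(e)=\tfrac12$ for bridges is not a symmetry factor as you claim: summing the parity conditions~\eqref{eq:sublattice:L} over the vertices on one side of a bridge forces the parameter $b_i$ to be even, so the sum over that parameter produces $\tfrac14\zeta(2)$ instead of $\zeta(2)$, while the lattice in the remaining integration variables has index $2^{|V(\Graph)|-2}$ rather than $2^{|V(\Graph)|-1}$; the net effect is $2\cdot\tfrac14=\tfrac12$. For a non-bridge the parameter takes both parities and the index is unchanged, giving $1$. Without the Chen--Eskin input, the correct weight, and this parity analysis, your Steps~2--3 cannot be completed as stated.
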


As an illustration of the above Theorem we compute
$\carea(\cQ_{2,0})$ and $\carea(\cQ_{1,2})$ in
appendices~\ref{a:2:0} and~\ref{a:1:2} respectively.

%------------------------------------------------------------
\subsection{Masur--Veech Volumes and Siegel--Veech constants}
\label{ss:intro:Siegel:Veech:Masur:Veech}
The Siegel--Veech constant can be expressed in terms of the
Masur--Veech volumes of certain boundary strata. The
corresponding formula for the strata of Abelian
differentials was obtained in~\cite{Eskin:Masur:Zorich};
for the strata of quadratic differentials it was obtained
in~\cite{Goujard:carea}. Before presenting
a reformulation of Corollary~1 in~\cite{Goujard:carea}
concerning the principal stratum $\cQ(1^{4g-4+n}, -1^n)$ in
$\cQ_{g,n}$ we introduce the following conventions.

We define by convention
\begin{eqnarray*}
 \Vol\cQ_{0,3} &=& 4\langle\tau_0^3\rangle=4\,,
 \\
 \Vol\cQ_{1,1} &=&\frac{2\pi^2}{3}\langle\tau_0^3\rangle=\frac{2\pi^2}{3}\,.
\end{eqnarray*}
We also define by convention the following stable graphs:

\[\cG(0,3):=\left\lbrace\begin{array}{c}
\includegraphics{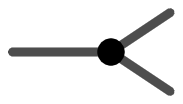}
\begin{picture}(0,0)(0,0)
\put(25,-15){$0$}
\end{picture}
\hspace{60pt}
\vspace{10pt}
\end{array}\right\rbrace,\]
and
\[\cG(1,1):=\left\lbrace\begin{array}{c}
\includegraphics{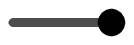}
\begin{picture}(0,0)(0,0)
\put(30,-15){$1$}
\end{picture}
\vspace{10pt}
\hspace{40pt}
\end{array},
\begin{array}{c}
\includegraphics{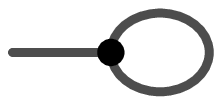}
\begin{picture}(0,0)(0,0)
\put(25,-15){$0$}
\end{picture}
\vspace{10pt}
\hspace{60pt}
\end{array}\right\rbrace.\]
(We did not label the legs since each of the above graphs
admits unique labeling of the legs up to a symmetry of the
graph.) Under such conventions, the values of
$\Vol\cQ_{0,3}$ and of $\cQ_{1,1}$ correspond to
Formula~\eqref{eq:square:tiled:volume}.

Another convention concerns the value of the following ratio
which would be used in the formula below. We define
$$
\left.\frac{(2n_i-7)!}{(n_i-4)!}\right|_{n_i=3}:=
\lim_{n\to 3}\frac{\Gamma(2n-6)}{\Gamma(n-3)}=\frac{1}{2}\,.
$$

\begin{NNTheorem}[\cite{Goujard:carea}]
Let $g$ be a strictly positive integer, and $n$ nonnegative integer.
When $g=1$ we assume that $n\ge 2$. Under the above conventions
the following formula is valid:
\begin{align}
\label{eq:carea:Elise}
\carea(\cQ_{g,n}) \cdot \Vol(\cQ_{g,n})=
\frac{1}{8}
 \sum_{\substack{g_1+g_2=g \\ n_1+n_2=n+2 \\
 g_i\geq 0, n_i\geq 1, \dprinc_i\geq 0}}
 \frac{\zeroes!}{\zeroes_1!\zeroes_2!}\frac{n!}{(n_1-1)!(n_2-1)!}\cdot\\
\notag
\cdot\frac{(\dprinc_1-1)!(\dprinc_2-1)!}{(\dprinc-1)!}
\Vol(\cQ_{g_1, n_1})\times\Vol(\cQ_{g_2,n_2})
 +\frac{\zeroes!}{(\zeroes-2)!}\frac{(\dprinc-3)!}{(\dprinc-1)!}
 \Vol(\cQ_{g-1,n+2})\,.
\end{align}
Here $d=\dim_{\C}\cQ_{g,n}=6g-6+2n$, $d_i=6g_i-6+2n_i$,
$\ell=4g-4+n$, $\zeroes_i=4g_i-4+n_i$\,.

For $g=0$ and any integer $n$ satisfying $n\ge 4$
the following formula is valid:
\begin{multline}
\label{eq:carea:g0:Elise}
\carea(\cQ_{0,n}) \cdot \Vol(\cQ_{0,n})=
\frac{1}{8}
 \sum_{\substack{ n_1+n_2=n+2 \\  n_i\geq 3}}
 \frac{(n-4)!}{(n_1-4)!(n_2-4)!}
\cdot
\\
\cdot
 \frac{n!}{(n_1-1)!(n_2-1)!}
\cdot\frac{(2n_1-7)!(2n_2-7)!}{(2n-7)!}
\Vol(\cQ_{0, n_1})\times\Vol(\cQ_{0,n_2})
\end{multline}
\end{NNTheorem}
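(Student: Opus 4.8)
This Theorem is a specialization of Corollary~1 of~\cite{Goujard:carea} to the principal stratum $\cQ(1^{4g-4+n},-1^n)\subset\cQ_{g,n}$, so the plan is to start from the general Siegel--Veech formula of~\cite{Goujard:carea} and to carry out the combinatorial simplification in this special case. That formula, following the Eskin--Masur--Zorich philosophy, computes $\carea\cdot\Vol$ as a sum over the admissible configurations of a single maximal cylinder whose core curve is pinched; each configuration contributes the product of the Masur--Veech volumes of the boundary strata obtained by collapsing the cylinder, weighted by an explicit local combinatorial constant and by a normalization factor accounting for the codimension-one degeneration. First I would enumerate these configurations for the principal stratum. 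Since all zeroes and all poles are simple, the pinched core is an embedded simple closed curve, and up to the mapping class group there are exactly two topological types: the \emph{separating} one, which splits the surface into pieces of type $(g_1,n_1)$ and $(g_2,n_2)$ with $g_1+g_2=g$ and $n_1+n_2=n+2$, and the \emph{non-separating} one, which lowers the genus by one and yields a single surface of type $(g-1,n+2)$. These produce respectively the double sum and the final term of~\eqref{eq:carea:Elise}; in genus $0$ every simple closed curve separates, so the non-separating term is absent and one obtains~\eqref{eq:carea:g0:Elise}.

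The second step is to identify the boundary strata. Collapsing the cylinder converts each of its two boundary circles into a newborn marked point; in the principal stratum the inherited zeroes and poles stay simple, so the two new points account exactly for the shift $n_1+n_2=n+2$ (respectively for the two extra points of $\cQ_{g-1,n+2}$). Consequently every boundary stratum is again a \emph{principal} stratum, namely that of $\cQ_{g_1,n_1}$ and $\cQ_{g_2,n_2}$ in the separating case and that of $\cQ_{g-1,n+2}$ in the non-separating case. This is the key simplification that lets the general formula collapse into one written purely in terms of the quantities $\Vol\cQ_{g',n'}$.

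The third and most delicate step is to match the combinatorial weights. I would factor the local constant of~\cite{Goujard:carea} into three independent labelings: the distribution of the $\zeroes=4g-4+n$ labeled simple zeroes among the two sides, giving $\zeroes!/(\zeroes_1!\,\zeroes_2!)$ with $\zeroes_1+\zeroes_2=\zeroes-2$; the distribution of the $n$ labeled poles, giving $n!/((n_1-1)!(n_2-1)!)$ since one marked point on each side is the newborn node; and the normalization from the codimension-one boundary passage, which in the ``radius $\tfrac12$'' convention of~\eqref{eq:def:Vol:Q:g:n} produces the dimensional ratio $(\dprinc_1-1)!(\dprinc_2-1)!/(\dprinc-1)!$ with $\dprinc_i=6g_i-6+2n_i$. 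In the non-separating case both new points lie on the single remaining component, so the pole-distribution factor becomes trivial and the zero-distribution factor degenerates to the falling factorial $\zeroes!/(\zeroes-2)!$, while the dimensional ratio becomes $(\dprinc-3)!/(\dprinc-1)!$. A useful consistency check is the genus-$0$ reduction: there $\zeroes_i=n_i-4$ and $\dprinc_i-1=2n_i-7$, so the factors above specialize precisely to those in~\eqref{eq:carea:g0:Elise}, confirming the interpretation.

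The main obstacle will be this last matching. Goujard's constant is assembled from the local ribbon-graph data at the newborn singularities together with the order of the automorphism group of the configuration, and reconciling it with the clean product of factorials above requires careful bookkeeping of the volume normalization~\eqref{eq:def:Vol:Q:g:n} and of the symmetry factors that distinguish the two cases; in particular the overall prefactor $\tfrac18$ must absorb both the global constant of~\cite{Goujard:carea} and the factor $\tfrac12$ coming from the fact that the separating sum is taken over \emph{ordered} pairs $((g_1,n_1),(g_2,n_2))$. Finally I would verify the degenerate components $(g_i,n_i)\in\{(0,3),(1,1)\}$, for which the collapsed piece is a point rather than a positive-dimensional stratum: one checks directly that the stipulated values $\Vol\cQ_{0,3}=4$ and $\Vol\cQ_{1,1}=\tfrac{2\pi^2}{3}$, together with the limiting convention $\left.(2n_i-7)!/(n_i-4)!\right|_{n_i=3}=\tfrac12$, are exactly those making the factorial weights extend continuously across the boundary of the range of summation, so that the specialized formula holds there as well.
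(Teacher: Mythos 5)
You should first be aware that the paper never proves this statement where it is stated: it is quoted, with attribution, as a reformulation of Corollary~1 of \cite{Goujard:carea}, and the conventions preceding it (the values $\Vol\cQ_{0,3}=4$ and $\Vol\cQ_{1,1}=\tfrac{2\pi^2}{3}$, the two degenerate stable graphs, and the limit $\left.\tfrac{(2n_i-7)!}{(n_i-4)!}\right|_{n_i=3}=\tfrac12$) are introduced precisely so that the reformulation can be written uniformly. The closest the paper comes to an internal proof is indirect: Theorem~\ref{th:carea} (proved by the lattice-point count) combined with Theorem~\ref{th:same:SV} (proved by the edge-cutting surgery $\Cut_e\Graph$ on stable graphs, where bridges produce the separating terms and non-bridge edges produce the $\Vol\cQ_{g-1,n+2}$ term) yields exactly \eqref{eq:carea:Elise} and \eqref{eq:carea:g0:Elise}. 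Your route is genuinely different: you derive the formula directly from the flat-geometric principal-boundary analysis of \cite{Eskin:Masur:Zorich} and \cite{Goujard:carea}, enumerating cylinder degenerations, identifying the boundary strata as principal strata with two newborn simple poles, and factoring the weights into zero-distribution, pole-distribution and dimensional factors. That plan is legitimate (it is essentially how the result was originally obtained), and your structural points --- separating versus non-separating core curve, $\zeroes_1+\zeroes_2=\zeroes-2$, $n_1+n_2=n+2$, absence of the non-separating term in genus $0$, the $\tfrac18$ absorbing the ordered-pair factor, and the genus-$0$ specialization check --- are all correct. What each approach buys: yours explains the geometric meaning of every factor but leans on the full configuration machinery of \cite{Goujard:carea}; the paper's internal route avoids configurations entirely, trading them for combinatorics of stable graphs.

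The genuine weak point is your last paragraph, on the degenerate components; they cannot be dispatched by ``continuity of the factorial weights''. For $(g_i,n_i)=(0,3)$ the putative piece would carry $\zeroes_i=n_i-4=-1$ simple zeroes, so the weight in \eqref{eq:carea:Elise} is literally of the form $0\cdot\infty$ (namely $1/\zeroes_i!=1/\Gamma(0)=0$ against $(\dprinc_i-1)!=\Gamma(0)=\infty$), and the Gamma-limit convention is exactly what assigns it the finite value $\tfrac12$; geometrically these terms encode the configurations of \cite{Goujard:carea} in which one end of the cylinder is capped off by a disk containing two of the simple poles (the ``pocket''), and the collapsed cap is a sphere carrying two simple poles and one double pole --- not a surface in any principal stratum, and in particular not ``a point''. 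For $(g_i,n_i)=(1,1)$ the situation is worse for your argument: the relevant stratum $\cQ(1,-1)$ is empty (Masur--Smillie), so $\Vol\cQ_{1,1}=\tfrac{2\pi^2}{3}$ is a purely fictitious value --- the paper pins it down by compatibility with Formula~\eqref{eq:square:tiled:volume} applied to the graphs in $\cG(1,1)$ --- and it cannot be ``checked directly'' as the volume of anything. A complete proof along your lines therefore has to match these degenerate terms configuration-by-configuration against the actual statement of Corollary~1 in \cite{Goujard:carea}; no limiting or continuity argument can substitute for that bookkeeping.
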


Note that the expressions appearing in the right-hand sides
of~\eqref{eq:carea}, \eqref{eq:carea:Elise}
and~\eqref{eq:carea:g0:Elise} can be seen as polynomials in
correlators. More precisely, in the definition of $N_{g_v,
n_v}(\boldsymbol{b}_v)$ one can keep the
correlators
$\langle\psi_1^{d_1} \ldots \psi_k^{d_k}\rangle
=\langle\tau_{d_1} \ldots \tau_{d_k}\rangle$
in~\eqref{eq:c:subscript:d}
without evaluation. We extend the operators $\cZ$
and $\partial_\Gamma$
to polynomials in the variables $b_e$ and
in ``unevaluated'' correlators
by linearity.
 % \[
 % \cZ (\langle \psi_1^{d_1} \ldots \psi_k^{d_k} \rangle\, p) =
 % \langle \psi_1^{d_1} \ldots \psi_k^{d_k} \rangle \, \cZ(p).
 % \]
For example, under such convention one gets
\[
\Vol(\cQ_{0,5})
=
\frac{\pi^2}{9}
\left( 5 \langle \tau_0^3 \tau_1\rangle\, \langle \tau_0^3 \rangle
+ 4 \langle \tau_0^3 \rangle^3 \right)
\quad \text{and} \quad
\frac{\pi^2}{3} \carea(\cQ_{0,5}) \Vol(\cQ_{0,5}) = \frac{5}{9} \langle \tau_0^3 \rangle^3.
\]
More values for volumes and Siegel--Veech constats are
presented in Tables~\ref{tab:Vol:as:polynomials}
and~\ref{tab:SV:as:polynomials}
in Appendix~\ref{a:tables}.

Viewed in this way, the right-hand sides
of~\eqref{eq:carea} and of~\eqref{eq:carea:Elise} in the
case of $g\ge 1$ (respectively, the right-hand sides
of~\eqref{eq:carea} and of~\eqref{eq:carea:g0:Elise}
in the case $g=0$)
provide identities between polynomials in intersection numbers.
We show that these identities are, actually, trivial.

\begin{Theorem}
\label{th:same:SV}
The right-hand sides of~\eqref{eq:carea} and
of~\eqref{eq:carea:Elise} for $g\ge 1$
(respectively, the right-hand sides of~\eqref{eq:carea} and
of~\eqref{eq:carea:g0:Elise} for $g=0$) considered
as polynomials in intersection numbers of $\psi$-classes
coincide.
\end{Theorem}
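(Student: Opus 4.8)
The plan is to prove the asserted identity \emph{combinatorially}, by matching the two sides monomial-by-monomial in the correlators, rather than by invoking the numerical equality of the two expressions (both of which compute $\carea(\cQ_{g,n})\cdot\Vol(\cQ_{g,n})$): numerical equality for each fixed $(g,n)$ would not suffice, since the correlators satisfy nontrivial relations and the claim is an identity of \emph{formal} polynomials. First I would turn the right-hand side of \eqref{eq:carea:Elise} (resp.\ \eqref{eq:carea:g0:Elise}) into a polynomial in correlators by substituting Theorem~\ref{th:volume}, writing $\Vol(\cQ_{g_i,n_i})=\sum_{\Gamma_i\in\cG_{g_i,n_i}}\cZ(P_{\Gamma_i})$ and $\Vol(\cQ_{g-1,n+2})=\sum_{\Gamma'\in\cG_{g-1,n+2}}\cZ(P_{\Gamma'})$. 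The right-hand side of \eqref{eq:carea} is already $\frac{3}{\pi^2}\sum_{\Graph}\sum_{e\in E(\Graph)}\cZ(\partial^e_{\Graph}P_\Graph)$. Both sides thus become sums indexed by stable graphs, and the goal is an index-wise match via an edge-cutting bijection.

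The central computation is the explicit form of $\partial^e_\Graph P_\Graph$. Since every edge variable occurs in $P_\Graph$ to odd powers while each $N_{g_v,n_v}$ is even in its arguments, extracting the part of $P_\Graph$ linear in $b_e$, as prescribed by \eqref{eq:operator:D:e}, amounts to setting $b_e=0$ inside the vertex polynomials while keeping the global factor $\prod_{e'}b_{e'}$:
\[
\partial^e_\Graph P_\Graph
=\separatingQ(e)\,C_\Graph\,\prod_{e'\in E(\Graph)}b_{e'}\cdot
\Bigl(\prod_{v\in V(\Graph)}N_{g_v,n_v}(\boldsymbol{b}_v)\Bigr)\Big|_{b_e=0},
\]
where $C_\Graph$ denotes the scalar prefactor in \eqref{eq:P:Gamma}. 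Setting $b_e=0$ turns each half-edge of $e$ into a leg (an argument equal to $0$), which is precisely the effect of cutting the curve $\gamma_e$ and creating new marked points. This yields the bijection: a pair $(\Graph,e)$ with $e$ a bridge gives, on removal of $e$, a pair $(\Gamma_1,\Gamma_2)\in\cG_{g_1,n_1}\times\cG_{g_2,n_2}$ with $g_1+g_2=g$ and $n_1+n_2=n+2$ (the separating case), while $e$ not a bridge gives a connected $\Gamma'\in\cG_{g-1,n+2}$ (the non-separating case); in genus $0$ every stable graph is a tree, so only bridges occur, matching the absence of a $\cQ_{g-1,n+2}$ term in \eqref{eq:carea:g0:Elise}. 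Because the edge variables of the two pieces and $b_e$ are mutually disjoint, $\cZ$ factorizes, and in the bridge case
\[
\cZ\bigl(\partial^e_\Graph P_\Graph\bigr)
=\separatingQ(e)\,\zeta(2)\,\frac{C_\Graph}{C_{\Gamma_1}C_{\Gamma_2}}\,
\cZ(P_{\Gamma_1})\,\cZ(P_{\Gamma_2}),
\]
the factor $\zeta(2)=\pi^2/6$ arising from the single surviving power of $b_e$; the non-bridge case is analogous with $C_{\Gamma'}$ in place of $C_{\Gamma_1}C_{\Gamma_2}$.

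Next I would match the scalar factors. Using $d_1+d_2=d-2$ and $|V(\Graph)|=|V(\Gamma_1)|+|V(\Gamma_2)|$, the powers of $2$ inside the prefactors cancel exactly, and since $\separatingQ(e)=\frac12$ for a bridge one gets $\frac{3}{\pi^2}\,\separatingQ(e)\,\zeta(2)=\frac14$. A direct computation gives
\[
\frac{C_\Graph}{C_{\Gamma_1}C_{\Gamma_2}}
=\frac{\ell!}{(d-1)!}\cdot\frac{(d_1-1)!\,(d_2-1)!}{\ell_1!\,\ell_2!}\cdot
\frac{|\Aut(\Gamma_1)|\,|\Aut(\Gamma_2)|}{|\Aut(\Graph)|},
\]
whose factorial part reproduces exactly the factorials in \eqref{eq:carea:Elise}. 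The non-bridge computation instead leaves a residual factor $4$ (because there $d'=d-2$ while $|V|$ is unchanged), which combines with $\frac{3}{\pi^2}\zeta(2)=\frac12$ and with the factorial ratio $\frac{\ell!}{(\ell-2)!}\frac{(d-3)!}{(d-1)!}$ to match the last term of \eqref{eq:carea:Elise}. After cancelling these factorials, the whole identity reduces to two purely graph-theoretic statements: for fixed isomorphism classes $(\Gamma_1,\Gamma_2)$,
\[
\sum_{(\Graph,e)\to(\Gamma_1,\Gamma_2)}
\frac{|\Aut(\Gamma_1)|\,|\Aut(\Gamma_2)|}{|\Aut(\Graph)|}
=\frac12\cdot\frac{n!}{(n_1-1)!\,(n_2-1)!},
\]
and, for fixed $\Gamma'$, $\sum_{(\Graph,e)\to\Gamma'}|\Aut(\Gamma')|/|\Aut(\Graph)|=\frac12$.

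I expect these symmetry-and-labeling counts to be the main obstacle. The difficulty is that the legs of the pieces carry the local labels $1,\dots,n_i$ used in $\sum_{\Gamma_i}\cZ(P_{\Gamma_i})=\Vol(\cQ_{g_i,n_i})$, whereas the legs of $\Graph$ carry global labels $1,\dots,n$ and the half-edge created by the cut plays a distinguished role; reconciling the two requires an orbit--stabilizer (Burnside-type) count relating $|\Aut(\Graph)|$, the orders $|\Aut(\Gamma_1)|,|\Aut(\Gamma_2)|$, and the number of admissible gluings and relabelings, where the multinomial $\frac{n!}{(n_1-1)!(n_2-1)!}$ records the distribution of the $n$ global labels and the two factors of $\frac12$ record, respectively, the exchange symmetry of the two pieces and the symmetry of the node. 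Particular care is needed for edges that are loops or that join a vertex to itself, and for the case $(g_1,n_1)=(g_2,n_2)$ where the two pieces may coincide. Finally I would check that the leaf vertices of $\Graph$ of type $\cQ_{0,3}$ and $\cQ_{1,1}$, together with the stipulated conventions $\Vol\cQ_{0,3}=4$, $\Vol\cQ_{1,1}=\frac{2\pi^2}{3}$, the graphs $\cG(0,3),\cG(1,1)$, and the value $\frac{(2n_i-7)!}{(n_i-4)!}\big|_{n_i=3}=\frac12$, are precisely what is needed to make the endpoints of the summation ranges in \eqref{eq:carea:Elise} and \eqref{eq:carea:g0:Elise} agree with the corresponding boundary contributions of \eqref{eq:carea}.
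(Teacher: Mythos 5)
Your proposal is correct and follows essentially the same route as the paper's own proof in Section~\ref{eq:proof:of:coincidence:of:two:SV:expressions}: expand the right-hand side of~\eqref{eq:carea:Elise} (resp.~\eqref{eq:carea:g0:Elise}) via Theorem~\ref{th:volume}, use the parity of the vertex polynomials to compute $\partial^e_{\Graph}P_\Graph$ as $\separatingQ(e)\prod_{e'}b_{e'}\cdot\Pi_\Graph\big|_{b_e=0}$, identify cutting the edge $e$ (the paper's surgery $\Cut_e\Graph$) with the bridge/separating versus non-bridge/non-separating dichotomy, and factor $\cZ$ to match the two sums term by term. In fact your write-up is more explicit than the paper's at the final step (the factorial ratio $C_\Graph/(C_{\Gamma_1}C_{\Gamma_2})$, the value $\cZ(b_e)=\zeta(2)$, and the automorphism--relabeling counts that the paper dismisses with ``it is easy to see''), so no gap relative to the paper's own argument.
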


Theorem~\ref{th:same:SV} is proved in
Section~\ref{eq:proof:of:coincidence:of:two:SV:expressions}.

%------------------------------------------------------------
\subsection{Frequencies of multicurves (after M.~Mirzakhani)}
\label{ss:Frequencies:of:simple:closed:curves}

We will say that two integral multicurves on the same
smooth surface of genus $g$ with $n$ punctures
\textit{have the same topological type}
if they belong to the same orbit of the mapping class group
$\Mod_{g,n}$.

We change now flat setting to hyperbolic setting. Following
M.~Mirzakhani, given an integral multicurve $\gamma$ in
$\cML_{g,n}(\Z)$ and a hyperbolic surface $X\in\cT_{g,n}$
consider the function $s_X(L,\gamma)$ counting the number
of simple closed geodesic multicurves on $X$ of length at
most $L$ of the same topological type as $\gamma$.
M.~Mirzakhani proves
in~\cite{Mirzakhani:grouth:of:simple:geodesics} the
following Theorem.
\begin{NNTheorem}[M.~Mirzakhani]
For any rational multi-curve $\gamma$ and
any hyperbolic surface $X\in\cT_{g,n}$,
$$
s_X(L,\gamma)\sim B(X)\cdot\frac{c(\gamma)}{b_{g,n}}\cdot L^{6g-6+2n}\,,
$$
as $L\to+\infty$.
\end{NNTheorem}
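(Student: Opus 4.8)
The plan is to recover this theorem by combining the ergodicity of the $\Mod_{g,n}$-action on the space $\cML_{g,n}$ of measured laminations with Mirzakhani's integration (unfolding) formula over the moduli space $\cM_{g,n}$, following the strategy of \cite{Mirzakhani:grouth:of:simple:geodesics}. Write $d=6g-6+2n$ for the dimension of $\cML_{g,n}$ and let $\mu_{\mathrm{Th}}$ denote the Thurston measure, the $\Mod_{g,n}$-invariant locally finite measure on $\cML_{g,n}$ which is homogeneous of degree $d$ and is normalized by the integral points $\cML_{g,n}(\Z)$. The hyperbolic length function $\lambda\mapsto\ell_\lambda(X)$ is continuous, proper, and homogeneous of degree $1$, so the ``unit ball'' $B_X=\{\lambda\in\cML_{g,n}:\ell_\lambda(X)\le 1\}$ is compact and we may set $B(X)=\mu_{\mathrm{Th}}(B_X)$. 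Since lengths scale linearly and the orbit $\Mod_{g,n}\cdot\gamma$ is discrete, we have $s_X(L,\gamma)=\#\big((\Mod_{g,n}\cdot\gamma)\cap L\cdot B_X\big)$; first I would reformulate this in terms of the rescaled counting measures
$$
\nu^\gamma_L=\frac{1}{L^{d}}\sum_{\alpha\in\Mod_{g,n}\cdot\gamma}\delta_{\alpha/L}
$$
on $\cML_{g,n}$, so that $s_X(L,\gamma)/L^{d}=\nu^\gamma_L(B_X)$.

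The heart of the argument is to show that $\nu^\gamma_L$ converges weakly to $\tilde c(\gamma)\,\mu_{\mathrm{Th}}$ for a constant $\tilde c(\gamma)\ge 0$ depending only on the $\Mod_{g,n}$-orbit of $\gamma$. Because the $\Mod_{g,n}$-action commutes with the scaling $\lambda\mapsto\lambda/L$ and preserves the orbit, each $\nu^\gamma_L$ is in fact exactly $\Mod_{g,n}$-invariant, hence so is every weak-$*$ subsequential limit. By Masur's ergodicity theorem for the $\Mod_{g,n}$-action on $\cML_{g,n}$, the only locally finite invariant measures in the relevant measure class are scalar multiples of $\mu_{\mathrm{Th}}$, and the degree-$d$ homogeneity of $\nu^\gamma_L$ pins down the scale, yielding the claimed limit. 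Granting this, and using that $\mu_{\mathrm{Th}}(\partial B_X)=0$ because $\{\lambda:\ell_\lambda(X)=1\}$ is a codimension-one piecewise-real-analytic hypersurface, one upgrades weak-$*$ convergence to convergence of $\nu^\gamma_L(B_X)$ and obtains the pointwise asymptotics $s_X(L,\gamma)/L^{d}\to\tilde c(\gamma)\,B(X)$ for each fixed $X$.

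It then remains to identify $\tilde c(\gamma)=c(\gamma)/b_{g,n}$, which I would do by integrating the asymptotic relation over $\cM_{g,n}$ against the Weil--Petersson measure. On the one hand, $\int_{\cM_{g,n}}\big(s_X(L,\gamma)/L^{d}\big)\,dX\to\tilde c(\gamma)\int_{\cM_{g,n}}B(X)\,dX=\tilde c(\gamma)\,b_{g,n}$, where $b_{g,n}=\int_{\cM_{g,n}}B(X)\,dX$ is precisely the normalizing total mass. On the other hand, Mirzakhani's integration formula expresses $\int_{\cM_{g,n}}s_X(L,\gamma)\,dX$ as a finite-dimensional integral of a product of Weil--Petersson volume polynomials $V_{g_v,n_v}$ over the pieces obtained by cutting along $\gamma_{\mathit{reduced}}$; this is a polynomial in $L$ of degree $d$ whose leading coefficient is, by definition, $c(\gamma)$. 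Matching the two computations gives $\tilde c(\gamma)=c(\gamma)/b_{g,n}$, and substituting back yields the stated asymptotics $s_X(L,\gamma)\sim B(X)\cdot\frac{c(\gamma)}{b_{g,n}}\cdot L^{d}$. I expect the main obstacle to be the convergence $\nu^\gamma_L\to\tilde c(\gamma)\,\mu_{\mathrm{Th}}$: the delicate points are establishing tightness and local finiteness of the family $\{\nu^\gamma_L\}$ so that no subsequential limit loses mass to infinity or is identically zero, and verifying the boundary estimate $\mu_{\mathrm{Th}}(\partial B_X)=0$; the ergodicity input is exactly what forces every limit to be a single multiple of the Thurston measure rather than some other invariant measure.
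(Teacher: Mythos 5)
First, a point of orientation: the paper contains no proof of this statement at all. It is quoted verbatim as background from Mirzakhani's work \cite{Mirzakhani:grouth:of:simple:geodesics} and used as input (the paper's own contribution is the later comparison of $\Vol(\gamma)$ with $c(\gamma)$). So your proposal must be measured against Mirzakhani's original argument, whose broad strategy you do reconstruct: rescaled orbit-counting measures, invariance, ergodicity, and identification of the constant by integrating over $\cM_{g,n}$ via the integration formula. The genuine gap is in the sentence ``by Masur's ergodicity theorem \dots the only locally finite invariant measures in the relevant measure class are scalar multiples of $\mu_{\mathrm{Th}}$.'' Ergodicity of $\mu_{\mathrm{Th}}$ does not classify invariant measures; it only implies that an invariant measure \emph{absolutely continuous with respect to} $\mu_{\mathrm{Th}}$ is a scalar multiple of it. Invariance plus local finiteness is demonstrably insufficient: each of your measures $\nu^\gamma_L$ is itself $\Mod_{g,n}$-invariant, locally finite and purely atomic, hence not a multiple of $\mu_{\mathrm{Th}}$; and the actual classification of invariant locally finite measures on $\cML_{g,n}$ (Lindenstrauss--Mirzakhani, Hamenst\"adt) contains many further examples supported on non-filling laminations. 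The phrase ``in the relevant measure class'' is exactly the missing step: one must \emph{prove} that every weak-$*$ subsequential limit of $(\nu^\gamma_L)$ is absolutely continuous with respect to $\mu_{\mathrm{Th}}$. Mirzakhani does this in integral train-track charts: there the orbit $\Mod_{g,n}\cdot\gamma$ lies in a fixed rescaling of the integer lattice, so $\nu^\gamma_L$ is dominated, chart by chart, by the rescaled full lattice-counting measure, whose limit is Lebesgue measure, i.e.\ $\mu_{\mathrm{Th}}$; only after this domination does ergodicity apply.

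Three smaller corrections. Your claim that ``the degree-$d$ homogeneity of $\nu^\gamma_L$ pins down the scale'' does no work: homogeneity says nothing about which multiple of $\mu_{\mathrm{Th}}$ arises; the scalar is determined only by your final integration step. That step, in turn, interchanges $\lim_{L\to\infty}$ with $\int_{\cM_{g,n}}$ over a \emph{noncompact} space, which requires Mirzakhani's uniform bound $s_X(L,\gamma)\le C(X)\,L^{6g-6+2n}$ with $C(X)$ Weil--Petersson-integrable, so that dominated convergence applies; this same bound is what forces all subsequential limits to share one constant, so the order of the argument matters. Finally, for $\mu_{\mathrm{Th}}(\partial B_X)=0$ the clean argument is homogeneity of $\mu_{\mathrm{Th}}$: the function $r\mapsto\mu_{\mathrm{Th}}\{\lambda:\ell_\lambda(X)\le r\}=r^{6g-6+2n}B(X)$ is continuous, so the unit sphere is null; no piecewise-analyticity of the level set is needed.
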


The coefficient in this asymptotic formula has beautiful
structure. All information about the hyperbolic metric $X$
is carried by the factor $B(X)$. Note that this
factor does not depend on the multicurve $\gamma$.
The information about $\gamma$ is carried by the
coefficient $c(\gamma)$ which in turn does not depend on the
hyperbolic metric,
but only on the topological type of $\gamma$, i.e., it is one
and the same for all multicurves in the orbit
$[\gamma]=\Mod_{g,n}\cdot\gamma$ of $\gamma$ under the mapping class
group.

The factor $B(X)$ has the following geometric meaning.
Consider the unit ball
$B_X=\{\gamma\in\cML_{g,n}\,|\,\ell_X(\gamma)\le 1\}$
defined by means of the length function $\ell_X$. The
factor $B(X)$ is the Thurston's measure of $B_X$:
$$
B(X)=\mu_{\mathrm{Th}}(B_X)\,.
$$

The factor $b_{g,n}$ depends only on $g$ and $n$. It is
defined as the average of $B(X)$ over $\cM_{g,n}$ viewed as
the moduli space of hyperbolic metrics, where the average
is taken with respect to the Weil--Petersson volume form on
$\cM_{g,n}$:
\begin{equation}
\label{eq:b:g:n}
b_{g,n}=\int_{\cM_{g,n}} B(X)\,dX\,.
\end{equation}

Mirzakhani showed that
\begin{equation}
\label{eq:b:g:n:as:sum:of:c:gamma}
b_{g,n}=\sum_{[\gamma]\in\mathcal{O}} c(\gamma)\,,
\end{equation}
where the sum of $c(\gamma)$ taken with respect to
representatives $[\gamma]$ of all orbits $\mathcal{O}$ of
the mapping class group $\Mod_{g,n}$ in $\cML_{g,n}(\Z)$.
This allows to interpret the ratio
$\tfrac{c(\gamma)}{b_{g,n}}$ as the probability to get a
multicurve of type $\gamma$ taking a ``large random''
multicurve (in the same sense as the probability that
coordinates of a ``random'' point in $\Z^2$ are coprime
equals $\tfrac{6}{\pi^2}$). More precisely, M.~Mirzakhani
showed that the asymptotic frequency
$\frac{c(\gamma)}{b_{g,n}}$ represents the density of the
orbit $\Mod_{g,n}\cdot\gamma$ inside the set of all
integral simple closed multicurves $\cML_{g,n}(\Z)$. This
density is analogous to the density $\frac{6}{\pi^2}$ of
integral points with coprime coordinates in $\Z^2$
represented by the $\SLZ$-orbit of the vector $(1,0)$.

\begin{figure}[htb]
   %
   %  Simple closed curves on 6-punctured spheres
   %
\includegraphics{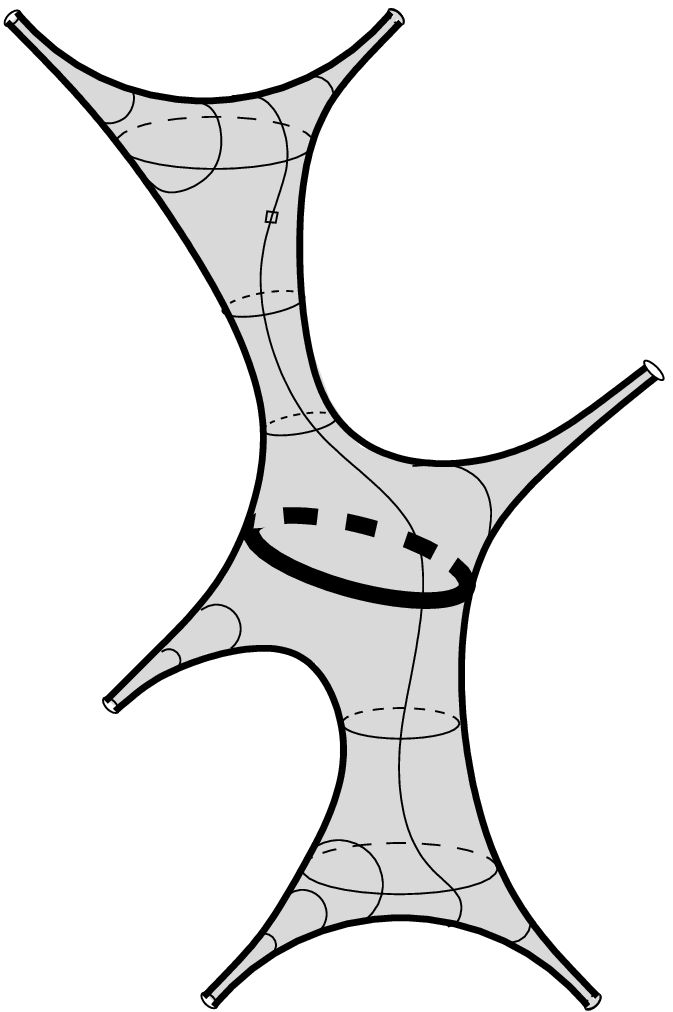}

\includegraphics{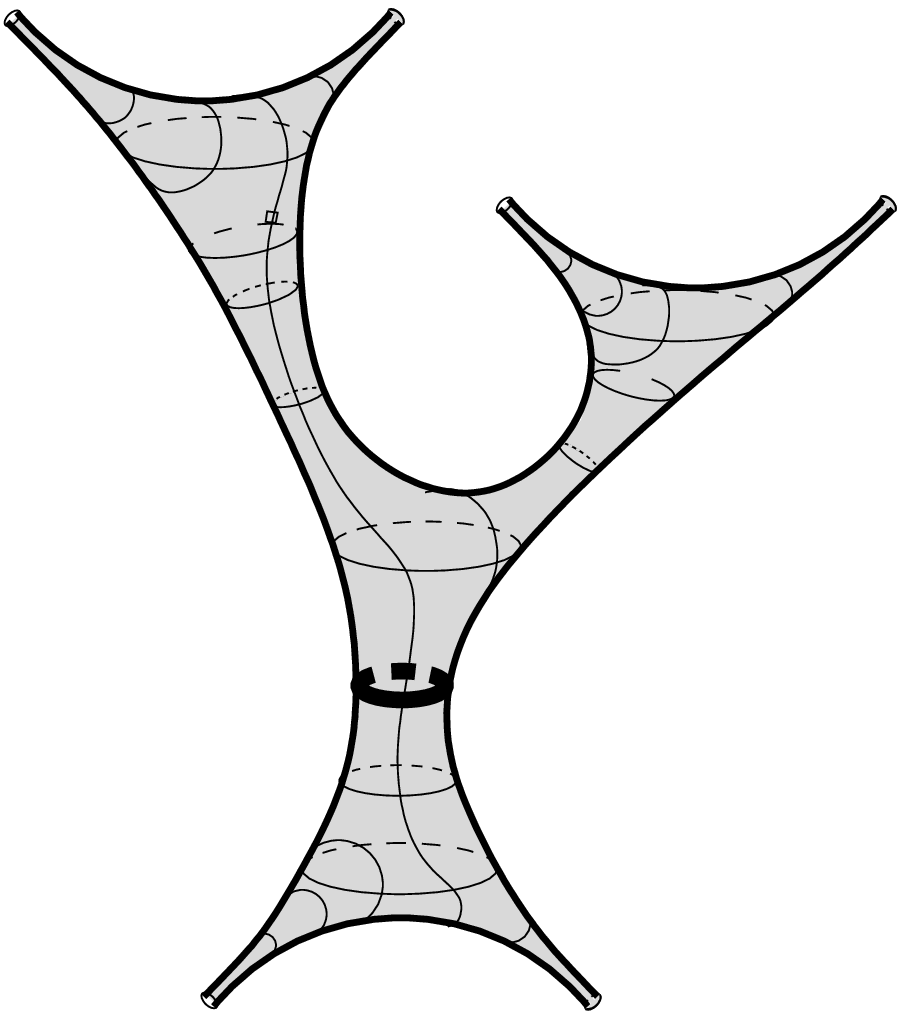}

\vspace{110pt}
\caption{
\label{fig:six:punctured:spheres}
Simple closed curves on a six-punctured sphere}
\end{figure}

M.~Mirzakhani found an explicit expression for the
coefficient $c(\gamma)$ and for the global normalization
constant $b_{g,n}$ in terms of the intersection numbers of
$\psi$-classes.

\begin{Example}
\label{ex:c:gamma:for:six:punctured:spheres} For any
hyperbolic metric $X$ on a sphere with $6$ cusps as in
Figure~\ref{fig:six:punctured:spheres}, a long simple
closed geodesic separates the cusps into groups of $3+3$
cusps with probability $\frac{4}{7}$ and into $2+4$ cusps
with probability $\frac{3}{7}$ (see (2) on page~123
in~\cite{Mirzakhani:grouth:of:simple:geodesics} for
calculation).
\end{Example}

\begin{Remark}
These values we confirmed experimentally in 2017 by M.~Bell
and S.~Schleimer. They were also confirmed by more implicit
independent computer experiment by V.~Delecroix.
\end{Remark}

%------------------------------------------------------------
\subsection{Frequencies of square-tiled surfaces of fixed
combinatorial type}
\label{ss:Frequencies:of:square:tiled:surfaces}

The following Theorem bridges flat and hyperbolic
count.

\begin{Theorem}
\label{th:our:density:equals:Mirzakhani:density}
For any integral multicurve $\gamma\in\cML_{g,n}(\Z)$,
the volume contribution $\Vol(\gamma)$
to the Masur--Veech volume $\Vol\cQ_{g,n}$
coincides with the Mirzakhani's
asymptotic frequency $c(\gamma)$ of simple
closed geodesic multicurves of topological type $\gamma$
up to the explicit factor $const_{g,n}$
depending only on $g$ and $n$:
\begin{equation}
\label{eq:Vol:gamma:c:gamma}
\Vol(\gamma)
=const_{g,n}\cdot c(\gamma)\,,
\end{equation}
where
\begin{equation}
\label{eq:const:g:n}
const_{g,n}
=2\cdot(6g-6+2n)\cdot
(4g-4+n)!\cdot 2^{4g-3+n}\cdot
\end{equation}
\end{Theorem}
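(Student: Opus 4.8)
The plan is to prove the identity separately for each fixed weighted multicurve $\gamma=\gamma_{\boldsymbol{H}}=H_1\gamma_1+\dots+H_k\gamma_k$ by expressing both $\Vol(\gamma)$ and Mirzakhani's frequency $c(\gamma)$ through the \emph{same} product of polynomials $N_{g_v,n_v}$ attached to the stable graph $\Gamma=\Gamma(\gamma)$, and then checking that the two expressions differ only by a factor depending on $g,n$. On the flat side this is already in hand from~\eqref{eq:contribution:of:gamma:to:volume}: $\Vol(\gamma_{\boldsymbol{H}})=\cY(\boldsymbol{H})(P_\Gamma)$, where $P_\Gamma$ is the polynomial of~\eqref{eq:P:Gamma}, homogeneous of degree $6g-6+2n-|E(\Gamma)|$ in the edge variables $b_e$. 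The first real step is therefore to put Mirzakhani's coefficient into a comparable shape.

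Recall that Mirzakhani's integration formula computes $c(\gamma)$, the leading coefficient of $s_X(L,\gamma)$, as an integral over the length simplex of the product of the \emph{top-degree} Weil--Petersson volume polynomials of the pieces cut off by $\gamma_{\mathit{reduced}}$, weighted by the twist factors. Writing $x_e=\ell_{\gamma_e}$ for the length of the component $\gamma_e$ dual to the edge $e$, one has
\begin{equation*}
c(\gamma_{\boldsymbol{H}})
=\frac{1}{2^{M(\gamma)}\,|\Sym(\gamma)|}
\int_{\substack{x_e>0\\ \sum_e H_e x_e\le 1}}
\ \prod_{v\in V(\Gamma)} V^{\mathrm{top}}_{g_v,n_v}(\boldsymbol{x}_v)
\ \prod_{e\in E(\Gamma)} x_e\ \prod_e dx_e\,,
\end{equation*}
where $M(\gamma)$ and $\Sym(\gamma)$ are the symmetry data of the weighted multicurve. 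Using the normalization~\eqref{eq:Vgn:through:Ngn} to replace each $V^{\mathrm{top}}_{g_v,n_v}$ by a fixed scalar multiple of $N_{g_v,n_v}$, the integrand $\prod_v V^{\mathrm{top}}_{g_v,n_v}\cdot\prod_e x_e$ becomes a fixed multiple of $P_\Gamma(\boldsymbol{x})$; here the scalar $\prod_v 2^{2g_v-3+n_v}$ telescopes, via $\sum_v(2g_v-3+n_v)=2g-2+n-|V(\Gamma)|$, and combines with the factor $2^{|V(\Gamma)|-1}$ built into $P_\Gamma$ to leave only a $(g,n)$-dependent power of two, the $|V(\Gamma)|$-dependence dropping out.

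The key computation is the evaluation of the simplex integral. Since $P_\Gamma$ is homogeneous of degree $6g-6+2n-k$ in the $k=|E(\Gamma)|$ variables $x_e$, every monomial $\prod_e x_e^{m_e}$ of $P_\Gamma$ satisfies $\sum_e m_e+k=6g-6+2n$, independently of the monomial. The Dirichlet integral
\begin{equation*}
\int_{\substack{x_e>0\\ \sum_e H_e x_e\le 1}}\prod_e x_e^{m_e}\,\prod_e dx_e
=\frac{\prod_e m_e!}{\big(\sum_e m_e+k\big)!}\ \prod_e\frac{1}{H_e^{m_e+1}}
\end{equation*}
then produces the \emph{common} denominator $(6g-6+2n)!$ for all monomials, while the surviving factor $\prod_e m_e!\,H_e^{-(m_e+1)}$ is exactly the image of the monomial under $\cY(\boldsymbol{H})$. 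By linearity the simplex integral of $P_\Gamma$ equals $\tfrac{1}{(6g-6+2n)!}\,\cY(\boldsymbol{H})(P_\Gamma)=\tfrac{1}{(6g-6+2n)!}\,\Vol(\gamma_{\boldsymbol{H}})$. This is the heart of the argument: the Dirichlet formula converts the continuous hyperbolic integral into the discrete flat evaluation $\cY(\boldsymbol{H})$, with a uniform factorial precisely because $P_\Gamma$ carries the correct homogeneous degree.

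Assembling the three ingredients yields $\Vol(\gamma)=const_{g,n}\cdot c(\gamma)$ with an explicit $const_{g,n}$; it remains to check that the accumulated powers of $2$ and factorials collapse to $2\cdot(6g-6+2n)\cdot(4g-4+n)!\cdot 2^{4g-3+n}$ and, crucially, that all $\gamma$-dependent combinatorics disappear. The powers of two and the ratio $\frac{(6g-7+2n)!}{(4g-4+n)!(6g-6+2n)!}=\frac{1}{(4g-4+n)!\,(6g-6+2n)}$ coming from the leading constant of $P_\Gamma$ combine cleanly, but the genuine obstacle is the matching of symmetry factors: the flat side carries $1/|\Aut(\Gamma)|$ inside $P_\Gamma$, while the hyperbolic side carries $1/(2^{M(\gamma)}|\Sym(\gamma)|)$. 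The whole statement therefore hinges on the combinatorial identity $|\Aut(\Gamma)|=2^{M(\gamma)}\,|\Sym(\gamma)|$, reconciling graph automorphisms (which may flip the two half-edges of each loop) with the symmetries of the weighted multicurve together with Mirzakhani's factor $2^{-M(\gamma)}$. Verifying this reconciliation of symmetry data, uniformly over all stable graphs and all weightings $\boldsymbol{H}$, is the step I expect to demand the most care.
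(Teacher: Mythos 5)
Your overall strategy is structurally the same as the paper's proof: both sides are reduced to the same product $\prod_{v}N_{g_v,n_v}$ attached to $\Gamma(\gamma)$, the Dirichlet simplex integral reproduces exactly the operator $\cY(\boldsymbol{H})$ together with the common factorial $(6g-6+2n)!$ (in the paper this step is already packaged inside Mirzakhani's coefficients $b_\gamma$ of~\eqref{eq:b:gamma:5:3}, so no integral is ever written), and the global powers of $2$ and factorials then collapse to $const_{g,n}$; your degree count and the telescoping $\sum_v(2g_v-3+n_v)=2g-2+n-|V(\Gamma)|$ are correct. However, the step you yourself single out as the delicate one --- the matching of symmetry factors --- is wrong as you state it, in two compounding ways, and this is a genuine gap rather than a detail.

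First, your integral formula for $c(\gamma_{\boldsymbol{H}})$ misquotes Mirzakhani: the polynomial entering her formula is not $\prod_j V_{g_j,n_j}(x)$ but $\frac{1}{N(\gamma)}\prod_j V_{g_j,n_j}(x)$, see~\eqref{eq:Vol:Mgn:gamma:x}, where $N(\gamma)$ counts elements of the stabilizer of the components modulo those preserving the orientation of each component; your prefactor $1/(2^{M(\gamma)}|\Sym(\gamma)|)$ is missing this $1/N(\gamma)$. Second, you apply~\eqref{eq:Vgn:through:Ngn} as if $V^{top}_{g_v,n_v}=2^{2g_v-3+n_v}N_{g_v,n_v}$ held uniformly, whereas the relation carries an exceptional extra factor $2$ at every vertex with $(g_v,n_v)=(1,1)$ (Remark~\ref{rm:Kazarian}); the product of these exceptional factors over all vertices is exactly $2^{M(\gamma)}$, and it is this product that cancels Mirzakhani's prefactor $2^{-M(\gamma)}$. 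Consequently $2^{M(\gamma)}$ has nothing to do with graph automorphisms, and the identity on which your proof hinges, $|\Aut(\Gamma)|=2^{M(\gamma)}\,|\Sym(\gamma)|$, is simply false: for a single non-separating simple closed curve on a closed surface of genus $g\ge 2$ one has $|\Aut(\Gamma)|=2$ (the flip of the unique loop), while $M(\gamma)=0$ and $|\Sym(\gamma)|=1$. The correct reconciliation, used in the paper, is~\eqref{eq:Sym:N:Aut}: $|\Aut(\Gamma(\gamma))|=|\Sym(\gamma)|\cdot N(\gamma)$, the flips of loops being accounted for by $N(\gamma)$, not by $2^{M(\gamma)}$. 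With these two corrections (restore $N(\gamma)$, and keep track of the exceptional $(1,1)$ factors so that $2^{\pm M(\gamma)}$ cancel) your computation closes and reproduces the paper's key identity~\eqref{eq:two:polynomials} and the constant~\eqref{eq:const:g:n}; without them, the ratio $\Vol(\gamma)/c(\gamma)$ you obtain depends on $\gamma$ through $N(\gamma)$ and is not a constant.
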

Theorem~\ref{th:our:density:equals:Mirzakhani:density}
is proved in Section~\ref{s:comparison:with:Mirzakhani}.

\begin{Example}
\label{ex:Vol:gamma:for:six:punctured:spheres} A
one-cylinder square-tiled surface in the moduli space
$\cQ_{0,6}$ can have $3$ simple poles on each of the two
boundary component of the maximal horizontal cylinder or
can have $2$ simple poles on one boundary component and $4$
simple poles on the other boundary component. The
asymptotic frequency of
square-tiled surfaces of the first type is
$\frac{4}{7}$ and the asymptotic frequency of the
square-tiled surfaces of second type
is $\frac{3}{7}$; compare to
Example~\ref{ex:c:gamma:for:six:punctured:spheres}.
\end{Example}

Combining~\eqref{eq:Vol:Q:as:sum:of:Vol:gamma},
\eqref{eq:b:g:n:as:sum:of:c:gamma},
\eqref{eq:Vol:gamma:c:gamma} and~\eqref{eq:const:g:n} we
get the following immediate Corollary:

\begin{Corollary}
\label{cor:Vol:as:b:g:n}
For any admissible pair of nonnegative integers $(g,n)$,
the Masur--Veech volume $\Vol\cQ_{g,n}$ and the average
Thurston measure of a unit ball $b_{g,n}$ are related as
follows:

\begin{equation}
\label{eq:Vol:g:n:b:g:n}
\Vol\cQ_{g,n}
=2\cdot(6g-6+2n)\cdot
(4g-4+n)!\cdot 2^{4g-3+n}\cdot
b_{g,n}\,.
\end{equation}
\end{Corollary}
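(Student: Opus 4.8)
The plan is to sum the per-orbit identity \eqref{eq:Vol:gamma:c:gamma} over all topological types of integral multicurves and then invoke the two decomposition formulas \eqref{eq:Vol:Q:as:sum:of:Vol:gamma} and \eqref{eq:b:g:n:as:sum:of:c:gamma}. The key structural point is that both the flat volume contributions $\Vol(\gamma)$ and Mirzakhani's hyperbolic frequencies $c(\gamma)$ are indexed by the \emph{same} set of representatives $[\gamma]\in\mathcal{O}$ of $\Mod_{g,n}$-orbits in $\cML_{g,n}(\Z)$; this shared indexing is precisely what lets the two series be compared summand by summand.

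First I would apply Theorem~\ref{th:our:density:equals:Mirzakhani:density}, which asserts $\Vol(\gamma)=const_{g,n}\cdot c(\gamma)$ for every integral multicurve $\gamma$, with the explicit constant in \eqref{eq:const:g:n}. The decisive feature is that $const_{g,n}$ depends only on $g$ and $n$ and is independent of the orbit $[\gamma]$. Hence, summing over all orbits and factoring the constant out of the sum, one obtains
\begin{equation*}
\Vol\cQ_{g,n}=\sum_{[\gamma]\in\mathcal{O}}\Vol(\gamma)=const_{g,n}\sum_{[\gamma]\in\mathcal{O}}c(\gamma)=const_{g,n}\cdot b_{g,n},
\end{equation*}
where the first equality is \eqref{eq:Vol:Q:as:sum:of:Vol:gamma} and the last is \eqref{eq:b:g:n:as:sum:of:c:gamma}. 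Inserting the value of $const_{g,n}$ from \eqref{eq:const:g:n} yields exactly \eqref{eq:Vol:g:n:b:g:n}, completing the proof.

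The only point that deserves a remark is the legitimacy of pulling the constant through the (infinite) sum over orbits: there are countably many $\Mod_{g,n}$-orbits in $\cML_{g,n}(\Z)$, so one must know that each series converges. This is guaranteed by the inputs already cited---existence, positivity and summability of the $\Vol(\gamma)$ to $\Vol\cQ_{g,n}$ from \cite{DGZZ:meanders:and:equidistribution}, together with Mirzakhani's summation \eqref{eq:b:g:n:as:sum:of:c:gamma} of the $c(\gamma)$ to $b_{g,n}$. Granting these, the manipulation is absolutely convergent and term-by-term, so there is no genuine obstacle; all the mathematical content of the corollary is carried by the per-orbit comparison in Theorem~\ref{th:our:density:equals:Mirzakhani:density}, and the corollary itself is a one-line bookkeeping consequence.
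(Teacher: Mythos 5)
Your proof is correct and is essentially the paper's own argument: the paper derives the Corollary by combining exactly the same four ingredients \eqref{eq:Vol:Q:as:sum:of:Vol:gamma}, \eqref{eq:b:g:n:as:sum:of:c:gamma}, \eqref{eq:Vol:gamma:c:gamma} and \eqref{eq:const:g:n}, summing the per-orbit identity over all $\Mod_{g,n}$-orbits and factoring out the constant. Your closing remark on absolute convergence (justified by the summability statements behind \eqref{eq:Vol:Q:as:sum:of:Vol:gamma} and \eqref{eq:b:g:n:as:sum:of:c:gamma}) is a sound, if minor, addition that the paper leaves implicit in calling this an ``immediate Corollary.''
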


\begin{Remark}
In Theorem~1.4 in~\cite{Mirzakhani:earthquake}
M.~Mirzakhani established the relation
$$
\Vol\cQ_{g}
=
const_{g}\cdot
b_{g}\,,
$$
where $b_{g}$ is computed in Theorem~5.3
in~\cite{Mirzakhani:grouth:of:simple:geodesics}.
Our main formula~\eqref{eq:contribution:of:gamma:to:volume}
has basically the same structure as Mirzakhani's formula for $b_{g,n}$.
We provide a detailed comparison of these two formulae in
Section~\ref{s:comparison:with:Mirzakhani}.

However, Mirzakhani but does not give any close formula for
the value of the normalization constant $const_g$. This
constant was recently computed by
F.~Arana--Herrera~\cite{Arana:Herrera} and by L.~Monin and
I.~Telpukhovkiy~\cite{Monin:Telpukhovskiy} simultaneously
and independently of us by different methods.
\end{Remark}

Formula~\eqref{eq:square:tiled:volume} from
Theorem~\ref{th:volume} below
allows to compute $\Vol\cQ_{g,n}$ for
all sufficiently small values of $(g,n)$.
Corollary~\ref{cor:Vol:as:b:g:n}
immediately provides explicit values of $b_{g,n}$
for all such pairs.

When $g=0$ the value $\Vol\cQ_{0,n}$ admits closed
formula~\eqref{eq:vol:genus:0} obtained
in~\cite{AEZ:genus:0}. Corollary~\ref{cor:Vol:as:b:g:n}
translates this formula into the following explicit
expression for $b_{0,n}$.

\begin{Corollary}
The quantity $b_{0,n}$ defined in~\eqref{eq:b:g:n}
has the following value:
\begin{equation}
b_{0,n}=
\frac{1}{(n-3)!}
\cdot
\left(\frac{\pi}{2}\right)^{2(n-3)}\,.
\end{equation}
\end{Corollary}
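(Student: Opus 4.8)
The plan is to combine the closed-form genus-zero volume formula~\eqref{eq:vol:genus:0} with the general relation between $\Vol\cQ_{g,n}$ and $b_{g,n}$ established in Corollary~\ref{cor:Vol:as:b:g:n}. The target statement is a purely algebraic consequence of these two results, so no new geometry is needed: everything reduces to solving~\eqref{eq:Vol:g:n:b:g:n} for $b_{g,n}$ and substituting the known value of $\Vol\cQ_{0,n}$.

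First I would specialize Corollary~\ref{cor:Vol:as:b:g:n} to $g=0$. Setting $g=0$ in~\eqref{eq:Vol:g:n:b:g:n} gives
\begin{equation}
\label{eq:b:0:n:from:corollary}
\Vol\cQ_{0,n}
=2\cdot(2n-6)\cdot
(n-4)!\cdot 2^{n-3}\cdot
b_{0,n}\,,
\end{equation}
so that
\begin{equation}
\label{eq:b:0:n:solved}
b_{0,n}
=\frac{\Vol\cQ_{0,n}}{2\cdot(2n-6)\cdot(n-4)!\cdot 2^{n-3}}\,.
\end{equation}
Next I would insert the Athreya--Eskin--Zorich value $\Vol\cQ_{0,n}=\pi^{2n-6}/2^{n-5}$ from~\eqref{eq:vol:genus:0} into~\eqref{eq:b:0:n:solved}. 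The denominator collects as $2\cdot 2(n-3)\cdot(n-4)!\cdot 2^{n-3}=4(n-3)!\cdot 2^{n-3}=(n-3)!\cdot 2^{n-1}$, using $(n-3)\cdot(n-4)!=(n-3)!$. The numerator is $\pi^{2n-6}/2^{n-5}=\pi^{2n-6}\cdot 2^{5-n}$.

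The remaining task is elementary power-of-two bookkeeping. Combining the two factors of $2$ gives $2^{5-n}/2^{n-1}=2^{6-2n}=(1/4)^{n-3}$, and since $\pi^{2n-6}\cdot(1/4)^{n-3}=(\pi^2/4)^{n-3}=(\pi/2)^{2(n-3)}$, I obtain
\begin{equation}
\label{eq:b:0:n:final}
b_{0,n}
=\frac{1}{(n-3)!}\cdot\left(\frac{\pi}{2}\right)^{2(n-3)}\,,
\end{equation}
which is exactly the asserted value. I would finally note that the identity is valid for all $n\ge 4$, matching the range in which~\eqref{eq:vol:genus:0} and Corollary~\ref{cor:Vol:as:b:g:n} apply.

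I do not anticipate any genuine obstacle here, since the argument is a one-line substitution followed by simplification of exponents; the only point requiring mild care is the arithmetic of the power-of-two and the factorial cancellations, which I have traced above. The proof depends only on previously stated results, so the statement follows immediately.
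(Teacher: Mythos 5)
Your proof is correct and follows exactly the paper's own route: the paper obtains this corollary precisely by specializing Corollary~\ref{cor:Vol:as:b:g:n} to $g=0$ and substituting the value $\Vol\cQ_{0,n}=\pi^{2n-6}/2^{n-5}$ from~\eqref{eq:vol:genus:0}. Your factorial and power-of-two bookkeeping is accurate, so nothing further is needed.
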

By Stirling formula we get the following asymptotics for large
$n$:
\begin{equation}
b_{0,n}
\sim\frac{1}{\sqrt{2\pi n}}\cdot
\left(\frac{\pi^2 e}{4n}\right)^{n-3}\quad\text{as}\ n\to+\infty\,.
\end{equation}

%------------------------------------------------------------
\subsection{Statistical geometry of square-tiled surfaces}
\label{ss:Combinatorial:geometry:of:random:st:surfaces}

Theorem~\ref{th:volume} provides a detailed description of
statistical geometric properties of square-tiled surfaces
in $\cQ_{g,n}$ tiled with large number of squares in the
same spirit as the result~\cite[Theorem
 1.2]{Mirzakhani:statistics} of M.~Mirzakhani describing
statistics of lengths of simple closed geodesics in random
pants decomposition. More precisely, Mirzakhani fixes a
reduced multicurve $\gamma=\gamma_1+\dots+\gamma_{3g-3}$
decomposing the surface of genus $g$ into pair of pants and
considers its $\Mod_g$-orbit. For any hyperbolic metric,
M.~Mirzakhani describes the asymptotic distribution of
(normalized) lengths of simple closed geodesics represented
by the components $\gamma_i$, $i=1,\dots,3g-3$, of the
multicurve $\gamma$.

Our result concerns, in particular, the asymptotic
statistics of (normalized) perimeters of a random
square-tiled surface corresponding to a given stable graph
$\Gamma$ and tiled with large number of squares. The
resulting statistics disclose the geometric meaning of
coefficients of the polynomials $P_\Gamma$ associated to a
stable graph $\Gamma$ appearing in our formulae for the
Masur--Veech volumes as in Theorem~\ref{th:volume} and for
the Siegel-Veech constants as in Theorem~\ref{th:carea}.

As we have seen in
Section~\ref{ss:Frequencies:of:square:tiled:surfaces}
asymptotic statistical properties of random square-tiled
surfaces can be translated into asymptotic statistical
properties of geodesic multicurves on random hyperbolic
surfaces and vice versa. This general correspondence
translates the results mentioned above into analogs of a
mean version of Theorem 1.2 in~\cite{Mirzakhani:statistics},
in the sense that we obtain the average of her
statistics averaging over all hyperbolic surfaces in
$\cM_g$, where the average is computed using the
Weil--Petersson measure on $\cM_g$.

Let us define the operator
$\cZt(\boldsymbol{x},\boldsymbol{H})$ on polynomials $\Q[b_1, \ldots, b_k]$ as
follows. We define it on monomials
\[
\cZt(\boldsymbol{x},\boldsymbol{H}) \left( \prod_{i=1}^k b_i^{m_i} \right)
=
\prod_{i=1}^k \frac{x_i^{m_i}}{H_i^{m_i+1}}
\]
and extend it by $\Q$-linearity. We denote by
$$
\Delta^k :=
\{(x_1, \ldots, x_k)\,\vert\, x_i \geq 0;\, x_1 + \ldots + x_k \leq
1\}
$$
the standard $k$-dimensional simplex. The operator $\cZt$ generalizes
both $\cY(\boldsymbol{H})$ and $\cZ$ in the sense that we can recover
$\cY(\boldsymbol{H})$ and $\cZ$ by integration
\begin{align}
\label{eq:recover:Z}
\cZ(P) &= \sum_{H_i \geq 1} \int_{\Delta_k}
\cZt(\boldsymbol{x},\boldsymbol{H})(P)\, dx_1 \ldots dx_k\,.
\\
\label{eq:recover:Y}
\cY(\boldsymbol{H}) (P) &= \int_{\Delta_k}
\cZt(\boldsymbol{x},\boldsymbol{H})(P)\, dx_1 \ldots dx_k\,,
\end{align}

To any square-tiled surface $S$ in $\cST_{g,n}(N)$ we associate
the following data
\[
\phi_{N}(S) := \left(\Gamma, \boldsymbol{H},
\frac{\boldsymbol{b}}{N}\right)
\in \cG_{g,n} \times \N^k \times \R^k\,,
\]
where $\Gamma$ is the stable graph associated to the
horizontal cylinder decomposition,
$k$ is the number of maximal horizontal cylinders
(i.e., number of edges of $\Gamma$),
and
$\boldsymbol{H}=(H_1, \ldots, H_k)$ and
$\boldsymbol{b} = (b_1, \ldots, b_k)$
are respectively the heights and perimeters
of the maximal horizontal cylinders
measured in those units, in which the square of the tiling
has unit sides.
Considering the set
$\cST_{g,n}(2N)$ of all square-tiled
surfaces in $\cQ_{g,n}$
tiled with at most $2N$ squares (compare
to~\eqref{eq:Vol:sq:tiled}) we obtain a measure of finite
mass on $\Gamma \times \N^k \times \R^k$
\[
\tilde\mu_{g,n,N} := 2 (6g-6+2n)\cdot \frac{1}{N^d}
\sum_{S \in \cST_{g,n}(2N)} \frac{1}{|\Aut(S)|}
\,\delta_{\phi(S)}\,,
\]
where $\delta_y$ is the Dirac mass at $y \in \cG_{g,n}
\times \N^k \times \R^k$.

Similarly, for each stable graph $\Gamma\in\cG_{g,n}$, each
$\boldsymbol{H}\in\N^k$, and each $N\in\N$ we can define the
following measure $\mu^{\gamma(\Gamma,\boldsymbol{H})}_{g,n,N}$ on the simplex
$\Delta^k$:
\[
\mu^{\gamma(\Gamma,\boldsymbol{H})}_{g,n,N}
:= 2 (6g-6+2n)\cdot \frac{1}{N^d}
\sum_{S \in \cST_{g,n}(2N)\cap\cSTgn(\Gamma,\boldsymbol{H})} \frac{1}{|\Aut(S)|}
\,\delta_{\phi(S)}\,.
\]
Here $\cSTgn(\Gamma,\boldsymbol{H})$ is the set of square-tiled surfaces
associated to the stable graph $\Gamma$ and having the
vector of heights $\boldsymbol{H}$.

Finally, we can desintegrate the discrete
part $(\Gamma,h)$ and obtain the decomposition
\[
\mu_{g,n,N} = \sum_{\Gamma, \boldsymbol{H}}
\mu^{\gamma(\Gamma,\boldsymbol{H})}_{g,n,N}\,.
\]

Fix $g \geq 0$ and $n \geq 0$ so that $2 g + n \geq 4$. Let
$\tilde\mu_{g,n,N}$, $\mu^{\gamma(\Gamma,\boldsymbol{H})}_{g,n,N}$
and $\mu_{g,n,N}$ be the measures defined above.

\begin{Theorem}
\label{thm:statistics}
We have the following weak convergence of
measures:
\begin{equation}
\label{eq:measure:tilde:mu}
(\tilde\mu_{g,n,N})_{N} \to \sum_{\Gamma,\boldsymbol{H}}
\cZt(\boldsymbol{x},\boldsymbol{H})(P_\Gamma)\,
\delta_\Gamma \otimes \delta_h \otimes d x
\quad\text{as }N\to+\infty\,.
\end{equation}

For each stable graph $\Gamma\in\cG_{g,n}$ and each
$\boldsymbol{H}\in\N^k$ we have:
\begin{equation}
\label{eq:measure:mu:Gamma:H}
\left(\mu^{\gamma(\Gamma,\boldsymbol{H})}_{g,n,N}\right)_N \to
\cZt(\boldsymbol{x},\boldsymbol{H})(P_\Gamma)\, dx
\quad\text{as }N\to+\infty\,.
\end{equation}
Here $P_\Gamma = P_\Gamma(b_1, \ldots, b_k)$
is the global polynomial associated to the stable graph
$\Gamma$ by Formula~\eqref{eq:P:Gamma} and $dx$ is the
Lebesgue measure on the simplex $\Delta^k$.

Similarly, we have the following weak convergence of measures:
\begin{equation}
\label{eq:measure:mu}
(\mu_{g,n,N})_{N} \to \sum_{\Gamma,\boldsymbol{H}}
\cZt(x,\boldsymbol{H})(P_\Gamma)\,d x\,.
\end{equation}
\end{Theorem}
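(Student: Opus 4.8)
The plan is to deduce all three convergences from the single ``atomic'' statement \eqref{eq:measure:mu:Gamma:H}, and then recover \eqref{eq:measure:tilde:mu} and \eqref{eq:measure:mu} by summing over the finite set $\cG_{g,n}$ and over $\boldsymbol{H}\in\N^k$. Fix a stable graph $\Gamma$ with $k=|E(\Gamma)|$ edges and fix $\boldsymbol{H}$. By definition $\mu^{\gamma(\Gamma,\boldsymbol{H})}_{g,n,N}$ is, up to the factor $2(6g-6+2n)N^{-d}$, the pushforward under $\boldsymbol{b}\mapsto\boldsymbol{b}/N$ of the counting measure on integral perimeter vectors $\boldsymbol{b}\in\N^k$, each weighted by the number (counted with $1/|\Aut|$) of square-tiled surfaces in $\cSTgn(2N)\cap\cSTgn(\Gamma,\boldsymbol{H})$ realizing $\boldsymbol{b}$. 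Thus the statement is an equidistribution result for rescaled lattice points carrying a polynomial weight, and the role of $\cZt$ is to package the continuous limit density.

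First I would record the weighted count of square-tiled surfaces with fixed data $(\Gamma,\boldsymbol{H},\boldsymbol{b})$. This is exactly the quantity analyzed in the proof of Theorem~\ref{th:volume}: such a surface is assembled by choosing one of $b_e$ cyclic twists in each cylinder $e$ and by filling each vertex component $v$ with a metric ribbon graph of boundary lengths $\boldsymbol{b}_v$, whose weighted number is $N_{g_v,n_v}(\boldsymbol{b}_v)$ up to lower-order terms by Kontsevich's theorem~\cite{Kontsevich}. After collecting the automorphism and $2^{|V(\Gamma)|-1}$ factors together with the twist factor $\prod_e b_e$, the leading part of this count is a fixed universal constant times the homogeneous polynomial $P_\Gamma(\boldsymbol{b})$ of \eqref{eq:P:Gamma}, of degree $d-k$; the lower-order terms form a genuine quasi-polynomial by \cite{Norbury,Chapman:Mulase:Safnuk} and have strictly smaller degree.

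Next, to establish \eqref{eq:measure:mu:Gamma:H} it suffices, by the Stone--Weierstrass theorem on the compact simplex, to test against monomials $f(\boldsymbol{x})=\prod_e x_e^{a_e}$. Inserting the weight from the previous step, $\int f\,d\mu^{\gamma(\Gamma,\boldsymbol{H})}_{g,n,N}$ becomes, to leading order, a universal constant times $N^{-(d+|a|)}\sum_{\boldsymbol{b}}P_\Gamma(\boldsymbol{b})\prod_e b_e^{a_e}$, where $|a|=\sum_e a_e$ and the sum runs over $\boldsymbol{b}\in\N^k$ with $\sum_e b_e H_e\le 2N$. The summand is homogeneous of degree $d-k+|a|$, so this is a Riemann sum over lattice points in a fixed simplicial region which converges, after the rescaling $\boldsymbol{x}=\boldsymbol{b}/N$, to the integral of the corresponding homogeneous density against $f$. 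Rewriting this limit through the defining property of $\cZt(\boldsymbol{x},\boldsymbol{H})$ and the integration identity \eqref{eq:recover:Y} identifies it with $\int_{\Delta^k}f(\boldsymbol{x})\,\cZt(\boldsymbol{x},\boldsymbol{H})(P_\Gamma)\,dx$. The quasi-polynomial lower-order terms contribute a strictly smaller power of $N$ and drop out, while the choice $f\equiv 1$ returns $\cY(\boldsymbol{H})(P_\Gamma)=\Vol(\gamma(\Gamma,\boldsymbol{H}))$ as in \eqref{eq:contribution:of:gamma:to:volume}, pinning down all normalizations.

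Finally I would sum the atomic limits. The sum over $\Gamma\in\cG_{g,n}$ is finite and causes no difficulty, so \eqref{eq:measure:tilde:mu} and \eqref{eq:measure:mu} reduce to interchanging $\lim_{N\to+\infty}$ with the infinite sum over $\boldsymbol{H}\in\N^k$. This interchange is the main obstacle, since weak convergence on the discrete unbounded factor $\N^k$ is not automatic and requires a tail estimate uniform in $N$. I expect to supply it by dominating the finite-$N$ mass of each $\mu^{\gamma(\Gamma,\boldsymbol{H})}_{g,n,N}$ by a fixed constant multiple of $\cY(\boldsymbol{H})(P_\Gamma)$, uniformly in $N$ (bounding the truncated region $\sum_e b_e H_e\le 2N$ by the full positive orthant), and then using the convergence of $\sum_{\boldsymbol{H}\in\N^k}\cY(\boldsymbol{H})(P_\Gamma)=\cZ(P_\Gamma)=\Vol(\Gamma)<\infty$ from \eqref{eq:Vol:Gamma} to invoke dominated convergence for series. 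This domination also yields the tightness needed for weak convergence of the total measures, so \eqref{eq:measure:tilde:mu} and \eqref{eq:measure:mu} follow from the atomic statement \eqref{eq:measure:mu:Gamma:H}.
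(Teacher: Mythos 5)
Your proposal is correct and takes essentially the same route as the paper: the paper likewise feeds the square-tiled-surface count from the proof of Theorem~\ref{th:volume} into a lattice Riemann-sum statement (Lemma~\ref{lm:evaluation:for:monomial}, generalized to continuous test functions in Lemma~\ref{lm:general:evaluation:for:monomial}), with the interchange of the $N\to+\infty$ limit and the sum over $\boldsymbol{H}$ justified by dominated convergence, exactly as in your last step. The only difference is organizational---the paper proves the summed statement~\eqref{eq:measure:mu} directly through one lemma treating $(\boldsymbol{b},\boldsymbol{H})$ jointly and declares the other two claims analogous, whereas you prove the per-$(\Gamma,\boldsymbol{H})$ statement first and then sum---which does not change the substance of the argument.
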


Theorem~\ref{thm:statistics}
is proved in Section~\ref{eq:conditions:on:lengths}.

Comparing~\eqref{eq:recover:Z}
and~\eqref{eq:recover:Y}
with respectively~\eqref{eq:Vol:Gamma}
and~\eqref{eq:contribution:of:gamma:to:volume}
we recover the following global normalizations
of the resulting measures:
\begin{align*}
\int_{\Delta^k}&
\cZt(x,\boldsymbol{H})(P_\Gamma)\, dx
=\Vol(\Gamma,\boldsymbol{H})
\\
\int_{\Delta^k}&
\sum_{\Gamma,\boldsymbol{H}}
\cZt(x,\boldsymbol{H})(P_\Gamma)\,d x\,,
=
\sum_{\Gamma,\boldsymbol{H}}
\int_{\Delta^k}
\cZt(x,\boldsymbol{H})(P_\Gamma)\,d x
=
\Vol\cQ_{g,n}
\end{align*}

The above theorem allows us to describe statistics of
random-square tiled surfaces. For example, we can compute
the asymptotic probability that a random square-tiled
surface tiled with a large number of squares corresponds a
given stable graph $\Gamma$. Considering only square-tiled
surfaces associated to a given stable graph $\Gamma$, we
can compute asymptotic distributions of the heights
$\boldsymbol{H}$ of the maximal horizontal cylinders and
asymptotic distribution of their areas normalized by the
area of the surface. We can also compute asymptotic
statistics of perimeters of the cylinders under appropriate
normalization; for example statistics of the ratios of any
two perimeters. Note, that for the ratios of length
variables, the unit of measurement becomes irrelevant, in
particular,
$$
\frac{H_i}{H_j}=\frac{h_i}{h_j}
\qquad\text{and}\quad
\frac{b_i}{b_j}=\frac{w_i}{w_j}\,.
$$

We will use the notation $\E_\Gamma$
(respectively $\E_{\Gamma,H}$) to
denote the asymptotic expectation values of quantities
evaluated on square-tiled surfaces with given cylinder
decomposition associated to $\Gamma$ (respectively
associated to $\Gamma$ and given heights $H$).

Let us consider several simple examples.
Consider the following stable graph $\Phi$ in $\cG_{2,0}$
and the associated reduced multicurve:

\includegraphics{genus_two_graph_22.eps}
\includegraphics{genus_two_22.eps}
\begin{picture}(0,0)(-48,1)
\put(-19,-10){$b_1$}
\put(23,-16){$b_2$}
\put(6,-10){$0$}
\put(41,-10){$1$}
\put(-40,-10){$\Phi$}
\put(117,-10){$b_1$}
\put(164,-23){$b_2$}
\end{picture}
% \begin{picture}(0,0)(-189,9)
% \put(-24,0){$b_1$}
% \put(23,-14){$b_2$}
% \end{picture}
   %
%\vspace*{10pt}

\noindent
It was computed in Table~\ref{tab:2:0} that
$\Vol(\Phi)=\tfrac{1}{675}\cdot\pi^6$ and
$\Vol\cQ_{2,0}=\tfrac{1}{15}\cdot\pi^6$.
Thus, a random square-tiled surface in $\cQ_2$
(tiled with very large number of squares)
corresponds to the stable graph $\Phi$
with (asymptotic) probability $\tfrac{1}{45}$.

We have also computed in Table~\ref{tab:2:0}
the polynomial
$P_\Phi = \tfrac{2}{15} b_1 b_2^3$.
For any given $\boldsymbol{H}= (H_1, H_2)$ we get
\[
\E_{\Phi,\boldsymbol{H}}\left(\frac{b_1}{b_2}\right)
\ =\
\frac{2! \cdot 2!}{1! \cdot 3!}\ \frac{H_2}{H_1}
\ =\
\frac{2}{3} \cdot \frac{H_2}{H_1}\,.
\]
In other words, if we impose to a square-tiled surface
``of type $\Phi$'' to have cylinders of the same height,
then the perimeter $b_2$ of the second cylinder is in average $\frac{2}{3}$ shorter
than the perimeter $b_1$ of the first cylinder. However, if
we impose a large height to the first cylinder,
its perimeter becomes proportionally short in the average,
which is quite natural.

What might seem somehow counterintuitive is that
if we do not fix $H$, we obtain
\[
\E_\Phi\left(\frac{b_1}{b_2}\right)
= \frac{2}{3} \cdot \frac{\zeta(3)^2}{\zeta(2) \zeta(4)}
\approx 3.90238\,,
\quad \text{while} \quad
\E_\Phi\left(\frac{b_2}{b_1}\right) = +\infty\,.
\]

Now consider the following graph $\Petal_2(2)$ with two edges:

\includegraphics{genus_two_graph_21.eps}
\begin{picture}(0,0)(-40,0)
\put(1,-19){$b_1$}
\put(62,-19){$b_2$}
\put(33,-30){$0$}
\put(-30,-19){$\Petal_2(2)$}
\end{picture}

\includegraphics{genus_two_21.eps}
\begin{picture}(0,0)(-189,9)
\put(-24,0){$b_1$}
\put(66,0){$b_2$}
\end{picture}
\vspace*{30pt}

\noindent
It was computed in Table~\ref{tab:2:0} that
$P_{\Petal_2(2)} = \tfrac{8}{5} (b_1^3 b_2 + b_1 b_2^3)$. Then
for any given $\boldsymbol{H} = (H_1, H_2)$ we have
\[
\E_{\Gamma,\boldsymbol{H}}\left(\frac{b_1}{b_2}\right) =
\frac{\frac{4!}{H_1^5 H_2} + \frac{2!\cdot 2!}{H_1^3 H_2^3}}
{\frac{3!}{H_1^4 H_2^2} + \frac{3!}{H_1^2 H_2^4}}
=
\frac{2 H_2 \left(H_1^2+6 H_2^2\right)}{3 H_1
   \left(H_1^2+H_2^2\right)}\,.
\]
In particular, in the symmetric case, when $H_1=H_2$, we have
$$
\E_{\Gamma,\boldsymbol{H}}\left(\frac{b_1}{b_2}\right) =
\E_{\Gamma,\boldsymbol{H}}\left(\frac{b_2}{b_1}\right) =
\frac{7}{3}\,.
$$

Note also, that we get for free the averaged version of
\cite[Theorem~1.2]{Mirzakhani:statistics}. Namely, when the
stable graph $\Gamma\in\cG_{g,0}$ has maximal possible
number $3g-3$ of vertices (i.e., when the corresponding
multicurve provides a pants decomposition of the surface),
equation~\eqref{eq:P:Gamma} for $P_\Gamma$
takes the following form:
$$
P_\Gamma(\boldsymbol{b})
=(\text{numerical factor})\cdot
b_1\dots b_{3g-3}\cdot\prod_{v\in V(\Gamma)}
N_{0,3}(\boldsymbol{b}_v)\,.
$$
Since $N_{0,3}=1$ identically, we conclude that for
$H_1=H_2=\dots=H_{3g-3}$ the density function of lengths
statistics is the product $b_1\cdot\dots\cdot b_{3g-3}$ up
to a constant normalization factor. Mirzakhani proves in
\cite[Theorem~1.2]{Mirzakhani:statistics} that the same
asymptotic length statistics is valid for any individual
hyperbolic surface in $\cM_g$ (and not only in average, as
we do).

We complete this section considering two examples
describing statistics of heights of the maximal
horizontal cylinders of a random square-tiled surfaces
(equivalently, statistics of weights of a random
integer multicurve in $\cML_{g,n}$. We start with the following
elementary Lemma.

\begin{Lemma}
\label{lm:height:of:one:cylinder}
Consider a random square-tiled surface in $\cQ_{g,n}$
having a single maximal horizontal cylinder. The asymptotic
probability that this cylinder is represented by a single band
of squares (i.e. that $H_1=1$) equals
$\cfrac{1}{\zeta(6g-6+2n)}$.
\end{Lemma}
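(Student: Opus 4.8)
The plan is to reduce everything to the shape of the single-variable polynomial $P_\Graph$ attached to a single-edge stable graph, and then to read off the probability as the ratio of a single value of $\cY$ to the value of $\cZ$. First I would observe that a square-tiled surface with a single maximal horizontal cylinder corresponds exactly to a stable graph $\Graph\in\cG_{g,n}$ with a single edge, so $|E(\Graph)|=1$ and $P_\Graph$ is a polynomial in the single perimeter variable $b_1$. By the degree count recorded in Remark~\ref{rm:Vol:power:of:pi}, $P_\Graph$ is homogeneous of degree $6g-6+2n-|E(\Graph)|=6g-7+2n$, and since a homogeneous polynomial in one variable is a monomial we obtain
\[
P_\Graph(b_1)=C_\Graph\,b_1^{\,d-1},\qquad d:=6g-6+2n,
\]
for some rational constant $C_\Graph>0$. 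The crucial point is that the exponent $d-1$ is the same for every single-edge graph, whether the edge is a loop or a bridge, because the general degree formula depends only on $g$, $n$ and $|E(\Graph)|$.

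Next I would apply the two formulae of Theorem~\ref{th:volume}. The contribution of surfaces of type $\gamma(\Graph,H)$ with fixed height $H=H_1$ is
\[
\Vol\big(\gamma(\Graph,H)\big)=\cY(H)(P_\Graph)=C_\Graph\,\frac{(d-1)!}{H^{d}}\,,
\]
while the total contribution of this graph, summed over all heights, is
\[
\Vol(\Graph)=\cZ(P_\Graph)=C_\Graph\,(d-1)!\,\zeta(d)\,.
\]
Summing over the finitely many single-edge graphs and writing $C:=\sum_{|E(\Graph)|=1}C_\Graph$, the total weight carried by single-cylinder surfaces of height $H$ is $C\,(d-1)!/H^{d}$, and the total weight of all single-cylinder surfaces is $C\,(d-1)!\,\zeta(d)$. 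Interpreting the volume contributions as asymptotic frequencies, which is legitimate by the convergence statement around~\eqref{eq:Vol:Q:as:sum:of:Vol:gamma}, the asymptotic probability that $H_1=1$ is the ratio
\[
\frac{C\,(d-1)!\,/\,1^{d}}{C\,(d-1)!\,\zeta(d)}=\frac{1}{\zeta(d)}=\frac{1}{\zeta(6g-6+2n)}\,,
\]
so that the constant $C$, and hence all dependence on the individual graphs, cancels.

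The only genuinely delicate point I expect is bookkeeping at the level of the definitions rather than any analysis: one must verify that $P_\Graph$ really does collapse to a single monomial in $b_1$ for every admissible single-edge graph. For a loop this requires the evaluation $N_{g-1,n+2}(b_1,b_1,0,\dots,0)$, and for a bridge the product $N_{g_1,n_1}(b_1,0,\dots,0)\cdot N_{g_2,n_2}(b_1,0,\dots,0)$; in each case only the pure top power of $b_1$ survives once the leg-variables are set to zero, and the total exponent equals $d-1$ by the relations $\sum_v g_v+|E(\Graph)|-|V(\Graph)|+1=g$ and $\sum_v n_v=2|E(\Graph)|+n$. Once this is checked, the cancellation of $C_\Graph$ is automatic and the value $1/\zeta(6g-6+2n)$ drops out with no further computation.
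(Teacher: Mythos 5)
Your proposal is correct and follows essentially the same route as the paper: the paper's (one-line) proof likewise observes that any single-edge stable graph $\Graph$ has $P_\Graph=(\text{numerical factor})\cdot b_1^{6g-7+2n}$, from which the ratio $\cY(1)(P_\Graph)/\cZ(P_\Graph)=1/\zeta(6g-6+2n)$ is immediate. Your write-up merely makes explicit the details the paper leaves implicit — the degree count $6g-6+2n-|E(\Graph)|$, the evaluation of $\cY(\boldsymbol{H})$ and $\cZ$ on the monomial, and the cancellation of the constant $C_\Graph$ after summing over the finitely many single-edge graphs — all of which are accurate.
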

\begin{proof}
When a stable graph
$\Gamma\in\cG_{g,n}$ has a single edge $b_1$,
Formula~\eqref{eq:P:Gamma} gives
$P_\Gamma=(\text{numerical factor})
\cdot b_1^{6g-7+2n}$,
and the Lemma follows.
\end{proof}

In terms of multicurves this means that a random
single-component integral multicurve $n\gamma$ in $\cML(\Z)$ (where
$n\in\N$ and $\gamma$ is a simple closed curve) is reduced
(i.e. $n=1$) with asymptotic probability
$\frac{1}{\zeta(6g-6+2n)}$.

Note that $\zeta(x)$ tends to $1$ exponentially rapidly as
the real-valued argument $x$ grows. Thus, our result
implies, that when at least one of $g$ or $n$ is large
enough, a random one-cylinder square-tiled surface is tiled
with a single horizontal band of squares with a very large
probability, and a random single-component integral
multicurve is just a simple closed curve with
a very large probability.

The polynomial $P_\Gamma$ enables to compute analogous
probabilities for any given stable graph $\Gamma$. For
example, a random square-tiled surface in $\cQ_2$
associated to the stable graph $\Phi$, considered earlier
in this section, has both cylinders of height $H_1=H_2=1$
with probability
$$
\frac{\cY(1,1)(P_\Phi)}{\cZ(P_\Phi)}
=\frac{1}{\zeta(2)\zeta(4)}
=\frac{540}{\pi^6}\approx 0.561687\,.
$$
A random square-tiled surface in $\cQ_2$ associated to the
stable graph $\Petal_2(2)$, considered earlier in this
section, has heights $H_i$ of both horizontal cylinders
bounded by $2$ with probability
\begin{multline*}
\frac{\cY(1,1)(P_{\Petal_2(2)})
+\cY(1,2)(P_{\Petal_2(2)})
+\cY(2,1)(P_{\Petal_2(2)})
+\cY(2,2)(P_{\Petal_2(2)})}{\cZ(P_{\Petal_2(2)})}
\ =\\=\
\frac{2+\frac{2}{16}+\frac{2}{4}+\frac{2}{64}}{2\zeta(2)\zeta(4)}
=\frac{\frac{85}{64}}{\zeta(2)\zeta(4)}
\approx 0.745991\,.
\end{multline*}

%------------------------------------------------------------
\subsection{Random square-tiled surfaces
and multicurves in large genera}
\label{ss:large:genera}

To describe statistical geometry of square-tiled surfaces
in $\cQ_{g,n}$ for any given couple $(g,n)$, one has to
consider all stable graphs $\Gamma$ in $\cQ_{g,n}$ and
apply the technique developed in
Section~\ref{ss:Combinatorial:geometry:of:random:st:surfaces}
to each graph. However, the number of stable graphs grows
rapidly as at least one of $g$ or $n$ grows. In this
section we show how can one overcome the problem of growing
multitude of stable graphs and we describe large genus
asymptotics of statistical geometry of square-tiled
surfaces and of geodesic multicurves. For simplicity we let
$n=0$ everywhere throughout this section. In terms of
square-tiled surfaces this means that the surface does not
have conical singularities of angle $\pi$. In terms of
hyperbolic multicurves this means that the underlying
hyperbolic surface does not have cusps.

We start with the simplest case of
one-cylinder square-tiled surfaces, or, equivalently,
with the case of simple closed curves.

\begin{Theorem}
\label{th:separating:over:non:separating}
The frequency of separating simple closed geodesics
on a closed hyperbolic surface of large genus $g$ is exponentially
small with respect to the frequency of non-separating
simple closed geodesics:
\begin{equation}
\label{eq:log:sep:over:non:sep}
\frac{c(\gamma_{sep})}{c(\gamma_{nonsep})}
\sim
\sqrt{\frac{2}{3\pi g}}\cdot\frac{1}{4^g}\quad\text{as }g\to+\infty\,,
\end{equation}
\end{Theorem}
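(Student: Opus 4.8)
The plan is to convert the statement about geodesic frequencies into a statement about volume contributions of two explicit single--edge stable graphs, and then to extract the large--genus asymptotics of the two contributions separately. By Theorem~\ref{th:our:density:equals:Mirzakhani:density} one has $c(\gamma)=\Vol(\gamma)/const_{g,0}$ with $const_{g,0}$ depending only on $g$, so the normalization cancels and $\tfrac{c(\gamma_{sep})}{c(\gamma_{nonsep})}=\tfrac{\Vol(\gamma_{sep})}{\Vol(\gamma_{nonsep})}$. A non--separating simple closed curve corresponds to the stable graph with a single genus--$(g-1)$ vertex carrying one loop, while a separating curve corresponds to a bridge joining a genus--$g_1$ vertex to a genus--$g_2$ vertex with $g_1+g_2=g$, $g_i\ge 1$; the separating frequency is the sum over these splits. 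In every case $P_\Gamma$ is, by~\eqref{eq:P:Gamma}, a single monomial in the one edge variable $b_1$ (since $N_{g_i,1}(b_1)$ is a monomial and $N_{g-1,2}(b_1,b_1)$ is the diagonal restriction of a homogeneous polynomial), of degree $6g-7$. Hence $\cZ$ multiplies each by the common factor $(6g-7)!\,\zeta(6g-6)$, which cancels in the ratio together with the graph--independent prefactor $\tfrac{2^{6g-5}(4g-4)!}{(6g-7)!}$. After also accounting for the vertex factor $2^{-(|V|-1)}$ and $|\!\Aut(\Gamma)|$, I expect to obtain
\[
\frac{c(\gamma_{sep})}{c(\gamma_{nonsep})}=2^{5g-8}\,\frac{\sum_{g_1+g_2=g}c_{g_1}c_{g_2}}{T_g},
\]
where $c_g$ is the single coefficient of $N_{g,1}$ and $T_g:=N_{g-1,2}(1,1)$.

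For the numerator I would use the closed one--point evaluation $\langle\tau_{3g-2}\rangle_{g,1}=\tfrac{1}{24^g\,g!}$, which gives $c_g=\tfrac{1}{2^{5g-4}(3g-2)!\,24^g\,g!}$, so that $c_{g_1}c_{g-g_1}=\tfrac{1}{2^{5g-8}24^g}f(g_1)$ with $f(g_1)=\big((3g_1-2)!\,g_1!\,(3g_2-2)!\,g_2!\big)^{-1}$. The key (and perhaps counterintuitive) point is that $f$ is \emph{not} maximized at the extreme splits but at the balanced split $g_1\approx g/2$: super--additivity of the logarithms of these factorial products forces an interior maximum, and indeed $f(g/2)$ exceeds $f(1)$ by a factor of order $16^{g}$. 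A Laplace/Stirling analysis around $g_1=g/2$ (the second derivative of $-\log f$ there is $\sim 16/g$, giving a Gaussian of width $\sim\sqrt g$) then yields the asymptotics of $\sum_{g_1+g_2=g}f(g_1)$, after which the central--binomial asymptotics $\binom{2m}{m}\sim 4^m/\sqrt{\pi m}$ convert the half--genus factorials $(3g/2)!$ and $(g/2)!$ into $(3g)!$ and $g!$, producing an explicit power of $16^g$.

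The denominator $T_g=\sum_{d_1+d_2=3g-4}\tfrac{\langle\tau_{d_1}\tau_{d_2}\rangle_{g-1,2}}{d_1!\,d_2!}$ is the hard part. I would first rewrite the weights via $(2d+1)!!=(2d+1)!/(2^d d!)$, so that after substituting the large--genus asymptotics of the two--point numbers each summand becomes $2^{d}/(2d+1)!$ times a $d$--independent factor; the constrained sum $\sum_{d_1+d_2=3g-4}\tfrac{1}{(2d_1+1)!(2d_2+1)!}$ then collapses in closed form through the generating function $\sinh^2 x=\tfrac12(\cosh 2x-1)$, giving $\tfrac{2^{6g-7}}{(6g-6)!}$. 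The crux is justifying this substitution: one needs the large--genus asymptotic formula for $\langle\tau_{d_1}\tau_{d_2}\rangle_{g-1,2}$ to hold \emph{uniformly} over the whole range of $(d_1,d_2)$ that dominates the sum, namely $d_1\approx d_2\approx 3g/2$ where both indices are large. This uniform control is exactly the content of the Delecroix--Goujard--Zograf--Zorich/Aggarwal asymptotics for $\psi$--intersection numbers (alternatively it may be routed through the Mirzakhani--Zograf large--genus asymptotics of Weil--Petersson volumes); establishing it in the required summed form is the main obstacle of the proof.

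Finally I would assemble the two pieces. Both numerator and denominator are governed by \emph{balanced} configurations, and it is the competition between the $16^g$ arising from the balanced separating split and the $2^{6g}$ arising from $24^g\,T_g$ that produces the net decay $16^g/2^{6g}=4^{-g}$. Tracking the polynomial and $\sqrt{\,}$ corrections through Stirling, the half--integer factorials recombine via $\binom{2m}{m}\sim 4^m/\sqrt{\pi m}$ so that the full factorials $(3g)!\,g!$ cancel between numerator and denominator, leaving the numerical coefficient $\tfrac{2}{\sqrt{6\pi}}=\sqrt{\tfrac{2}{3\pi}}$ together with the factor $g^{-1/2}$. This yields exactly $\frac{c(\gamma_{sep})}{c(\gamma_{nonsep})}\sim\sqrt{\frac{2}{3\pi g}}\cdot\frac{1}{4^g}$, as claimed.
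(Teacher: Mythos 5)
Your proposal follows essentially the same route as the paper's own proof: reduce frequencies to volume contributions of the two single-edge stable graphs via Theorem~\ref{th:our:density:equals:Mirzakhani:density}; evaluate each separating contribution in closed form from Witten's one-point formula $\langle\tau_{3g-2}\rangle_{g,1}=1/(24^g\,g!)$ (the paper's Proposition~\ref{pr::contribution:Gamma:1}); sum over splits $g_1+g_2=g$ by a Gaussian/Laplace analysis centered at the balanced split --- your $\sum_{g_1}f(g_1)$ is, up to the overall factor $1/\big((3g-4)!\,g!\big)$, exactly the paper's $S(g)=\sum_{g_1}\binom{g}{g_1}\binom{3g-4}{3g_1-2}$ of Lemma~\ref{lm:sum:of:products:of:two:binomials}, and your second derivative $16/g$ matches the paper's variance $\sigma^2\sim g/16$; and, for the non-separating graph, substitute two-point correlator asymptotics and collapse the constrained sum --- your $\sinh^2$ identity is the same combinatorial fact as the paper's $\sum_k\binom{6g}{2k+1}=2^{6g-1}$. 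Modulo the harmless inconsistency between your two definitions of $T_g$ (as $N_{g-1,2}(1,1)$ versus $\sum\langle\tau_{d_1}\tau_{d_2}\rangle/(d_1!\,d_2!)$, which differ by $2^{5g-7}$), your bookkeeping assembles to the stated asymptotics; I checked that the final cancellation producing $\sqrt{2/(3\pi g)}\cdot 4^{-g}$ is correct.

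The genuine gap is the step you yourself flag and then outsource: uniform control of $\langle\tau_{d_1}\tau_{d_2}\rangle_{g-1,2}$ in the regime $d_1\approx d_2\approx 3g/2$ that dominates the sum, together with an upper bound over the \emph{entire} range of partitions (needed so the tails cannot spoil the two-sided estimate). None of the sources you name supplies this in citable form: Mirzakhani--Zograf treat correlators with bounded indices (and Weil--Petersson volumes, from which the bulk regime $d_i\sim g$ does not follow directly), while the uniform intersection-number asymptotics of Aggarwal postdate this paper. This missing ingredient is precisely the technical heart of the paper's argument: Proposition~\ref{pr:main:bounds} proves the two-sided bound $1-\tfrac{2}{6g-1}=a_{g,1}\le a_{g,k}\le a_{g,0}=1$ for \emph{all} $k$, starting from Zograf's explicit recursion~\eqref{eq:a:g:k:difference} for the differences $a_{g,k+1}-a_{g,k}$, and its proof (rewriting the recursion as $R\cdot P_i/Q$, monotonicity of $R(g,j)$, separate treatment of small $k$ and small $g$) occupies the first half of Section~\ref{s:2:correlators}. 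So your outline is the paper's proof minus its hardest ingredient; to complete it along your lines you would have to either reproduce that correlator analysis or locate a genuinely applicable uniform estimate, which at the time of writing did not exist in the literature.
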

Here and below the equivalence means that the ratio of the two
expressions tends to $1$ as $g\to+\infty$.

Theorem~\ref{th:separating:over:non:separating} is proved
in Section~\ref{ss:simple:closed:geodesics}. The proof is
based on the large genus asymptotic formulae for
$2$-correlators
$\langle \psi_1^{d_1}\psi_2^{d_2}\rangle$
uniform for all partitions $d_1+d_2=3g-1$.
This formula is
obtained in Section~\ref{s:2:correlators} using results
of~\cite{Zograf:2:correlators}.
\smallskip

\noindent
\textbf{Conjectural uniform large genus asymptotics of correlators.}
Conjecturally, analogous uniform large genus asymptotic
formulae are also valid for $n$-correlators $\langle
\psi_1^{d_1}\dots\psi_k^{d_n}\rangle$ for any fixed $n$ and
even for $n$ growing logarithmically with respect to $g$.
We discuss the corresponding conjectures in more details in
Appendix~\ref{s:Conjectural:asymptotics:of:correlators}. We
show that the following weaker form of this conjectures is
already sufficient to describe the statistical geometry of
square-tiled surfaces in $\cQ_g$ and of integer multicurves
in $\cML_g$ for large genera $g$.

Denote by $\Pi(3g-3-k,k)$ the set of partitions
$D_1+\dots+D_k=3g-3-k$ of a positive integer $3g-3-k$ into
$k$ nonnegative integers. For any $D\in\Pi(3g-3-k,k)$
we define
\begin{multline}
\Psi(D,k):=
\frac{(g-k)!(3g-3-k)!}{(6g-2k-5)!}
\cdot
\frac{24^{g-k}}{2^{6g-6-k}}
\cdot
\prod_{j=1}^k \frac{(2D_j+2)!}{D_j!}
\cdot\\
\cdot
\int_{\overline{\cM}_{g-k,2k}}
(\psi_1+\psi_2)^{D_1}\dots(\psi_{2k-1}+\psi_{2k})^{D_k}\,.
\end{multline}
By $[x]$ we denote the integer part of $x\in\R$.
For any $C>0$ we define the following two quantities:
\begin{align}
\psi_{min}(g,C)&:=
\min_{1\le k\le [C\log(g)]}
\
\min_{D\in\Pi(3g-3-k,k)}
\Psi(D,k)\,,
\\
\psi_{max}(g,C)&:=
\max_{1\le k\le [C\log(g)]}
\
\max_{D\in\Pi(3g-3-k,k)}
\Psi(D,k)\,.
\end{align}
\begin{Conjecture}
\label{conj:introduction:sum:of:correlators}
For any $C<2$ we have
\begin{equation}
\label{eq:min:max:psi:g:C}
\lim_{g\to+\infty} \psi_{min}(g,C)
=\lim_{g\to+\infty} \psi_{min}(g,C)
=1\,.\end{equation}
\end{Conjecture}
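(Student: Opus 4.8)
The plan is to reduce the statement to the conjectural uniform large-genus asymptotics of $\psi$-class correlators discussed in Appendix~\ref{s:Conjectural:asymptotics:of:correlators}, and then to verify by a direct computation that the normalizing prefactor in $\Psi(D,k)$ is chosen precisely so that the resulting leading term equals $1$. First I would expand each factor in the integral by the binomial theorem, $(\psi_{2j-1}+\psi_{2j})^{D_j}=\sum_{a_j=0}^{D_j}\binom{D_j}{a_j}\psi_{2j-1}^{a_j}\psi_{2j}^{D_j-a_j}$, so that
\[
\int_{\overline{\cM}_{g-k,2k}}\prod_{j=1}^k(\psi_{2j-1}+\psi_{2j})^{D_j}
=\sum_{0\le a_j\le D_j}\Big(\prod_{j=1}^k\binom{D_j}{a_j}\Big)\,
\big\langle\tau_{a_1}\tau_{D_1-a_1}\cdots\tau_{a_k}\tau_{D_k-a_k}\big\rangle\,.
\]
Here the correlators live on $\overline{\cM}_{g-k,2k}$, whose dimension $3(g-k)-3+2k=3g-3-k=\sum_j D_j$ matches the total degree, so each term is a genuine top intersection number.

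Next I would insert the uniform asymptotic formula $\langle\tau_{d_1}\cdots\tau_{d_n}\rangle_G\sim (6G-5+2n)!!\big/\big(24^{G}\,G!\,\prod_{i}(2d_i+1)!!\big)$ with $G=g-k$ and $n=2k$, for which $6G-5+2n=6g-2k-5$. Since all intersection numbers are positive, a uniform relative error is preserved under the positive summation over the multinomial indices, so it suffices to sum the leading terms. Using $(2a+1)!!=(2a+1)!/(2^a a!)$ one checks that $\binom{D}{a}\big/\big((2a+1)!!\,(2(D-a)+1)!!\big)=D!\,2^{D}\big/\big((2a+1)!\,(2(D-a)+1)!\big)$, whence the sum factorizes over $j$. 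The inner sum is evaluated in closed form from $\sinh^2 x=\tfrac12(\cosh 2x-1)$, which gives $\sum_{a=0}^{D}1\big/\big((2a+1)!\,(2(D-a)+1)!\big)=2^{2D+1}/(2D+2)!$, so each factor equals $D_j!\,2^{3D_j+1}/(2D_j+2)!$.

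Substituting back into the definition of $\Psi(D,k)$, the factors $(g-k)!$ and $24^{g-k}$ cancel, the product $\prod_j (2D_j+2)!/D_j!$ cancels against the closed-form sums and leaves $\prod_j 2^{3D_j+1}=2^{9g-9-2k}$, and using $(6g-2k-5)!!/(6g-2k-5)!=1\big/\big(2^{3g-k-3}(3g-k-3)!\big)$ together with the identity $3g-3-k=3g-k-3$ one finds that every remaining factorial and power of two cancels, so $\Psi(D,k)\to 1$. The point is that this cancellation is independent of the particular partition $D$ and enters $k$ only through the asymptotic input, which is exactly what is needed to conclude the uniform convergence of both $\psi_{min}(g,C)$ and $\psi_{max}(g,C)$ to $1$.

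The hard part is the asymptotic input itself: one needs the correlator formula above with a relative error that is uniform over \emph{all} partitions and over all $k\le[C\log(g)]$, that is, for up to $2k\le 2[C\log(g)]$ marked points on a surface of genus $g-k$. This is precisely the uniform statement formulated conjecturally in Appendix~\ref{s:Conjectural:asymptotics:of:correlators}, and the restriction $C<2$ reflects the range of the (logarithmically growing) number of marked points for which such a uniform estimate is expected to hold. Establishing this uniform control, rather than the pointwise asymptotics of an individual correlator, is the genuine obstacle, and is the reason the statement is phrased as a conjecture.
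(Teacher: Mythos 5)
Your reduction is correct and is essentially the paper's own treatment: since this statement is a Conjecture, the paper never proves it unconditionally, but in Appendix~\ref{s:Conjectural:asymptotics:of:correlators} it derives the equivalent reformulation (Conjecture~\ref{conj:sum:of:correlators}, where $\Psi(D,k)=c_{g-k,k}(\boldsymbol{D})$) from the stronger Conjecture~\ref{conj:normalized:correlators:log:g:strong} by exactly your steps --- binomial expansion of $(\psi_{2j-1}+\psi_{2j})^{D_j}$, insertion of the uniform correlator ansatz~\eqref{eq:ansatz}, evaluation of the inner sums via $\sum_{m=0}^{n-1}\binom{2n}{2m+1}=2^{2n-1}$ (your $\sinh^2$ identity in disguise), and the same cancellation of prefactors. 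Your closing identification of the uniform asymptotics of correlators, over all partitions and all $k\le C\log(g)$, as the genuinely open ingredient is precisely the paper's reason for leaving the statement as a conjecture rather than a theorem.
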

\begin{Remark}
For what follows it would be sufficient to prove
Conjecture~\ref{conj:introduction:sum:of:correlators} for
any particular $C>\tfrac{1}{2}$. Moreover, for the purposes
of this article a weaker conjecture would be sufficient:
we use only certain weighted sums of
correlators as above and, essentially, only for $k$ in the interval
$\big[(\tfrac{1}{2}-\epsilon)\log(g);
\big[(\tfrac{1}{2}+\epsilon)\log(g)]$. Thus, for $k$ in the
complement of this range less restrictive estimates for
correlators would also work.
\end{Remark}
\smallskip

%------------------------------------------------------------
\noindent
\textbf{Two multiple harmonic sums.~}
Consider the following two sums
\begin{align}
\label{eq:multiple:harmonic:sum:def}
H_k(m)=\sum_{j_1+\dots+j_k=m}&
\frac{1}{j_1\cdot j_2\cdots j_k}\,,
\\
\label{eq:multiple:harmonic:zeta:sum:def}
Z_k(m)=\sum_{j_1+\dots+j_k=m}&
\frac{\zeta(2j_1)\cdots\zeta(2j_k)}
{j_1\cdot j_2\cdots j_k}\,.
\end{align}
Their asymptotic expansions are studied in detail
in Section~\ref{s:Two:harmonic:sums}.
Preparing the text we realized that we do not have
a proof of the appropriate
uniform version of the corresponding
asymptotic extension,
so we state it as a Conjecture.
This
Conjecture is a strengthened version of
Theorem~\ref{th:multiple:harmonic:zeta:sum:full:expansion}
from Appendix~\ref{ss:fine:asymptotic:expansions}.

We define in~\eqref{eq:def:A:j} (respectively
in~\eqref{eq:def:B:j}) certain specific sequences of
numbers $A_0, A_1, \dots$ (respectively $B_0, B_1, \dots$)
used in the Conjecture below.

\begin{Conjecture}
\label{conj:uniform:bound:for:error:term}
For any $C$ in the interval $]0;2[$ and for any integer $k$
satisfying $1\le k\le C\log(m)$, the following asymptotic
expansions are valid:
\begin{align}
\label{eq:multiple:harmonic:sum:full:expansion:log}
H_k(m)
&=\frac{k!}{m}\cdot
\left(\sum_{j=0}^{k-1}
\frac{A_j}{(k-1-j)!}
\cdot\big(\log m\big)^{k-1-j}
+\epsilon_k^H(m)\right)
\,,
\\
\label{eq:multiple:harmonic:zeta:sum:full:expansion:log}
Z_k(m)
&=\frac{k!}{m}\cdot
\left(\sum_{j=0}^{k-1}
\frac{B_j}{(k-1-j)!}
\cdot\big(\log m\big)^{k-1-j}
+\epsilon_k^Z(m)\right)
\,,
\end{align}
where
\begin{align}
\label{eq:max:epsilon:H}
\lim_{m\to+\infty}
\frac{1}{\sqrt{m}}\cdot
\sum_{k=1}^{[C\log(m)]}\frac{\epsilon_k^H(m)}{k!\cdot 2^{k-1}}=0\,.
\\
\label{eq:max:epsilon:Z}
\lim_{m\to+\infty}
\frac{1}{\sqrt{m}}\cdot
\sum_{k=1}^{[C\log(m)]}\frac{\epsilon_k^Z(m)}{k!\cdot 2^{k-1}}=0\,.
\end{align}
\end{Conjecture}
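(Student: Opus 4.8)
The plan is to pass to the ordinary generating functions in $m$ and extract coefficients by singularity analysis, keeping the dependence on $k$ explicit. Since each factor of $H_k(m)$ contributes $\sum_{j\ge1}x^j/j=-\log(1-x)$, one has
\[
\sum_{m\ge k}H_k(m)\,x^m=\bigl(-\log(1-x)\bigr)^k .
\]
For $Z_k$ the Weierstrass product $\prod_{n\ge1}(1-x/n^2)=\frac{\sin(\pi\sqrt x)}{\pi\sqrt x}$ together with $-\log(1-x/n^2)=\sum_{j\ge1}x^j/(j\,n^{2j})$ gives, after summing over $n$,
\[
\sum_{m\ge k}Z_k(m)\,x^m=\left(-\log\frac{\sin(\pi\sqrt x)}{\pi\sqrt x}\right)^{\!k}.
\]
Both have their dominant singularity at $x=1$; for $Z_k$ the next singularity sits at $x=4$, so a full $\Delta$-domain is available for a transfer theorem. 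Near $x=1$ one writes $-\log\frac{\sin(\pi\sqrt x)}{\pi\sqrt x}=-\log(1-x)+R(x)$, where $R$ is analytic at $1$ with $R(1)=\log 2$ (because $\frac{\sin(\pi\sqrt x)}{\pi\sqrt x}=(1-x)h(x)$ with $h(1)=\tfrac12$). Thus both generating functions are of the shape $(-\log(1-x)+R)^k$, with $R\equiv0$ for $H_k$ and $R(1)=\log 2$ for $Z_k$.

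\textbf{Extracting the main terms.} The key device is the $\alpha$-derivative representation $(-\log(1-x)+R(x))^k=\partial_\alpha^k\,(1-x)^{-\alpha}e^{\alpha R(x)}\big|_{\alpha=0}$. Singularity analysis yields, uniformly for $\alpha$ near $0$,
\[
[x^m]\,(1-x)^{-\alpha}e^{\alpha R(x)}=\frac{m^{\alpha-1}}{\Gamma(\alpha)}\,e^{\alpha R(1)}\Bigl(1+O(1/m)\Bigr),
\]
and only the value $\phi(1)=e^{\alpha R(1)}$ enters at the leading order $m^{\alpha-1}$ (the Taylor data $R'(1),\dots$ feed the subleading orders $m^{\alpha-1-r}/\Gamma(\alpha-r)$). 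Writing $\frac{m^{\alpha}}{\Gamma(\alpha)}=\alpha\,e^{\alpha\log m}/\Gamma(1+\alpha)$ and differentiating $k$ times at $\alpha=0$ produces exactly $\frac{k!}{m}\sum_{j=0}^{k-1}\frac{A_j}{(k-1-j)!}(\log m)^{k-1-j}$, where $A_j$ are the Taylor coefficients of $1/\Gamma(1+\alpha)$; for $Z_k$ the factor $e^{\alpha R(1)}=2^{\alpha}$ replaces $m^\alpha$ by $(2m)^\alpha$ and yields the same shape with $B_j$ the Taylor coefficients of $2^{\alpha}/\Gamma(1+\alpha)$. This reproduces the main terms of \eqref{eq:multiple:harmonic:sum:full:expansion:log}--\eqref{eq:multiple:harmonic:zeta:sum:full:expansion:log} and matches the definitions \eqref{eq:def:A:j}--\eqref{eq:def:B:j}.

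\textbf{The error terms and the main obstacle.} By construction $\epsilon_k^H(m)$ and $\epsilon_k^Z(m)$ are $\partial_\alpha^k$ of the $O(1/m)$ singularity-analysis remainder, evaluated at $\alpha=0$. The genuine difficulty is uniformity in $k$ over $1\le k\le C\log m$ together with the weighted summation in \eqref{eq:max:epsilon:H}--\eqref{eq:max:epsilon:Z}. The natural route is a Cauchy estimate $\epsilon_k^H(m)=\frac{k!}{2\pi i}\oint_{|\alpha|=\rho}\frac{\mathrm{Rem}(\alpha,m)}{\alpha^{k+1}}\,d\alpha$ with a radius $\rho\asymp k/\log m$ chosen to balance the factor $m^\alpha$, using a remainder bound from the transfer theorem that is uniform for $|\alpha|\le\rho_0$ on the $\Delta$-domain. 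The weights $1/(k!\,2^{k-1})$ then turn $\sum_{k\le[C\log m]}\epsilon_k/(k!\,2^{k-1})$ into a rapidly damped series, and one must show --- using the constraint $C<2$ to control its length --- that the total is $o(\sqrt m)$. The crux, which is exactly the point the conjecture isolates, is to make the transfer-theorem remainder explicit and uniform as a function of the parameter $\alpha$, and to keep the $k$-th $\alpha$-derivative from accumulating uncontrolled powers of $\log m$; a uniform-in-parameter version of singularity analysis, or a saddle-point analysis of the Cauchy integral in the central range $k\asymp\tfrac12\log m$, is what is required.

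\textbf{An alternative route for $H_k$.} For the first sum one may instead use the identity $H_k(m)=\frac{k!}{m!}\,c(m,k)$, where $c(m,k)$ is the unsigned Stirling number of the first kind, and invoke the known uniform asymptotics of $c(m,k)$ in the central range $k\asymp\log m$ (the cycle-count distribution of a random permutation). The remaining task is to develop the parallel ``generalized Stirling'' asymptotics for $Z_k$, whose generating function differs only by the analytic factor $e^{\alpha R(x)}$; this isolates precisely the same uniform remainder control as the obstacle above, so I expect no genuine shortcut and would concentrate the effort on the uniform transfer estimate.
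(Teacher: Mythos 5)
Your proposal does not prove the statement --- and, importantly, neither does the paper: this is stated as a \emph{Conjecture}, and the authors say explicitly, just before formulating it, that they ``do not have a proof of the appropriate uniform version'' and can only prove the fixed-$k$ analogue, Theorem~\ref{th:multiple:harmonic:zeta:sum:full:expansion}. What you actually establish --- the generating functions $(-\log(1-x))^k$ and $\bigl(-\log\tfrac{\sin(\pi\sqrt{x})}{\pi\sqrt{x}}\bigr)^k$ (the latter coincides with the paper's $\Phi(t)^k$ via the reflection formula $\Gamma(1-\sqrt{t})\,\Gamma(1+\sqrt{t})=\pi\sqrt{t}/\sin(\pi\sqrt{t})$), the local decomposition $-\log(1-x)+R(x)$ with $R(1)=\log 2$, and the extraction of the main terms through $\partial_\alpha^k(1-x)^{-\alpha}e^{\alpha R(x)}\big|_{\alpha=0}$, identifying $A_j$ and $B_j$ as the Taylor coefficients of $1/\Gamma(1+\alpha)$ and $2^{\alpha}/\Gamma(1+\alpha)$ --- is a clean repackaging of the paper's own proofs of Theorem~\ref{th:multiple:harmonic:zeta:sum:full:expansion} and of Proposition~\ref{prop:generating:function:for:A:and:B}. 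But that part is not the content of the Conjecture: the expansions \eqref{eq:multiple:harmonic:sum:full:expansion:log}--\eqref{eq:multiple:harmonic:zeta:sum:full:expansion:log} merely \emph{define} the error terms $\epsilon_k^H(m)$ and $\epsilon_k^Z(m)$; the assertion is the uniform-in-$k$ control \eqref{eq:max:epsilon:H}--\eqref{eq:max:epsilon:Z}.

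On that assertion your text offers a strategy (a Cauchy integral over $|\alpha|=\rho$ with $\rho\asymp k/\log m$, a transfer-theorem remainder uniform in the parameter $\alpha$, a saddle-point analysis in the central range $k\asymp\tfrac12\log m$) but no actual estimate: no uniform bound on the singularity-analysis remainder is derived, the hypothesis $C<2$ never enters any inequality, and the required conclusion that the weighted error sums are $o(\sqrt{m})$ is never reached. You acknowledge this yourself when you write that the uniform transfer estimate ``is what is required.'' So the proposal is an honest reduction of the Conjecture to a plausible analytic program, not a proof; and the missing step is precisely the one the authors could not carry out, which is why the statement appears in the paper as a conjecture rather than a theorem. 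If you wish to push further, the place to invest is a quantitative, $\alpha$-uniform version of the Flajolet--Odlyzko transfer theorem on a $\Delta$-domain (or, for $H_k$, the known uniform central-range asymptotics of Stirling numbers of the first kind, extended to the perturbed generating function $e^{\alpha R(x)}$), together with an explicit verification that the accumulated $k$-th $\alpha$-derivatives of the remainder, summed with weights $1/(k!\,2^{k-1})$ over $k\le C\log m$, stay $o(\sqrt{m})$.
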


\begin{Remark}
In
Theorem~\ref{th:multiple:harmonic:zeta:sum:full:expansion}
from Appendix~\ref{ss:fine:asymptotic:expansions} we prove
that for any fixed $k$ we have $\epsilon_k^H(m)=o(1)$ and
$\epsilon_k^Z(m)=o(1)$ as $g\to+\infty$.
Conjecture~\ref{conj:uniform:bound:for:error:term} claims
that convergence is uniform for all $k\in[1;C\log(m)]$ in
the sense of~\eqref{eq:max:epsilon:H} and~\eqref{eq:max:epsilon:Z}.
\end{Remark}

%------------------------------------------------------------
\noindent\textbf{Geometry of random square-tiled surfaces
and of random multicurves in large genera.~}
Recall that stable graphs $\Gamma\in\cG_g=\cG_{g.0}$ are in
the natural correspondence with $\Mod_g$-orbits of reduced
integral multicurves
$\gamma_{\mathit{reduced}}=\gamma_1+\gamma_2+\dots+\gamma_k$,
where $\gamma_i$ and $\gamma_j$ are pairwise non-isotopic
for $i\neq j$. We say that an integral multicurve
$\gamma=a_1\gamma_1+\dots+a_k\gamma_k$ corresponds to the
stable graph $\Gamma$ if the corresponding reduced
multicurve
$\gamma_{\mathit{reduced}}=\gamma_1+\gamma_2+\dots+\gamma_k$
does. Recall also that for any stable graph $\Gamma$ the
number of vertices of $\Gamma$ is exactly the number of
connected components of the complement
$S_g\setminus\gamma_{reduced}$ (i.e. the number of pieces
in which $\gamma_{reduced}$ chops the topological surface
$S_g$), see
Section~\ref{ss:Square:tiled:surfaces:and:associated:multicurves}.

We use the notion ``\textit{random integral multicurve}'' in $\cML_g(\Z)$
and  ``\textit{random square-tiled surface}'' (tiled with large number
of squares) in the sense described in Section~\ref{ss:Combinatorial:geometry:of:random:st:surfaces}.

\begin{CondTheorem}
\label{cond:th:does:not:separate}
Conjectures~\ref{conj:Vol:Qg},
\ref{conj:introduction:sum:of:correlators}
and~\ref{conj:uniform:bound:for:error:term} imply that a
random integral multicurve $\gamma\in\cML_g(\Z)$ does not
separate the surface $S_g$ (i.e. that
$S_g\setminus\gamma_{reduced}$ is connected) with
probability, which tends to $1$ when $g\to+\infty$.

Equivalently, the same conjectures imply that
all conical points of a random square-tiled
surface in $\cQ_g$ belong to the same horizontal (and to
the same vertical) layer with probability, which tends to
$1$ as $g\to+\infty$.
\end{CondTheorem}
\begin{proof}
For any positive integer $k$ within bounds $1\le k\le g$,
denote by $\Petal_k(g)$ the stable graph in $\cG_g$ having
single vertex decorated with the integer $g-k$, and having
$k$ edges (loops). Theorem~\ref{th:volume} combined with
Conjecture~\ref{conj:introduction:sum:of:correlators} lead
to a simple expression for $\Vol(\Petal_k(g))$ in terms of the
sum~\eqref{eq:multiple:harmonic:zeta:sum:def}, see
Theorem~\ref{th:bounds:for:Vol:Gamma:k:g} and
Corollary~\ref{cor:Vol:g:k} in
Appendix~\ref{s:volume:contribution:from:graphs:with:single:vertex}.

By Conditional Theorem~\ref{cond:th:sum:over:one:cylinder:graphs}
in Appendix~\ref{ss:sum:over:single:vertex:graphs},
Conjectures~\ref{conj:introduction:sum:of:correlators}
and~\ref{conj:uniform:bound:for:error:term} imply together
that
$$
\sum_{k=1}^{[C\log(m)]} \Vol(\Petal_k(g))
\sim
\frac{4}{\pi}
\cdot\left(\frac{8}{3}\right)^{4g-4}
\quad\text{as }g\to+\infty\,.
$$

On the other hand, by Conjecture~\ref{conj:Vol:Qg}
$$
\sum_{\Gamma\in\cG_g} \Vol(\Gamma)
\sim
\frac{4}{\pi}
\cdot\left(\frac{8}{3}\right)^{4g-4}
\quad\text{as }g\to+\infty\,,
$$
which means that asymptotically, as $g\to+\infty$,
the total contribution to the Masur--Veech volume
$\Vol\cQ_g$ of all stable graphs in $\cG_g$
which are different from
$\Petal_k(g)$ with $k\le C\log(g)$,
tends to zero. The latter observation
completes the proof.
\end{proof}

%------------------------------------------------------------
\noindent\textbf{Number of cylinders of a random square-tiled surface.
Number of primitive components of a random multicurve.~}
Consider the number $k$ of maximal horizontal cylinders of
a random square-tiled surface in $\cQ_g$ (equivalently, the
number of components of the reduced multicurve
$\gamma_{reduced}=\gamma_1+\dots+\gamma_k$ associated to a
random integral multicurve $\gamma\in\cML_g(\Z)$) as an
integer random variable with values in $\N$ (where, by
definition, the probability that $k>3g-3$ is equal to
zero). Consider the corresponding probability distribution
$\cP$.

Suppose that there exists a constant $C$ in the interval
$]\tfrac{1}{2};2[$ for which both
Conjectures~\ref{conj:introduction:sum:of:correlators}
and~\ref{conj:uniform:bound:for:error:term} are valid. Let
us consider now only square-tiled surfaces (integral
multicurves) in $\cQ_g$ which are associated to one of the
graphs $\Petal_k(g)$ with $k$ in the range $1\le k\le
C\log(g)$. Consider corresponding random square-tiled
surfaces (equivalently, random multicurves) and consider
the number of maximal horizontal cylinders (equivalently,
the number of components of the associated reduced
multicurve) as a random variable. Consider the resulting
conditional probability distribution $\cP_{cond}$.

We prove in Conditional Theorem~\ref{th:Poisson} in
Appendix~\ref{ss:Approximation:by:Poisson} that
Conjectures~\ref{conj:Vol:Qg},
\ref{conj:introduction:sum:of:correlators}
and~\ref{conj:uniform:bound:for:error:term} imply that both
probability distributions $\cP_{cond}$ and $\cP$ converge
in total variation to the Poisson distribution with
parameter
\begin{equation*}
%\label{eq:lambda:bis}
\lambda(g)=\frac{\log(6g-6)+\gamma}{2}+(\log2-1)\,,
\end{equation*}
when $g\to\infty$.
(Here we add $1$ to the Poisson random
variable, so that it takes values in $\{1,2,3,\dots\}$ and
not in $\{0,1,2,\dots\}$ as usually.)

Note that the Poisson distribution with parameter $\lambda$
slowly tends to the normal distribution when
$\lambda\to+\infty$. Thus, in plain terms, the above
observation implies that the number $k$ of cylinders of a
random square-tiled surface in $\cQ_g$ (equivalently, the
number $k$ of components of a reduced multicurve
$\gamma_{reduced}=\gamma_1+\dots+\gamma_k$ associated to a
random integral multicurve $\gamma\in\cML_g(\Z)$ on a
surface of large genus $g$) is located within
bounds:
$$
\frac{\log(g)}{2}-3\sqrt{\frac{\log(g)}{2}}
\le k \le
\frac{\log(g)}{2}+3\sqrt{\frac{\log(g)}{2}}
$$
with probability
greater than $0.98$, when $g$ is large enough.
\smallskip

%------------------------------------------------------------
\noindent\textbf{Heights of cylinders of a random
square-tiled surface of large genus. Weights of a random
integer multicurve.~}
We complete this Section with one more Conjecture.

\begin{Conjecture}
\label{conj:one:vertex:dominates:for:fixed:k}
For any fixed positive integer $k$ the contribution
$\Vol(\Petal_k(g))$ of square-tiled surfaces represented by
the graph $\Petal_k(g)$ to $\Vol\cQ(1^{4g-4})$ is uniformly
dominating the total contribution of all other $k$-cylinder
surfaces as $g\to+\infty$.
\end{Conjecture}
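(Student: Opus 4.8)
The plan is to reduce the assertion, through Theorem~\ref{th:volume}, to a comparison of the polynomials $P_\Graph$ under $\cZ$ over all stable graphs $\Graph\in\cG_g$ with exactly $k$ edges. For every such $\Graph$ the prefactor $\tfrac{2^{6g-5}(4g-4)!}{(6g-7)!}$ in~\eqref{eq:P:Gamma} depends only on $g$, so it cancels from every ratio and I may work with the reduced kernel $\tfrac{1}{2^{|V(\Graph)|-1}}\tfrac{1}{|\Aut(\Graph)|}\prod_{e}b_e\prod_{v}N_{g_v,n_v}(\boldsymbol{b}_v)$. I first note that $\Petal_k(g)$ is the \emph{unique} $k$-edge graph with one vertex: a single vertex forces all $k$ edges to be loops, pins the genus decoration to $g-k$, and gives $|V|=1$, so the penalising factor $\tfrac{1}{2^{|V|-1}}$ equals $1$. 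Every other $k$-edge graph has $|V|\ge 2$; it therefore already pays a factor $2^{-(|V|-1)}$, splits the genus as $\sum_v g_v=g-h^1(\Graph)$ with $h^1(\Graph)=k-|V(\Graph)|+1$, and contributes a \emph{product} of intersection numbers over vertices of strictly smaller genus. The goal is to show that the sum of all competing contributions is $o(\Vol(\Petal_k(g)))$ as $g\to+\infty$; this refines the aggregate statement of Conditional Theorem~\ref{cond:th:does:not:separate} to a fixed number of cylinders.

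Next I would pin down the leading asymptotics of the dominant term. Setting the two half-edges of each loop equal collapses the monomials of $N_{g-k,2k}$ to the paired correlators $\int_{\overline{\cM}_{g-k,2k}}(\psi_1+\psi_2)^{D_1}\cdots(\psi_{2k-1}+\psi_{2k})^{D_k}$ indexed by $D\in\Pi(3g-3-k,k)$, and applying $\cZ$ converts each resulting $b_j^{2D_j+1}$ into a $\zeta$-factor. After regrouping, $\Vol(\Petal_k(g))$ is precisely the $\Psi(D,k)$-weighted multiple harmonic zeta sum of the shape~\eqref{eq:multiple:harmonic:zeta:sum:def}, as recorded in Theorem~\ref{th:bounds:for:Vol:Gamma:k:g} and Corollary~\ref{cor:Vol:g:k}. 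For \emph{fixed} $k$ the asymptotics of $Z_k(3g-3)$ are supplied by the proven Theorem~\ref{th:multiple:harmonic:zeta:sum:full:expansion}, while the normalisation $\Psi(D,k)\to1$ of Conjecture~\ref{conj:introduction:sum:of:correlators} — established unconditionally for $k=1$ via the $2$-correlator asymptotics of Section~\ref{s:2:correlators} — controls the intersection numbers uniformly over the partition $D$. Together these give the exact leading order of the dominant contribution.

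The heart of the matter is the upper bound on the competitors, and the mechanism is already transparent for $k=1$. There the only competitors of the loop $\Petal_1(g)$ are the bridges with $g_1+g_2=g$, governed by the one-point numbers $\langle\tau_{3g_i-2}\rangle=1/(24^{g_i}g_i!)$. Carrying the powers of $2$ from~\eqref{eq:c:subscript:d} through the computation and using $S_1:=\sum_{d_1+d_2=3g-4}\langle\tau_{d_1}\tau_{d_2}\rangle/(d_1!\,d_2!)\sim 2^{6g-7}/\big(24^{g-1}(g-1)!(6g-6)(3g-4)!\big)$ from $\Psi(D,1)\to1$, one finds that the ratio $\Vol(\Petal_1(g))/\sum_{g_1+g_2=g}\Vol(\mathrm{bridge}_{g_1,g_2})$ is asymptotic to $6\cdot 2^{6g-7}/\big((6g-6)(3g-4)\big)$, hence grows like $2^{6g}/\mathrm{poly}(g)$. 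The same bookkeeping shows in general that splitting off any vertex multiplies the would-be contribution by an exponentially small factor which the combinatorial sum over genus distributions — dominated by its boundary terms, where one genus stays bounded — cannot recover. As there are only finitely many topological types of $k$-edge graph for fixed $k$, summing these exponentially suppressed contributions still yields a total that is $o(\Vol(\Petal_k(g)))$.

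The hard part will be the upper bound in its full generality. For $k=1$ I used only sharp asymptotics of the one- and two-point numbers, but for $|V|\ge2$ and $k\ge2$ the vertices carry moduli spaces with several marked points, so I need upper bounds for $\int_{\overline{\cM}_{g_v,n_v}}\psi_1^{d_1}\cdots\psi_{n_v}^{d_{n_v}}$ that are uniform both in the $\psi$-exponents and in the genus distribution. This is exactly the regime where only the uniform Conjectures~\ref{conj:introduction:sum:of:correlators} and~\ref{conj:uniform:bound:for:error:term} are available; thus for general fixed $k$ the argument is conditional on a suitable fixed-$k$ form of these asymptotics, while for $k=1$ it is unconditional. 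A cleaner route, which I would pursue in parallel, is to seek a direct monotonicity inequality — ``concentration of genus beats splitting'' — for the paired intersection numbers that would bypass the explicit asymptotics altogether.
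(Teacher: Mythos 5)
You should first be clear about the status of this statement: it is a \emph{Conjecture} in the paper, and the paper offers no proof of it. The only thing the paper establishes is the remark that the case $k=1$ follows from Theorem~\ref{th:separating:over:non:separating}, which is proved unconditionally via Theorem~\ref{th:contribution:Gamma:1} (asymptotics of $\Vol(\Petal_1(g))$ from the two-point correlator bounds) and Propositions~\ref{pr::contribution:Gamma:1}--\ref{pr:sum:of:contributions:of:separating:graphs} (asymptotics of the total contribution of the separating one-edge graphs). Your proposal is, appropriately, only a strategy: you correctly reduce the problem via Theorem~\ref{th:volume} to comparing $\cZ(P_\Gamma)$ over $k$-edge stable graphs, and you are honest that for $k\ge 2$ the needed uniform control of multi-point correlators is exactly what is conjectural. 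So far this is consistent with the paper.

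However, your unconditional $k=1$ computation contains a genuine quantitative error, and the error undermines the mechanism you propose for general $k$. You claim the ratio $\Vol(\Petal_1(g))/\sum_{g_1+g_2=g}\Vol(\Separating(g_1,g_2))$ is asymptotic to $6\cdot 2^{6g-7}/\bigl((6g-6)(3g-4)\bigr)$, i.e.\ grows like $64^g/\mathrm{poly}(g)$. The correct rate, from Theorem~\ref{th:separating:over:non:separating}, is the reciprocal of $\sqrt{2/(3\pi g)}\cdot 4^{-g}$, i.e.\ the ratio grows only like $4^g\sqrt{g}$. The source of the discrepancy is your assertion that the sum over genus splittings is ``dominated by its boundary terms, where one genus stays bounded.'' It is the opposite: by Proposition~\ref{pr::contribution:Gamma:1}, $\Vol(\Separating(g_1,g-g_1))$ is proportional to $\binom{g}{g_1}\binom{3g-4}{3g_1-2}$, which is peaked at the \emph{central} splitting $g_1\approx g/2$, and Lemma~\ref{lm:sum:of:products:of:two:binomials} shows the sum is $\sim 2^{4g-4}/\sqrt{6\pi g}$. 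Dropping this exponentially large enhancement is exactly your missing factor $2^{4g}=16^g$. Consequently your guiding principle for general $k$ --- that splitting off a vertex costs an exponential factor ``which the combinatorial sum over genus distributions cannot recover'' --- is false as stated: the sum recovers a factor of order $2^{4g}$ per splitting, and only a comparatively thin margin of $4^{-g}$ survives for $k=1$. For graphs with more vertices and edges this competition between per-vertex suppression and summation enhancement is precisely what makes the statement delicate, and it is why it remains a conjecture in the paper rather than a theorem; your proposal, as written, does not close this gap even at the heuristic level.
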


\begin{Remark}
For $k=1$
Conjecture~\ref{conj:one:vertex:dominates:for:fixed:k}
instantly follows from Theorem~\ref{th:separating:over:non:separating}.
\end{Remark}

The following Conditional Theorem generalizes
Lemma~\ref{lm:height:of:one:cylinder} from
Section~\ref{ss:Combinatorial:geometry:of:random:st:surfaces}.

\begin{CondTheorem}
\label{cond:th:height:for:fixed:k}
Conjecture~\ref{conj:introduction:sum:of:correlators}
implies that for any fixed $k$,
for which
Conjecture~\ref{conj:one:vertex:dominates:for:fixed:k}
is valid,
all maximal horizontal cylinders
of a random $k$-cylinder
square-tiled surface in $\cQ_g$
have unit height
$H_i=1$, for $i=1,\dots,k$, with probability which tends to
$1$ as $g\to+\infty$ (equivalently, all weights $a_i$,
$i=1,\dots,k$, of a random $k$-component integral
multicurve $a_1\gamma_1+\dots+a_k\gamma_k$ in $\cML_g(\Z)$
are equal to $1$ with probability which tends to $1$ as
$g\to+\infty$).
\end{CondTheorem}

Our last Conditional Theorem shows that
the latter property is not uniform.

\begin{CondTheorem}
\label{cond:th:glovbal:contribution:of:height:1}
Conjectures~\ref{conj:Vol:Qg},
\ref{conj:introduction:sum:of:correlators}
and~\ref{conj:uniform:bound:for:error:term} imply that all
maximal horizontal cylinders of a random square-tiled
surface in $\cQ_g$ have unit heights with probability which
tends to $\tfrac{\sqrt{2}}{2}$ as $g\to+\infty$
(equivalently, the probability, that a random integral
multicurve $\gamma\in\cML_g(\Z)$ is reduced, tends to
$\tfrac{\sqrt{2}}{2}$ as $g\to+\infty$).
\end{CondTheorem}

%------------------------------------------------------------
\subsection{Structure of the paper}

In Section~\ref{s:proofs} we prove Theorem~\ref{th:volume}
stated in Section~\ref{ss:intro:Masur:Veech:volumes}
providing the formula for the Masur--Veech volume
$\Vol\cQ_{g,n}$ and Theorem~\ref{th:carea} stated in
Section~\ref{ss:intro:Siegel:Veech:constants} providing the
formula for area Siegel--Veech constant
$\carea(\cQ_{g,n})$.

In Section~\ref{s:comparison:with:Mirzakhani} we compare
our Formula~\ref{eq:square:tiled:volume} for
$\Vol\cQ_{g,n}$ with Mirzakhani's formula for $b_{g,n}$ and
our Formula~\eqref{eq:volume:contribution:of:stable:graph}
for $\Vol(\Gamma)$ for a stable graph $\Gamma$ with
Mirzakhani's formula for the associated $c(\gamma)$ for the
associated multicurve $\gamma$. We elaborate translation
between two languages and prove
Theorem~\ref{th:our:density:equals:Mirzakhani:density}
stated in
Section~\ref{ss:Frequencies:of:square:tiled:surfaces}
evaluating the normalization constant~\eqref{eq:const:g:n}
between the corresponding quantities.

In Section~\ref{s:2:correlators} we elaborate a uniform
asymptotic formula~\eqref{eq:a:g:k:difference} for
$2$-correlators $\langle\tau_{d_1}\tau_{d_2}\rangle$, which
has independent interest. We apply it to computation of
asymptotic frequencies $c(\gamma_{sep})$ and
$c(\gamma_{nonsep})$ of separating and of non-separating
simple closed hyperbolic geodesics on a hyperbolic surface
of large genus $g$ thus proving
Theorem~\ref{th:separating:over:non:separating} stated in
Section~\ref{ss:large:genera}.

For the sake of completeness, we present a detailed formal
definition of a stable graph in
Appendix~\ref{s:stable:graphs}

Appendix~\ref{s:explicit:calculations} provides examples of
explicit calculations of the Masur--Veech volume
$\Vol\cQ_{g,n}$ and of the Siegel--Veech constant
$\carea(\cQ_{g,n})$ for small $g$ and $n$.

Appendix~\ref{a:tables} presents tables of
$\Vol\cQ_{g,n}$ and of $\carea(\cQ_{g,n})$ for small $g$
and $n$.

In Appendix~\ref{s:Two:harmonic:sums} we elaborate
asymptotic expansions for the multiple harmonic
sums~\eqref{eq:multiple:harmonic:sum:def}
and~\eqref{eq:multiple:harmonic:zeta:sum:def}. In
particular, the results of this Section provide evidence
for Conjecture~\ref{conj:uniform:bound:for:error:term}
from Section~\ref{ss:large:genera}.

In Appendix~\ref{s:Conjectural:asymptotics:of:correlators}
we present conjectural uniform asymptotic formula for
correlators generalizing
Conjecture~\ref{conj:introduction:sum:of:correlators} from
Section~\ref{ss:large:genera}.

In
Appendix~\ref{s:volume:contribution:from:graphs:with:single:vertex}
we compute the asymptotic contribution $\Vol(\Petal_k(g))$
of the stable graph $\Petal_k(g))$ with a single vertex and
with $k$ loops to the Masur--Veech volume $\Vol\cQ_g$ for
large genera $g$ conditionally to
Conjecture~\ref{conj:introduction:sum:of:correlators} and
prove the Conditional Theorems stated in
Section~\ref{ss:large:genera} describing the asymptotic
geometry of random square-tiled surfaces of large genus and
of random integral multicurves on a surface of large genus.
\medskip

%----------------------------------------------------------------------
\noindent\textbf{Acknowledgements.}
%\subsection{Acknowledgements}
   %
Numerous results of this paper were directly or indirectly
inspired by beautiful ideas of Maryam~Mirzakhani. Working
on this paper we had a constant feeling that we are
following her steps. This concerns in particular the
relation between Masur--Veech volume $\Vol\cQ_{g,n}$ and
frequencies of hyperbolic multicurves and relation between
large genus asymptotics of the volumes of moduli spaces and
intersection numbers of $\psi$-classes.

A.~Eskin planned to use Kontsevich formula for computation
of Masur--Veech volumes before invention of the the
approach of
Eskin--Okounkov~\cite{Eskin:Okounkov:Inventiones}. We thank
him for useful conversations and for indicating to us that
our technique of evaluation of the Masur--Veech volumes
admits generalization to strata with only odd zeroes, see
Remark~\ref{rm:to:complete}.

We are extremely grateful to A.~Aggarwal who explained
us a conceptual method of computation of multiple
harmonic sums developed in Appendix~\ref{s:Two:harmonic:sums}.

We thank F.~Arana--Herrera, S.~Barazer, C.~Ball, G.~Borot,
E.~Duryev, M.~Liu, L.~Monin, B.~Petri, K.~Rafi,
I.~Telpukhovky, S.~Wolpert, A.~Wright, D.~Zagier for useful
discussions. We are grateful to M.~Kazarian for his
computer code evaluating intersection numbers which we used
on the early stage of this project. We highly appreciate
computer experiments of M.~Bell and S.~Schleimer which
provided computer evidence independent of theoretical
predictions of frequencies of multicurves on surfaces of
low genera.

We are grateful to MPIM in Bonn, where considerable part of
this work was performed,
and MSRI in Berkeley for providing us with friendly and
stimulating environment.

%############################################################
%############################################################
%############################################################
\section{Proofs of the volume formulae}
\label{s:proofs}

We start this section by recalling the necessary background
and normalization conventiones that are used in the
subsequent sections of the paper.

%----------------------------------------------------------------------
\subsection{The principal stratum and Masur--Veech measure}
\label{ss:background:strata}
In this section we recall the canonical construction of the
Masur--Veech measure on $\cQ_{g,n}$ and its link with the
integral structure given by the square-tiled surfaces.

Consider a compact nonsingular complex curve $C$ of genus
$g$ endowed with a meromorphic quadratic differential $q$
with $\zeroes=4g-4+n$ simple zeroes and with $n$ simple
poles. Any such pair $(C,q)$ defines a canonical ramified
double cover $\pi:\hat C\to C$ such that $\pi^\ast
q=\hat\omega^2$, where $\hat\omega$ is an Abelian
differential $\widehat\omega$ on the double cover $\widehat
C$. The ramification points of $\pi$ are exactly the zeroes
and poles of $q$. The double cover $\widehat C$ is endowed
with the canonical involution $\iota$ interchanging the two
preimages of every regular point of the cover. The stratum
$\cQ(1^{\zeroes},-1^n)$ of such differentials is modelled
on the subspace of the relative cohomology of the double
cover $\widehat C$, antiinvariant with respect to the
involution $\iota$. This antiinvariant subspace is denoted
by $H^1_-(\widehat C,\{\widehat P_1,\dots,\widehat
P_{\zeroes}\};\C)$, where $\{\widehat P_1,\dots,\widehat
P_{\zeroes}\}$ are zeroes of the induced Abelian
differential $\widehat\omega$. The stratum $\cQ(1^\zeroes,
-1^n)$ is open and dense in $\cQ_{g,n}$ and its complement
in $\cQ_{g,n}$ has positive codimension. In what follows we
always work with this stratum.

We define a lattice in $H^1_-(\widehat
S,\{\widehat{P}_1,\ldots,\widehat{P}_{\zeroes}\};\C)$ as
the subset of those linear forms which take values in
$\Z\oplus i\Z$ on $H^-_1(\widehat S,\{\widehat
P_1,\dots,\widehat P_{\zeroes}\};\Z)$. The integer points
in $\cQ_{g,n}$ are exactly those quadratic differentials
for which the associated flat surface with the metric $|q|$
can be tiled with $1/2 \times 1/2$ squares. In this way the
integer points in $\cQ_{g,n}$ are represented by
\textit{square-tiled surface} as defined in
Section~\ref{ss:Square:tiled:surfaces:and:associated:multicurves}.

We define the Masur--Veech volume
element $d\!\Vol$ on $\cQ(1^{\zeroes},-1^n)$ as the linear volume
element in the vector space
$H^1_-(S,\{\widehat{P}_1,\ldots,\widehat{P}_{\zeroes}\};\C{})$ normalized in such a
way that the fundamental domain of the above lattice has unit
volume.

By construction, the volume element $d\!\Vol^{symplectic}$
in $\cQ_{g,n}$ induced by the canonical symplectic
structure considered in Section~\ref{ss:MV:volume} and the
linear volume element $d\!\Vol^{period}$ in period
coordinates defined in this section
belong to the same Lebesgue measure class. It was proved by
H.~Masur in~\cite{Masur:Hamiltonian} that the Teichm\"uller
flow is Hamiltonian, in particular, that
$d\!\Vol^{symplectic}$ is preserved by the the
Teichm\"uller flow. By the results of
H.~Masur~\cite{Masur:82} and
W.~Veech~\cite{Veech:Gauss:measures}, the volume element
$d\!\Vol^{period}$ is also preserved by the Teichm\"uller
flow. Ergodicity of the Teichm\"uller flow now implies that
the two volume forms are pointwise proportional with
constant coefficient.

We postpone evaluation of this constant factor to another
paper. Throughout this paper we consider the normalization
of the Masur--Veech volume element
$d\!\Vol=d\!\Vol^{period}$ as defined in the current
section and then define $\Vol\cQ_{g,n}$ by
means of~\eqref{eq:def:Vol:Q:g:n} and of
Convention~\ref{conv:MV:volume:notation}. This definition
incorporates the conventions on the choice of the lattice,
on the choice of the level of the area function, and the
convention on the dimensional constant. We
follow~\cite{AEZ:Dedicata}, \cite{AEZ:genus:0},
\cite{Goujard:volumes},
\cite{DGZZ:meanders:and:equidistribution},
\cite{DGZZ:one:cylinder:Yoccoz:volume} in the choice of
these conventions; see \S~4.1 in~\cite{AEZ:genus:0} and
Appendix~A in\cite{DGZZ:meanders:and:equidistribution} for
the arguments in favour of this normalization.

%----------------------------------------------------------------------
\subsection{Jenkins--Strebel differentials and stable graphs}
\label{ss:Jenkins:Strebel}
A quadratic differential $q$ in $\cQ_{g,n}$ is called
\textit{Jenkins-Strebel} if its horizontal foliation
contains only closed leaves. Any Jenkins--Strebel
differential can be decomposed into maximal horizontal
cylinders with zeroes and simple poles located on the
boundaries of these cylinders. We  call these boundaries
\textit{singular layers}. Each singular layer defines a
metric ribbon graph representing an oriented surface with
boundary. When the quadratic differential belongs to the
principal stratum $\cQ(1^l,-1^n)$, the ribbon graph has
vertices of valence three at simple zeroes of $q$, vertices
of valence one at simple poles of $q$ and no vertices of
any other valency. Throughout this paper we always assume
that the quadratic differential $q$ belongs to the
principal stratum.

Every ribbon graph $\ribbongraph$ considered as an oriented surface
with boundary has certain genus $g(\ribbongraph)$, certain number
$n(\ribbongraph)$ of boundary components, and certain number
$p(\ribbongraph)$ of univalent vertices often called \textit{leaves}
of the graph. The number $m(\ribbongraph)$ of trivalent vertices can
be expressed through these quantities as
\begin{equation}
\label{eq:m:g:p:n}
m(\ribbongraph)=4g(\ribbongraph)-4+2n(\ribbongraph)+p(\ribbongraph)
\end{equation}

\begin{figure}[htb]
  % jenkins_strebel_Q_g1.eps
  % jenkins_strebel_Q_g2_cyl.eps
  %
\includegraphics{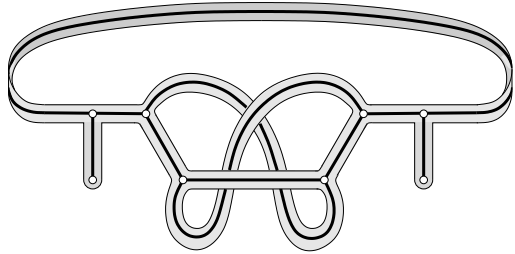}
\vspace{85pt}%{90pt}
\caption{
\label{fig:ribbon:graph}
This ribbon graph $\ribbongraph$ has
genus $g(\ribbongraph)=1$; it has $n(\ribbongraph)=2$ boundary components
and $p(\ribbongraph)=2$ univalent vertices.
}
\end{figure}

To every Jenkins--Strebel differential $q$ as above we
associate a \textit{stable graph} $\Graph =(V, H, \alpha,
\iota, \boldsymbol{g}, L)$ in the same way as we did it in
Section~\ref{ss:Square:tiled:surfaces:and:associated:multicurves}
in the particular case when Jenkins--Strebel differential
$q$ represents a square-tiled surface. (A formal definition
of a stable graph can be found, for example,
in~\cite{Okounkov:Pandharipande}; we reproduce it in
Appendix~\ref{s:stable:graphs} for completeness.) We recall
briefly the construction of $\Gamma$.

The vertices of $\Graph$ encode the singular layers. The
set of all vertices (singular layers) is denoted by $V$. A
tubular neighborhood of a singular layer $v$ is a surface
with boundary, which has some genus $g_v$. The
\textit{genus decoration} $\boldsymbol{g} = (g_v)_v$
associates to each $v\in V$ the nonnegative integer $g_v$.
Any maximal horizontal cylinder of $q$ has two boundary
components which are canonically identified with
appropriate boundary components of tubular neighborhoods of
appropriate singular layers $v_i, v_j$ (where $v_i$ and
$v_j$ coincide when the cylinder goes from the singular layer
to itself). In this way, each maximal horizontal
cylinder defines an edge of $\Graph$ joining the
boundary layers $v_i, v_j$. Finally, simple poles of
$q$ are encoded by the \textit{legs} of $\Graph$. By
convention the $n$ simple poles are labeled, so the legs of
$\Gamma$ inherit the labeling $L$.
Relation~\eqref{eq:m:g:p:n}
implies the stability condition $2 g_v - 2 + n_v > 0$ for
every vertex $v$ of $\Graph$.

\begin{Remark}
Consider the following trivial stable graph: it has a
unique vertex decorated by the integer $g$; it has $n$
legs; it has no edges. Such graph does not correspond to
any Strebel differential. However, being considered as an
element of the set $\cG_{g,n}$ of all stable graphs, it
provides zero contribution when we formally apply
formula~\eqref{eq:square:tiled:volume} defining the
Masur--Veech volume $\Vol\cQ_{g,n}$ in
Theorem~\ref{th:volume}. It also
provides zero contribution when we formally apply
formula~\eqref{eq:carea} defining the Siegel--Veech
constant $\carea(\cQ_{g,n})$ in
Theorem~\ref{th:carea}.
\end{Remark}

%----------------------------------------------------------------------
\subsection{Conditions on the lengths of the waist curves of the cylinders}
\label{eq:conditions:on:lengths}

Having a square-tiled surface and its associated stable
graph $\Graph$, we denote by $k=|E(\Graph)|$ the number of
maximal cylinders filled with closed horizontal
trajectories. Denote by $w_1, \dots, w_k$ the lengths of
the waist curves of these cylinders. Since every edge of
any singular layer $v$ is followed by the boundary of the
corresponding ribbon graph twice, the sum of the lengths of
all boundary components of each singular layer $V$ is
integer (and not only half-integer).

Let $\Graph$ be a stable graph and let us consider the collection of linear
forms $l_v=\sum_{e\in E_v(\Graph)}w_e$ in variables $w_1,\dots, w_k$, where
$k=|E(\Graph)|$, $v$ runs over the vertices $V(\Graph)$, and $E_v(\Graph)$ is
the set of edges adjacent to the vertex $v$ (ignoring legs).
It is immediate to see that the vector space
spanned by all such linear forms has dimension $|V(\Graph)|-1$.

Let us make a change of variables passing from half-integer
to integer parameters
$b_i:=2w_i$ where $i=1,\dots,k$. Consider the integer sublattice
$\mathbb{L}^k\subset\Z^k$ defined by the linear relations
\begin{equation}
\label{eq:sublattice:L}
l_{v}(b_1,\dots,b_k)=\sum_{e\in E_v(\Graph)}b_{e}=0\ (\operatorname{mod} 2)
\end{equation}
for all vertices $v\in V(\Graph)$. By the above
remark, the sublattice $\mathbb{L}^k$ has index $2^{|V(\Graph)|-1}$
in $\Z^k$. We summarize the observations of this section in the following
criterion.

\begin{Corollary}
\label{cor:criterion}
A collection of strictly positive numbers
$w_1,\dots,w_{k}$,
where $w_i\in\tfrac{1}{2}\N$ for $i=1,\dots,k$,
corresponds to a square-tiled surface
realized by a stable graph $\Graph\in\cG_{g,n}$ if and only if $k=|E(\Graph)|$ and the
corresponding vector $\boldsymbol{b}=2\boldsymbol{w}$ belongs to the sublattice
$\mathbb{L}^{k}$. This sublattice has index
$|\Z^{k}:\mathbb{L}^{k}|=2^{|V(\Graph)|-1}$
in the integer lattice $\Z^k$.
\end{Corollary}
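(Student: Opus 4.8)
The plan is to treat the two assertions separately: first the criterion (the ``if and only if''), then the index computation. The equality $k=|E(\Graph)|$ is built into the correspondence between square-tiled surfaces and stable graphs recalled in Section~\ref{ss:Jenkins:Strebel}: the maximal horizontal cylinders are in bijection with the edges of $\Graph$, so the number $k$ of cylinders equals $|E(\Graph)|$. It therefore remains to characterize, for a fixed $\Graph$, which width vectors $\boldsymbol{w}\in(\tfrac12\N)^k$ are realized. I would first dispatch necessity, which is essentially already contained in the preceding discussion, and then turn to the index computation and to sufficiency, isolating the index as the cleanest self-contained step and flagging sufficiency as the genuine obstacle.

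For necessity, suppose $\boldsymbol{w}$ comes from an honest square-tiled surface with stable graph $\Graph$. Each singular layer is a metric ribbon graph tiled by $\tfrac12\times\tfrac12$ squares, so all of its edge lengths lie in $\tfrac12\N$; since the boundary of a ribbon graph runs along every edge exactly twice, the sum of the lengths of its boundary components equals twice the sum of its edge lengths, hence is an integer. This boundary-length sum at a vertex $v$ is precisely $l_v(\boldsymbol{w})=\sum_{e\in E_v(\Graph)}w_e$, where a loop at $v$ contributes $w_e$ twice (it accounts for two boundary components of the layer). Thus $l_v(\boldsymbol{w})\in\Z$ for every $v$, which in the variables $b_e=2w_e$ reads $l_v(\boldsymbol{b})=\sum_{e\in E_v(\Graph)}b_e\equiv 0\pmod 2$; that is, $\boldsymbol{b}\in\mathbb{L}^k$.

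Next I would compute the index. Consider the homomorphism $\phi\colon\Z^k\to\mathbb{F}_2^{|V(\Graph)|}$ sending $\boldsymbol{b}$ to $(l_v(\boldsymbol{b})\bmod 2)_{v}$; by definition $\mathbb{L}^k=\ker\phi$, so $|\Z^k:\mathbb{L}^k|=|\operatorname{im}\phi|=2^{r}$, where $r$ is the $\mathbb{F}_2$-rank of the family $\{l_v\bmod 2\}_{v\in V(\Graph)}$. The key point is that $r=|V(\Graph)|-1$. On one hand, since every edge has exactly two half-edges, $\sum_{v}l_v=2\sum_{e}b_e\equiv 0\pmod 2$, giving one relation, so $r\le|V(\Graph)|-1$. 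On the other hand, if $\sum_v c_v\,l_v\equiv 0$ over $\mathbb{F}_2$, then the coefficient of each $b_e$ must vanish: a loop at $v$ contributes $2c_v\equiv 0$ and imposes nothing, while a non-loop edge joining $v_i$ to $v_j$ forces $c_{v_i}=c_{v_j}$. Since $\Graph$ is connected (loops being irrelevant to connectivity), all $c_v$ coincide, so the space of relations is spanned by $(1,\dots,1)$ and is one-dimensional. Hence $r=|V(\Graph)|-1$ and $|\Z^k:\mathbb{L}^k|=2^{|V(\Graph)|-1}$.

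Finally, sufficiency is where I expect the real work to lie. Given $\boldsymbol{b}\in\mathbb{L}^k$ with $w_e=b_e/2\in\tfrac12\N$, the condition $l_v(\boldsymbol{b})\equiv 0\pmod 2$ says exactly that at each vertex $v$ the prescribed boundary lengths $\boldsymbol{b}_v$ (a loop appearing twice) sum to an even integer. By the combinatorial/Jenkins--Strebel model of $\overline{\cM}_{g_v,n_v}$ underlying Kontsevich's count (Section~\ref{ss:intro:volume:polynomials}), this even-sum condition is precisely what is needed to produce a metric ribbon graph of type $(g_v,n_v)$ with integer edge lengths and boundary lengths equal to $\boldsymbol{b}_v$; stability $2g_v-2+n_v>0$ guarantees $(g_v,n_v)\notin\{(0,1),(0,2)\}$, so the relevant count is defined and nonempty. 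Assembling these singular layers and gluing into each edge $e$ a flat cylinder of waist $w_e$ and an arbitrary positive integer height, according to the incidences of $\Graph$, yields a square-tiled surface whose horizontal cylinder decomposition has stable graph $\Graph$ and waist lengths $\boldsymbol{w}$. The main obstacle is exactly this realization step: one must verify that the parity condition is not merely necessary but also sufficient for the existence of the ribbon graph at every vertex, and that the cylinders glue consistently; both reduce to the bookkeeping of half-integer versus integer lengths together with the existence statement in the Jenkins--Strebel/Kontsevich theory.
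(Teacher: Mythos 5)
Your proof of necessity and of the index statement is correct and coincides with what the paper actually does: the discussion preceding Corollary~\ref{cor:criterion} derives the parity condition from the fact that the boundary of each singular layer traverses every saddle connection twice, and then asserts that the span of the forms $l_v$ has dimension $|V(\Graph)|-1$ as ``immediate to see.'' Your $\mathbb{F}_2$-rank argument (the single relation $\sum_v l_v\equiv 0$, plus connectivity through non-loop edges forcing all coefficients of any relation to coincide) is a genuine proof of that assertion, so on these two points you match the paper and supply a detail it omits.

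The gap is in the sufficiency direction, exactly where you located it --- but the situation is worse than an unfinished verification: the existence statement you invoke is false. Kontsevich's theorem (and Norbury's refinement) is an asymptotic, respectively quasi-polynomial, count; for small admissible boundary vectors the true count can vanish even though $N_{g,n}(\boldsymbol{b})>0$. Concretely, take the stable graph in $\cG_{2,0}$ with one vertex of genus $1$ and one loop (the first entry of Table~\ref{tab:2:0}). Since its unique edge is a loop, the congruence at the vertex reads $2b_1\equiv 0\pmod 2$ and is vacuous, so $\mathbb{L}^1=\Z$ and $b_1=1$, i.e.\ $w_1=\tfrac12$, is admissible (the claimed index $2^{|V(\Graph)|-1}=1$ confirms that loops are meant to contribute twice, as in your reading). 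Yet no square-tiled surface realizes this vector: its singular layer would be a ribbon graph of genus $1$ with two boundary components of length $\tfrac12$ each, hence of total edge length $\tfrac12$, while any such graph has first Betti number $2g_v+n_v-1=3$, hence at least three edges, each a saddle connection of length at least $\tfrac12$. Equivalently, there is no genus-$1$ metric ribbon graph with integer edges and two boundary components of length $1$, although $N_{1,2}(1,1)=\tfrac{1}{96}>0$; the lower-order terms cancel the leading term for small $\boldsymbol{b}$. So the ``if'' direction of the Corollary, read literally, fails for small admissible vectors. The paper itself never addresses sufficiency (the Corollary is presented only as a ``summary of observations,'' namely necessity and the index), and the failure is harmless there, since the Corollary is used only in asymptotic lattice-point counts where exceptional small vectors are absorbed into lower-order terms: what is true, and all that is needed, is realizability for admissible $\boldsymbol{b}$ with all coordinates sufficiently large. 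But your plan of closing the gap by quoting Kontsevich/Jenkins--Strebel existence for every admissible $\boldsymbol{b}$ cannot work as stated.
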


We complete this section with a generalization
of Lemma~3.7 in~\cite{AEZ:Dedicata} which would be used in
the proof of our main
formula~\eqref{eq:volume:contribution:of:stable:graph} for
the Masur--Veech volume $\Vol\cQ_{g,n}$.

\begin{Lemma}
\label{lm:evaluation:for:monomial}
Let $\mathbb{L}^k$ be a sublattice of finite index
$|\Z^k:\mathbb{L}^k|$ in the integer lattice $\Z^k$
and let $m_1,\dots,m_k\in\N$ be any positive integers.
The following formula holds
\begin{multline}
\label{eq:3.7}
\lim_{N\to+\infty}
\frac{1}{N^{|m|+k}}
\sum_{\substack{\boldsymbol{b}\cdot\boldsymbol{H}\le N\\b_i, H_i\in\N\\ \boldsymbol{b}\in\mathbb{L}^k}}
b_1^{m_1}\cdots b_k^{m_k}
\ =\\=\
\frac{1}{|\Z^k:\mathbb{L}^k|}\cdot
\frac{1}{(|m|+k)!}
\cdot
\prod_{i=1}^k \Bigg(m_i!\cdot \zeta(m_i+1)\Bigg)
\ =\\=\
\frac{1}{|\Z^k:\mathbb{L}^k|}\cdot
\frac{1}{(|m|+k)!}
\cdot
\cZ(b_1^{m_1}\cdots b_k^{m_k})
\,.
\end{multline}

Moreover, the sum in $\boldsymbol{H}$ and the limit commute:
$$
\lim_{N\to+\infty}
\frac{1}{N^{|m|+k}}
\sum_{\substack{\boldsymbol{b}\cdot\boldsymbol{H}\le N\\b_i, H_i\in\N\\ \boldsymbol{b}\in\mathbb{L}^k}}
b_1^{m_1}\cdots b_k^{m_k}
\ =\
\sum_{\substack{\boldsymbol{H}\\H_i \in \N}}
\lim_{N\to+\infty}
\frac{1}{N^{|m|+k}}
\sum_{\substack{\boldsymbol{b}\cdot\boldsymbol{H}\le N\\b_i \in\N\\ \boldsymbol{b}\in\mathbb{L}^k}}
b_1^{m_1}\cdots b_k^{m_k}
$$
and we have
$$
\lim_{N\to+\infty}
\frac{1}{N^{|m|+k}}
\sum_{\substack{\boldsymbol{b}\cdot\boldsymbol{H}\le N\\b_i \in\N\\ \boldsymbol{b}\in\mathbb{L}^k}}
b_1^{m_1}\cdots b_k^{m_k}
\ =\
\frac{1}{|\Z^k:\mathbb{L}^k|}\cdot
\frac{1}{(|m|+k)!}
\cdot
\cY(\boldsymbol{H})(b_1^{m_1}\cdots b_k^{m_k})\,.
$$
\end{Lemma}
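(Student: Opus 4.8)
The plan is to prove the three assertions in the following order: first the evaluation for a fixed weight vector $\boldsymbol{H}$ (the last displayed formula), then the uniform estimate that legitimizes interchanging $\lim_N$ with $\sum_{\boldsymbol{H}}$, and finally the two remaining formulae as a consequence.

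\textbf{Step 1 (fixed $\boldsymbol{H}$).} For fixed $\boldsymbol{H}=(H_1,\dots,H_k)\in\N^k$ set
$$
S_N(\boldsymbol{H})=\frac{1}{N^{|m|+k}}\sum_{\substack{\boldsymbol{b}\cdot\boldsymbol{H}\le N\\ b_i\in\N,\ \boldsymbol{b}\in\mathbb{L}^k}} b_1^{m_1}\cdots b_k^{m_k}\,.
$$
I read this as a weighted count of the points of $\mathbb{L}^k$ lying in the simplicial region $R_N=\{\boldsymbol{b}\in\R^k_{>0}:\sum_i b_iH_i\le N\}$, each weighted by $\prod_i b_i^{m_i}$. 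Rescaling by $\boldsymbol{x}=\boldsymbol{b}/N$ turns $S_N(\boldsymbol{H})$ into $\tfrac{1}{N^k}\sum_{\boldsymbol{x}\in\frac1N\mathbb{L}^k\cap R_1}\prod_i x_i^{m_i}$, a Riemann sum for the lattice $\tfrac1N\mathbb{L}^k$ of covolume $|\Z^k:\mathbb{L}^k|/N^k$ against the bounded, compactly supported, Riemann-integrable function $\prod_i x_i^{m_i}$ on the region $\{x_i>0,\ \sum_i x_iH_i\le1\}$ (whose boundary has measure zero). Equidistribution of lattice points therefore gives
$$
\lim_{N\to+\infty}S_N(\boldsymbol{H})=\frac{1}{|\Z^k:\mathbb{L}^k|}\int_{\substack{x_i>0\\ \sum_i x_iH_i\le1}}\prod_{i=1}^k x_i^{m_i}\,dx_1\cdots dx_k\,.
$$
The substitution $u_i=H_ix_i$ pulls the factor $\prod_i H_i^{-(m_i+1)}$ out of the integral and reduces it to the Dirichlet integral $\int \prod_i u_i^{m_i}$ over the standard simplex, whose value is $\prod_i m_i!/(|m|+k)!$. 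Since $\prod_i m_i!\,H_i^{-(m_i+1)}=\cY(\boldsymbol{H})(b_1^{m_1}\cdots b_k^{m_k})$, this yields exactly the third formula.

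\textbf{Step 2 (uniform bound).} Dropping the congruence constraint $\boldsymbol{b}\in\mathbb{L}^k$ only enlarges the sum, and $\sum_i b_iH_i\le N$ forces $b_i\le N/H_i$ for each $i$; hence $S_N(\boldsymbol{H})\le\frac{1}{N^{|m|+k}}\prod_{i=1}^k\sum_{b_i=1}^{\lfloor N/H_i\rfloor} b_i^{m_i}$. Using the crude bound $\sum_{b=1}^{M}b^{m}\le M^{m+1}$ I obtain $S_N(\boldsymbol{H})\le\prod_{i=1}^k H_i^{-(m_i+1)}$, uniformly in $N$. Because every $m_i\ge1$, this dominating function is summable: $\sum_{\boldsymbol{H}\in\N^k}\prod_i H_i^{-(m_i+1)}=\prod_i\zeta(m_i+1)<\infty$.

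\textbf{Step 3 (interchange and conclusion).} The full double sum equals $\sum_{\boldsymbol{H}\in\N^k}S_N(\boldsymbol{H})$. By Step~2 the summands are dominated uniformly in $N$ by a summable function of $\boldsymbol{H}$, so dominated convergence for series permits interchanging $\lim_N$ with $\sum_{\boldsymbol{H}}$; this is precisely the asserted commutation. Carrying the limit inside and applying Step~1 term by term, together with the identity $\sum_{\boldsymbol{H}\in\N^k}\cY(\boldsymbol{H})=\cZ$ recorded after the definition of $\cZ$, transforms the sum into
$$
\frac{1}{|\Z^k:\mathbb{L}^k|}\cdot\frac{1}{(|m|+k)!}\cdot\cZ(b_1^{m_1}\cdots b_k^{m_k})=\frac{1}{|\Z^k:\mathbb{L}^k|}\cdot\frac{1}{(|m|+k)!}\prod_{i=1}^k\big(m_i!\,\zeta(m_i+1)\big)\,,
$$
which is the first formula. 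The only genuinely analytic point is the interchange of limit and infinite sum over $\boldsymbol{H}$; I expect this to be the main obstacle, but the crude uniform domination of Step~2 disposes of it cleanly, and everything else is the standard lattice-point/Dirichlet-integral computation already used for Lemma~3.7 in~\cite{AEZ:Dedicata}.
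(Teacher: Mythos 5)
Your proposal is correct and follows essentially the same route as the paper's proof: a lattice-point/Riemann-sum reduction to the Dirichlet integral over a simplex, the index factor $|\Z^k:\mathbb{L}^k|^{-1}$ coming from the covolume of the sublattice, and dominated convergence to commute $\lim_N$ with $\sum_{\boldsymbol{H}}$. The only difference is one of packaging: the paper cites Lemma~3.7 of~\cite{AEZ:Dedicata} for the unrestricted double-sum limit and merely gestures at the domination (``the sum approximates the integral from below''), whereas you prove the fixed-$\boldsymbol{H}$ statement from scratch and exhibit the explicit summable dominating function $\prod_i H_i^{-(m_i+1)}$ — a self-contained and, if anything, cleaner write-up of the same argument.
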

\begin{proof}
The limit with omitted restriction
$\boldsymbol{b}\in\mathbb{L}^k$ is computed in Lemma~3.7
in~\cite{AEZ:Dedicata}. Note that the inversion of sum and
limits here is valid by virtue of the dominated convergence
theorem. More precisely, under the substitution $x_i =
\frac{b_i H_i}{N}$ the sum approximates the integral
from below.

The restriction $\boldsymbol{b}\in\mathbb{L}^k$ rescales
the volume element in the corresponding integral sum by the
index $|\Z^k:\mathbb{L}^k|$ of the sublattice $\mathbb{L}^k$
in $\Z^k$ which
produces the extra factor $|\Z^k:\mathbb{L}^k|^{-1}$.

Finally, the last equality in~\eqref{eq:3.7}
and the last equality in the last line
of the assertion of Lemma~\ref{lm:evaluation:for:monomial}
are just the
definitions~\eqref{eq:cZ} and~\eqref{eq:cV} of $\cZ$
and of $\cY$ respectively.
\end{proof}

The above Lemma admits an immediate generalization in terms
of densities. The Lemma below provides a proof of
Theorem~\ref{thm:statistics}.
We consider the relation~\eqref{eq:measure:mu};
the other statements of
Theorem~\ref{thm:statistics}
are proved analogously.

\begin{Lemma}
\label{lm:general:evaluation:for:monomial}
Let $F: \Delta^k \times \N^k \to \R$ be a continuous
function integrable with respect to the density
$\cZt(b_1^{m_1} \ldots b_k^{m_k}) \delta_{\boldsymbol{H}}
d\boldsymbol{x}$ defined in~\eqref{eq:measure:mu}.
Then
\begin{multline*}
\lim_{N\to+\infty}
\frac{1}{N^{|m|+k}}
\sum_{\substack{\boldsymbol{b}\cdot\boldsymbol{H}\le 2N
  \\b_i, H_i\in\N
  \\ \boldsymbol{b}\in\mathbb{L}^k}}
F\left( \left(\frac{b_1 H_1}{2N},\dots,\frac{b_k H_k}{2N}\right),
\boldsymbol{H}\right)
\,b_1^{m_1}\cdots b_k^{m_k}
\ =\\= \
\frac{1}{|\Z^k:\mathbb{L}^k|}\cdot
\sum_{\boldsymbol{H}}
\int_{\Delta^k} F(\boldsymbol{x}, \boldsymbol{H})\,
\cZt(b_1^{m_1}\cdots b_k^{m_k})\, d\boldsymbol{x}\,.
\end{multline*}
\end{Lemma}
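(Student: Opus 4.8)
The plan is to run the same Riemann-sum argument that proves Lemma~\ref{lm:evaluation:for:monomial}, now carrying the continuous weight $F$ through the limit. First I would freeze the height vector $\boldsymbol{H}\in\N^k$ and analyse the inner sum over $\boldsymbol{b}$ alone, postponing the summation over $\boldsymbol{H}$ to the very end. The natural change of variables is $x_i=\frac{b_iH_i}{2N}$: under it the constraint $\boldsymbol{b}\cdot\boldsymbol{H}\le 2N$ becomes $\boldsymbol{x}\in\Delta^k$, the admissible points $\boldsymbol{b}\in\mathbb{L}^k$ become a rectangular grid inside the simplex of mesh $\frac{H_i}{2N}$ along the $i$-th axis thinned by the index $|\Z^k:\mathbb{L}^k|$, and the monomial turns into $\prod_i b_i^{m_i}=(2N)^{|m|}\prod_i H_i^{-m_i}x_i^{m_i}$. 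The positivity constraint $b_i\ge 1$ only removes a boundary layer of relative width $O(1/N)$, which is negligible in the limit.

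For fixed $\boldsymbol{H}$ the inner expression is then a Riemann sum of the continuous integrand $\boldsymbol{x}\mapsto F(\boldsymbol{x},\boldsymbol{H})\prod_i x_i^{m_i}$ over the compact simplex $\Delta^k$. Uniform continuity of $F(\cdot,\boldsymbol{H})$ on $\Delta^k$ guarantees that the sum converges to the integral, the covolume of the thinned grid supplying the factor $\frac{1}{|\Z^k:\mathbb{L}^k|}$ and the Jacobian $\prod_i H_i^{-1}$ promoting $\prod_i H_i^{-m_i}$ to the full $\cZt$-weight $\prod_i H_i^{-(m_i+1)}$. After collecting the powers of $N$ and dividing by $N^{|m|+k}$ I obtain, for each $\boldsymbol{H}$, the limit $\frac{1}{|\Z^k:\mathbb{L}^k|}\int_{\Delta^k}F(\boldsymbol{x},\boldsymbol{H})\,\cZt(b_1^{m_1}\cdots b_k^{m_k})\,d\boldsymbol{x}$; the case $F\equiv1$ should reproduce Lemma~\ref{lm:evaluation:for:monomial}, which serves as the consistency check that pins down the normalising constants in this scaling.

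It remains to sum over $\boldsymbol{H}\in\N^k$ and to interchange this summation with the limit $N\to+\infty$; this is the main obstacle, and it is exactly the point where the integrability hypothesis on $F$ is used. I would justify the interchange by dominated convergence, as in the proof of Lemma~\ref{lm:evaluation:for:monomial}: for each $\boldsymbol{H}$ and each $N$ the inner quantity is bounded, uniformly in $N$, by a constant multiple (depending on $\sup|F|$ on the relevant region) of the $\boldsymbol{H}$-term of the convergent series that assembles $\cZ(b_1^{m_1}\cdots b_k^{m_k})=\sum_{\boldsymbol{H}}\cY(\boldsymbol{H})(b_1^{m_1}\cdots b_k^{m_k})$. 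The assumption that $F$ is integrable against the density $\cZt(b_1^{m_1}\cdots b_k^{m_k})\,\delta_{\boldsymbol{H}}\,d\boldsymbol{x}$ is precisely what controls both the tail $\boldsymbol{H}\to\infty$ and the behaviour near $\partial\Delta^k$, so that no mass of the discrete measures escapes in the limit. The delicate part is this uniform tail control: one must rule out that mass concentrates near the boundary of the simplex or drifts to large $\boldsymbol{H}$ faster than the weight $F$ can absorb. Once the interchange is legitimate, summing the per-$\boldsymbol{H}$ limits obtained above over $\boldsymbol{H}\in\N^k$ yields the asserted identity.
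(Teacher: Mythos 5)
Your proposal is correct and takes essentially the same route as the paper: the paper states this Lemma as an ``immediate generalization'' of Lemma~\ref{lm:evaluation:for:monomial}, whose proof is precisely your argument --- a Riemann-sum passage to the integral under the substitution $x_i = \frac{b_i H_i}{2N}$ for each fixed $\boldsymbol{H}$, the sublattice condition $\boldsymbol{b}\in\mathbb{L}^k$ rescaling the volume element by the index $|\Z^k:\mathbb{L}^k|$, and dominated convergence to interchange the summation over $\boldsymbol{H}$ with the limit $N\to+\infty$. If anything, your write-up is more explicit than the paper (which leaves the per-$\boldsymbol{H}$ analysis and the tail control over large $\boldsymbol{H}$ entirely implicit), and you correctly identify the interchange of sum and limit as the only step where the integrability hypothesis on $F$ is genuinely used.
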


%----------------------------------------------------------------------
\subsection{Counting trivalent metric ribbon graphs with leaves}
\label{ssec:trivalent:graphs:with:leaves}
We need the following elementary generalization of the Theorem of M.~Kontsevich
stated in section~\ref{ss:intro:volume:polynomials} allowing to our metric
ribbon graph have univalent vertices (\textit{leaves}) in addition
to trivalent vertices.

We use the letter $p$ to denote the number of leaves.
Consider a collection of positive integers $b_1,\dots, b_n$
such that $\sum_{i=1}^n b_i$ is even. Similarly to
$\cN_{g,n}(b_1, \ldots, b_n)$ defined in
section~\ref{ss:intro:volume:polynomials}, let us denote by
$\cN_{g,n,p}(b_1,\dots,b_n)$ the weighted count of
connected metric ribbon graphs $G$  of genus $g$ with $n$
labeled boundary components of integer lengths
$b_1,\dots,b_n$ and $p$ univalent vertices. In other words
$$
\cN_{g,n,p}(b_1,\dots,b_n):=\sum_{G \in \cR_{g,n,p}} \frac{1}{|\Aut(G)|} N_G(b_1,\dots,b_n).
$$
where $\cR_{g,n,p}$ is the set of equivalence classes of
ribbon graphs of genus $g$, $n$ boundary components and $p$
univalent vertices. The counting function $\cN_{g,n,p}$
generalizes Kontsevich polynomials.

\begin{Proposition}
\label{pr:count:with:leaves}
Consider $n$-tuples of large positive integers
$b_1,\dots,b_n$ such that
$\sum_{i=1}^n b_i$ is even. The following relation holds:
\begin{align}
\cN_{g,n,p}(b_1,\dots,b_n)
&= \cN_{g,n+p,0}(b_1,\dots,b_n,\underbrace{0,\dots,0}_p) \\
&= N_{g,n+p}(b_1,\dots,b_n,\underbrace{0,\dots,0}_p)
+\text{lower order terms}
\,,
\end{align}
where the Kontsevich polynomials $N_{g,n}$ are defined by formula~\eqref{eq:N:g:n}.
\end{Proposition}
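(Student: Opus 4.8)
The plan is to treat the two equalities separately. The second equality is essentially free once the first is in hand: the function $\cN_{g,n+p,0}(b_1,\dots,b_n,\underbrace{0,\dots,0}_p)$ is a specialization of the quasi-polynomial $\cN_{g,n+p,0}(b_1,\dots,b_{n+p})$, whose top homogeneous part is $N_{g,n+p}(b_1,\dots,b_{n+p})$ by the theorem of Kontsevich recalled in Section~\ref{ss:intro:volume:polynomials} (with quasi-polynomiality of the lower order terms guaranteed by Norbury and by Chapman--Mulase--Safnuk). Substituting $b_{n+1}=\dots=b_{n+p}=0$ into that identity of quasi-polynomials yields the second line. So the whole content lies in the first equality, which I would prove by induction on $p$: it suffices to treat the case $p=1$, that is $\cN_{g,n,1}(b_1,\dots,b_n)=\cN_{g,n+1,0}(b_1,\dots,b_n,0)$, and then add leaves (equivalently, zero-perimeter boundaries) one at a time.

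For the single-leaf step I would set up a combinatorial move converting a univalent vertex into an extra boundary component. Given $G\in\cR_{g,n,1}$ with its unique leaf $v$ and pendant edge $e$, I replace $v$ by a trivalent vertex carrying a small loop $\lambda$ whose two half-edges are cyclically adjacent, so that $\lambda$ bounds a monogon face. This produces a trivalent ribbon graph $\widehat G\in\cR_{g,n+1,0}$ in which the new boundary is exactly that monogon, and the inverse move contracts $\lambda$, turning its vertex back into a leaf. A bookkeeping with the relation~\eqref{eq:m:g:p:n} confirms that the move preserves the genus and raises the number of boundary components by one while removing the leaf, and one checks directly that it identifies $\Aut(G)$ with $\Aut(\widehat G)$. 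Crucially, the perimeter of the monogon equals the length of $\lambda$, and the loop contributes that same length to exactly one other boundary; hence setting the new perimeter to zero forces the length of $\lambda$ to zero and returns precisely the metric data of $G$ together with its pendant edge $e$. This shows that the monogon-type graphs in $\cR_{g,n+1,0}$ reproduce $\cN_{g,n,1}(b_1,\dots,b_n)$ after the specialization.

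The main obstacle is to show that this monogon locus is all that survives, i.e. that every trivalent graph in $\cR_{g,n+1,0}$ whose distinguished boundary is a $k$-gon with $k\ge 2$ contributes nothing to $\cN_{g,n+1,0}(b_1,\dots,b_n,0)$. A $k$-gon boundary has perimeter equal to a sum of $k$ positive edge-lengths, so it can never equal zero for an actual integer point; the delicate step is to verify that the corresponding term of the quasi-polynomial also vanishes at the argument $0$, rather than being the spurious analytic continuation of a nonzero Ehrhart quasi-polynomial. I would handle this by analysing the piecewise (chamber) structure of the lattice-point count of each fixed graph: the perimeter-zero hyperplane of a $\ge 2$-gon boundary lies on a wall of the relevant chamber, and I expect the contribution there to drop in polynomial degree and to cancel against the neighbouring wall-crossing terms, leaving only the monogon (leaf) stratum. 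An alternative, more conceptual route that I would pursue in parallel is to read both sides as lattice-point counts over the same combinatorial model of $\cM_{g,n+p}$: a leaf and a contracted zero-perimeter boundary encode one and the same marked point, so the two cell decompositions agree and the counts coincide stratum by stratum. Reconciling the normalization and automorphism factors under this identification is the remaining piece of care the argument requires.
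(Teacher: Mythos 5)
Your reduction of the second equality to Kontsevich--Norbury quasi-polynomiality is fine, and your monogon picture (a leaf is obtained from a trivalent vertex carrying a loop by contracting that loop, i.e.\ from a boundary monogon of perimeter zero) is the correct geometric intuition, with the correct bijection on graphs and automorphisms. But the central step of your argument --- that every trivalent graph in $\cR_{g,n+1,0}$ whose distinguished boundary is a $k$-gon with $k\ge 2$ contributes nothing to the evaluation at $b_{n+1}=0$ --- is exactly the hard point, and you leave it as an expectation rather than a proof. Moreover, it cannot be verified graph by graph: the counting function of an individual trivalent graph is only \emph{piecewise} quasi-polynomial (it jumps across walls where some edge length degenerates), and the continuation of a single chamber polynomial to $b_{n+1}=0$ need not vanish. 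For instance, for $g=0$ and three boundary components, the theta graph (two vertices, three edges, every boundary a $2$-gon) has counting function identically equal to $1$ on its chamber, which certainly does not vanish when continued to $b_3=0$; only after summing with the dumbbell graphs over the full chamber decomposition does the global quasi-polynomial $N_{0,3}=1$ emerge, and only then does its value at $b_3=0$ coincide with the count of leaf graphs. So the cancellation you need is a genuinely global statement about the sum over all graphs and all chambers; proving it is essentially equivalent to the proposition itself, and neither your wall-crossing sketch nor the ``same cell decomposition'' alternative supplies that proof.

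The paper avoids evaluating counting functions at zero altogether. It argues by induction on $p$ using two parallel recursions: combinatorially, attaching a pendant edge (a new leaf) to a graph, summed over all attachment positions along the boundaries and over the length of the new edge, multiplies the leading term of the counting function by the operator $\frac{1}{2}\cI$, where $\cI=\sum_i\cI_{b_i}$ integrates each boundary variable (this is the generalization to arbitrary genus of Lemma~3.5 of~\cite{AEZ:Dedicata}); algebraically, the Kontsevich polynomials satisfy $N_{g,n+p+1}(b_1,\dots,b_n,0,\dots,0)=\tfrac{1}{2}\,\cI\bigl(N_{g,n+p}(b_1,\dots,b_n,0,\dots,0)\bigr)$, which unwinds into nothing but the string equation for the correlators $\langle\tau_{d_1}\cdots\tau_{d_n}\rangle$. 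Since both sides satisfy the same recursion in $p$ with the same base case $p=0$ (Kontsevich's theorem), the identity of leading terms follows. If you prefer to complete your route instead, the missing ingredient is precisely a string-type equation for the lattice-count quasi-polynomials at a zero argument (a result of Norbury); once you invoke that, your argument essentially collapses into the recursion used in the paper.
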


The proof of Proposition~\ref{pr:count:with:leaves}
is the combination of the following two Lemmas.

\begin{NNLemma}
%\label{induction}
   %
Suppose that for some $g,n,p$ the leading term of
$\cN_{g,n,p}(b_1,\dots,b_n)$ is a homogeneous polynomial
$N^{top}_{g,n,\poles}(b_1,\dots, b_n)$ in $b_1,\dots,b_n$.
Then the leading term of $\cN_{g,n,\poles+1}(b_1,\dots,
b_n)$ is also a homogeneous polynomial
$N^{top}_{g,n,\poles+1}(b_1,\dots, b_n)$ in
$b_1,\dots,b_n$. Moreover, it satisfies the relation
\begin{equation}
\label{eq:g:m:k}
N^{top}_{g,n,\poles+1}=\frac{1}{2}\cdot\cI(N^{top}_{g,n,\poles})\,,
\end{equation}
where  $\cI=\sum_{i=1}^n  \cI_{b_i}$, and operators  $\cI_{b_i}$
are defined on monomials  by
$$
\cI_{b_i}(b_1^{j_1}\dots b_{i-1}^{j_{i-1}}
\cdot b_i^{j_i}\cdot
b_{i+1}^{j_{i+1}}  \dots b_n^{j_n})=
b_1^{j_1}\dots b_{i-1}^{j_{i-1}} \cdot
\frac{b_i^{j+2}}{j+2}\cdot
b_{i+1}^{j_{i+1}}  \dots b_n^{j_n}
$$
and  are  extended  to arbitrary polynomials by linearity.
\end{NNLemma}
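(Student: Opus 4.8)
The plan is to prove the recursion by a direct combinatorial \emph{add-a-leaf} correspondence, reading off its effect on the boundary lengths and then passing to the leading homogeneous term. First I would describe the map that creates one extra univalent vertex. Starting from a trivalent-with-leaves ribbon graph $\ribbongraph$ counted by $\cN_{g,n,p}$, I choose an edge $e$, a side $s$ of $e$ (equivalently one of the two boundary corners along $e$, lying on some boundary component $i=i(s)$), an interior integer point splitting $e$ as $\ell_1+\ell_2=\ell(e)$ with $\ell_1,\ell_2\ge 1$, and an integer $h\ge 1$; then I insert a trivalent vertex at the chosen point and attach to it a new leaf-edge $f$ of length $h$ poking into the face on side $s$. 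The inverse operation prunes the new leaf (remove the univalent vertex and its edge, then smooth the resulting bivalent vertex), so this sets up an automorphism-preserving correspondence between graphs $\ribbongraph'$ counted by $\cN_{g,n,p+1}$ with the new leaf distinguished and triples $(\ribbongraph,e,s)$ carrying the metric data $(\ell_1,\ell_2,h)$. Since every graph with $p+1$ leaves is obtained this way, this construction computes $\cN_{g,n,p+1}$ exactly, so its leading term is captured correctly.

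Next I would record the effect on the metric. Splitting and smoothing change no boundary length, while going around the new leaf traverses $f$ twice and only on side $s$, so the single boundary length $b_i$ increases by exactly $2h$ and every other $b_j$ stays fixed. Writing $b^{(i,h)}=(b_1,\dots,b_i-2h,\dots,b_n)$ and letting $\mu$ run over $\ribbongraph$-metrics, for fixed $\mu$ and $h$ the reconstructed $\ribbongraph'$ has boundary $b$ if and only if $\partial_\mu\ribbongraph=b^{(i,h)}$, and the number of admissible integer splits of the marked edge is $\ell_\mu(e)-1$. Hence the number of metrics on $\ribbongraph'$ with boundary $b$ equals $\sum_{h\ge1}\sum_{\mu:\,\partial_\mu\ribbongraph=b^{(i,h)}}(\ell_\mu(e)-1)$, and summing over the marked $(e,s)$ while passing to leading order (replacing $\ell_\mu(e)-1$ by $\ell_\mu(e)$ and the counts by polytope volumes) reduces everything to
\[
\sum_{(e,s)}\ \sum_{h\ge1}\ \int_{\{\mu\,:\,\partial_\mu\ribbongraph=b^{(i,h)}\}}\ell_\mu(e)\,d\mu .
\]

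The crux is the following bookkeeping identity: for a fixed boundary component $i$, as $(e,s)$ ranges over all edge-side pairs with $i(s)=i$, the quantities $\ell_\mu(e)$ are exactly the lengths of the edge-sides traversed by boundary $i$, so $\sum_{(e,s):\,i(s)=i}\ell_\mu(e)$ equals the length of boundary $i$ in the metric $\mu$, namely the constant $b_i-2h$. Substituting this collapses the edge-sum and the inner integral becomes the leading term $N^{top}_{g,n,p}$, leaving $\sum_i\int_0^{b_i/2}(b_i-2h)\,N^{top}_{g,n,p}(b^{(i,h)})\,dh$. The substitution $u=b_i-2h$ (so $dh=\tfrac12\,du$) turns the $i$-th summand into $\tfrac12\int_0^{b_i}u\,N^{top}_{g,n,p}(\dots,u,\dots)\,du$, which on a monomial $\dots b_i^{j}\dots$ is precisely $\tfrac12\,\tfrac{b_i^{j+2}}{j+2}=\tfrac12\,\cI_{b_i}(\dots b_i^{j}\dots)$. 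Summing over $i$ gives $\tfrac12\cI(N^{top}_{g,n,p})$, which is again homogeneous of degree two higher, yielding both assertions of the Lemma.

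I expect the main obstacle to be the precise automorphism and labelling bookkeeping needed to pin down the constant $\tfrac12$: the correspondence is clean only once the newly created leaf is distinguished, so one must either treat the univalent vertices as labelled (consistent with their role as the degenerate boundary components in $N_{g,n+p}(b,\underbrace{0,\dots,0}_p)$) or carry an explicit factor counting leaf-orbits, checking throughout that $|\Aut(\ribbongraph',\lambda)|=|\Aut(\ribbongraph,e,s)|$. A secondary, routine point is verifying that only the leading homogeneous part survives, i.e.\ that replacing $\ell_\mu(e)-1$ by $\ell_\mu(e)$, replacing split- and position-counts by their volumes, and discarding the exceptional configurations where pruning smooths a loop into a vertexless circle all affect strictly lower-order terms.
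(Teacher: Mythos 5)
Your proposal is correct and is essentially the paper's own argument: the paper's proof consists of citing Lemma~3.5 of~\cite{AEZ:Dedicata} (noting its inductive step works in any genus), and that inductive step is exactly your add-a-leaf correspondence, where the count of attachment positions on boundary $i$ equals $b_i-2h$ up to lower-order corrections, so the sum over the new leaf's length $h$ becomes the parity-constrained Riemann sum for $\tfrac{1}{2}\int_0^{b_i}u\,N^{top}_{g,n,p}(\dots,u,\dots)\,du=\tfrac{1}{2}\,\cI_{b_i}\big(N^{top}_{g,n,p}\big)$. The labelling subtlety you flag is resolved by the paper's standing convention that the univalent vertices (corresponding to the labeled simple poles) are labeled, so the newly created leaf is automatically distinguished and no extra orbit-counting factor arises.
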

\begin{proof}
This Lemma mimics Lemma 3.5 in~\cite{AEZ:Dedicata}. Formally
speaking, in~\cite{AEZ:Dedicata} the corresponding statement
is formulated only for $g=0$, but it is immediate to see that the
inductive proof is applicable without any changes to any genus as
soon as the base of induction, corresponding to $\poles=0$ is valid.
\end{proof}

\begin{NNLemma}
Polynomials $N_{g,n}$
defined by equation~\eqref{eq:N:g:n} satisfy the relations:
\begin{equation}
\label{eq:g:m:plus:k}
N_{g,n+\poles+1}(b_1,\dots,b_n,0,\dots,0)=
\frac{1}{2}\cdot\cI(N_{g,n+\poles}(b_1,\dots,b_n,0,\dots,0))\,.
\end{equation}
\end{NNLemma}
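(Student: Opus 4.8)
The plan is to derive~\eqref{eq:g:m:plus:k} from the string equation for intersection numbers of $\psi$-classes, which is precisely the identity governing the effect of adjoining one marked point carrying $\tau_0$. First I would unwind the left-hand side using the definitions~\eqref{eq:N:g:n}--\eqref{eq:correlator}. Substituting $b_{n+1}=\dots=b_{n+\poles+1}=0$ annihilates every monomial of $N_{g,n+\poles+1}$ in which one of the last $\poles+1$ variables occurs with positive exponent, so only the multi-indices $\boldsymbol{d}$ with $d_{n+1}=\dots=d_{n+\poles+1}=0$ survive. Rewriting the surviving correlators in the $\tau$-notation~\eqref{eq:tau:correlators} and using $0!=1$ for the vanishing entries, this gives
\[
N_{g,n+\poles+1}(b_1,\dots,b_n,0,\dots,0)
=\frac{1}{2^{5g-4+2(n+\poles)}}
\sum_{d_1+\dots+d_n=3g-2+n+\poles}
\frac{\langle\tau_{d_1}\cdots\tau_{d_n}\tau_0^{\poles+1}\rangle}{d_1!\cdots d_n!}\,b_1^{2d_1}\cdots b_n^{2d_n},
\]
where I have used $5g-6+2(n+\poles+1)=5g-4+2(n+\poles)$.

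Next I would apply the string equation to the correlator $\langle\tau_{d_1}\cdots\tau_{d_n}\tau_0^{\poles+1}\rangle$, singling out one of the $\tau_0$-points as the point being forgotten. The key observation is that the $\poles$ remaining $\tau_0$'s --- those produced by the zero-variables --- contribute nothing, since lowering an index equal to $0$ yields $\tau_{-1}=0$; thus only the $n$ genuine indices $d_1,\dots,d_n$ get lowered, and
\[
\langle\tau_{d_1}\cdots\tau_{d_n}\tau_0^{\poles+1}\rangle
=\sum_{j=1}^{n}\langle\tau_{d_1}\cdots\tau_{d_j-1}\cdots\tau_{d_n}\tau_0^{\poles}\rangle.
\]
Setting $e_i=d_i$ for $i\neq j$ and $e_j=d_j-1$, the reduced correlators on the right are exactly those entering $N_{g,n+\poles}(b_1,\dots,b_n,0,\dots,0)$, since $|e|=3g-3+n+\poles$ matches the required degree condition.

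Finally I would match coefficients. Applying $\cI_{b_j}$ to the monomial of $N_{g,n+\poles}$ carrying $b_1^{2e_1}\cdots b_n^{2e_n}$ multiplies its coefficient by $\tfrac{1}{2e_j+2}=\tfrac12\cdot\tfrac{1}{e_j+1}$ and raises the exponent of $b_j$ by $2$; the factor $\tfrac{1}{e_j+1}$ converts $\tfrac{1}{e_j!}$ into $\tfrac{1}{(e_j+1)!}=\tfrac{1}{d_j!}$, exactly the factorial demanded by the $j$-th term above, while the leftover $\tfrac12$ turns the prefactor $2^{-(5g-6+2(n+\poles))}$ of $N_{g,n+\poles}$ into $2^{-(5g-5+2(n+\poles))}$. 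Comparing with the prefactor $2^{-(5g-4+2(n+\poles))}$ of $N_{g,n+\poles+1}$ leaves precisely one more factor $\tfrac12$, which is the constant in~\eqref{eq:g:m:plus:k}. Summing over $j$ and over all surviving multi-indices then yields $N_{g,n+\poles+1}(\dots)=\tfrac12\,\cI(N_{g,n+\poles}(\dots))$.

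The only genuine obstacle is this normalization bookkeeping: one must confirm that the factor generated internally by $\cI_{b_j}$ together with the gap between the two prefactors $2^{-(5g-6+2(n+\poles))}$ and $2^{-(5g-6+2(n+\poles+1))}$ combine to the single $\tfrac12$ appearing in~\eqref{eq:g:m:plus:k}, and that the $\poles$ silent $\tau_0$ insertions truly drop out of the string equation. Everything else is a direct substitution, and the argument is valid throughout the range where both $N_{g,n+\poles}$ and $N_{g,n+\poles+1}$ are defined.
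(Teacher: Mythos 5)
Your proof is correct and follows the same route as the paper: both reduce the identity, via the explicit definitions of $N_{g,n}$, to the string equation for the correlators $\langle\tau_{d_1}\cdots\tau_{d_n}\tau_0^{\poles+1}\rangle$, with the silent $\tau_0$ insertions dropping out because lowering a zero index gives $\tau_{-1}=0$. The only difference is presentational: the paper compresses the factorial and power-of-two bookkeeping into ``we immediately see,'' whereas you carry it out explicitly, and your accounting (the factor $\tfrac{1}{2e_j+2}=\tfrac12\cdot\tfrac{1}{e_j+1}$ from $\cI_{b_j}$ plus the single leftover $\tfrac12$ matching the prefactor gap) is exactly right.
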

\begin{proof}
Using the explicit expressions of $N_{g,n+\poles+1}$ and of
$N_{g,n+\poles}$ in terms of $\psi$-classes
we immediately see that the above
relation is equivalent to the following one:
\begin{multline*}
\langle \psi_1^{d_1}\psi_2^{d_2}\dots \psi_n^{d_n} \rangle
=\\=
\langle \psi_1^{d_1-1}\psi_2^{d_2}\dots \psi_n^{d_n} \rangle
+
\langle \psi_1^{d_1}\psi_2^{d_2-1}\dots \psi_n^{d_n} \rangle
+\dots+
\langle \psi_1^{d_1}\psi_2^{d_2}\dots \psi_n^{d_n-1} \rangle
\end{multline*}
which is nothing else but the string equation for a monomial $M$
in $\tau$-correlators:
$$
\langle \tau_0 M \rangle = \sum_{i=1}^\infty \langle \tau_{i-1} \frac{\partial M}{\partial \tau_i} \rangle\,.
$$
\end{proof}

\begin{proof}[Proof of Proposition~\ref{pr:count:with:leaves}]
For $\poles=0$ (when there are no poles at all) the statement
corresponds to the original Theorem of Kontsevich stated in
section~\ref{ss:intro:volume:polynomials}. We use this as the base of
induction in $p$ for any fixed pair $(g,n)$. It remains to notice that
equations~\eqref{eq:g:m:k} and~\eqref{eq:g:m:plus:k} recursively
define the corresponding polynomials for any $(g,n,p)$ starting from
$(g,n,0)$.
\end{proof}

%----------------------------------------------------------------------
\subsection{Proof of the volume formula}

A square-tiled surface corresponding to a fixed stable graph
$\Graph$ can be described by three groups of
parameters. Parameters in different groups can
be varied independently. Parameters in the first group are
responsible for the lengths of horizontal saddle connections. In this
group we fix only the lengths $w_1, \dots, w_k$ of the waist curves
of the cylinders filled with closed horizontal trajectories, where $k$
is the number of edges in $\Graph$. This
leaves certain freedom for the choice of the lengths of horizontal
saddle connections. The criterion of admissibility of a given
collection $\boldsymbol{w}=(w_1,\dots,w_k)$ is given by
Corollary~\ref{cor:criterion}. The count for the number of choices of
the lengths of all individual saddle connections for a fixed choice
of $\boldsymbol{w}$ is given in
Proposition~\ref{pr:count:with:leaves}.

There are no restrictions on the choice of strictly positives integer
or half-integer heights $h_1,\dots, h_k$ of the cylinders.

Having chosen the widths $w_1, \dots, w_k$ of all maximal cylinders
(i.e. the lengths of the closed horizontal trajectories) and the
heights $h_1, \dots, h_k$ of the cylinders, the flat area of the
entire surface is already uniquely determined as the sum
$\boldsymbol{w}\cdot\boldsymbol{h}=w_1 h_1+\dots +w_k h_k$ of flat areas of
individual cylinders.

However, when the lengths of all horizontal saddle connections and
the heights $h_i$ of all cylinders are fixed, there is still a freedom
in the third independent group of parameters. Namely, we can twist
each cylinder by some twist $\phi_i\in\frac{1}{2}\N$ before attaching
it to the layer. Applying, if necessary, appropriate Dehn twist we
can assume that $0\le\phi_i<w_i$, where $w_i$ is the perimeter
(length of the waist curve) of the corresponding cylinder. Thus, for
any choice of lengths of horizontal saddle connections realizing some
square-tiled surface with the stable graph $\Graph$ and for any
choice $h_1,\dots, h_k$ of heights of the cylinders we get
$(2w_1)\cdot\ldots\cdot(2w_k)$ square-tiled surfaces sharing the same
lengths of the horizontal saddle connections and same heights of the
cylinders.

In Proposition~\ref{pr:count:with:leaves} we assume that the lengths
of the edges of the metric ribbon graph are integer. Clearly, if we
allow these lengths to be also half-integer, we get
$N_{g,n+p}\big(2w_1,\dots,2w_n,0,\dots,0\big)$ as the leading
term of the new count. The realizability condition
$2\boldsymbol{w}\in\mathbb{L}^k$ from Corollary~\ref{cor:criterion}
translates as the compatibility condition of the parity of the sum of
the lengths of the boundary components of each individual connected
ribbon graph as in Proposition~\ref{pr:count:with:leaves}.

We are ready to write a formula for the leading term in
the number of all square-tiled surfaces tiled with at most $2N$
squares represented by the stable graph $\Graph$ when the integer
bound $N$ becomes sufficiently large:
\begin{multline*}
\card(\cSTgn(2N)\cap\cSTgn(\Gamma))
\sim\\
\sim
(4g-4+n)!\cdot
\frac{1}{|\operatorname{Aut}(\Graph)|}\cdot
\sum_{\substack{\boldsymbol{w}\cdot\boldsymbol{h}\le N/2\\w_i, h_i\in\frac{1}{2}\N\\ 2\boldsymbol{w}\in\mathbb{L}^k}}
(2w_1) \cdots (2w_k)\cdot
\prod_{v\in V(\Graph)}
N_{g_v,n_v}(2\boldsymbol{w}_v)\,,
\end{multline*}
Notations in the above expression mimic notations
in~\eqref{eq:square:tiled:volume}, namely $k=|E(\Graph)|$,
and $\boldsymbol{w}_v$ is defined analogously to
$\boldsymbol{b}_v$ in~\eqref{eq:square:tiled:volume}.
The factor $l!=(4g-4+n)!$ represents the number of ways
to label the $l$ trivalent vertices of the ribbon graphs,
which correspond to $l$ simple zeroes of the corresponding
Strebel quadratic differential $q$. Note that by convention
the univalent vertices (leaves) (corresponding to simple poles
of $q$ and also to $n$ marked points) are already labeled.

Making a change of variables $b_h:=(2w_h)\in\N$ and
$H_i:=(2h_i)\in\N$ we can rewrite the above expression as
\begin{multline}
\label{eq:last}
\card(\cSTgn(2N)\cap\cSTgn(\Gamma))
\sim\\
\sim
(4g-4+n)!\cdot
\frac{1}{|\operatorname{Aut}(\Graph)|}\cdot
\sum_{\substack{\boldsymbol{b}\cdot\boldsymbol{H}\le 2N\\b_i, H_i\in\N\\ \boldsymbol{b}\in\mathbb{L}^k}}
b_1 \cdots b_k
\cdot
\prod_{v\in V(\Graph)}
N_{g_v,n_v}(\boldsymbol{b}_v)\,.
\end{multline}

The expression above is a homogeneous polynomial of degree
$6g-6+2n-k$. For any individual monomial the corresponding
sum was evaluated in
Lemma~\ref{lm:evaluation:for:monomial}. It remains to adapt
formula~\eqref{eq:3.7} to our specific context.

By Corollary~\ref{cor:criterion} the sublattice
$\mathbb{L}^k$ in the above formula has index
$2^{|V(\Graph)|-1}=|\Z^k:\mathbb{L}^k|$. The corresponding
factor $1/2^{|V(\Graph)|-1}$ appears
as the first factor in the second line of
definition~\eqref{eq:P:Gamma}
of $P_\Gamma(\boldsymbol{b})$.

The degree of the homogeneous polynomial denoted by $|m|$ in
formula~\eqref{eq:3.7} equals in our case to $6g-6+2n-k$, so
$|m|+k=6g-6+2n=\dim_{\C}\cQ(1^\zeroes,-1^n)=\dprinc$. Note also that
in~\eqref{eq:3.7} we perform the summation under the condition
$\boldsymbol{b}\cdot\boldsymbol{H}\le N$ while in the above formula
we sum over the region $\boldsymbol{b}\cdot\boldsymbol{H}\le 2N$.
This provides extra factor $2^{\dprinc}$.
Finally, passing from
$\card(\cSTgn(2N)\cap\cSTgn(\Gamma))$
in~\eqref{eq:last} to $\Vol\cQ_{g,n}$
by~\eqref{eq:Vol:sq:tiled}
we introduce the extra factor
$2(6g-6+2n)$.
The resulting product factor
$$
2(6g-6+2n)\cdot\frac{2^{\dprinc}}{\dprinc !} \cdot (4g-4+n)!
=
\frac{2^{6g-5+2n} \cdot (4g-4+n)!}{(6g-7+2n)!}
$$
is the factor in the first line of
definition~\eqref{eq:P:Gamma} of
$P_\Gamma(\boldsymbol{b})$.

Theorem~\ref{th:volume} is proved.

%----------------------------------------------------------------------
\subsection{Yet another expression for the Siegel--Veech constant}

For any square-tiled surface $S$ define the following quantity. Suppose that
$S$ has $k$ maximal cylinders filled with closed horizontal trajectories.
Denote as usual by $w_i$ the length of the closed horizontal trajectory (length of the
waist curve) of the $i$-th cylinder and denote by $h_i$ its height.
Define
$$
M(S):=
\sum_{i=1}^k
\frac{h_{i}}{w_{i}}\,.
$$

The $\GL$-orbit of any square-tiled surface $S$ is a closed
invariant submanifold $\cL(S)$ in the ambient stratum of
quadratic (or Abelian) differentials. In the same way in
which we defined in
Section~\ref{ss:intro:Siegel:Veech:constants} the area
Siegel--Veech constant $\carea(\cQ_{g,n})$ for $\cQ_{g,n}$,
we can define the area Siegel--Veech constant $\carea(\cL(S))$
for $\cL(S)$. It satisfies, in particularly, analogs
of~\eqref{eq:SV:constant:definition}
and~\eqref{eq:SV:asymptotics}. Theorem 4 in~\cite{Eskin:Kontsevich:Zorich} proves
the following assertion.

\begin{NNTheorem}[\cite{Eskin:Kontsevich:Zorich}]
For   any   connected   square-tiled   surface $S$, the
Siegel--Veech constant $c_{\mathit{area}}(\cL(S))$ has  the
following value:
\begin{equation}
\label{eq:SVconstant:for:square:tiled}
c_{\mathit{area}}(\nu_1)=
\cfrac{3}{\pi^2}\cdot
\cfrac{1}{\card(\SLZ\cdot S)}\
\sum_{S_i\in\SLZ\cdot S} M(S)
\end{equation}
\end{NNTheorem}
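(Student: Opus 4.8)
The plan is to deduce the formula from the Veech--Vorobets Siegel--Veech formula applied to the closed orbit $\cL(S)=\overline{\GL\cdot S}$, combined with an unfolding of the resulting integral over the finite orbit $\SLZ\cdot S$. First I would record the homogeneous structure of the orbit: for a square-tiled surface the Veech group $\Gamma_S=\Stab_{\SL}(S)$ is a finite-index subgroup of $\SLZ$, the orbit $\SLZ\cdot S$ is finite with $\card(\SLZ\cdot S)=[\SLZ:\Gamma_S]=:n_0$, and the $\SL$-invariant probability measure $\nu_1$ on the area-one locus of $\cL(S)$ is the normalized Haar measure on $\SL/\Gamma_S$. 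For a test function $f$ on $\R^2$ supported near a circle, the area-weighted Siegel--Veech transform $\hat f(S')=\sum_{\cyl\subset S'}\Area(\cyl)\,f(\mathrm{hol}(\cyl))$ satisfies, by the Veech--Vorobets theorem quoted above, $\int \hat f\,d\nu_1=\carea(\cL(S))\cdot\int_{\R^2}f\,dx\,dy$; the whole point is to evaluate the left-hand side combinatorially.

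Next I would unfold. Writing $\SL/\Gamma_S$ as an $n_0$-fold cover of $\SL/\SLZ$ reduces the integral to $\tfrac{1}{n_0}\sum_{S_j\in\SLZ\cdot S}\int_{\SL/\SLZ}\hat f(g\,S_j)\,dg$. Because every $S_j$ is square-tiled, the holonomies of its cylinders lie in $\Z^2$, and the set of pairs $(S_j,\cyl)$ over the finite orbit is $\SLZ$-invariant with holonomy transforming linearly and area preserved. I would therefore split this set into $\SLZ$-orbits and choose for each orbit a horizontal representative---possible since a cylinder whose holonomy is a lattice vector $m v_0$ ($v_0$ primitive) is carried to a horizontal cylinder of holonomy $m(1,0)$ by any $\gamma\in\SLZ$ rotating $v_0$ to $(1,0)$. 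Standard orbit unfolding then turns $\int_{\SL/\SLZ}$, summed over an $\SLZ$-orbit of cylinders, into an integral $\int_{\SL/\Gamma_0}$, where $\Gamma_0\subset\SLZ$ is the parabolic subgroup fixing the horizontal direction of the chosen representative. Equivalently, this collects the contributions cusp by cusp: the rational directions form $\Proj^1(\Q)$, the cusps of $\cL(S)$ are $\Gamma_S\backslash\Proj^1(\Q)$, and summing horizontal data over $\SLZ\cdot S=\Gamma_S\backslash\SLZ$ visits every cusp with its natural width.

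The local computation at each horizontal cylinder is where the arithmetic is pinned down. In horocyclic (Iwasawa) coordinates the parabolic integral $\int_{\SL/\Gamma_0}\Area(\cyl)\,f\big(g\cdot w_i(1,0)\big)\,dg$ splits into a transverse direction, which reconstructs $\int_{\R^2}f$, and the horocyclic direction along the cusp, whose length is governed by the common affine multitwist of the horizontal decomposition. Carrying this out, the area factor $w_i h_i$ together with the width $w_i$ appearing in the holonomy collapses to the inverse modulus $h_i/w_i$, so each horizontal cylinder of $S_j$ contributes a fixed universal constant times $h_i/w_i$, and summing over all horizontal cylinders of all $S_j$ produces $\sum_{S_j}\sum_i h_i/w_i=\sum_{S_j}M(S_j)$. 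Dividing out $\int_{\R^2}f$ and assembling the covolume normalizations---where $\Vol(\SL/\SLZ)$ is proportional to $\zeta(2)$---yields the asserted prefactor $\tfrac{3}{\pi^2}=\tfrac{1}{2\zeta(2)}$, giving $\carea(\cL(S))=\tfrac{3}{\pi^2}\cdot\tfrac{1}{n_0}\sum_{S_j}M(S_j)$.

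I expect the main obstacle to be exactly this constant bookkeeping: matching the normalization of $\nu_1$ to Haar measure on $\SL/\Gamma_S$, verifying that the parabolic (cusp) integral turns the pair (area, holonomy width) into the single inverse modulus $h_i/w_i$ with no stray multiplicity, and confirming that the multitwist length does not introduce a cusp-dependent factor that would spoil the clean average $\tfrac{1}{n_0}\sum_{S_j}M(S_j)$---pinning the precise rational multiple of $1/\pi^2$ is the delicate part. A secondary technical point is justifying Veech--Vorobets with a genuine test function $f$ rather than a sharp cutoff at length $L$, which requires the standard approximation argument controlling cylinders of length close to $L$.
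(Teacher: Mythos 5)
You should know at the outset that the paper contains no proof of this statement to compare yours against: it is quoted, with attribution, as Theorem~4 of~\cite{Eskin:Kontsevich:Zorich}, and is used purely as a black box (it feeds the Chen--Eskin formula~\eqref{eq:SVconstant:for:stratum}, which in turn is used to prove Theorem~\ref{th:carea}). So your proposal can only be measured against the standard argument in the literature, and it does follow that argument correctly in all essential respects: the closed orbit $\cL(S)$ carries the Haar probability measure of $\SL/\Gamma_S$ with $\Gamma_S$ of finite index $n_0=\card(\SLZ\cdot S)$ in $\SLZ$; the Veech--Vorobets identity reduces $\carea(\cL(S))$ to the integral of the area-weighted Siegel--Veech transform; unfolding first over the finite fiber $\SLZ/\Gamma_S$ and then over $\SLZ$-orbits of pairs (surface, cylinder) reduces everything to horizontal cylinders; and the parabolic integral produces $h_i/w_i$ per horizontal cylinder with the prefactor $1/(2\zeta(2))=3/\pi^2$, the factor $2$ coming from holonomy vectors being defined only up to sign.

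Two of the points you dismiss as ``bookkeeping'' are exactly where the proof could go wrong, so let me name how they resolve. First, the cusp-width cancellation: the stabilizer in $\SLZ$ of a horizontal pair $(S_j,\cyl)$ is only a finite-index subgroup, say of index $k$, of the parabolic $\pm U$ fixing the horizontal direction, so each unfolded orbit integral carries a factor $k$; this factor is absorbed because the same $\SLZ$-orbit of pairs contains exactly $k$ horizontal representatives, namely the pairs $(uS_j,u\cdot\cyl)$ with $u$ running over unipotent coset representatives, all sharing the same ratio $h/w$. Summing $M$ over the full finite orbit $\SLZ\cdot S$, as the formula does, therefore counts each cusp automatically with its width, and no cusp-dependent factor survives -- this is the precise content of your remark that the orbit ``visits every cusp with its natural width.'' Second, your opening claim that $\Stab_{\SL}(S)$ is a subgroup of $\SLZ$ requires $S$ to be reduced (absolute periods generating exactly the square lattice); for a non-reduced square-tiled surface one should either pass to the reduced model or run the whole argument with $\Stab_{\SLZ}(S)$ in place of the Veech group, which is what the quantity $\card(\SLZ\cdot S)=[\SLZ:\Stab_{\SLZ}(S)]$ in the statement refers to anyway. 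With these two points made explicit, your plan is a correct outline of the known proof. (Note in passing that the displayed formula in the paper contains a typo inherited from nowhere: the summand should read $M(S_i)$, not $M(S)$, and the left-hand side $\carea(\nu_1)$ means $\carea(\cL(S))$.)
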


Recall that for   any   connected   component of any  stratum of
Abelian differentials or of a stratum of meromorphic quadratic
differentials with at most simple poles we denote by $\cST(N)$ the
number of square-tiled surfaces in this stratum tiled with at most
$N$ squares. Define now the quantity:
$$
\cD\cST(N):=\sum_{\substack{
\text{square-tiled surfaces $S$ tiled}\\
\text{with at most $N$ squares}}} M(S)\,.
$$

The latter quantity has the following geometric interpretation.
Consider all square-tiled surfaces obtained from square-tiled
surfaces as above by cutting exactly one cylinder $\cyl_i$ along the
closed horizontal trajectory at the level $h$, where $0\le h< h_i$
and $h$ is integer in the case of Abelian differentials and
half-integer in the case of quadratic differentials. In other words,
we do not chop the squares along the cut. The above quantity
enumerates bordered square-tiled surfaces obtained in this way.
Indeed, we loose the twist parameter $w_i$ (correspondingly $2w_i$)
along the cylinder which is now cut open, but we gain the new height
parameter $h_i$ (correspondingly $2h_i$) responsible for the level of
the cut.

As a corollary of the above theorem, D.~Chen and A.~Eskin
proved the following
Theorem (see Appendix A in~\cite{Chen}):

\begin{NNTheorem}[D.~Chen, A.~Eskin]
For   any   connected   component of any  stratum
of  Abelian differentials
and of any stratum of meromorphic quadratic differentials with at most simple poles,
the Siegel--Veech
constant $c_{\mathit{area}}$ has  the
following value:
\begin{equation}
\label{eq:SVconstant:for:stratum}
c_{\mathit{area}}=
\cfrac{3}{\pi^2}\cdot
\lim_{N\to\infty}
\frac{\cD\cST(2N)}
{\cST(2N)}
\end{equation}
\end{NNTheorem}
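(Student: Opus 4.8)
The plan is to deduce the formula from the Eskin--Kontsevich--Zorich Theorem~\eqref{eq:SVconstant:for:square:tiled} stated just above, by an averaging argument that exploits the fact that the truncation by the number of squares is compatible with the $\SLZ$-action. First I would record the elementary but crucial observation that $\SLZ$ preserves the number of squares of a square-tiled surface: each $g\in\SLZ$ stabilizes the lattice $\Z\oplus i\Z$, hence descends to an automorphism of the torus (respectively the pillowcase) and carries a degree-$d$ covering to a degree-$d$ covering. Consequently the finite set of square-tiled surfaces tiled with at most $2N$ squares is a union of complete $\SLZ$-orbits. Writing $O=\SLZ\cdot S$ and using~\eqref{eq:SVconstant:for:square:tiled} in the form $\sum_{S_i\in O}M(S_i)=\tfrac{\pi^2}{3}\,\card(O)\,\carea(\cL(O))$, I would then regroup the defining sum for $\cD\cST$ orbit by orbit:
\[
\cD\cST(2N)=\sum_{O\subset\cST(2N)}\ \sum_{S_i\in O}M(S_i)
=\frac{\pi^2}{3}\sum_{O\subset\cST(2N)}\card(O)\,\carea(\cL(O)).
\]
Since $\cST(2N)=\sum_{O}\card(O)$, this yields, for every $N$, the exact identity
\[
\frac{3}{\pi^2}\cdot\frac{\cD\cST(2N)}{\cST(2N)}
=\frac{\sum_{O}\card(O)\,\carea(\cL(O))}{\sum_{O}\card(O)}
=\frac{1}{\cST(2N)}\sum_{S}\carea(\cL(S)),
\]
so the right-hand side of the theorem is precisely the average of the per-orbit Siegel--Veech constants over all square-tiled surfaces with at most $2N$ squares; equivalently, by~\eqref{eq:SVconstant:for:square:tiled}, it equals $\tfrac{3}{\pi^2}$ times the average of the scale-invariant observable $M(S)=\sum_{\cyl\subset S}h(\cyl)/w(\cyl)$.

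It therefore remains to prove that this average converges to $\carea$ of the whole stratum as $N\to+\infty$. Here I would invoke equidistribution of square-tiled surfaces: the normalized counting measures $\tfrac{1}{\cST(2N)}\sum_{S}\delta_{[S]}$ on the area level set converge weakly to the normalized Masur--Veech measure $\bar\nu_1$ (this is the lattice-point count already underlying~\eqref{eq:Vol:sq:tiled}). For a bounded observable this would at once give convergence of the averages to $\int M\,d\bar\nu_1$. The remaining identification $\int M\,d\bar\nu_1=\tfrac{\pi^2}{3}\carea(\cQ_{g,n})$ would then follow from the Veech--Vorobets Siegel--Veech formula~\eqref{eq:SV:constant:definition}: since $\bar\nu_1$ is $\SO$-invariant, averaging $M$ over the circle of directions replaces the horizontal cylinders by cylinders in all directions, and the weight $h/w=\Area(\cyl)/w^2$ is exactly the one that, integrated against the natural measure on widths, reproduces the $\tfrac{1}{\pi L^2}$-normalized area count $N_{\mathit{area}}$.

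The main obstacle is that $M$ is \emph{unbounded}: a surface may carry very narrow cylinders (small $w$), for which $h/w$ is large, so weak convergence of measures does not by itself control $\int M$. I would overcome this by a uniform-integrability argument: truncate to $M_T:=\min(M,T)$, apply equidistribution to the bounded $M_T$ to obtain convergence of the average of $M_T$ over $\cST(2N)$ to $\int M_T\,d\bar\nu_1$, and then let $T\to+\infty$. The two limits may be interchanged provided the contribution of the thin cylinders to the averages is uniformly small in $N$, that is, provided $\sup_N\big(\text{average of }(M-M_T)\text{ over }\cST(2N)\big)\to 0$. This tail estimate is precisely the Eskin--Masur control of cylinder counts near the boundary of the stratum (no escape of mass for the relevant Siegel--Veech transform), and verifying it in the present normalization is the technical heart of the proof. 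Once it is in place, combining it with the equidistribution and the Veech--Vorobets identification turns the exact per-$N$ identity of the first paragraph into the claimed limit, completing the argument.
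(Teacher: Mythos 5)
Your first paragraph is correct but carries essentially no content: since $\cD\cST(2N)=\sum_S M(S)$ by definition, the ``exact identity'' you derive is just the tautology that $\tfrac{3}{\pi^2}\cD\cST(2N)/\cST(2N)$ is the average of $\tfrac{3}{\pi^2}M$ over $\cST(2N)$, rewritten via~\eqref{eq:SVconstant:for:square:tiled} as an orbit-weighted average of the constants $\carea(\cL(S))$. That rewriting is indeed why the statement is a corollary of the Eskin--Kontsevich--Zorich theorem, and it is all the paper itself records: the theorem is quoted from Appendix~A of~\cite{Chen} and is not proved in this paper. So the entire burden of proof lies in your limiting step, and that step, as proposed, fails.

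The gap is the following. The observable $M(S)=\sum_i h_i/w_i$ is a sum over \emph{horizontal} cylinders only. With respect to the Masur--Veech measure, almost every surface has uniquely ergodic (in particular minimal) horizontal foliation and hence carries no horizontal cylinder at all; the locus of surfaces admitting a horizontal cylinder is a countable union of real-codimension-one subsets in period coordinates. Thus $M=0$ almost everywhere, and likewise every truncation $M_T=\min(M,T)$ vanishes a.e., so $\int M_T\,d\Vol_1=0$ for every $T$. Your scheme ``equidistribution gives convergence of the average of $M_T$ over $\cST(2N)$ to $\int M_T\,d\Vol_1$, then remove the truncation by an Eskin--Masur tail bound'' could therefore only ever produce the value $0$, never $\tfrac{\pi^2}{3}\carea$. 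The underlying error is that weak-$*$ convergence of the normalized counting measures (the lattice-point count behind~\eqref{eq:Vol:sq:tiled}) controls integrals of functions that are continuous $\Vol_1$-a.e.; but $M_T$ is discontinuous precisely on the Jenkins--Strebel locus that supports all the counting measures: it vanishes at a.e.\ point of the stratum while being bounded below by positive numbers at every square-tiled surface. No uniform-integrability argument repairs this, because the defect is not escape of mass to thin cylinders but the fact that the two sides of your claimed limit are computed on mutually singular sets. Your rotation-averaging remark does not rescue the identification either: the Veech--Vorobets formula~\eqref{eq:SV:constant:definition} concerns sums over cylinders in \emph{all} directions weighted by an integrable function of the holonomy vector, whereas the rotation average of $M$ is still a function vanishing a.e. The positivity of $\lim_{N\to\infty}\cD\cST(2N)/\cST(2N)$ is an arithmetic phenomenon invisible to soft equidistribution; any correct proof (as in Appendix~A of~\cite{Chen}) must combine the exactness of~\eqref{eq:SVconstant:for:square:tiled} on each finite $\SLZ$-orbit with genuine counting input on the asymptotics of $\cST$ and of the cut-surface count $\cD\cST$, rather than weak convergence of counting measures.
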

(Formally speaking, the original Theorem is proved only for the components of the strata
of Abelian differentials, but the proof can be easily extended to the
strata of quadratic differentials.)

%----------------------------------------------------------------------
\subsection{Proof of the formula for the area Siegel--Veech constant}
We have already evaluated the denominator
in~\eqref{eq:SVconstant:for:stratum}. Evaluating the numerator
following the lines of the initial computation we reduce the problem to
the evaluation of the sum~\eqref{eq:last} counted with the weight
$\frac{h_i}{w_i}=\frac{H_i}{b_i}$, for each $i=1,\dots,k$,
where we use the notations of formula~\eqref{eq:last}:
\begin{equation}
\label{eq:last:carea:1}
\sum_{\substack{\boldsymbol{b}\cdot\boldsymbol{H}\le 2N\\b_i, H_i\in\N\\ \boldsymbol{b}\in\mathbb{L}^k}}
\left(b_1 \cdots b_k
\cdot
\prod_{v\in V(\Graph)}N_{g_v,n_v}(\boldsymbol{b}_v)\right)\cdot\frac{H_i}{b_i}
\end{equation}

Denote by $P(b_1,\dots,b_k)$ the homogeneous polynomial
$\prod N_{g_v,n_v}(\boldsymbol{b}_v)$
in the formula above.
It is easy to see that the condition
$b_i H_i<\boldsymbol{b}\cdot\boldsymbol{H}\le 2N$ implies that
the contribution of any monomial of $P(b_1,\dots,b_k)$
containing the variable $b_i$ to the above sum
is of order $o(N^{\dprinc})$, so it does not contribute to the limit~\eqref{eq:SVconstant:for:stratum}.
Thus, up to lower order terms the sum~\eqref{eq:last:carea:1}
coincides with the sum
\begin{equation}
\label{eq:last:carea:2}
\sum_{\substack{\boldsymbol{b}\cdot\boldsymbol{H}\le 2N\\b_i, H_i\in\N\\ \boldsymbol{b}\in\mathbb{L}^k}}
b_1 \cdots b_{i-1}\cdot H_i\cdot b_{i+1}\cdots b_k
\cdot
P(b_1,\dots,b_{i-1},0,b_{i+1},\dots, b_k)\,.
\end{equation}
It is sufficient to interchange the notations $b_i$ and $H_i$
to see that
\begin{multline*}
\sum_{\substack{\boldsymbol{b}\cdot\boldsymbol{H}\le 2N\\b_i, H_i\in\N}}
b_1 \cdots b_{i-1}\cdot H_i\cdot b_{i+1}\cdots b_k
\cdot
P(b_1,\dots,b_{i-1},0,b_{i+1},\dots, b_k)
=\\=
\sum_{\substack{\boldsymbol{b}\cdot\boldsymbol{H}\le 2N\\b_i, H_i\in\N}}
b_1 \cdots b_k
\cdot
P(b_1,\dots,b_{i-1},0,b_{i+1},\dots, b_k)\,.
\end{multline*}
We already know how to evaluate the latter sum, so it remains to
study the impact of the extra condition $\boldsymbol{b}\in\mathbb{L}^k$
present in the sum~\eqref{eq:last:carea:2}.

Recall the strategy of evaluation of the sum~\eqref{eq:last:carea:2}
(see the proof of analogous Lemma~3.7 in~\cite{AEZ:Dedicata} for
reduction to integral sums and the proof of
Lemma~\ref{lm:evaluation:for:monomial} for the impact of the
sublattice condition). Variables $H_1,\dots, H_{i-1}, b_i,
H_{i+1},\dots, H_k$ are considered as parameters. For each collection
of such parameters we evaluate the corresponding integral sum over a
simplex in the $k$-dimensional space with coordinates
$b_1,\dots,b_{i-1},H_i, b_{i+1},\dots, b_k$. After that we perform
summation with respect to parameters $H_1,\dots, H_{i-1}, b_i,
H_{i+1},\dots, H_k$,

When the edge of the graph $\Graph$ corresponding to the
variable $b_i$ is a \textit{bridge} (i.e. when this edge is
separating), the \textit{parameter} $b_i$ is always
even. The space of integration now has coordinates
$b_1,\dots,b_{i-1},H_i, b_{i+1},\dots, b_k$; the sublattice
$\mathbb{L}^k$ in it is defined by the system of
equations~\eqref{eq:sublattice:L} where we let $b_i=0$.
Such sublattice has index $2^{|V(\Graph)|-2}$ and not
$2^{|V(\Graph)|-1}$ as before. Thus, on the level of
integration we gain factor $2$ with respect to the initial
count. However, since the \textit{parameter} $b_i$ is now
always even, evaluating the corresponding sum with respect
to possible values of this parameter we get the sum
$$
\frac{1}{2^2}+\frac{1}{4^2}+\frac{1}{6^2}\dots=
\frac{1}{4}\cdot \zeta(2)
$$
instead of the original sum
$$
\frac{1}{1^2}+\frac{1}{2^2}+\frac{1}{3^2}\dots=\zeta(2)\,.
$$
Thus, when $b_i$ corresponds to a bridge
(i.e. to a separating edge), we get
\begin{multline*}
\sum_{\substack{\boldsymbol{b}\cdot\boldsymbol{H}\le 2N\\b_i, H_i\in\N\\ \boldsymbol{b}\in\mathbb{L}^k}}
b_1 \cdots b_{i-1}\cdot H_i\cdot b_{i+1}\cdots b_k
\cdot
P(b_1,\dots,b_{i-1},0,b_{i+1},\dots, b_k)
=\\=
\frac{1}{2}\cdot\sum_{\substack{\boldsymbol{b}\cdot\boldsymbol{H}\le 2N\\b_i, H_i\in\N\\ \boldsymbol{b}\in\mathbb{L}^k}}
b_1 \cdots b_k
\cdot
P(b_1,\dots,b_{i-1},0,b_{i+1},\dots, b_k)\,.
\end{multline*}

When $b_i$ corresponds to a non separating edge, the \textit{parameter}
$b_i$ in the sum~\eqref{eq:last:carea:2} can take even and odd values.
The new space of integration has coordinates
$b_1,\dots,b_{i-1},H_i, b_{i+1},\dots, b_k$;
where the sublattice in it is defined by the
system of equations~\eqref{eq:sublattice:L}
in which we substitute $b_i=0$ or $b_i=1$
depending on the parity of the value of the parameter $b_i$.
The sublattice is linear in
the first case and affine in the second case.
Such sublattice has index $2^{|V(\Graph)|-1}$
as before. Thus, when $b_i$ corresponds to a non-separating edge, we get
\begin{multline*}
\sum_{\substack{\boldsymbol{b}\cdot\boldsymbol{H}\le 2N\\b_i, H_i\in\N\\ \boldsymbol{b}\in\mathbb{L}^k}}
b_1 \cdots b_{i-1}\cdot H_i\cdot b_{i+1} b_k
\cdot
P(b_1,\dots,b_{i-1},0,b_{i+1},\dots, b_k)
=\\=
\sum_{\substack{\boldsymbol{b}\cdot\boldsymbol{H}\le 2N\\b_i, H_i\in\N\\ \boldsymbol{b}\in\mathbb{L}^k}}
b_1 \cdots b_k
\cdot
P(b_1,\dots,b_{i-1},0,b_{i+1},\dots, b_k)\,.
\end{multline*}

We have proved that
\begin{multline*}
\sum_{\substack{\boldsymbol{b}\cdot\boldsymbol{H}\le 2N\\b_i, H_i\in\N\\ \boldsymbol{b}\in\mathbb{L}^k}}
\left(b_1 \cdots b_k
\cdot \prod_{v\in V(\Graph)}
N_{g_v,n_v}(\boldsymbol{b}_v)
\right)
\cdot\left(\sum_{i=1}^k\frac{H_i}{b_i}\right)
=\\=
\sum_{\substack{\boldsymbol{b}\cdot\boldsymbol{H}\le 2N\\b_i, H_i\in\N\\ \boldsymbol{b}\in\mathbb{L}^k}}
b_1 \cdots b_k\cdot\cD_{\Graph}
\left(
\prod_{v\in V(\Graph)}
N_{g_v,n_v}(\boldsymbol{b}_v)
\right)
+\text{lower order terms}\,,
\end{multline*}
where operator $\cD_{\Graph}$ is defined in
formula~\eqref{eq:operator:D}. Applying to the latter sum the same
technique as in the end of the proof of Theorem~\ref{th:volume} we
complete the proof of Theorem~\ref{th:carea}.

%----------------------------------------------------------------------
\subsection{Equivalence of two expressions for the Siegel--Veech constant}
\label{eq:proof:of:coincidence:of:two:SV:expressions}

In this section we prove Theorem~\ref{th:same:SV}.

We start the proof by establishing a natural correspondence
between summands of the two expressions. For any stable
graph $\Gamma\in\cG_{g,n}$ and and any edge $e$ of $\Gamma$
we define a combinatorial surgery
$\Cut_e\Graph$ of $\Gamma$. We describe it separately
in the case when $e$ is a bridge (i.e. a separating edge),
and when it is not.

We start with the case when $e$ is a bridge. Cut the edge
$e$ transforming it into two legs. Assign index $1$ to one
of the resulting graphs, and index $2$ to the remaining
one. We do not modify the genus decoration of the vertices.
The set of vertices $V(\Gamma)$ gets naturally partitioned
into two complementary subsets $V=V_1\sqcup V_2$. Define
$g_i=\sum_{v\in V_i} g_v$ for $i=1,2$. Similarly, the $n$
original legs are partitioned into $n_1$ legs which get to
$\Gamma_1$ and into $n_2$ legs which get to $\Gamma_2$. For
$i=1,2$, relabel the $n_i$ legs of $\Gamma_i$ to the
consecutive labels $1,2,\dots,n_i$ preserving the order
of labels. Assign the label $n_i+1$ to the new leg of
$\Gamma_i$ created during the surgery. The stability
condition $2 g_v - 2 + n_v > 0$, which is valid for every
vertex $v$ of $\Graph$, implies that we get two stable
graphs $\Gamma_i\in\cG_{g_i,n_i+1}$.

The only ambiguity in this construction is the choice of
the label ($1$ or $2$) for one of the components $\Gamma_i$
of the graph $\Gamma$ with removed bridge $e$. In general,
there are two choices, except the case when there is a
symmetry of $\Gamma$ acting on the edge $e$ as a flip (i.e.
a symmetry which sends $e$ to itself exchanging its two
ends).

Note that the surgery is reversible in the following sense.
Given two stable graphs $\Gamma_i\in\cG_{g_i,n_i+1}$ we can
glue the endpoint of the leg with index $n_1+1$ of
$\Gamma_1$ to the endpoint of the leg with index $n_2+1$ of
$\Gamma_2$ creating a connected graph with $n=n_1+n_2$ legs
and with an extra bridge joining $\Gamma_1$ to $\Gamma_2$.
The only ambiguity in this construction is in relabeling
the $n=n_1+n_2$ legs to a consecutive list $(1,2,\dots,n)$;
there are $\binom{n}{n_1}$ ways to do it.

We describe now the surgery $\Cut_e\Gamma$ in the
remaining case when the edge $e$ of $\Gamma$ is not a
bridge (i.e. is not separating). Cutting such edge we
transform it into two legs. We keep the same labels for the
preexisting legs and we associate labels $n+1$ and $n+2$ to
the two created legs. In general, there are two ways to do
that, except when there is a symmetry of $\Gamma$ acting on
the edge $e$ as a flip (i.e. a symmetry which sends $e$ to
itself exchanging its two ends). We get a stable graph
$\Gamma'\in\cG_{g-1,n+2}$.

The inverse operation (applicable to stable graphs with at
least two legs) consists in gluing the two legs of higher
index together transforming them into an edge and keeping
the same labeling for the other legs. Once again we do not
modify the genus decoration of the vertices.

Note that the operator $\partial_\Gamma$ is defined
in~\eqref{eq:operator:D} as a sum
$\sum \partial^e_{\Graph}$
over edges $e\in E(\Gamma)$ of a
stable graph $\Gamma$. Thus, our key sum $\sum_{\Graph \in
\cG_{g,n}} \cZ\left(\partial_{\Graph} P_\Gamma\right)$ in
the right-hand side of~\eqref{eq:carea} in
Theorem~\ref{th:carea} can be seen as the sum over all
pairs $(\Gamma,e)$, where $\Gamma\in\cG_{g,n}$ and $e\in
E(\Gamma)$. We show below that for every such pair
$(\Gamma,e)$, the corresponding term of the resulting sum
has simple expression in terms of the product
$\cZ(P_{\Gamma_1})\cZ(P_{\Gamma_2})$ when $e$ is a bridge
and in terms of $\cZ(P_{\Gamma'})$ when $e$ is not a
bridge, where $\Gamma_1\sqcup\Gamma_2$ (respectively $\Gamma'$)
are the stable graphs obtained under applying the surgery
$\Cut_e\Gamma$.

Having a stable graph $\Graph$ we associate to it the
polynomial
$$
\Pi_{\Graph}(\boldsymbol{b}):=
\prod_{v\in V(\Graph)}
N_{g_v,n_v}(\boldsymbol{b}_v)\,.
$$
By definition~\eqref{eq:P:Gamma} of
$P_{\Graph}(\boldsymbol{b})$ we have
$$
P_{\Graph}(\boldsymbol{b})
=(\text{combinatorial factor})
\cdot\left(\prod_{e\in E(\Gamma)} b_e\right)
\cdot\Pi_{\Graph}(\boldsymbol{b})\,.
$$
A pair $(\Graph,e_0)$ provides a nonzero contribution
to the sum in the right-hand side of~\eqref{eq:carea}
if and only the term
$\partial^{e_0}_{\Graph}=\chi_{\Graph}(e_0)b_{e_0}
\left.\frac{\partial}{\partial b_{e_0}}\right|_{b_{e_0}=0}$,
in the operator $\partial_{\Graph}$
applied to
$\left(\prod_{e\in E(\Gamma)} b_e\right)
\cdot\Pi_{\Graph}(\boldsymbol{b})$
does not identically vanish
(see given by~\eqref{eq:operator:D:e}).
The latter is
equivalent to the condition that the polynomial
$\left.\Pi_{\Graph}\right|_{b_{e_0}=0}$ does not identically
vanish.

If the edge $e_0$ is a bridge, consider the stable
graphs $\Gamma_1, \Gamma_2$ obtained under the surgery
$\Cut_e\Gamma$. The polynomial
$\left.\Pi_{\Graph}\right|_{b_{e_0}=0}$
splits naturally into the product:
$$
\left.\Pi_{\Graph}\right|_{b_j=0}
=\Pi_{\Graph_1}\Pi_{\Graph_2}\,,
$$
so when $e_0$ is a bridge,
and when
$\left.\Pi_{\Graph}\right|_{b_{e_0}=0}$ does not identically
vanish, we get
\begin{align}
\label{eq:separating}
\cZ\left(\partial_{\Graph}^{e_0}\left( \prod_e b_e \cdot  \Pi_{\Graph}\right)\right)
&=\cZ\left(\frac{1}{2}\cdot \prod_e b_e
 \cdot  \left.\Pi_{\Graph}\right|_{b_{e_0}=0}\right)
\\
&=\cZ\left(\frac{1}{2}\cdot b_{e_0}\cdot
 \prod_{e\in E(\Graph_1)}b_e\cdot \Pi_{\Graph_1}
 \cdot \prod_{e\in E(\Graph_2)}b_e\cdot \Pi_{\Graph_2}\right)\notag\\
&=\frac{1}{2}\cdot\frac{\pi^2}{3}
 \cdot\cZ\left(\prod_{e\in E(\Graph_1)}b_e\cdot \Pi_{\Graph_1}\right)
 \cdot\cZ\left(\prod_{e\in E(\Graph_2)}b_e\cdot \Pi_{\Graph_2}\right),\notag
\end{align}

If the edge $e_0$ is
not a bridge
and
$\left.\Pi_{\Graph}\right|_{b_{e_0}=0}$ does not identically
vanish, we get
$\left.\Pi_{\Graph}\right|_{b_{e_0}=0}=\Pi_{\Graph'}$
so
\begin{multline}
\label{eq:non:separating}
\cZ\left(\partial_{\Graph}^{e_0}
\left( \prod_e b_e \cdot  \Pi_{\Graph}\right)\right)
=\\=
\cZ\left(b_{e_0}\cdot\prod_{e\in E(\Graph')}b_e\cdot \Pi_{\Graph'}\right)
=\frac{\pi^2}{3}\cdot\cZ\left(\prod_{e\in E(\Graph')}b_e\cdot \Pi_{\Graph'}\right)\,.
\end{multline}

Rewrite the right-hand side
of~\eqref{eq:carea} as
$$
\frac{3}{\pi^2}
\cdot
\sum_{\Graph \in \cG_{g,n}}
\cZ\left(\partial_{\Graph}
P_\Gamma\right)
=
\frac{3}{\pi^2}
\cdot
\sum_{\Graph \in \cG_{g,n}}\sum_{e\in E(\Gamma)}
\cZ\left(\partial^e_{\Graph}
P_\Gamma\right)
$$
Apply~\eqref{eq:separating}
and~\eqref{eq:non:separating}
to the resulting sum, keeping the
intersection numbers non evaluated
and cancel out the factors
$\frac{3}{\pi^2}$ and
$\frac{\pi^2}{3}$.

Suppose now that $g\ge 1$ (the consideration in the case
$g=0$ is completely analogous). Consider the sum in the
right-hand side of~\eqref{eq:carea:Elise} and replace
$\Vol\cQ_{g_i,n_i}$ for $i=1,2$ and $\Vol\cQ_{g-1,n+2}$ in
it by the corresponding sums~\eqref{eq:square:tiled:volume}
over $\cG_{g_1,n_1}\times\cG_{g_2,n_2}$ and $\cG_{g-1,n+2}$
respectively (where we keep the intersection numbers non
evaluated. It is easy to see, that we get term-by-term the
same sum as above. Theorem~\ref{th:same:SV} is proved.

%######################################################################
%######################################################################
%######################################################################

\section{Comparison with Mirzakhani formula for $b_{g,n}$}
\label{s:comparison:with:Mirzakhani}

Consider a pair $(X,\lambda)$, where $X$ is a hyperbolic
metric $X$ on a closed smooth surface $S_g$ of genus $g$ and
$\lambda$ --- a measured lamination on $S_g$.
In the paper~\cite{Mirzakhani:earthquake} M.~Mirzakhani
associates to almost any such pair $(X,\lambda)$ a unique
holomorphic quadratic differential $q=F(\lambda,X)$ on the
complex curve $C=C(X)$ corresponding to the hyperbolic
metric $X$.

Consider the measure $\mu_{\mathrm{WP}}$ on the moduli
space $\cM_g$ coming from the Weil--Petersson volume
element and consider Thurston measure $\mu_{\mathrm{Th}}$
on the space of measured laminations $\cM\cL_g$. Using
ergodicity arguments, M.~Mirzakhani proves
in~\cite{Mirzakhani:earthquake} that the pushforward
measure
\begin{equation}
\label{eq:F:ast:WP:times:Th}
\mu_g:=F_\ast(\mu_{\mathrm{WP}}\otimes\mu_{\mathrm{Th}})
\end{equation}
on $\cQ_g$ under the map
$$
F: \cM_g\times\cM\cL_g \to \cQ_g
$$
is proportional to the Masur--Veech measure.
Equation~\eqref{eq:F:ast:WP:times:Th} should be considered
as the definition of Mirzakhani's normalization of the
Masur---Veech measure on $\cQ_g$.

\begin{Remark}
Note that under such implicit definition of the
Masur--Veech measure it is not clear at all why its density
should be constant in period coordinates.
Definition~\eqref{eq:F:ast:WP:times:Th} does not provide
any distinguished lattice in period coordinates either.
Thus, though we know, by results of Mirzakhani, that the
density Masur--Veech measure defined
by~\eqref{eq:F:ast:WP:times:Th} differs from the
Masur--Veech volume element defined in
section~\ref{ss:background:strata} by a constant numerical
factor which depends only on $g$, evaluation of this factor
is not quite straightforward. It is performed in
Theorem~\ref{th:factor} below.
\end{Remark}

M.~Mirzakhani proves in~\cite{Mirzakhani:earthquake} that
the map $F$ identifies the hyperbolic length
$\ell_\lambda(X)$ of the measured lamination $\lambda$
evaluated in the hyperbolic metric $X$ with the norm
$\|q(\lambda,X)\|=\int_C |q|$ of the quadratic differential
$q$ (defined as the area of the flat surface associated to
the pair $(C,q)$):
\begin{equation}
\label{eq:norm:q:length:lambda}
\int_{C(X)} |q(\lambda,X)|=\ell_\lambda(X)\,.
\end{equation}
Given a hyperbolic metric $X$ in $\cM_g$ define a
``unit ball'' $B_X\subset \cM\cL_g$ as
$$
B_X:= \{\lambda \in \cML: \ell_X(\lambda) \leq 1\}\,.
$$
Relation~\eqref{eq:norm:q:length:lambda} implies that as
the image of $F$ restricted to the total space of the
bundle of ``unit balls'' $B_X$ over $\cM_g$ one gets the
total space $\cQ^{\le 1}_g$ of the bundle of ``unit balls''
in $\cQ_g$, where
\begin{equation}
\label{eq:ball:of:radius:1:in:Q:g:n}
\cQ^{\Area\le 1}_g
=
\left\{(C,q)\in\cQ_g,\,|\, \Area(C,q) \le 1\right\}\,.
\end{equation}
We have seen in Section~\ref{ss:MV:volume} that the real
hypersurface
\begin{equation}
\label{eq:sphere:of:radius:1:in:Q:g:n}
\cQ^{\Area=1}_g
=
\left\{(C,q)\in\cQ_{g,n}\,|\, \Area(C,q) = 1\right\}\,.
\end{equation}
can be seen as the unit cotangent bundle to $\cM_g$.
In notations of Mirzakhani it is denoted as $\cQ^1\cM_g$.
M.~Mirzakhani defines the Masur--Veech volume of $\cQ_g$ as
\begin{equation}
\label{eq:Vol:Q:1:Mirzakhani:definition}
\Vol\cQ^1\cM_g=
\Vol_{\mathrm{Mir}}\cQ_g:=\mu_g(\cQ^{\Area\le 1}_g)\,.
\end{equation}
The above observations imply the following formula
for $\Vol_{\mathrm{Mir}}\cQ_g$
(see Theorem~1.4 in~\cite{Mirzakhani:earthquake}):
\begin{NNTheorem}[Mirzakhani]
The Masur--Veech volume of the moduli space of holomorphic
quadratic differentials on complex curves of genus $g$
defined by~\eqref{eq:Vol:Q:1:Mirzakhani:definition}
under normalizations~\eqref{eq:F:ast:WP:times:Th}
satisfies the following relation
\begin{equation}
\label{eq:MV:Volume:as:average:Thurston:measure}
\Vol\cQ^1\cM_g=
\Vol_{\mathrm{Mir}}\cQ_g = \int_{\cM_g} B(X)\,dX\,,
\end{equation}
where $B(X) := \mu_{\mathrm{Th}}(B_X)$ is the Thurston measure of the
unit ball $B_X$ in the space of measured geodesic
laminations with respect to the length function defined by
the hyperbolic metric $X$.
\end{NNTheorem}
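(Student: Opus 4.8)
The plan is to unwind the definition~\eqref{eq:Vol:Q:1:Mirzakhani:definition} of $\Vol_{\mathrm{Mir}}\cQ_g$ and to reduce the asserted identity to a single application of Fubini's theorem, the two genuinely nontrivial inputs --- the existence of the map $F$ and the length--norm relation~\eqref{eq:norm:q:length:lambda} --- being supplied by Mirzakhani's earlier work in~\cite{Mirzakhani:earthquake}.

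First I would use that $\mu_g=F_\ast(\mu_{\mathrm{WP}}\otimes\mu_{\mathrm{Th}})$ is a pushforward measure, so that $\mu_g(A)=(\mu_{\mathrm{WP}}\otimes\mu_{\mathrm{Th}})(F^{-1}(A))$ for every measurable $A\subset\cQ_g$. Applying this to $A=\cQ^{\Area\le 1}_g$ and combining with~\eqref{eq:F:ast:WP:times:Th} and~\eqref{eq:Vol:Q:1:Mirzakhani:definition} gives
\[
\Vol_{\mathrm{Mir}}\cQ_g
=\mu_g\big(\cQ^{\Area\le 1}_g\big)
=(\mu_{\mathrm{WP}}\otimes\mu_{\mathrm{Th}})\big(F^{-1}(\cQ^{\Area\le 1}_g)\big).
\]
The next, and geometrically decisive, step is to identify the preimage $F^{-1}(\cQ^{\Area\le 1}_g)$ inside $\cM_g\times\cML_g$. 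Since $\Area(C,q)=\int_C|q|$, the relation~\eqref{eq:norm:q:length:lambda} shows that for a pair $(X,\lambda)$ one has $\Area\big(F(\lambda,X)\big)=\ell_X(\lambda)$; hence $(X,\lambda)\in F^{-1}(\cQ^{\Area\le 1}_g)$ exactly when $\ell_X(\lambda)\le 1$, i.e.\ when $\lambda\in B_X$. Thus, up to the null set on which $F$ is undefined, $F^{-1}(\cQ^{\Area\le 1}_g)$ is precisely the total space over $\cM_g$ of the fibration whose fibre over $X$ is the unit ball $B_X\subset\cML_g$. Finally I would disintegrate the product measure along the projection $\cM_g\times\cML_g\to\cM_g$, obtaining by Fubini
\[
(\mu_{\mathrm{WP}}\otimes\mu_{\mathrm{Th}})\big(\{(X,\lambda):\ell_X(\lambda)\le 1\}\big)
=\int_{\cM_g}\mu_{\mathrm{Th}}(B_X)\,d\mu_{\mathrm{WP}}(X)
=\int_{\cM_g}B(X)\,dX,
\]
which is precisely~\eqref{eq:MV:Volume:as:average:Thurston:measure}.

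The main obstacle is purely measure-theoretic bookkeeping rather than geometry. One has to verify that the exceptional set where $F$ fails to be defined is genuinely $(\mu_{\mathrm{WP}}\otimes\mu_{\mathrm{Th}})$-negligible, so that the pushforward identity and the preimage description above are unaffected; that the whole construction is $\Mod_g$-equivariant and therefore descends correctly to the moduli-space quotient on which the Weil--Petersson and Thurston measures actually live; and that $B(X)=\mu_{\mathrm{Th}}(B_X)$ is a measurable and $\mu_{\mathrm{WP}}$-integrable function of $X$, so that Fubini legitimately applies. The deep ingredients here --- that such an $F$ exists, is measurable, and pushes the product measure into the Masur--Veech class, together with the length--norm identity~\eqref{eq:norm:q:length:lambda} --- are Mirzakhani's theorems, which I would invoke rather than reprove.
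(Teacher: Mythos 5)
Your argument is correct and is essentially the paper's own: the paper likewise obtains~\eqref{eq:MV:Volume:as:average:Thurston:measure} by combining the definition~\eqref{eq:Vol:Q:1:Mirzakhani:definition} of $\Vol_{\mathrm{Mir}}\cQ_g$ as the mass of the pushforward measure~\eqref{eq:F:ast:WP:times:Th} on $\cQ^{\Area\le 1}_g$ with the length--norm identity~\eqref{eq:norm:q:length:lambda}, which identifies the preimage $F^{-1}(\cQ^{\Area\le 1}_g)$ with the bundle of unit balls $B_X$ over $\cM_g$, and then integrates fibrewise to get $\int_{\cM_g}\mu_{\mathrm{Th}}(B_X)\,d\mu_{\mathrm{WP}}(X)$. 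Your extra measure-theoretic bookkeeping (negligibility of the set where $F$ is undefined, measurability and integrability of $B(X)$) is sound and is exactly what the paper leaves implicit, deferring the deep inputs to Theorem~1.4 of~\cite{Mirzakhani:earthquake}.
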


The integral on the right-hand side of
formula~\eqref{eq:MV:Volume:as:average:Thurston:measure}
is explicitely evaluated by Mirzakhani in the paper~\cite{Mirzakhani:grouth:of:simple:geodesics}.
Combined with our formula~\eqref{eq:square:tiled:volume}
for the Masur--Veech volume $\Vol\cQ_g$ of the moduli
space of quadratic differentials in normalization of
section~\ref{ss:background:strata} we get the following
result.

\begin{Theorem}
\label{th:factor}
The pushforward
$F_\ast(\mu_{\mathrm{WP}}\otimes\mu_{\mathrm{Th}})$ of the
product measure $\mu_{\mathrm{WP}}\otimes\mu_{\mathrm{Th}}$
on $\cM_g\times\cM\cL_g$ to $\cQ_g$ under the map $F$
admits a density at almost all points of $\cQ_g$. In the
period coordinates of the principal stratum $\cQ(1^{4g-4})$
in $\cQ_g$ this density coincides with the Masur--Veech
volume element $d\Vol_{\mathrm{MV}}$ defined in
section~\ref{ss:background:strata} divided by the factor
$(4g-4)!\cdot 2^{4g-3}$:
$$
F_\ast(\mu_{\mathrm{WP}}\otimes\mu_{\mathrm{Th}})=
\frac{d\Vol_{\mathrm{MV}}}{(4g-4)!\cdot 2^{4g-3}}
$$

The Masur--Veech volume $\Vol\cQ^1\cM_g$ in
Mirzakhani's normalization~\eqref{eq:MV:Volume:as:average:Thurston:measure}
and the Masur--Veech volume $\Vol\cQ_g$
in normalization of formula~\eqref{eq:square:tiled:volume}
are related by the following factor:
$$
\Vol\cQ_g
=\big((12g-12)\cdot
(4g-4)!\cdot 2^{4g-3}\big)\cdot
\Vol\cQ^1\cM_g\,.
$$
\end{Theorem}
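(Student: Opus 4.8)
The plan is to deduce the theorem from two inputs that are already available: Mirzakhani's ergodic identification of the pushforward measure, and the comparison between the flat and hyperbolic counts encoded in Theorem~\ref{th:our:density:equals:Mirzakhani:density}. The volume relation in the second displayed formula is in fact immediate: by Corollary~\ref{cor:Vol:as:b:g:n} specialized to $n=0$ one has $\Vol\cQ_g=2\cdot(6g-6)\cdot(4g-4)!\cdot 2^{4g-3}\cdot b_{g,0}=(12g-12)\cdot(4g-4)!\cdot 2^{4g-3}\cdot b_{g,0}$, while Mirzakhani's formula~\eqref{eq:MV:Volume:as:average:Thurston:measure} together with the definition~\eqref{eq:b:g:n} of $b_{g,0}$ gives $\Vol\cQ^1\cM_g=\int_{\cM_g}B(X)\,dX=b_{g,0}$. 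Substituting the latter into the former yields exactly $\Vol\cQ_g=\big((12g-12)\cdot(4g-4)!\cdot 2^{4g-3}\big)\cdot\Vol\cQ^1\cM_g$. (If one prefers a self-contained route, the same identity follows by summing the per-curve relation $\Vol(\gamma)=const_{g,0}\cdot c(\gamma)$ over all $\Mod_g$-orbits and invoking \eqref{eq:Vol:Q:as:sum:of:Vol:gamma} and \eqref{eq:b:g:n:as:sum:of:c:gamma}, since $const_{g,0}=(12g-12)\cdot(4g-4)!\cdot 2^{4g-3}$ by \eqref{eq:const:g:n}.)

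For the density statement I would first record the qualitative fact, due to Mirzakhani and recalled in the Remark preceding the theorem: since the Teichm\"uller flow is ergodic and preserves both $\mu_g:=F_\ast(\mu_{\mathrm{WP}}\otimes\mu_{\mathrm{Th}})$ and $d\Vol_{\mathrm{MV}}$, the two lie in the same Lebesgue class and are pointwise proportional, $F_\ast(\mu_{\mathrm{WP}}\otimes\mu_{\mathrm{Th}})=c_g\,d\Vol_{\mathrm{MV}}$ for a constant $c_g$ depending only on $g$. It then only remains to evaluate $c_g$, which I would do by integrating this proportionality over the area-ball $\cQ^{\Area\le 1}_g$. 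The $\mu_g$-mass of this region is $\Vol\cQ^1\cM_g$ by the very definition~\eqref{eq:Vol:Q:1:Mirzakhani:definition}, while its $d\Vol_{\mathrm{MV}}$-mass is controlled by $\Vol\cQ_g$ through Convention~\ref{conv:MV:volume:notation} and~\eqref{eq:def:Vol:Q:g:n}, which identify $\Vol\cQ_g$ with $(12g-12)=\dim_{\R}\cQ_g$ times the $d\Vol_{\mathrm{MV}}$-mass of the sublevel set $\cQ^{\Area\le\frac12}_g$, together with the homogeneity of the area function (a quadratic form in period coordinates) that governs the passage between the two radii. Dividing $\Vol\cQ^1\cM_g$ by the resulting $d\Vol_{\mathrm{MV}}$-mass and using the volume relation above is designed to produce $c_g=\big((4g-4)!\cdot 2^{4g-3}\big)^{-1}$, i.e. the claimed density.

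The hard part will be the normalization bookkeeping in this last step, and specifically reconciling the area-scaling conventions so that the exact constant — rather than the correct factorial times an unspecified power of $2$ — emerges. One must track simultaneously: the ``radius $\tfrac12$ versus $1$'' convention, the factor introduced by the ramified double cover on which the period lattice lives, the half-integrality of the $\tfrac12\times\tfrac12$ square lattice, and Mirzakhani's normalization of $\mu_{\mathrm{Th}}$ through the length--area identity~\eqref{eq:norm:q:length:lambda}. The factor $(4g-4)!$ has a clean conceptual origin — it records the labelling of the $4g-4$ simple zeroes that distinguishes the paper's labelled period-coordinate count from Mirzakhani's intrinsic hyperbolic count — but pinning the power of $2$ to $4g-3$ is precisely where the quadratic homogeneity of the area and the lattice index $2^{|V(\Gamma)|-1}$ of Corollary~\ref{cor:criterion} must be combined with care. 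All the genuinely conceptual content, by contrast, is already contained in the proportionality statement and in the per-curve comparison of Theorem~\ref{th:our:density:equals:Mirzakhani:density}.
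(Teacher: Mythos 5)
Your treatment of the second displayed formula (the volume relation) is correct and is exactly the paper's argument: the paper obtains Theorem~\ref{th:factor} as an immediate corollary of Theorem~\ref{th:our:density:equals:Mirzakhani:density}, i.e.\ it sums the per-multicurve identity \eqref{eq:Vol:gamma:c:gamma} into Corollary~\ref{cor:Vol:as:b:g:n}, specializes to $n=0$, and combines with Mirzakhani's identity $\Vol\cQ^1\cM_g=b_g$ coming from \eqref{eq:MV:Volume:as:average:Thurston:measure} and \eqref{eq:bgn:as:integral} --- both of the routes you describe.

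The density statement, however, is where your proposal has a genuine gap. Your plan is: proportionality $F_\ast(\mu_{\mathrm{WP}}\otimes\mu_{\mathrm{Th}})=c_g\,d\!\Vol_{\mathrm{MV}}$ (Mirzakhani's ergodicity argument, the same black box the paper's Remark invokes --- that part is fine), followed by an evaluation of $c_g$ by comparing masses of area balls. But you never perform that evaluation; you only assert it ``is designed to produce'' $c_g=\big((4g-4)!\cdot 2^{4g-3}\big)^{-1}$. If one executes your sketch faithfully, it does not produce this constant. Indeed, by \eqref{eq:Vol:Q:1:Mirzakhani:definition} one has $\mu_g\big(\cQ^{\Area\le 1}_g\big)=\Vol\cQ^1\cM_g$, while quadratic homogeneity of the area in period coordinates together with \eqref{eq:def:Vol:Q:g:n} and Convention~\ref{conv:MV:volume:notation} gives
$$
\Vol_{\mathrm{MV}}\big(\cQ^{\Area\le 1}_g\big)
=2^{6g-6}\cdot\Vol_{\mathrm{MV}}\big(\cQ^{\Area\le \frac{1}{2}}_g\big)
=2^{6g-6}\cdot\frac{\Vol\cQ_g}{12g-12}\,,
$$
so that, using the volume relation you have already established,
$$
c_g=\frac{\mu_g\big(\cQ^{\Area\le 1}_g\big)}{\Vol_{\mathrm{MV}}\big(\cQ^{\Area\le 1}_g\big)}
=\frac{(12g-12)\cdot\Vol\cQ^1\cM_g}{2^{6g-6}\cdot\Vol\cQ_g}
=\frac{1}{(4g-4)!\cdot 2^{10g-9}}\,,
$$
which differs from the claimed density factor by $2^{6g-6}$. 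Consequently your route, as written, either fails to reach the stated constant or requires identifying extra normalization factors (the $\tfrac12\times\tfrac12$-square lattice on the double cover, the normalization of $\mu_{\mathrm{Th}}$ by integral multicurves, the length--area identity \eqref{eq:norm:q:length:lambda}) that exactly absorb $2^{6g-6}$; you list these ingredients but compute none of them, and this reconciliation is precisely the entire quantitative content of the claim.

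Note also that this ball-mass comparison is not the paper's mechanism at all: the paper derives the density statement together with the volume statement from the term-by-term comparison in the proof of Theorem~\ref{th:our:density:equals:Mirzakhani:density}, where every power of $2$ is tracked once and for all in matching Mirzakhani's formulas \eqref{eq:c:gamma} and \eqref{eq:b:gamma:5:3} against \eqref{eq:P:Gamma} and \eqref{eq:contribution:of:gamma:to:volume} via the polynomial identity \eqref{eq:two:polynomials} and the comparison of the global prefactors $(6g-6+2n)!$ and $\tfrac{2^{6g-5+2n}(4g-4+n)!}{(6g-7+2n)!}$. In that route no rescaling between the radius-$\tfrac12$ and radius-$1$ balls ever occurs, so the $2^{6g-6}$ tension that blocks your argument never has to be confronted. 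To repair your proposal you would have to either carry out the lattice/normalization bookkeeping explicitly, or abandon the ball-mass evaluation and fall back on the paper's per-multicurve comparison.
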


Theorem~\ref{th:factor} is obtained as an immediate
corollary of the more general result presented
in Theorem~\ref{th:our:density:equals:Mirzakhani:density}.

%------------------------------------------------------------
\subsection{Mirzakhani's formulae for the Masur--Veech volume of $\cQ_g$
and for asymptotic frequencies of simple closed geodesic multicurves}

Integral~\eqref{eq:MV:Volume:as:average:Thurston:measure}
is a particular case a more general integral
\begin{equation}
\label{eq:bgn:as:integral}
b_{g,n}:=\int_{\overline{\cM}_{g,n}} B(X)\,dX
\end{equation}
introduced in Theorem~1.2
in~\cite{Mirzakhani:grouth:of:simple:geodesics}. The latter
integral is computed in Theorem~5.3 on page 118
in~\cite{Mirzakhani:grouth:of:simple:geodesics}.
To reproduce the corresponding formula and closely related
formula for for the asymptotic frequencies $c(\gamma)$
of simple closed geodesic multicurves $\gamma$
of fixed topological type we need to remind the
notations from~\cite{Mirzakhani:grouth:of:simple:geodesics}.
\medskip

%-----------------------------
\noindent
\textbf{Simple closed multicurves.}
Depending on the context, we denote by the same symbol
$\gamma$ a collection of disjoint, essential, nonperipheral
simple closed curves, no two of which are in the same
homotopy class; a disjoint union of such curves; the
corresponding primitive multicurve; and the corresponding
orbit in the space $\cM\cL_{g,n}$ under the action of the
mapping class group $\Mod_{g,n}$,
$$
\gamma
=\gamma_1\sqcup\dots\sqcup\gamma_k
=\gamma_1+\dots+\gamma_k
=(\gamma_1,\dots,\gamma_k)\,.
$$
(M.~Mirzakhani uses
in~\cite{Mirzakhani:grouth:of:simple:geodesics} symbols
$\gamma$, $\hat\gamma$ and $\tilde\gamma$ for these objects
depending on the context.) To every such multicurve
$\gamma$ M.~Mirzakhani associates
in~\cite{Mirzakhani:grouth:of:simple:geodesics} a
collection of quantities $N(\gamma)$, $M(\gamma)$,
$\Sym(\gamma)$, $b_\gamma, \Vol_{\mathrm{WP}}(\cM_{g,n}(\gamma,x))$
involved in the formula for $b_{g,n}$. For the sake of
completeness, and to simplify formulae comparison we
reproduce the definitions of these quantities; see the
original paper~\cite{Mirzakhani:grouth:of:simple:geodesics}
of M.~Mirzakhani for details.
\medskip

%-----------------------------
\noindent
\textbf{Collection $\boldsymbol{\cS_{g,n}}$ of all topological types of primitive multicurves.}
Recall that by $S_{g,n}$ we denote a smooth orientable
topological surface of genus $g$ with $n$ punctures.
Consider the finite set $\mathcal{S}_{g,n}$ defined as
\begin{equation}
\label{eq:S:cal:g:n}
\cS_{g,n}:=
\{\gamma\,|\,\gamma\ \text{is a union of simple closed curves on}
\ S_{g,n}\}/\Mod_{g,n}\,.
\end{equation}
(see formula~(5.4) on page~118
in~\cite{Mirzakhani:grouth:of:simple:geodesics}). It is
immediate to see that $\cS_{g,n}$ is an a canonical
bijection with the set $\cG_{g,n}$ of stable graphs defined
in section~\ref{ss:intro:Masur:Veech:volumes},
\begin{equation}
\label{eq:cS:cG}
\cS_{g,n}\simeq \cG_{g,n}\,.
\end{equation}
%\medskip

%-----------------------------
\noindent
\textbf{Symmetries $\boldsymbol{\Stab(\gamma)}$ and $\boldsymbol{N(\gamma)}$ of a primitive multicurve.}
For any set $A$ of homotopy classes of simple closed curves
on $S_{g,n}$, Mirzakhani defines $\Stab(A)$ as
$$
\Stab(A):=\{g\in\Mod{g,n}\,|\, g\cdot A=A\}\subset\Mod_{g,n}\,.
$$
Having a multicurve $\gamma$ on $S_{g,n}$ as above,
Mirzakhani defines
$$
\Sym(\gamma):=\Stab(\gamma)/\cap_{i=1}^k\Stab(\gamma_i)\,,
$$
(see the beginning of section~4 on page~112
of~\cite{Mirzakhani:grouth:of:simple:geodesics} for both
definitions). For any single connected simple closed curve
$\gamma_i$ one has $|\Sym(\gamma_i)|=1$.

For each connected simple closed curve
$\gamma_i$ define
$\Stab_0(\gamma_i)\subset\Stab(\gamma_i)$ as
the subgroup consisting of elements which
preserve the orientation of $\gamma_i$.
Define $N(\gamma)$ as
$$
N(\gamma):=\left|
\bigcap_{i=1}^k\Stab(\gamma_i)/
\bigcap_{i=1}^k\Stab_0(\gamma_i)\right|\,.
$$
(see page~113
of~\cite{Mirzakhani:grouth:of:simple:geodesics}).

Consider a stable graph $\Gamma(\gamma)$ associated
to a primitive multicurve $\gamma$.
It follows from definitions of $\Stab(\gamma)$, $N(\gamma)$
and $\Aut(\gamma)$ that
\begin{equation}
\label{eq:Sym:N:Aut}
|\Aut(\Gamma(\gamma))|=|\Sym(\gamma)|\cdot N(\gamma)\,.
\end{equation}

%-----------------------------
\noindent
\textbf{Number $\boldsymbol{M(\gamma)}$ of one-handles.}
Consider now the closed surface
\begin{equation}
\label{eq:decomposition:of:S:g:n}
S_{g,n}(\gamma)=\bigsqcup_{j=1}^s S_{g_j,n_j}
\end{equation}
obtained from $S_{g,n}$ by cutting along all
$\gamma_1,\dots,\gamma_k$. Here $S_{g_1,n_1},\dots,
S_{g_s,n_s}$ are the connected components of the resulting
surface $S_{g,n}(\gamma)$. Define
$$
M(\gamma):=|\{i\,|\, \gamma_i\
\text{separates off a one-handle from}\ S_{g,n}\}|\,,
$$
(see this formula in the statement of Theorem~4.1 on
page~114 of~\cite{Mirzakhani:grouth:of:simple:geodesics}).
By definition, a ``one-handle'' is a surface of genus one
with one boundary component, i.e., a surface of type
$S_{1,1}$.

\begin{Remark}
\label{rm:one:handle} There is one very particular case,
when the index $i$ in the definition of $M(\gamma)$ should
be counted with multiplicity $2$. Namely, when $\gamma$ is
a connected separating simple closed curve on a surface
$S_2=S_{2,0}$, this single simple closed curve separates
off \textit{two} one-handles at once, and thus should be
counted with multiplicity $2$. In other words, the quantity
$M(\gamma)$ might be defined as
\begin{equation}
\label{eq:M:gamma}
M(\gamma):=\text{number of surfaces of type }\ S_{1,1}\
\text{in decomposition~\eqref{eq:decomposition:of:S:g:n}}
\end{equation}
without any exceptions and multiplicities.
\end{Remark}
\medskip

%------------------------------------------------------------
\noindent
\textbf{Volume polynomials $\boldsymbol{V_{g,n}}$.}
In~\cite{Mirzakhani:simple:geodesics:and:volumes}
and~\cite{Mirzakhani:volumes:and:intersection:theory}
proves the following statement, which we state
following Theorems~4.2 and~4.3
in~\cite{Mirzakhani:grouth:of:simple:geodesics}.

\begin{NNTheorem}[Mirzakhani]
The Weil--Petersson volume
$\Vol_{\textrm{WP}}\cM_{g,n}(b)$
of the moduli space
of bordered
hyperbolic surfaces of genus $g$ with hyperbolic
boundary components of lengths $b_1,\dots,b_n$
is a polynomial $V_{g,n}(b_1,\dots,b_n)$
in even powers of $b_1,\dots,b_n$; that is,
\begin{equation*}
\Vol_{\mathrm{WP}}\cM_{g,n}(b)
=V_{g,n}(b)=\sum_{\substack{\alpha\\|\alpha|\le 3g-3+n}} C_\alpha\cdot b^{2\alpha}\,,
\end{equation*}
where $C_\alpha>0$ lies in $\pi^{6g-6+2n-2|\alpha|}\cdot\Q$.
The coefficient $C_\alpha$ is given by
\begin{equation*}
C_\alpha=\frac{1}{2^{|\alpha|}\cdot\alpha!\cdot(3g-3+n-|\alpha|)!}
\int_{\overline{\cM}_{g,n}}
\psi_1^{\alpha_1}\cdots\psi_n^{\alpha_n}
\cdot\omega^{3g-3+n-|\alpha|}\,,
\end{equation*}
where $\omega$ is the Weil--Petersson symplectic form,
$\alpha!=\prod_{i=1}^n \alpha_i!$, and $|\alpha|=\sum_{i=1}^n\alpha_i$.
\end{NNTheorem}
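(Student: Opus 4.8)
The plan is to realize $V_{g,n}(b)$ as the integral of the top exterior power of the Weil--Petersson symplectic form over the Deligne--Mumford compactification, reducing the entire statement to a single cohomological identity describing how the Weil--Petersson class depends on the boundary lengths. The moduli space $\cM_{g,n}(b)$ of bordered hyperbolic surfaces with geodesic boundaries of lengths $b_1,\dots,b_n$ carries a Weil--Petersson symplectic form $\omega_b$, which in Fenchel--Nielsen coordinates is given by Wolpert's formula $\omega_b=\sum_i d\ell_i\wedge d\tau_i$; its total volume is
\[
V_{g,n}(b)=\int_{\cM_{g,n}(b)}\frac{\omega_b^{\,d}}{d!},\qquad d:=3g-3+n.
\]
Since for varying $b$ the spaces $\cM_{g,n}(b)$ are all diffeomorphic, it suffices to understand $\int\omega_b^{\,d}$ as a function of $b$; I would pass to the fixed compactified space $\overline{\cM}_{g,n}$, across which $\omega_b$ extends to a closed $2$-form whose cohomology class carries all the relevant information.

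The key step, and the main obstacle, is the identity
\[
[\omega_b]=[\omega]+\frac{1}{2}\sum_{i=1}^n b_i^2\,\psi_i
\qquad\text{in }H^2(\overline{\cM}_{g,n};\R),
\]
where $\omega$ denotes the Weil--Petersson form at $b=0$, whose class equals $2\pi^2\kappa_1$, and $\psi_1,\dots,\psi_n$ are the tautological classes of the marked points. I would establish this via the Duistermaat--Heckman theorem: cutting along the boundary geodesics exhibits $\cM_{g,n}(b)$ as a symplectic reduction of a larger space on which the twist flows about the boundary curves generate a Hamiltonian $n$-torus action, the $i$-th moment-map component being $\tfrac12\ell_i^2$. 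Duistermaat--Heckman then forces the reduced symplectic class to vary linearly in the moment parameters $\tfrac12 b_i^2$, with linear coefficients the first Chern classes of the circle bundles at the marked points, which one identifies with the $\psi_i$. Verifying this identification, and controlling the extension of $\omega_b$ across the boundary divisors of $\overline{\cM}_{g,n}$ so that the equality holds on the compactification, is the delicate analytic core of the argument.

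Granting the identity, the remaining steps are purely formal. The multinomial theorem gives
\[
V_{g,n}(b)=\int_{\overline{\cM}_{g,n}}\frac{1}{d!}
\Bigl(\omega+\tfrac12\sum_i b_i^2\psi_i\Bigr)^{d}
=\sum_{|\alpha|\le d}
\frac{b^{2\alpha}}{2^{|\alpha|}\,\alpha!\,(d-|\alpha|)!}
\int_{\overline{\cM}_{g,n}}\psi_1^{\alpha_1}\cdots\psi_n^{\alpha_n}\,\omega^{\,d-|\alpha|},
\]
which exhibits $V_{g,n}$ as a polynomial in the even powers $b_i^2$ and reads off the stated formula for $C_\alpha$. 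Positivity of $C_\alpha$ follows because $\omega$ is a K\"ahler class and each $\psi_i$ is represented by an effective class, so every intersection number appearing is strictly positive. Finally, substituting $[\omega]=2\pi^2\kappa_1$ rewrites each integral as $(2\pi^2)^{d-|\alpha|}$ times the rational intersection number $\int\psi^\alpha\kappa_1^{\,d-|\alpha|}$, whence $C_\alpha\in\pi^{6g-6+2n-2|\alpha|}\cdot\Q$, as asserted.
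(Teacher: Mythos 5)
Your proposal is correct, and it is essentially the original argument: the paper itself gives no proof of this statement but quotes it from Mirzakhani's papers (stated following Theorems~4.2 and~4.3 of her paper on the growth of simple closed geodesics), and Mirzakhani's proof there is exactly the symplectic-reduction/Duistermaat--Heckman route you outline, with the identity $[\omega_b]=[\omega]+\tfrac{1}{2}\sum_i b_i^2\psi_i$ on $\overline{\cM}_{g,n}$ as the key step and Wolpert's $[\omega]=2\pi^2\kappa_1$ yielding the $\pi$-power rationality of the coefficients. So there is nothing within the paper to compare against; your sketch reproduces the approach of the cited source.
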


\begin{Remark}[M.~Kazarian]
\label{rm:Kazarian}
For one particular pair $(g,n)$, namely
for $(g,n)=(1,1)$, Mirzakhani's normalization
\begin{equation}
\label{eq:Vol:M11:Mirzakhani}
\Vol^{\mathit{Mirzakhani}}_{\textrm{WP}}\cM_{1,1}(b)
=\frac{1}{24}(b^2+4\pi^2)\,,
\end{equation}
(see equation~(4.5) on page~116
in~\cite{Mirzakhani:grouth:of:simple:geodesics}
or~\cite{Mirzakhani:volumes:and:intersection:theory}) is
twice bigger than the normalization in many other papers.
Topologically $\overline{\cM}_{1,1}$ is homeomorphic to
$\CP$. However, since every elliptic curve with a marked
point admits an involution, the fundamental class of the
orbifold $\overline{\cM}_{1,1}$ used in the integration
equals
$[\overline{\cM}_{1,1}]=\frac{1}{2}\left[\CP\right]\in
H_2(\CP)$ which gives the value
\begin{equation}
\label{eq:Vol:M11:Kazarian}
\Vol_{\textrm{WP}}\cM_{1,1}(b)
=\frac{1}{48}(b^2+4\pi^2)\,.
\end{equation}

\end{Remark}

Denote by $V^{top}_{g,n}(b)$ the homogeneous part of this
highest degree $6g-6+2n$ of $V_{g,n}(b)$. It follows from
the definition of the volume polynomial $V_{g,n}(b)$ that
\begin{equation}
\label{eq:Vgn}
V^{top}_{g,n}(b)=\sum_{|\alpha|= 3g-3+n} C_\alpha\cdot b^{2\alpha}\,,
\end{equation}
where $C_\alpha$ is given by
\begin{equation}
\label{eq:C:alpha}
C_\alpha=\frac{1}{2^{3g-3+n}\cdot\alpha!}
\int_{\overline{\cM}_{g,n}}
\psi_1^{\alpha_1}\cdots\psi_n^{\alpha_n}\,.
\end{equation}
Comparing the definition of $V^{top}_{g,n}(b)$
with the definition~\eqref{eq:N:g:n}--\eqref{eq:correlator}
of the polynomial $N_{g,n}(b)$ we get the following result.
\begin{Lemma}
The homogeneous parts of top degree of Mirzakhani's
volume polynomial $V_{g,n}$ and of Kontsevich's polynomial
$N_{g,n}$ coincide up to the constant factor:
\begin{equation}
\label{eq:Vgn:through:Ngn}
V^{top}_{g,n}(b)
=\begin{cases}
\hspace*{12pt}
2^{2g-3+n}\cdot N_{g,n}(b)
&\text{for}\ (g,n)\neq(1,1)\,;
\\
2\cdot 2^{2g-3+n}\cdot N_{g,n}(b)
&\text{for}\ (g,n)=(1,1)\,.
\end{cases}
\end{equation}
\end{Lemma}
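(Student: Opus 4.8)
The plan is to prove the identity by comparing the two homogeneous polynomials coefficient by coefficient. Both $V^{top}_{g,n}(b)$ and $N_{g,n}(b)$ are homogeneous of degree $6g-6+2n$, and by~\eqref{eq:Vgn} and~\eqref{eq:N:g:n} their monomials are indexed by the very same set of multiindices, namely those $\alpha=(\alpha_1,\dots,\alpha_n)$ and $\boldsymbol{d}=(d_1,\dots,d_n)$ with $|\alpha|=|\boldsymbol{d}|=3g-3+n$, each contributing a single monomial $b^{2\alpha}$, respectively $b^{2\boldsymbol{d}}$. Hence it suffices to establish the stated proportionality between the coefficient $C_\alpha$ of $b^{2\alpha}$ in $V^{top}_{g,n}$ and the coefficient $c_{\boldsymbol{d}}$ of $b^{2\boldsymbol{d}}$ in $N_{g,n}$ in the case $\alpha=\boldsymbol{d}$.

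First I would treat the generic case $(g,n)\neq(1,1)$. Comparing the explicit expression~\eqref{eq:C:alpha} for $C_\alpha$ with the explicit expression~\eqref{eq:c:subscript:d} for $c_{\boldsymbol{d}}$, and using that $\alpha!=\boldsymbol{d}!$ together with $\int_{\overline{\cM}_{g,n}}\psi_1^{\alpha_1}\cdots\psi_n^{\alpha_n}=\langle\psi_1^{d_1}\cdots\psi_n^{d_n}\rangle$ whenever $\alpha=\boldsymbol{d}$ (both being the same intersection number~\eqref{eq:correlator}), the factorials and the intersection numbers cancel, leaving only the ratio of the two powers of $2$:
\begin{equation*}
\frac{C_\alpha}{c_{\boldsymbol{d}}}
=\frac{2^{5g-6+2n}}{2^{3g-3+n}}
=2^{(5g-6+2n)-(3g-3+n)}
=2^{2g-3+n}\,.
\end{equation*}
Since this ratio is independent of the multiindex $\alpha$, summing over all $\alpha$ with $|\alpha|=3g-3+n$ yields $V^{top}_{g,n}(b)=2^{2g-3+n}\cdot N_{g,n}(b)$ for every admissible pair $(g,n)$ other than $(1,1)$.

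Finally I would handle the exceptional pair $(g,n)=(1,1)$, where the single point of care lies. Here the correlator~\eqref{eq:correlator} entering $N_{1,1}$ is integrated against the orbifold fundamental class $[\overline{\cM}_{1,1}]=\tfrac12[\CP]$, whereas Mirzakhani's volume polynomial is normalized as in~\eqref{eq:Vol:M11:Mirzakhani}, which is exactly twice the value~\eqref{eq:Vol:M11:Kazarian} obtained from that orbifold class, the discrepancy being caused by the generic elliptic involution recorded in Remark~\ref{rm:Kazarian}. Multiplying the coefficient ratio computed above by this extra factor of $2$ gives $V^{top}_{1,1}(b)=2\cdot 2^{2g-3+n}\cdot N_{1,1}(b)$, consistent with the explicit checks $N_{1,1}(b_1)=\tfrac{1}{48}b_1^2$ and $V^{top}_{1,1}(b_1)=\tfrac{1}{24}b_1^2$. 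The main (indeed the only) obstacle is thus the normalization bookkeeping in the $(1,1)$ case; the rest of the statement is a direct cancellation of factorials and correlators in~\eqref{eq:C:alpha} and~\eqref{eq:c:subscript:d}.
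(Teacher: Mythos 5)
Your proof is correct and follows essentially the same route as the paper: the paper's own argument is precisely the coefficient-by-coefficient comparison of~\eqref{eq:C:alpha} with~\eqref{eq:c:subscript:d}, giving the ratio $2^{(5g-6+2n)-(3g-3+n)}=2^{2g-3+n}$, with the exceptional factor $2$ for $(g,n)=(1,1)$ traced to the normalization discrepancy between~\eqref{eq:Vol:M11:Mirzakhani} and~\eqref{eq:Vol:M11:Kazarian} recorded in Remark~\ref{rm:Kazarian}. You have merely written out in full what the paper states in one line, and your bookkeeping (including the check $V^{top}_{1,1}(b_1)=\tfrac{1}{24}b_1^2=2\cdot\tfrac{1}{48}b_1^2$) is accurate.
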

In the exceptional case $(g,n)=(1,1)$ the polynomial
$V^{top}_{1,1}(b)=\frac{1}{24} b^2$ used by Mirzakhani is
twice larger than $N_{1,1}(b)=\frac{1}{48} b^2$. The origin
of this extra factor $2$ is explained in
Remark~\ref{rm:Kazarian}.
\medskip

\noindent \textbf{Volume polynomial
$\boldsymbol{\Vol_{\mathrm{WP}}(\cM_{g,n}(\gamma,x))}$
associated to a multicurve $\boldsymbol{\gamma}$.}
Assuming that the initial surface $S_{g,n}$
is endowed with a hyperbolic metric, and
that the simple closed curves $\gamma_1,\dots,\gamma_k$
are realized by simple closed hyperbolic geodesics
of hyperbolic lengths $x_1=\ell_{\gamma_1}(X),\dots,
x_k=\ell_{\gamma_k}(X)$,
the boundary $\partial S_{g,n}(\gamma)$
gets $k$ pairs of distinguished boundary
components of lengths $x_1,\dots,x_k$.
We assume that the $n$ marked points
of $S_{g,n}$ are represented by hyperbolic cusps,
i.e. by hyperbolic boundary components of zero length.
Denote by $(g_i,n_i)$ the genus and the number of boundary
components (including cusps) of each the surface $S_j$,
where $j=1,\dots,s$. To each boundary component
of each surface with boundary
$S_j$ we have assigned a length variable which is
equal to $x_i$ if the boundary component comes from
the cut along $\gamma_i$, or is equal to $0$ if the boundary
component comes from a cusp (one of the $n$ marked points).
Consider the corresponding Mirzakhani--Weil--Petersson
volume polynomial $V_{g_j,n_j}(x)$
and define
\begin{equation}
\label{eq:Vol:Mgn:gamma:x}
\Vol_{\mathrm{WP}}(\cM_{g,n}(\gamma,x))
:=\frac{1}{N(\gamma)}
\prod_{j=1}^s V_{g_j,n_j}(x)\,,
\end{equation}
(see formula~(4.1) on page~113 of~\cite{Mirzakhani:grouth:of:simple:geodesics}).
Denote by $(2d_1,\dots,2d_k)_\gamma$ the coefficient
of $x^{2d_1}\cdots x^{2d_k}$ in this polynomial and
let
\begin{equation}
\label{eq:b:gamma:5:3}
b_\gamma(2d_1,\dots,2d_k):= (2d_1,\dots,2d_k)_\gamma
\frac{\prod_{i=1}^k(2d_i+1)!}{(6g-6+2n)!}\,,
\end{equation}
(see equation~(5.3) on page~118 in~\cite{Mirzakhani:grouth:of:simple:geodesics}).

Now everything is ready to state Mirzakhani's result and to
prove Theorem~\ref{th:our:density:equals:Mirzakhani:density} and
Theorem~\ref{th:factor}.
\medskip

%------------------------------------------------------------
\noindent
\textbf{Mirzakhani's results and proof of comparison theorems.}
Let $\gamma=\gamma_1\sqcup\dots\sqcup\gamma_k$ be a
primitive simple closed curve as above. Let
$\gamma_a=\sum_{i=1}^k a_i\gamma_i$, where $a_i\in\N$ for
$i=1,\dots,k$. (To follow original notations of Mirzakhani,
we denotes integer weights of components of a multicurve by
$a_i$ and not by $H_i$ as before.)

\begin{NNTheorem}[Mirzakhani~\cite{Mirzakhani:grouth:of:simple:geodesics}]
The frequency $c(\gamma_a)$ of a multi-curve
$\gamma_a=\sum_{i=1}^k a_i\gamma_i$ is equal to
\begin{equation}
\label{eq:c:gamma}
c(\gamma_a)
=\frac{2^{-M(\gamma)}}{|\Sym(\gamma)|}
\cdot\sum_{\substack{d\\|d|=3g-3+n-k}}
\frac{b_\gamma(2d_1,\dots,2d_k)}{a_1^{2d_1+2}\dots a_k^{2d_k+2}}\,.
\end{equation}
The expression $b_{g,n}$ given by the
integral~\eqref{eq:bgn:as:integral} can be represented as
the sum
\begin{equation}
\label{eq:bgn:as:sum:B:gamma}
b_{g,n}=\sum_{\gamma\in\cS_{g,n}} B_\gamma\,,
\end{equation}
where for $\gamma=\bigsqcup_{i=1}^k \gamma_i$
one has
\begin{equation}
\label{eq:B:gamma}
B_\gamma
=\frac{2^{-M(\gamma)}}{|\Sym(\gamma)|}
\cdot\sum_{|d|=3g-3+n-k}
b_\gamma(2d_1,\dots,2d_k)\prod_{i=1}^k\zeta(2s_i+2)\,.
\end{equation}
\end{NNTheorem}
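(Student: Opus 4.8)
The plan is to obtain both identities as consequences of Mirzakhani's integration formula together with her identification of $c(\gamma_a)$ with the leading coefficient of the averaged counting function; both of these I treat as available inputs. Fix a reduced multicurve $\gamma=\gamma_1+\dots+\gamma_k$ and weights $a=(a_1,\dots,a_k)\in\N^k$, and write $\gamma_a=\sum_i a_i\gamma_i$, so that $\ell_{\gamma_a}(X)=\sum_i a_i\,\ell_{\gamma_i}(X)$. First I would feed the test function $F(x_1,\dots,x_k)=\mathbf{1}[a_1x_1+\dots+a_kx_k\le L]$ into Mirzakhani's integration formula. Since summing this indicator over the $\Mod_{g,n}$-orbit of $\gamma$ counts precisely the geodesic multicurves of topological type $\gamma_a$ of length at most $L$, this gives the exact identity
\begin{equation*}
\int_{\cM_{g,n}} s_X(L,\gamma_a)\,dX
=\frac{2^{-M(\gamma)}}{|\Sym(\gamma)|}
\int_{\R_+^k}\mathbf{1}\!\left[{\textstyle\sum_i}a_ix_i\le L\right]
\Vol_{\mathrm{WP}}(\cM_{g,n}(\gamma,x))\,x_1\cdots x_k\,dx\,,
\end{equation*}
in which the factor $2^{-M(\gamma)}$ and the $1/N(\gamma)$ already built into $\Vol_{\mathrm{WP}}(\cM_{g,n}(\gamma,x))$ via~\eqref{eq:Vol:Mgn:gamma:x} record the one-handle corrections and the internal symmetries of $\gamma$.

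Next I would read off the leading asymptotics as $L\to+\infty$. Only the top-homogeneous part of $\Vol_{\mathrm{WP}}(\cM_{g,n}(\gamma,x))\cdot x_1\cdots x_k$, whose monomials are $(2d_1,\dots,2d_k)_\gamma\,x_1^{2d_1+1}\cdots x_k^{2d_k+1}$ with $|d|=3g-3+n-k$, survives after dividing by $L^{6g-6+2n}$; the lower-degree terms produce strictly smaller powers of $L$. For each such monomial the rescaling $x_i\mapsto Lx_i/a_i$ maps the domain onto the standard simplex and a Dirichlet integral gives
\begin{equation*}
\lim_{L\to+\infty}\frac{1}{L^{6g-6+2n}}
\int_{\{\sum_i a_ix_i\le L\}}\prod_{i=1}^k x_i^{2d_i+1}\,dx
=\frac{\prod_{i=1}^k(2d_i+1)!}{(6g-6+2n)!}\cdot\prod_{i=1}^k\frac{1}{a_i^{2d_i+2}}\,.
\end{equation*}
Recognising $(2d_1,\dots,2d_k)_\gamma\,\prod_i(2d_i+1)!/(6g-6+2n)!$ as $b_\gamma(2d_1,\dots,2d_k)$ from~\eqref{eq:b:gamma:5:3}, and invoking $c(\gamma_a)=\lim_L L^{-(6g-6+2n)}\int_{\cM_{g,n}}s_X(L,\gamma_a)\,dX$ (which follows from Mirzakhani's pointwise asymptotics after integrating and using $\int B(X)\,dX=b_{g,n}$), yields exactly~\eqref{eq:c:gamma}.

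Finally, to reach~\eqref{eq:bgn:as:sum:B:gamma} I would start from~\eqref{eq:b:g:n:as:sum:of:c:gamma} and group the orbits of all integral multicurves by their reduced type $\gamma\in\cS_{g,n}\simeq\cG_{g,n}$, so that the contribution of a fixed $\gamma$ is $B_\gamma=\sum_{a\in\N^k}c(\gamma_a)$. Summing~\eqref{eq:c:gamma} over $a\in\N^k$ the expression factorises, and since $\sum_{a_i\ge1}a_i^{-(2d_i+2)}=\zeta(2d_i+2)$ converges for every $d_i\ge0$, one obtains $B_\gamma$ in the form~\eqref{eq:B:gamma}. I expect the genuinely substantial content to lie entirely in the two cited theorems of Mirzakhani --- the integration formula and the ergodic-theoretic identification of the pointwise frequency --- rather than in these elementary calculations; within the derivation itself the only delicate points are the precise bookkeeping of the symmetry factors $M(\gamma)$, $\Sym(\gamma)$ and $N(\gamma)$ (in particular the one-handle multiplicity of Remark~\ref{rm:one:handle}) and the verification that the sub-leading terms of the volume polynomial are negligible in the limit.
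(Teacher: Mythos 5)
You should first be aware that the paper contains no proof of this statement: it is quoted, with attribution, directly from Mirzakhani's paper \cite{Mirzakhani:grouth:of:simple:geodesics} (her integration formula together with Theorem~5.3 and formula~(5.5) there), and Section~\ref{s:comparison:with:Mirzakhani} only builds the dictionary ($M(\gamma)$, $\Sym(\gamma)$, $N(\gamma)$, $b_\gamma$) needed to state it before using it as a black box in the proof of Theorem~\ref{th:our:density:equals:Mirzakhani:density}. So there is no proof in the paper to compare yours against; what your proposal reconstructs is Mirzakhani's original derivation, and as such it is essentially correct. The chain you describe --- the integration formula applied to the test function $\mathbf{1}[\sum_i a_ix_i\le L]$, discarding the lower-order homogeneous parts of $\prod_j V_{g_j,n_j}$, the Dirichlet evaluation $\int_{\Delta^k}\prod_i u_i^{2d_i+1}\,du=\prod_i(2d_i+1)!\,/\,(6g-6+2n)!$ after the rescaling $x_i\mapsto Lx_i/a_i$ (note $\sum_i(2d_i+2)=2(3g-3+n-k)+2k=6g-6+2n$, so the powers of $L$ match), the identification with $b_\gamma$ via \eqref{eq:b:gamma:5:3}, and finally $\sum_{a_i\ge1}a_i^{-(2d_i+2)}=\zeta(2d_i+2)$ --- is exactly how \eqref{eq:c:gamma} and \eqref{eq:B:gamma} arise in the source. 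Your last step also silently corrects the misprint in \eqref{eq:B:gamma}, where $\zeta(2s_i+2)$ should read $\zeta(2d_i+2)$.

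Two caveats, both largely inherited from the source rather than introduced by you, but worth flagging since you present these steps as routine. First, you obtain $c(\gamma_a)=\lim_{L\to+\infty}L^{-(6g-6+2n)}\int_{\cM_{g,n}}s_X(L,\gamma_a)\,dX$ by integrating Mirzakhani's pointwise asymptotics; this interchange of limit and integral requires dominating $s_X(L,\gamma_a)/L^{6g-6+2n}$ by a function of $X$ integrable for the Weil--Petersson measure, which you do not supply. In Mirzakhani's own logic the arrow points the other way: the integrated asymptotics is exactly the elementary computation you perform and effectively characterizes $c(\gamma_a)$, while the pointwise statement is the deep ergodic-theoretic result; phrased that way no interchange is needed. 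Second, the symmetry bookkeeping when some weights $a_i$ coincide: summing \eqref{eq:c:gamma} over all $a\in\N^k$, as you do, differs from summing over $\Mod_{g,n}$-orbits of weighted multicurves by orbit--stabilizer factors inside $\Sym(\gamma)$, so the convention of \eqref{eq:c:gamma} (the full $1/|\Sym(\gamma)|$ of the reduced multicurve) and the unrestricted sum over $\N^k$ implicit in \eqref{eq:B:gamma} are mutually consistent only with that convention made explicit; your appeal to \eqref{eq:b:g:n:as:sum:of:c:gamma}, which is a sum over orbits, glosses over this point.
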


\begin{proof}[Proof of Theorem~\ref{th:our:density:equals:Mirzakhani:density}]
Let $\gamma=\gamma_1\sqcup\dots\gamma_k$
be a primitive simple closed curve in $\cS_{g,n}$.
Let $\Gamma$ be the associated decorated graph.
The sets $\cS_{g,n}$ and $\cG_{g,n}$ are
in the canonical one-to-one correspondence.

Defining $M(\gamma)$ as in~\eqref{eq:M:gamma} we see that
$M(\gamma)$ is the number of terms $V_{1,1}$ in the
product~\eqref{eq:Vol:Mgn:gamma:x}, i.e. number of indices
$j$ in the range $\{1,\dots,s\}$ such that
$(g_j,n_j)=(1,1)$. Thus, the factor $2^{-M(\gamma)}$ in
equations~\eqref{eq:c:gamma} and~\eqref{eq:B:gamma}
compensates the difference in
normalizations~\eqref{eq:Vol:M11:Mirzakhani}
and~\eqref{eq:Vol:M11:Kazarian}, see
Remark~\ref{rm:Kazarian}. Hence,
applying~\eqref{eq:Vgn:through:Ngn} to the right-hand side
of~\eqref{eq:Vol:Mgn:gamma:x} we get
$$
2^{-M(\gamma)}\prod_{j=1}^s V_{g_j,n_j}(x)
=
\left(\prod_{j=1}^s N_{g_j,n_j}(x)\right)\cdot
\left(\prod_{j=1}^s 2^{2g_j-3+n_j}\right)\,.
$$

Having a decorated graph $\Gamma$ one has
\begin{multline*}
g=1-\chi(\Gamma)+\sum_{v_j\in V(\Gamma)} g(v_j)
=1-|V(\Gamma)|+|E(\Gamma)|+\sum_{j=1}^s g_j
=\\=
1+\sum_{j=1}^s \left(g_j -1 +\frac{n_j}{2}\right)
-\frac{(\text{total number of half-edges (legs)})}{2}
\,.
\end{multline*}
(By convention $E(\Gamma)$ denotes the set of ``true''
edges of $\Gamma$ versus $n$ half-edges (legs)
corresponding to $n$ marked points on $S_{g,n}$.) Hence,
$$
\sum_{j=1}^s (2g_j-3+n_j)
=2g-2+n
-|V(\Gamma)|
=2g-3+n
-(|V(\Gamma)|-1)\,.
$$

Note also that by definition
$$
|\Aut(\Gamma)|=|\Sym(\gamma)|\cdot N(\gamma)\,.
$$
We have proved that
\begin{multline}
\label{eq:two:polynomials}
\frac{2^{-M(\gamma)}}{|\Sym(\gamma)|\cdot N(\gamma)}
\prod_{j=1}^s V_{g_j,n_j}(x)
=\\=
2^{2g+n-3}\cdot
\left(\frac{1}{2^{|V(\Graph)|-1}} \cdot
\frac{1}{|\operatorname{Aut}(\Graph)|}
\cdot
\prod_{v\in V(\Graph)}
N_{g_v,n_v}(x)\right)\,.
\end{multline}

Let us prove now
relation~\eqref{eq:Vol:gamma:c:gamma}.
Expressions~\eqref{eq:b:gamma:5:3} and~\eqref{eq:c:gamma}
represent $c(\gamma_a)$ as the sum of terms constructed
using the polynomial in the left-hand side
of~\eqref{eq:two:polynomials}.
Expression~\eqref{eq:contribution:of:gamma:to:volume} represents
$\Vol\big(\gamma(\Gamma,\boldsymbol{a})\big)$ as the same sum of the analogous terms
constructed using the proportional polynomial in the
brackets on the right-hand side
of~\eqref{eq:two:polynomials}. The only difference between
the two somes comes from the global normalization factors
shared by all terms of the sums. Namely,
expression~\eqref{eq:b:gamma:5:3} has extra $(6g-6+2n)!$ in
the denominator, while the first line of
expression~\eqref{eq:P:Gamma} has the extra
global factor $\cfrac{2^{6g-5+2n} \cdot
(4g-4+n)!}{(6g-7+2n)!}$. Taking into consideration the
coefficient of proportionality $2^{2g-3}$ relating the two
polynomials in~\eqref{eq:two:polynomials} we get
$$
(6g-6+2n)!\cdot c(\gamma_a)=2^{2g+n-3}\cdot
\left(\cfrac{2^{6g-5+2n} \cdot (4g-4+n)!}{(6g-7+2n)!}\right)^{-1}
\cdot\Vol\big(\gamma(\Gamma,\boldsymbol{a})\big)\,,
$$
and~\eqref{eq:Vol:gamma:c:gamma} follows.

It remains to notice that by definitions~\eqref{eq:B:gamma}
and~\eqref{eq:c:gamma} of $B_\gamma$ and $c(\gamma_a)$
respectively one has
$$
B_\gamma=\sum_{a\in\N^k} c(\gamma_a)\,.
$$
Similarly, by Definition~\eqref{eq:Vol:Gamma}
of $\Vol\Gamma$ one has
$$
\Vol\Gamma=\sum_{a\in\N^k} \Vol\big(\gamma(\Gamma,\boldsymbol{a})\big)\,.
$$
Since the coefficient of proportionality
$const_{g,n}$
between $\Vol\big(\gamma(\Gamma,\boldsymbol{a})\big)$
and $c(\gamma_a)$ in~\eqref{eq:Vol:gamma:c:gamma}
is common for all $a\in\N^k$,
relation~\eqref{eq:Vol:gamma:c:gamma}
for the individual terms
implies analogous relation
$$
\Vol\Gamma=const_{g,n}\cdot B_\gamma
$$
for the above sums of all terms
over all $\boldsymbol{a}\in\N^k$
and for the corresponding sums
$$
\Vol\cQ_{g,n}=const_{g,n}\cdot B_\gamma
$$
(see~\eqref{eq:square:tiled:volume}
and~\eqref{eq:bgn:as:sum:B:gamma}) over all stable graphs
$\Gamma\in\cG_{g,n}$.
Theorem~\ref{th:our:density:equals:Mirzakhani:density},
Corollary~\ref{cor:Vol:as:b:g:n} and
Theorem~\ref{th:factor} are proved.
\end{proof}

%######################################################################
%######################################################################
%######################################################################

\section{Large genus asymptotics for frequencies
of simple closed curves and of one-cylinder
square-tiled surfaces}
\label{s:2:correlators}

%------------------------------------------------------------
\subsection{Universal bounds for $2$-correlators}

Following Witten~\cite{Witten} define
\begin{equation}
\label{eq:tau:correlator}
\langle \tau_{d_1}\dots\tau_{d_n}\rangle=\int_{\overline{\cM}_{g,n}}\psi^{d_1}\dots\psi^{d_n}\,,
\end{equation}
where $d_1+\dots+d_n=3g-3+n$.

Consider the following normalization of the $2$-correlators
$\langle\tau_k\tau_{3g-1-k}\rangle_g$
introduced in~\cite{Zograf:2:correlators}:
\begin{equation}
\label{eq:a:g:k}
a_{g,k}
=\frac{(2k+1)!!\cdot(6g-1-2k)!!}{(6g-1)!!}
\cdot 24^g\cdot g!
\cdot \langle\tau_k\tau_{3g-1-k}\rangle_g\,.
\end{equation}
By~(7) in~\cite{Zograf:2:correlators} under such
normalization the differences of $2$-correlators admit the
following explicit expression:
\begin{multline}
\label{eq:a:g:k:difference}
a_{g,k+1}-a_{g,k}
=\\=
\cfrac{(6g-3-2k)!!}{(6g-1)!!}
\cdot
\begin{cases}
\cfrac{(6j-1)!!}{j!}
\cdot\cfrac{(g-1)!}{(g-j)!}
\cdot(g-2j)\,,
\quad&\text{ for }k=3j-1\,,
\\ \ & \ \\
-2\cdot\cfrac{(6j+1)!!}{j!}
\cdot\cfrac{(g-1)!}{(g-1-j)!}\,,
\quad&\text{ for }k=3j\,,
\\ \ & \ \\
2\cdot\cfrac{(6j+3)!!}{j!}
\cdot\cfrac{(g-1)!}{(g-1-j)!}\,,
\quad&\text{ for }k=3j+1\,,
\end{cases}
\end{multline}
where $k=0,1,\dots,\left[\cfrac{3g-1}{2}\right]-1$, and
$a_{g,0}=1$.

\begin{Proposition}
\label{pr:main:bounds}
For all $g\in\N$ and for all integer $k$ in the range
$\{2,3,\dots,3g-3\}$ the following bounds are valid:
\begin{equation}
\label{eq:main:bounds}
1-\frac{2}{6g-1}=a_{g,1} =a_{g,3g-2}
< a_{g,k}
< a_{g,0}=a_{g,3g-1}=1\,.
\end{equation}
\end{Proposition}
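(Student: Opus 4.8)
The plan is to regard $(a_{g,k})_{k}$ as a finite sequence in $k$ and to analyse it through the explicit difference formula~\eqref{eq:a:g:k:difference} together with a reflection symmetry. First I would record that
\[
a_{g,k}=a_{g,3g-1-k},
\]
since the correlator $\langle\tau_k\tau_{3g-1-k}\rangle_g$ is invariant under exchanging the two marked points of $\overline{\cM}_{g,2}$, while the prefactor $\tfrac{(2k+1)!!\,(6g-1-2k)!!}{(6g-1)!!}$ in~\eqref{eq:a:g:k} is invariant under $k\mapsto 3g-1-k$. This gives at once the equalities $a_{g,0}=a_{g,3g-1}$ and $a_{g,1}=a_{g,3g-2}$, and reduces the two strict inequalities to the range $2\le k\le\lfloor(3g-1)/2\rfloor$. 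The $k=0$ case of~\eqref{eq:a:g:k:difference} gives $a_{g,1}-a_{g,0}=-\tfrac{2}{6g-1}$, so that $a_{g,1}=1-\tfrac{2}{6g-1}$ by the normalization $a_{g,0}=1$.

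Writing $\delta_k:=a_{g,k+1}-a_{g,k}$, the three cases of~\eqref{eq:a:g:k:difference} show that $\delta_{3j}<0$, $\delta_{3j+1}>0$, and $\operatorname{sign}(\delta_{3j+2})=\operatorname{sign}(g-2j-2)$. For indices in the first half one checks $g-2j-2\ge 0$, so there the sequence moves strictly down, up, up within each block of three consecutive indices. Hence in the first half the local maxima of $(a_{g,k})$ occur precisely at $k\equiv 0\ (\mathrm{mod}\ 3)$ and the local minima at $k\equiv 1\ (\mathrm{mod}\ 3)$. It therefore suffices to prove two monotonicity statements: the local maxima $a_{g,3j}$ strictly decrease in $j$ (so all lie strictly below $a_{g,0}=1$), and the local minima $a_{g,3j+1}$ strictly increase in $j$ (so all lie strictly above $a_{g,1}$). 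Any remaining interior value is trapped between an adjacent local minimum and local maximum, hence lies in $(a_{g,1},1)$; combined with the symmetry above this proves the Proposition.

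These two statements concern the block sums $B_j:=a_{g,3(j+1)}-a_{g,3j}=\delta_{3j}+\delta_{3j+1}+\delta_{3j+2}$ and $B'_j:=a_{g,3(j+1)+1}-a_{g,3j+1}=\delta_{3j+1}+\delta_{3j+2}+\delta_{3j+3}$. The key simplification I would use is that the cancelling pair collapses cleanly:
\[
\delta_{3j}+\delta_{3j+1}
=\frac{12\,(2j+1-g)}{(6g-1)!!}\cdot\frac{(6j+1)!!\,(6g-5-6j)!!}{j!}\cdot\frac{(g-1)!}{(g-1-j)!},
\]
whose sign is that of $2j+1-g$. Inserting this together with the explicit expressions for $\delta_{3j+2}$ and $\delta_{3j+3}$, and dividing out the common positive factorial factor, reduces $B_j$ and $B'_j$ to two polynomials $\beta_j$ and $\beta'_j$ in $g$ and $j$, which I would treat as quadratics in $g$. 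Evaluating at the left endpoints of the relevant ranges one finds the manifestly signed values $\beta_j\big|_{g=2j+2}=-12(j+1)(6j+7)<0$ and $\beta'_j\big|_{g=2j+3}=108j^2+312j+201>0$; checking that in each case the vertex of the quadratic lies to the left of this endpoint then forces $\beta_j<0$ for all $g\ge 2j+2$ and $\beta'_j>0$ for all $g\ge 2j+3$, ranges that contain all pairs $(g,j)$ occurring in the first half.

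The main obstacle is precisely this last sign analysis. Each block sum is an algebraically cancelling combination — a negative pair together with a nonnegative term — so no termwise sign argument works; the whole reduction hinges on the clean collapse of $\delta_{3j}+\delta_{3j+1}$ and on the endpoint-plus-vertex estimate for the resulting quadratics in $g$. Secondary care is needed to match the index ranges so that only $g\ge 2j+2$ (respectively $g\ge 2j+3$) is ever invoked, and at the centre, where for even $g$ the step $\delta_{3j+2}$ may vanish, producing a harmless equality between two consecutive interior values that still lies strictly inside $(a_{g,1},1)$.
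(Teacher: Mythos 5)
Your proof is correct, but it takes a genuinely different route from the paper's. The paper argues by perturbation around $a_{g,5}$: it computes $a_{g,k}$ explicitly for $k\le 5$ and disposes of $g\le 4$ by symmetry (Lemma~\ref{lm:k:up:to:5}, Corollary~\ref{cor:g:up:to:4}), and for $g\ge 5$, $6\le k\le\left[\frac{3g-1}{2}\right]$ it bounds the cumulative deviation $|a_{g,k}-a_{g,5}|$ by the number of steps times $R(g,2)$ --- each step being at most $R(g,j)\le R(g,2)$ in absolute value (Lemmas~\ref{lm:P:Q} and~\ref{lm:R}) --- and finally checks that this deviation is strictly smaller than the explicit slack $\elow,\ \eup$ separating $a_{g,5}$ from the two endpoints (Lemma~\ref{lm:R2:jmax:smaller:than:epsilon}). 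You instead exploit the down/up/up sign pattern to place local maxima at $k\equiv 0$ and local minima at $k\equiv 1 \pmod{3}$, and prove strict interlacing monotonicity of these two subsequences by exact sign analysis of the block sums. I verified your key computations: the collapse
\[
\delta_{3j}+\delta_{3j+1}
=\frac{12\,(2j+1-g)}{(6g-1)!!}\cdot\frac{(6j+1)!!\,(6g-6j-5)!!}{j!}\cdot\frac{(g-1)!}{(g-1-j)!}
\]
is an identity, the endpoint values $\beta_j\big|_{g=2j+2}=-12(j+1)(6j+7)$ and $\beta'_j\big|_{g=2j+3}=108j^2+312j+201$ are correct, the quadratics in $g$ have leading coefficients $-72(j+1)$ and $108j+102$, and the vertex conditions reduce to the manifestly true inequalities $36j^2+180j+141\ge 0$ and $108j^2+420j+303\ge 0$; moreover your validity ranges $g\ge 2j+2$ (for $\beta_j<0$) and $g\ge 2j+3$ (for $\beta'_j>0$) match exactly what the trapping argument requires. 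One point deserves to be made explicit: the recurrence~\eqref{eq:a:g:k:difference} is only stated for $k\le\left[\frac{3g-1}{2}\right]-1$, so at the center (even $g$, $k=\left[\frac{3g-1}{2}\right]$) the vanishing of the step should be deduced from the symmetry, which gives $\delta_k=-\delta_{3g-2-k}$ and hence $\delta_k=0$ at the fixed index; with this remark your treatment of the central equality is complete. As for what each approach buys: the paper's deviation estimates are reused later (Lemma~\ref{lm:difference:bounded:by:Rj} and the asymptotic analysis of $a_{g,k}$), whereas your argument is uniform in $g$ and $k$ --- no small-$k$ or small-$g$ case analysis --- and yields strictly stronger structural information, namely that the subsequences of local maxima and local minima are monotone; this is also the sharpest such statement one could hope for, since the paper itself notes that the third subsequence $a_{g,2},a_{g,5},\dots$ fails to be monotone for $g\ge 17$.
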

\smallskip

%------------------------------------------------------------
\noindent\textbf{Structure of the proof}
The expression $a_{g,1}=1-\frac{2}{6g-1}$ immediately
follows from the fact that $a_{g,0}=1$ and recursive
relations~\eqref{eq:a:g:k:difference}. For $g=1$
bounds~\eqref{eq:a:g:k:difference} are trivial. The
symmetry $a_{g,3g-1-k}=a_{g,k}$ allows us to confine $k$ to
the range $\{2,3,\dots,\left[\frac{3g-1}{2}\right]\}$.

Using recursive relations~\eqref{eq:a:g:k:difference} we
evaluate $a_{g,k}$ explicitly for $k=2,\dots,5$ and prove
in Lemma~\ref{lm:k:up:to:5}
bounds~\eqref{eq:a:g:k:difference} for these small values
of $k$. In genera $g=2,3,4$, the expression
$\left[\frac{3g-1}{2}\right]$ is bounded by $5$ which
implies bounds~\eqref{eq:a:g:k:difference} for any $k$ when
$g=2,3,4$. From this point we always assume that $g\ge 5$
and $k$ is in the range
$\{6,\dots,\left[\frac{3g-1}{2}\right]\}$.

We start the
main part of the proof by rewriting the recurrence
relations~\eqref{eq:a:g:k:difference} in a form
convenient for estimates. Namely, we introduce the
function
\begin{equation}
\label{eq:R}
R(g,j)=\frac{\binom{3g}{3j} \binom{g}{j}}{\binom{6g}{6j}}
\end{equation}
and express the right-hand side of each of the recurrence
relations~\eqref{eq:a:g:k:difference} as a product
$R\cdot\frac{P_i}{Q}$, where $P_i$, $i=1,2,3$, and $Q$
are explicit polynomials in $g$ and $j$. In
Lemma~\ref{lm:P:Q} we show that for any $g$ the absolute
value of each of the rational functions $P_i/Q$ on the
range of $j$ corresponding to $k\in
\{6,\dots,\left[\frac{3g-1}{2}\right]\}$ is bounded from
above by $1$. In Lemma~\ref{lm:R} we show that for any
fixed $g$ and $0\le j\le \left[\frac{g-1}{2}\right]$, the
expression $R(g,j)$ is monotonously decreasing as a
function of $j$. Combining these two lemmas we obtain in
Lemma~\ref{lm:difference:bounded:by:jmax:R2} the estimate
$-R(g,2)\cdot\frac{g-3}{2} \le a_{g,k}-a_{g,5} \le
R(g,2)\cdot\frac{3g-11}{3}$ valid for all $g$ and $k$ under
consideration.

We use explicit expressions for rational functions $\elow$
and $\eup$ in $a_{g,5}=1-\frac{2}{6g-1}+\elow =1-\eup$
obtained in Lemma~\ref{lm:k:up:to:5} to prove in
Lemma~\ref{lm:R2:jmax:smaller:than:epsilon} that for
$g\ge5$ the inequalities $R(g,2)\cdot\frac{g-3}{3}<\elow$
and $R(g,2)\cdot\frac{3g-11}{3} <\eup$ hold which completes
the proof.

%------------------------------------------------------------
\subsection{Small values of $k$ and $g$}
Recall that $a_{g,0}=1$. Recursive
relations~\eqref{eq:a:g:k:difference} provide the following
first several terms $a_{g,k}$ for $k=1,2,3,4,5$, where for
each $k$ in this range we indicate the smallest value of
$g$ starting from which the corresponding equality holds:
\begin{align}
\label{eq:a:g:0}
a_{g,0}&=1
&\text{ for }g\ge 1\,,
\\
\label{eq:a:g:1}
a_{g,1}&=1-\frac{2}{6g-1}
&\text{ for }g\ge 1\,,
\\
\notag
a_{g,2}
%&=1-\frac{2}{6g-1}+\frac{6}{(6g-1)(6g-3)}
%=\\
&=1-\frac{12(g-1)}{(6g-1)(6g-3)}
&\text{ for }g\ge 2\,,
\\
\notag
a_{g,3}
%&=1-\frac{2}{6g-1}+\frac{3(17g-20)}{(6g-1)(6g-3)(6g-5)}
%=\\
&=1-\frac{3(24g^2-49g+30)}{(6g-1)(6g-3)(6g-5)}
&\text{ for }g\ge 3\,,
\\
\notag
a_{g,4}
&=1-\frac{2}{6g-1}+\frac{9(g-2)(34g-35)}{(6g-1)(6g-3)(6g-5)(6g-7)}
% \hspace*{-40pt}\\
% &\hspace*{-40pt}
%=\\
%&=1-\frac{3(144 g^3-462 g^2+ 593 g-280)}{(6g-1)(6g-3)(6g-5)(6g-7)}
&\text{ for }g\ge 3\,,
\\
\label{eq:epsilon:lower:init}
a_{g,5}
&=1-\frac{2}{6g-1}
+\frac{27(68g^3-308g^2+519g-280)}{(6g\!-\!1)(6g\!-\!3)(6g\!-\!5)(6g\!-\!7)(6g\!-\!9)}
=\\
\label{eq:epsilon:upper:init}
&=1-\frac{9 (g-2) (288 g^3-780 g^2+1012 g-525)}{(6g-1)(6g-3)(6g-5)(6g-7)(6g-9)}
&\text{ for }g\ge 4\,.
\end{align}

\begin{Lemma}
\label{lm:k:up:to:5}
For all $g\in\N$ and for all $k\in\N$ satisfying
$2\le k\le \min\left(5,\left[\frac{3g-1}{2}\right]\right)$
the relations~\eqref{eq:main:bounds} are valid:
\begin{equation*}
1-\frac{2}{6g-1}=a_{g,1} < a_{g,k} < a_{g,0}=1\,.
\end{equation*}
\end{Lemma}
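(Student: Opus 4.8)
The plan is to derive both inequalities directly from the closed expressions \eqref{eq:a:g:1}--\eqref{eq:epsilon:upper:init}, reducing each one to the positivity of an explicit polynomial in $g$. A convenient observation to record first is that the range of validity attached to each formula matches exactly the hypothesis $2\le k\le\min(5,[\tfrac{3g-1}{2}])$: the condition $k\le[\tfrac{3g-1}{2}]$ forces $g\ge 2$ when $k=2$, forces $g\ge 3$ when $k=3$ or $k=4$, and forces $g\ge 4$ when $k=5$. Hence for every admissible pair $(g,k)$ the relevant displayed formula applies, and it suffices to handle $k=2,3,4,5$ separately.

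For the lower bound $a_{g,k}>a_{g,1}=1-\tfrac{2}{6g-1}$ I would clear denominators in each case. For $k=2$ this collapses to $2(6g-3)>12(g-1)$, i.e. $6>0$; for $k=3$ it becomes $2(6g-3)(6g-5)>3(24g^2-49g+30)$, i.e. $51g-60>0$, which holds for $g\ge 3$. For $k=4$ the lower bound is immediate from the displayed formula for $a_{g,4}$, since its correction term $\tfrac{9(g-2)(34g-35)}{(6g-1)(6g-3)(6g-5)(6g-7)}$ is strictly positive for $g\ge 3$. For $k=5$ I would read off the lower bound from form \eqref{eq:epsilon:lower:init}, where it amounts to positivity of the cubic $68g^3-308g^2+519g-280$ for $g\ge 4$.

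For the upper bound $a_{g,k}<1$ I would proceed symmetrically, always starting from whichever form presents $a_{g,k}$ as $1$ minus a single fraction. For $k=2$ the claim is just $12(g-1)>0$; for $k=3$ it is positivity of $24g^2-49g+30$, whose discriminant $49^2-4\cdot 24\cdot 30=-479$ is negative, so the quadratic is positive for all $g$. For $k=4$, clearing denominators in the formula for $a_{g,4}$ gives $2(6g-3)(6g-5)(6g-7)>9(g-2)(34g-35)$, i.e. positivity of the cubic $432g^3-1386g^2+1779g-840$. For $k=5$ I would switch to form \eqref{eq:epsilon:upper:init}, reducing the upper bound to positivity of $(g-2)(288g^3-780g^2+1012g-525)$ for $g\ge 4$.

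The only mildly nonroutine ingredient is verifying the cubic positivities, namely for $432g^3-1386g^2+1779g-840$, $68g^3-308g^2+519g-280$, and $288g^3-780g^2+1012g-525$. For each such cubic $f$ I would show that $f$ is strictly increasing on all of $\mathbb{R}$ by checking that $f'$ is a quadratic with positive leading coefficient and negative discriminant, and then conclude $f>0$ throughout the relevant range by evaluating $f$ at the smallest admissible genus. For example $432g^3-1386g^2+1779g-840$ has derivative $1296g^2-2772g+1779$ with discriminant $2772^2-4\cdot 1296\cdot 1779<0$, while its value at $g=3$ equals $3687>0$; the other two cubics are treated identically, with positive values at $g=4$. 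Thus the main obstacle is purely computational, amounting to organizing these discriminant-and-endpoint checks, and no genuine difficulty remains once each bound has been reduced to polynomial form.
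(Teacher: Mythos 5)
Your proof is correct, and all of its reductions check out: the identities $2(6g-3)-12(g-1)=6$, $2(6g-3)(6g-5)-3(24g^2-49g+30)=51g-60$, and $2(6g-3)(6g-5)(6g-7)-9(g-2)(34g-35)=432g^3-1386g^2+1779g-840$ are right, the three discriminants are indeed negative, the endpoint values ($3687$ at $g=3$; $1220$ and $9475$ at $g=4$) are positive, and your observation that the hypothesis $k\le\left[\frac{3g-1}{2}\right]$ forces exactly the genus ranges in which formulas \eqref{eq:a:g:1}--\eqref{eq:epsilon:upper:init} are stated correctly disposes of the bookkeeping. Your route does differ from the paper's in organization. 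The paper does not verify all eight inequalities from the closed forms; it first extracts from the recurrence \eqref{eq:a:g:k:difference} the sign pattern of consecutive differences, namely $a_{g,k+1}<a_{g,k}$ precisely when $k\equiv 0\ (\operatorname{mod}\ 3)$ and $a_{g,k+1}>a_{g,k}$ otherwise, and chains bounds along the sequence: the lower bounds for $a_{g,2},a_{g,3},a_{g,5}$ and the upper bound for $a_{g,4}$ are inherited from the neighboring term, so only four inequalities need inspection of the explicit expressions -- the trivial upper bound for $a_{g,2}$, positivity of $24g^2-49g+30$, positivity of the correction term of $a_{g,4}$, and positivity of $288g^3-780g^2+1012g-525$, the last handled by the comparison $288g^3-780g^2+1012g-525>288\bigl((g-1)^3-1\bigr)$ rather than by a derivative argument. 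Your approach pays for this with two extra cubic positivity checks, but your uniform derivative-discriminant-plus-endpoint technique makes them routine and arguably more systematic than the paper's ad hoc bounding; the paper's approach is lighter on computation and isolates the period-three monotonicity pattern, which it reuses later (in the proof of Lemma~\ref{lm:difference:bounded:by:jmax:R2}) to count ascents and descents of the sequence $a_{g,k}$, so that observation is not wasted effort there.
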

\begin{proof}

By~\eqref{eq:a:g:0}
and~\eqref{eq:a:g:1} the terms $a_{g,0}$ and $a_{g,1}$ indeed have
values as claimed in the statement of
Lemma~\ref{lm:k:up:to:5}.

It follows from recurrence
relations~\eqref{eq:a:g:k:difference} that for $k$
satisfying $0\le k\le \left[\frac{3g-1}{2}\right]-1$ we
have $a_{g,k+1}<a_{g,k}$ if and only if $k\equiv
0\,(\operatorname{mod} 3)$ and we have $a_{g,k+1}>a_{g,k}$
for the remaining $k$ in this range. In particular, for
$g\ge 2$ the difference $a_{g,2}-a_{g,1}$ is strictly
positive, which implies the desired lower
bound~\eqref{eq:main:bounds} for $a_{g,2}$ when $g\ge 2$.
The explicit expression for $a_{g,2}$ when $g\ge 2$ implies
the desired strict upper bound~\eqref{eq:main:bounds}.

Recursive relations~\eqref{eq:a:g:k:difference} imply that
for $g\ge 3$ the difference $a_{g,3}-a_{g,2}$ is strictly
positive. Since $a_{g,2}$ satisfies the desired lower
bound~\eqref{eq:main:bounds}, the term $a_{g,3}$ also does.
The quadratic polynomial $(24g^2-49g+30)$ in the numerator
of the explicit expression for $a_{g,3}$ admits only
strictly positive values, which implies the upper
bound~\eqref{eq:main:bounds} for $a_{g,3}$ when $g\ge 3$.

Recursive relations~\eqref{eq:a:g:k:difference} imply that
for $g\ge 3$ the difference $a_{g,4}-a_{g,3}$ is strictly
negative. Since $a_{g,3}$ satisfies the desired upper
bound~\eqref{eq:main:bounds}, the term $a_{g,4}$ also does.
The explicit expression for $a_{g,4}$ implies the lower
bound~\eqref{eq:main:bounds} for $a_{g,4}$ when $g\ge 3$.

Finally, recursive relations~\eqref{eq:a:g:k:difference}
imply that for $g\ge 4$ the difference $a_{g,5}-a_{g,4}$ is
strictly positive, which implies the desired lower
bound~\eqref{eq:main:bounds} for $a_{g,5}$ when $g\ge 4$.
It remains to verify that the polynomial $(288 g^3-780
g^2+1012 g-525)$ in the explicit
expression~\eqref{eq:epsilon:upper:init} for $a_{g,5}$
attains only strictly positive values for $g\ge 4$ to prove
the desired upper bound~\eqref{eq:main:bounds} for
$a_{g,5}$. For $g\ge 4$ we have:
\begin{multline*}
288 g^3-780 g^2+1012 g-525
>\\>
288 g^3 - 864 g^2 + 864 g  -576
=288((g-1)^3-1)>0\,.
\end{multline*}
\end{proof}

\begin{Corollary}
\label{cor:g:up:to:4}
For any $g$ in $\{1,2,3,4\}$ and for all $k\in\N$
satisfying $2\le k\le 3g-3$, the desired
bounds~\eqref{eq:main:bounds} are valid:
\begin{equation*}
1-\frac{2}{6g-1}=a_{g,1} < a_{g,k} < a_{g,0}=1\,.
\end{equation*}
\end{Corollary}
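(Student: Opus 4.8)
The plan is to derive Corollary~\ref{cor:g:up:to:4} directly from Lemma~\ref{lm:k:up:to:5} by exploiting the symmetry $a_{g,k}=a_{g,3g-1-k}$ mentioned in the outline of the structure of the proof. First I would record why this symmetry holds. In the normalization~\eqref{eq:a:g:k} the substitution $k\mapsto 3g-1-k$ interchanges the two double factorials $(2k+1)!!$ and $(6g-1-2k)!!$ in the numerator, while leaving the denominator $(6g-1)!!$ and the factor $24^g\cdot g!$ untouched; since the correlator $\langle\tau_k\tau_{3g-1-k}\rangle_g$ is symmetric in its two indices, the whole product is invariant, yielding $a_{g,k}=a_{g,3g-1-k}$ for all admissible $k$.

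Next I would observe the key numerical fact that for $g\in\{1,2,3,4\}$ one has $\left[\tfrac{3g-1}{2}\right]\le 5$, so that the index range covered by Lemma~\ref{lm:k:up:to:5}, namely $2\le k\le\min\left(5,\left[\tfrac{3g-1}{2}\right]\right)=\left[\tfrac{3g-1}{2}\right]$, is exactly the ``lower half'' of the full range. Then I would dispose of each $k$ in $\{2,\dots,3g-3\}$ as follows. If $k\le\left[\tfrac{3g-1}{2}\right]$, Lemma~\ref{lm:k:up:to:5} applies verbatim and gives the stated bounds. If instead $k>\left[\tfrac{3g-1}{2}\right]$, I set $k'=3g-1-k$; from $2\le k\le 3g-3$ one gets $2\le k'\le 3g-3$, and from $k>\left[\tfrac{3g-1}{2}\right]$ one checks $k'\le\left[\tfrac{3g-1}{2}\right]\le 5$, so Lemma~\ref{lm:k:up:to:5} bounds $a_{g,k'}$, and the symmetry $a_{g,k}=a_{g,k'}$ transports these bounds to $a_{g,k}$.

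Concretely the bookkeeping reduces to: $g=1$ gives an empty range; for $g=2$ the range $\{2,3\}$ is settled by $a_{2,3}=a_{2,2}$ together with the lemma at $k=2$; for $g=3$ the range $\{2,\dots,6\}$ is settled since the lemma covers $k\in\{2,3,4\}$ while $a_{3,5}=a_{3,3}$ and $a_{3,6}=a_{3,2}$; for $g=4$ the range $\{2,\dots,9\}$ is settled since the lemma covers $k\in\{2,3,4,5\}$ and symmetry supplies $k\in\{6,7,8,9\}$. Since the statement is an immediate corollary, there is no substantial obstacle; the only point needing a line of verification is the inequality $3g-1-k\le\left[\tfrac{3g-1}{2}\right]$ when $k>\left[\tfrac{3g-1}{2}\right]$, equivalently $3g-2\le 2\left[\tfrac{3g-1}{2}\right]$, which follows at once by splitting into the two parities of $3g-1$.
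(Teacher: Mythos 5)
Your proof is correct and follows essentially the same route as the paper: reduce via the symmetry $a_{g,k}=a_{g,3g-1-k}$ to the range $k\le\left[\tfrac{3g-1}{2}\right]$, note that for $g\le 4$ this bound is at most $5$, and invoke Lemma~\ref{lm:k:up:to:5}. The extra details you supply (the justification of the symmetry from~\eqref{eq:a:g:k}, the parity check, and the genus-by-genus bookkeeping) are all accurate but are left implicit in the paper's two-sentence proof.
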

\begin{proof}
Recall that the symmetry $a_{g,k}=a_{g,3g-1-k}$ allows to
limit $k$ to the range $2\le k\le
\left[\frac{3g-1}{2}\right]$. Thus, for $g\le 4$ the
largest possible value of $k$ satisfying $k\le
\left[\frac{3g-1}{2}\right]$ equals to $\left[\frac{3\cdot
4-1}{2}\right]=5$. The proof of
bounds~\eqref{eq:main:bounds} for $k\le 5$ and any $g$ is
already completed in Lemma~\ref{lm:k:up:to:5}.
\end{proof}

%------------------------------------------------------------
\subsection{Alternative form of recurrence relations}

We start by extracting the common factor in
relations~\eqref{eq:a:g:k:difference} and by simplifying
it.

Define the following polynomial in $g$ and $j$:
\begin{equation}
\label{eq:Q}
Q(g,j)=g(6g-6j-1)(6g-6j-3)\,,
\end{equation}
Rewriting double factorials in terms of factorials
and rearranging we get
\begin{multline*}
%\label{eq:a:g:k:difference:3}
\cfrac{(6g-6j-5)!!}{(6g-1)!!}\cdot
\cfrac{(6j-1)!!\cdot(g-1)!}{j!\,(g-j)!}
=\\=
\left(\frac{(6g-6j-5)!}{(3g-3j-3)!\,2^{3g-3j-3}}\right)
\left(\frac{(3g-1)!\,2^{3j-1}}{(6g-1)!}\right)
\left(\frac{(6j-1)!}{(3j-1)!\,2^{3j-1}}\right)
\frac{(g-1)!}{j!\,(g-j)!}
=\\=
8\cdot
\left(\frac{(6j-1)!\,(6g-6j-5)!}{(6g-1)!}\right)
\left(\frac{(3g-1)!}{(3j-1)!\,(3g-3j-3)!}\right)
\left(\frac{(g-1)!}{j!\,(g-j)!}\right)
=\\=
8\cdot
\left(\frac{(6j)!\,(6g-6j)!}{(6g)!}\right)
\left(\frac{(3g)!}{(3j)!\,(3g-3j)!}\right)
\left(\frac{g!}{j!\,(g-j)!}\right)
\cdot\\ \cdot
\frac{6g}{(6j)\cdot(6g-6j)(6g-6j-1)(6g-6j-2)(6g-6j-3)(6g-6j-4)}
\cdot\\ \cdot
\frac{(3j)\cdot(3g-3j)(3g-3j-1)(3g-3j-2)}{3g}
\cdot\frac{1}{g}
=\\=
\frac{\binom{3g}{3j} \binom{g}{j}}{\binom{6g}{6j}}
\cdot
\frac{1}{g\cdot(6g-6j-1)(6g-6j-3)}
=R(g,j)\cdot\frac{1}{Q(g,j)}\,.
\end{multline*}
   %
% Introducing notations
%    %
% \begin{align}
% \label{eq:Q}
% Q(g,j)&:=g(6g-6j-1)(6g-6j-3)\,,\\
% \label{eq:R}
% R(g,j)&:=\frac{\binom{3g}{3j} \binom{g}{j}}{\binom{6g}{6j}}\,.
% \end{align}
   %
Defining the following polynomials in $g$ and $j$:
\begin{align}
\label{eq:p1}
P_1(g,j)&=(6g-6j-1)(6g-6j-3)(g-2j)\,,\\
\label{eq:p2}
P_2(g,j)&=-2(6g-6j-3)(6j+1)(g-j)\,,\\
\label{eq:p3}
P_3(g,j)&=2(6j+1)(6j+3)(g-j)\,,
\end{align}
we can express the recurrence
relations~\eqref{eq:a:g:k:difference} as
\begin{align}
\label{eq:R:P1:Q}
a_{g,3j}-a_{g,3j-1} &= R(g,j)\cdot \frac{P_1(g,j)}{Q(g,j)}\,,
&\text{ where }3\le 3j\le\left[\frac{3g-1}{2}\right]\,,
\\
\label{eq:R:P2:Q}
a_{g,3j+1}-a_{g,3j} &= R(g,j)\cdot \frac{P_2(g,j)}{Q(g,j)}\,,
&\text{ where }1\le 3j+1\le\left[\frac{3g-1}{2}\right]\,,
\\
\label{eq:R:P3:Q}
a_{g,3j+2}-a_{g,3j+1} &= R(g,j)\cdot \frac{P_3(g,j)}{Q(g,j)}\,,
&\text{ where }2\le 3j+2\le\left[\frac{3g-1}{2}\right]\,.
% \\
% \label{eq:R:P:Q}
% a_{g,3(j+1)-1}-a_{g,3j-1} &= R(g,j)\cdot \frac{T(g,j)}{Q(g,j)}\,,
% &\text{ where }5\le 3(j+1)-1\le\left[\frac{3g-1}{2}\right]
\end{align}

\begin{Lemma}
\label{lm:P:Q}
For any $g\in\N$ the following bounds are valid
\begin{align}
\label{eq:R:P1:Q:bound}
0<\frac{P_1(g,j)}{Q(g,j)}<1\,,
&&\text{ where }3\le 3j\le\left[\frac{3g-1}{2}\right]\,,
\\
\label{eq:R:P2:Q:bound}
-1<\frac{P_2(g,j)}{Q(g,j)}<0\,,
&&\text{ where }1\le 3j+1\le\left[\frac{3g-1}{2}\right]\,,
\\
\label{eq:R:P3:Q:bound}
0<\frac{P_3(g,j)}{Q(g,j)}<1\,,
&&\text{ where }2\le 3j+2\le\left[\frac{3g-1}{2}\right]\,.
% \\
% \label{eq:R:P:Q:bound}
% \left|\frac{T(g,j)}{Q(g,j)}\right|<1\,,
% &\text{ where }5\le 3(j+1)-1\le\left[\frac{3g-1}{2}\right]
\end{align}
\end{Lemma}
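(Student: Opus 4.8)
The plan is to reduce each ratio $P_i/Q$ to lowest terms using the common factors already visible in \eqref{eq:p1}--\eqref{eq:p3} and \eqref{eq:Q}, and then bound what remains. First I would record the constraints on $j$ coming from the three hypotheses. The condition $3\le 3j\le[\frac{3g-1}{2}]$ forces $j\ge 1$ and $6j\le 3g-1$, hence $g-2j\ge 1$; the condition $1\le 3j+1\le[\frac{3g-1}{2}]$ forces $0\le j\le\frac{g-1}{2}$; and the condition $2\le 3j+2\le[\frac{3g-1}{2}]$ forces $0\le j\le\frac{3g-5}{6}$. Throughout all three ranges every factor $6g-6j-1$ and $6g-6j-3$ is strictly positive, so the sign assertions reduce to tracking the signs of numerators.

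The bound \eqref{eq:R:P1:Q:bound} is then immediate: the factor $(6g-6j-1)(6g-6j-3)$ cancels, leaving $\frac{P_1}{Q}=\frac{g-2j}{g}$, which lies strictly between $0$ and $1$ precisely because $j\ge 1$ gives $g-2j<g$ and $6j\le 3g-1$ gives $g-2j\ge 1>0$. For \eqref{eq:R:P2:Q:bound} the factor $6g-6j-3$ cancels, leaving $\frac{P_2}{Q}=-\frac{2(6j+1)(g-j)}{g(6g-6j-1)}$, which is manifestly negative on the range; the only real content is the lower bound $\frac{P_2}{Q}>-1$, equivalent to $2(6j+1)(g-j)<g(6g-6j-1)$. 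Clearing and collecting terms, I would reduce this to $f(j):=6(g-j)(g-2j)-3g+2j>0$, and prove it on $0\le j\le\frac{g-1}{2}$ by noting that $f'(j)=-18g+24j+2\le -6g-10<0$ there, so $f$ is decreasing and $f(j)\ge f\!\left(\frac{g-1}{2}\right)=g+2>0$.

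Finally I would obtain \eqref{eq:R:P3:Q:bound} with no new computation by observing the factorization $\frac{P_3}{Q}=\left|\frac{P_2}{Q}\right|\cdot\frac{6j+3}{6g-6j-3}$. The first factor already lies in $(0,1)$ by the previous step, since the $P_3$-range $j\le\frac{3g-5}{6}$ is contained in the $P_2$-range $j\le\frac{g-1}{2}$; and the second factor satisfies $\frac{6j+3}{6g-6j-3}<1$ because $j\le\frac{3g-5}{6}<\frac{g-1}{2}$ yields $6j+3<6g-6j-3$. A product of two numbers in $(0,1)$ lies in $(0,1)$, which gives both inequalities in \eqref{eq:R:P3:Q:bound}.

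The computation is elementary throughout, and the only step needing care is the quadratic-in-$j$ inequality $f(j)>0$ underlying the $P_2$ bound. I do not expect a genuine obstacle there: the monotonicity of $f$ on $[0,\frac{g-1}{2}]$ together with the clean endpoint value $f\!\left(\frac{g-1}{2}\right)=g+2$ settles it uniformly in $g$ with no case analysis, and the remaining two bounds follow formally from it.
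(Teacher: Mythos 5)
Your proof is correct and takes essentially the same route as the paper: the same cancellations yielding $\frac{P_1}{Q}=\frac{g-2j}{g}$, a monotonicity-plus-right-endpoint argument at $2j=g-1$ for the $P_2$ bound, and the identical factorization $\frac{P_3}{Q}=\left|\frac{P_2}{Q}\right|\cdot\frac{6j+3}{6g-6j-3}$ together with $6j\le 3g-5$ for the third bound. The only cosmetic difference is that you verify the $P_2$ inequality by clearing denominators and differentiating the quadratic $f(j)=6(g-j)(g-2j)-3g+2j$, whereas the paper keeps the rational form $\left(1+\tfrac{1}{6g-6j-1}\right)\cdot\tfrac{2j+1/3}{g}$ as a product of two increasing positive factors; both arguments reduce to the same endpoint evaluation.
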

\begin{proof}
The bounds $3\le 3j\le\left[\frac{3g-1}{2}\right]$
in~\eqref{eq:R:P1:Q:bound} imply that $0 < j <
\frac{g}{2}$. Dividing expression~\eqref{eq:p1} for
$P_1(g,j)$ by expression~\eqref{eq:Q} for $Q(g,j)$ we get
$$
\frac{P_1(g,j)}{Q(g,j)}=\frac{g-2j}{g}\,.
$$
Clearly,
$$
0<\frac{g-2j}{g}<1
$$
for any $g\in\N$ and for all $j$ satisfying $0 < j <
\frac{g}{2}$.

The bounds $1\le 3j+1\le\left[\frac{3g-1}{2}\right]$
in~\eqref{eq:R:P2:Q:bound} imply that $0\le j
<\frac{g}{2}$. Dividing expression~\eqref{eq:p2} for
$P_2(g,j)$ by expression~\eqref{eq:Q} for $Q(g,j)$ we get
\begin{multline*}
\frac{P_2(g,j)}{Q(g,j)}
=-2\cdot\frac{6j+1}{6g-6j-1}\cdot\frac{g-j}{g}
=-\frac{6g-6j}{6g-6j-1}\cdot\frac{2j+\frac{1}{3}}{g}
=\\=
-\left(1+\frac{1}{6g-6j-1}\right)
\cdot\left(\frac{2j+\frac{1}{3}}{g}\right)\,.
\end{multline*}
Since $0\le 2j \le g-1$, the latter expression is always
strictly negative for this range of $j$. Both factors in the brackets
in the latter expression are monotonously growing
on this range of $j$, so the maximum of the absolute value
of the product is attained at $2j=g-1$. We get
\begin{multline}
\label{eq:P2:over:Q}
0<\left|\frac{P_2(g,j)}{Q(g,j)}\right|
\le\left(1+\frac{1}{6g-(3g-3)-1}\right)
\cdot\left(\frac{g-\frac{2}{3}}{g}\right)
=\\=
\left(1+\frac{1}{3g+2}\right)
\cdot\left(1-\frac{1}{\frac{3}{2}g}\right)<1
\quad\text{ for }g\in\N\
\text{ and }0\le j < \frac{g}{2}\,.
\end{multline}

The bounds $2\le 3j+2\le\left[\frac{3g-1}{2}\right]$
in~\eqref{eq:R:P3:Q:bound} imply that $0\le j<\frac{g}{2}$
and that $6j\le 3g-5$. Dividing expression~\eqref{eq:p3}
for $P_3(g,j)$ by expression~\eqref{eq:Q} for $Q(g,j)$ we
get
$$
\frac{P_3(g,j)}{Q(g,j)}
=
2\cdot\frac{(6j+1)(6j+3)(g-j)}{g(6g-6j-1)(6g-6j-3)}
=\left|\frac{P_2(g,j)}{Q(g,j)}\right|
\cdot\frac{6j+3}{6g-6j-3}\,.
$$
The expression $\frac{6j+3}{6g-6j-3}$ is strictly positive
and is monotonously growing on the range of $j$ under
consideration, so it attains its maximum on the largest
possible value of $j$. Since $6j\le 3g-5$ we get
$$
0<\frac{6j+3}{6g-6j-3}\le\frac{3g-2}{3g+2}<1
\quad\text{for }
2\le 3j+2\le\left[\frac{3g-1}{2}\right]
\text{ and }g\in\N\,.
$$
Combined with~\eqref{eq:P2:over:Q} this proves the desired
bounds~\eqref{eq:R:P3:Q:bound}
which completes the proof of Lemma~\ref{lm:P:Q}.
\end{proof}

\begin{Lemma}
\label{lm:R}
For any fixed value of $g\in\N$, the expression $R(g,j)$
considered as a function of $j$ is strictly monotonously
decreasing on the range
$\left\{0,1,\dots,\left[\frac{g-1}{2}\right]\right\}$ of the
argument $j$.
\end{Lemma}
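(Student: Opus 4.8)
The plan is to reduce the claim to a single elementary monotonicity statement by analyzing the consecutive ratio $R(g,j+1)/R(g,j)$ and showing it is strictly smaller than $1$ throughout the range. First I would expand this ratio into a product of three binomial-coefficient ratios,
\[
\frac{R(g,j+1)}{R(g,j)}
=\frac{\binom{3g}{3j+3}}{\binom{3g}{3j}}
\cdot\frac{\binom{g}{j+1}}{\binom{g}{j}}
\cdot\frac{\binom{6g}{6j}}{\binom{6g}{6j+6}},
\]
and rewrite each factor as a ratio of products of consecutive integers, e.g. $\binom{3g}{3j+3}/\binom{3g}{3j}=\tfrac{(3g-3j)(3g-3j-1)(3g-3j-2)}{(3j+1)(3j+2)(3j+3)}$, and similarly for the other two. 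The bookkeeping here is the only laborious part, but it collapses cleanly: using the identities $6j+2=2(3j+1)$, $6j+4=2(3j+2)$, $6j+6=2(3j+3)$ (and the analogous ones with $g-j$ in place of $j$), the triples $(3j+1)(3j+2)(3j+3)$ and $(3(g-j))(3(g-j)-1)(3(g-j)-2)$ cancel together with the powers of $2$, leaving
\[
\frac{R(g,j+1)}{R(g,j)}
=\frac{(g-j)\,(6j+1)(6j+3)(6j+5)}{(j+1)\bigl(6(g-j)-1\bigr)\bigl(6(g-j)-3\bigr)\bigl(6(g-j)-5\bigr)}.
\]

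Setting $m=g-j$ and $h(m)=\dfrac{m}{(6m-1)(6m-3)(6m-5)}$, this ratio factors as $c_j\cdot h(m)$ with $c_j=\dfrac{(6j+1)(6j+3)(6j+5)}{j+1}$, and one checks immediately that $c_j\cdot h(j+1)=1$. This reflects the symmetry $R(g,j)=R(g,g-j)$, the value $m=j+1$ corresponding to the midpoint $g=2j+1$. Therefore, to conclude $R(g,j+1)/R(g,j)<1$ it suffices to prove two things: that $m>j+1$ on the relevant range, and that $h$ is strictly decreasing for integer arguments $\ge 1$. The first is immediate: for $0\le j\le\left[\frac{g-1}{2}\right]-1$ one has $j<\frac{g-1}{2}$, hence $m=g-j>j+1$.

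For the monotonicity of $h$ I would compare $h(m)$ and $h(m+1)$ by clearing denominators; the inequality $h(m+1)<h(m)$ is equivalent to
\[
m(6m+1)(6m+3)(6m+5)-(m+1)(6m-1)(6m-3)(6m-5)=432m^3+324m^2-108m+15>0,
\]
which holds for every $m\ge 1$ since $432m^3-108m=108m(4m^2-1)>0$ there. Combining these, $h(m)<h(j+1)$ whenever $m>j+1$, so $R(g,j+1)/R(g,j)=c_j\,h(m)<c_j\,h(j+1)=1$ for every $j$ in $\{0,\dots,\left[\frac{g-1}{2}\right]-1\}$, which is exactly strict monotone decrease on $\{0,1,\dots,\left[\frac{g-1}{2}\right]\}$. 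I expect the main (though purely routine) obstacle to be carrying out the cancellation in the ratio correctly; once the ratio is in the displayed product form, the remainder is a one-line cubic positivity check.
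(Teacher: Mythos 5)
Your proof is correct. You compute exactly the same consecutive ratio as the paper does (your displayed formula for $R(g,j+1)/R(g,j)$ coincides with the first line of the paper's computation, and your cancellation of the triples against the powers of $2$ is right), and like the paper you exploit the symmetry $R(g,j)=R(g,g-j)$, i.e.\ the fact that the ratio equals $1$ at the symmetric configuration. Where you diverge is in how the inequality is finished. The paper regroups the ratio as
\[
\left(1-\frac{1}{6j+6}\right)
\cdot\frac{6j+1}{6g-6j-5}
\cdot\frac{6j+3}{6g-6j-3}
\cdot\left(1+\frac{1}{6g-6j-1}\right),
\]
observes that each of the four positive factors is strictly increasing in $j$ for fixed $g$, and checks that the product equals $1$ at $2j=g-1$; monotonicity in $j$ then forces the ratio below $1$ on the whole range. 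You instead freeze $j$, split the ratio as $c_j\,h(g-j)$ with $h(m)=m/((6m-1)(6m-3)(6m-5))$, use the normalization $c_j\,h(j+1)=1$, and prove $h$ strictly decreasing by expanding $m(6m+1)(6m+3)(6m+5)-(m+1)(6m-1)(6m-3)(6m-5)=432m^3+324m^2-108m+15>0$ (your expansion and the positivity argument both check out), then invoke $g-j>j+1$ on the range. The trade-off: the paper's regrouping makes all the monotonicity visible by inspection, with essentially no polynomial algebra, but requires tracking four factors and (implicitly) positivity of their denominators; your version concentrates the entire work into a single one-variable cubic inequality, at the cost of one explicit expansion. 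Both arguments are complete and elementary.
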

\begin{proof}
It is immediate to verify that
\begin{multline}
\label{eq:ratio:of:R}
R(g,j+1)/R(g,j)=
\frac{(6j+5)(6j+3)(6j+1)}{(6g-6j-1)(6g-6j-3)(6g-6j-5)}
\cdot\frac{g-j}{j+1}
=\\=
\frac{j+5/6}{j+1}
\cdot\frac{6j+1}{6g-6j-5}
\cdot\frac{6j+3}{6g-6j-3}
\cdot\frac{6g-6j}{6g-6j-1}
=\\=
\left(1-\frac{1}{6j+6}\right)
\cdot\frac{6j+1}{6g-6j-5}
\cdot\frac{6j+3}{6g-6j-3}
\cdot\left(1+\frac{1}{6g-6j-1}\right)
\end{multline}
For any fixed $g\in\N$ each of the four terms in the last
line of the above expression is
strictly monotonously increasing as
a function of $j$ on the range
$\{0,1,...\left[\frac{g-1}{2}\right]\}$.
It is immediate to verify that
when $2j=g-1$, the product of four terms
in the last line of the above expression
is identically equal to $1$ for all $g\in\N$,
and the Lemma follows.
\end{proof}

\begin{Lemma}
\label{lm:difference:bounded:by:jmax:R2}
For any $g\in\N$ and for any
integer $k$ in the range
$6\le k\le \left[\frac{3g-1}{2}\right]$
the following bounds hold:
\begin{equation}
\label{eq:agk:minus:ag5}
-R(g,2)\cdot\frac{g-3}{2}
\le a_{g,k}-a_{g,5}
\le R(g,2)\cdot\frac{3g-11}{3}\,.
\end{equation}
\end{Lemma}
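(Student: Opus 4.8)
The plan is to exploit the fact that $a_{g,5}=a_{g,3\cdot 1+2}$ sits exactly at a block boundary of the three-term recurrence~\eqref{eq:R:P1:Q}--\eqref{eq:R:P3:Q}, so that for $6\le k\le\left[\tfrac{3g-1}{2}\right]$ the quantity $a_{g,k}-a_{g,5}$ telescopes as a sum of consecutive differences indexed by $m=6,7,\dots,k$, with no incomplete block on the left. First I would record the signs: the difference with index $m$ equals $R(g,j)\cdot P_i(g,j)/Q(g,j)$, where $i$ and $j$ are read off from $m\bmod 3$, and by Lemma~\ref{lm:P:Q} it is positive for $m\equiv 0,2\pmod 3$ (types $P_1,P_3$) and negative for $m\equiv 1\pmod 3$ (type $P_2$). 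The same lemma shows each such difference has absolute value strictly less than $R(g,j)$.

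Next I would replace every $R(g,j)$ by $R(g,2)$. This is legitimate because along the whole range the index $j$ stays in $\{2,\dots,\left[\tfrac{g-1}{2}\right]\}$: a short check on the parity of $g$ shows that $3j\le\left[\tfrac{3g-1}{2}\right]$ forces $j\le\left[\tfrac{g-1}{2}\right]$, so Lemma~\ref{lm:R} yields $R(g,j)\le R(g,2)$ throughout. (Here $g\ge 5$, since for $g\le 4$ the range $6\le k\le\left[\tfrac{3g-1}{2}\right]$ is empty.) Consequently each positive difference is bounded above by $R(g,2)$ and each negative one is bounded below by $-R(g,2)$.

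The upper bound then follows by discarding all negative differences and bounding each of the $N_+$ positive ones by $R(g,2)$, and the lower bound by discarding all positive differences and bounding each of the $N_-$ negative ones by $-R(g,2)$. Since a partial sum only grows when negatives are dropped, and only shrinks when positives are dropped, it suffices to count positive and negative indices over the full window $m\in\{6,\dots,K\}$, where $K:=\left[\tfrac{3g-1}{2}\right]$. Splitting into $g$ even and $g$ odd, I would verify directly that the number of indices $m\equiv 1\pmod 3$ in $[6,K]$ satisfies $N_-\le\tfrac{g-3}{2}$, and that $N_+=(K-5)-N_-\le\tfrac{3g-11}{3}$. These two counts produce exactly the claimed coefficients, giving
\begin{equation*}
-R(g,2)\cdot\frac{g-3}{2}\le a_{g,k}-a_{g,5}\le R(g,2)\cdot\frac{3g-11}{3}\,.
\end{equation*}

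The main obstacle is purely bookkeeping: matching the residue count of these two arithmetic progressions to the precise rational coefficients. The near-equality in the count (for instance $N_-=\tfrac{g-3}{2}$ when $g$ is odd) shows the estimate is essentially tight, so there is no slack to be wasteful with in the earlier monotonicity and sign steps; one must be careful that dropping terms is done in the correct direction and that the endpoint $j\le\left[\tfrac{g-1}{2}\right]$ really holds for the largest index in play.
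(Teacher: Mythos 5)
Your proof is correct and follows essentially the same route as the paper's: telescoping the recurrences \eqref{eq:R:P1:Q}--\eqref{eq:R:P3:Q} starting from $a_{g,5}$, bounding each increment in absolute value by $R(g,2)$ via Lemma~\ref{lm:P:Q} and Lemma~\ref{lm:R}, and then counting the positive and negative increments. The only cosmetic difference is that you count signs over the full window $\{6,\dots,[\tfrac{3g-1}{2}]\}$ and use monotonicity of the counts, while the paper counts over the actual window $\{6,\dots,k\}$ with a case analysis on $k \bmod 3$; both yield exactly the coefficients $\tfrac{g-3}{2}$ and $\tfrac{3g-11}{3}$.
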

\begin{proof}
We can assume that $g\ge 5$; otherwise the range of $k$ is
empty and the statement is vacuous.

Consider the sequence
$\left\{a_{g,5},a_{g,6},\dots,a_{g,k}\right\}$, and denote
by $n_+(k)$ the number of entries $m$ in the set
$\{5,6,\dots,k-1\}$, for which the inequality
$a_{g,m+1}>a_{g,m}$ holds. Similarly, denote by $n_-(k)$
the number of entries in the same set for which inequality
$a_{g,m+1}<a_{g,m}$ holds.

Combining the recurrence relations in the
form~\eqref{eq:R:P1:Q}--\eqref{eq:R:P3:Q},
bounds~\eqref{eq:R:P1:Q:bound}--\eqref{eq:R:P3:Q:bound} and
Lemma~\ref{lm:R} we conclude that
$$
-R(g,2)\cdot n_-(k)
\le a_{g,k}-a_{g,5}
\le R(g,2)\cdot n_+(k)\,.
$$
It remains to translate the restriction $k\le
\left[\frac{3g-1}{2}\right]$ into upper bounds for $n_+(k)$
and $n_-(k)$ as functions of $g$.

Recall that it follows from recurrence
relations~\eqref{eq:a:g:k:difference} that for $m$
satisfying $0\le m\le \left[\frac{3g-1}{2}\right]-1$ we
have $a_{g,m+1}<a_{g,m}$ if and only if $m\equiv
0\,(\operatorname{mod} 3)$ and we have $a_{g,m+1}>a_{g,m}$
for the remaining $m$ in this range. This implies that
\begin{align*}
n_+&=2(j-2)
&n_-&=j-2\,,
&\text{when }k=3j-1\,,
\\
n_+&=2(j-2)+1
&n_-&=j-2\,,
&\text{when }k=3j\,,\hspace*{17pt}
\\
n_+&=2(j-2)+1
&n_-&=j-1\,,
&\text{when }k=3j+1\,.
\end{align*}
In all these cases we have
\begin{align*}
n_+(k) &\le \frac{2k-10}{3}\,,
\\
n_-(k) &\le \frac{k-4}{3}\,.
\end{align*}
By assumption $k\le \left[\frac{3g-1}{2}\right]$,
so the latter bounds imply that
\begin{align*}
n_+(k) &\le \frac{3g-11}{3}
\\
n_-(k) &\le \frac{g-3}{2}\,.
\end{align*}
and~\eqref{eq:agk:minus:ag5} follows.
\end{proof}

We assume that $g\ge 5$ and
$k\in\{6,\dots,\left[\frac{3g-1}{2}\right]\}$. Define
\begin{align}
\label{eq:epsilon:lower:def}
\elow&=\frac{27(68g^3-308g^2+519g-280)}{(6g-1)(6g-3)(6g-5)(6g-7)(6g-9)}\,,
\\
\label{eq:epsilon:upper:def}
\eup&=\frac{9 (g-2) (288 g^3-780 g^2+1012 g-525)}{(6g-1)(6g-3)(6g-5)(6g-7)(6g-9)}\,.
\end{align}
In these notations
expressions~\eqref{eq:epsilon:lower:init}
and~\eqref{eq:epsilon:upper:init} for $a_{g,5}$ can be
written as
\begin{equation}
\label{eq:a:g:5:in:terms:of:epsilon}
a_{g,5}=1-\frac{2}{6g-1}+\elow =1-\eup\,.
\end{equation}

\begin{Lemma}
\label{lm:R2:jmax:smaller:than:epsilon}
For any integer $g$ satisfying $g\ge 5$
the following strict inequalities are valid:
\begin{align*}
R(g,2)&\cdot\frac{g-3}{2}\quad   <\elow
\\
R(g,2)&\cdot\frac{3g-11}{3} <\eup\,.
\end{align*}
\end{Lemma}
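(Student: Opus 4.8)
The plan is to turn both inequalities into elementary polynomial inequalities in $g$ and then verify those for $g\ge 5$. The first step is to compute a closed form for $R(g,2)$ from~\eqref{eq:R}. Writing out the three binomial coefficients and noting that the six even factors $(6g)(6g-2)(6g-4)(6g-6)(6g-8)(6g-10)$ occurring in $\binom{6g}{12}$ equal $2^6$ times the six consecutive factors $(3g)(3g-1)(3g-2)(3g-3)(3g-4)(3g-5)$ that make up $\binom{3g}{6}$, these cancel and leave only the six odd factors in the denominator. This gives
\[
R(g,2)=\frac{10395\, g(g-1)}{2\,(6g-1)(6g-3)(6g-5)(6g-7)(6g-9)(6g-11)}\,.
\]

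The decisive feature is that the denominator of $R(g,2)$ contains exactly the product $D:=(6g-1)(6g-3)(6g-5)(6g-7)(6g-9)$ that is also the common denominator of $\elow$ and $\eup$ in~\eqref{eq:epsilon:lower:def} and~\eqref{eq:epsilon:upper:def}, together with a single extra factor $(6g-11)$. For $g\ge5$ every one of these factors, as well as $g-3$ and $3g-11$, is strictly positive, so I would clear $D$ from both sides of each inequality and multiply through by the remaining positive factors without changing the direction of the inequality. For the lower bound this reduces $R(g,2)\cdot\frac{g-3}{2}<\elow$ to
\[
10395\, g(g-1)(g-3)<108\,(6g-11)(68g^3-308g^2+519g-280)\,,
\]
and for the upper bound it reduces $R(g,2)\cdot\frac{3g-11}{3}<\eup$ to
\[
10395\, g(g-1)(3g-11)<54\,(6g-11)(g-2)(288g^3-780g^2+1012g-525)\,.
\]

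In each case the right-hand side has strictly larger degree (four, respectively five) than the left-hand side (three) and positive leading coefficient, so each inequality automatically holds for all large $g$; it remains only to confirm that it already holds from $g=5$ onward. I would do this by the substitution $g=5+t$ and checking that, after expansion, the difference (right-hand side minus left-hand side) has only nonnegative coefficients as a polynomial in $t\ge0$, which makes positivity manifest for every integer $g\ge5$. A quick sanity check at $g=5$ (both sides differ by more than an order of magnitude) indicates there is ample margin for this certificate to succeed.

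The only genuinely delicate point is the bookkeeping in the first step: one must track the cancellation of the even factors of $\binom{6g}{12}$ precisely so that the common denominator $D$ emerges cleanly and the single leftover factor $(6g-11)$ is correctly placed. Once the closed form for $R(g,2)$ is established, the remaining verifications are routine, since the degree gap guarantees eventual positivity and the shift $g=5+t$ converts the finite range into a finite inspection of coefficient signs.
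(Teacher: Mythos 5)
Your proposal is correct, and it rests on the same key step as the paper: the closed form
\[
R(g,2)=\frac{10395}{2}\cdot\frac{g(g-1)}{(6g-1)(6g-3)(6g-5)(6g-7)(6g-9)(6g-11)}\,,
\]
whose denominator contains the common denominator $D=(6g-1)(6g-3)(6g-5)(6g-7)(6g-9)$ of $\elow$ and $\eup$ together with the single extra factor $(6g-11)$, plus the observation that all relevant factors are positive for $g\ge 5$. Your reductions to the polynomial inequalities
$10395\,g(g-1)(g-3)<108\,(6g-11)(68g^3-308g^2+519g-280)$ and
$10395\,g(g-1)(3g-11)<54\,(6g-11)(g-2)(288g^3-780g^2+1012g-525)$
are exact (the constants $4\cdot 27=108$ and $6\cdot 9=54$ are right). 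Where you diverge is the finishing step: the paper forms the ratios $\frac{2\,\elow}{R(g,2)\,(g-3)}$ and $\frac{3\,\eup}{R(g,2)\,(3g-11)}$ and performs polynomial division with remainder, so that it only has to check that $408g-964$ (resp.\ $576g^2-1080g+2964$) exceeds $385/4$ (resp.\ $385/2$) for $g\ge 5$ and that the leftover fraction is positive; you instead clear all denominators and certify positivity of the difference polynomial by the shift $g=5+t$. That certificate, which you announced but did not execute, does in fact succeed: the two differences expand to
\begin{align*}
44064\,t^4+590517\,t^3+2991951\,t^2+6833538\,t+5976180\,,
\\
93312\,t^5+1722384\,t^4+12727287\,t^3+46983591\,t^2+86621346\,t+63914130\,,
\end{align*}
respectively, with all coefficients strictly positive. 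The trade-off is clear: the paper's division keeps the arithmetic small at the cost of a monotonicity remark, while your route needs the larger expansions above but then gives a purely mechanical positivity certificate. Do note that a nonnegative-coefficient certificate after shifting by $5$ is a sufficient but not necessary condition for positivity on $[5,\infty)$, so this expansion was a genuine obligation of your argument rather than a formality; it happens to work here.
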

\begin{proof}
The proof is a straightforward calculation.

First note that all the quantities $R(g,2), (g-3), (3g-11),
\elow, \eup$ are strictly positive for $g\ge 5$, where
strict positivity of $\elow$ and of $\eup$ was proved in
Lemma~\ref{lm:k:up:to:5}. Thus, it is sufficient to prove
that
\begin{align}
\label{eq:low}
\frac{2\,\elow}{R(g,2)\cdot(g-3)} > 1
\quad\text{for }g\ge 5\,,
\\
\label{eq:up}
\frac{3\,\eup}{R(g,2)\cdot(3g-11)} > 1
\quad\text{for }g\ge 5\,.
\end{align}

Applying definition~\eqref{eq:R} to evaluate $R(g,2)$ and
cancelling out common factors in the numerator and in the
denominator of the resulting expression we get
\begin{equation}
\label{eq:R:g:2}
R(g,2)=
\frac{10395}{2}
\cdot\frac{g(g-1)}{(6g-1)(6g-3)(6g-5)(6g-7)(6g-9)(6g-11)}\,.
\end{equation}

Plug expression~\eqref{eq:epsilon:lower:def} for $\elow$
and the above expression~\eqref{eq:R:g:2} for $R(g,2)$ into
the left-hand side of~\eqref{eq:low} and cancel out the
common factors in the numerator and in the denominator of
the resulting expression. Applying polynomial division with
remainder to the resulting numerator and denominator we get
\begin{multline*}
\frac{2\,\elow}{R(g,2)\cdot(g-3)}=
2\cdot 27\cdot\frac{2}{10395}
\cdot\frac{(68g^3-308g^2+519g-280)(6g-11)}{g(g-1)(g-3)}
=\\=\frac{4}{385}
\cdot\left(
408 g -964 + \frac{1422 g^2- 4497 g + 3080}{g(g-1)(g-3)}
\right)\,.
\end{multline*}
It is immediate to check that $(1422 g^2- 4497 g + 3080)$
is positive for $g\ge 5$. It remains to note that for $g\ge
5$ we have $(408g-964)\ge (408\cdot 5-964)=1076> 385/4$ which completes the
proof of~\eqref{eq:low}.

Performing analogous manipulations we get
\begin{multline*}
\frac{3\,\eup}{R(g,2)\cdot(3g-11)}
=\\=
3\cdot 9\cdot\frac{2}{10395}
\cdot\frac{(g-2) (288 g^3-780 g^2+1012 g-525)(6g-11)}{g(g-1)(3g-11)}
=\\=
\frac{2}{385}
\cdot\left(
576 g^2 - 1080 g + 2964
+\frac{9790 g^2 + 1735 g - 11550}{g(g-1)(3g-11)}
\right)\,.
\end{multline*}
It is immediate to check that $(9790 g^2 + 1735 g - 11550)$
is positive for $g\ge 5$ as well as the denominator of the
corresponding fraction. It remains to note that the
function $(576 g^2 - 1080 g + 2964)$ is monotonously
growing on the interval $[5;+\infty[$, so for $g\ge 5$ we
get:
$$
576 g^2 - 1080 g + 2964\ge 11964
\ge 576\cdot 5^2 - 1080\cdot 5 + 2964= 11964
> 385/2\,,
$$
which completes the proof of~\eqref{eq:up}.
\end{proof}

\begin{proof}[Proof of Proposition~\ref{pr:main:bounds}]
For small genera, $g=1,2,3,4$,
Proposition~\ref{pr:main:bounds} was proved in
Corollary~\ref{cor:g:up:to:4}.

For genera $g\ge 5$ and $k=2,3,4,5$,
Proposition~\ref{pr:main:bounds} was proved in
Lemma~\ref{lm:k:up:to:5}. The symmetry
$a_{g,k}=a_{g,3g-1-k}$ implies
Proposition~\ref{pr:main:bounds} for symmetric values of
$k$.

For $g\ge 5$ and $k$ in the range $6\le k\le
\left[\frac{3g-1}{2}\right]$
Proposition~\ref{pr:main:bounds} immediately follows from
combination of
Lemma~\ref{lm:difference:bounded:by:jmax:R2},
expression~\eqref{eq:a:g:5:in:terms:of:epsilon} for
$a_{g,5}$ and Lemma~\ref{lm:R2:jmax:smaller:than:epsilon}.
The symmetry $a_{g,k}=a_{g,3g-1-k}$ implies
Proposition~\ref{pr:main:bounds} for symmetric values of
$k$.
\end{proof}

%------------------------------------------------------------
\subsection{Asymptotic behavior of normalized $2$-correlators
in large genera}
%\label{ss:a:g:k:large:genera}

In this section we describe briefly behavior of $a_{g,k}$
for $g\gg 1$. More detailed discussion would be given in a
separate (and more general) paper.

When $g\to +\infty$ and $j$ remains bounded,
expressions~\eqref{eq:p1}--\eqref{eq:p3} and~\eqref{eq:Q}
for polynomials $P_i(g,j)$, $i=1,\dots,4$, and $Q(g,j)$
respectively imply that
\begin{align*}
\frac{P_1(g,j)}{Q(g,j)}
&=1-(2j)\cdot\frac{1}{g}\,,
\\
\frac{P_2(g,j)}{Q(g,j)}
&=-\left(2j+\frac{1}{3}\right)\cdot\frac{1}{g}+o\left(\frac{1}{g}\right)\,,
\\
\frac{P_3(g,j)}{Q(g,j)}
&=\left(2j+\frac{1}{3}\right)\left(j+\frac{1}{2}\right)\cdot\frac{1}{g^2}+o\left(\frac{1}{g^2}\right)\,,
\end{align*}
as $g\to+\infty$. In
particular, for $g\gg1$ we see that for small values of $j$
the ratio $\frac{P_1(g,j)}{Q(g,j)}$ is close to $1$, while
the ratio $\frac{P_2(g,j)}{Q(g,j)}$ is of the order
$\frac{1}{g}$ and the ratio $\frac{P_3(g,j)}{Q(g,j)}$ is of
the order $\frac{1}{g^2}$. Thus, assuming that $g\gg 1$,
and taking consecutive terms $\left\{a_{g,3j-1},\,a_{g,3j},\,a_{g,3j+1},\,a_{g,3j+1}\right\}$
with $j\ll g$
we observe certain increment from $a_{g,3j-1}$ to $a_{g,3j}$,
much smaller decrement from $a_{g,3j}$ to $a_{g,3j+1}$
and very small increment from $a_{g,3j+1}$ to $a_{g,3j+2}$.

For any fixed $g\gg 1$ and $j\ll g$ the quantity $R(g,j)$
defined by~\eqref{eq:R} is very rapidly decreasing
as $j$ grows. We conclude from expression~\eqref{eq:ratio:of:R}
that for bounded $j$ and $g\to+\infty$ one has
$$
R(g,j+1)=R(g,j)\cdot\frac{(j+\frac{5}{6})(j+\frac{3}{6})(j+\frac{1}{6})}{(j+1)}
\cdot\frac{1}{g^2}\cdot(1+o(1))\,.
$$
Since $R(g,0)=1$ we get the following expressions for $j=0,1,2,3$:
\begin{align*}
R(g,0)&=1\,,
\\
R(g,1)&
=\frac{(0+\frac{5}{6})(0+\frac{3}{6})(0+\frac{1}{6})}{(0+1)}
\cdot\frac{1}{g^2}\cdot\big(1+o(1)\big)
&
=\frac{5}{72}\cdot\frac{1}{g^2}\cdot\big(1+o(1)\big)\,,
\\
R(g,2)&
=\frac{(1+\frac{5}{6})(1+\frac{3}{6})(1+\frac{1}{6})}{(1+1)}
\cdot\frac{5}{72}\cdot\frac{1}{g^4}\cdot\big(1+o(1)\big)
&
=\frac{385}{3456}\cdot\frac{1}{g^4}\cdot\big(1+o(1)\big)\,,
\\
R(g,3)&
=\frac{(2+\frac{5}{6})(2+\frac{3}{6})(2+\frac{1}{6})}{(2+1)}
\cdot\frac{385}{3456}\cdot\frac{1}{g^6}\cdot\big(1+o(1)\big)
\hspace*{-2.8pt}
&
\hspace*{-5.1pt}
=\frac{425425}{746496}\cdot\frac{1}{g^6}\cdot\big(1+o(1)\big)\,.
\end{align*}
Lemma~\ref{lm:difference:bounded:by:jmax:R2}
admits the following immediate generalization:
\begin{Lemma}
\label{lm:difference:bounded:by:Rj}
For any $g\in\N$ and for any
integer $k$ in the range
$3j\le k\le \left[\frac{3g-1}{2}\right]$
the following bounds hold:
\begin{equation}
\label{eq:agk:minus:ag:j}
-R(g,j)\cdot\frac{g-j-1}{2}
\le a_{g,k}-a_{g,3j-1}
\le R(g,j)\cdot\frac{3g-2j-7}{3}\,.
\end{equation}
\end{Lemma}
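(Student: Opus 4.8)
The plan is to reproduce, almost verbatim, the argument proving Lemma~\ref{lm:difference:bounded:by:jmax:R2}, which is exactly the special case $j=2$ of the present statement: indeed $a_{g,5}=a_{g,3\cdot 2-1}$, the prefactor is $R(g,2)$, and the bounds $\frac{g-3}{2}$, $\frac{3g-11}{3}$ are obtained from \eqref{eq:agk:minus:ag:j} by setting $j=2$. Throughout I would assume $j\ge 2$ and that the range $3j\le k\le\left[\frac{3g-1}{2}\right]$ is nonempty, the statement being vacuous otherwise.

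First I would telescope $a_{g,k}-a_{g,3j-1}$ as the sum of the consecutive increments $a_{g,m+1}-a_{g,m}$ over $m\in\{3j-1,3j,\dots,k-1\}$. Each such increment equals one of the three product expressions \eqref{eq:R:P1:Q}--\eqref{eq:R:P3:Q}, i.e.\ $R(g,j')\cdot P_i(g,j')/Q(g,j')$ with $i\in\{1,2,3\}$ and with index $j'$ that starts at $j$ (for the step at $m=3j-1$) and is nondecreasing, hence $j'\ge j$ throughout. By Lemma~\ref{lm:P:Q} the factors satisfy $|P_i/Q|<1$, so every increment has absolute value at most $R(g,j')$, and by the signs in \eqref{eq:R:P1:Q:bound}--\eqref{eq:R:P3:Q:bound} the increment is positive precisely when $m\not\equiv 0\,(\operatorname{mod}3)$ and negative when $m\equiv 0\,(\operatorname{mod}3)$. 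The crucial use of the hypothesis $k\le\left[\frac{3g-1}{2}\right]$ is that it forces the largest index $j'$ occurring in the telescoping to be exactly $\left[\frac{g-1}{2}\right]$, so all the $j'$ stay inside the range $\left\{0,\dots,\left[\frac{g-1}{2}\right]\right\}$ on which Lemma~\ref{lm:R} guarantees that $R(g,\cdot)$ is decreasing. Combined with $j'\ge j$ this yields $R(g,j')\le R(g,j)$, so each increment lies in $[-R(g,j),R(g,j)]$ with the sign pattern above.

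Writing $n_+(k)$ (resp.\ $n_-(k)$) for the number of $m\in\{3j-1,\dots,k-1\}$ with $m\not\equiv 0$ (resp.\ $m\equiv 0$) modulo $3$, the previous paragraph gives
$$-R(g,j)\cdot n_-(k)\le a_{g,k}-a_{g,3j-1}\le R(g,j)\cdot n_+(k)\,.$$
Since $3j-1\not\equiv 0\,(\operatorname{mod}3)$, the multiples of $3$ in $\{3j-1,\dots,k-1\}$ are exactly $3j,3(j+1),\dots$, so $n_-(k)=\lfloor(k-1)/3\rfloor-j+1$ and $n_+(k)=(k-3j+1)-n_-(k)$; both are nondecreasing in $k$ and hence maximal at $k=\left[\frac{3g-1}{2}\right]$. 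A short parity check (as in the proof of Lemma~\ref{lm:difference:bounded:by:jmax:R2}) evaluates these maxima exactly: one finds $n_+\!\left(\left[\frac{3g-1}{2}\right]\right)=g-2j$ in both parities, and $n_-\!\left(\left[\frac{3g-1}{2}\right]\right)$ equal to $\frac{g-2j}{2}$ or $\frac{g+1-2j}{2}$. For $j\ge 2$ these are elementarily dominated by $\frac{3g-2j-7}{3}$ and $\frac{g-j-1}{2}$ respectively, and substituting into the displayed inequality gives precisely \eqref{eq:agk:minus:ag:j}.

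I expect the only mildly delicate point to be the bookkeeping of the last paragraph rather than any conceptual difficulty: one must track the index $j'$ carefully to invoke Lemma~\ref{lm:R} legitimately, and one must extract the clean bounds in $g$ and $j$ from the floor $\left[\frac{3g-1}{2}\right]$ (where the exact evaluation of $n_\pm$, not the naive real estimate, is what makes the comparison go through). Everything else is a direct transcription of the $j=2$ argument, which is why the authors call the statement an immediate generalization.
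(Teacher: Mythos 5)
Your proposal is correct and is exactly the argument the paper intends: the paper offers no separate proof, declaring the statement an ``immediate generalization'' of Lemma~\ref{lm:difference:bounded:by:jmax:R2}, and you carry out precisely that generalization (telescoping, the sign pattern and the bound $|P_i/Q|<1$ from Lemma~\ref{lm:P:Q}, monotonicity of $R(g,\cdot)$ from Lemma~\ref{lm:R} on the admissible range of indices, and the exact count of $n_{\pm}$ at $k=\left[\frac{3g-1}{2}\right]$) with correct bookkeeping. Your restriction to $j\ge 2$ is not a defect but a necessity the paper leaves implicit: for $j=1$ the claimed upper bound in~\eqref{eq:agk:minus:ag:j} can fail (e.g.\ $g=3$, where $a_{3,3}>a_{3,2}$ while the right-hand side is $R(3,1)\cdot\frac{3\cdot 3-2-7}{3}=0$), so the lemma is only valid in the regime $j\ge 2$ that generalizes the proven case $j=2$.
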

Thus, having found the asymptotic expansion (when
$g\to+\infty$) in $\frac{1}{g}$ up to the term
$\frac{1}{g^{2j-2}}$ for some $a_{g,3j-1}$, we get the same
asymptotic expansion up to the term $\frac{1}{g^{2j-2}}$
for all $a_{g,k}$ with $k$ satisfying $3j-1\le k\le 3g-3j$.

Finally, no matter whether $g=2j$ or $g=2j+1$ one easily
derives from Stirling formula that
$$
R(g,j)\approx\frac{1}{2^{2g-1}}\cdot\frac{1}{\sqrt{\pi g}}\,.
$$
Morally, when $g\gg1$ and the index $k$ is located
sufficiently far from the extremities of the range
$\{0,1,\dots,3g-1\}$, the values of $a_{g,k}$ become,
basically, indistinguishable.

\begin{figure}[hbt]
\includegraphics{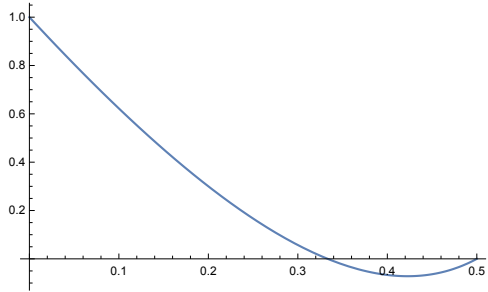}
\vspace{90bp}
\caption{
\label{fig:graph}
Graph of $f(x)$}
\end{figure}

It is curious to note that the sequence
$\{a_{g,2},\,a_{g,5},\dots,a_{g,3j_{max}-1}\}$, where
$j_{max}$ is the maximum integer satisfying
$3j_{max}-1\le\left[\frac{3g-1}{2}\right]$, is not
monotonously increasing for $g\ge 17$. Let $x=\frac{j}{g}$.
Our bounds on $j$ imply that $0<x<\frac{1}{2}$.
For any fixed $g\gg1$ define
$$
f(x)=f\left(\frac{j}{g}\right)=\frac{(P_1(g,j)+P_2(g,j)+P_3(g,j))}{Q(g,j)}\,.
$$
For large values of $g$ the graph of $f(x)$ has the form as
in Figure~\ref{fig:graph}, so up to some
point the function $f$ remains positive and the sequence
$a_{g,2},\,a_{g,5},\dots$ monotonously increases, but then
it attains its maximum and very slowly monotonously
decreases up to $a_{g,3j_{max}-1}$.

%######################################################################
%######################################################################
%######################################################################

%------------------------------------------------------------
\subsection{Asymptotic volume contribution of one-cylinder
square-tiled surfaces.}
\label{s:simple:closed:geodesics:asymptotics}

In this section we compute the large genus asymptotics for
the contributions of the stable graphs having a single edge
to the Masur--Veech volume $\Vol\cQ_g$ of the moduli space
of holomorphic quadratic differentials on complex curves of
genus $g$. In other words, we compute asymptotic
contributions of one-cylinder square-tiled surfaces to the
Masur--Veech volume of the principal stratum
$\cQ(1^{4g-4})$ of holomorphic quadratic differentials in
large genus $g$. These contributions obviously provide
lower bounds for $\Vol\cQ_g$.

Denote by $\Graph_1(g)$ the stable graph having a single
vertex and having a single edge and such that the vertex is
decorated with integer $g-1$
(see~Figure~\ref{fig:non:separating}). The single edges
forms a loop, so this stable graphs represents a surface of
genus $g$ with no marked points. This stable graph encodes
the orbit of a simple closed nonseparating curve on a
surface of genus $g$. This stable graph represents also the
boundary divisor
$\delta_{0,g}\subset\overline{\cM}_g-\cM_g$ for which the
generic stable curve has single irreducible component of
genus $g-1$ with single node.

\begin{figure}[htb]
\includegraphics{genus_two_graph_11.eps}
\begin{picture}(0,0)(158,0) % (0,0)(155,0)
\put(44,-14){$g-1$}
\end{picture}
\includegraphics{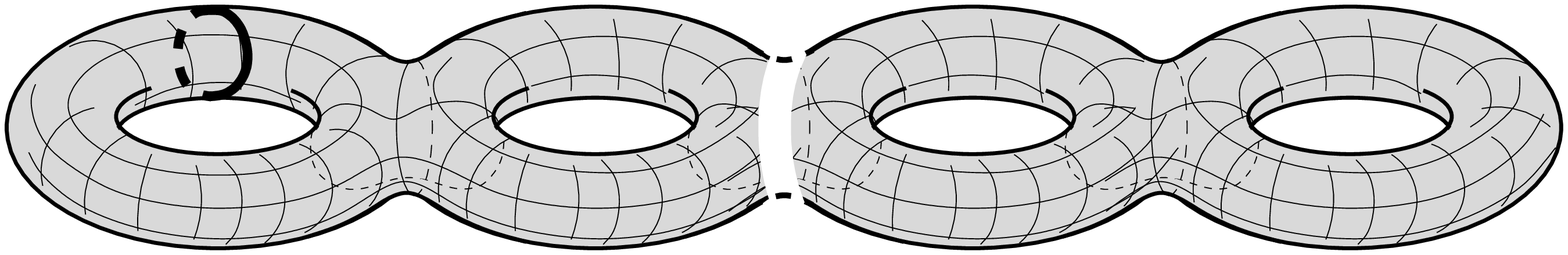}
\begin{picture}(0,0)(155,0) % (0,0)(155,0)
\put(75,-26){$\underbrace{\rule{175pt}{0pt}}_g$}
\put(280,-29){{\scriptsize geometric}}
\put(283,-36){{\scriptsize genus $g$}}
\end{picture}
\includegraphics{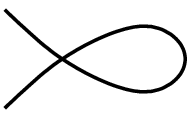}
\vspace*{40pt}
\caption{
\label{fig:one:vertex:one:loop}
Graph $\Graph_1(g)$ on the left
represents non-separating simple closed curves on a surface
of genus $g$ (as in the middle) and the boundary divisor
$\delta_{0,g}$ of irreducible stable curves with single node
as on the right.}
\label{fig:non:separating}
\end{figure}

\begin{Theorem}
\label{th:contribution:Gamma:1} The contribution of the
graph $\Graph_1(g)$ having single vertex decorated with
label $g-1$ and single edge to the Masur--Veech volume
$\Vol\cQ_g$ of the moduli space $\cQ_g$ of holomorphic
quadratic differentials on complex curves of genus $g$ has
the following asymptotics:
\begin{equation}
\label{eq:asymptotic:contribution:Gamma:1}
\Vol\Graph_1(g)
=\sqrt{\frac{2}{3\pi g}}
\cdot\left(\frac{8}{3}\right)^{4g-4}
\cdot\left(1+O\left(\frac{1}{g}\right)\right)
\quad\text{as }g\to+\infty\,.
\end{equation}
\end{Theorem}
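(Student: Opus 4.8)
The plan is to make the single-graph contribution $\Vol(\Gamma_1(g))=\cZ(P_{\Gamma_1(g)})$ completely explicit, reduce it to a weighted sum of two-correlators in genus $g-1$, and then extract the asymptotics; the only non-elementary ingredient will be the uniform bounds of Proposition~\ref{pr:main:bounds}, which are already available.

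First I would read off $P_{\Gamma_1(g)}$ from \eqref{eq:P:Gamma}. The graph $\Gamma_1(g)$ has one vertex (decorated by $g-1$, of valency $2$), one loop edge, and $|\Aut(\Gamma_1(g))|=2$ (the flip of the loop), so the factors $\tfrac{1}{2^{|V|-1}}$ and $\tfrac{1}{|\Aut|}$ equal $1$ and $\tfrac12$, giving
\begin{equation*}
P_{\Gamma_1(g)}(b_1)=\frac{2^{6g-5}(4g-4)!}{(6g-7)!}\cdot\frac12\cdot b_1\cdot N_{g-1,2}(b_1,b_1).
\end{equation*}
The crucial simplification is that $N_{g-1,2}$ is homogeneous of degree $3g-4$, so setting $b_2=b_1$ collapses every monomial to $b_1^{6g-8}$: thus $N_{g-1,2}(b_1,b_1)=C_g\,b_1^{6g-8}$ with $C_g=\sum_{d_1+d_2=3g-4}c_{(d_1,d_2)}$, and $P_{\Gamma_1(g)}$ is a single monomial of degree $6g-7$. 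Applying $\cZ$ through $\cZ(b_1^m)=m!\,\zeta(m+1)$ with $m=6g-7$ yields the exact identity $\Vol(\Gamma_1(g))=2^{6g-6}(4g-4)!\,C_g\,\zeta(6g-6)$, where $\zeta(6g-6)=1+o(1)$.

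Next I would unfold $C_g$. Writing $g'=g-1$ and inserting \eqref{eq:c:subscript:d} and the normalization \eqref{eq:a:g:k}, the identity $k!\,(2k+1)!!=(2k+1)!/2^{k}$ lets the double factorials combine into
\begin{equation*}
C_g=\frac{2^{-2g'+1}\,(6g'-1)!!}{24^{g'}\,g'!}\sum_{k=0}^{3g'-1}\frac{a_{g',k}}{(2k+1)!\,(6g'-1-2k)!},
\end{equation*}
so that all intersection numbers are repackaged into the normalized correlators $a_{g',k}$. Here Proposition~\ref{pr:main:bounds} enters decisively: since $1-\tfrac{2}{6g'-1}\le a_{g',k}\le1$ uniformly in $k$, I may replace every $a_{g',k}$ by $1$ at the cost of a factor $1+O(1/g)$, and the remaining sum collapses by the binomial identity $\sum_{j\text{ odd}}\binom{6g'}{j}=2^{6g'-1}$ to $\sum_{k}\tfrac{1}{(2k+1)!(6g'-1-2k)!}=\tfrac{2^{6g'-1}}{(6g')!}$.

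Finally I would collect the powers of $2$ and $3$, using $(6g'-1)!!=(6g')!/(2^{3g'}(3g')!)$, whereupon everything reduces to $\Vol(\Gamma_1(g))=\tfrac{2^{4g'}}{3^{g'}}\binom{4g'}{g'}\bigl(1+O(1/g)\bigr)$. Stirling's formula gives $\binom{4g'}{g'}\sim\sqrt{\tfrac{2}{3\pi g'}}\,(256/27)^{g'}$, and since $\tfrac{2^{4}}{3}\cdot\tfrac{256}{27}=\tfrac{4096}{81}=(8/3)^4$, this produces $\Vol(\Gamma_1(g))\sim\sqrt{\tfrac{2}{3\pi g'}}(8/3)^{4g'}$; substituting $g'=g-1$ and absorbing $\sqrt{g/(g-1)}=1+O(1/g)$ gives exactly \eqref{eq:asymptotic:contribution:Gamma:1}. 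The main conceptual obstacle, namely uniform control of the two-correlators over the full range of $k$, is already surmounted by Proposition~\ref{pr:main:bounds}, so the residual difficulty is purely the bookkeeping of the many factorial and power-of-two normalizations; I would guard against slips by checking the exact formula against the $g=2$ entry of Table~\ref{tab:2:0}, where it returns $16\,\zeta(6)=\tfrac{16}{945}\pi^6$ on the nose.
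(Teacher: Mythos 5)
Your proposal is correct and follows essentially the same route as the paper's proof: collapse $b_1\,N_{g-1,2}(b_1,b_1)$ to a single monomial, apply $\cZ$, rewrite the resulting sum of $2$-correlators in terms of the normalized quantities $a_{g',k}$, invoke the uniform bounds of Proposition~\ref{pr:main:bounds} together with the odd-binomial identity $\sum_k\binom{6g'}{2k+1}=2^{6g'-1}$, and finish with Stirling's formula for $\binom{4g'}{g'}$. The only differences are cosmetic (you shift by $g'=g-1$ where the paper works in genus $g+1$, and you add a useful sanity check against the $g=2$ entry of Table~\ref{tab:2:0}).
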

\begin{proof}
It would be slightly more convenient to prove
formula~\eqref{eq:asymptotic:contribution:Gamma:1} in genus
$g+1$. Assign variable $b_1$ to the only edge of the graph.
Formula~\eqref{eq:volume:contribution:of:stable:graph} from
Theorem~\ref{th:volume}
applied to the graph $\Graph_1(g+1)$ gives
\begin{multline}
\label{eq:contribution:graph:G1:initial}
\left(\frac{(4g)!}{(6g)!}\cdot 2^{6g}\cdot 12g\right) \cdot
\frac{1}{2}\cdot 1\cdot
b_1 \cdot
N_{g,2}(b_1,b_1)
\xmapsto{\cZ}
\\
\xmapsto{\ \cZ\ }
\Vol\Graph_1(g+1)
=(4g)!\cdot2^{g+2}\cdot\zeta(6g)
\cdot\sum_{d_1+d_2=3g-1}
\frac{\langle\psi_1^{d_1}\psi_2^{d_2}\rangle}{d_1!\cdot d_2!}
=\\=
(4g)!\cdot2^{g+2}\cdot\zeta(6g)
\cdot\sum_{k=0}^{3g-1}
\frac{\langle\tau_k\tau_{3g-1-k}\rangle_g}{k!\cdot(3g-1-k)!}
\,.
\end{multline}

Now pass to the normalization~\eqref{eq:a:g:k}
of the correlators
$\langle\tau_k\tau_{3g-1-k}\rangle_g$:
\begin{equation*}
%\label{eq:a:g:k}
a_{g,k}
=\frac{(2k+1)!!\cdot(6g-1-2k)!!}{(6g-1)!!}
\cdot 24^g\cdot g!
\cdot \langle\tau_k\tau_{3g-1-k}\rangle_g\,.
\end{equation*}
By Proposition~\ref{pr:main:bounds}, the $2$-correlators
admit the following uniform bounds under such
normalization:
$$
1-\frac{2}{6g-1}
\le a_{g,k}
\le 1\,,\quad\text{for }k=0,1,\dots,3g-1\,.
$$

Rewriting the expression~\eqref{eq:contribution:graph:G1:initial}
for $\Vol\Graph_1(g+1)$ in terms of $a_{g,k}$
we get
\begin{multline}
\label{eq:contribution:graph:G1:a:g:k:init}
\Vol\Graph_1(g+1)/\zeta(6g)
=\\=
(4g)!\cdot2^{g+2}
\cdot\frac{(6g-1)!!}{24^g\cdot g!}
\cdot\sum_{k=0}^{3g-1}
\frac{a_{g,k}}
{k!\cdot(2k+1)!!\cdot(3g-1-k)!\cdot(6g-1-2k)!!}\,.
\end{multline}
Passing from double factorials to factorials,
\begin{align*}
(6g-1)!! &= \frac{(6g)!}{(3g)!\cdot 2^{3g}}\\
(2k+1)!! &= \frac{(2k+1)!}{k!\cdot 2^k}\\
(6g-1-2k)!! &=\frac{(6g-1-2k)!}{(3g-1-k)!\cdot 2^{3g-k-1}}\,,
\end{align*}
we rewrite and simplify expression~\eqref{eq:contribution:graph:G1:a:g:k:init}
as follows:
\begin{multline}
\label{eq:contribution:graph:G1:a:g:k:intermediate}
\Vol\Graph_1(g+1)/\zeta(6g)
=\\=
(4g)!\cdot 2^{g+2}
\cdot\frac{1}{3^g\cdot 2^{3g}}
\cdot\frac{1}{g!\cdot(3g)!}
\cdot\frac{1}{2}
\sum_{k=0}^{3g-1}
\binom{6g}{2k+1}\cdot a_{g,k}
=\\=
\frac{(4g)}{g!\cdot(3g)!}
\cdot\frac{1}{3^g\cdot 2^{2g}}
\cdot 2\sum_{k=0}^{3g-1}
\binom{6g}{2k+1}\cdot a_{g,k}\,.
\end{multline}
Combining the classical identity
$$
\sum_{k=0}^{3g-1} \binom{6g}{2k+1}= 2^{6g-1}
$$
with bounds~\eqref{eq:main:bounds}
we get the following bounds for $\Vol\Graph_1(g+1)$:
\begin{equation}
\label{eq:contribution:graph:G1:bounds}
\binom{4g}{g}
\cdot\left(\frac{2^4}{3}\right)^g
\cdot\left(1-\frac{2}{6g-1}\right)
\le
\Vol\Graph_1(g+1)/\zeta(6g)
\le
\binom{4g}{g}
\cdot\left(\frac{2^4}{3}\right)^g
\,.
\end{equation}
Note that
$$
\zeta(6g)\to 1\quad\text{as }g\to+\infty
$$
and the convergence is exponentially fast.

Applying Stirling's formula to factorials
in the binomial coefficient
$\binom{4g}{g}$ in the latter expression
and simplifying we get
\begin{equation}
\label{eq:binom:4g:g}
\binom{4g}{g}
=\sqrt{\frac{2}{3\pi g}}
\cdot\left(\frac{2^8}{3^3}\right)^g
\cdot\left(1+O\left(\frac{1}{g}\right)\right)
\quad\text{as }g\to+\infty\,.
\end{equation}
Combining the latter equality with
bounds~\eqref{eq:contribution:graph:G1:bounds}
we get the desired formula~\eqref{eq:asymptotic:contribution:Gamma:1}
in genus $g+1$:
$$
\Vol\Graph_1(g+1)
=\sqrt{\frac{2}{3\pi g}}
\cdot\left(\frac{8}{3}\right)^{4g}
\cdot\left(1+O\left(\frac{1}{g}\right)\right)
\quad\text{as }g\to+\infty\,.
$$
\end{proof}

\begin{Remark}
\label{rm:potential:detalisation}
Actually, we have very good control of asymptotic
expansions of correlators $a_{g,k}$ in powers of
$\frac{1}{g}$, see~\cite{}, so it would not be difficult to
specify several terms of the asymptotic expansion of
$O\left(\frac{1}{g}\right)$ in
formula~\eqref{eq:asymptotic:contribution:Gamma:1}. We do
not do it only because we do not currently see any specific
need for a more precise expression.
\end{Remark}

Theorem~\ref{th:contribution:Gamma:1} immediately implies
Theorem~\ref{th:asymptotic:lower:bound}.
\bigskip

We proceed with the remaining graph having single edge.
This time it has two vertices joined by the edge as in
Figure~\ref{fig:separating}. The two vertices are decorated
with strictly positive integers $g_1,g_2\in\N$ such that
$g_1+g_2=g$ (see~Figure~\ref{fig:non:separating}). Without
loss of generality we may assume that $g_1\le g_2$. This
stable graph encodes the orbit of a simple closed curve
separating the compact surface of genus $g$ without
punctures into subsurfaces of genera $g_1$ and $g_2$. This
stable graph represents also the boundary divisor
$\delta_{g_1,g}\subset\overline{\cM}_g-\cM_g$ for which a generic stable
curve has two irreducible components of genera $g_1$ and
$g_2$ respectively joined at a single node.

\begin{figure}[htb]
\includegraphics{genus_two_graph_12.eps}
\begin{picture}(0,0)(145,-5) % (0,0)(135,-10)
\put(-2,-8){$g_1$}
\put(31,-8){$g_2$}
\end{picture}
\includegraphics{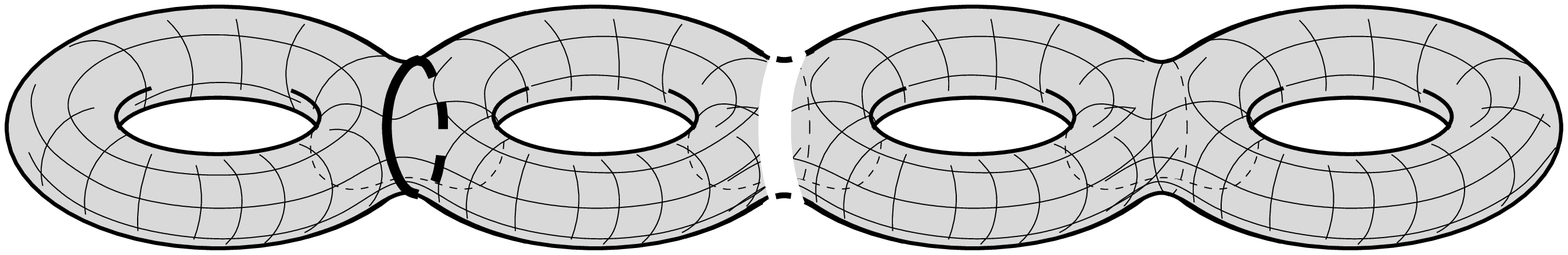}
\begin{picture}(0,0)(155,5) % (0,0)(155,0)
\put(75,-26){$\underbrace{\rule{80pt}{0pt}}_{g_1}$}
\put(175,-26){$\underbrace{\rule{80pt}{0pt}}_{g_2}$}
\put(280,-30){{\scriptsize $g_1$}}
\put(303,-30){{\scriptsize $g_2$}}
\end{picture}
\includegraphics{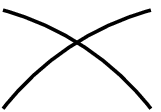}
\vspace*{40pt}
\caption{
\label{fig:two:verticies:one:edge}
Graph $\Separating(g_1,g_2)$
represents simple closed curves
on a surface of genus $g$ separating the surface
into subsurfaces of genera $g_1$ and $g_2$, where $g_1+g_2=g$
(in the middle). The same graph represents the
boundary divisor
$\delta_{g_1,g}\subset\overline{\cM}_g-\cM_g$ for which a generic stable
curve has two irreducible components of genera $g_1$ and
$g_2$ respectively joined at a single node (as on the right).
}
\label{fig:separating}
\end{figure}

\begin{Proposition}
\label{pr::contribution:Gamma:1}
The contribution $\Vol(\Separating(g_1,g-g_1))$ of the
graph $\Separating(g_1,g-g_1)$ having a single edge joining
two vertices decorated with labels $g_1, g-g_1$
respectively, to the Masur--Veech volume $\Vol\cQ_g$ of the
moduli space $\cQ_g$ of holomorphic quadratic differentials
has the following value:
\begin{equation}
\label{eq:asymptotic:contribution:segment}
\Vol(\Separating(g_1,g-g_1))
=\frac{4\cdot\zeta(6g-6)}{|\operatorname{Aut}(\Separating(g_1,g-g_1))|}
\cdot \binom{4g-4}{g}
\cdot\frac{1}{12^g}
\cdot \binom{g}{g_1}
\cdot \binom{3g-4}{3g_1-2}
\,.
\end{equation}
\end{Proposition}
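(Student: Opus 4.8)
The plan is to apply directly the formula $\Vol(\Separating(g_1,g-g_1))=\cZ(P_{\Separating})$ of Theorem~\ref{th:volume}, using the explicit shape of~\eqref{eq:P:Gamma} for the graph at hand. The graph $\Separating(g_1,g-g_1)$ has two vertices, decorated by $g_1$ and $g_2:=g-g_1$, joined by a single edge $e$, and no legs; hence $|V(\Separating)|=2$, $|E(\Separating)|=1$, and each vertex has valency $n_v=1$. Writing $b_1=b_e$, formula~\eqref{eq:P:Gamma} specializes (with $n=0$) to
\[
P_{\Separating}(b_1)=\frac{2^{6g-5}\,(4g-4)!}{(6g-7)!}\cdot\frac{1}{2}\cdot\frac{1}{|\Aut(\Separating(g_1,g-g_1))|}\cdot b_1\cdot N_{g_1,1}(b_1)\,N_{g_2,1}(b_1)\,.
\]

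First I would make the factors $N_{g_i,1}$ fully explicit. Since $N_{g_i,1}$ is homogeneous of degree $3g_i-2$ by~\eqref{eq:N:g:n}, it consists of the single monomial $c_{3g_i-2}\,b_1^{2(3g_i-2)}$, and by~\eqref{eq:c:subscript:d} together with the classical one-point value $\langle\tau_{3g_i-2}\rangle=\tfrac{1}{24^{g_i}g_i!}$ one obtains $N_{g_i,1}(b_1)=\tfrac{1}{2^{5g_i-4}(3g_i-2)!\,24^{g_i}g_i!}\,b_1^{6g_i-4}$. (As a sanity check this reproduces $N_{1,1}(b_1)=\tfrac{1}{48}b_1^2$.) Multiplying the two factors, using $g_1+g_2=g$ to collapse $2^{5g_1-4}\,2^{5g_2-4}=2^{5g-8}$ and $24^{g_1}\,24^{g_2}=24^g$, and then inserting the edge variable $b_1$, the product $b_1\,N_{g_1,1}(b_1)\,N_{g_2,1}(b_1)$ becomes a single monomial in $b_1$ of odd degree $6g-7$.

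Next I would apply the operator $\cZ$. By its definition~\eqref{eq:cZ} one has $\cZ(b_1^{6g-7})=(6g-7)!\cdot\zeta(6g-6)$, so the factor $(6g-7)!$ in the numerator of~\eqref{eq:P:Gamma} cancels against the $(6g-7)!$ in the denominator of $P_{\Separating}$. After collecting all powers of $2$ and of $3$ — the decisive bookkeeping being $2^{6g-5}\cdot 2^{-1}\cdot 2^{-(5g-8)}=2^{g+2}$ and $2^{g+2}/24^g=4/12^g$ — one arrives at
\[
\Vol(\Separating(g_1,g-g_1))=\frac{4\,\zeta(6g-6)}{|\Aut(\Separating(g_1,g-g_1))|}\cdot\frac{1}{12^g}\cdot\frac{(4g-4)!}{g_1!\,g_2!\,(3g_1-2)!\,(3g_2-2)!}\,.
\]

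It remains to recognize the combinatorial factor. Using $3g-4-(3g_1-2)=3g_2-2$ one checks the identity $\binom{4g-4}{g}\binom{g}{g_1}\binom{3g-4}{3g_1-2}=\tfrac{(4g-4)!}{g_1!\,g_2!\,(3g_1-2)!\,(3g_2-2)!}$, since the factors $g!$ and $(3g-4)!$ cancel telescopically. This rewrites the displayed expression as the asserted~\eqref{eq:asymptotic:contribution:segment}. I expect no genuine conceptual obstacle: the statement is an instance of Theorem~\ref{th:volume} for a two-vertex, one-edge graph, and the only delicate point is the exact tracking of the powers of $2$, of $3$, and of the factorials. The sole external input beyond the excerpt is the one-point intersection number $\langle\tau_{3g-2}\rangle=\tfrac{1}{24^g g!}$.
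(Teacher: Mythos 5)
Your proof is correct and follows essentially the same route as the paper: specialize formula~\eqref{eq:P:Gamma} to the two-vertex, one-edge graph, make $N_{g_i,1}$ explicit via Witten's one-point value $\langle\tau_{3g_i-2}\rangle=\tfrac{1}{24^{g_i}g_i!}$, apply $\cZ$ to the monomial $b_1^{6g-7}$ to produce $(6g-7)!\,\zeta(6g-6)$, and repackage the factorials as the product of binomial coefficients. The power-of-two bookkeeping ($2^{6g-5}\cdot 2^{-1}\cdot 2^{-(5g-8)}=2^{g+2}$, hence $4/12^g$) and the identity $\binom{4g-4}{g}\binom{g}{g_1}\binom{3g-4}{3g_1-2}=\tfrac{(4g-4)!}{g_1!\,g_2!\,(3g_1-2)!\,(3g_2-2)!}$ agree exactly with the paper's computation.
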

\begin{proof}
Let $g_2=g-g_1$.
The contribution of the graph $\Separating(g_1,g_2)$ is given by
formula~\eqref{eq:square:tiled:volume} from
Theorem~\ref{th:volume}:
\begin{equation}
\label{eq:Vol:Graph:g1:g2}
\Vol(\Separating(g_1,g_2))
=\frac{2^{6g-5} \cdot (4g-4)!}{(6g-7)!}\cdot \\
\frac{1}{2} \cdot
\frac{1}{|\operatorname{Aut}(\Graph(g_1,g_2))|}
\cdot
\cZ\big(b\cdot
N_{g_1,1}(b)\cdot N_{g_2,1}(b)
\big)\,,
\end{equation}
where
\begin{equation}
\label{eq:Aut}
|\operatorname{Aut}(\Graph(g_1,g_2))|=
\begin{cases}
2,&\text{when }g_1=g_2\,,\\
1,&\text{otherwise}\,.
\end{cases}
\end{equation}

By the result of E.~Witten~\cite{Witten} one has
the following closed expression for $1$-correlators:
$$
\langle \psi_1^{3g-2} \rangle
=
\int_{\overline{\cM}_{g,1}} \psi_1^{3g-2}
=
\frac{1}{24^g\cdot g!}\,.
$$

Applying definitions~\eqref{eq:N:g:n}
and~\eqref{eq:c:subscript:d} to $N_{g,1}(b)$
and using the above expression
for $\langle \psi_1^{3g-2} \rangle$
we get the following close formula
for the polynomial $N_{g,1}(b)$:
\begin{multline}
N_{g,1}(b)=c_{3g-2} b^{2(3g-2)}=
\frac{1}{2^{5g-6+2}\cdot(3g-2)!}
\cdot\langle \psi_1^{3g-2} \rangle \cdot b^{6g-4}
=\\=
\frac{1}{2^{8g-4}\cdot 3^g\cdot g!\cdot (3g-2)!}
\cdot b^{6g-4}\,.
\end{multline}

Using definition~\eqref{eq:cZ} of $\cZ$
and the assumption $g_1+g_2=g$
we can
now develop the rightmost factor
in~\eqref{eq:cZ} as follows
\begin{multline*}
\cZ\big(b\cdot N_{g_1,1}(b)\cdot N_{g_2,1}(b)\big)
=\\=
\cZ\left(
\frac{1}{2^{8g_1-4}\!\cdot\! 2^{8g_2-4}}
\!\cdot\!\frac{1}{3^{g_1}\!\cdot\! 3^{g_2}}
\!\cdot\! \frac{1}{g_1!\, g_2!}
\!\cdot\! \frac{1}{(3g_1-2)! (3g_2-2)!}
\!\cdot\! b\cdot b^{6g_1-4}\cdot b^{6g_2-4}\right)
=\\=
\frac{1}{2^{8g-8}}
\cdot\frac{1}{3^g}
\cdot \frac{1}{g_1!\, (g-g_1)!}
\cdot \frac{1}{(3g_1-2)! (3g-3g_1-2)!}
\cdot(6g-7)!\cdot\zeta(6g-6)
\,.
\end{multline*}
Plugging this expression into~\eqref{eq:Vol:Graph:g1:g2};
multiplying and dividing by $g!$ and by $(3g-4)!$ to
pass to binomial coefficients
and simplifying we get the desired
formula~\eqref{eq:asymptotic:contribution:segment}.
\end{proof}

We complete this section with computation of cumulative
contribution of the graphs $\Separating(g_1,g-g_1)$ to
$\Vol\cQ_g$ coming from all
$g_1=1,\dots,\left[\frac{g}{2}\right]$.

\begin{Proposition}
\label{pr:sum:of:contributions:of:separating:graphs}
The following asymptotic relation is valid
\begin{equation}
\label{eq:sum:of:contributions:of:separating:graphs}
\sum_{g_1=1}^{\left[\frac{g}{2}\right]}
\Vol(\Separating(g_1,g-g_1))
\sim
\frac{2}{3\pi g}\cdot\frac{1}{4^g}
\cdot\left(\frac{8}{3}\right)^{4g-4}\,,
\end{equation}
where the equivalence means that the ratio of the two expressions
tends to $1$ as $g\to\infty$.
\end{Proposition}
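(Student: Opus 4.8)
The plan is to start from the closed formula~\eqref{eq:asymptotic:contribution:segment} of Proposition~\ref{pr::contribution:Gamma:1}, factor out everything independent of $g_1$, and reduce the statement to the large genus asymptotics of a single binomial sum. Writing $t_{g_1}:=\binom{g}{g_1}\binom{3g-4}{3g_1-2}$, formula~\eqref{eq:asymptotic:contribution:segment} gives
\[
\sum_{g_1=1}^{[g/2]}\Vol(\Separating(g_1,g-g_1))
=4\zeta(6g-6)\binom{4g-4}{g}\frac{1}{12^g}
\sum_{g_1=1}^{[g/2]}\frac{t_{g_1}}{|\Aut(\Separating(g_1,g-g_1))|}\,.
\]
The $t_{g_1}$ are symmetric, $t_{g_1}=t_{g-g_1}$, with $|\Aut|=2$ exactly when $g=2g_1$ and $|\Aut|=1$ otherwise, while $t_0=t_g=0$. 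Treating the parities of $g$ separately (the automorphism factor $\tfrac12$ compensating the unique middle term when $g$ is even), the weighted half-range sum equals $\tfrac12 S$, where $S:=\sum_{g_1=0}^{g}\binom{g}{g_1}\binom{3g-4}{3g_1-2}$. Thus it remains to find the asymptotics of $\binom{4g-4}{g}$ and of $S$.

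Next I would recognize $S$ as a single coefficient. Using $(1+x)(1-x+x^2)=1+x^3$ one has $(1+x)^{3g-4}(1+x^3)^g=(1+x)^{4g-4}(1-x+x^2)^g$, and a coefficient extraction shows $S=\bigl[x^{3g-2}\bigr](1+x)^{3g-4}(1+x^3)^g$. The cleanest route to the asymptotics is a local central limit theorem: the normalized series $2^{-(4g-4)}(1+x)^{3g-4}(1+x^3)^g$ is the probability generating function of $X=\sum_{i=1}^{3g-4}X_i+\sum_{j=1}^{g}Y_j$, where the $X_i$ are i.i.d. taking values $0,1$ with probability $\tfrac12$ and the $Y_j$ are i.i.d. taking values $0,3$ with probability $\tfrac12$, all independent. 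Its mean is $\tfrac{3g-4}{2}+\tfrac{3g}{2}=3g-2$ and its variance is $\tfrac{3g-4}{4}+\tfrac{9g}{4}=3g-1$, and the lattice span is $1$ since $\gcd(1,3)=1$. Hence $2^{-(4g-4)}S=\Pr[X=3g-2]$ is evaluated exactly at the integer mean, and the local limit theorem yields
\[
S\sim\frac{2^{4g-4}}{\sqrt{2\pi(3g-1)}}\sim\frac{2^{4g-4}}{\sqrt{6\pi g}}
\qquad\text{as }g\to+\infty\,.
\]
Equivalently this is a saddle-point evaluation of the contour integral for $S$; the pleasant point is that $x=1$ is the \emph{exact} saddle, since $x\psi'(x)=3g-2$ holds at $x=1$ for $\psi=\ln\bigl((1+x)^{3g-4}(1+x^3)^g\bigr)$, and $\psi''(1)=1$, so the second moment is $3g-1$.

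Finally I would assemble the pieces. A direct Stirling estimate, parallel to~\eqref{eq:binom:4g:g}, gives $\binom{4g-4}{g}\sim\frac{81}{256}\sqrt{\tfrac{2}{3\pi g}}\bigl(\tfrac{2^8}{3^3}\bigr)^{g}$. Substituting this together with $S\sim 2^{4g-4}/\sqrt{6\pi g}$, $\zeta(6g-6)\to1$ and $12^g=2^{2g}3^g$ into the reduced expression $2\zeta(6g-6)\binom{4g-4}{g}12^{-g}S$, the square-root prefactors multiply to $\tfrac{1}{3\pi g}$ and the powers of $2$ and $3$ collect to $2^{10g-11}\,3^{3-4g}$. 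Since the target $\tfrac{2}{3\pi g}\cdot\tfrac{1}{4^g}\bigl(\tfrac83\bigr)^{4g-4}$ equals the same $\tfrac{2^{10g-11}\,3^{3-4g}}{\pi g}$, this proves~\eqref{eq:sum:of:contributions:of:separating:graphs}.

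The main obstacle is the asymptotics of $S$: the bookkeeping and Stirling steps are routine, but one must justify that the distribution of $X$ genuinely fills the full integer lattice (so the $\gcd=1$ local limit theorem applies with no periodicity spoiling the estimate) and that it can be evaluated precisely at the mean. In saddle-point language the corresponding point is to verify that the maximum of $|(1+x)^{3g-4}(1+x^3)^g|$ on the unit circle is attained only at $x=1$ — the competing maxima of $|1+x^3|$ at $x=e^{\pm 2\pi i/3}$ are strongly suppressed by the factor $(1+x)^{3g-4}$ — so that a single Gaussian contribution at $x=1$ dominates.
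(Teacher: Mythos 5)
Your proof is correct, and although the opening reduction coincides with the paper's (the same symmetrization of the half-range sum against the automorphism factor, giving $2\zeta(6g-6)\binom{4g-4}{g}12^{-g}S$ with $S=\sum_{g_1}\binom{g}{g_1}\binom{3g-4}{3g_1-2}$), your treatment of the key lemma — the asymptotics $S\sim 2^{4g-4}/\sqrt{6\pi g}$, which is Lemma~\ref{lm:sum:of:products:of:two:binomials} in the paper — is genuinely different. The paper approximates the two normalized binomial sequences by normal densities $\cN(g/2,\,g/4)$ and $\cN(g/2,\,g/12-1/9)$, invokes the rule for a product of Gaussian densities with common mean to get an effective variance $\sigma^2\sim g/16$, and evaluates $S$ as the central term $\binom{g}{[g/2]}\binom{3g-4}{3[g/2]-2}$ times $\sqrt{2\pi\sigma^2}$. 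You instead package $S$ exactly as the single coefficient $[x^{3g-2}](1+x)^{3g-4}(1+x^3)^g$ and apply a local limit theorem, equivalently a saddle-point evaluation at the exact saddle $x=1$ (your mean $3g-2$ and variance $3g-1$ are both right). What your route buys is rigor out of the box: the only genuine issue — aperiodicity, i.e.\ that the competing maxima of $|1+x^3|$ at $x=e^{\pm 2\pi i/3}$ do not contribute — is precisely the point you single out, and it is dispatched since $|1+x|=1$ there, so the modulus is $2^g$ against $2^{4g-4}$ at $x=1$. The paper's argument, by contrast, is more heuristic as written (replacing discrete sums of products by products of Gaussians requires uniformity over $g_1$ including the tails), but it generalizes immediately to Proposition~\ref{prop:sum:of:products:of:two:binomials} on sums of products and ratios of several binomials, which is what the authors need it for.

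One bookkeeping point in your favor: your Stirling estimate $\binom{4g-4}{g}\sim\frac{81}{256}\sqrt{\tfrac{2}{3\pi g}}\bigl(\tfrac{2^8}{3^3}\bigr)^{g}$ is the correct one. The corresponding display in the paper's proof carries the prefactor $\tfrac{1}{\sqrt{6\pi g}}$ instead of $\sqrt{\tfrac{2}{3\pi g}}=\tfrac{2}{\sqrt{6\pi g}}$; with that prefactor the assembly would produce only half of the right-hand side of~\eqref{eq:sum:of:contributions:of:separating:graphs}, so the paper's display is a typo, and your constant tracking (both sides reducing to $2^{10g-11}\,3^{3-4g}/(\pi g)$) confirms the Proposition as stated.
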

\begin{proof}
Applying formula~\eqref{eq:Vol:Graph:g1:g2};
making the summation index range from $1$ to $g-1$
(instead of up to $\left[\frac{g}{2}\right]$
and taking into consideration that when $g=2g_1$,
the term $\Vol\Separating(g_1,g_1)$ has
$|\operatorname{Aut}(\Separating(g_1,g_1))|=2$
(see equation~\eqref{eq:Aut}) we get
the following expression for the numerator
of the latter expression:
$$
\sum_{g_1=1}^{\left[\frac{g}{2}\right]}
\Vol(\Separating(g_1,g-g_1))
=
2\cdot\zeta(6g-6)
\cdot \binom{4g-4}{g}
\cdot\frac{1}{12^g}
\cdot
\sum_{g_1=1}^{g-1}
\binom{g}{g_1}
\cdot \binom{3g-4}{3g_1-2}
\,.
$$

The zeta value $\zeta(6g-6)$ tends to $1$ exponentially fast
when $g\to+\infty$. Stirling Formula provides the follwing
asymptotic value of the binomial coefficient
$$
\binom{4g-4}{g}
\sim
\frac{1}{\sqrt{6\pi g}}
\cdot 3^g\cdot\left(\frac{4}{3}\right)^{4g-4}\,.
$$
Thus, to complete the proof of formula~\eqref{eq:log:sep:over:non:sep}
and, thus, of Theorem~\ref{th:separating:over:non:separating},
it remains to prove the Lemma below.
\end{proof}

\begin{Lemma}
\label{lm:sum:of:products:of:two:binomials}
The following asymptotic formula is valid:
\begin{equation}
\label{eq:sum:of:products:of:binomials:asymptotics}
S(g)=\sum_{g_1=1}^{g-1}
\binom{g}{g_1}
\binom{3g-4}{3g_1-2}
\sim
\frac{1}{\sqrt{6\pi g}}\cdot 2^{4g-4}
\end{equation}
as $g\to+\infty$.
\end{Lemma}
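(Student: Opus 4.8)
The plan is to convert the sum $S(g)$ into a single contour integral and then apply the saddle-point method. First I would observe that the terms with $g_1=0$ and $g_1=g$ vanish (since $\binom{3g-4}{-2}=0$ and $\binom{3g-4}{3g-2}=0$), so the summation range may be extended to $0\le g_1\le g$ without changing $S(g)$. Writing $\binom{3g-4}{3g_1-2}=[z^{3g_1-2}](1+z)^{3g-4}$ and summing the series $\sum_{g_1}\binom{g}{g_1}z^{-3g_1}=(1+z^{-3})^g$, one obtains
\[
S(g)=[z^{3g-2}]\Big((1+z)^{3g-4}(1+z^3)^g\Big).
\]
Using the factorization $1+z^3=(1+z)(1-z+z^2)$ this simplifies to
\[
S(g)=[z^{3g-2}]\Big((1+z)^{4g-4}(1-z+z^2)^g\Big)
=\frac{1}{2\pi i}\oint_{|z|=1}\frac{(1+z)^{4g-4}(1-z+z^2)^g}{z^{3g-1}}\,dz,
\]
and I would write the integrand as $e^{g\phi(z)}h(z)$ with $\phi(z)=4\log(1+z)+\log(1-z+z^2)-3\log z$ and amplitude $h(z)=z/(1+z)^4$.

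The next step is to locate the saddle point. A direct computation gives $\phi'(1)=\tfrac{4}{2}+\tfrac{1}{1}-\tfrac{3}{1}=0$, so $z=1$ is a critical point, with $\phi(1)=4\log 2$, $\phi''(1)=-1+1+3=3$, and $h(1)=\tfrac{1}{16}$. Parametrising the unit circle by $z=e^{i\theta}$ near $\theta=0$ one finds $\phi(e^{i\theta})=4\log 2-\tfrac32\theta^2+O(\theta^3)$, the expected Gaussian profile, consistent with the fact that both binomial factors peak at $g_1=g/2$.

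The hard part is to show that $z=1$ is the dominant saddle, i.e. that $\operatorname{Re}\phi(e^{i\theta})$ attains its global maximum on $|z|=1$ only at $\theta=0$, so that the arc $|\theta|\ge\delta$ contributes an exponentially negligible amount. Here the factorization pays off: on the unit circle $|1+z|=2|\cos(\theta/2)|$ and $|1-z+z^2|=|1+z^3|/|1+z|=|\cos(3\theta/2)|/|\cos(\theta/2)|$, so that
\[
\operatorname{Re}\phi(e^{i\theta})=4\log 2+3\log\cos(\tfrac{\theta}{2})+\log\big|\cos(\tfrac{3\theta}{2})\big|.
\]
Since $\cos(\theta/2)\ge 0$ for $\theta\in(-\pi,\pi]$, exponentiating reduces the required bound to the elementary inequality $\cos^3(\tfrac{\theta}{2})\,\big|\cos(\tfrac{3\theta}{2})\big|\le\cos^3(\tfrac{\theta}{2})\le 1$, with equality only at $\theta=0$.

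With this maximum property established, a standard Laplace estimate localises the integral to a shrinking neighbourhood of $\theta=0$, replaces $h$ by $h(1)$ and $\phi$ by its quadratic Taylor polynomial, and yields
\[
S(g)\sim\frac{1}{2\pi}\cdot\frac{2^{4g}}{16}\int_{-\infty}^{\infty}e^{-\frac{3g}{2}\theta^2}\,d\theta
=\frac{2^{4g-4}}{2\pi}\sqrt{\frac{2\pi}{3g}}=\frac{2^{4g-4}}{\sqrt{6\pi g}},
\]
which is the claim. An alternative route, avoiding contour integration, would apply the local central limit theorem (Gaussian approximation via Stirling) to each binomial factor directly on the sum: both $\binom{g}{g_1}$ and $\binom{3g-4}{3g_1-2}$ peak at $g_1=g/2$, and multiplying the two Gaussians and interpreting the sum over $g_1$ as a Riemann sum for a Gaussian integral produces the same constant. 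The only delicate point on that route is the uniform control of the Stirling error away from the peak, which is precisely the analogue of the saddle-dominance step carried out above.
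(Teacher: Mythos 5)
Your proof is correct, and it takes a genuinely different route from the paper. The paper argues probabilistically: it treats the normalized arrays $\binom{g}{g_1}/2^g$ and $\binom{3g-4}{3g_1-2}/2^{3g-4}$ as approximations to Gaussian densities with common mean $g/2$ and variances $g/4$ and $g/12-1/9$, uses the fact that a product of two Gaussian densities with the same mean is proportional to a Gaussian with variance $\sigma_1^2\sigma_2^2/(\sigma_1^2+\sigma_2^2)\sim g/16$, approximates $S(g)$ by (peak value)$\times\sqrt{2\pi\sigma^2}$, and evaluates the peak $\binom{g}{[g/2]}\binom{3g-4}{3[g/2]-2}$ by Stirling. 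Your argument instead packages the convolution into the coefficient extraction $S(g)=[z^{3g-2}]\bigl((1+z)^{4g-4}(1-z+z^2)^g\bigr)$ and runs a saddle-point analysis at $z=1$; all your computations check out ($\phi'(1)=0$, $\phi''(1)=3$, $h(1)=1/16$, and $\frac{1}{2\pi}\cdot\frac{2^{4g}}{16}\sqrt{2\pi/(3g)}=2^{4g-4}/\sqrt{6\pi g}$). What your approach buys is rigor at the delicate point: the identity $\operatorname{Re}\phi(e^{i\theta})=4\log 2+3\log\cos(\theta/2)+\log\lvert\cos(3\theta/2)\rvert$ gives a clean, elementary proof that $z=1$ is the unique global maximum on the circle, so the tail truncation is fully justified — precisely the step that the paper's Gaussian-product argument leaves implicit (uniform local-CLT control away from the peak), as you yourself note. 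What the paper's approach buys is generality: it transfers verbatim to the subsequent Proposition on sums of products \emph{and ratios} of binomial coefficients $\binom{a_in+b_i}{a_ik+c_i}/\binom{r_jn+s_j}{r_jk+t_j}$, where ratios destroy the generating-function structure your method relies on; it is also shorter and makes the probabilistic mechanism (two distributions peaking at the same point) transparent.
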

\begin{proof}
The probability density of the normal distribution
$\cN(\mu,\sigma^2)$ is given by the function
$$
f(x\,|\,\mu,\sigma^2)=
\frac{1}{\sqrt{2\pi\sigma^2}}
\cdot e^{-\tfrac{(x-\mu)^2}{2\sigma^2}}\,.
$$

Let $g$ be a large positive integer. After normalization by
$2^g$ the distribution of binomial coefficients
$\binom{g}{k}$, where $k=0,1,\dots,g$, tends to the normal
distribution $\cN\!\!\left(\tfrac{g}{2},\tfrac{g}{4}\right)$ as
$g\to+\infty$.

Let $m$ be any positive integer which we use as a fixed
parameter. The normalized distribution of binomial
coefficients $\binom{m\cdot g}{k}$, where $k=0,1,\dots,
m\cdot g$, tends to the normal distribution
$\cN\!\!\left(\tfrac{m\cdot g}{2},\tfrac{m\cdot
g}{4}\right)$ as $g\to\infty$. Hence, the normalized
distribution of binomial coefficients $\binom{m\cdot
g}{m\cdot k}$, where $k=0,1,\dots, g$, tends to the normal
distribution
$\cN\!\!\left(\tfrac{g}{2},\tfrac{g}{4m}\right)$ as
$g\to+\infty$. In particular, letting $m=3$ we see that the normalized distribution
of binomial coefficients $\binom{3g-4}{3k-2}$, where
$k=1,\dots,g-1$, tends to the normal distribution
$\cN\!\!\left(\tfrac{g}{2},\tfrac{g}{12}-\tfrac{1}{9}\right)$.

The normalized distribution of the product of two normal
distributions $\cN(\mu,\sigma_1^2)$ and
$\cN(\mu,\sigma_1^2)$ sharing the same mean $\mu$ is the
normal distribution
$\cN\!\!\left(\mu,\tfrac{\sigma_1^2\sigma_2^2}{\sigma_1^2+\sigma_2^2}\right)$.
Hence, after normalization by $S(g)$, the product
$\binom{g}{g_1}\binom{3g-4}{3g_1-2}$, of the two binomial
distributions, where $g_1=1,\dots,g-1$, tends to the normal
distribution $\cN(g/2,\sigma^2)$, where
$$
\sigma^2=
\frac
{\tfrac{g}{4}\left(\tfrac{g}{12}-\tfrac{1}{9}\right)}
{\tfrac{g}{4}+\left(\tfrac{g}{12}-\tfrac{1}{9}\right)}
\sim
\frac{g}{16}\,,\quad\text{as } g\to+\infty\,.
$$
Thus, the asymptotic value of the sum $S(g)$ can be
computed as the value of the product distribution
$\binom{g}{g_1}\binom{3g-4}{3g_1-2}$ at $\mu=g/2$
multiplied by $\sqrt{2\pi\sigma^2}$, that is
\begin{equation}
\label{eq:value:at:mean:times:sqrt:2:pi:sigma}
S(g)\sim
\binom{g}{\left[\frac{g}{2}\right]}
\cdot\binom{3g-4}{3\left[\frac{g}{2}\right]-2}
\cdot\sqrt{2\pi \tfrac{g}{16}}\,.
\end{equation}

From Stirling formula we get
$$
\binom{2m}{m}\sim \frac{2^{2m}}{\sqrt{\pi m}}
\quad\text{and}\quad
\binom{2m+1}{m}\sim \frac{2^{2m+1}}{\sqrt{\pi m}}
\,.
$$
Applying these asymptotic formula to each of the binomial
coefficients
in~\eqref{eq:value:at:mean:times:sqrt:2:pi:sigma} for even
and odd values of $g$ respectively we get
$$
S(g)
\sim\frac{2^g}{\sqrt{\pi \tfrac{g}{2}}}
\cdot\frac{2^{3g-4}}{\sqrt{\pi \tfrac{3g-4}{2}}}
\cdot\frac{\sqrt{\pi g}}{\sqrt{8}}
\sim
\frac{1}{\sqrt{6\pi g}}\cdot 2^{4g-4}\,.
$$

\end{proof}

\begin{NNRemark}
%\label{rm:potential:detalisation:hyp:sum}
   %
The hypergeometric sum $S(g)$ on the left hand side
of~\eqref{eq:sum:of:products:of:binomials:asymptotics}
satisfies the following recursive relation obtained
applying Zeilberger's algorithm:
\begin{multline*}
S(g+2)=
2\cdot\frac{(324g^4+432 g^3+123 g^2-49 g-8)}{(6g-1)(3g+4)(3g-1)(g+1)}\cdot S(g+1)
+\\+
36\cdot\frac{(6g+5)(4g-1)(4g-3)}{(6g-1)(3g+4)(3g-1)}\cdot S(g)\,.
\end{multline*}
\end{NNRemark}

The asymptotic
formula~\eqref{eq:sum:of:products:of:binomials:asymptotics}
from Lemma~\ref{lm:sum:of:products:of:two:binomials} admits
the following immediate generalization.

\begin{Proposition}
\label{prop:sum:of:products:of:two:binomials}
Let $a_i\in\N$, $b_i,c_i\in\Z$, where $i=1,\dots,p$;
$r_j\in\N$, $s_j,t_j\in\Z$, where $j=1,\dots,q$,
be any collection of integer parameters satisfying
the condition
$$
(a_1+\dots+a_p)-(r_1+\dots+r_q)>0\,.
$$
Let $n\in\N$. The following asymptotic formula is valid:
\begin{multline}
\label{eq:sum:of:products:and:ratios:of:binomials:asymptotics}
\sum_{k=0}^{n}
\frac
{
\begin{pmatrix}a_1 n+b_1\\a_1 k+c_1\end{pmatrix}
\cdots
\begin{pmatrix}a_p n+b_p\\a_p k+c_p\end{pmatrix}
}{
\begin{pmatrix}r_1 n+s_1\\r_1 k+t_1\end{pmatrix}
\cdots
\begin{pmatrix}r_q n+s_q\\r_q k+t_q\end{pmatrix}
}
\sim\\
\sim
\rule{0pt}{20pt}
\left(\frac{\pi n}{2}\right)^{\tfrac{q-p+1}{2}}
\cdot 2^{(\sum_i a_i -\sum_j r_j)n+(\sum_i b_i - \sum_j s_j)}
\cdot\left(\frac{\prod_j r_j}{\prod_i a_i}\right)^{\tfrac{1}{2}}
\cdot\frac{1}{\left(\sum_i a_i-\sum_j r_j\right)^{\tfrac{1}{2}}}
\end{multline}
as $n\to+\infty$.
\end{Proposition}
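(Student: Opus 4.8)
The plan is to generalize directly the local central limit argument used in the proof of Lemma~\ref{lm:sum:of:products:of:two:binomials}. Set $A:=\sum_{i=1}^p a_i-\sum_{j=1}^q r_j$, which is strictly positive by hypothesis, and write $T(k)$ for the summand on the left-hand side of~\eqref{eq:sum:of:products:and:ratios:of:binomials:asymptotics}. First I would show that the whole sum concentrates in a window $|k-n/2|=O(\sqrt{n})$ around the midpoint. For this, the coarse Stirling estimate $\log_2\binom{\alpha n+\beta}{\alpha k+\gamma}=\alpha n\,\eta(k/n)\cdot(1+o(1))$, where $\eta(x)=-x\log_2 x-(1-x)\log_2(1-x)$ is the binary entropy, shows that to leading exponential order $T(k)\approx 2^{A n\,\eta(k/n)}$. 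Since $A>0$ and $\eta$ is strictly concave with unique maximum $\eta(1/2)=1$, the summand is exponentially peaked at $k=n/2$ and decays exponentially away from it; in particular the contributions from $|k/n-1/2|$ bounded away from $0$, and a fortiori the boundary terms near $k=0$ and $k=n$ where the individual binomials cease to be Gaussian, are negligible. This is exactly the step where the hypothesis $A>0$ is indispensable: it guarantees that the combined exponent is concave rather than convex, so that the sum genuinely localizes.

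Within this window I would then invoke the refined (local) normal approximation for each binomial separately. For a single numerator factor, $\binom{a_i n+b_i}{a_i k+c_i}$ is approximated, uniformly for $k=n/2+O(\sqrt{n})$, by a Gaussian in $k$ centred at $n/2+O(1)$ with variance $\sigma_i^2\sim n/(4a_i)$ and peak value $2^{a_i n+b_i}/\sqrt{\pi a_i n/2}$; the denominator factors contribute the reciprocals, with variances $\tau_j^2\sim n/(4r_j)$. Multiplying the numerator Gaussians and dividing by the denominator ones, precisions add and subtract, so the combined summand is, up to factors tending to $1$, a Gaussian of precision $\Sigma^{-2}=\sum_i\sigma_i^{-2}-\sum_j\tau_j^{-2}=4A/n$, that is, of variance $\Sigma^2\sim n/(4A)$ — finite and positive again precisely because $A>0$. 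The $O(1)$ discrepancies between the centres of the individual factors perturb the leading constant only by factors $\exp(O(1/n))\to 1$, so they may be ignored.

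Following the scheme of Lemma~\ref{lm:sum:of:products:of:two:binomials}, I would finally evaluate $S(n)\sim T(n/2)\cdot\sqrt{2\pi\Sigma^2}$, replacing the Riemann sum over $k$ by the corresponding Gaussian integral. Using the central-binomial asymptotics $\binom{2m}{m}\sim 2^{2m}/\sqrt{\pi m}$ (and its odd analogue, equivalently $\binom{N}{\lfloor N/2\rfloor}\sim 2^{N}\sqrt{2/(\pi N)}$) for each factor, the peak value factorizes as
\[
T(n/2)\sim\prod_{i=1}^p\frac{2^{a_i n+b_i}}{\sqrt{\pi a_i n/2}}\cdot\prod_{j=1}^q\frac{\sqrt{\pi r_j n/2}}{2^{r_j n+s_j}}=2^{An+(\sum_i b_i-\sum_j s_j)}\cdot\Big(\tfrac{\pi n}{2}\Big)^{(q-p)/2}\Big(\tfrac{\prod_j r_j}{\prod_i a_i}\Big)^{1/2},
\]
while $\sqrt{2\pi\Sigma^2}\sim(\pi n/2)^{1/2}A^{-1/2}$. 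Collecting the powers of $(\pi n/2)$ into $(\pi n/2)^{(q-p+1)/2}$ and the factor $A^{-1/2}=(\sum_i a_i-\sum_j r_j)^{-1/2}$ yields exactly~\eqref{eq:sum:of:products:and:ratios:of:binomials:asymptotics}.

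The main obstacle, and the only genuinely technical point, is to make the two approximations uniform and compatible: the coarse entropy bound must control the tails with an error of strictly smaller order, and the fine local central limit approximation must be valid uniformly throughout the $O(\sqrt{n})$ bulk, including a uniform treatment of the reciprocal denominator factors (whose pointwise growth away from the centre is dominated by the numerator decay exactly when $A>0$). Once these uniform estimates are in place, the algebraic bookkeeping of the exponents and Gaussian constants is routine and mirrors the $p=2$, $q=0$ computation already carried out in Lemma~\ref{lm:sum:of:products:of:two:binomials}.
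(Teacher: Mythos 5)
Your proposal is correct and follows essentially the same route as the paper: the paper states Proposition~\ref{prop:sum:of:products:of:two:binomials} as an ``immediate generalization'' of Lemma~\ref{lm:sum:of:products:of:two:binomials}, whose proof is exactly your argument --- local normal approximation of each binomial factor (variance $n/(4a_i)$, respectively $n/(4r_j)$), adding and subtracting precisions to get the combined variance $n/(4A)$ with $A=\sum_i a_i-\sum_j r_j$, and evaluating the sum as peak value times $\sqrt{2\pi\Sigma^2}$ via central-binomial asymptotics. Your write-up in fact supplies the uniformity and tail-localization details (the entropy bound, which is where $A>0$ enters) that the paper leaves implicit.
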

By convention, the sum is evaluated only for those $k$ in
the range $0,\dots,n$ for which all binomial coefficients
in the numerator and in the numerator are well-defined,
the sum $\sum_{j=1}^q r_j$ is equal to zero and
the product $\prod_{j=1}^q r_j$ is equal to one
when $q=0$.

%-----------------------------------------------------------
\subsection{Frequencies of simple closed geodesics}
\label{ss:simple:closed:geodesics}

In this section we use the setting and the notations as in
the Theorem~6.1 of M.~Mirzakhani~\cite{Mirzakhani:grouth:of:simple:geodesics}
reproduced at the end of Section~\ref{ss:Square:tiled:surfaces:and:associated:multicurves}.

Recall that equivalence classes of smooth simple closed
curves on an oriented surface of genus $g$ without boundary
or punctures, where curves are considered up to a
diffeomorphism are classified as follows. The curve can be
separating or non-separating. All non-separating curves
as in Figure~\ref{fig:non:separating}
belong to the same class; we denote the corresponding
frequency by $c(\gamma_{nonsep,g})$.

Separating simple closed curves are classified by the
genera $g_1,g_2$ of components in which the curve separates
the surface; see Figure~\ref{fig:separating}. Here
$g_1+g_2=g$; $g_1,g_2\ge 1$, and pairs $g_1,g_2$ and
$g_2,g_1$ correspond to the same equivalence class.
Denote a simple closed curve of this type
by $\gamma_{g_1,g_2}$.

Recall that reduced simple closed multicurves are in the
natural bijective correspondence with stable graphs. The
stable graph corresponding to $\gamma_{g_1,g_2}$ is,
clearly, $\Separating(g_1,g_2)$.

Recall that the volume contribution $\Vol(\Separating(g_1,g_2))$
comes from all one-cylinder square-tiled surfaces of genus
$g=g_1+g_2$ such that the waist curve of the single cylinder
separates the surface into two surfaces of genera $g_1$ and
$g_2$ respectively. This single horizontal cylinder can be composed of
$a=1,2,\dots$ horizontal bands of squares.
The contribution $\Vol(\Separating(g_1,g_2))$
is the sum of contributions $\Vol(a\cdot\gamma_{g_1,g_2})$
of square-tiled surfaces having fixed value $a\in\N$,
$$
\Vol(\Separating(g_1,g_2))
=\sum_{a=1}^{+\infty} \Vol(a\cdot\gamma_{g_1,g_2})\,.
$$
Recall also, that
$$
\Vol(a\cdot\gamma_{g_1,g_2})
=\frac{1}{a^d} \Vol(\gamma_{g_1,g_2})\,,
$$
where $d=6g-6=\dim\cQ_g$, which implies that
$$
\label{eq:Vol:Gamma:g1:g2:1}
\Vol(\Separating(g_1,g_2))
=\zeta(6g-6)\cdot\Vol(\gamma_{g_1,g_2})\,.
$$
Combining the latter relation with
formula~\eqref{eq:Vol:gamma:c:gamma} from
Theorem~\ref{th:our:density:equals:Mirzakhani:density} we get
$$
\frac{\Vol(\Separating(g_1,g_2))}{\zeta(6g-6)}
=\Vol(\gamma_{g_1,g_2})
=2\cdot(6g-6)\cdot
(4g-4)!\cdot 2^{4g-3}\cdot
c(\gamma_{g_1,g_2})\,.
$$
Applying
expression~\eqref{eq:asymptotic:contribution:segment} for
$\Vol\Graph(g_1,g_2)$ we get the following formula:
\begin{multline}
\label{eq:Mirzakhani:separating}
c(\gamma_{g_1,g-g_1})=
\frac{1}{|\Aut\Separating(g_1,g-g_1)|}
\cdot\\
\cdot\frac{1}{
2^{3g-4}
\cdot 24^g\cdot
g_1!\cdot (g-g_1)!\cdot (3g_1-2)!\cdot(3(g-g_1)-2)!\cdot (6g-6)
}\,.
\end{multline}

In this way we reprove the formula for the frequency of
simple closed separating geodesics first proved by
M.~Mirzakhani (see page~124
in~\cite{Mirzakhani:grouth:of:simple:geodesics}).

\begin{Remark}
The formula on page~124
in~\cite{Mirzakhani:grouth:of:simple:geodesics} contains
two misprints: the power in the first factor in the
denominator is indicated as $2^{3g-2}$ while it should be
read as $2^{3g-4}$ and the fifth factor is indicated as
$(3g-2)!$ while it should be read as $(3i-2)!$.
Indeed, following Mirzakhani's calculation we have to use
formula~(5.5)
from~\cite{Mirzakhani:grouth:of:simple:geodesics} for
$c(\gamma)$. In notations of this formula applied to our
particular $\gamma$ we have $n=0$, $k=1$, $a_1=1$,
$s_1=3g-4$. Mirzakhani assumes for simplicity that $g>2i>2$,
which implies that $M(\gamma)=0$ and
$|\operatorname{Sym}(\gamma)|=1$
(and implies that $|\Aut\Separating(g_1,g-g_1)|=1$ in notations
of Formula~\eqref{eq:Mirzakhani:separating} above).
Thus, applying
formula~(5.5)
from~\cite{Mirzakhani:grouth:of:simple:geodesics} she gets
$$
c(\gamma)=b_\Gamma(2\cdot(3g-4))
=(2\cdot(3g-4))_\Gamma\cdot\frac{(2\cdot(3g-4)+1)!}{(6g-6)!}\,.
$$
To compute $(2\cdot(3g-4))_\Gamma$ we compute following
Mirzakhani the product of the coefficients of the leading
terms of the polynomials $V_{i,1}(x)$ and $V_{g-i,1}(x)$
(see the bottom of page~122
in~\cite{Mirzakhani:grouth:of:simple:geodesics}). In this
way we get
\begin{multline*}
\frac{1}{(3i-2)!\cdot i!\cdot 24^i\cdot 2^{3i-2}}
\cdot
\frac{1}{(3(g-i)-2)!\cdot (g-i)!\cdot 2^{3(g-i)-2}}
=\\=
\frac{1}{2^{3g-4}\cdot 24^g\cdot i!\cdot (g-i)!\cdot (3i-2)!\cdot (3(g-i)-2)!}\,.
\end{multline*}
(compare to~\eqref{eq:Mirzakhani:separating} replacing
$g_1$ with $i$). The remaining last factor $(6g-6)$ in the
denominator of the formula of Mirzakhani comes from
$\frac{(2\cdot(3g-4)+1)!}{(6g-6)!}=\frac{1}{6g-6}$.
\end{Remark}

Denote
by $c(\gamma_{sep,g})$ the sum of the frequencies
$c(\gamma_{g_1,g_2})$ over all equivalence classes
of separating curves, i.e.
over all unordered pairs $(g_1,g_2)$ satisfying
$g_1+g_2=g$; $g_1,g_2\ge 1$.
We are now ready to prove
Theorem~\ref{th:separating:over:non:separating}.

\begin{proof}[Proof of Theorem~\ref{th:separating:over:non:separating}]
By Theorem~\ref{th:our:density:equals:Mirzakhani:density}
we have
$$
\frac{c(\gamma_{sep,g})}{c(\gamma_{nonsep,g})}=
\frac{\sum_{g_1=1}^{\left[\frac{g}{2}\right]}
\Vol(\Separating(g_1,g-g_1))}
{\Vol\Graph_1(g)}\,.
$$

Plugging the asymptotic
values~\eqref{eq:asymptotic:contribution:Gamma:1}
and~\eqref{eq:sum:of:contributions:of:separating:graphs}
respectively in the denominator and numerator of the ratio
on the right hand side we obtain the desired asymptotic
value for the ratio on the left hand side.
\end{proof}

In the table below we present the exact (first line) and
approximate (second line) values of the ratio
$\tfrac{c(\gamma_{sep,g})}{c(\gamma_{nonsep,g})}$ of the
two frequencies in small genera and the value given by the
asymptotic formula~\eqref{eq:log:sep:over:non:sep} (third
line).

$$
\begin{array}{c|c|c|c|c||c}
g&2&3&4&5&11\\
\hline &&&&&\\
[-\halfbls]
\text{\small Exact}
&\frac{1}{48}
&\frac{5}{1776}
&\frac{605}{790992}
&\frac{4697}{27201408}
 % &\frac{323}{7914752}
&\frac{166833285883}{5360555755385245488}
\\ &&&&& \\
[-\halfbls]
\hline &&&&&\\
[-\halfbls]
\text{\small Approximate}
& \scriptstyle 2.08\cdot 10^{-2}
& \scriptstyle 2.82\cdot 10^{-3}
& \scriptstyle 7.65\cdot 10^{-4}
& \scriptstyle 1.73\cdot 10^{-4}
 % & \scriptstyle 4.08\cdot 10^{-5}
& \scriptstyle 3.11\cdot 10^{-8}
\\ &&&&& \\
[-\halfbls]
\hline &&&&&\\
[-\halfbls]
\text{\small Asymp. formula}
& \scriptstyle 2.03\cdot 10^{-2}
& \scriptstyle 4.16\cdot 10^{-3}
& \scriptstyle 9.00\cdot 10^{-4}
& \scriptstyle 2.01\cdot 10^{-4}
 % & \scriptstyle 4.59\cdot 10^{-5}
& \scriptstyle 3.31\cdot 10^{-8}
\end{array}
$$

%######################################################################
%######################################################################
%######################################################################

\appendix

%######################################################################
%######################################################################
%######################################################################
\section{Stable graphs: formal definition}
\label{s:stable:graphs}

We now introduce the definition of a stable graph.
Let $S$ be an hyperbolic surface of genus $g$ and
$n$ (numbered) marked points. Let $\gamma$ be a multicurve on $S$ (that is
a finite union of disjoint geodesics) the surface $S \setminus \gamma$
is a union of surfaces with boundaries (and marked points). The dual
graph $\Graph$ to this decomposition is constructed as follows:
\begin{itemize}
\item each subsurface gives rise to a vertex $v$ of $\Graph$ carrying
some genus $g_v$, some marked points (the \textit{legs}) and a
certain number of boundary components,
\item each component $\gamma'$ of $\gamma$ gives rise to an edge between the
two components on the two sides of $\gamma'$.
\end{itemize}
See the figures in the tables of Appendix~\ref{a:2:0} for the
correspondence between multicurves and stable graphs in genus 2.

We now introduce the more formal following definition.
\begin{Definition}
\label{def:stable:graph}
A \textit{stable graph} for $\cM_{g,n}$ is a 6-tuple
$\Graph = (V, H, \alpha, \iota, \mathbf{g}, L)$ where
\begin{itemize}
\item $V$ is a finite set of \textit{vertices}
\item $H$ is a finite set of \textit{half-edges}
\item $\iota: H \to H$ is an involution. The fixed
points of $\iota$ are called the \textit{legs}
and the 2-cycles the \textit{edges} of $\Graph$.
\item $\alpha: H \to V$ is a map that attaches
an half-edge to a vertex. The number of half-edges
at a given vertex $v$ is denoted by $n_v := |\alpha^{-1}(v)|$.
\item The graph is connected: for each pair of vertices
$(u,v)$ there exists a sequence of
half edges $(h_1, h'_1, h_2, h'_2, \ldots, h_k, h'_k)$
so that $\iota(h_i) = h'_i$, $u = \alpha(h_1)$,
$v = \alpha(h'_k)$ and $\alpha(h'_i) = \alpha(h_{i+1})$..
\item $\mathbf{g} = \{g_v\}_{v \in V}$ is a set
of non-negative integers, one at each vertex,
called the \textit{genus decoration}.
\item $L$ is a bijection from the set of legs to
$\{1,\ldots,n\}$.
\item The \textit{genus} $g(\Graph)$ of $\Graph$ should be equal to $g$ where
\[
g(\Graph) = h^1(\Graph) + \sum_{v \in V} g_v
\]
where $h^1(\Graph)$ is the first betti number of the graph.
\item the following \textit{stability condition} holds at each vertex $v$ of $\Graph$
\[
2 g_v - 2 + n_v > 0.
\]
\end{itemize}
\end{Definition}
Given a stable graph $\Graph$, we associate an underlying graph whose vertex
set is $V$ and each 2 cycle $(h,h')$ of $\iota$ gives an edge
attached to $\alpha(h)$ and $\alpha(h')$. We denote $E = E(\Graph)$ these
edges. Such graph might have multiple edges and loops. The additional
information of the stable graph is the genus decoration $\mathbf{g}$
and the $n$ legs.

Two stable graphs $\Graph = (V,H,\mathbf{v},\iota,L,\mathbf{g})$ and
$\Graph'=(V',H',\mathbf{v'},\iota,L',\mathbf{g'})$ are isomorphic if there exists
two bijections $\phi: V \to V'$ and $\psi: H \to H'$ that preserve
edges, legs and markings that is
\[
\iota \circ \psi = \iota' \circ \psi
\quad
L'(\psi(h)) = L(h)
\quad
g'_{\phi(v)} = g_v.
\]
The definition of stable graphs and automorphisms mean that
neither vertices nor edges of the decorated graph
$\Graph$ are labeled, but each leg is numbered by $L$.

We denote by $\cG_{g,n}$ the set of isomorphism classes of stable graphs
with given genus $g$ and number of legs $n$.

As we already mentioned, each stable graph
in $\cG_{g,n}$ corresponds to a bounary cycle of the
Deligne-Mumford compactification $\overline{\cM_{g,n}}$.
More precisely, each vertex $v$ of the graph corresponds to
the component of a nodal curve (of genus $g_v$ and contain
the marked points corresponding to the legs attached at
this vertex) and each edge of $E(\Graph)$ represents a node
together with its number in $\{1,2,\ldots,n\}$.
(See survey~\cite{Vakil} for excellent introduction in this
subject and for beautiful illustrations.)
Hence, the
unique stable graph with no edge and $n$ legs in
$\cG_{g,n}$ corresponds to the component $\cM_{g,n}$ (the
smooth curves).

%######################################################################
%######################################################################
%######################################################################
\section{Examples of explicit calculations}
\label{s:explicit:calculations}

%----------------------------------------------------------------------
\subsection{Holomorphic quadratic differentials in genus two.}
\label{a:2:0}
We start by evaluation of the Siegel--Veech constant
$\carea(Q_2)$. Note that certain graphs do not
contribute at all to $\carea$.

$$
\hspace*{20pt}
\begin{array}{ll|l}

\includegraphics{genus_two_21.eps}
\begin{picture}(0,0)(0,0)
\put(-24,0){$b_1$}
\put(66,0){$b_2$}
\end{picture}
\hspace*{67pt}
%\vspace*{5pt}

&
\frac{128}{5}\cdot
1 \cdot
\frac{1}{8}\cdot
b_1 b_2 \cdot
N_{0,4}(b_1,b_1,b_2,b_2)=

&

(1+1)\cdot

\\

%------------------------------------ \cQ(1^4) 2.1 line 2 below

\includegraphics{genus_two_graph_21.eps}
\begin{picture}(0,0)(0,-10)
\put(-9,-19){$b_1$}
\put(52,-19){$b_2$}
\put(23,-30){$0$}
\end{picture}
  %
%\vspace*{5pt}

&
\rule{0pt}{12pt}
=\frac{16}{5}\cdot b_1 b_2\cdot\left(\frac{1}{4}(2b_1^2+2b_2^2)\right)

&

\cdot\!\frac{8}{5}\!\cdot\! 3!\zeta(4)\!\cdot\!1!\zeta(2)

\\
%------------------------------------ \cQ(1^4) 2.1 line 3 below

&

\hspace*{45pt}
\xmapsto{\partial_{\Graph}}\frac{8}{5}(1\cdot b_1 b_2^3+1\cdot b_1^3 b_2)\xmapsto{\cZ}

&

=1\cdot\frac{8}{225}\cdot \pi^6

\\
\vspace*{-18pt}\\
&&\\
\hline
&&\\

%------------------------------------ \cQ(1^4) 2.2 line 1 below

\includegraphics{genus_two_22.eps}
\begin{picture}(0,0)(0,0)
\put(-24,0){$b_1$}
\put(23,-14){$b_2$}
\end{picture}
\rule{0pt}{12pt}

&

\frac{128}{5}\!\cdot\!
\frac{1}{2}\!\cdot\!
\frac{1}{2}\!\cdot\!
b_1 b_2\!\cdot\! N_{0,3}(b_1,b_1,b_2)\!\cdot\! N_{1,1}(b_2)

&
1\!\cdot\!
\frac{2}{15}\!\cdot\! 1!\zeta(2)\!\cdot\!3!\zeta(4)

\\

%------------------------------------ \cQ(1^4) 2.2 line 2 below

\includegraphics{genus_two_graph_22.eps}
\begin{picture}(0,0)(0,0)
\put(-19,-10){$b_1$}
\put(23,-16){$b_2$}
\put(6,-10){$0$}
\put(41,-10){$1$}
\end{picture}

&
\rule{0pt}{15pt}
=\frac{32}{5}\!\cdot\! b_1 b_2 \!\cdot\! 1
\!\cdot\! \left(\frac{1}{48} b_2^2\right)
\xmapsto{\partial_{\Graph}}
\!\frac{2}{15}\!\cdot\! 1\!\cdot\!  b_1 b_2^3\xmapsto{\cZ}
&
=1\cdot\frac{1}{675}\cdot\pi^6

\\&&\\
\vspace*{-18pt}\\
&&\\
\hline
&&\\

%------------------------------------ \cQ(1^4) 3.1 line 1 below

\includegraphics{genus_two_31.eps}
\begin{picture}(0,0)(0,0)
\put(-24,0){$b_1$}
\put(23,-14){$b_2$}
\put(66,0){$b_3$}
\end{picture}

&
\frac{128}{5}\!\cdot\!
\frac{1}{2}\!\cdot\!
\frac{1}{8}\!\cdot\!
b_1 b_2 b_3 \!\cdot\!
N_{0,3}(b_1,b_1,b_2)\cdot

&

(1+\frac{1}{2}+1)\cdot

\\

%------------------------------------ \cQ(1^4) 3.1 line 2 below

\includegraphics{genus_two_graph_31.eps}
\begin{picture}(0,0)(0,5)
\put(-18,-10){$b_1$}
\put(23,-16){$b_2$}
\put(63,-10){$b_3$}
\put(5.5,-10){$0$}
\put(42,-10){$0$}
\end{picture}

\rule{0pt}{12pt}

&

\cdot N_{0,3}(b_2,b_3,b_3)
=\frac{8}{5}\!\cdot\!
{b_1} {b_2} {b_3}
\!\cdot\! (1) \!\cdot\! (1)

&
\cdot\frac{8}{5}\cdot \left(1!\,\zeta(2)\right)^3
\\

%------------------------------------ \cQ(1^4) 3.1 line 3 below

&
\rule{0pt}{15pt}
\xmapsto{\partial_{\Graph}}
\!\frac{8}{5}\!\left(1\!\cdot\! b_1 b_2 b_3
\!+\!\frac{1}{2} b_1 b_2 b_3
\!+\! 1\!\cdot\! b_1 b_2 b_3\right)\!\!\xmapsto{\cZ}\!
\hspace*{-3pt}
&
=\frac{5}{2}\cdot\frac{1}{135}\cdot\pi^6

\\
\vspace*{-18pt}\\
&&\\
\hline
&&\\

%------------------------------------ \cQ(1^4) 3.2 line 1 below

\includegraphics{genus_two_32.eps}
\begin{picture}(0,0)(0,0)
\put(-24,0){$b_1$}
\put(20,12){$b_2$}
\put(66,0){$b_3$}
\end{picture}

&

\frac{128}{5}\!\cdot\!
\frac{1}{2}\!\cdot\!
\frac{1}{12}\!\cdot\!
b_1 b_2 b_3 \!\cdot\!
N_{0,3}(b_1,b_1,b_2)\cdot

&

(1+1+1)\cdot

\\
%------------------------------------ \cQ(1^4) 3.2 line 2 below

\includegraphics{genus_two_graph_32.eps}
\begin{picture}(0,0)(0,0)
\put(4,-16){$b_1$}
\put(27.5,-16){$b_2$}
\put(43,-16){$b_3$}
\put(31,2){$0$}
\put(31,-35){$0$}
\end{picture}

&

\rule{0pt}{15pt}

\cdot N_{0,3}(b_2,b_3,b_3)
=\frac{16}{15}\!\cdot\! {b_1 b_2 b_3}\!\cdot\! (1) \!\cdot\! (1)

&
\frac{16}{15}\!\cdot\! \left(1!\,\zeta(2)\right)^3
\\

%------------------------------------ \cQ(1^4) 3.2 line 3 below

&
\rule{0pt}{15pt}

\xmapsto{\partial_{\Graph}}
\!\frac{8}{5}\!\left(1\!\cdot\! b_1 b_2 b_3
\!+\!\frac{1}{2} b_1 b_2 b_3
\!+\! 1\!\cdot\! b_1 b_2 b_3\right)\!\!\xmapsto{\cZ}\!
\hspace*{-3pt}
&
=3\cdot\frac{2}{405}\cdot\pi^6
\end{array}
$$
\smallskip

Taking the sum of the four contributions we obtain:
$$
\left(
\left(1\cdot\frac{8}{225}+1\cdot\frac{1}{675}\right)
+
\left(\frac{5}{2}\cdot\frac{1}{135}+3\cdot\frac{2}{405}\right)
\right)\cdot \pi^6
=
\left(\frac{1}{27}+\frac{1}{30}\right)\cdot \pi^6
=
\frac{19}{270}\cdot \pi^6\,.
$$
Dividing by $\Vol\cQ(1^4)=\cfrac{1}{15}\cdot\pi^6$ we get the answer which
matches the value found in~\cite{Goujard:carea}:
$
\cfrac{\pi^2}{3}\cdot c_{area}(\cQ(1^4))=\left(\cfrac{19}{270}\pi^6\right):
\left(\cfrac{1}{15}\pi^6\right)=\cfrac{19}{18}\,.
$

The computation of the
Masur--Veech volume $\Vol\cQ_2$ was presented in
Table~\ref{tab:2:0}
in
Section~\ref{ss:intro:Masur:Veech:volumes}.
The first two graphs in Table~\ref{tab:2:0} represent the
contribution to the volume $\Vol\cQ(1^4)$ of square-tiled
surfaces having single maximal cylinder. The resulting
contribution
$
\frac{7}{405} \pi^6 = \frac{49}{3} \zeta(6)
$
was found in Appendix~C
in~\cite{DGZZ:initial:arXiv:one:cylinder:with:numerics} by
completely different technique.

The third and the fourth graph together represent the contribution
$
\frac{1}{27}\pi^6
$
of square-tiled surfaces having two maximal cylinders. The last two
graphs --- the contribution
$
\frac{1}{81}\pi^6
$
of square-tiled surfaces having three maximal cylinders.

Normalizing the contribution of $1,2,3$-cylinder square-tiled
surfaces by the entire volume $\Vol\cQ(1^4)$ of the stratum we get
the quantity $p_k(\cQ(1^4))$ which can be interpreted as the
``probability'' for a ``random'' square-tiled surface in the stratum
$\cQ(1^4)$ to have exactly $k$ horizontal cylinders. These same quantities
$p_k$ provide ``probabilities'' of getting a $k$-band generalized
interval exchange transformation (linear involution) taking a
``random'' generalized interval exchange transformation in the Rauzy
class representing the stratum $\cQ(1^4)$ (see section~3.2
in~\cite{DGZZ:initial:arXiv:one:cylinder:with:numerics}
for details). The latter quantities are particularly
simple to evaluate in numerical experiments. The resulting
proportions
$$
\big(p_1(\cQ(1^4)),p_2(\cQ(1^4)),p_3(\cQ(1^4))\big)=
\left(\frac{7}{405},\frac{1}{27},\frac{1}{81}\right):\frac{1}{15}=
\left(\frac{7}{27},\frac{15}{27},\frac{5}{27}\right)
$$
match the numerical experiments obtained earlier
in Appendix~C
of~\cite{DGZZ:initial:arXiv:one:cylinder:with:numerics}.

%----------------------------------------------------------------------
\subsection{Holomorphic quadratic differentials in genus three.}
\label{a:3:0}

In genus three there are already $41$  different decorated ribbon
graphs. We do not provide the graph-by-graph calculation as we
did in genus two but only the contributions of $k$-cylinder
square-tiled surfaces for all possible values $k=1,\dots,6$ of
cylinders.

$$
\begin{array}{|c|c|c|c|}
\hline &&&\\
[-\halfbls]
\text{Number of}       &\text{Number of}          &\text{Contribution}               &\text{Relative}          \\
\text{cylinders } k    &\text{graphs } \Graph    &\text{to the volume}              &\text{contribution } p_k \\
&&& \\
[-\halfbls]
\hline
&&&\\
[-\halfbls]
1                      &           2              &\frac{94667}{126299250}\cdot\pi^{12}  &\frac{757336}{3493125}   \\
&&& \\
[-\halfbls]
\hline
&&&\\
[-\halfbls]
2                      &           5              &\frac{150749}{108256500}\cdot\pi^{12} &\frac{4220972}{10479375} \\
&&& \\
[-\halfbls]
\hline
&&&\\
[-\halfbls]
3                      &           9              &\frac{84481}{86605200}\cdot\pi^{12}   &\frac{591367}{2095875}   \\
&&& \\
[-\halfbls]
\hline
&&&\\
[-\halfbls]
4                      &           12             &\frac{5989}{21651300}\cdot\pi^{12}    &\frac{167692}{2095875}   \\
&&& \\
[-\halfbls]
\hline
&&&\\
[-\halfbls]
5                      &           8              &\frac{1}{17820}\cdot\pi^{12}          &\frac{28}{1725}          \\
&&& \\
[-\halfbls]
\hline
&&&\\
[-\halfbls]
6                      &           5              &\frac{1}{144342}\cdot\pi^{12}         &\frac{56}{27945}         \\
&&& \\
\hline
\end{array}
$$

The resulting contribution of $1$-cylinder surfaces was confirmed by
the alternative combinatorial study of the Rauzy class of the stratum
$\cQ(1^8)$ (see section~3.2
in~\cite{DGZZ:initial:arXiv:one:cylinder:with:numerics}.
The approximate values
$p_k(\cQ(1^8))$ were confirmed by numerical experiments with
statistics of $k$-band generalized interval exchange transformations.

Taking the sum of all contributions we get the volume of the moduli
space $\cQ_3$ of holomorphic quadratic differentials in genus $3$:
$
\Vol\cQ_3=\cfrac{115}{33264}\cdot \pi^{12}\,.
$

%----------------------------------------------------------------------
\subsection{Meromorphic quadratic differentials in genus one.}
\label{a:1:2}

In this section we apply formula~\eqref{eq:square:tiled:volume} to compute
the Masur--Veech volume $\Vol\cQ(1^2,-1^2)$ of the moduli space
$\cQ_{1,2}$ of meromorphic quadratic differentials in genus $g=1$
with two simple poles $p=2$.

We use the same convention on the order of numerical factors
in every first line of the middle column as in section~\ref{a:2:0}.
Namely, for $(g,p)=(1,2)$ we have
$
\zeroes!\cdot 2\dprinc
\cdot
\frac{2^{\dprinc}}{\dprinc!}
=\frac{32}{5}\,,
$
which is the first factor. The second factor is $1/2^{|V(\Graph)|-1}$.

The third factor is $|\Aut(\Graph)|^{-1}$. We remind
that the vertices and edges of $\Graph$ are not labeled while
the two legs are labeled. An automorphism of $\Graph$ preserves
the decoration of vertices and the labeling of the legs. For example,
the graph $\Graph$ in the second line does not have any nontrivial
automorphisms.

\begin{table}[hbt]
$$
\hspace*{20pt}
\begin{array}{llr}

%------------------------------------ Q(1^2,-1^2) 1.1 below

\includegraphics{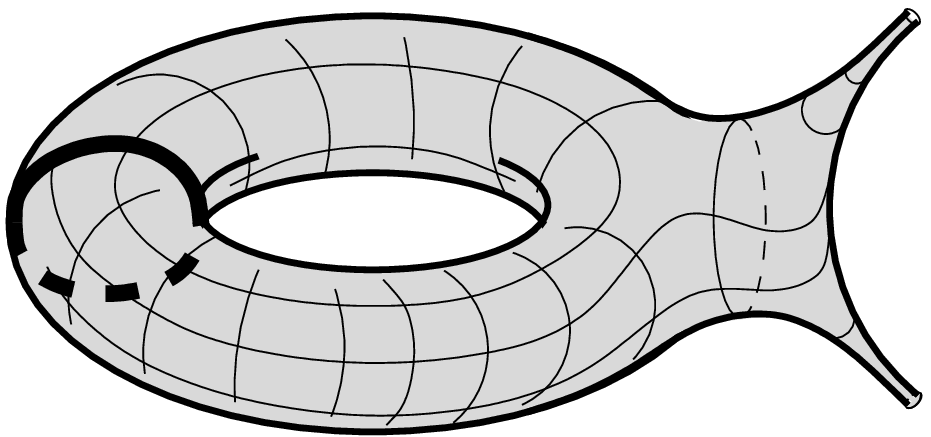}
\begin{picture}(0,0)(0,0)
\put(-9,0){$b_1$}
\end{picture}
\hspace*{45pt} % 50
\vspace*{10pt}

&

\frac{32}{3}\cdot
1\cdot
\frac{1}{2}\cdot
b_1 \cdot
N_{0,4}(b_1,b_1,0,0) =

&

\\ % second line of Q(1^2,-1^2) 1.1 below
\includegraphics{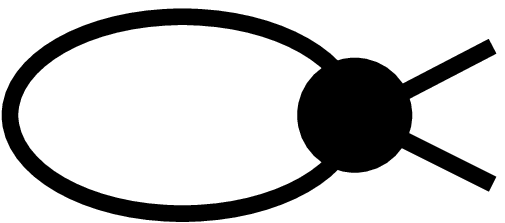}
\begin{picture}(0,0)(0,0)
\put(12,0){$b_1$}
\put(40,-9){$0$}
\end{picture}
\vspace*{10pt}

&

\hspace*{17pt}
=
\frac{16}{3}\cdot
b_1 \cdot \left(\frac{1}{4}(2b_1^2)\right)
\hspace*{5pt} =\hspace*{5pt}
\frac{8}{3}\cdot b_1^3\
\hspace*{1pt}
\xmapsto{\ \cZ\ }

&

\hspace*{-20pt}
\frac{8}{3}\cdot 3!\cdot\zeta(4)
=
\frac{8}{45}\pi^4

\\\hline&&\\

%------------------------------------ Q(1^2,-1^2) 1.2 below

\includegraphics{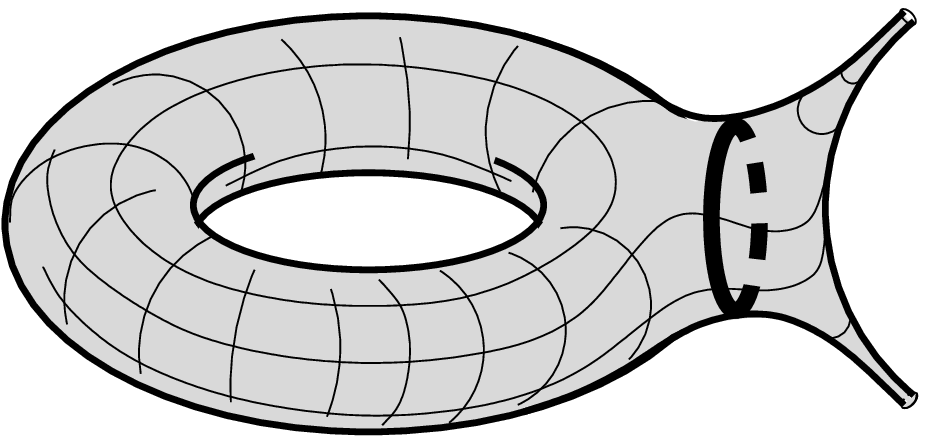}
\begin{picture}(0,0)(0,0)
\put(38,12){$b_1$}
\end{picture}
\vspace*{10pt}

&

\frac{32}{3}\cdot
\frac{1}{2}\cdot
1\cdot
b_1 \cdot
N_{1,1}(b_1)\cdot N_{0,3}(0,0,b_1) =

&

\\ % second line of Q(1^2,-1^2) 1.2 below

\includegraphics{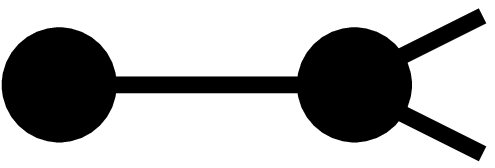}
\begin{picture}(0,0)(0,0)
\put(29,5){$b_1$}
\put(22,-9){$1$}
\put(39.5,-9){$0$}
\end{picture}
\vspace*{10pt}

&

\hspace*{17pt}
= \frac{16}{3}\cdot b_1 \cdot \left(\frac{1}{48} b_1^2\right) \cdot (1)
=\frac{1}{9}\cdot b_1^3\ \xmapsto{\ \cZ\ }

&

\hspace*{-20pt}
\frac{1}{9}\cdot 3!\cdot \zeta(4) = \frac{1}{135}\cdot\pi^4

\\\hline&&\\

%------------------------------------ Q(1^2,-1^2) 2.1 below

\includegraphics{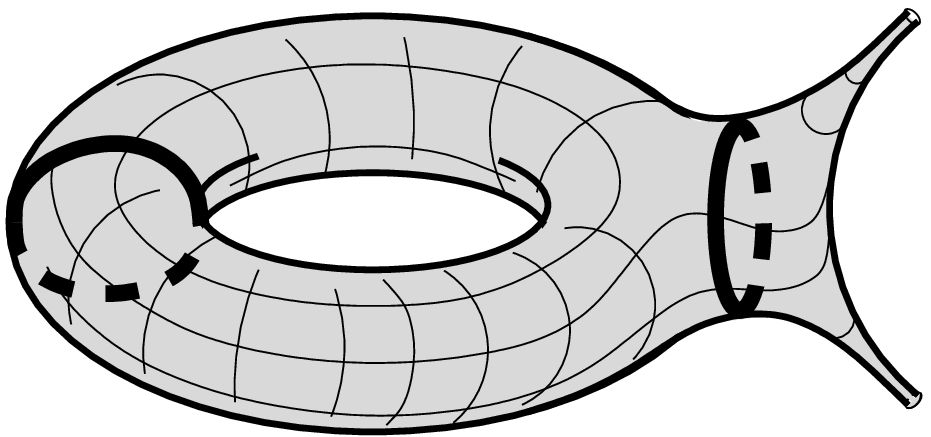}
\begin{picture}(0,0)(0,0)
\put(-9,0){$b_1$}
\put(38,12){$b_2$}
\end{picture}
\vspace*{10pt}

&

\frac{32}{3}\cdot
\frac{1}{2}\cdot
\frac{1}{2}\cdot
b_1 b_2\cdot
N_{0,3}(b_1,b_1,b_2)\cdot N_{0,3}(b_1,0,0)

&

\\ % second line of Q(1^2,-1^2) 2.1 below

\includegraphics{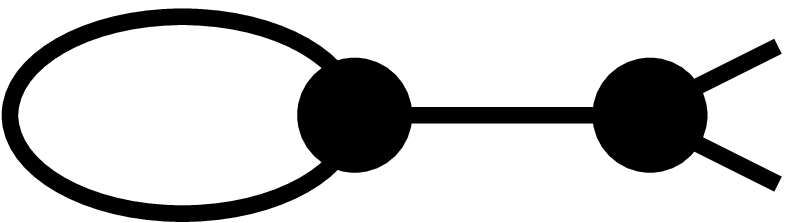}
\begin{picture}(0,0)(0,0)
\put(-5,0){$b_1$}
\put(29,5){$b_2$}
\put(22,-9){$0$}
\put(39.5,-9){$0$}
\end{picture}
\vspace*{10pt}

&

\hspace*{17pt}
= \frac{8}{3}\cdot
b_1 b_2\cdot (1)\cdot (1)=
\frac{8}{3}\cdot b_1 b_2
\
\hspace*{1pt}
\xmapsto{\ \cZ\ }

&

\hspace*{-20pt}
\frac{8}{3}\cdot \big(\zeta(2)\big)^2
=
\frac{2}{27}\pi^4

\\\hline&&\\

%------------------------------------ Q(1^2,-1^2) 2.2 below

\includegraphics{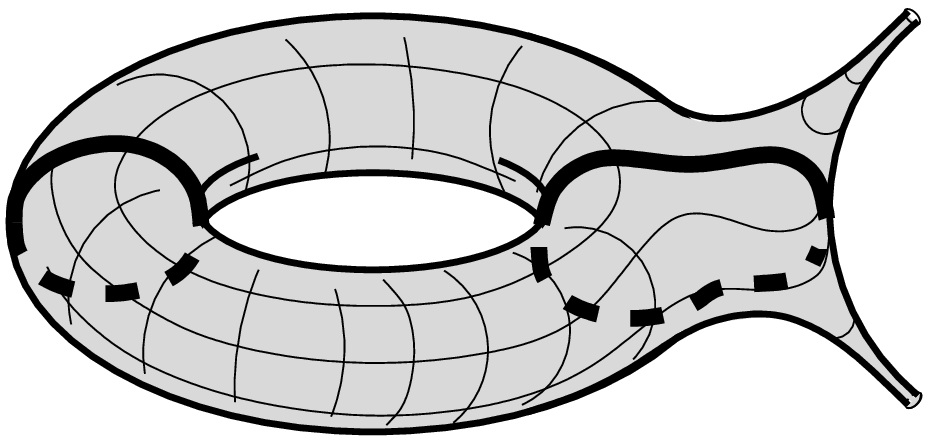}
\begin{picture}(0,0)(0,0)
\put(-9,0){$b_1$}
\put(38,12){$b_2$}
\end{picture}
\vspace*{15pt}

&

\frac{32}{3}\cdot
\frac{1}{2}\cdot
\frac{1}{2}\cdot
b_1 b_2\cdot
N_{0,3}(0,b_1,b_2)\cdot N_{0,3}(b_1,b_2,0)

&

\\ % second line of Q(1^2,-1^2) 2.2 below

\includegraphics{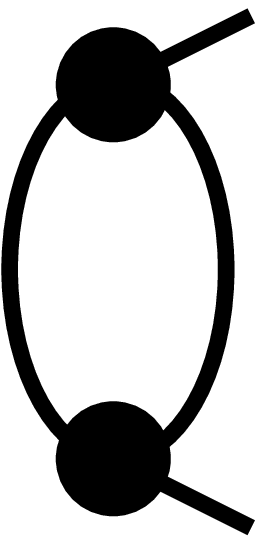}
\begin{picture}(0,0)(0,0)
\put(15,0){$b_1$}
\put(41,0){$b_2$}
\put(29.5,1.5){$0$}
\put(29.5,-20){$0$}
\end{picture}
\vspace*{20pt}

&

\hspace*{17pt}
= \frac{8}{3}\cdot
b_1 b_2\cdot (1)\cdot (1)=
\frac{8}{3}\cdot b_1 b_2

\
\hspace*{1pt}
\xmapsto{\ \cZ\ }

&

\hspace*{-20pt}
\frac{8}{3}\cdot \big(\zeta(2)\big)^2
=
\frac{2}{27}\pi^4
\end{array}
$$
\caption{
\label{tab:1:2}
Computation of $\Vol\cQ_{1,2}$.
Left column represents
stable graphs $\Gamma\in\cG_{1,2}$
and associated multicurves;
middle column gives
associated polynomials
$P_\Gamma$; right column
provides $\Vol(\Gamma)$.
}
\end{table}

The resulting value
\begin{equation*}
\Vol\cQ_{1,2}=
%\Vol\cQ(1^2,-1^2)=
\left(\left(\frac{8}{45}+\frac{1}{135}\right)+\left(\frac{2}{27}+\frac{2}{27}\right)\right)\cdot\pi^4
=
\left(\frac{5}{27}+\frac{4}{27}\right)\cdot\pi^4
=\cfrac{\pi^4}{3}
\end{equation*}
matches the one found in~\cite{Goujard:volumes}. The
contribution of $1$-cylinder square-tiled surfaces and the
proportion $5:4$ between $1$-cylinder and $2$-cylinder
contributions match the corresponding quantities found in
Appendix~C
in~\cite{DGZZ:initial:arXiv:one:cylinder:with:numerics}.

%\newpage

Now we evaluate the Siegel--Veech constant $\carea(Q(1^4))$.

$$
\hspace*{5pt}
\begin{array}{llr}

%------------------------------------ Q(1^2,-1^2) 2.1 below

\includegraphics{genus_one_21.eps}
\begin{picture}(0,0)(0,0)
\put(-9,0){$b_1$}
\put(38,12){$b_2$}
\end{picture}
\vspace*{10pt}
\hspace*{43pt}
&

\frac{32}{3}\cdot
\frac{1}{2}\cdot
\frac{1}{2}\cdot
b_1 b_2\cdot
N_{0,3}(b_1,b_1,b_2)\cdot N_{0,3}(b_1,0,0)

&

\\ % second line of Q(1^2,-1^2) 2.1 below

\includegraphics{genus_one_graph_21.eps}
\begin{picture}(0,0)(0,0)
\put(-5,0){$b_1$}
\put(29,5){$b_2$}
\put(22,-9){$0$}
\put(39.5,-9){$0$}
\end{picture}
\vspace*{10pt}

&

%\hspace*{17pt}
= \frac{8}{3}\cdot
b_1 b_2 \cdot 1\cdot 1=
\frac{8}{3}\cdot  b_1 b_2
\
\xmapsto{\cZ\circ\partial_{\Graph}}

&

\hspace*{-45pt}
\left(1 + \frac{1}{2}\right)\cdot
\frac{8}{3}\cdot \big(\zeta(2)\big)^2
=
\frac{3}{2}\cdot
\frac{2}{27}\pi^4

\\\hline&&\\

%------------------------------------ Q(1^2,-1^2) 2.2 below

\includegraphics{genus_one_22.eps}
\begin{picture}(0,0)(0,0)
\put(-9,0){$b_1$}
\put(38,12){$b_2$}
\end{picture}
\vspace*{15pt}

&

\frac{32}{3}\cdot
\frac{1}{2}\cdot
\frac{1}{2}\cdot
b_1 b_2\cdot
N_{0,3}(0,b_1,b_2)\cdot N_{0,3}(b_1,b_2,0)

&

\\ % second line of Q(1^2,-1^2) 2.2 below

\includegraphics{genus_one_graph_22.eps}
\begin{picture}(0,0)(0,0)
\put(15,0){$b_1$}
\put(41,0){$b_2$}
\put(29.5,1.5){$0$}
\put(29.5,-20){$0$}
\end{picture}
\vspace*{20pt}

&

%\hspace*{10pt}
= \frac{8}{3}\cdot
b_1 b_2\cdot 1\cdot 1=
\frac{8}{3}\cdot b_1 b_2

\
\hspace*{1pt}
\xmapsto{\cZ\circ\partial_{\Graph}}

&

\hspace*{-45pt}
(1+1)\cdot
\frac{8}{3}\cdot \big(\zeta(2)\big)^2
=
2\cdot
\frac{2}{27}\pi^4
\end{array}
$$

Taking the sum of the two contributions we obtain the answer:

$$
\left(\frac{3}{2}\cdot\frac{2}{27}+2\cdot\frac{2}{27}\right)\cdot\pi^4
=
\frac{7}{27}\cdot \pi^4
\,.
$$
Dividing by $\Vol\cQ(1^2,-1^2)=\frac{\pi^4}{3}$ we get the answer which
matches the value found in~\cite{Goujard:carea}.
$$
\frac{\pi^2}{3}\cdot c_{area}(\cQ(1^2,-1^2))=
\left(\frac{7}{27}\cdot\pi^4\right):
\left(\frac{1}{3}\cdot\pi^4\right)=\frac{7}{9}\,.
$$
   %

%\newpage

%#####################################
%#####################################
\section{Tables of volumes and of Siegel--Veech constants}
\label{a:tables}

In this appendix we present numerical data for
$\cQ_{g,n}$ corresponding to small values of $g$ and $n$.
We apply formulae~\eqref{eq:square:tiled:volume}
and~\eqref{eq:carea} respectively to express in
Tables~\ref{table:vol:int:nb} and~\ref{table:SV:int:nb} the
Masur--Veech volumes $\Vol(\cQ_{g,n})$ and the Siegel--Veech
constants $\carea(\cQ_{g,n})$ as polynomials in the
intersection numbers.

Recall that applying formula~\eqref{eq:carea:Elise},
one can express the
Siegel--Veech constant $\carea$ in terms of the volumes of the
principal boundary strata. Table~\ref{table:SV:int:nb} provides the
corresponding explicit expressions for low-dimensional strata.

Finally, Table~\ref{table:vol:SV} gathers the numerical values of the
volumes, of the Siegel--Veech constants and of the sums of the
Lyapunov exponents of the low-dimensional strata
$\cQ(1^\zeroes,-1^p)$. (Recall that the stratum
$\cQ(1^\zeroes,-1^p)$ is open and dense in $\cQ_{g,n}$,
where $p=n$ and $l=4g-4+n$.)
By formula~2.3
in~\cite{Eskin:Kontsevich:Zorich}, these quantities are related as
follows:
$$
\Lambda^+=\frac{1}{24}\left(\frac{5}{3}\ell-3 p\right) +\frac{\pi^2}{3}\carea(\cQ(1^\zeroes,-1^p))\,.
$$
The sum $\Lambda^+=\lambda_1^+ +\dots+ \lambda_g^+$ of the top $g$
Lyapunov exponents of the stratum $\cQ(1^\zeroes,-1^p)$
admits experimental evaluation. The values
of $\Lambda^+$ based on the values of $\carea(\cQ(1^\zeroes,-1^p))$
computed in this paper match the approximate values obtained in these
numerical experiments.

One has $\Lambda^+=0$ in genus zero, so in genus zero
the Siegel--Veech constant
admits a simple closed formula. By formula~1.1 in~\cite{AEZ:genus:0} one has
$$
\Vol\cQ(1^{p-4},-1^p)=2\pi^2\left(\frac{\pi^2}{2}\right)^{p-4}\,.
$$
We get the same expressions in the entries of Table~\ref{table:vol:SV}
corresponding to genus $0$ computed by formulae~\eqref{eq:square:tiled:volume}
and~\eqref{eq:carea} of the current paper.

%----------------------------------------------------------------------

\begin{table}[ht]
\renewcommand{\arraystretch}{1.5}
\hspace*{-6pt}
$\begin{array}{|c|c|c|}
\hline
g&p & \Vol\cQ(1^{4g-4+p} ,-1^p) \\
\hline
0&4 & 12 \zeta(2)\langle \tau_0^3\rangle^2 \\
 \cline{3-3 }
 && 2\pi^2 \langle \tau_0^3\rangle^2\\
 \hline
0 & 5 & 40\zeta(4)\langle \tau_0^3\tau_1\rangle \langle \tau_0^3\rangle + 20\zeta(2)^2 \langle \tau_0^3\rangle^3\\
 \cline{3-3 }
&& \frac{\pi^4}{9}(4\langle \tau_0^3\tau_1\rangle \langle \tau_0^3\rangle+ 5 \langle \tau_0^3\rangle^3)\\
\hline
0 & 6 &  20\zeta(6)(4\langle \tau_0^3\tau_1\rangle^2+3\langle \tau_0^4\tau_2\rangle \langle \tau_0^3\rangle)+120\zeta(4)\zeta(2)\langle \tau_0^3\tau_1\rangle \langle \tau_0^3\rangle^2 +28\zeta(2)^3 \langle \tau_0^3\rangle^4\\
   \cline{3-3}
 & &\pi^6(\frac{16}{189}\langle \tau_0^3\tau_1\rangle^2+\frac{4}{63}\langle \tau_0^4\tau_2\rangle \langle \tau_0^3\rangle+\frac{2}{9}\langle \tau_0^3\tau_1\rangle \langle \tau_0^3\rangle^2 +\frac{7}{54}\langle \tau_0^3\rangle^4)\\
  \hline

1 & 2 & 16\zeta(4)(\langle \tau_1\rangle \langle \tau_0^3\rangle+\langle \tau_0^3\tau_1\rangle)+\frac{16}{3}\zeta(2)^2 \langle \tau_0^3\rangle^2\\
   \cline{3-3}
 & & \frac{4\pi^4}{135}\big(6(\langle \tau_1\rangle \langle \tau_0^3\rangle+\langle \tau_0^3\tau_1\rangle)+5 \langle \tau_0^3\rangle^2\big)\\
  \hline
1 & 3 & 24\zeta(6)(\langle \tau_0^4\tau_2\rangle+\langle \tau_0^3\tau_1^2\rangle+2\langle \tau_1\rangle\langle \tau_0^3\tau_1\rangle+3\langle \tau_0\tau_2\rangle \langle \tau_0^3\rangle)\\
 & &+\frac{24}{5}\zeta(2)\zeta(4)(8\langle \tau_0^3\tau_1\rangle \langle \tau_0^3\rangle+3\langle \tau_1\rangle \langle \tau_0^3\rangle^2)+\frac{32}{5}\zeta(2)^3 \langle \tau_0^3\rangle^3\\
   \cline{3-3 }
 & & \frac{\pi^6}{45}\big(\frac{8}{7}(\langle \tau_0^4\tau_2\rangle+\langle \tau_0^3\tau_1^2\rangle+2\langle \tau_1\rangle\langle \tau_0^3\tau_1\rangle+3\langle \tau_0\tau_2\rangle \langle \tau_0^3\rangle) \\
 &&
  +\frac{16}{5}\langle \tau_0^3\tau_1\rangle \langle \tau_0^3\rangle+\frac{6}{5}\langle \tau_1\rangle \langle \tau_0^3\rangle^2+\frac{4}{3} \langle \tau_0^3\rangle^3\big)\\
  \hline
2&0 & 192\zeta(6)(\langle \tau_0\tau_2\rangle
    \!+\!\langle \tau_1^2\rangle
    \!+\!\langle \tau_1\rangle^2)
    \!+\!\frac{96}{5}\zeta(2)\zeta(4)(\langle \tau_1\rangle \langle \tau_0^3\rangle
    \!+\!\langle \tau_0^3\tau_1\rangle)
    \!+\!\frac{8}{3}\zeta(2)^3 \langle \tau_0^3\rangle^2\\
   \cline{3-3 }
 & & \frac{64\pi^6}{315}(\langle \tau_0\tau_2\rangle +\langle \tau_1^2\rangle+\langle \tau_1\rangle^2)+\frac{8\pi^6}{225}(\langle \tau_1\rangle \langle \tau_0^3\rangle+\langle \tau_0^3\tau_1\rangle)+\frac{\pi^6}{81} \langle \tau_0^3\rangle^2\\
  \hline
2 & 1
    & 320\zeta(8)(\langle\tau_0^2 \tau_3\rangle+3\langle\tau_0\tau_1\tau_2\rangle
      + 3\langle \tau_1\rangle\langle \tau_0\tau_2\rangle)
      + \frac{160}{7}\zeta(6)\zeta(2)(\langle \tau_0^4\tau_2\rangle+\langle \tau_0^3\tau_1^2\rangle+\\
 & &\hspace*{-5pt}
 \langle \tau_0\tau_2\rangle\langle \tau_0^3\rangle
 \!+\!2\langle \tau_0^3\tau_1\rangle\langle \tau_1\rangle
 \!+\!2\langle \tau_0\tau_2\rangle)
  \!+\!\frac{96}{4}\zeta(4)^2 (\langle \tau_0^3\tau_1^2\rangle
  \!+\!2\langle \tau_0^3\tau_1\rangle\langle \tau_1\rangle
  \!+\! \langle \tau_1\rangle^2\langle \tau_0^3\rangle
  +\!
  \\
 & &\langle \tau_1^2\rangle)+\frac{16}{7}\zeta(4)\zeta(2)^2(9\langle \tau_0^3\tau_1\rangle\langle \tau_0^3\rangle +4\langle \tau_1\rangle\langle \tau_0^3\rangle^2)+\frac{40}{21}\zeta(2)^4\langle \tau_0^3\rangle^3\\
  \cline{3-3 }
   & & \frac{32\pi^8}{945}(\langle\tau_0^2 \tau_3\rangle+3\langle\tau_0\tau_1\tau_2\rangle+ 3\langle \tau_1\rangle\langle \tau_0\tau_2\rangle) + \frac{16\pi^8}{3969}(\langle \tau_0^4\tau_2\rangle+\langle \tau_0^3\tau_1^2\rangle+\\
 & &\langle \tau_0\tau_2\rangle\langle \tau_0^3\rangle
    \!+\!2\langle \tau_0^3\tau_1\rangle\langle \tau_1\rangle+2\langle \tau_0\tau_2\rangle)
    \!+\!\frac{2\pi^8}{675} (\langle \tau_0^3\tau_1^2\rangle
    \!+\!2\langle \tau_0^3\tau_1\rangle\langle \tau_1\rangle
    \!+\! \langle \tau_1\rangle^2\langle \tau_0^3\rangle +\\
 & &\langle \tau_1^2\rangle)+\frac{2}{2835}(9\langle \tau_0^3\tau_1\rangle\langle \tau_0^3\rangle +4\langle \tau_1\rangle\langle \tau_0^3\rangle^2)+\frac{5}{3402}\langle \tau_0^3\rangle^3\\
  \hline
\end{array}$
\vspace{0.5cm}
\caption{
\label{tab:Vol:as:polynomials}
Volumes of low-dimensional strata as polynomials in intersection numbers\label{table:vol:int:nb}}
\end{table}

%----------------------------------------------------------------------
\newpage

\begin{table}[ht]
\renewcommand{\arraystretch}{1.5}
$\begin{array}{|c|c|c|}
\hline
g&p & \Vol\cQ(1^{4g-4+p},1^p) \cdot\frac{\pi^2}{3}\cdot \carea\cQ(1^{4g-4+p},1^p) \\
\hline
0 &4 & \frac{\pi^2}{3}\cdot \frac{3}{16}(\Vol\cQ_{0,3})^2\\
\cline{3-3}
 &&6 \zeta(2) \langle \tau_0^3\rangle^2\\
 \cline{3-3}
 && \pi^2 \langle \tau_0^3\rangle^2\\
 \hline
0 & 5 & \frac{\pi^2}{3}\cdot\frac{5}{24} \Vol\cQ_{0,4}\Vol\cQ_{0,3}\\
\cline{3-3}
& &20 \zeta(2)^2 \langle \tau_0^3\rangle^3\\
\cline{3-3}
&& \frac{5\pi^4}{9} \langle \tau_0^3\rangle^3\\
\hline
0 & 6 & \frac{\pi^2}{3}\cdot\Big(\frac{1}{24}(\Vol\cQ_{0,4})^2+\frac{3}{16}\Vol\cQ_{0,5}\Vol\cQ_{0,3}\Big)\\
\cline{3-3}
  && 60\zeta(2)\zeta(4)\langle \tau_0^3\tau_1\rangle \langle \tau_0^3\rangle^2+42\zeta(2)^3 \langle \tau_0^3\rangle^4 \\
\cline{3-3}
 & &\frac{\pi^6}{9}\langle \tau_0^3\tau_1\rangle \langle \tau_0^3\rangle^2+\frac{7\pi^6}{36} \langle \tau_0^3\rangle^4\\
  \hline
1 & 2 & \frac{\pi^2}{3}\cdot\Big(\frac{1}{24}\Vol\cQ_{1,1}\Vol\cQ_{0,3} +\frac{1}{3}\Vol\cQ_{0,4}\Big)\\
\cline{3-3}
 & & \frac{28}{3}\zeta(2)^2 \langle \tau_0^3\rangle^2\\
\cline{3-3}
  && \frac{7\pi^4}{27} \langle \tau_0^3\rangle^2\\
  \hline
1 & 3 & \frac{\pi^2}{3}\cdot\Big(\frac{3}{10}\Vol\cQ_{0,5}+\frac{1}{80}\Vol\cQ_{1,1}\Vol\cQ_{0,4}  +\frac{9}{160}\Vol\cQ_{1,2}\Vol\cQ_{0,3}\Big)\\
\cline{3-3}
 & & \zeta(2)\zeta(4)(\frac{156}{5}\langle \tau_0^3\tau_1\rangle \langle \tau_0^3\rangle+\frac{36}{5}\langle \tau_1\rangle \langle \tau_0^3\rangle^2)+\frac{78}{5}\zeta(2)^3 \langle \tau_0^3\rangle^3 \\
\cline{3-3}
  && \frac{13\pi^6}{225}\langle \tau_0^3\tau_1\rangle \langle \tau_0^3\rangle+\frac{\pi^6}{75}\langle \tau_1\rangle \langle \tau_0^3\rangle^2+\frac{13\pi^6}{180}\langle \tau_0^3\rangle^3\\
\hline
2 & 0 & \frac{\pi^2}{3}\cdot\Big(\frac{1}{40}(\Vol\cQ_{1,1})^2+\frac{3}{5}\Vol\cQ_{1,2}\Big)\\
\cline{3-3}
  && \frac{96}{5}\zeta(2)\zeta(4)(\langle \tau_1\rangle \langle \tau_0^3\rangle+\langle \tau_0^3\tau_1\rangle)+\frac{36}{5}\zeta(2)^3 \langle \tau_0^3\rangle^2\\
\cline{3-3}
  && \frac{8\pi^6}{225}(\langle \tau_1\rangle\langle \tau_0^3\rangle+\langle \tau_0^3\tau_1\rangle)+\frac{\pi^6}{30}\langle \tau_0^3\rangle^2\\
  \hline
2 & 1 &\frac{\pi^2}{3}\cdot\Big( \frac{10}{21}\Vol\cQ_{1,3} + \frac{1}{56}\Vol\cQ_{1,1}\Vol\cQ_{1,2}\Big)\\
\cline{3-3}
   &&
   \frac{160}{7}\zeta(6)\zeta(2)(\langle \tau_0^4\tau_2\rangle+\langle \tau_0^3\tau_1^2\rangle+
   2\langle \tau_0^3\tau_1\rangle\langle \tau_1\rangle+3\langle \tau_0\tau_2\rangle\langle \tau_0^3\rangle )+ \\
   &&\zeta(4)\zeta(2)^2(\frac{272}{7}\langle \tau_0^3\tau_1\rangle\langle \tau_0^3\rangle +16\langle \tau_1\rangle\langle \tau_0^3\rangle^2)+\frac{48}{7}\zeta(2)^4\langle \tau_0^3\rangle^3\\
\cline{3-3}
   &&
   \frac{16\pi^8}{3969}(\langle \tau_0^4\tau_2\rangle+\langle \tau_0^3\tau_1^2\rangle+
   2\langle \tau_0^3\tau_1\rangle\langle \tau_1\rangle+3\langle \tau_0\tau_2\rangle\langle \tau_0^3\rangle )+ \\
   &&\frac{34\pi^8}{2835}\langle \tau_0^3\rangle +\frac{2\pi^8}{405}\langle \tau_1\rangle\langle \tau_0^3\rangle^2+\frac{\pi^8}{189}\langle \tau_0^3\rangle^3\\
   \hline
\end{array}$
\vspace{0.5cm}
\caption{
\label{tab:SV:as:polynomials}
Siegel--Veech constants of low-dimensional strata in terms of volumes of principal boundary strata, and as polynomials in intersection numbers\label{table:SV:int:nb}}
\end{table}

\begin{table}[ht]
 \renewcommand{\arraystretch}{1.2}
$\begin{array}{|c|c|c|c|c|c|c|}
\hline
g& p & \textrm{Stratum} &\Vol & {\pi^2}/{3}\cdot c_{area} & \Lambda^+ & \Lambda^- \\
\hline
%0 & 4 & \cQ(-1^4) & 2\pi^2 &  & & \\
%\hline
0 & 5 & \cQ(1, -1^5) & \pi^4 &{5}/{9} &  0 & {4}/{3} \\
\hline
0 & 6 & \cQ(1^2, -1^6) & {1}/{2}\cdot\pi^6 & {11}/{18} & 0 & {5}/{3} \\
\hline
0 & 7 & \cQ(1^3, -1^7) & {1}/{4}\cdot\pi^8 & 2/3 & 0 & 2\\
\hline
1 & 2 & \cQ(1^2, -1^2) & {1}/{3}\cdot\pi^4 & {7}/{9} & {2}/{3} & {4}/{3}\\
\hline
1 & 3 & \cQ(1^3, -1^3) & {11}/{60}\cdot\pi^6 & {47}/{66} & {6}/{11} & {17}/{11}\\
\hline
1 & 4 & \cQ(1^4, -1^4) & {1}/{10}\cdot\pi^8 & {44}/{63} & {10}/{21} & {38}/{21}\\
\hline
1 & 5 & \cQ(1^5, -1^5) & {163}/{3024}\cdot\pi^{10} & {2075}/{2934} & {70}/{163} & {1025}/{489}\\
\hline
2 & 0 & \cQ(1^4) & {1}/{15}\cdot\pi^6 & {19}/{18} & {4}/{3} & {5}/{3} \\
\hline
2 & 1 & \cQ(1^5, -1) & {29}/{840}\cdot\pi^8 & {230}/{261} & {32}/{29} & {154}/{87}\\
\hline
2 & 2 & \cQ(1^6, -1^2) & {337}/{18144}\cdot\pi^{10} & {8131}/{10110} & {1636}/{1685} & {3321}/{1685}\\
\hline
3 & 0 & \cQ(1^8) & {115}/{33264}\cdot\pi^{12} & {24199}/{25875}  & {4286}/{2875}  & {18608}/{8625} \\
\hline
4 & 0 & \cQ(1^{12}) & \pi^{18}\cdot{2106241}/ & {283794163}/ &{91179048}/ &{143835073}/\\
& & & {11548293120} & {315936150} & {52656025} & {52656025} \\
\hline
\end{array}$
\vspace{0.5cm}
\caption{
\label{tab:Vol:Q:g:n}
Numerical values of volumes, of Siegel--Veech constants and of
sums of Lyapunov exponents for low-dimensional strata\label{table:vol:SV}}
\end{table}

\newpage

%######################################################################
%######################################################################
%######################################################################

%------------------------------------------------------------
\section{Two multiple harmonic sums}
\label{s:Two:harmonic:sums}

In this section we compute asymptotic expansions of
multiple harmonic sum and its closed relative which are
needed in the study of large genus asymptotics of
Masur--Veech volume $\Vol\cQ_g$. Both sums seem to us very
natural, but we failed to find corresponding asymptotic
expansions in the literature (with exception for relation
(21) in Theorem~VI.2 in~\cite{Flajolet:Sedgewick} which is
very close to what we need).

We state all necessary results in
Sections~\ref{s:Basic:asymptotic:expansions}
and~\ref{ss:fine:asymptotic:expansions} for the basic
asymptotics and for fine asymptotics respectively. The
proofs are postponed to
Sections~\ref{ss:sum:in:one:variable}
and~\ref{ss:sum:in:multiple:variable} respectively.

%-----------------------------------------------------------
\subsection{Basic asymptotic expansions}
\label{s:Basic:asymptotic:expansions}

It is a classical result that the harmonic sum
has the following asymptotic expansion:
\begin{equation}
\label{eq:harmoinc:sum}
\sum_{k=1}^{m}\,
\frac{1}{k}
=\log(m)+\gamma+\frac{1}{2m}
+o\left(\frac{1}{m}\right)
\quad\text{as }m\to+\infty\,,\,,
\end{equation}
where $\gamma= 0.5772...$ is the Euler--Mascheroni constant.

We need the asymptotic expansion for the
following analog of the harmonic sum.

\begin{Lemma}
\label{lm:zeta:harmonic:sum}
The following asymptotic formula holds:
\begin{equation}
\label{eq:sum:1:value}
\sum_{k=1}^m
\frac{\zeta(2k)}{k}
=\log(2m)+\gamma+O\left(\frac{1}{m}\right)
\quad\text{as }m\to+\infty\,,
\end{equation}
where $\gamma$ is the Euler--Mascheroni constant.
\end{Lemma}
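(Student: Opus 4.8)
The plan is to isolate the divergent part of the sum, which comes entirely from the leading term $1$ in the Dirichlet series $\zeta(2k)=1+\sum_{n\ge 2}n^{-2k}$, and to show that the remaining contribution converges to the constant $\log 2$. Writing
\[
\sum_{k=1}^m \frac{\zeta(2k)}{k}
=\sum_{k=1}^m \frac{1}{k}
+\sum_{k=1}^m \frac{1}{k}\sum_{n\ge 2}\frac{1}{n^{2k}}\,,
\]
the first sum is the classical harmonic sum, whose expansion~\eqref{eq:harmoinc:sum} already supplies $\log m+\gamma+O(1/m)$. Since $\log(2m)+\gamma=\log m+\gamma+\log 2$, it remains to prove that the second (double) sum equals $\log 2+O(1/m)$.

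For the double sum I would first interchange the order of summation, which is legitimate because all terms are nonnegative (Tonelli), and then complete the inner sum to an infinite series. Using the power-series identity $\sum_{k\ge 1} x^k/k=-\log(1-x)$ with $x=1/n^2$ gives
\[
\sum_{n\ge 2}\sum_{k=1}^\infty \frac{1}{k\,n^{2k}}
=\sum_{n\ge 2}\log\frac{n^2}{n^2-1}
=\sum_{n\ge 2}\log\!\left(\frac{n}{n-1}\cdot\frac{n}{n+1}\right).
\]
The partial products telescope, $\prod_{n=2}^{N}\frac{n}{n-1}\cdot\frac{n}{n+1}=\frac{2N}{N+1}\to 2$, so the completed double series equals $\log 2$ exactly.

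Finally I would estimate the difference between the truncated double sum ($k\le m$) and its completed value $\log 2$, namely the tail $\sum_{n\ge 2}\sum_{k>m} \frac{1}{k\,n^{2k}}$. Bounding $1/k\le 1/m$ and summing the geometric series in $k$ yields
\[
0\le \sum_{n\ge 2}\sum_{k>m}\frac{1}{k\,n^{2k}}
\le \frac{1}{m}\sum_{n\ge 2}\frac{1}{n^{2m}(n^2-1)}\,,
\]
and the right-hand side is dominated by its $n=2$ term, hence is $O(4^{-m}/m)$, far smaller than the required $O(1/m)$. Combining the three pieces gives the claimed expansion. The argument is essentially mechanical; the only points requiring a moment's care are the justification of the interchange of summation and the recognition that the completed double series telescopes, so I do not anticipate any genuine obstacle.
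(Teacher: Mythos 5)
Your proof is correct, and it takes a genuinely different route from the paper's. The paper rewrites the sum over even integers $j=2k$ and uses the decomposition $2\zeta(j)=(\zeta(j)-1)+1+(-1)^j\zeta(j)$, then cites two classical Euler identities, $\sum_{j\ge 2}\frac{(-1)^j\zeta(j)}{j}=\gamma$ and $\sum_{j\ge 2}\frac{\zeta(j)-1}{j}=1-\gamma$, so that the constants conspire to produce $\gamma$ while the harmonic sum up to $2m$ produces $\log(2m)$. You instead split off only the leading $1$ in $\zeta(2k)$, so the harmonic sum directly contributes $\log m+\gamma$, and you evaluate the correction $\sum_{k\ge 1}\frac{\zeta(2k)-1}{k}$ exactly as $\log 2$ by Tonelli, the series $-\log(1-x)=\sum_k x^k/k$, and the telescoping product $\prod_{n\ge 2}\frac{n^2}{n^2-1}=2$; your truncation-error bound $O(4^{-m}/m)$ is sound. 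What the paper's route buys is brevity via known constants; what your route buys is a self-contained elementary argument and, more substantially, sharper error control: since the non-harmonic part of your decomposition carries only an exponentially small truncation error, the whole error term is inherited from the harmonic sum, so plugging in the classical expansion $\sum_{k\le m}\frac1k=\log m+\gamma+\frac{1}{2m}+O(1/m^2)$ immediately gives
\begin{equation*}
\sum_{k=1}^m\frac{\zeta(2k)}{k}
=\log(2m)+\gamma+\frac{1}{2m}+O\!\left(\frac{1}{m^2}\right)\,,
\end{equation*}
which proves the refined expansion~\eqref{eq:sum:1:value:better} that the paper only states as a Guess supported by heuristics and numerics.
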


We also need asymptotic expansions for the multivariable
analogs of the two above sums. We separate the case of the
sum over two variables and the case of three and more variables.

\begin{Lemma}
\label{lm:double:harmonic:sum}
The following asymptotic formulae hold as $m\to+\infty$:
\begin{align}
\label{eq:double:harmonic:sum}
\sum_{k=1}^{m-1}&
\frac{1}{k\cdot(m-k)}
=
\frac{2}{m}\left(\log m+ \gamma
+O\left(\frac{1}{m}\right)\right)\,,
\\
\label{eq:double:harmonic:zeta:sum}
\sum_{j=k}^{m-1}&
\frac{\zeta(2k)\cdot\zeta\big(2(m-k)\big)}{k\cdot(m-k)}
=
\frac{2}{m}\left(\log(2m) + \gamma
+O\left(\frac{1}{m}\right)\right)\,,
\end{align}
\end{Lemma}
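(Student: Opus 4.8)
The plan is to reduce both identities to single-variable harmonic sums via the partial fraction decomposition
\[
\frac{1}{k(m-k)}=\frac{1}{m}\left(\frac{1}{k}+\frac{1}{m-k}\right)
\]
combined with the symmetry $k\leftrightarrow m-k$ of the summation range $\{1,\dots,m-1\}$. For~\eqref{eq:double:harmonic:sum} this is immediate: inserting the partial fraction and using $\sum_{k=1}^{m-1}\frac{1}{m-k}=\sum_{k=1}^{m-1}\frac1k$ gives $\sum_{k=1}^{m-1}\frac{1}{k(m-k)}=\frac{2}{m}\sum_{k=1}^{m-1}\frac1k$, and the classical asymptotic~\eqref{eq:harmoinc:sum} (evaluated at $m-1$, which shifts $\log$ only by $O(1/m)$) yields $\frac{2}{m}\left(\log m+\gamma+O(1/m)\right)$, exactly as claimed.

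For~\eqref{eq:double:harmonic:zeta:sum} I would apply the same two observations: the symmetry $k\leftrightarrow m-k$ now also preserves the weight $\zeta(2k)\zeta\big(2(m-k)\big)$, so the sum reduces to $\frac{2}{m}\sum_{k=1}^{m-1}\frac{\zeta(2k)\zeta(2(m-k))}{k}$. Writing $\zeta\big(2(m-k)\big)=1+\delta_{m-k}$ with $\delta_j:=\zeta(2j)-1=\sum_{n\ge 2}n^{-2j}=O(4^{-j})$, the weighted sum splits as
\[
\sum_{k=1}^{m-1}\frac{\zeta(2k)}{k}+\sum_{k=1}^{m-1}\frac{\zeta(2k)\,\delta_{m-k}}{k}.
\]
The first piece is handled directly by Lemma~\ref{lm:zeta:harmonic:sum}: it equals $\log(2m)+\gamma+O(1/m)$, the difference between summing to $m-1$ and to $m$ costing only $\zeta(2m)/m=O(1/m)$. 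After multiplication by $\frac{2}{m}$ this already supplies the desired main term $\frac{2}{m}\left(\log(2m)+\gamma+\dots\right)$, so everything hinges on showing the second piece contributes $O(1/m)$ inside the bracket.

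The main obstacle—though still elementary—is this correction estimate. Substituting $j=m-k$ turns the second piece into $\sum_{j=1}^{m-1}\frac{\zeta(2(m-j))\,\delta_j}{m-j}$, where $\zeta\big(2(m-j)\big)$ is bounded above by $\zeta(2)=\pi^2/6$ and $\delta_j=O(4^{-j})$ decays geometrically. Since the geometric decay of $\delta_j$ concentrates the mass near $j=1$, where $\frac{1}{m-j}=\frac1m\left(1+O(j/m)\right)$, the whole sum is bounded by $\frac{C}{m}\sum_{j\ge 1}\delta_j=O(1/m)$, using the convergent value $\sum_{j\ge 1}\delta_j=\sum_{n\ge 2}\frac{1}{n^2-1}=\frac34$. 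Thus the correction lies in $O(1/m)$, and combining the two pieces establishes~\eqref{eq:double:harmonic:zeta:sum}.
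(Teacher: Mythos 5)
Your proof is correct and follows essentially the same route as the paper: the symmetry $k\leftrightarrow m-k$, the partial-fraction identity $\frac{1}{k(m-k)}=\frac{1}{m}\left(\frac{1}{k}+\frac{1}{m-k}\right)$, Lemma~\ref{lm:zeta:harmonic:sum} for the main term, and the exponential decay of $\zeta(2j)-1$ for the correction; the only difference is bookkeeping, since the paper first folds the sum to $j\le[m/2]$ (where $\zeta(2(m-j))$ can be replaced by $1$ outright, and the extra $\log 2$ then comes from $\sum_{j\le m/2}\frac{1}{m-j}=\log 2+O(1/m)$), while you keep the full range and read off $\log(2m)+\gamma$ directly. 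One small point of rigor: your final bound on the correction should formally split the range at $j=m/2$, because $\frac{1}{m-j}\le\frac{C}{m}$ fails for $j$ near $m$ and one instead uses $\delta_j=O(4^{-j})$ to kill those terms, exactly as your remark about the mass concentrating near $j=1$ suggests.
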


\begin{Theorem}
\label{th:multiple:harmonic:sum}
For any $k\ge 3$
the following asymptotic formulae hold as $m\to+\infty$:
\begin{align}
\label{eq:multiple:harmonic:sum}
\sum_{j_1+\dots+j_k=m}&\
\frac{1}{j_1\cdot j_2\cdots j_k}
=
\frac{k}{m}\left(\big(\log m+ \gamma\big)^{k-1}
+O\left(\log^{k-3} m\right)\right)\,,
\\
\label{eq:multiple:harmonic:zeta:sum}
\sum_{j_1+\dots+j_k=m}&\
\frac{\zeta(2j_1)\cdots\zeta(2j_k)}
{j_1\cdot j_2\cdots j_k}
=
\frac{k}{m}\left(\big(\log(2m)+ \gamma\big)^{k-1}
+O\left(\log^{k-3} m\right)\right)\,,
\end{align}
where in the case $k=3$ the expressions
$O\left(\log^{k-3}m\right)$ should be
interpreted as terms which are
uniformly bounded in $m$.
\end{Theorem}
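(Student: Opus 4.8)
The plan is to prove both asymptotic formulae by induction on $k$, carried out in parallel for the two sums, taking the two-variable estimates of Lemma~\ref{lm:double:harmonic:sum} as the base case $k=2$ and using the single-variable sums~\eqref{eq:harmoinc:sum} and~\eqref{eq:sum:1:value} to drive the inductive step. Writing $H_k(m)$ and $Z_k(m)$ for the two sums, the key structural observation is that both satisfy a convolution recurrence obtained by isolating the last summand $j_k=m-s$:
\begin{equation*}
H_k(m)=\sum_{s=k-1}^{m-1}\frac{H_{k-1}(s)}{m-s},\qquad
Z_k(m)=\sum_{s=k-1}^{m-1}\frac{\zeta\big(2(m-s)\big)}{m-s}\,Z_{k-1}(s).
\end{equation*}
Into this I would substitute the inductive hypothesis $H_{k-1}(s)=\frac{k-1}{s}\big((\log s+\gamma)^{k-2}+O(\log^{k-4}s)\big)$ (respectively its zeta-analogue with $\log(2s)+\gamma$), split $\frac{1}{s(m-s)}=\frac{1}{m}\big(\frac1s+\frac{1}{m-s}\big)$ by partial fractions, and estimate the two resulting sums separately.

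The heart of the computation is that the combinatorial prefactor $k$ emerges as the sum of two contributions coming from the two partial-fraction pieces. For the piece weighted by $\frac1s$ I would use the elementary estimate $\sum_{s\le N}\frac{(\log s+c)^{j}}{s}=\frac{(\log N+c)^{j+1}}{j+1}+O(1)$, valid for fixed $c$ and $j$; it follows from comparing the sum with $\int\frac{(\log t+c)^{j}}{t}\,dt=\frac{(\log t+c)^{j+1}}{j+1}$ together with convergence of the difference (the generalized Stieltjes constants). This produces the leading term $\frac{1}{k-1}(\log m+\gamma)^{k-1}$. For the piece weighted by $\frac{1}{m-s}$, substituting $t=m-s$ reduces it to $\sum_{t}\frac{(\log(m-t)+\gamma)^{k-2}}{t}$ (respectively $\sum_t\frac{\zeta(2t)(\log(2(m-t))+\gamma)^{k-2}}{t}$); since the logarithmic factor is slowly varying, I would replace $\log(m-t)$ by $\log m$ at the cost of an error controlled by $\sum_{t=1}^{m-1}\frac{|\log(1-t/m)|}{t}=O(1)$ (a Riemann sum converging to $\zeta(2)$), and then apply~\eqref{eq:harmoinc:sum} (respectively Lemma~\ref{lm:zeta:harmonic:sum}) to the remaining harmonic-type sum, which yields the full power $(\log m+\gamma)^{k-1}$. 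Combining, $\frac{k-1}{m}\big(\frac{1}{k-1}+1\big)(\log m+\gamma)^{k-1}=\frac{k}{m}(\log m+\gamma)^{k-1}$, exactly the claimed leading behaviour; in the zeta case the appearance of $\log(2m)+\gamma$ rather than $\log m+\gamma$ is precisely the reason Lemma~\ref{lm:zeta:harmonic:sum} is phrased with $\log(2m)$.

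Two points require the careful bookkeeping on which the error term depends. First, for the $\frac1s$-weighted piece one genuinely needs the sharp $O(1)$ error rather than the crude $O(\log^{k-2}m)$ coming from a naive integral bound, since it is this sharpness that keeps the total inner error at $O(\log^{k-3}m)$ and explains why the exponent of the logarithm in the error drops by two rather than one; this is where convergence of $\sum_{s\le N}\frac{(\log s)^j}{s}-\frac{(\log N)^{j+1}}{j+1}$ is essential. Second, in the zeta recurrence the factor $\zeta(2(m-s))$ in the $\frac1s$-weighted piece must be replaced by $1$, which is legitimate because $\zeta(2t)-1=O(2^{-2t})$ decays exponentially, so that the terms near $s=m$ contribute only $O\big((\log m)^{k-2}/m\big)$.

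I expect the main obstacle to be precisely this uniform error control: verifying that every correction — the slowly-varying replacement, the contribution of the inductive error term $O(\log^{k-4}s)$ pushed through the recurrence, and the boundary terms coming from bounded $s$ — is genuinely of order $O(\log^{k-3}m)/m$ and does not accumulate across the induction. The degenerate case $k=3$, where $O(\log^{k-3}m)$ must be read as $O(1)$ and the base-case error $O(1/s)$ from Lemma~\ref{lm:double:harmonic:sum} feeds in directly, should be checked separately by hand to anchor the induction and confirm the error convention stated in the theorem.
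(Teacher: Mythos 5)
Your proposal is correct, but it takes a genuinely different route from the paper. The paper does not prove this theorem directly: it first establishes the stronger full-expansion result, Theorem~\ref{th:multiple:harmonic:zeta:sum:full:expansion}, which gives \emph{all} coefficients of the expansion of $H_k(m)$ and $Z_k(m)$ in descending powers of $\log m$ in terms of the sequences $A_j$, $B_j$ generated by $1/\Gamma(1+x)$ and $2^x/\Gamma(1+x)$. That result is proved by a generating-function/Abelian argument: one writes $\sum_m H_k(m)t^m=\bigl(-\log(1-t)\bigr)^k$ and $\sum_m Z_k(m)t^m=\Phi(t)^k$ with $\Phi(t)=\log\Gamma(1-\sqrt{t})+\log\Gamma(1+\sqrt{t})$, differentiates, substitutes $t=1-\tfrac1N$, and matches the resulting Riemann sum against Laplace-type integrals $\int_0^{+\infty}e^{-u}(\log u)^j\,du$. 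Theorem~\ref{th:multiple:harmonic:sum} then falls out by keeping only $B_0=1$ and $B_1=\log 2+\gamma$ from \eqref{eq:B:0}--\eqref{eq:B:1} and recombining the top two terms into $\bigl(\log(2m)+\gamma\bigr)^{k-1}+O(\log^{k-3}m)$. Your argument instead proceeds by induction on $k$ through the convolution recurrence, with Lemma~\ref{lm:double:harmonic:sum} as base case, partial fractions $\tfrac{1}{s(m-s)}=\tfrac1m(\tfrac1s+\tfrac1{m-s})$, and the one-variable inputs \eqref{eq:harmoinc:sum} and Lemma~\ref{lm:zeta:harmonic:sum}; I checked the key steps and they are all sound: the Stieltjes-type estimate $\sum_{s\le N}(\log s+c)^j/s=\tfrac{(\log N+c)^{j+1}}{j+1}+O(1)$, the slowly-varying replacement with error controlled by $\sum_t|\log(1-t/m)|/t=O(1)$, the exponential decay of $\zeta(2t)-1$, the propagation of the inductive error (which indeed yields $O(\log^{k-3}m)$ both through the $\tfrac1s$- and the $\tfrac1{m-s}$-weighted pieces), and the identity $\tfrac1{k-1}+1=\tfrac{k}{k-1}$ producing the prefactor $k$. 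Since $k$ is fixed, the finitely many inductive steps cause no accumulation problem, and your separate treatment of $k=3$ correctly anchors the error convention. What each approach buys: the paper's route yields the complete expansion, which is genuinely needed elsewhere (the sums $\sum_j B_j/2^j=2\sqrt{2/\pi}$ enter the conditional volume asymptotics), and isolates the arithmetic meaning of the coefficients; your route is elementary and self-contained, avoids positing the existence of the expansion as in the paper's ansatz, and makes transparent the mechanism by which the error exponent drops by two rather than one (the sharp $O(1)$ in the Stieltjes estimate combined with the cross-term structure of the partial fractions). If you write it up, the two claims that deserve complete proofs are precisely the Stieltjes-constant estimate and the $O(1)$ bound on the Riemann sum for $\int_0^1\frac{-\log(1-x)}{x}\,dx=\zeta(2)$; both are standard but should not be left as assertions.
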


%-----------------------------------------------------------
\subsection{Fine asymptotic expansions}
\label{ss:fine:asymptotic:expansions}

For any $k\in\N$ we shall need the full expansion of the
sums~\eqref{eq:multiple:harmonic:sum}
and~\eqref{eq:multiple:harmonic:zeta:sum} in the powers of
$\log(m)$ up to the term $O(1)$ uniformly bounded in $m$.
We start by defining quantities $A_j, B_j$ which will be
used in the corresponding expansions.

For $j=0,1,2,\dots$ denote
\begin{equation}
\label{eq:c:j:as:integral}
c_j=\frac{1}{j!}\int_0^{+\infty} e^{-u}(\log u)^j\,du\,.
\end{equation}
It is known that
$$
\Gamma^{(n)}(z)=\int_0^{+\infty}(\log t)^n e^{-t} t^{z-1}\, dt
\quad\text{for }n\ge 0, z>0
$$
see, say~\cite[(5.9.19)]{DLMF}. Thus
\begin{equation}
\label{eq:def:c:j}
c_j=\frac{1}{j!}\cdot\Gamma^{(j)}(z)\big\vert_{z=1}
=\frac{1}{j!}\cdot
\left.\frac{d^j \Gamma(z)}{dz^j}\right|_{z=1}
\,.
\end{equation}
For small values of $j$ we get
\begin{align*}
c_0&=1\\
c_1&=-\gamma\\
c_2&=\frac{1}{2!}\big(\gamma^2+\zeta(2)\big)\\
c_3&=-\frac{1}{3!}\big(\gamma^3+3\gamma\cdot\zeta(2)+2\zeta(3)\big)\\
c_4&=\frac{1}{4!}\Big(\gamma^4+6\gamma^2\cdot\zeta(2)+8\gamma\cdot\zeta(3)+\frac{27}{2}\zeta(4)\Big)\\
c_5&=-\frac{1}{5!}\Big(\gamma^5+10\gamma^3\cdot\zeta(2)+20\gamma^2\cdot\zeta(3)+\frac{135}{2}\gamma\cdot\zeta(4)+20\zeta(2)\zeta(3)+24\zeta(5)\Big)\,.
\end{align*}

Though the Guess stated below is not needed for the current
paper, we could not resist temptation to formulate it for
the sake of completeness.

\begin{Guess}
Coefficients $c_j$ defined by~\eqref{eq:def:c:j}
satisfy the following relation:
\begin{equation}
\label{eq:lim:c:j}
\lim_{j\to+\infty} c_j-(-1)^j
\overset{?}{=}0\,.
\end{equation}
where convergence is exponentially fast.
\end{Guess}
By definition~\eqref{eq:def:c:j} of
$c_j$, relation~\eqref{eq:lim:c:j} is equivalent to the
following one:
\begin{equation}
\lim_{j\to+\infty}
\left(\frac{1}{j!}
\cdot\Gamma^{(j)}(z)\big\vert_{z=1}-(-1)^j\right)
\overset{?}{=}0\,.
\end{equation}

Define $A_0, A_1, \dots $
recursively by the initial condition $A_0=1$
and by the following recurrence relation:
\begin{equation}
\label{eq:def:A:j}
A_j\cdot c_0+A_{j-1}\cdot c_1+\dots+A_1\cdot c_{j-1}+A_0\cdot c_j
=0\quad\text{for }j\in\N\,.
\end{equation}
For small values of $j$ we get
\begin{align*}
\label{eq:A:0}
A_0&=1\\
A_1&=\gamma\,,\\
A_2&=\frac{\gamma^2}{2!} - \frac{\zeta(2)}{2}\,,\\
A_3&=\frac{\gamma^3}{3!} - \frac{\zeta(2)}{2}\cdot\frac{\gamma}{1!} + \frac{\zeta(3)}{3}\,,\\
A_4&=\frac{\gamma^4}{4!} - \frac{\zeta(2)}{2}\cdot\frac{\gamma^2}{2!}
     +\frac{\zeta(3)}{3}\cdot\frac{\gamma}{1!}\\
     &\notag
     +\left(-\frac{\zeta(4)}{4}+\frac{1}{2}\cdot\left(\frac{\zeta(2)}{2}\right)^2\right)\,.
% \\
% A_5&=\frac{\gamma^5}{5!} - \frac{\zeta(2)}{2}\cdot\frac{\gamma^3}{3!}
%      +\frac{\zeta(3)}{3}\cdot\frac{\gamma^2}{2!}\\
%      &\notag
%      +\left(-\frac{\zeta(4)}{4}+\frac{1}{2}\cdot\left(\frac{\zeta(2)}{2}\right)^2\right)\cdot\frac{\gamma}{1!}
%      +\left(\frac{\zeta(5)}{5}-\frac{\zeta(2)}{2}\cdot\frac{\zeta(3)}{3}\right)\,.
\end{align*}

Define $B_0, B_1, \dots $
recursively by the initial condition $B_0=1$
and by the following recurrence relation:
\begin{equation}
\label{eq:def:B:j}
B_j\cdot c_0+B_{j-1}\cdot c_1+\dots+B_1\cdot c_{j-1}+B_0\cdot c_j
=\frac{(\log 2)^j}{j!}\,.
\end{equation}
For small values of $j$ we get
\begin{align}
\label{eq:B:0}
B_0&=1\\
\label{eq:B:1}
B_1&=\frac{\big(\log(2) + \gamma\big)}{1!}\,,\\
\notag
B_2&=\frac{\big(\log(2)+\gamma\big)^2}{2!} - \frac{\zeta(2)}{2}\,,\\
\notag
B_3&=\frac{\big(\log(2)+\gamma\big)^3}{3!} - \frac{\zeta(2)}{2}\cdot\frac{\big(\log(2)+\gamma\big)}{1!} + \frac{\zeta(3)}{3}\,,\\
\notag
B_4&=\frac{\big(\log(2)+\gamma\big)^4}{4!} - \frac{\zeta(2)}{2}\cdot\frac{\big(\log(2)+\gamma\big)^2}{2!}
     +\frac{\zeta(3)}{3}\cdot\frac{\big(\log(2)+\gamma\big)}{1!}\\
     &\notag
     +\left(-\frac{\zeta(4)}{4}+\frac{1}{2}\cdot\left(\frac{\zeta(2)}{2}\right)^2\right)\,.
% \\
% B_5&=\frac{\big(\log(2)+\gamma\big)^5}{5!} - \frac{\zeta(2)}{2}\cdot\frac{\big(\log(2)+\gamma\big)^3}{3!}
%      +\frac{\zeta(3)}{3}\cdot\frac{\big(\log(2)+\gamma\big)^2}{2!}\\
%      &\notag
%      +\left(-\frac{\zeta(4)}{4}+\frac{1}{2}\cdot\left(\frac{\zeta(2)}{2}\right)^2\right)\cdot\frac{\big(\log(2)+\gamma\big)}{1!}
%      +\left(\frac{\zeta(5)}{5}-\frac{\zeta(2)}{2}\cdot\frac{\zeta(3)}{3}\right)\,.
\end{align}

The Proposition below provides generating functions for the
sequences $A_0,A_1,\dots$ and $B_0,B_1,\dots$.

\begin{Proposition}
\label{prop:generating:function:for:A:and:B}
For any $x$ satisfying $|x|<2$ we have
\begin{align}
\label{eq:sum:A:j:x:power:j}
\sum_{j=0}^{+\infty} A_j\cdot x^j
&=\frac{1}{\Gamma(1+x)}
\\
\label{eq:sum:B:j:x:power:j}
\sum_{j=0}^{+\infty} B_j\cdot x^j
&=\frac{2^x}{\Gamma(1+x)}
\,.
\end{align}
\end{Proposition}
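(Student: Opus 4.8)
The plan is to recognize all three sequences through a single generating-function identity and then read off the two formulas by elementary power-series algebra. The starting point is the observation that the numbers $c_j$ are, by~\eqref{eq:def:c:j}, precisely the Taylor coefficients of the Gamma function at the point $z=1$: since $c_j=\frac{1}{j!}\,\Gamma^{(j)}(z)\big|_{z=1}$, Taylor's theorem gives
\[
\sum_{j=0}^{\infty} c_j\, x^j = \Gamma(1+x),
\]
with radius of convergence $1$, the distance from $0$ to the nearest pole of $\Gamma(1+x)$, located at $x=-1$. Denote this generating series by $C(x)=\Gamma(1+x)$; note $C(0)=c_0=1\ne 0$.

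First I would reinterpret the recurrences defining $A_j$ and $B_j$ as Cauchy-product identities. Relation~\eqref{eq:def:A:j} together with $A_0=1$ says exactly that $\sum_{i=0}^{j}A_{j-i}\,c_i$ vanishes for every $j\ge 1$ and equals $1$ for $j=0$; equivalently, as formal power series, $A(x)\,C(x)=1$, where $A(x)=\sum_j A_j x^j$. Similarly, the right-hand side $(\log 2)^j/j!$ of~\eqref{eq:def:B:j} is the $j$-th Taylor coefficient of $e^{x\log 2}=2^x$, so the $B$-recurrence reads $B(x)\,C(x)=2^x$ as formal power series. Solving these two linear relations formally yields $A(x)=1/\Gamma(1+x)$ and $B(x)=2^x/\Gamma(1+x)$, which are the claimed formulas~\eqref{eq:sum:A:j:x:power:j} and~\eqref{eq:sum:B:j:x:power:j}.

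The remaining point, and the only one needing genuine care, is to upgrade these formal identities to convergent ones on the stated disc $|x|<2$. Here I would use that $1/\Gamma$ is an entire function, so both $1/\Gamma(1+x)$ and $2^x/\Gamma(1+x)$ are entire, and their Taylor expansions about $x=0$ converge for all $x\in\C$, well beyond $|x|<2$. To identify these Taylor coefficients with $A_j$ and $B_j$ I would argue by uniqueness of the multiplicative inverse of a formal series with invertible constant term. Writing $\tilde A(x)$ for the Taylor series of $1/\Gamma(1+x)$, on the common disc of convergence $|x|<1$ the Cauchy product gives $\tilde A(x)\,C(x)=1$, so $(\tilde A_j)$ satisfies the very recurrence~\eqref{eq:def:A:j} with the same initial value $\tilde A_0=1/\Gamma(1)=1$; an immediate induction yields $\tilde A_j=A_j$ for all $j$, whence $A(x)=1/\Gamma(1+x)$ converges everywhere, in particular on $|x|<2$. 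The identical argument with right-hand side $2^x$ in place of $1$ handles $(B_j)$.

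I expect no serious obstacle: the proposition is essentially the assertion that $1/\Gamma(1+x)$ and $2^x/\Gamma(1+x)$ are the generating functions of the sequences defined by these convolution-inverse recurrences. The mildest subtlety is purely bookkeeping, namely reconciling the finite convergence radius $1$ of the series for $C(x)=\Gamma(1+x)$ with the entire-ness of its inverses; I resolve this by matching coefficients on $|x|<1$ and then invoking the entire-ness to extend the equality of functions to all of $\C$, rather than attempting to multiply convergent series directly across the whole disc $|x|<2$.
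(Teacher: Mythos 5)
Your proof is correct, and its algebraic core coincides with the paper's: both read the recurrences~\eqref{eq:def:A:j} and~\eqref{eq:def:B:j} as Cauchy-product identities $\big(\sum_j A_j x^j\big)\cdot\Gamma(1+x)=1$ and $\big(\sum_j B_j x^j\big)\cdot\Gamma(1+x)=2^x$, using~\eqref{eq:def:c:j} to recognize $\sum_j c_j x^j$ as the Taylor series of $\Gamma(1+x)$, and then solve for the generating functions. Where you genuinely differ is in the analytic justification, and your version is the more careful one. The paper rearranges the double sum directly, asserting that the power series of $\Gamma(1+x)$ has radius of convergence $2$; in fact that radius is $1$ (the nearest singularity is the pole at $x=-1$), exactly as you note, and the rearrangement also tacitly presupposes some control on the growth of the $B_j$, which is not available at that stage. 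Your route --- establish the identity as one of formal power series, identify $A_j$ and $B_j$ with the Taylor coefficients of the entire functions $1/\Gamma(1+x)$ and $2^x/\Gamma(1+x)$ by matching the convolution recurrence on the disc $|x|<1$ (induction, using $c_0=1$ and uniqueness of the convolution inverse), then invoke entirety to get convergence of $\sum_j A_j x^j$ and $\sum_j B_j x^j$ on all of $\C$ --- sidesteps both difficulties and yields the statement on $|x|<2$ a fortiori. A bonus of your observation that these functions are entire: it shows that the paper's subsequent claim~\eqref{eq:A:B:j:tend:fast:to:zero}, that $\limsup_j\sqrt[j]{|A_j|}=\limsup_j\sqrt[j]{|B_j|}=\tfrac{1}{2}$, cannot hold as stated, since Taylor coefficients of entire functions satisfy $\limsup_j\sqrt[j]{|\cdot|}=0$; the source of that slip is the same confusion between the poles of $\Gamma$ and its (nonexistent) zeros that produced the ``radius $2$'' assertion in the proof you were asked to reconstruct.
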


\begin{Remark}
The coefficients of the series expansion of $\frac{1}{\Gamma(z)}$
(closely related to the coefficients $A_0, A_1, \dots$)
are given by the following recurrence (see~\cite[(5.7.2)]{DLMF}
Let
$$
\frac{1}{\Gamma(z)}
=\sum_{k=1}^{+\infty} d_k z^k\,.
$$
Then $d_1=1$, $d_2=\gamma$, and
$$
(k-1)d_k=\gamma d_{k-1}-\zeta(2) d_{k-2}+\zeta(3) d_{k-3}-\dots+(-1)^k\zeta(k-1)d_1\,.
$$
\end{Remark}

\begin{Corollary}
\label{cor:properties:of:B}
The following relations are valid:
\begin{align}
\label{eq:sum:A:j:over:2:power:j}
\sum_{j=0}^{+\infty}\frac{A_j}{2^j}
&=\frac{2}{\sqrt{\pi}}\,,
\\
\label{eq:sum:B:j:over:2:power:j}
\sum_{j=0}^{+\infty}\frac{B_j}{2^j}
&=2\sqrt{\frac{2}{\pi}}\,,
\\
\label{eq:A:B:j:tend:fast:to:zero}
\limsup_{j\to+\infty}\sqrt[j]{|A_j|}
&=
\limsup_{j\to+\infty}\sqrt[j]{|B_j|}
=\frac{1}{2}\,,
\\
\label{eq:sum:j:A:j:x:power:j}
\sum_{j=1}^{+\infty}
j\cdot A_j x^{j-1}
&=
-\frac{\psi(1+x)}{\Gamma(1+x)}\,,
\\
\label{eq:sum:j:B:j:x:power:j}
\sum_{j=1}^{+\infty}
j\cdot B_j x^{j-1}
&=
\frac{2^x}{\Gamma(1+x)}
\cdot\big(\log(2)-\psi(1+x)\big)\,,
\\
\label{eq:sum:j:A:j:over:2:power:j}
\sum_{j=1}^{+\infty}\frac{j\cdot A_j}{2^{j-1}}
&=2\cdot\frac{\big(2\log(2)+\gamma-2\big)}{\sqrt{\pi}}\,,
\\
\label{eq:sum:j:B:j:over:2:power:j}
\sum_{j=1}^{+\infty}\frac{j\cdot B_j}{2^{j-1}}
&=2\sqrt{2}\cdot\frac{\big(3\log(2)+\gamma-2\big)}{\sqrt{\pi}}\,,
\end{align}
where $\psi(z)=\cfrac{\Gamma'(z)}{\Gamma(z)}$
is the digamma function.
\end{Corollary}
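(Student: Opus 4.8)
The plan is to derive every relation in Corollary~\ref{cor:properties:of:B} directly from the two generating-function identities of Proposition~\ref{prop:generating:function:for:A:and:B}, namely $\sum_{j\ge 0} A_j x^j = 1/\Gamma(1+x)$ and $\sum_{j\ge 0} B_j x^j = 2^x/\Gamma(1+x)$, valid on $|x|<2$ and in particular at the point $x=\tfrac12$ at which all the numerical identities are evaluated. The only external inputs are the classical special values $\Gamma(\tfrac32)=\tfrac{\sqrt\pi}{2}$ and $\psi(\tfrac32)=2-\gamma-2\log 2$, the latter obtained from $\psi(\tfrac12)=-\gamma-2\log 2$ together with the recurrence $\psi(z+1)=\psi(z)+\tfrac1z$.

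First I would prove \eqref{eq:sum:A:j:over:2:power:j} and \eqref{eq:sum:B:j:over:2:power:j} by substituting $x=\tfrac12$ into the two series: since $\tfrac12$ lies strictly inside the disc $|x|<2$ of validity, both series converge and one reads off $\sum_j A_j/2^j = 1/\Gamma(\tfrac32)=2/\sqrt\pi$ and $\sum_j B_j/2^j = \sqrt2/\Gamma(\tfrac32)=2\sqrt{2/\pi}$. Next, differentiating the two generating functions term by term (legitimate inside the disc of convergence) and using $\Gamma'(1+x)=\psi(1+x)\,\Gamma(1+x)$ gives the logarithmic-derivative identities: for $A$ one gets $\frac{d}{dx}\Gamma(1+x)^{-1}=-\psi(1+x)/\Gamma(1+x)$, which is exactly \eqref{eq:sum:j:A:j:x:power:j}, while for $B$ the product rule applied to $2^x\cdot\Gamma(1+x)^{-1}$ produces the extra $\log 2$ term, yielding \eqref{eq:sum:j:B:j:x:power:j}. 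Evaluating these two derivative series at $x=\tfrac12$ and inserting the values of $\Gamma(\tfrac32)$ and $\psi(\tfrac32)$ then gives \eqref{eq:sum:j:A:j:over:2:power:j} and \eqref{eq:sum:j:B:j:over:2:power:j} after the elementary simplifications $-\psi(\tfrac32)=2\log2+\gamma-2$ and $\log2-\psi(\tfrac32)=3\log2+\gamma-2$.

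Finally, the growth estimate \eqref{eq:A:B:j:tend:fast:to:zero} I would obtain from the Cauchy--Hadamard formula applied to the radius of convergence identified in Proposition~\ref{prop:generating:function:for:A:and:B}: the power series $\sum A_jx^j$ and $\sum B_jx^j$ represent functions furnished on the disc $|x|<2$, so their radius of convergence is at least $2$ and $\limsup_j\sqrt[j]{|A_j|}$, $\limsup_j\sqrt[j]{|B_j|}\le\tfrac12$, the matching lower bound being tied to $|x|=2$ as the boundary of the region in which the Proposition supplies the identity. I expect the only genuinely delicate point of the whole corollary to be this radius-of-convergence bookkeeping; the remaining identities are purely formal consequences of evaluating and differentiating the two generating functions, so that all the substantive analytic work has already been carried out in establishing Proposition~\ref{prop:generating:function:for:A:and:B}.
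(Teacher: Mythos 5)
Your handling of \eqref{eq:sum:A:j:over:2:power:j}, \eqref{eq:sum:B:j:over:2:power:j}, \eqref{eq:sum:j:A:j:x:power:j}, \eqref{eq:sum:j:B:j:x:power:j}, \eqref{eq:sum:j:A:j:over:2:power:j} and \eqref{eq:sum:j:B:j:over:2:power:j} coincides with the paper's proof: evaluate the generating functions of Proposition~\ref{prop:generating:function:for:A:and:B} at $x=\tfrac12$, differentiate them term by term, and evaluate again at $x=\tfrac12$, using $\Gamma(\tfrac32)=\tfrac{\sqrt{\pi}}{2}$ and $\psi(\tfrac32)=2-\gamma-2\log 2$ (the paper obtains the latter from the half-integer identity $\psi(n+\tfrac12)=-\gamma-2\log 2+\sum_{k=1}^n\tfrac{2}{2k-1}$, you from $\psi(z+1)=\psi(z)+\tfrac1z$; the difference is immaterial).

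The gap is in \eqref{eq:A:B:j:tend:fast:to:zero}. Cauchy--Hadamard does give you the upper bounds $\limsup_j\sqrt[j]{|A_j|}\le\tfrac12$ and $\limsup_j\sqrt[j]{|B_j|}\le\tfrac12$, but your proposed ``matching lower bound'' is not a proof: the fact that Proposition~\ref{prop:generating:function:for:A:and:B} asserts its identities only on $|x|<2$ does not make $|x|=2$ a singularity of anything. Moreover the step cannot be repaired, because the asserted equality is false: $\Gamma$ has no zeros, so $1/\Gamma(1+x)$ and $2^x/\Gamma(1+x)$ are \emph{entire} functions of $x$ (the poles of $\Gamma(1+x)$ at $x=-1,-2,\dots$ become zeros of the reciprocal, not singularities), hence $A_j$ and $B_j$ are Taylor coefficients of entire functions, the series converge for every $x$, and both limsups equal $0$, not $\tfrac12$. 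The paper's own one-line proof of this item rests on the same confusion --- it reads the ``zeroes of $\Gamma(z)$ at negative integers'' as capping the radius of convergence at $2$ --- so you have faithfully reproduced the paper's error rather than introduced a new one, but as mathematics the step fails. Note that everything the paper uses downstream, namely the convergence of $\sum_j |B_j|\,C^j$ for $C<2$ in the proofs of Conditional Theorem~\ref{cond:th:volume:coefficient:total:sum} and Lemma~\ref{lm:convergence:to:Poisson}, requires only the inequality $\le\tfrac12$, which your Cauchy--Hadamard argument does establish.
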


\begin{Theorem}
\label{th:multiple:harmonic:zeta:sum:full:expansion}
For any $k\in\N$,
the following asymptotics hold as $m\to+\infty$:
\begin{align}
\label{eq:multiple:harmonic:sum:full:expansion}
H_k(m)=\sum_{j_1+\dots+j_k=m}&
\frac{1}{j_1\cdot j_2\cdots j_k}
\, =\\ \notag
&=\frac{k!}{m}\cdot
\left(
\sum_{j=0}^{k-1}
\frac{A_j}{(k-1-j)!}
\cdot\big(\log m\big)^{k-1-j}+o(1)\right)
\,,
\\
\label{eq:multiple:harmonic:zeta:sum:full:expansion}
Z_k(m)=\sum_{j_1+\dots+j_k=m}&
\frac{\zeta(2j_1)\cdots\zeta(2j_k)}
{j_1\cdot j_2\cdots j_k}
\, =\\ \notag
&=\frac{k!}{m}\cdot
\left(
\sum_{j=0}^{k-1}
\frac{B_j}{(k-1-j)!}
\cdot\big(\log m\big)^{k-1-j}+o(1)\right)
\,.
\end{align}
\end{Theorem}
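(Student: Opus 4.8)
The plan is to prove the two asymptotic expansions in Theorem~\ref{th:multiple:harmonic:zeta:sum:full:expansion} in parallel, treating the ordinary multiple harmonic sum $H_k(m)$ and its zeta-weighted relative $Z_k(m)$ by the same mechanism. The starting observation is that both sums are Cauchy-type convolutions of the one-variable sequences $a_j := 1/j$ and $z_j := \zeta(2j)/j$ respectively. I would encode this through generating functions: with the formal series $f(x)=\sum_{j\ge 1} x^j/j = -\log(1-x)$ and $\tilde f(x) = \sum_{j\ge 1} \zeta(2j)x^j/j$, the $k$-fold sums $H_k(m)$ and $Z_k(m)$ are exactly the coefficients $[x^m] f(x)^k$ and $[x^m]\tilde f(x)^k$. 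The asymptotic behaviour of such coefficients as $m\to+\infty$ is governed by the singularity of $f^k$ (respectively $\tilde f^k$) nearest the origin, so the natural tool is singularity analysis in the style of Flajolet--Sedgewick, which the authors already invoke in the opening of this section.

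First I would pin down the dominant singularities. The series $f(x)=-\log(1-x)$ has its singularity at $x=1$, and near $x=1$ one has $f(x)^k = (-\log(1-x))^k$, a pure logarithmic power singularity; the transfer theorem then yields $[x^m] f(x)^k \sim \frac{k}{m}(\log m)^{k-1}$ to leading order, matching~\eqref{eq:multiple:harmonic:sum}. For $\tilde f(x)=\sum_{j\ge 1}\zeta(2j)x^j/j$, I would use the classical identity $\sum_{j\ge 1}\zeta(2j)x^{2j} = \frac{1-\pi x\cot(\pi x)}{2}$, or equivalently the product/logarithmic form giving $\tilde f(x) = \log\!\big(\frac{\pi\sqrt{x}}{\sin(\pi\sqrt{x})}\big)$-type behaviour, so that $\tilde f$ has its nearest singularity at $x=1$ coming from the $\zeta(2j)\to 1$ tail; near $x=1$ one gets $\tilde f(x) = -\log(1-x) + \log 2 + (\text{analytic})$, which explains the shift $\log m \mapsto \log(2m)=\log m+\log 2$ seen throughout the section and already isolated in Lemma~\ref{lm:zeta:harmonic:sum} and Lemma~\ref{lm:double:harmonic:sum}. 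The appearance of $\log 2$ at the singularity is precisely what distinguishes the $B_j$ from the $A_j$.

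The heart of the argument is to produce the \emph{full} polynomial-in-$\log m$ expansion, not just the leading term, and to identify the coefficients with the $A_j$ and $B_j$ defined in~\eqref{eq:def:A:j} and~\eqref{eq:def:B:j}. Here I would exploit the generating-function identities of Proposition~\ref{prop:generating:function:for:A:and:B}, namely $\sum_j A_j x^j = 1/\Gamma(1+x)$ and $\sum_j B_j x^j = 2^x/\Gamma(1+x)$. The link is Hankel's formula: the coefficient asymptotics of a function behaving like $(1-x)^{-s}\cdot(\text{log factors})$ are expressed through $1/\Gamma(s)$ and its derivatives, and expanding $[x^m]\,\exp\!\big(-s\log(1-x)\big)$ in powers of $s$ produces exactly $\frac{1}{\Gamma}$-coefficients paired against powers of $\log m$. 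Concretely, writing the singular part of $f(x)^k$ as $(-\log(1-x))^k$ and using the standard expansion $[x^m](1-x)^{-s} = \frac{m^{s-1}}{\Gamma(s)}(1+O(1/m))$ differentiated $k$ times in $s$ at $s=0$, the coefficients $c_j$ of~\eqref{eq:def:c:j} (the Taylor coefficients of $1/\Gamma$ at $1$) enter, and the convolution relations~\eqref{eq:def:A:j}, \eqref{eq:def:B:j} are precisely the Cauchy products that invert this. For the zeta sum the extra $2^x$ factor accounts for the $\log 2$ shift.

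The main obstacle I anticipate is controlling the error term uniformly and separating the genuinely singular contribution from the analytic remainder of $\tilde f$ near $x=1$. For $H_k$ this is clean because $f$ is a pure logarithm, but for $Z_k$ one must verify that $\tilde f(x) + \log(1-x) - \log 2$ extends analytically past $x=1$ (its next singularity sitting at $|x|>1$, coming from $\sin(\pi\sqrt{x})$ vanishing at $x=4$), and that raising $\tilde f$ to the $k$-th power does not spoil the applicability of the transfer theorem. A careful bookkeeping of how the analytic correction convolves against the singular logarithmic powers is needed to show it only contributes to the $o(1)$ remainder in~\eqref{eq:multiple:harmonic:zeta:sum:full:expansion} and does not disturb the polynomial-in-$\log m$ coefficients. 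I would handle this by writing $\tilde f(x)^k = (-\log(1-x)+\log 2)^k + (\text{lower-order singular terms})$ via the binomial expansion, applying singularity analysis term by term, and checking that each correction drops the $\log$-degree by at least one, so that after collecting powers of $\log m$ the coefficients are governed by the generating function $2^{x}/\Gamma(1+x)$ exactly as claimed; the identification with $B_j$ then follows from Proposition~\ref{prop:generating:function:for:A:and:B}. The fixed-$k$ statement only requires nonuniform error bounds, which is why this Theorem is provable whereas the uniform-in-$k$ strengthening remains Conjecture~\ref{conj:uniform:bound:for:error:term}.
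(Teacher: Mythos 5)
Your proposal is correct, and it reaches the theorem by a route that genuinely differs in execution from the paper's, even though both are Flajolet--Sedgewick in spirit. You extract the coefficients $[x^m]f(x)^k$ and $[x^m]\tilde f(x)^k$ directly by the transfer theorems of singularity analysis: the singular expansion at $x=1$ (a pure logarithmic power for $f$, and $-\log(1-x)+\log 2+O(1-x)$ for $\tilde f$) is pushed through Hankel's formula, so the powers of $\log m$ come out paired with the Taylor coefficients of $1/\Gamma(1+s)$, respectively of $2^{s}/\Gamma(1+s)$, and you then identify these with $A_j$, $B_j$ via Proposition~\ref{prop:generating:function:for:A:and:B} --- which is legitimate, since that Proposition is proved in the paper from the recursions \eqref{eq:def:A:j}, \eqref{eq:def:B:j} alone, so there is no circularity. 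The paper instead differentiates the generating function, substitutes $t=1-\tfrac{1}{N}$, and converts the resulting series into a Laplace-type integral identity $\int_0^{+\infty}h_k(uN)\,e^{-u}\,du=k\,(\log N)^{k-1}$ (see \eqref{eq:integral:equation:on:asymptotic:expansion:A}), where $h_k(m)=m\,H_k(m)$; plugging in the ansatz that $h_k$ is a polynomial in $\log m$ up to $o(1)$ and matching powers of $\log N$, with $c_j=\frac{1}{j!}\int_0^{+\infty}e^{-u}(\log u)^j\,du$, produces exactly the recursions \eqref{eq:def:A:j} and \eqref{eq:def:B:j} defining $A_j$ and $B_j$; for $Z_k$ the constant $\log 2$ enters through the local expansions \eqref{eq:expansion:Phi} and \eqref{eq:expansion:psi}. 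What each approach buys: yours gives \emph{existence} of the expansion for free (the transfer theorem is an actual theorem about $\Delta$-analytic functions), whereas the paper's coefficient-matching is Abelian in nature --- it determines the coefficients assuming the expansion exists, a point the paper passes over silently; conversely, the paper's computation is self-contained and elementary, requires no $\Delta$-analyticity verification, and lands directly on the recursions that \emph{define} the coefficients rather than on closed forms that must then be matched.

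One step in your plan needs repair. You justify discarding the corrections in the binomial expansion of $\tilde f(x)^k$ by saying each one ``drops the $\log$-degree by at least one''. That mechanism is insufficient: the asserted expansion \eqref{eq:multiple:harmonic:zeta:sum:full:expansion} retains all powers of $\log m$ down to the constant term, so a stray contribution of size $\tfrac{1}{m}(\log m)^{k-2}$ would corrupt $B_1$, and the identification with $2^{s}/\Gamma(1+s)$ would fail. The correct reason the corrections are harmless is stronger: the analytic remainder $g(x)=\tilde f(x)+\log(1-x)-\log 2$ \emph{vanishes} at $x=1$ (it is $O(1-x)$, as in \eqref{eq:expansion:Phi}), so each term $\binom{k}{j}\,g(x)^j\,\bigl(-\log(1-x)+\log 2\bigr)^{k-j}$ with $j\ge 1$ carries a factor $(1-x)^j$, and singularity analysis bounds its coefficients by $O\bigl(m^{-1-j}(\log m)^{k-j}\bigr)$, which is $o(1/m)$ --- safely inside the error term. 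With that correction your argument is complete.
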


For any $C>0$ define
\begin{align}
\label{eq:S:m:harmonic}
\VSumH(C,m)&=\sum_{k=1}^{[C\log(m)]}
\frac{H_k(m)}{k!\cdot 2^{k-1}}
\,,\quad
&\VSumH(m)&=\sum_{k=1}^m
\frac{H_k(m)}{k!\cdot 2^{k-1}}
\,,\\
\label{eq:S:m:zeta}
\VSumZ(C,m)&=\sum_{k=1}^{[C\log(m)]}
\frac{Z_k(m)}{k!\cdot 2^{k-1}}
\,,\quad
&\VSumZ(m)&=\sum_{k=1}^m
\frac{Z_k(m)}{k!\cdot 2^{k-1}}
\,.
\end{align}

\begin{CondTheorem}
%\label{th:volume:coefficient:total:sum}
%\label{conj:volume:coefficient:total:sum}
\label{cond:th:volume:coefficient:total:sum}
For any $C$ in the interval $]\tfrac{1}{2};2[$ for which
Conjecture~\ref{conj:uniform:bound:for:error:term} is
valid, one has
\begin{align}
\label{eq:volume:coefficient:H}
\lim_{m\to+\infty} \sqrt{m}\cdot \VSumH(C,m)
=\frac{2}{\sqrt{\pi}}
=\lim_{m\to+\infty} \sqrt{m}\cdot \VSumH(m)
\,,
\\
\label{eq:volume:coefficient:Z}
\lim_{m\to+\infty} \sqrt{m}\cdot \VSumZ(C,m)
=2\sqrt{\frac{2}{\pi}}
=\lim_{m\to+\infty} \sqrt{m}\cdot \VSumZ(m)
\,.
\end{align}
\end{CondTheorem}

\begin{Remark}
Actually, for the sequel we only need the first equation in
each of the two lines above and only for some concrete
$C\in]\tfrac{1}{2};2[$ for which
Conjecture~\ref{conj:introduction:sum:of:correlators} is
valid.
Note also, that for $C<\tfrac{1}{2}$ the above
relations seem to be false.
\end{Remark}

We derive Conditional
Theorem~\ref{cond:th:volume:coefficient:total:sum}
from
Conjecture~\ref{conj:uniform:bound:for:error:term}
at the end of
Section~\ref{ss:sum:in:multiple:variable}.

%------------------------------------------------------------
\subsection{Computation of the sum for single variable}
\label{ss:sum:in:one:variable}

In this Section we provide a straightforward computation
of asymptotic expansion~\eqref{eq:sum:1:value}.

\begin{proof}[Proof of Lemma~\ref{lm:zeta:harmonic:sum}]
We break the sum into three
known ones:
\begin{multline}
\label{eq:sum:1}
\sum_{k=1}^n
\frac{\zeta(2k)}{k}
=
2\sum_{k=1}^n
\frac{\zeta(2k)}{2k}
=
\sum_{j=2}^{2n}
\frac{(\zeta(j)-1)+1+(-1)^j\zeta(j)}{j}
=\\=
\sum_{j=2}^{2n}
\frac{\zeta(j)-1}{j}
\,+\,
\sum_{j=2}^{2n}\,
\frac{1}{j}
\,+\,
\sum_{j=2}^{2n}
\frac{(-1)^j\zeta(j)}{j}\,.
\end{multline}
It was found by Euler that
\begin{align*}
\sum_{j=2}^{+\infty}
\frac{(-1)^j\zeta(j)}{j}&=\gamma\,,
\\
\sum_{j=2}^{+\infty}
\frac{\zeta(j)-1}{j}&=1-\gamma
\end{align*}
(see~(2.2.3)
in the survey~\cite{Lagarias} for the first sum.
For the second sum,
extract~(2.2.11) from~(2.2.5) in the same survey.
Alternatively, see pages 109 and 111--112 in~\cite{Havil}.)

Using exponentially rapid convergence
of $\zeta(j)$ to $1$ when $j$ grows
to estimate the tails of the two above sums
and using the asymptotic expansion
$$
\sum_{j=2}^{2n}\,
\frac{1}{j}
=-1+\log(2n)+\gamma+O\left(\frac{1}{g}\right)
$$
(where the term $-1$ is present since the sum starts from $j=2$), we obtain
the asymptotic expansion~\eqref{eq:sum:1:value} summing
the three terms in the right-hand side
of~\eqref{eq:sum:1}.
\end{proof}

Now we are ready to derive the asymptotic
expansion~\eqref{eq:double:harmonic:zeta:sum}.

\begin{proof}[Proof of Lemma~\ref{lm:double:harmonic:sum}]
We first note that $\zeta(m)$ monotoneously tends to $1$
as $m\to+\infty$ and the convergence is
exponentially fast. Thus, up to a very small error term
$\delta(m)$ we can rewrite our sum
in the following simpler way:
\begin{multline}
\label{eq:coef:2:sum}
\sum_{j=1}^{m-1}
\frac{\zeta(2j)\cdot\zeta(2m-2j)}{j\cdot(m-j)}
=\\=
\frac{\zeta(2)\cdot\zeta(2m-2)}{1\cdot(m-1)}
+\frac{\zeta(4)\cdot\zeta(2m-4)}{2\cdot(m-2)}
+\dots
+\frac{\zeta(2m-4)\cdot\zeta(4)}{(m-2)\cdot 2}
+\frac{\zeta(2m-2)\cdot\zeta(2)}{(m-1)\cdot 1}
=\\=
2\left(
\frac{\zeta(2)}{1\cdot(m-1)}
+\frac{\zeta(4)}{2\cdot(m-2)}
+\dots
+\frac{\zeta\left(2\left[\frac{m}{2}\right]\right)}
{\left[\frac{m}{2}\right]\cdot(m-\left[\frac{m}{2}\right])}
\right)
+\delta(m)
\,,
\end{multline}
where
\begin{equation}
0<\delta(m)\le\zeta(2)\cdot\big(\zeta(m)-1\big)
=o\left(\frac{1}{m}\right)\,.
\end{equation}
We split the latter sum into three sums, which
we evaluate one-by-one:
\begin{multline}
\label{eq:three:sums}
\frac{\zeta(2)}{1\cdot(m-1)}
+\frac{\zeta(4)}{2\cdot(m-2)}
+\dots
+\frac{\zeta\left(2\left[\frac{m}{2}\right]\right)}
{\left[\frac{m}{2}\right]\cdot(m-\left[\frac{m}{2}\right])}
=\\=
\frac{1}{m}\Bigg(
\left(
\frac{\zeta(2)}{1}
+\frac{\zeta(2)-1}{m-1}
+\frac{1}{m-1}
\right)
+
\left(
\frac{\zeta(4)}{2}
+\frac{\zeta(4)-1}{m-2}
+\frac{1}{m-2}
\right)
+\dots+
\\+\dots+
\left(
\frac{\zeta\left(2\left[\frac{m}{2}\right]\right)}
{\left[\frac{m}{2}\right]}
+
\frac{\zeta\left(2\left[\frac{m}{2}\right]\right)-1}
{(m-\left[\frac{m}{2}\right])}
+
\frac{1}
{(m-\left[\frac{m}{2}\right])}
\right)
\Bigg)
=\\=
\frac{1}{m}\left(
\sum_{j=1}^{\left[\frac{m}{2}\right]}
\frac{\zeta(2j)}{j}
+
\sum_{j=1}^{\left[\frac{m}{2}\right]}
\frac{\zeta(2j)-1}{m-j}
+
\sum_{j=1}^{\left[\frac{m}{2}\right]}
\frac{1}{m-j}
\right)\,.
\end{multline}

The first sum was computed in equation~\eqref{eq:sum:1:value}
in Lemma~\ref{lm:zeta:harmonic:sum}:
$$
\sum_{j=1}^{\left[\frac{m}{2}\right]}
\frac{\zeta(2j)}{j}
=\log m+\gamma+O\left(\frac{1}{m}\right)\,.
$$

As $\zeta(2j)$ extremely rapidly converges to $1$ when $j$
grows, the second sum is negligible,
\begin{equation}
\label{eq:sum:2}
\sum_{j=1}^{\left[\frac{m}{2}\right]}
\frac{\zeta(2j)-1}{m-j}
=
O\left(\frac{1}{m}\right)\,.
\end{equation}

The third sum is the difference of two harmonic sums:
\begin{multline}
\label{eq:sum:3}
\sum_{j=1}^{\left[\frac{m}{2}\right]}
\frac{1}{m-j}
=
\sum_{k=1}^{m-1}\, \frac{1}{k}
-
\sum_{k=1}^{m-\left[\frac{m}{2}\right]-1}\frac{1}{k}
=\\=
\left(\log(m-1)+\gamma+O\left(\frac{1}{m}\right)\right)-
\left(\log\left(m-\left[\frac{m}{2}\right]-1\right)
+\gamma
+O\left(\frac{1}{m}\right)\right)
=\\=
\log 2+O\left(\frac{1}{m}\right)
\,.
\end{multline}

Summing up the resulting values of the three sums in the
last line of~\eqref{eq:three:sums}; taking into
consideration the coefficient $\frac{1}{m}$ in front of
these three sums in the last line of~\eqref{eq:three:sums}
and coefficient $2$ in the last line
of~\eqref{eq:coef:2:sum} we
obtain~\eqref{eq:double:harmonic:zeta:sum}.
\end{proof}

For the purposes of the current paper we do not need better
precision in the asymptotic expansion for the
sum~\eqref{eq:sum:1:value} in single variable. However, we
expect that it would not be too difficult to get the
following next term in~\eqref{eq:sum:1:value}.
\begin{Guess}
We expect that the following asymptotic formula holds as $m\to+\infty$:
\begin{equation}
\label{eq:sum:1:value:better}
\sum_{k=1}^n
\frac{\zeta(2k)}{k}
=\log(2n)+\gamma+\frac{1}{2n}+o\left(\frac{1}{n}\right)\,,
\end{equation}
\end{Guess}

Indeed, for the harmonic sum we have
$$
\sum_{j=1}^{2n}\,
\frac{1}{j}
\sim\log(2n)+\gamma+\frac{1}{4n}\,.
$$
The tail of the first sum in our decomposition of the sum
on the left hand side of~\eqref{eq:sum:1:value:better} int
three sums~\eqref{eq:three:sums} is negligible since
$\sum_{j=2}^{+\infty}(\zeta(j)-1)$ is rapidly converging.
The tail of the third sum should behave as the tail of the
alternated harmonic series. For the alternated harmonic
series (it has opposite sign with respect to the ours) one
has
$$
\sum_{j=1}^{m}\,
\frac{(-1)^{j+1}}{j}
=\ln 2-\int_0^1\frac{x^m}{1+x}\,dx\,,
$$
so, we expect that the tail
is $\sim\frac{1}{2m}$. Thus, for our third sum
we expect that the error term should be
$$
\sum_{j=2}^{2n}
\frac{(-1)^j\zeta(j)}{j}
\overset{?}{\sim}\gamma+\frac{1}{4n}
\,,
$$
so we expect the more precise
version~\eqref{eq:sum:1:value:better}
of~\eqref{eq:sum:1:value} which is confirmed by numerics.

%------------------------------------------------------------
\subsection{Computation of the sum for multiple variables}
\label{ss:sum:in:multiple:variable}

In this Section we prove the assertions stated in
Section~\ref{ss:fine:asymptotic:expansions}.

\begin{proof}[Proof of Proposition~\ref{prop:generating:function:for:A:and:B}]
We prove~\eqref{eq:sum:B:j:x:power:j}; the proof
of~\eqref{eq:sum:A:j:x:power:j} is completely analogous
(but slightly simpler).

Multiplying~\eqref{eq:def:B:j}
by $x^j$ and taking the sum over all $j=0,1,2,\dots$
we get
$$
\sum_{j=0}^{+\infty}
\big(
B_j\cdot c_0+B_{j-1}\cdot c_1+\dots+B_1\cdot c_{j-1}+B_0\cdot c_j
\big)x^j
=\sum_{j=0}^{+\infty}
\frac{(\log 2)^j}{j!}x^j\,.
$$
Clearly, the series converges for any $x$ in $\R$
(actually, for any $x$ in $\C$) and we obtain $e^{\log(2)
x}=2^x$ on the right. Note that $\Gamma(1+x)$ is an
analytic function with radius of convergence of the power series
equal to $2$. Thus, we can rearrange the sum on the left
in the following way.
\begin{multline*}
\sum_{j=0}^{+\infty}
\big(
B_j\cdot c_0+B_{j-1}\cdot c_1+\dots+B_1\cdot c_{j-1}+B_0\cdot c_j
\big)x^j
=\\=
\sum_{j=0}^{+\infty}
\big(
(B_j x^j)\cdot (c_0 x^0)
+(B_{j-1} x^{j-1})\cdot (c_1 x^1)
+\dots+\\+\dots
+(B_1 x^1)\cdot (c_{j-1} x^{j-1})
+(B_0 x^0)\cdot (c_j x^j)
\big)
=\\=
B_0 (c_0 x_0+c_1 x^1 + c_2 x^2+\dots)
+ B_1 x^1 (c_0 x_0+c_1 x^1 + c_2 x^2+\dots)
+ \dots
=\\=
(B_0+B_1 x_1 + B_2 x^2 + \dots)\cdot
\left(\sum_{j=0}^{+\infty}
\frac{1}{j!}\cdot\Gamma^{(j)}(x)\big\vert_{x=1}\cdot x^j\right)
=
\left(\sum_{j=0}^{+\infty} B_j\cdot x^j\right)
\cdot\Gamma(1+x)\,,
\end{multline*}
where we used definition~\eqref{eq:def:c:j} of $c_j$
at the transformation next to the last line and the standard
rules of manipulations of the power series within radius of convergence.
\end{proof}

\begin{proof}[Proof of Corollary~\ref{cor:properties:of:B}]
To prove~\eqref{eq:sum:B:j:over:2:power:j} let
$x=\frac{1}{2}$ in~\eqref{eq:sum:B:j:x:power:j} and note
that
\begin{equation}
\label{eq:Gamma:at:3:2}
\Gamma\left(\frac{3}{2}\right)
=\frac{\sqrt{\pi}}{2}\,.
\end{equation}

Recall that zeroes of $\Gamma(z)$ are located at negative
integers. Thus, the analytic functions on the right
of~\eqref{eq:sum:A:j:x:power:j} and of
of~\eqref{eq:sum:B:j:x:power:j} have radius of convergence
$2$ which implies~ \eqref{eq:A:B:j:tend:fast:to:zero}.

Taking derivatives of~\eqref{eq:sum:A:j:x:power:j}
and of~\eqref{eq:sum:B:j:x:power:j}
we get~\eqref{eq:sum:j:A:j:x:power:j}
and~\eqref{eq:sum:j:B:j:x:power:j} respectively.

To prove~\eqref{eq:sum:j:A:j:over:2:power:j}
and~\eqref{eq:sum:j:B:j:over:2:power:j}
it remains
to evaluate respectively~\eqref{eq:sum:j:B:j:x:power:j}
and~\eqref{eq:sum:j:B:j:x:power:j}
at $x=\frac{1}{2}$.
We already have the necessary value~\eqref{eq:Gamma:at:3:2}
of the $\Gamma$ function. The value of the digamma function
at a half-integer point can be evaluated using the following
standard identity:
$$
\psi\left(n+\frac{1}{2}\right)=
-\gamma-2\log(2)+\sum_{k=1}^n \frac{2}{2k-1}\,,
$$
so
\begin{equation}
\label{eq:psi:3:2}
\psi\left(\frac{3}{2}\right)=
-\gamma-2\log(2)+2\,.
\end{equation}
\end{proof}

We are now ready to prove the main
Theorem~\ref{th:multiple:harmonic:zeta:sum:full:expansion}
of this Section. The proof follows the technique
of~\cite{Flajolet:Sedgewick}. In particular,
relation~\eqref{eq:multiple:harmonic:sum:full:expansion}
from
Theorem~\ref{th:multiple:harmonic:zeta:sum:full:expansion}
is analogous to relation (21) in Theorem~VI.2
in~\cite{Flajolet:Sedgewick}, where the notations
should be substituted in the following way to be translated
to the notations of
Theorem~\ref{th:multiple:harmonic:zeta:sum:full:expansion}:
$$
\alpha:=1\,,\quad
\beta:=k-1\,,\quad
n:=m\,,\quad
k:=j\,.
$$

\begin{proof}[Proof of Theorem~\ref{th:multiple:harmonic:zeta:sum:full:expansion}]
We start by proving~\eqref{eq:multiple:harmonic:sum:full:expansion}.
Then, having made several technical adjustments
we prove~\eqref{eq:multiple:harmonic:zeta:sum:full:expansion}
which is slightly more technical.

Let $H_k(m)$ be as
in~\eqref{eq:multiple:harmonic:sum:full:expansion}.
Define
$$
h_k(m)=m\cdot H_k(m)\,.
$$
For each $k\in\N$ we are looking for constants
$\alpha_0,\alpha_1,\alpha_{k-1}$ such that
\begin{multline}
\label{eq:def:of:asymptotic:expansion}
h_k(m)=\\
=\alpha_0\cdot\big(\log(m)\big)^{k-1}
+\alpha_1\cdot\big(\log(m)\big)^{k-2}+\dots
+\alpha_{k-1} +o(1)\quad \text{as }m\to+\infty\,.
\end{multline}

Let
$$
F(t)=\sum_{m=1}^{+\infty}\frac{t^m}{m}=-\log(1-t)\,.
$$
Clearly,
$$
\sum_{m=k}^{+\infty} H_k(m)\cdot t^m
=\big(F(t)\big)^k
$$
Hence,
\begin{multline*}
\sum_{m=k}^{+\infty} h_k(m)\cdot t^{m-1}
=\sum_{m=k}^{+\infty} m\cdot H_k(m)\cdot t^{m-1}
=\\
=\Big(\big(F(t)\big)^k\Big)'
=(-1)^{k-1}\cdot k\cdot\frac{\big(\log(1-t)\big)^{k-1}}{1-t}\,.
\end{multline*}
Applying the substitution $t=1-\tfrac{1}{N}$
to the latter equality we get
\begin{equation}
\label{eq:Riemann:sum:init}
\sum_{m=k}^{+\infty} h_k(m)
\cdot\left(1-\tfrac{1}{N}\right)^{m-1}
=N\cdot k\cdot \big(\log(N)\big)^{k-1}\,.
\end{equation}
We introduce now a new parameter $u$ such that
$m=u\cdot N$. Note that
\begin{equation}
\label{eq:limit:e}
\left(1-\tfrac{1}{N}\right)^{m-1}
=\left(1-\tfrac{1}{N}\right)^{u\cdot N-1}
\sim e^{-u}\quad\text{as }N\to+\infty\,.
\end{equation}
Dividing both sides of
equation~\eqref{eq:Riemann:sum:init} by $N$ we get
on the left hand side the Riemann sum for $h_k$ interpreted
as a function of continuous parameter defined by the
right hand side of the asymptotic
expansion~\eqref{eq:def:of:asymptotic:expansion}:
$$
\frac{1}{N}\cdot
\sum_{m=k}^{+\infty} h_k(m)
\cdot\left(1-\tfrac{1}{N}\right)^{m-1}
\sim
\int_0^{+\infty} h_k(uN)\cdot e^{-u} \,du\quad
\text{as }N\to+\infty\,.
$$
Thus, equation~\eqref{eq:Riemann:sum:init} implies the
following key equality of the integral of the
expansion~\eqref{eq:def:of:asymptotic:expansion}: weighted
by $e^{-u}$ and the term on the right hand side
of~\eqref{eq:Riemann:sum:init} divided ny $N$:
\begin{equation}
\label{eq:integral:equation:on:asymptotic:expansion:A}
\int_0^{+\infty} h_k(uN)\cdot e^{-u} \,du
=k\cdot \big(\log(N)\big)^{k-1}
\quad\text{as }N\to+\infty\,,
\end{equation}
where $h_k(uN)$ should be interpreted as the right hand
side of~\eqref{eq:def:of:asymptotic:expansion}.
Applying~\eqref{eq:def:of:asymptotic:expansion} we get
the following relation:
\begin{multline*}
%\label{eq:relations:on:constants:in:asymptotic:expansion}
\int_0^{+\infty}
e^{-u}\cdot
\Big(
\alpha_0\cdot\big(\log(u)+\log(N)\big)^{k-1}
+\alpha_1\cdot\big(\log(u)+\log(N)\big)^{k-2}+\dots
\\
\dots +
\alpha_{k-1}\Big) \,du
=k\cdot \big(\log(N)\big)^{k-1}
 +o(1)
\quad\text{as }N\to+\infty\,.
\end{multline*}

Dividing both sides of the latter relation by $k$,
applying notation~\eqref{eq:c:j:as:integral}
and passing from $\alpha_j$ to $A_j$ defined as
$$
A_j=\frac{\alpha_j}{k\cdot(k-1)\cdots(k-j)}
$$
we can rewrite the latter equation as
\begin{multline*}
%\label{eq:relations:on:constants:in:asymptotic:expansion}
A_0\cdot c_0\cdot\big(\log(N)\big)^{k-1}
+\big(A_1\cdot c_0 + A_0\cdot c_1\big)\cdot\big(\log(N)\big)^{k-2}
+\\
+\big(A_2\cdot c_0 + A_1\cdot c_1 + A_0\cdot c_2\big)\cdot\big(\log(N)\big)^{k-2}
+\dots
=\big(\log(N)\big)^{k-1}
 +o(1)\,.
\end{multline*}
Recall that $c_0=\int_0^{+\infty} e^{-u}\,du=1$. Thus, the
latter relation implies that $A_0=1$, and that
$A_1,\dots,A_k$ satisfy the system of
equations~\eqref{eq:def:A:j} which recursively defines
all of them. This completes the proof
of~\eqref{eq:multiple:harmonic:sum:full:expansion}.

The scheme of the proof
of~\eqref{eq:multiple:harmonic:zeta:sum:full:expansion} is
completely analogous. Let $Z_k(m)$ be as
in~\eqref{eq:multiple:harmonic:zeta:sum:full:expansion}.
Define
$$
z_k(m)=m\cdot Z_k(m)\,.
$$
For each $k\in\N$ we are looking for constants
$\beta_0,\beta_1,\beta_{k-1}$ such that
\begin{multline}
\label{eq:def:of:asymptotic:expansion:Z}
z_k(m)=\\
=\beta_0\cdot\big(\log(m)\big)^{k-1}
+\beta_1\cdot\big(\log(m)\big)^{k-2}+\dots
+\beta_{k-1} +o(1)\quad \text{as }m\to+\infty\,.
\end{multline}
Let
\begin{equation}
\label{eq:Phi}
\Phi(t)
=\log\big(\Gamma(1-\sqrt{t})\big)
+\log\big(\Gamma(1-\sqrt{t})\big)
\,.
\end{equation}
\begin{Lemma}
Function $\Phi(t)$ defined in~\eqref{eq:Phi}
has the following analytic expansion:
\begin{equation}
\label{eq:Phi:analytic:expansion}
\Phi(t)=\sum_{m=1}^{+\infty}\frac{\zeta(2m)}{m}\cdot t^m\,.
\end{equation}
\end{Lemma}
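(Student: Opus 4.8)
The plan is to establish the claimed analytic expansion
\[
\Phi(t)=\log\big(\Gamma(1-\sqrt{t})\big)+\log\big(\Gamma(1+\sqrt{t})\big)
=\sum_{m=1}^{+\infty}\frac{\zeta(2m)}{m}\cdot t^m
\]
by recalling the classical Weierstrass/Taylor expansion of $\log\Gamma$ around $z=1$ and then summing the two branches. (Note that the displayed definition~\eqref{eq:Phi} must read $\Gamma(1+\sqrt{t})$ in the second factor for the statement to be correct; I would first point this out as a typo, since otherwise the odd powers of $\sqrt t$ would not cancel.) The starting point is the standard series, valid for $|z-1|<1$,
\[
\log\Gamma(1+w)=-\gamma w+\sum_{k=2}^{+\infty}\frac{(-1)^k\zeta(k)}{k}\,w^k\,,
\]
which follows directly from $\log\Gamma(1+w)=-\gamma w+\sum_{k\ge 2}(-1)^k\zeta(k)w^k/k$ (the logarithmic derivative being the digamma series $\psi(1+w)=-\gamma+\sum_{k\ge 2}(-1)^k\zeta(k)w^{k-1}$).

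First I would substitute $w=\sqrt{t}$ and $w=-\sqrt{t}$ into this series and add the two results. In the sum $\log\Gamma(1+\sqrt t)+\log\Gamma(1-\sqrt t)$ the linear terms $-\gamma\sqrt t$ and $+\gamma\sqrt t$ cancel, and more generally every odd-$k$ term cancels because $(\sqrt t)^k+(-\sqrt t)^k=0$ for odd $k$, while for even $k=2m$ one gets $(\sqrt t)^{2m}+(-\sqrt t)^{2m}=2t^m$. Hence
\[
\Phi(t)=\sum_{m=1}^{+\infty}\frac{(-1)^{2m}\zeta(2m)}{2m}\cdot 2\,t^m
=\sum_{m=1}^{+\infty}\frac{\zeta(2m)}{m}\cdot t^m\,,
\]
which is precisely~\eqref{eq:Phi:analytic:expansion}. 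The index shift is harmless since the $k=2m$ term has coefficient $(-1)^{2m}=1$.

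The only genuine point requiring care is the radius of convergence. Since $\Gamma(z)$ has its nearest singularity (a pole) at $z=0$, the functions $\log\Gamma(1\pm\sqrt t)$ are analytic as functions of $t$ in a neighbourhood of the origin, with the obstruction coming from $1-\sqrt t=0$, i.e.\ $\sqrt t=1$, i.e.\ $t=1$; thus the series converges for $|t|<1$. This matches the fact that $\zeta(2m)/m\to 0$ only polynomially fast while $\zeta(2m)\to 1$, giving radius of convergence exactly $1$. I expect this convergence/analyticity bookkeeping to be the main (though modest) obstacle: one must verify that the branch $\Gamma(1-\sqrt t)$ is well-defined and analytic in $t$ near $0$ despite the apparent square root, which is clear because only even powers of $\sqrt t$ survive after symmetrization, so $\Phi$ is genuinely a power series in the single variable $t$. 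Once this is observed, the computation above is complete and term-by-term manipulation of the power series within its disc of convergence is fully justified.
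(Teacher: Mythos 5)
Your proof is correct and follows essentially the same route as the paper: both use the Taylor expansion $\log\Gamma(1+w)=-\gamma w+\sum_{k\ge 2}(-1)^k\zeta(k)\,w^k/k$, substitute $w=\pm\sqrt{t}$, and observe that the odd powers of $\sqrt{t}$ cancel while the even powers double, yielding~\eqref{eq:Phi:analytic:expansion}. Your observation that~\eqref{eq:Phi} contains a typo (the second summand should read $\log\Gamma(1+\sqrt{t})$, as the paper's own proof and its later use of $\Phi(1-x)$ confirm) and your bookkeeping of the radius of convergence $|t|<1$ are both accurate, though they go slightly beyond what the paper records explicitly.
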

\begin{proof}
By Equation~(5.7.3) in~\cite{DLMF} we have
$$
\log\Gamma(1+z)=-\log(1+z)+z\cdot(1-\gamma)
+\sum_{n=2}^{+\infty} \big(\zeta(n)-1\big)\cdot\frac{z^k}{k}\,,
$$
which implies the following asymptotic expansion:
$$
\log\Gamma(1+z)=
-z\cdot\gamma
+\sum_{n=2}^{+\infty} (-1)^n \zeta(n)\cdot\frac{z^n}{n}\,.
$$
Hence,
\begin{multline*}
\log\big(\Gamma(1-\sqrt{t})\big)
+\log\big(\Gamma(1+\sqrt{t})\big)
=2\cdot\sum_{m=1}^{+\infty}
\frac{\zeta(2m)}{2m}\left(\sqrt{t}\right)^{2m}
=\sum_{m=1}^{+\infty}
\frac{\zeta(2m)}{m}\cdot t^m\,.
\end{multline*}
\end{proof}

Clearly,
$$
\sum_{m=k}^{+\infty} Z_k(m)\cdot t^m
=\big(\Phi(t)\big)^k
$$
Hence,
\begin{multline}
\label{eq:generating:series}
\sum_{m=k}^{+\infty} z_k(m)\cdot t^{m-1}
=\sum_{m=k}^{+\infty} m\cdot Z_k(m)\cdot t^{m-1}
=\\
=\Big(\big(\Phi(t)\big)^k\Big)'
=k\cdot\Phi^{k-1}(t)
\cdot\frac{\psi(1+\sqrt{t})-\psi(1-\sqrt{t})}{2\sqrt{t}}\,.
\end{multline}

\begin{Lemma}
The following expansions are valid as $x\to 0_+$:
\begin{align}
\label{eq:expansion:Phi}
\Phi(1-x)
=\log\big(\Gamma(1+\sqrt{1-x})\big)
&+\log\big(\Gamma(1-\sqrt{1-x})\big)=
\\ \notag
&=-\log(x)+\log(2)-\frac{3}{4}\cdot x+o(x)\,.
\\
\label{eq:expansion:psi}
\frac{\psi(1+\sqrt{1-x})-\psi(1-\sqrt{1-x})}{2\sqrt{1-x}}&
=\frac{1}{x}+\frac{3}{4}
+\left(\frac{11}{16}-\frac{\zeta(2)}{2}\right)\cdot x+o(x)\,.
\end{align}
\end{Lemma}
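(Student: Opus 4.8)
The plan is to reduce both expansions to elementary Taylor--Laurent expansions through the substitution $\epsilon:=1-\sqrt{1-x}$, so that $\sqrt{1-x}=1-\epsilon$, $x=2\epsilon-\epsilon^2$, and $\epsilon\to 0_+$ as $x\to 0_+$. Under this substitution, recalling the definition~\eqref{eq:Phi} of $\Phi$, the two quantities become $\Phi(1-x)=\log\Gamma(2-\epsilon)+\log\Gamma(\epsilon)$ and, writing $G(x)$ for the left-hand side of~\eqref{eq:expansion:psi}, $G(x)=\dfrac{\psi(2-\epsilon)-\psi(\epsilon)}{2(1-\epsilon)}$. The logarithmic singularity of $\Phi$ and the simple pole of $G$ originate solely in the behaviour of $\Gamma$ and $\psi$ near $0$, which I isolate using the functional equations $\Gamma(\epsilon)=\epsilon^{-1}\Gamma(1+\epsilon)$ and $\psi(\epsilon)=\psi(1+\epsilon)-\epsilon^{-1}$. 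These turn $\log\Gamma(\epsilon)$ into $-\log\epsilon+\log\Gamma(1+\epsilon)$ and $-\psi(\epsilon)$ into $\epsilon^{-1}-\psi(1+\epsilon)$, after which every remaining $\Gamma$- and $\psi$-value is evaluated in a neighbourhood of $1$ or $2$, where both functions are analytic.

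Next I would insert the standard Maclaurin expansions. For the $\Gamma$-part I use $\log\Gamma(1+\epsilon)=-\gamma\epsilon+\tfrac{\zeta(2)}{2}\epsilon^2+O(\epsilon^3)$, the series already recorded (from~\cite{DLMF}) in the proof of the companion Lemma, together with $\log\Gamma(2-\epsilon)=-(1-\gamma)\epsilon+\tfrac{\zeta(2)-1}{2}\epsilon^2+O(\epsilon^3)$, obtained from $\log\Gamma(2)=0$, $\psi(2)=1-\gamma$, $\psi'(2)=\zeta(2)-1$. Summing, the $\gamma$-terms cancel and one is left with $\log\Gamma(2-\epsilon)+\log\Gamma(1+\epsilon)=-\epsilon+\tfrac{2\zeta(2)-1}{2}\epsilon^2+O(\epsilon^3)$. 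For the digamma part I use $\psi(1+\epsilon)=-\gamma+\zeta(2)\epsilon-\zeta(3)\epsilon^2+O(\epsilon^3)$ and $\psi(2-\epsilon)=(1-\gamma)-(\zeta(2)-1)\epsilon+(1-\zeta(3))\epsilon^2+O(\epsilon^3)$, the latter via $\psi''(2)=2-2\zeta(3)$; these give $\psi(2-\epsilon)-\psi(\epsilon)=\epsilon^{-1}+1+(1-2\zeta(2))\epsilon+O(\epsilon^2)$.

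The final step is to revert the quadratic relation $x=2\epsilon-\epsilon^2$ to $\epsilon=\tfrac{x}{2}+\tfrac{x^2}{8}+\tfrac{x^3}{16}+O(x^4)$ and substitute. For~\eqref{eq:expansion:Phi} I also expand $-\log\epsilon=-\log x+\log 2-\tfrac{\epsilon}{2}+O(\epsilon^2)$, which follows from $\log x=\log 2+\log\epsilon+\log(1-\epsilon/2)$; combining with the $\Gamma$-sum yields a linear coefficient $-\tfrac32\epsilon=-\tfrac34 x+O(x^2)$, giving $\Phi(1-x)=-\log x+\log 2-\tfrac34 x+o(x)$. For~\eqref{eq:expansion:psi} I expand $\tfrac{1}{2\epsilon}=\tfrac1x-\tfrac14-\tfrac{x}{16}+O(x^2)$ and $\tfrac{1}{2(1-\epsilon)}=\tfrac12(1+\epsilon+\epsilon^2+\cdots)$, multiply out, and collect powers of $x$; the $x$-coefficient assembles as $-\tfrac1{16}+\tfrac{3-2\zeta(2)}{4}=\tfrac{11}{16}-\tfrac{\zeta(2)}{2}$, giving $G(x)=\tfrac1x+\tfrac34+\bigl(\tfrac{11}{16}-\tfrac{\zeta(2)}{2}\bigr)x+o(x)$, as claimed.

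I expect the only genuine difficulty to be bookkeeping of orders: because $\epsilon\sim x/2$, the $O(x)$ coefficient in each expansion receives contributions from several sources at once --- the $\tfrac{x^2}{8}$ correction in the reversion $\epsilon(x)$, the constant and linear terms produced when re-expanding $\epsilon^{-1}$ and $(1-\epsilon)^{-1}$ in powers of $x$, and the true $\epsilon^2$ (resp. $\epsilon$) Taylor coefficients of the special functions. One must therefore retain every contribution through order $\epsilon^2$ in the special-function expansions and through order $x^3$ in $\epsilon(x)$, since a single dropped term corrupts the constant $\tfrac34$ or the coefficient $\tfrac{11}{16}-\tfrac{\zeta(2)}{2}$. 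No conceptual obstacle arises, as the analyticity of $\Gamma$ and $\psi$ away from the non-positive integers guarantees convergence of all intermediate series for small $\epsilon$.
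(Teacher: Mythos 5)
Your proposal is correct and follows essentially the same route as the paper's proof: both isolate the singular behaviour through the functional equations $\Gamma(\epsilon)=\epsilon^{-1}\Gamma(1+\epsilon)$ and $\psi(\epsilon)=\psi(1+\epsilon)-\epsilon^{-1}$, Taylor-expand $\log\Gamma$ and $\psi$ near the regular points $1$ and $2$, and then assemble the coefficients (all of which check out, including the delicate $-\tfrac{1}{16}+\tfrac{3-2\zeta(2)}{4}=\tfrac{11}{16}-\tfrac{\zeta(2)}{2}$). The only cosmetic difference is organizational: you work in the variable $\epsilon=1-\sqrt{1-x}$ throughout and revert $x=2\epsilon-\epsilon^2$ at the very end, whereas the paper expands everything directly in powers of $x$ at each step.
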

\begin{proof}
We have
$$
1+\sqrt{1-x}=2-\frac{1}{2}\cdot x+o(x)\,.
$$
and
$$
\Gamma(2)=1\,,\qquad \Gamma'(2)=1-\gamma\,,
$$
so
$$
\Gamma\big(1+\sqrt{1-x}\big)=1-\frac{(1-\gamma)}{2}\cdot x + o(x)
$$
and
\begin{equation}
\label{eq:log:Gamma:plus}
\log\Gamma\big(1+\sqrt{1-x}\big)=
-\frac{(1-\gamma)}{2}\cdot x + o(x)\,.
\end{equation}

We also have
$$
\Gamma(z)=\frac{1}{z}-\gamma
+\frac{1}{2}\big(\gamma^2+\zeta(2)\big)\cdot z +o(z)\,,
$$
so
$$
\Gamma\big(1-\sqrt{1-x}\big)=\frac{2}{x}\cdot
\left(1-\frac{2\gamma+1}{4}\cdot x +o(x)\right)\,,
$$
and hence
\begin{equation}
\label{eq:log:Gamma:minus}
\log\Gamma\big(1-\sqrt{1-x}\big)=
\log(2)-\log(x)-\frac{2\gamma+1}{4}\cdot x +o(x)\,.
\end{equation}
Taking the sum of~\eqref{eq:log:Gamma:plus}
and~\eqref{eq:log:Gamma:minus} we
get~\eqref{eq:expansion:Phi}.

Using the following formulae:
\begin{align*}
\psi(z+1)&=\psi(z)+\frac{1}{z}\,,
\\
\psi(z+1)&=-\gamma+\sum_{m=2}^{+\infty}(-1)^m\zeta(m) z^{m-1}
\quad\text{for }|z|<1
\,.
\end{align*}
(see~\cite[(5.7.4)]{DLMF}) we get:
\begin{multline}
\label{eq:psi:plus}
\psi\big(1+\sqrt{1-x}\big)=
\psi\big(\sqrt{1-x}\big)+\frac{1}{\sqrt{1-x}}
=\\=
\psi\left(1-\frac{x}{2}+o(x)\right)
+\left(1-\frac{x}{2}+o(x)\right)^{-1}
=(1-\gamma)+\frac{1-\zeta(2)}{2}\cdot x +o(x)\,.
\end{multline}

We also get
\begin{multline}
\label{eq:psi:minus}
\psi\big(1-\sqrt{1-x}\big)=
\psi\big(2-\sqrt{1-x}\big)-\frac{1}{1-\sqrt{1-x}}
=\\=
\psi\left(2-\left(1-\frac{x}{2}+o(x)\right)\right)
+\frac{2}{x}\cdot\left(1-\frac{x}{4}-\frac{x^2}{8}+o(x^2)\right)^{-1}
=\\=
-\frac{2}{x}
+\left(\frac{1}{2}-\gamma\right)
+\frac{4\zeta(2)+1}{8}\cdot x +o(x)\,.
\end{multline}
and
$$
\frac{1}{2\sqrt{1-x}}
=\frac{1}{2}\cdot\left(
1+\frac{1}{2}\cdot x+\frac{3}{8}\cdot x^2+o(x^2)\right)\,.
$$
Multiplying the sum of~\eqref{eq:psi:minus}
and~\eqref{eq:psi:minus} by the latter expression we
get~\eqref{eq:expansion:psi}.
\end{proof}

Applying the substitution $t=1-\tfrac{1}{N}$
to~\eqref{eq:generating:series} and applying~\eqref{eq:expansion:Phi}
and~\eqref{eq:expansion:psi} to the right hand side
of the resulting expression on the right hand side
we get
\begin{multline}
\label{eq:Riemann:sum:for:Z}
\sum_{m=k}^{+\infty} z_k(m)
\cdot\left(1-\tfrac{1}{N}\right)^{m-1}
=\\=
N\cdot k\cdot
\left(\log(N)+\log(2)-\frac{3}{4}\cdot\frac{1}{N}
+o\left(\frac{1}{N}\right)\right)\cdot
\left(1+\frac{3}{4}\cdot\frac{1}{N}
+o\left(\frac{1}{N}\right)\right)\,.
\end{multline}
We introduce now parameter $u$ such that
$m=u\cdot N$ and note~\eqref{eq:limit:e}.
Dividing both sides of
equation~\eqref{eq:Riemann:sum:for:Z} by $N$ we get
on the left hand side the Riemann sum for $z_k$ interpreted
as a function of continuous parameter defined by the
right hand side of the asymptotic
expansion~\eqref{eq:def:of:asymptotic:expansion:Z}:
$$
\frac{1}{N}\cdot
\sum_{m=k}^{+\infty} z_k(m)
\cdot\left(1-\tfrac{1}{N}\right)^{m-1}
\sim
\int_0^{+\infty} z_k(uN)\cdot e^{-u} \,du\quad
\text{as }N\to+\infty\,.
$$
Thus, equation~\eqref{eq:Riemann:sum:for:Z} implies the
following key equality of the integral of the
expansion~\eqref{eq:def:of:asymptotic:expansion:Z}: weighted
by $e^{-u}$ and the term on the right hand side
of~\eqref{eq:Riemann:sum:for:Z} divided by $N$:
\begin{multline}
\label{eq:integral:equation:on:asymptotic:expansion:B}
\int_0^{+\infty} z_k(uN)\cdot e^{-u} \,du
=\\
=k\cdot\left(\log(N)+\log(2)-\frac{3}{4}\frac{1}{N}
+o\left(\frac{1}{N}\right)\right)^{k-1}
\cdot\left(1+\frac{3}{4}\frac{1}{N}
+o\left(\frac{1}{N}\right)\right)
 +o(1)\,.
\end{multline}
where $z_k(uN)$ should be interpreted as the right hand
side of~\eqref{eq:def:of:asymptotic:expansion}.
Applying~\eqref{eq:def:of:asymptotic:expansion:Z} we get
the following relation:
\begin{multline*}
%\label{eq:relations:on:constants:in:asymptotic:expansion}
\int_0^{+\infty}
e^{-u}\cdot
\Big(
\beta_0\cdot\big(\log(u)+\log(N)\big)^{k-1}
+\beta_1\cdot\big(\log(u)+\log(N)\big)^{k-2}+\dots
+\beta_{k-1}\Big) \,du
=\\
=k\cdot\big(\log(N)+\log(2)\big)^{k-1}
+o(1)\,.
\end{multline*}
Identifying the terms containing the same powers
of $\log(N)$ on both sides of the above equality
we get the following system of equations:
\begin{align*}
\left(\beta_0
\cdot\int_0^{+\infty} e^{-u}\,du
\right)
\cdot\big(\log(N)\big)^{k-1}
&=k\cdot\big(\log(N)\big)^{k-1}
\\
\Bigg(\beta_0\cdot(k-1)
\cdot\int_0^{+\infty}\log(u)\cdot e^{-u}\,du
&+
\beta_1
\cdot\int_0^{+\infty} e^{-u}\,du
\Bigg)
\cdot\big(\log(N)\big)^{k-2}
=\\
&=k(k-1)\log(2)
\cdot\big(\log(N)\big)^{k-2}\,,
% \\
% \dots\qquad\qquad\qquad&=\qquad\qquad \dots
\end{align*}
etc. The system of equations above should be considered as a
system of equations on coefficients in front of
$\big(\log(N)\big)^{k-1}, \cdot\big(\log(N)\big)^{k-2}, \dots$.

Dividing both sides of the equation corresponding to the
term $\big(\log(N)\big)^{k-1-j}$ by
${k\cdot(k-1)\cdots(k-j)}$, passing to
notation~\eqref{eq:c:j:as:integral} for the integrals and
passing from $\beta_j$ to $B_j$ defined as
$
B_j=\cfrac{\beta_j}{k\cdot(k-1)\cdots(k-j)}
$
we obtain system of equations~\eqref{eq:def:B:j}. Recall
that this system of equations uniquely defines
$B_1,\dots,B_k$ by recursion starting from $B_0=1$. This
completes the proof
of~\eqref{eq:multiple:harmonic:sum:full:expansion}.
\end{proof}

\begin{proof}[Proof of
Conditional~Theorem~\ref{cond:th:volume:coefficient:total:sum}
]
We prove relation~\eqref{eq:volume:coefficient:Z};
the proof of~\eqref{eq:volume:coefficient:H} is
completely analogous.

Suppose that
Conjecture~\ref{conj:uniform:bound:for:error:term}
is valid for some $C\in]\tfrac{1}{2};2[$.
By~\eqref{eq:multiple:harmonic:zeta:sum:full:expansion:log}
for any $K\le C\log(m)$ we have:
\begin{multline}
\label{eq:rearranging:sum}
\sqrt{m}\cdot \sum_{k=1}^K
\frac{1}{k!}\cdot\frac{1}{2^{k-1}}\cdot Z_k(m)
=
\frac{1}{\sqrt{m}}\cdot
\sum_{k=1}^K
\frac{1}{k!}\cdot\frac{1}{2^{k-1}}
\cdot\\ \cdot
\Big(k\cdot B_0\cdot(\log m)^{k-1}+k(k-1)\cdot B_1\cdot(\log m)^{k-2}
+\dots+k!\cdot B_{k-1}+\epsilon_k^Z(m)\Big)
=\\=
\frac{1}{\sqrt{m}}\cdot
\Bigg(
B_0\cdot\sum_{k=1}^K
\frac{1}{(k-1)!}\cdot\left(\frac{\log m}{2}\right)^{k-1}
+
\frac{B_1}{2}\cdot\sum_{k=2}^K
\frac{1}{(k-2)!}\cdot\left(\frac{\log m}{2}\right)^{k-2}
+\\+\dots
+\frac{B_{K-1}}{2^{K-1}}+\sum_{k=1}^K\frac{\epsilon_k^Z(m)}{k!\cdot 2^{k-1}}
\Bigg)
=\\=
\frac{1}{\sqrt{m}}\cdot
\Bigg(
B_0\cdot\sum_{j=0}^{K-1}
\frac{1}{j!}\cdot\left(\frac{\log m}{2}\right)^j
+
\frac{B_1}{2}\cdot\sum_{j=0}^{K-2}
\frac{1}{j!}\cdot\left(\frac{\log m}{2}\right)^j
+\\+
\frac{B_2}{2^2}\cdot\sum_{j=0}^{K-3}
\frac{1}{j!}\cdot\left(\frac{\log m}{2}\right)^j
+\dots+\frac{B_{K-1}}{2^{K-1}}
+\sum_{k=1}^K\frac{\epsilon_k^Z(m)}{k!\cdot 2^{k-1}}
\Bigg)\,.
\end{multline}

It is easy to see that for any $C> \tfrac{1}{2}$
and for any fixed $n\in\N$ we have
$$
\lim_{m\to+\infty}
\frac{1}{\sqrt{m}}
\cdot\sum_{j=0}^{[C\log(m)]-n}
\frac{1}{j!}\cdot\left(\frac{\log m}{2}\right)^j
=
\lim_{m\to+\infty}
\frac{1}{\sqrt{m}}
\cdot
\exp\left(\frac{\log m}{2}\right)
=1\,.
$$
We now let $K=[C\log(m)]$ in~\eqref{eq:rearranging:sum}.
For any fixed $n$ we split the latter expression
in~\eqref{eq:rearranging:sum} into the sum of the first $n$
terms and the sum of the remaining terms.
\begin{multline}
\label{eq:claculation:of:total:sum}
B_0\cdot\Bigg(\frac{1}{\sqrt{m}}\cdot\sum_{j=0}^{K-1}
\frac{1}{j!}\cdot\left(\frac{\log m}{2}\right)^j\Bigg)
+\\+
\frac{B_1}{2}
\cdot\Bigg(\frac{1}{\sqrt{m}}\cdot\sum_{j=0}^{K-2}
\frac{1}{j!}\cdot\left(\frac{\log m}{2}\right)^j\Bigg)
+ \dots +
\frac{B_{n-1}}{2^{n-1}}
\cdot\Bigg(\frac{1}{\sqrt{m}}\cdot\sum_{j=0}^{K-n}
\frac{1}{j!}\cdot\left(\frac{\log m}{2}\right)^j\Bigg)
+\\
+\frac{B_{n}}{2^{n}}
\cdot\Bigg(\frac{1}{\sqrt{m}}\cdot\sum_{j=0}^{K-n-1}
\frac{1}{j!}\cdot\left(\frac{\log m}{2}\right)^j\Bigg)
+\dots+
\frac{1}{\sqrt{m}}\cdot\frac{B_{K-1}}{2^{K-1}}
+\frac{1}{\sqrt{m}}\cdot
\sum_{k=1}^K\frac{\epsilon_k^Z(m)}{k!\cdot 2^{k-1}}
\Bigg)\,.
\end{multline}

For any fixed $n$ the sum of the first $n$ terms tends to
$$
B_0+\frac{B_1}{2}+\dots+\frac{B_{n-1}}{2^{n-1}}
$$
as $m\to+\infty$. Choosing $n$ sufficiently large
we can make this sum arbitrary close to the
sum~\eqref{eq:sum:B:j:over:2:power:j}.

For any $m,L\in\N$
$$
0\le
\sum_{j=0}^{L}
\frac{1}{j!}\cdot\left(\frac{\log m}{2}\right)^j
\le \sqrt{m}\,,
$$
so
$$
\frac{1}{\sqrt{m}}\cdot
\left|\frac{B_n}{2^n}\cdot\sum_{j=0}^{K-(n+1)}
\frac{1}{j!}\cdot\left(\frac{\log m}{2}\right)^j
+\dots+\frac{B_{K-1}}{2^{K-1}}
\right|
\le
\sum_{j=n}^{+\infty}
\frac{|B_n|}{2^n}
\,.
$$
Relation~\eqref{eq:A:B:j:tend:fast:to:zero} thus implies
that choosing $n$ large enough we can make the second part
of the sum~\eqref{eq:claculation:of:total:sum} get
arbitrary close to zero for all sufficiently large $m$.

It remains to note that by~\eqref{eq:max:epsilon:H}
from Conjecture~\ref{conj:uniform:bound:for:error:term}
we have
$$
\lim_{m\to+\infty}
\frac{1}{\sqrt{m}}\cdot
\sum_{k=1}^{[C\log(m)]}\frac{\epsilon_k^Z(m)}{k!\cdot 2^{k-1}}=0\,.
$$
\end{proof}

\begin{proof}[Proof of Theorem~\ref{th:multiple:harmonic:sum}]
We prove below relation~\eqref{eq:multiple:harmonic:zeta:sum};
the proof of~\eqref{eq:multiple:harmonic:sum}
is completely analogous.

Using explicit expressions~\eqref{eq:B:0}
and~\eqref{eq:B:1} for $B_0$ and $B_1$ respectively we get
the two terms of highest degrees in the right-hand side
of~\eqref{eq:multiple:harmonic:zeta:sum:full:expansion}.
Rewriting them as below we
obtain~\eqref{eq:multiple:harmonic:zeta:sum}:
\begin{multline*}
\frac{k!}{m}\cdot\left(
\frac{1}{(k-1)!}\!\cdot\!\big(\log(m)\big)^{k-1}
\!+\!\frac{(\log(2)+\gamma)}{(k-2)!}\cdot\big(\log(m)\big)^{k-2}
\!+\!O\!\left(\big(\log(m)\big)^{k-3}\right)\right)\!=
\\
\frac{k}{m}\cdot\left(
\big(\log(m)\big)^{k-1}
+(k-1)\cdot(\log(2)+\gamma)\cdot\big(\log(m)\big)^{k-2}
+O\left(\big(\log(m)\big)^{k-3}\right)\right)=
\\
=\frac{k}{m}\cdot\left(\big(\log(m)+(\log(2)+\gamma)\big)^{k-1}
+O\left(\big(\log(m)\big)^{k-3}\right)\right)=
\\
=\frac{k}{m}\cdot\left(\big(\log(2m)+\gamma\big)^{k-1}
+O\left(\big(\log(m)\big)^{k-3}\right)\right)\,.
\end{multline*}
\end{proof}

%######################################################################
%######################################################################
%######################################################################

\section{Conjectural asymptotics of normalized correlators}
\label{s:Conjectural:asymptotics:of:correlators}

Let $g$ be a nonnegative integer and $n$ a positive
integer. Let the pair $(g,n)$ be different from $(0,1)$ and
$(0,2)$. Let $d_1,\dots,d_n$ be an ordered partition of $3g
- 3 + n$ into a sum of nonnegative integers,
$|d|=d_1+\dots+d_n=3g-3+n$, let $\boldsymbol{d}$ be a
multiindex $(d_1,\dots,d_n)$. Denote by $\Pi(3g-3+n,n)$
the set of all such partitions.
We use square brackets to denote the following
normalization of correlators in $\psi$-classes:
\begin{multline}
\label{eq:tau:square:brackets}
[\tau_{d_1}\dots\tau_{d_n}]_{g,n}=
\left(\prod_{i=1}^n 2^{2d_i}(2d_i+1)!!\right)\,
\langle \tau_{d_1} \dots \tau_{d_n}\rangle_{g,n}
=\\=
2^{3g-3+n}\cdot
\frac{(2d_1+1)!\cdots(2d_n+1)!}{d_1!\dots d_n!}
\int_{\overline{\cM}_{g,n}} \psi_1^{d_1}\cdots\psi_n^{d_n}\,.
\end{multline}

The following correlators admit close expression:
\begin{align}
\label{eq:max:bracket:tau}
[\tau_0^{n-1}\tau_{3g-3+n}]_{g,n}
&=2^{6g-6+2n}(6g-5+2n)!!
\int_{\overline{\cM}_{g,n}} \psi_n^{3g-3+n}
=\\
\notag
&=2^{6g-6+2n}(6g-5+2n)!!
\cdot\frac{1}{24^g\cdot g!}
=\\
\notag
&=\frac{(6g-5+2n)!}{(3g-3+n)!}
\cdot\frac{2^{3g-3+n}}{24^g\cdot g!}
\,;
\\
\label{eq:other:bracket:tau}
[\tau_1^{n-1}\tau_{3g-2}]_{g,n}
&=2^{6g-6+2n}\cdot 3^{n-1}\cdot(6g-3)!!
\int_{\overline{\cM}_{g,n}} \psi_1\dots\psi_{n-1}\psi_n^{3g-2}
=\\
\notag
&=2^{6g-6+2n}\cdot 3^{n-1}\cdot(6g-3)!!
\cdot\frac{(2g-3+n)!}{24^g\cdot g!(2g-2)!}
=\\
\notag
&=6^{n-1}\cdot\frac{(6g-3)!}{(3g-2)!}
\cdot\frac{(2g-3+n)!}{(2g-2)!}
\cdot\frac{2^{3g-3+n}}{24^g\cdot g!}
\,.
\end{align}

Let $\textbf{D}=(D_1,\dots,D_k)\in\Pi(3g-3+2k,k)$.
Define $c_{g,k}(\boldsymbol{D})$ as
\begin{multline}
\label{eq:sum:of:normalized:correlators}
c_{g,k}(\boldsymbol{D})=
\frac{g!\cdot(3g-3+2k)!}{(6g+4k-5)!}
\cdot\frac{3^g}{2^{3g-6+5k}}\cdot
\\
\cdot
\sum_{d_{1,1}+d_{1,2}=D_1}\dots\sum_{d_{k,1}+d_{k,2}=D_k}
\int_{\overline{\cM}_{g,2k}}
\psi_1^{d_{1,1}}\psi_2^{d_{1,2}}
\dots \psi_{2k-1}^{d_{k,1}}\psi_{2k}^{d_{k,2}}
\cdot\prod_{j=1}^k \cfrac{(2D_j+2)!}{d_{j,1}!\cdot d_{j,2}!}\,,
\end{multline}
which in notations~\eqref{eq:tau:square:brackets} is
equivalent to
\begin{multline}
\label{eq:sum:of:normalized:sq:correlators}
c_{g,k}(\boldsymbol{D})=
\frac{g!\cdot(3g-3+2k)!}{(6g+4k-5)!}
\cdot\frac{3^g}{2^{6g-9+7k}}\cdot
\\
\cdot
\sum_{d_{1,1}+d_{1,2}=D_1}\dots\sum_{d_{k,1}+d_{k,2}=D_k}
[\tau_{d_{1,1}}\tau_{d_{1,2}}
\dots \tau_{d_{k,1}}\tau_{d_{k,2}}]_{g,2k}
\cdot\prod_{j=1}^k \binom{2D_j+2}{2d_{j,1}+1}\,.
\end{multline}

We now state several conjectures on asymptotics of normalized
correlators in $\psi$-classes.
For any fixed pair $g,k\in\N$ denote
\begin{align}
\label{eq:c:max}
c^{max}_{g,k}
=&\max_{\boldsymbol{D}\in\Pi(3g-3+2k,k)}
c_{g,k}(\boldsymbol{D})\,,
\\
\label{eq:c:min}
c^{min}_{g,k}
=&\min_{\boldsymbol{D}\in\Pi(3g-3+2k,k)}
c_{g,k}(\boldsymbol{D})\,.
\end{align}

\begin{Conjecture}
\label{conj:sum:of:correlators:fixed:k}
For any fixed $k\in\N$
\begin{equation}
\label{eq:sum:of:correlators:fixed:k}
\lim_{g\to+\infty} c^{max}_{g,k}
= \lim_{g\to+\infty} c^{min}_{g,k}
=1\,.
\end{equation}
\end{Conjecture}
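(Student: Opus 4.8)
The plan is to split the statement into an exact combinatorial identity, which absorbs the entire normalization, and a single analytic input about the large-genus behaviour of correlators. First I would dispose of the base case $k=1$. Here $\Pi(3g-1,1)$ consists of the single partition $D_1=3g-1$, so $c^{max}_{g,1}=c^{min}_{g,1}=c_{g,1}(3g-1)$, and this quantity is a weighted sum of the $2$-correlators $\langle\tau_{d_1}\tau_{d_2}\rangle_g$ over $d_1+d_2=3g-1$. Passing to the normalization $a_{g,k}$ of~\eqref{eq:a:g:k} and invoking the uniform bounds $1-\tfrac{2}{6g-1}\le a_{g,k}\le 1$ of Proposition~\ref{pr:main:bounds}, the case $k=1$ becomes unconditional: the weighted sum differs from its value at $a_{g,k}\equiv 1$ by a relative error of order $O(1/g)$, precisely as in the computation underlying Theorem~\ref{th:contribution:Gamma:1}.

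For general fixed $k$ I would then isolate the exact identity built into~\eqref{eq:sum:of:normalized:correlators}. Replacing each integral $\int_{\overline{\cM}_{g,2k}}\psi_1^{d_{1,1}}\cdots\psi_{2k}^{d_{k,2}}$ by the candidate limiting value $\frac{(6g+4k-5)!!}{24^g\,g!\,\prod_{j,i}(2d_{j,i}+1)!!}$, the weighted sum factorizes over $j=1,\dots,k$. Writing $a!\,(2a+1)!!=(2a+1)!/2^a$, each factor becomes $(2D_j+2)!\,2^{D_j}\sum_{a+b=D_j}\frac{1}{(2a+1)!\,(2b+1)!}$, and the inner sum is the coefficient of $x^{2D_j+2}$ in $\sinh^2 x=\tfrac12(\cosh 2x-1)$, namely $\frac{2^{2D_j+1}}{(2D_j+2)!}$. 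Hence each factor equals $2^{3D_j+1}$, and the product over $j$ is $2^{3(3g-3+2k)+k}=2^{9g-9+7k}$; since the explicit prefactor in~\eqref{eq:sum:of:normalized:correlators} collapses (using $\frac{(6g+4k-5)!!}{(6g+4k-5)!}=\frac{1}{2^{3g+2k-3}(3g-3+2k)!}$ and $3^g/24^g=2^{-3g}$) to exactly $2^{-(9g-9+7k)}$, the substituted expression equals $1$ identically, for every $\boldsymbol{D}$. This reduces the conjecture to the statement that $c_{g,k}(\boldsymbol{D})$ converges to this substituted value uniformly in $\boldsymbol{D}$.

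The genuinely hard step is to show that the normalized $2k$-correlators $[\tau_{d_1}\cdots\tau_{d_{2k}}]_{g,2k}$ of~\eqref{eq:tau:square:brackets} converge as $g\to+\infty$ to their common limit $2^{6g-6+4k}(6g+4k-5)!!/(24^g g!)$ with a relative error that is \emph{uniform} over all index tuples entering the sum. The case $n=2$ is exactly Proposition~\ref{pr:main:bounds}, obtained from Zograf's recursion~\eqref{eq:a:g:k:difference}; for $n=2k\ge 4$ one needs either an analogous recursion-based two-sided bound or the large-genus uniform asymptotics of $\psi$-class intersection numbers. I expect this uniform estimate to be the main obstacle. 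The difficulty is that the weights $\prod_j\frac{(2D_j+2)!}{d_{j,1}!\,d_{j,2}!}$ place their dominant contributions on the balanced indices $d_{j,1}\approx d_{j,2}\approx D_j/2$ (where $(2a+1)!(2b+1)!$ is smallest) rather than on the extreme correlators~\eqref{eq:max:bracket:tau} whose value is exactly known, so a single pointwise asymptotic does not suffice: one must control the relative error uniformly across the whole range of moderate indices. Once such a uniform bound is established, I would split the weighted sum into a main part, where replacing the correlators by their limit contributes exactly $1$ by the identity above, and a remainder dominated by the uniform relative error, which tends to $0$; this yields $\lim_{g\to+\infty}c^{max}_{g,k}=\lim_{g\to+\infty}c^{min}_{g,k}=1$.
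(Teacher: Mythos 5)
Your proposal coincides with the paper's own treatment of this statement: it is a \emph{Conjecture} that the paper never proves unconditionally, and the paper's only argument concerning it is the derivation at the end of Appendix~\ref{s:Conjectural:asymptotics:of:correlators} from the stronger Conjectures~\ref{conj:normalized:correlators} and~\ref{conj:normalized:correlators:log:g:strong}, which is exactly your computation — substitute the ansatz~\eqref{eq:ansatz:sq} into~\eqref{eq:sum:of:normalized:sq:correlators}, apply the identity $\sum_{m=0}^{n-1}\binom{2n}{2m+1}=2^{2n-1}$ (your $\sinh^2 x$ coefficient extraction is the same identity), and use $\sum_{j=1}^k(2D_j+1)=6g-6+5k$ to see that all prefactors collapse to exactly $1$. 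You correctly identify the uniform relative-error estimate for normalized $2k$-correlators as the one genuinely missing ingredient (open in the paper as well, with only the $k=1$ case unconditional via Proposition~\ref{pr:main:bounds}), so your reduction is the same conditional implication the paper establishes, not a proof of the conjecture itself.
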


The next conjecture is more ambitious. It claims that the
limit exists even if we let $k$ grow, but sufficiently
slowly with respect to $g$.
Conjecture~\ref{conj:sum:of:correlators} is just a
reformulation of
Conjecture~\ref{conj:introduction:sum:of:correlators}.
Clearly, Conjecture~\ref{conj:sum:of:correlators}
implies Conjecture~\ref{conj:sum:of:correlators:fixed:k}.

\begin{Conjecture}
\label{conj:sum:of:correlators}
For any fixed $C<2$ one has
\begin{equation}
\label{eq:sum:of:correlators}
\lim_{g\to+\infty}\
\max_{1\le k\le C\log(g)}
c^{max}_{g,k}
=
\lim_{g\to+\infty}
\min_{1\le k\le C\log(g)}
c^{min}_{g,k}
=1\,.
\end{equation}
\end{Conjecture}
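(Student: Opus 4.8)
The plan is to reduce Conjecture~\ref{conj:sum:of:correlators} to a single \emph{uniform} large-genus estimate for the normalized correlators $[\tau_{d_1}\cdots\tau_{d_n}]_{g,n}$, and then to establish that estimate by induction through the Virasoro/DVV recursion for $\psi$-intersection numbers.

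\emph{Step 1: algebraic reduction.} Set $B_{g,n}:=\frac{2^{6g-6+2n}(6g-5+2n)!!}{24^g\,g!}$, which by \eqref{eq:max:bracket:tau} is precisely the extremal bracket $[\tau_0^{n-1}\tau_{3g-3+n}]_{g,n}$. Starting from the form \eqref{eq:sum:of:normalized:sq:correlators} of $c_{g,k}(\boldsymbol{D})$, the key combinatorial identity is $\sum_{d_{j,1}+d_{j,2}=D_j}\binom{2D_j+2}{2d_{j,1}+1}=2^{2D_j+1}$, so that (using $|D|=3g-3+2k$) the total weight factorizes as $\prod_j 2^{2D_j+1}=2^{6g-6+5k}$. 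A direct computation then shows that the overall prefactor $\frac{g!\,(3g-3+2k)!}{(6g+4k-5)!}\cdot\frac{3^g}{2^{6g-9+7k}}$ times $B_{g,2k}\cdot 2^{6g-6+5k}$ equals exactly $1$; that is, replacing every bracket appearing in \eqref{eq:sum:of:normalized:sq:correlators} by the single value $B_{g,2k}$ yields $c_{g,k}(\boldsymbol{D})=1$ identically. Writing $[\tau_{\boldsymbol{d}}]_{g,2k}=B_{g,2k}\bigl(1+\epsilon_{g,2k}(\boldsymbol{d})\bigr)$, the same computation gives $c_{g,k}(\boldsymbol{D})=1+\langle\epsilon_{g,2k}\rangle$, where $\langle\epsilon_{g,2k}\rangle$ is the average of $\epsilon_{g,2k}(\boldsymbol{d})$ against the nonnegative weights $\prod_j\binom{2D_j+2}{2d_{j,1}+1}$. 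Hence $\bigl|c_{g,k}(\boldsymbol{D})-1\bigr|\le \sup_{\boldsymbol{d}\in\Pi(3g-3+2k,2k)}\bigl|\epsilon_{g,2k}(\boldsymbol{d})\bigr|$ \emph{uniformly in} $\boldsymbol{D}$, which simultaneously pins $c^{max}_{g,k}$ and $c^{min}_{g,k}$ to $1$. This reduces the conjecture to showing $\sup_{\boldsymbol{d}}|\epsilon_{g,n}(\boldsymbol{d})|\to 0$ uniformly over all partitions and over $n=2k\le 2C\log g$.

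\emph{Step 2: uniform bracket asymptotics.} I would recast the Virasoro/DVV recursion for $\langle\tau_{d_1}\cdots\tau_{d_n}\rangle_g$ in the normalized variables $[\,\cdot\,]$: after dividing by $B_{g,n}$ it becomes an approximate averaging identity expressing the normalized $(g,n)$-correlator as a convex combination of normalized correlators of strictly smaller complexity $3g-3+n$ (the point-merging terms), together with a dominant genus-lowering nonseparating term and separating terms whose weights are exponentially small in $g$. One then argues by induction on the complexity. The closed forms \eqref{eq:max:bracket:tau} and \eqref{eq:other:bracket:tau}, Witten's value $\langle\tau_{3g-2}\rangle_g=\frac{1}{24^g g!}$, and the sharp two-point control of Proposition~\ref{pr:main:bounds} (from Zograf's recursion \eqref{eq:a:g:k:difference}) supply the base cases and anchor the induction. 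The goal of the inductive step is to show that a bound $\sup_{\boldsymbol{d}}|\epsilon_{g',n'}|\le\delta$ at all lower complexities propagates to a bound of shape $\delta\bigl(1+O(n/g)\bigr)+O(n/g)$; iterating over the $O(g)$ recursion depth needed to reach the base cases keeps the accumulated error $o(1)$ exactly while $n$ stays below the logarithmic threshold.

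\emph{Main obstacle.} The hard part is the uniformity of $\epsilon_{g,n}$ over all multi-indices $\boldsymbol{d}$ and over $n$ growing with $g$: the averaging operator coming from the recursion must be shown to contract quickly enough to absorb the growth in the number of branches, and one must control the genuinely extremal indices (one exponent of order $3g$, the rest small), where the naive asymptotics are least accurate. This is the analytic core on which everything rests, and the bound $C<2$ (that is, $n=2k$ up to $\sim 4\log g$) should mark the range in which the multiplicative errors compounded along the recursion remain summably small. Once $\sup_{\boldsymbol{d}\in\Pi(3g-3+n,n)}|\epsilon_{g,n}|=o(1)$ is secured for $n=O(\log g)$, Step~1 delivers Conjecture~\ref{conj:sum:of:correlators}, and the genus reindexing $g\mapsto g-k$ identifying $c_{g-k,k}$ with $\Psi(D,k)$ then yields the equivalent Conjecture~\ref{conj:introduction:sum:of:correlators}.
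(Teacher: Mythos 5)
There is a genuine gap, and it is worth being precise about where it lies. First, note that the statement you set out to prove is stated in the paper as a \emph{Conjecture}: the paper never proves it unconditionally. What the paper does prove (in Appendix~\ref{s:Conjectural:asymptotics:of:correlators}) is exactly your Step~1: combining the ansatz~\eqref{eq:ansatz:sq} with~\eqref{eq:sum:of:normalized:sq:correlators}, using the identity $\sum_{m=0}^{n-1}\binom{2n}{2m+1}=2^{2n-1}$ factor by factor and the cancellation of prefactors, one sees that $c_{g,k}(\boldsymbol{D})$ is a weighted average of the quantities $1+\epsilon(\boldsymbol{d})$ with nonnegative weights of total mass $2^{6g-6+5k}$, hence $|c_{g,k}(\boldsymbol{D})-1|\le\sup_{\boldsymbol{d}}|\epsilon(\boldsymbol{d})|$ uniformly in $\boldsymbol{D}$ and $k$. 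This is precisely the paper's derivation of Conjecture~\ref{conj:sum:of:correlators} from the stronger Conjecture~\ref{conj:normalized:correlators:log:g:strong}. So your Step~1 is correct, but it only reproduces the paper's conditional reduction; it does not advance the proof of the conjecture itself.

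The gap is Step~2, which is the entire content of the conjecture and is not carried out. You reduce everything to $\sup_{\boldsymbol{d}\in\Pi(3g-3+n,n)}|\epsilon_{g,n}(\boldsymbol{d})|=o(1)$ uniformly for $n\le 2C\log g$ --- that is, to Conjecture~\ref{conj:normalized:correlators:log:g:strong} itself --- and then only sketch an induction through the Virasoro/DVV recursion. The quantitative scheme you propose does not close: if each recursion step incurs a multiplicative error $1+O(n/g)$ and an additive error $O(n/g)$, then after the $\Theta(g)$ steps needed to descend to the base cases the accumulated factor is of order $e^{\Theta(n)}$, which for $n\asymp\log g$ is $g^{\Theta(1)}$, not $1+o(1)$; so the assertion that the accumulated error ``keeps $o(1)$'' is unjustified and, with the estimates as stated, false. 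Moreover your base-case input (Proposition~\ref{pr:main:bounds}, i.e.\ Zograf's two-point recursion~\eqref{eq:a:g:k:difference}) covers only $n=2$, and nothing in the proposal controls the extremal multi-indices (one exponent of order $3g$) or shows that the averaging operator contracts fast enough to beat the branching of the recursion. You identify this as the ``main obstacle,'' but acknowledging the obstacle is not resolving it: as written, the proposal establishes nothing beyond what the paper already proves conditionally, and the conjecture remains open within your argument.
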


% \begin{Conjecture}
% \label{conj:uniform:exp:bound}
% There exists $T>0$ such that for all $g,k\in\N$
%    %
% \begin{equation}
% \label{eq:uniform:exp:bound}
% c^{max}_{g,k}\le T^k\,.
% \end{equation}
% \end{Conjecture}

In the conjectures above we considered huge weighted
sum of correlators. The next two Conjectures are even more
ambitious since they treat all individual correlators.
Define $\epsilon(\boldsymbol{d})=\epsilon(d_1,\dots,d_n)$
as
\begin{equation}
\label{eq:epsilon:d}
\epsilon(\boldsymbol{d})
=\frac{[\tau_{d_1}\dots\tau_{d_n}]_{g,n}}
{[\tau_0^{n-1}\tau_{3g-3+n}]_{g,n}}-1\,.
\end{equation}
Define
\begin{equation}
\label{eq:epsilon:max:min}
\epsilon^{max}_{g,n}=\max_{\boldsymbol{d}\in\Pi(3g-3+n,n)}
\epsilon(\boldsymbol{d})\,,
\qquad
\epsilon^{min}_{g,n}=\min_{\boldsymbol{d}\in\Pi(3g-3+n,n)}
\epsilon(\boldsymbol{d})
\end{equation}

\begin{Conjecture}
\label{conj:normalized:correlators}
For any fixed $n\in\N$ one has
\begin{equation}
\label{eq:conj:Mirzakhani:Zograf}
\lim_{g\to+\infty} \epsilon^{max}_{g,n}
= \lim_{g\to+\infty} \epsilon^{min}_{g,n}
= 0\,.
\end{equation}
\end{Conjecture}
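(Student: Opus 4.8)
The plan is to show that Conjecture~\ref{conj:normalized:correlators} is equivalent to a single uniform large-genus asymptotic for the unnormalized intersection numbers, and to establish that asymptotic by induction via the Witten--Kontsevich (DVV) recursion, using the $2$-correlator bounds of Proposition~\ref{pr:main:bounds} as the base case. First I would reduce the statement. Since $\sum_i d_i=3g-3+n$ is fixed for $\boldsymbol{d}\in\Pi(3g-3+n,n)$, the factor $\prod_i 2^{2d_i}=2^{6g-6+2n}$ in normalization~\eqref{eq:tau:square:brackets} is independent of $\boldsymbol{d}$ and cancels in the ratio~\eqref{eq:epsilon:d}. Writing $V_g(\boldsymbol{d})=\prod_i(2d_i+1)!!\,\langle\tau_{d_1}\cdots\tau_{d_n}\rangle_{g,n}$ and using the explicit value~\eqref{eq:max:bracket:tau} of the reference correlator (which gives $V_g(0^{n-1},3g-3+n)=(6g-5+2n)!!/(24^g g!)$), one obtains
\[
\epsilon(\boldsymbol{d})+1=R_g(\boldsymbol{d}):=\frac{24^g\,g!}{(6g-5+2n)!!}\,V_g(\boldsymbol{d}),\qquad R_g(0^{n-1},3g-3+n)=1 \text{ exactly.}
\]
Thus $\epsilon^{max}_{g,n},\epsilon^{min}_{g,n}\to 0$ is equivalent to $R_g(\boldsymbol{d})\to 1$ uniformly over $\boldsymbol{d}$, i.e. to the uniform asymptotic $V_g(\boldsymbol{d})\sim (6g-5+2n)!!/(24^g\,g!)$ with the reference correlator realizing the limiting value on the nose.

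The cases $n=1,2$ are already in hand. For $n=1$ the single partition gives $R_g\equiv 1$ by Witten's value $\langle\tau_{3g-2}\rangle_g=1/(24^g g!)$. For $n=2$ the $2$-power cancellation shows that $R_g(d_1,d_2)$ coincides with the normalized $2$-correlator $a_{g,d_1}$ of~\eqref{eq:a:g:k} (one checks the double-factorial factors match and $a_{g,0}=1$). Hence Proposition~\ref{pr:main:bounds} gives $1-\tfrac{2}{6g-1}\le R_g(\boldsymbol{d})\le 1$, so $\epsilon^{max}_{g,2}=0$ and $\epsilon^{min}_{g,2}=-\tfrac{2}{6g-1}\to0$, settling $n=2$.

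For the inductive step I would use that every term of the DVV recursion strictly decreases $2g-2+n$, and induct on this quantity, proving a \emph{quantitative} uniform estimate $|R_g(\boldsymbol{d})-1|\le\phi(g,n)$ with $\phi\to 0$, valid in a range $n\le\sqrt{g}$ that contains every fixed $n$ once $g$ is large. After first stripping any $\tau_0$ entries through the string equation of Section~\ref{ssec:trivalent:graphs:with:leaves}, the recursion writes $V_g(\boldsymbol{d})$ as a sum of three groups: (i) same-genus \emph{join} terms, nonnegative explicit multiples of $V_g$ of $(n-1)$-point correlators; (ii) \emph{genus-dropping} terms $\tfrac12\sum_{a+b=d_1-2}(\cdots)\,V_{g-1}(a,b,\boldsymbol{d}_{\hat 1})$; and (iii) \emph{splitting} terms $\sum_{g_1+g_2=g}V_{g_1}(\cdots)V_{g_2}(\cdots)$. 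After dividing by $(6g-5+2n)!!/(24^g g!)$, the join coefficients become nonnegative weights whose total is $1+O(1/g)$ uniformly in $\boldsymbol{d}$; feeding the inductive hypothesis (each $(n-1)$-point factor is $1+o(1)$) into these terms alone already reproduces the target value $1$.

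The heart of the argument, and the step I expect to be the main obstacle, is the uniform bound showing that groups (ii) and (iii) contribute only $O(1/g)$ to $R_g(\boldsymbol{d})$, uniformly over \emph{all} partitions --- in particular when the distinguished index $d_1$ is of order $g$, so that the inner sum $\sum_{a+b=d_1-2}$ has order $g$ summands. Bounding the $V_{g-1}$-factors by a constant through the inductive hypothesis reduces this to an explicit estimate for sums of the normalized DVV coefficients (ratios of double factorials), which is precisely the kind of multiple-harmonic-sum bound developed in Appendix~\ref{s:Two:harmonic:sums}; the splitting terms are suppressed even further by the super-exponential factor $\binom{g}{g_1}24^{-g_1}24^{-g_2}g_1!\,g_2!/g!$, the same mechanism that makes separating curves exponentially rare in Theorem~\ref{th:separating:over:non:separating}. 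One must additionally choose the recursion index judiciously (e.g. peel off a smallest positive index), keep the excursion inside the admissible range $n\le\sqrt{g}$ as genus-dropping raises $n$ by one at each step, and propagate the $O(1/g)$ errors through the induction on $2g-2+n$ without letting them accumulate. This error bookkeeping, rather than any single closed identity, is the delicate technical core of the proof.
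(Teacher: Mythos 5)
First, a structural point: the statement you set out to prove is stated in the paper as a \emph{Conjecture} and is deliberately left open there. The paper's entire treatment of it consists of two observations: for $n=1$ the claim is vacuous (the set $\Pi(3g-2,1)$ has one element), and for $n=2$ it follows from the explicit $2$-correlator bounds of Proposition~\ref{pr:main:bounds}, giving $\epsilon^{max}_{g,2}=0$ and $\epsilon^{min}_{g,2}=-\tfrac{2}{6g-1}$. Your reduction of~\eqref{eq:epsilon:d} to the statement $R_g(\boldsymbol{d})\to 1$ (using that $\prod_i 2^{2d_i}$ is constant on $\Pi(3g-3+n,n)$ and the value~\eqref{eq:max:bracket:tau}), and your identification $R_g(k,3g-1-k)=a_{g,k}$ for $n=2$, reproduce exactly this part of the paper and are correct. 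Everything beyond that is a plan rather than a proof, and the plan's central quantitative claim is false.

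Concretely, normalize the DVV recursion as you propose. The join term attached to the index $d_j$ enters $R_g(\boldsymbol{d})$ with weight $w_j=\tfrac{2d_j+1}{6g-5+2n}$, so the total join weight is
\begin{equation*}
\sum_{j=2}^{n} w_j\;=\;\frac{\sum_{j\ge 2}(2d_j+1)}{6g-5+2n}
\;=\;1-\frac{2d_1+n-2}{6g-5+2n}\,,
\end{equation*}
while the genus-dropping terms enter with total weight $\approx\tfrac{d_1}{3g}$ times the average of the corresponding normalized $(g-1,\,n+1)$-point correlators (one checks, e.g., $R_1(2,0)=\tfrac15\cdot 1+\tfrac45\cdot R_0(0,0,0)=1$ exactly). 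Hence your assertion that the join terms carry weight $1+O(1/g)$ uniformly in $\boldsymbol{d}$ and that groups (ii) and (iii) are $O(1/g)$ error terms fails whenever the distinguished index is of order $g$ --- and peeling off the smallest positive index does not help: for a balanced partition $d_1\approx\dots\approx d_n\approx 3g/n$ (no $\tau_0$'s to strip) every index is of order $g$, and the genus-dropping terms carry weight $\approx 1/n$, a \emph{constant}. You can see this inside the paper's own data for $n=2$: for $k\approx 3g/2$ the join contribution to $a_{g,k}$ is $\approx\tfrac12$, yet $a_{g,k}\ge 1-\tfrac{2}{6g-1}$ by Proposition~\ref{pr:main:bounds}, so half the mass must come from the genus-dropping terms. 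Consequently the induction cannot treat group (ii) as an error: it must descend through it with non-negligible weight, raising $n$ by one per unit of genus, so the inductive hypothesis is needed for $n$ growing up to order $g$ (far outside your range $n\le\sqrt{g}$, and beyond even Conjecture~\ref{conj:normalized:correlators:log:g:strong}), and the additive $O(1/g')$ errors collected at genus levels $g'=g,g-1,\dots$ accumulate under your bookkeeping to $\sum_{g'}1/g'\sim\log g$, not $o(1)$. This accumulation, together with the need for a priori bounds uniform in $n$, is precisely why the statement is left as a conjecture in the paper; it was settled only later by A.~Aggarwal with substantially different techniques (a priori estimates combined with a comparison of the recursion to a random walk and local limit theorems), not by the error propagation you describe.
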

\begin{Remark}
The idea of normalization~\eqref{eq:tau:square:brackets}
aiming the above conjecture originates from discussions
of M.~Mirzakhani with P.~Zograf.
\end{Remark}

Conjecture~\ref{conj:sum:of:correlators:fixed:k}
follows from
Conjecture~\ref{conj:normalized:correlators}
(in a way in which we derive
Conjecture~\ref{conj:sum:of:correlators}
from
Conjecture~\ref{conj:normalized:correlators:log:g:strong}
below).

For $n=1$ the conjecture is vacuous: the set $\Pi(3g-2,1)$
contains single element, so
$\epsilon^{max}_{g,1}=\epsilon^{min}_{g,1}=0$.
For $n=2$ Conjecture~\ref{conj:normalized:correlators}
follows from the following Theorem giving explicit values
of $\epsilon^{max}_{g,2}$ and of $\epsilon^{min}_{g,2}$.

\begin{Theorem}
For any $g\in\N$ one has
\begin{equation}
\label{eq:epsilon:max:epsilon:min:n:2}
\epsilon^{max}_{g,2}=0\,,
\qquad
\epsilon^{min}_{g,2}=-\frac{2}{6g-1}\,.
\end{equation}
\end{Theorem}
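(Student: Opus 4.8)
The plan is to reduce the $n=2$ case entirely to the normalized $2$-correlators $a_{g,k}$ studied in Proposition~\ref{pr:main:bounds}, and to observe that the passage from $\langle\tau_k\tau_{3g-1-k}\rangle_g$ to the square-bracket normalization~\eqref{eq:tau:square:brackets} is, up to a factor independent of $k$, already the normalization~\eqref{eq:a:g:k}. Concretely, for $n=2$ one has $d_1+d_2=3g-1$; writing $k=d_1$ and $d_2=3g-1-k$, the definition~\eqref{eq:tau:square:brackets} gives
\begin{equation*}
[\tau_k\tau_{3g-1-k}]_{g,2}
=2^{2k}(2k+1)!!\cdot 2^{6g-2-2k}(6g-1-2k)!!\cdot\langle\tau_k\tau_{3g-1-k}\rangle_g
=2^{6g-2}(2k+1)!!(6g-1-2k)!!\,\langle\tau_k\tau_{3g-1-k}\rangle_g\,.
\end{equation*}
First I would substitute the definition~\eqref{eq:a:g:k} of $a_{g,k}$, which expresses $\langle\tau_k\tau_{3g-1-k}\rangle_g$ as $a_{g,k}\cdot(6g-1)!!\big/\big((2k+1)!!(6g-1-2k)!!\cdot 24^g\,g!\big)$. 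The decisive point is that the two double-factorial factors $(2k+1)!!(6g-1-2k)!!$ cancel exactly, leaving
\begin{equation*}
[\tau_k\tau_{3g-1-k}]_{g,2}
=\frac{2^{6g-2}(6g-1)!!}{24^g\,g!}\cdot a_{g,k}
=:C_g\cdot a_{g,k}\,,
\end{equation*}
where the prefactor $C_g$ does \emph{not} depend on $k$. (As a consistency check one may verify, using $(6g-1)!!=(6g)!/(2^{3g}(3g)!)$, that $C_g$ equals the explicit value of $[\tau_0\tau_{3g-1}]_{g,2}$ recorded in~\eqref{eq:max:bracket:tau}.)

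With this identity in hand the rest is immediate. Since the denominator $[\tau_0^{n-1}\tau_{3g-3+n}]_{g,n}$ in definition~\eqref{eq:epsilon:d} specializes for $n=2$ to $[\tau_0\tau_{3g-1}]_{g,2}=C_g\cdot a_{g,0}$, and since $a_{g,0}=1$, the common factor $C_g$ cancels in the ratio and one obtains
\begin{equation*}
\epsilon(d_1,d_2)=\frac{[\tau_k\tau_{3g-1-k}]_{g,2}}{[\tau_0\tau_{3g-1}]_{g,2}}-1
=\frac{a_{g,k}}{a_{g,0}}-1=a_{g,k}-1\,,\qquad k=d_1\in\{0,1,\dots,3g-1\}\,.
\end{equation*}
Thus $\epsilon^{max}_{g,2}=\max_{0\le k\le 3g-1}a_{g,k}-1$ and $\epsilon^{min}_{g,2}=\min_{0\le k\le 3g-1}a_{g,k}-1$ by~\eqref{eq:epsilon:max:min}. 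I would then invoke Proposition~\ref{pr:main:bounds}, which together with the boundary values $a_{g,0}=a_{g,3g-1}=1$ and $a_{g,1}=a_{g,3g-2}=1-\tfrac{2}{6g-1}$ and the strict bounds $1-\tfrac{2}{6g-1}<a_{g,k}<1$ for $2\le k\le 3g-3$ shows that the global maximum of $a_{g,k}$ over the full index range is $1$ (attained at $k=0,3g-1$) and the global minimum is $1-\tfrac{2}{6g-1}$ (attained at $k=1,3g-2$). Hence $\epsilon^{max}_{g,2}=0$ and $\epsilon^{min}_{g,2}=-\tfrac{2}{6g-1}$, as claimed.

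The argument is uniform in $g\ge 1$; for the smallest genus $g=1$ the admissible indices are only $k\in\{0,1,2\}$, where the values $a_{1,0}=a_{1,2}=1$, $a_{1,1}=\tfrac{3}{5}=1-\tfrac{2}{5}$ are known directly, so the conclusion holds there as well. There is essentially no analytic obstacle in this reduction: the only substantive input is Proposition~\ref{pr:main:bounds}, which we may assume, and the remaining work is the elementary double-factorial cancellation establishing $[\tau_k\tau_{3g-1-k}]_{g,2}=C_g\,a_{g,k}$. The one point warranting care is to confirm that the cancellation indeed leaves a $k$-independent prefactor (so that $a_{g,0}=1$ makes the denominator exactly $C_g$), which is precisely what the computation above verifies.
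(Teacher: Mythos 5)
Your proposal is correct and follows essentially the same route as the paper: both reduce the $n=2$ case to the identity $[\tau_k\tau_{3g-1-k}]_{g,2}/[\tau_0\tau_{3g-1}]_{g,2}=a_{g,k}$ (via the double-factorial cancellation and the explicit value~\eqref{eq:max:bracket:tau}) and then conclude from the bounds~\eqref{eq:main:bounds} of Proposition~\ref{pr:main:bounds}. Your write-up merely makes the cancellation and the $g=1$ boundary case more explicit than the paper's terser version.
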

\begin{proof}
Using explicit expression~\eqref{eq:max:bracket:tau}
we get
$$
[\tau_0\tau_{3g-1}]_{g,2}
=2^{6g-2}(6g-1)!!
\cdot\frac{1}{24^g\cdot g!}\,,
$$
so
$$
\frac{[\tau_{k}\tau_{3g-1-k}]_{g,2}}
{[\tau_0\tau_{3g-1}]_{g,2}}
=
\frac{(2k+1)!!\cdot(6g-1-2k)!!}{(6g-1)!!}
\cdot 24^g\cdot g!
\cdot \langle\tau_k\tau_{3g-1-k}\rangle_g
=a_{g,k}\,,
$$
where $a_{g,k}$ were defined in equation~\eqref{eq:a:g:k}.
Equation~\eqref{eq:epsilon:max:epsilon:min:n:2} now
follows immediately from bounds~\eqref{eq:main:bounds}
from Proposition~\ref{pr:main:bounds}.
\end{proof}

Using explicit expression~\eqref{eq:max:bracket:tau} for
$[\tau_0^{n-1}\tau_{3g-3+n}]_{g,n}$ in denominator of the
ratio in~\eqref{eq:epsilon:d} and the second line in
definition~\eqref{eq:tau:square:brackets} for the numerator
of the ratio in~\eqref{eq:epsilon:d} we can express
correlators in the following form:
\begin{multline}
\label{eq:ansatz}
\frac{\langle \psi_1^{d_1} \dots \psi_n^{d_n}\rangle_{g,n}}
{d_1!\dots d_n!}
=\\=
\frac{(6g-5+2n)!}{(2d_1+1)!\cdots(2d_n+1)!}
\cdot\frac{1}{g!\,(3g-3+n)!}
\cdot\frac{1}{24^g}
\cdot\big(1+\epsilon(\boldsymbol{d})\big)\,.
\end{multline}
or, equivalently,
\begin{equation}
\label{eq:ansatz:sq}
[\tau_{d_1} \dots \tau_{d_n}]_{g,n}
=
\frac{2^{n-3}}{3^g}\cdot
\frac{(6g-5+2n)!}{g!\,(3g-3+n)!}
\cdot\big(1+\epsilon(\boldsymbol{d})\big)\,.
\end{equation}
The above relation combined with
Conjecture~\ref{conj:normalized:correlators} explains the
origin of normalization~\eqref{eq:tau:square:brackets}: the
factor on the right-hand side depends only on $g$ and $n$;
the dependence on the individual partition $\boldsymbol{d}$
is reduced to the term $\epsilon(\boldsymbol{d})$ which
conjecturally tends to zero uniformly for all partitions in
$\Pi(3g-3+n,n)$ as genus grows.

Finally, we have the following the most ambitious conjecture.
For any $C<2$ and any $g\in\N$ define
$$
\delta(C,g)=
\max_{1\le n\le C\log(g)}
\max(|\epsilon^{min}_{g,n}|,|\epsilon^{max}_{g,n}|)\,.
$$

\begin{Conjecture}
\label{conj:normalized:correlators:log:g:strong}
For any $C<2$
\begin{equation}
\label{eq:normalized:correlators:log:g:strong}
\lim_{g\to+\infty}\delta(C,g)=0\,.
\end{equation}
\end{Conjecture}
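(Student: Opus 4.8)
The plan is to prove Conjecture~\ref{conj:normalized:correlators:log:g:strong} in two stages: first establish the fixed-$n$ statement (Conjecture~\ref{conj:normalized:correlators}, i.e. $\epsilon^{max}_{g,n},\epsilon^{min}_{g,n}\to 0$ for each fixed $n$) by an induction on $n$ via the Virasoro/DVV recursion for $\psi$-class intersection numbers, and then upgrade that convergence to be uniform over $1\le n\le C\log(g)$. The natural anchor of the induction is already present: for $n=1$ the set $\Pi(3g-2,1)$ is a singleton, so $\epsilon^{max}_{g,1}=\epsilon^{min}_{g,1}=0$ and nothing is to be shown, while for $n=2$ the Theorem establishing \eqref{eq:epsilon:max:epsilon:min:n:2} (which rests on Proposition~\ref{pr:main:bounds}) gives the sharp uniform bound $|\epsilon(\boldsymbol d)|\le\frac{2}{6g-1}=O(1/g)$. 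These furnish the base cases.

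The central device is to rewrite the DVV recursion in terms of the normalized brackets $[\tau_{d_1}\dots\tau_{d_n}]_{g,n}$ of \eqref{eq:tau:square:brackets}. The normalization factor $\prod_i 2^{2d_i}(2d_i+1)!!$ is designed precisely so that the descendant terms $(2d_j+1)\langle\tau_{d_1+d_j-1}\,\cdots\rangle$, the non-separating genus-drop term, and the separating splitting terms all acquire simple rational coefficients. Dividing the recursion through by the reference bracket $[\tau_0^{n-1}\tau_{3g-3+n}]_{g,n}$, whose closed form is \eqref{eq:max:bracket:tau}, converts it into an identity of the schematic shape
\[
1+\epsilon(\boldsymbol d)=(\text{explicit main term})+(\text{weighted combination of lower-order }\epsilon),
\]
in which the explicit main term is arranged to equal $1+o(1)$ and the remaining weights sum to $1+o(1)$. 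The substance is then to show that this recursion contracts the error: one proves by induction on $(n,g)$ that there is a function $\eta(g)\to 0$ with $\max(|\epsilon^{min}_{g,n}|,|\epsilon^{max}_{g,n}|)\le\eta(g)$, bounding separately the $n-1$ descendant contributions (which carry inductive-size errors but must be shown to have total weight $1+o(1)$) and the non-separating contribution (which lowers the genus and is $O(1/g)$).

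The main obstacle, and the reason the statement remains conjectural, is the separating (splitting) terms together with the growth regime $n\le C\log(g)$. Each such term is a product of two normalized correlators on surfaces with $g_1+g_2=g$ and $n_1+n_2=n+1$, and the combinatorially weighted sum of these products over all splittings is what threatens to overwhelm the error budget as $n$ grows. I expect the threshold $C<2$ to enter exactly here: the generating series controlling the accumulated weight is governed by $1/\Gamma(1+x)$, whose radius of convergence is $2$ by Proposition~\ref{prop:generating:function:for:A:and:B} and \eqref{eq:A:B:j:tend:fast:to:zero}, so that the relevant sums over $k\sim n$ terms converge for $C<2$ and diverge beyond it—mirroring how the same value $2$ governs the multiple harmonic sums of Appendix~\ref{s:Two:harmonic:sums}. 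Making this quantitative, i.e. producing a uniform estimate on the total separating contribution that closes the induction simultaneously for all $n\le C\log(g)$ with $C<2$, is the crux; a complete argument would most plausibly require either Aggarwal-type uniform large-genus asymptotics for the full $n$-point intersection numbers or a correspondingly sharp analytic control of the degeneration kernel in the DVV recursion.
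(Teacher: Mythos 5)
You should be aware at the outset that there is no proof in the paper to compare against: the statement is Conjecture~\ref{conj:normalized:correlators:log:g:strong}, which the authors present as their strongest conjecture on correlators. What the paper actually establishes around it is only the implication structure (it implies Conjecture~\ref{conj:normalized:correlators}, and, by the argument printed right after its statement, Conjectures~\ref{conj:sum:of:correlators} and~\ref{conj:introduction:sum:of:correlators}), plus the special cases $n=1$ (vacuous) and $n=2$ --- the latter being the Theorem giving~\eqref{eq:epsilon:max:epsilon:min:n:2}, which rests on the $2$-correlator recursion and Proposition~\ref{pr:main:bounds} --- plus a consistency check for the single ratio $[\tau_1^{n-1}\tau_{3g-2}]_{g,n}/[\tau_0^{n-1}\tau_{3g-3+n}]_{g,n}$ when $n\le C\log g$. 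Your base cases coincide exactly with these verified instances, so that part of your plan is sound and faithful to the evidence the paper actually has.

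The genuine gap is the one you name yourself, and it is not a repairable oversight but the reason the statement is a conjecture: nothing in the paper, and nothing in your sketch, supplies a uniform bound on the accumulated weight of the separating (splitting) terms in the DVV recursion when $n$ is allowed to grow like $C\log(g)$, so the proposed induction cannot be closed with the tools at hand. Your heuristic for the threshold $C<2$ also deserves caution: you tie it to the ``radius of convergence $2$'' of $\sum_j B_j x^j = 2^x/\Gamma(1+x)$ asserted in Proposition~\ref{prop:generating:function:for:A:and:B} and~\eqref{eq:A:B:j:tend:fast:to:zero}, but $1/\Gamma(1+x)$ is entire (the $\Gamma$-function has poles, not zeros, at non-positive integers), so that assertion marks only the region where the paper verifies the identity, not an actual singularity; the constant $2$ is therefore not explained by your mechanism, and any quantitative version of your inductive step would have to locate it honestly. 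In short, your proposal is a reasonable research program whose crux --- uniform large-genus asymptotics for $n$-point correlators of Aggarwal type, or equivalently sharp control of the degeneration kernel --- is precisely the missing ingredient; it is not a proof, and you correctly refrain from claiming that it is.
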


Note that if we do not impose the condition that $n$ is
allowed to grow only sufficiently slowly with respect to
$g$, then the guess does not work already for the ratio of
correlators~\eqref{eq:max:bracket:tau}
and~\eqref{eq:other:bracket:tau}. However, when $n\le C\log
g$, we get the following asymptotics for this particular
ratio. First note that
\begin{multline*}
\frac{[\tau_1^{n-1}\tau_{3g-2}]_{g,n}}
{[\tau_0^{n-1}\tau_{3g-3+n}]_{g,n}}
=
6^{n-1}\cdot
\frac{(6g-3)!}{(6g-5+2n)!}
\frac{(3g-3+n)!}{(3g-2)!}
\cdot\frac{(2g-3+n)!}{(2g-2)!}
=\\=
\left(
\prod_{i=1}^{2n-2}
\left(
1+\frac{i-3}{6g}
\right)
\right)^{-1}
\cdot
\left(
\prod_{i=1}^{n-1}
\left(
1+\frac{i-2}{3g}
\right)
\right)
\cdot
\left(
\prod_{i=1}^{n-1}
\left(
1+\frac{i-2}{2g}
\right)
\right)\,.
\end{multline*}
Passing to logarithms and assuming that $n\ll g$ we get
$$
\log\frac{[\tau_1^{n-1}\tau_{3g-2}]_{g,n}}
{[\tau_0^{n-1}\tau_{3g-3+n}]_{g,n}}
\sim\frac{n^2}{12g}\to 0\quad\text{ as } g\to+\infty\,.
$$

We have seen that Conjecture~\ref{conj:sum:of:correlators}
implies Conjecture~\ref{conj:sum:of:correlators:fixed:k}.
Clearly,
Conjecture~\ref{conj:normalized:correlators:log:g:strong}
implies Conjecture~\ref{conj:normalized:correlators}. We
show now how Conjecture~\ref{conj:sum:of:correlators} (and,
thus, equivalent
Conjecture~\ref{conj:introduction:sum:of:correlators})
follows from
Conjecture~\ref{conj:normalized:correlators:log:g:strong}.
Thus, Conjecture~\ref{conj:normalized:correlators:log:g:strong}
is our strongest conjecture on asymptotics of correlators:
it implies all other ones.

\begin{proof}
We prove how
Conjecture~\ref{conj:sum:of:correlators}
(and, hence, the equivalent
Conjecture~\ref{conj:introduction:sum:of:correlators})
follow from
Conjecture~\ref{conj:normalized:correlators:log:g:strong}.
We prove the upper bound; the lower bound is completely
analogous. Recall the following combinatorial identity:
$$
\sum_{m=0}^{n-1}\binom{2n}{2m+1}=2^{2n-1}\,.
$$

Fix $C<2$.
Combining~\eqref{eq:sum:of:normalized:sq:correlators},
with~\eqref{eq:ansatz:sq}, applying the upper
bound~\eqref{eq:normalized:correlators:log:g:strong} and
using the combinatorial identity as above we get
\begin{multline*}
\limsup_{g\to+\infty}
\max_{\substack{1\le k\le C\log(g)\\
      \boldsymbol{D}\in\Pi(3g-3+2k,k)}}
c_{g,k}(\boldsymbol{D})
=\\=
\limsup_{g\to+\infty}
\max_{\substack{1\le k\le C\log(g)\\
      \boldsymbol{D}\in\Pi(3g-3+2k,k)}}
\frac{g!\cdot(3g-3+2k)!}{(6g+4k-5)!}
\cdot\frac{3^g}{2^{6g-9+7k}}\cdot
\\
\cdot
\sum_{d_{1,1}+d_{1,2}=D_1}\dots\sum_{d_{k,1}+d_{k,2}=D_k}
[\tau_{d_{1,1}}\tau_{d_{1,2}}
\dots \tau_{d_{k,1}}\tau_{d_{k,2}}]_{g,2k}
\cdot\prod_{j=1}^k \binom{2D_j+2}{2d_{j,1}+1}
\le\\\le
\limsup_{g\to+\infty}
\max_{1\le k\le C\log(g)}
\left(
\frac{g!\,(3g-3+2k)!}{(6g+4k-5)!}
\cdot\frac{3^g}{2^{6g-9+7k}}
\right)\cdot\left(
\frac{2^{2k-3}}{3^g}\cdot
\frac{(6g-5+4k)!}{g!\,(3g-3+2k)!}
\right)
\\ \cdot
\prod_{j=1}^k
\left(\sum_{d_{j,1}=0}^{D_j} \binom{2D_j+2}{2d_{j,1}+1}\right)
\cdot
\limsup_{g\to+\infty}
\big(1+\delta(C,g)\big)
=\\=
\limsup_{g\to+\infty}
\max_{1\le k\le C\log(g)}
\frac{1}{2^{6g-6+5k}}\cdot
\left(\prod_{j=1}^k 2^{2D_j+1}\right)\cdot 1
%=
%\frac{2^{2\sum D_j}\cdot 2^k}{2^{6g-6+5k}}
= 1\,,
\end{multline*}
where in the last line we used the relation
$$
\sum_{j=1}^k (2D_j+1) = 6g-6+5k\,.
$$
\end{proof}

%######################################################################
%######################################################################
%######################################################################

\section{Asymptotic volume contribution of square-tiled
surfaces with single critical layer}
\label{s:volume:contribution:from:graphs:with:single:vertex}

In Section~\ref{ss:Vol:Graph:k:g} we compute large genus
asymptotics for the
volume contribution $\Vol(\Petal_k(g))$ of the
stable graph $\Petal_k(g)\in\cG_g$, having single vertex
and $k$ loops, to the Masur--Veech volume $\Vol\cQ_g$. Our
computation is conditional to
Conjecture~\ref{conj:sum:of:correlators:fixed:k} on
asymptotics of the associated sums of $2k$-correlators.
Recall, that
Conjecture~\ref{conj:sum:of:correlators:fixed:k} is an
implication of the stronger
Conjecture~\ref{conj:sum:of:correlators}; the latter
conjecture is a simple reformulation of
Conjecture~\ref{conj:introduction:sum:of:correlators}.

In Section~\ref{ss:sum:over:single:vertex:graphs} we
compute the large genus asymptotics for the sum of volume
contributions $\sum_{k=1}^{[C\log(g)]} \Vol(\Petal_k(g))$
of such graphs, conditionally to
Conjecture~\ref{conj:introduction:sum:of:correlators}
(equivalently, to
Conjecture~\ref{conj:sum:of:correlators}), and to
Conjecture~\ref{conj:uniform:bound:for:error:term}. Here we
use any $C$ in the interval $]\tfrac{1}{2};2[$ for which
Conjectures~\ref{conj:introduction:sum:of:correlators}
and~\ref{conj:uniform:bound:for:error:term} are valid. We
will see that the resulting asymptotic value of such sum
coincides with the conjectural asymptotic value $\Vol\cQ_g$
given by Conjecture~\ref{conj:Vol:Qg}.

%------------------------------------------------------------
\subsection{Evaluation of $\Vol\Petal_k(g)$}
\label{ss:Vol:Graph:k:g}

Consider the stable graph $\Petal_k(g)$ having single
vertex, decorated with the label $g-k$, and having $k$
loops, see the left picture in
Figure~\ref{fig:two:non:separating}. This stable graph
represents those multicurves on a surface of genus $g$
without punctures, for which the underlying reduced
multicurve $\gamma_{\mathit{reduced}}$ is composed of $k$
pairwise non-isotopic non-separating simple closed curves.
The square-tiled surfaces associated to this stable graph
have single singular layer and $k$ maximal horizontal
cylinders.

\begin{figure}[htb]
\includegraphics{genus_two_graph_21.eps}
\begin{picture}(0,0)(135,10) % (0,0)(135,0)
\put(44,-14){$g-k$}
\put(0,0){$\overbrace{\rule{70pt}{0pt}}^{k\text{ loops}}$}
\put(110,7){$\overbrace{\rule{55pt}{0pt}}^k$}
\put(100,-26){$\underbrace{\rule{175pt}{0pt}}_g$}
\end{picture}
\includegraphics{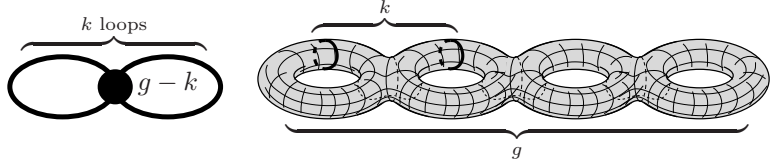}
\vspace*{40pt}
\caption{Graph $\Petal_k(g)$ (on the left)
representing $k$ pairwise non-homotopic
non-separating simple closed curves
on a surface of genus $g$ (on the right).}
\label{fig:two:non:separating}
\end{figure}

Notation $A(g,k) \preceq B(g,k)$ used in this section for
asymptotic inequalities between positive quantities means
that for any $\epsilon>0$ there exists $g_0\in\N$ such that
for all $g> g_0$ one has $A(g,k) \le (1+\epsilon)\cdot
B(g,k)$ for all integer $k$ in the interval $[1;C\log(g)]$,
where $C>0$ is specified separately. Notation $A(g,k)\sim
B(g,k)$ is used when
$\lim_{g\to+\infty}\frac{A(g,k)}{B(g,k)}=1$ uniformly for
all integer $k$ in the interval $[1;C\log(g)]$, where $C>0$
is specified separately.

\begin{Theorem}
\label{th:bounds:for:Vol:Gamma:k:g}
For any $C<2$ the following asymptotic
inequalities are valid (uniformly for
all integer $k$ in the interval $[1;C\log(g)]$)
\begin{multline}
\label{eq:bounds:in:terms:of:Z:k:gminus3}
c^{min}_{g-k,k}\cdot \sqrt{\frac{2}{\pi}}
\cdot\left(\frac{8}{3}\right)^{4g-4}
\cdot\frac{\sqrt{3g-3}}{2^{k-1}\cdot k!}
\cdot Z_k(3g-3)
\preceq \\ \preceq
\Vol(\Petal_k(g))
\preceq
c^{max}_{g-k,k}\cdot \sqrt{\frac{2}{\pi}}
\cdot\left(\frac{8}{3}\right)^{4g-4}
\cdot\frac{\sqrt{3g-3}}{2^{k-1}\cdot k!}
\cdot Z_k(3g-3)
\quad\text{as } g\to+\infty
\,,
\end{multline}
where $c^{max}_{g,k}, c^{min}_{g,k}$ are defined
in~\eqref{eq:c:max}, and~\eqref{eq:c:min} respectively and
$Z_k(3g-3)$ denotes the
sum~\eqref{eq:multiple:harmonic:zeta:sum:full:expansion}.
\end{Theorem}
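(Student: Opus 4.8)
The plan is to compute $\Vol(\Petal_k(g))=\cZ(P_{\Petal_k(g)})$ explicitly via Theorem~\ref{th:volume}, rewrite the answer as a weighted sum of the normalized correlators $c_{g-k,k}(\boldsymbol D)$ from~\eqref{eq:sum:of:normalized:correlators} against the summand of $Z_k(3g-3)$, and then sandwich. First I would instantiate formula~\eqref{eq:P:Gamma} for $\Petal_k(g)$: here $n=0$, the single vertex has genus $g-k$ and valency $2k$ (each of the $k$ loops contributes two half-edges), and $|\Aut(\Petal_k(g))|=2^k\,k!$ (a flip for each loop together with permutations among the $k$ loops, consistent with the factor $\tfrac18$ for $k=2$ in Table~\ref{tab:2:0}). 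This gives
\[
P_{\Petal_k(g)}(\boldsymbol b)=\frac{2^{6g-5}(4g-4)!}{(6g-7)!}\cdot\frac{1}{2^k\,k!}\cdot b_1\cdots b_k\cdot N_{g-k,2k}(b_1,b_1,\dots,b_k,b_k).
\]

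Next I would expand $N_{g-k,2k}$ using~\eqref{eq:N:g:n}--\eqref{eq:c:subscript:d} and group its monomials by the pair sums $D_i:=d_{2i-1}+d_{2i}$, so that the power of $b_i$ is $b_i^{2D_i}$; after multiplying by $b_1\cdots b_k$ and applying $\cZ$ through $\cZ(b_i^{2D_i+1})=(2D_i+1)!\,\zeta(2D_i+2)$ one obtains the exact expression
\[
\Vol(\Petal_k(g))=\frac{2^{g+1}(4g-4)!}{(6g-7)!\,k!}\sum_{\boldsymbol D\in\Pi(3g-k-3,k)}\Bigl(\prod_{i=1}^k(2D_i+1)!\,\zeta(2D_i+2)\Bigr)\,S(\boldsymbol D),
\]
where $S(\boldsymbol D)$ collects the correlators $\langle\psi_1^{d_1}\cdots\psi_{2k}^{d_{2k}}\rangle/\prod_j d_j!$ over all splittings $d_{2i-1}+d_{2i}=D_i$. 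The key algebraic step is to recognize $S(\boldsymbol D)$ inside $c_{g-k,k}(\boldsymbol D)$: substituting the outer genus $g\mapsto g-k$ in~\eqref{eq:sum:of:normalized:correlators} lands precisely on the index set $\Pi(3g-k-3,k)$, and matching prefactors shows that $\prod_j(2D_j+2)!\cdot S(\boldsymbol D)$ equals $c_{g-k,k}(\boldsymbol D)$ times an explicit $g,k$-dependent constant. Then the change of variables $j_i:=D_i+1$ (so $j_i\ge1$ and $\sum_i j_i=3g-3$) turns $\prod_i\tfrac{\zeta(2D_i+2)}{2D_i+2}$ into $2^{-k}\prod_i\tfrac{\zeta(2j_i)}{j_i}$, whose sum over $\boldsymbol D$ is exactly $2^{-k}\,Z_k(3g-3)$.

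At this point the entire correlator dependence of the summand is carried by the factor $c_{g-k,k}(\boldsymbol D)$, which I would bound above by $c^{max}_{g-k,k}$ and below by $c^{min}_{g-k,k}$ (see~\eqref{eq:c:max}--\eqref{eq:c:min}), pulling these $\boldsymbol D$-independent factors out of the sum. Collecting all the remaining $g,k$-dependent constants into a single factor $\Lambda(g,k)$, the two inequalities acquire exactly the announced shape, with $c^{min}_{g-k,k}\,\Lambda(g,k)\,Z_k(3g-3)$ and $c^{max}_{g-k,k}\,\Lambda(g,k)\,Z_k(3g-3)$ as the two sides. It then remains only to identify $\Lambda(g,k)$ with the claimed constant.

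The main task, and the only delicate one, is to verify the uniform asymptotics $\Lambda(g,k)\sim\sqrt{\tfrac{2}{\pi}}\,(8/3)^{4g-4}\,\tfrac{\sqrt{3g-3}}{2^{k-1}k!}$ by Stirling's formula. After cancelling the explicit $2^{k-1}k!$, the $k$-dependence reduces to $\tfrac{(6g-2k-5)!}{(g-k)!\,(3g-k-3)!}\cdot 12^{k}$ measured against its value at $k=0$; writing each shifted factorial as the $k=0$ factorial times a finite product and estimating $\prod_j\bigl(1-\tfrac{c_j}{6g}\bigr)=\exp\!\bigl(-O(k^2/g)\bigr)$ shows both that the exponential rate is $(4096/81)^g=(8/3)^{4g}$ and that the $k$-dependent corrections tend to $1$ uniformly on $1\le k\le C\log g$ (indeed for any $C$, since $k=o(\sqrt g)$). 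The hard part is the bookkeeping: carrying the subleading Stirling terms accurately enough to pin down the constant $(8/3)^{-4}\sqrt{2/\pi}$ and the polynomial factor $\sqrt{3g-3}$, while keeping every estimate uniform in $k$ across the whole range. This is where the care lies, but no idea beyond Stirling's formula and the elementary product estimate above is needed.
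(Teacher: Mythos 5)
Your proposal is correct and follows essentially the same route as the paper's own proof: instantiate~\eqref{eq:P:Gamma} for $\Petal_k(g)$ with $|\Aut(\Petal_k(g))|=2^k\,k!$, group the monomials of $N_{g-k,2k}$ by the pair sums $D_i=d_{2i-1}+d_{2i}$, recognize the resulting correlator sums as $c_{g-k,k}(\boldsymbol{D})$ so that the $\zeta$-weights assemble (via $j_i=D_i+1$) into $2^{-k}Z_k(3g-3)$, sandwich by $c^{min}_{g-k,k}$ and $c^{max}_{g-k,k}$, and finish with Stirling estimates uniform over $1\le k\le C\log g$. Your bookkeeping is sound: the factor $\frac{(6g-2k-5)!}{(g-k)!\,(3g-k-3)!}\cdot 12^k$ is indeed asymptotically $k$-independent, exactly as needed to produce the stated prefactor $\sqrt{2/\pi}\,(8/3)^{4g-4}\,\sqrt{3g-3}/(2^{k-1}k!)$.
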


We first state two Corollaries of this Theorem and
then prove the Theorem.

\begin{Corollary}
\label{cor:Vol:g:k}
Conjecture~\ref{conj:sum:of:correlators:fixed:k}
implies the following asymptotic expansion
for any fixed $k\in\N$
as $g\to+\infty$:
\begin{multline}
\label{eq:Vol:g:k}
\Vol(\Petal_k(g))
\sim
\sqrt{\frac{2}{\pi}}
\cdot\left(\frac{8}{3}\right)^{4g-4}
\cdot\frac{1}{\sqrt{3g-3}}
\,\cdot \\
\cdot\frac{1}{2^{k-1}}
\,\cdot
\left(\sum_{j=0}^{k-1}
\frac{B_j}{(k-1-j)!}
\cdot\big(\log(3g-3)\big)^{k-1-j}\right)
\sim\\ \sim
\sqrt{\frac{2}{\pi}}
\cdot\left(\frac{8}{3}\right)^{4g-4}
\cdot\frac{1}{\sqrt{3g-3}}
\cdot\frac{1}{(k-1)!}
\cdot
\left(\frac{\log(6g-6)+ \gamma}{2}\right)^{k-1}\,.
\end{multline}
\end{Corollary}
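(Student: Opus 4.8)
The plan is to combine the two-sided bound of Theorem~\ref{th:bounds:for:Vol:Gamma:k:g} with the conjectural collapse of the normalized correlators and then insert the known asymptotics of $Z_k$. First I would fix $k\in\N$ and observe that as $g\to+\infty$ the genus $g-k$ decorating the single vertex of $\Petal_k(g)$ also tends to infinity. Hence Conjecture~\ref{conj:sum:of:correlators:fixed:k}, applied with genus $g-k$ and the fixed number $k$ of loops, gives
$$\lim_{g\to+\infty} c^{min}_{g-k,k}=\lim_{g\to+\infty} c^{max}_{g-k,k}=1.$$
Since these two factors are the only difference between the upper and lower bounds in~\eqref{eq:bounds:in:terms:of:Z:k:gminus3}, the sandwich becomes tight, yielding
$$\Vol(\Petal_k(g))\sim \sqrt{\frac{2}{\pi}}\cdot\left(\frac{8}{3}\right)^{4g-4}\cdot\frac{\sqrt{3g-3}}{2^{k-1}\cdot k!}\cdot Z_k(3g-3)\quad\text{as }g\to+\infty.$$

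Next I would substitute the full asymptotic expansion of the multiple harmonic zeta sum provided by Theorem~\ref{th:multiple:harmonic:zeta:sum:full:expansion} with $m=3g-3$, namely
$$Z_k(3g-3)=\frac{k!}{3g-3}\left(\sum_{j=0}^{k-1}\frac{B_j}{(k-1-j)!}\big(\log(3g-3)\big)^{k-1-j}+o(1)\right).$$
Plugging this in, the factor $k!$ cancels against the $k!$ in the denominator while $\frac{\sqrt{3g-3}}{3g-3}=\frac{1}{\sqrt{3g-3}}$, which produces exactly the first displayed expression in~\eqref{eq:Vol:g:k}. The $o(1)$ term is harmless for the equivalence because, since $B_0=1$, the $j=0$ summand equals $\frac{(\log(3g-3))^{k-1}}{(k-1)!}$ and dominates both the remaining summands and the $o(1)$ as $g\to+\infty$.

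Finally, for the second equivalence in~\eqref{eq:Vol:g:k} I would pass to the leading term of the polynomial in $\log(3g-3)$. The first line of~\eqref{eq:Vol:g:k} is asymptotic to its top-degree term $\frac{1}{2^{k-1}(k-1)!}\big(\log(3g-3)\big)^{k-1}$, whereas the proposed closed form expands, to leading order, as $\frac{1}{(k-1)!}\big(\tfrac{1}{2}\log(6g-6)\big)^{k-1}$. Using $\log(6g-6)=\log(3g-3)+\log 2$ together with $\log(3g-3)\to+\infty$ gives $\log(6g-6)\sim\log(3g-3)$, so the additive constants $\log 2$ and $\gamma$ contribute only lower-order corrections and the two leading terms agree. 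This establishes both equivalences in~\eqref{eq:Vol:g:k}.

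The argument is essentially mechanical once the two ingredients are in place; the only point requiring attention is the dominance step, i.e. confirming that the subleading powers of $\log(3g-3)$ in the expansion of $Z_k$ and the $o(1)$ error do not interfere with the leading asymptotics. For fixed $k$ this is immediate, since the top power $(\log(3g-3))^{k-1}$ grows without bound (or, when $k=1$, is the constant $1$, against which the $o(1)$ simply vanishes). No uniformity over $k$ is needed here — that refinement is precisely what is deferred to the summation over $k$ carried out in Section~\ref{ss:sum:over:single:vertex:graphs}.
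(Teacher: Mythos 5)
Your proof is correct and follows essentially the same route as the paper: the first equivalence is obtained exactly as you describe, by combining the two-sided bound of Theorem~\ref{th:bounds:for:Vol:Gamma:k:g} with Conjecture~\ref{conj:sum:of:correlators:fixed:k} (applied, as you rightly note, at genus $g-k$, which is harmless for fixed $k$) and then inserting the full expansion of $Z_k(3g-3)$ from Theorem~\ref{th:multiple:harmonic:zeta:sum:full:expansion}, with the same dominance check disposing of the $o(1)$ error. The only cosmetic difference is in the second equivalence, where the paper simply cites the formula $Z_k(m)=\frac{k}{m}\big((\log(2m)+\gamma)^{k-1}+O(\log^{k-3}m)\big)$ from Theorem~\ref{th:multiple:harmonic:sum}, whereas you compare the leading monomials of the two polynomial expressions by hand; for fixed $k$ both justifications are valid and yield the stated asymptotic equivalence.
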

\begin{proof}
The first equivalence is a combination of the asymptotic
expansion~\eqref{eq:multiple:harmonic:zeta:sum:full:expansion}
from
Theorem~\ref{th:multiple:harmonic:zeta:sum:full:expansion}
for $Z_k(3g-3)$ with conjectural asymptotic
formula~\eqref{eq:sum:of:correlators:fixed:k} for
$c^{max}_{g,k}$ and $c^{min}_{g,k}$ from
Conjecture~\ref{conj:sum:of:correlators:fixed:k}. The
second equivalence corresponds to asymptotic
formula~\eqref{eq:multiple:harmonic:zeta:sum} from
Theorem~\ref{th:multiple:harmonic:sum}.
\end{proof}

Let $k\le C\log(g)$. Define
\begin{multline}
\label{eq:V:k:g:with:epsilon}
V_k(g)=
\sqrt{\frac{2}{\pi}}
\cdot\left(\frac{8}{3}\right)^{4g-4}
\cdot\frac{\sqrt{3g-3}}{2^{k-1}\cdot k!}
\cdot Z_k(3g-3)
\ =\
\sqrt{\frac{2}{\pi}}
\cdot\left(\frac{8}{3}\right)^{4g-4}
\cdot\\
\cdot\frac{1}{\sqrt{3g-3}}
\cdot\frac{1}{2^{k-1}}
\,\cdot
\left(\sum_{j=0}^{k-1}
\frac{B_j}{(k-1-j)!}
\cdot\big(\log(3g-3)\big)^{k-1-j}
+\epsilon_k^Z(3g-3)
\right)\,,
\end{multline}
where $\epsilon_k^Z(m)$ is defined
in~\eqref{eq:multiple:harmonic:zeta:sum:full:expansion:log}
from Conjecture~\ref{conj:uniform:bound:for:error:term}.

\begin{Corollary}
\label{cor:V:over:Gamma:uniform}
For any $C<2$ for which
Conjecture~\ref{conj:introduction:sum:of:correlators}
is valid we have:
\begin{equation}
\label{eq:Vol:g:k:uniform}
\lim_{g\to+\infty}
\max_{1\le k\le C\log(g)}
\frac{V_k(g)}{\Vol(\Petal_k(g))}
=
\lim_{g\to+\infty}
\min_{1\le k\le C\log(g)}
\frac{V_k(g)}{\Vol(\Petal_k(g))}
=1\,.
\end{equation}
\end{Corollary}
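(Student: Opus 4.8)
The plan is to recognize that $V_k(g)$, defined in~\eqref{eq:V:k:g:with:epsilon}, is precisely the common factor appearing on both sides of the two-sided asymptotic estimate~\eqref{eq:bounds:in:terms:of:Z:k:gminus3} of Theorem~\ref{th:bounds:for:Vol:Gamma:k:g}. Thus that theorem already reads
\[
c^{min}_{g-k,k}\cdot V_k(g)\ \preceq\ \Vol(\Petal_k(g))\ \preceq\ c^{max}_{g-k,k}\cdot V_k(g)
\]
uniformly for $1\le k\le [C\log g]$, where $\preceq$ is the uniform asymptotic inequality introduced just before the theorem. The whole statement will then follow by a squeeze argument once the factors $c^{min}_{g-k,k}$ and $c^{max}_{g-k,k}$ are controlled uniformly in $k$.

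The first key step is to match indices. I would verify that the quantity $\Psi(D,k)$ from Section~\ref{ss:large:genera}, whose total genus is $g$ and whose moduli-space integral is taken over $\overline{\cM}_{g-k,2k}$, coincides with $c_{g-k,k}(\boldsymbol{D})$ defined in~\eqref{eq:sum:of:normalized:correlators}: expanding each factor $(\psi_{2j-1}+\psi_{2j})^{D_j}$ by the binomial theorem turns $\prod_j\frac{(2D_j+2)!}{D_j!}$ into $\prod_j\frac{(2D_j+2)!}{d_{j,1}!\,d_{j,2}!}$, and substituting $g\mapsto g-k$ in the prefactor of $c_{g,k}$ reproduces $\frac{(g-k)!\,(3g-3-k)!}{(6g-2k-5)!}\cdot\frac{24^{g-k}}{2^{6g-6-k}}$ after collecting the powers of $2$ and $3$. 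Consequently $\psi_{max}(g,C)=\max_{1\le k\le[C\log g]}c^{max}_{g-k,k}$ and $\psi_{min}(g,C)=\min_{1\le k\le[C\log g]}c^{min}_{g-k,k}$, which is exactly the reformulation already noted in the remark accompanying Conjecture~\ref{conj:sum:of:correlators}.

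Next I would unwind the definition of $\preceq$. Fixing $\epsilon>0$, there is $g_0$ so that for all $g>g_0$ and all $1\le k\le[C\log g]$ one has both $c^{min}_{g-k,k}V_k(g)\le(1+\epsilon)\Vol(\Petal_k(g))$ and $\Vol(\Petal_k(g))\le(1+\epsilon)c^{max}_{g-k,k}V_k(g)$. Since all quantities are strictly positive, these rearrange to
\[
\frac{1}{(1+\epsilon)\,\psi_{max}(g,C)}\ \le\ \frac{V_k(g)}{\Vol(\Petal_k(g))}\ \le\ \frac{1+\epsilon}{\psi_{min}(g,C)}
\]
uniformly over $1\le k\le[C\log g]$. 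Taking the minimum and maximum over $k$ preserves these bounds, so for $g>g_0$ both $\min_k V_k/\Vol$ and $\max_k V_k/\Vol$ lie in the displayed interval.

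Finally I would invoke Conjecture~\ref{conj:introduction:sum:of:correlators} (valid for the chosen $C$), which gives $\psi_{min}(g,C)\to1$ and $\psi_{max}(g,C)\to1$ as $g\to+\infty$. Passing to $\liminf$ and $\limsup$ in the chain above yields $\tfrac{1}{1+\epsilon}\le\liminf_g\min_k\tfrac{V_k}{\Vol}\le\limsup_g\max_k\tfrac{V_k}{\Vol}\le1+\epsilon$, and letting $\epsilon\to0$ forces both limits in~\eqref{eq:Vol:g:k:uniform} to equal $1$. The only genuinely delicate point is the bookkeeping of uniformity: one must ensure the threshold $g_0$ produced by $\preceq$ and the convergence of $\psi_{min},\psi_{max}$ hold simultaneously for all $k$ in the range $[1,C\log g]$. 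This is already guaranteed, since both Theorem~\ref{th:bounds:for:Vol:Gamma:k:g} and Conjecture~\ref{conj:introduction:sum:of:correlators} are stated uniformly in $k$ on this range, so no further estimate on individual correlators is required.
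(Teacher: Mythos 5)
Your proof is correct and follows essentially the same route as the paper's: combine the two-sided uniform bound of Theorem~\ref{th:bounds:for:Vol:Gamma:k:g}, whose common factor is exactly $V_k(g)$, with the uniform convergence $c^{min}_{g-k,k},\,c^{max}_{g-k,k}\to 1$ supplied by Conjecture~\ref{conj:introduction:sum:of:correlators}, and squeeze. The details you spell out — the identification $\Psi(\boldsymbol{D},k)=c_{g-k,k}(\boldsymbol{D})$ via the binomial expansion and matching of prefactors, and the explicit $\epsilon$-bookkeeping hidden in the $\preceq$ notation — are precisely what the paper's terse proof leaves implicit, and you verify them correctly.
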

\begin{proof}
Consider expression~\eqref{eq:bounds:in:terms:of:Z:k:gminus3} from
Theorem~\ref{th:bounds:for:Vol:Gamma:k:g}.
Conjecture~\ref{conj:introduction:sum:of:correlators}
claims that $c^{min}_{g-k,k}$ and $c^{max}_{g-k,k}$ tend to
$1$ uniformly for all $k$ in the interval
$\big[1;[C\log(g)]\big]$ when $g\to+\infty$. Applying
expression~\eqref{eq:multiple:harmonic:zeta:sum:full:expansion:log}
for $Z_k(3g-3)$ we get the desired relation.
\end{proof}

\begin{NNRemark}
Note that at this stage we do
not yet make any assertions on asymptotic behavior of
$\epsilon_k^Z(3g-3)$, so at this stage
we do not rely on Conjecture~\ref{conj:uniform:bound:for:error:term}.
\end{NNRemark}

\begin{proof}[Proof of Theorem~\ref{th:bounds:for:Vol:Gamma:k:g}]
The automorphism group $\operatorname{Aut}(\Petal_k(g))$
consists of all possible permutations of loops composed
with all possible flips of individual loops, so
$$
|\operatorname{Aut}(\gamma)|=2^k\cdot k!\,.
$$
The graph $\Petal_k(g)$ has single vertex, so
$|V(\Graph)|=1$. The surface does not have any marked points,
so $n=0$.
Thus, applying~\eqref{eq:volume:contribution:of:stable:graph}
to $\Petal_k(g)$ we get
\begin{multline}
\label{eq:Vol:Gamma:k:g:init}
\Vol(\Petal_k(g))
=\frac{2^{6g-5} \cdot (4g-4)!}{(6g-7)!}
\cdot\\ \cdot
1\cdot\frac{1}{2^k\cdot k!}
\cdot\cZ\big(
b_1 b_2\dots b_k \cdot
N_{g-k,2k}(b_1,b_1,b_2,b_2,\dots,b_k,b_k)
\big)
=\\=
\frac{(4g-4)!}{(6g-7)!}
\cdot \frac{2^{6g-5}}{2^k\cdot k!}
\cdot\frac{1}{2^{5(g-k)-6+4k}}
\cdot\\
\sum_{\boldsymbol{d}\in\Pi(3g-3-k,2k)}
\frac{\langle\psi^{\mathbf{d}}\rangle_{g-k,2k}}{\mathbf{d}!}\,\cdot\,
\prod_{i=1}^k
\Big((2d_{2i-1}+2d_{2i}+1)!\,\cdot\,\zeta(2d_{2i-1}+2d_{2i}+2)\Big)\,.
\end{multline}

Rewrite the latter sum using notations
$\textbf{D}=(D_1,\dots,D_k)\in\Pi(3g-3-k,k)$
and $c_{g-k,k}(\boldsymbol{D})$ defined
by~\eqref{eq:sum:of:normalized:correlators}.
Adjusting
expression~\eqref{eq:sum:of:normalized:correlators}
given for genus $g$ to genus $g-k$
we get
\begin{multline*}
\sum_{\boldsymbol{d}\in\Pi(3g-3-k,2k)}
\frac{\langle\psi^{\mathbf{d}}\rangle_{g-k,2k}}{\mathbf{d}!}\,\cdot\,
\prod_{i=1}^k
\Big((2d_{2i-1}+2d_{2i}+1)!\,\cdot\,\zeta(2d_{2i-1}+2d_{2i}+2)\Big)
=\\=
\sum_{\boldsymbol{D}\in\Pi(3g-3-k,2k)}
c_{g-k,k}(\boldsymbol{D})\cdot
\frac{(6(g-k)+4k-5)!}{(g-k)!\cdot(3(g-k)-3+2k)!}
\cdot\frac{2^{3(g-k)-6+5k}}{3^{(g-k)}}
\cdot\\
\cdot\prod_{j=1}^k \frac{\zeta(2D_j+2)}{2D_j+2}\,.
\end{multline*}
which allows to rewrite~\eqref{eq:Vol:Gamma:k:g:init} as
\begin{multline*}
\Vol(\Petal_k(g))
=\\=
\left(\frac{(4g-4)!}{(6g-7)!}
\cdot 2^{g+1}
\cdot\frac{1}{k!}\right)
\cdot
\left(\frac{(6g-2k-5)!}{(g-k)!\cdot(3g-3-k)!}
\cdot\frac{2^{3g-6+2k}}{3^{(g-k)}}\right)\cdot
\\
\sum_{\boldsymbol{D}\in\Pi(3g-3-k,2k)}
\frac{c_{g-k,k}(\boldsymbol{D})}{2^k}
\cdot\prod_{j=1}^k \frac{\zeta(2D_j+2)}{D_j+1}\,.
\end{multline*}

Rearranging factors with factorials and powers of $2$ and
$3$, using~\eqref{eq:c:max} and~\eqref{eq:c:min} for the
bounds of $c_{g-k,k}(\boldsymbol{D})$ and passing to
notation~\eqref{eq:multiple:harmonic:zeta:sum:full:expansion}
we get the following bounds:
\begin{multline}
\label{eq:Vol:Gamma:k:g:intermed}
c^{min}_{g-k,k}\le \Vol(\Petal_k(g)):
\\
\left(
\frac{(6g-2k-5)!}{(6g-7)!}
\cdot\frac{(4g-4)!}{(g-k)!\cdot(3g-3-k)!}
\cdot \frac{2^{4g-5+3k}}{3^{(g-k)}}
\cdot\frac{1}{k!}
\cdot Z_k(3g-3)
\right)
\\
\le c^{max}_{g-k,k}
\end{multline}

We have the following asymptotic equivalence
uniform for all $k\in[1;C\log(g)]$:
\begin{multline}
\label{eq:equivalence:1}
\frac{(4g-4)!}{(g-k)!\cdot(3g-3-k)!}
\sim
\frac{(4g-4)!}{(g-1)!\cdot(3g-3)!}
\cdot g^{k-1}\cdot (3g)^k
\sim\\ \sim
\sqrt{\frac{2}{\pi(3g-3)}}
\cdot\left(\frac{2^8}{3^3}\right)^{g-1}
\cdot 3^k
\cdot g^{2k+1}
\quad\text{as }g\to+\infty
\,,
\end{multline}
where in the second line we
applied asymptotic formula~\eqref{eq:binom:4g:g}
for the binomial coefficient $\binom{4(g-1)}{g-1}$.
We also have the following asymptotic equivalence
uniform for all $k\in[1;C\log(g)]$:
\begin{equation}
\label{eq:equivalence:2}
\frac{(6g-2k-5)!}{(6g-7)!}
\sim
\frac{1}{(6g)^{2k-2}}
\quad\text{as }g\to+\infty
\,.
\end{equation}
Using the two above equivalences and
collecting powers of $2$, of $3$ and of $g$, we can
rewrite~\eqref{eq:Vol:Gamma:k:g:intermed} in the following
way:
$$
c^{min}_{g-k,k}\preceq
\cfrac{\Vol(\Petal_k(g))}
{\sqrt{\frac{2}{\pi}}
\cdot\left(\frac{8}{3}\right)^{4g-4}
\cdot\frac{\sqrt{3g-3}}{2^{k-1}\cdot k!}
\cdot Z_k(3g-3)}
\preceq c^{max}_{g-k,k}\,.
$$
\end{proof}

%------------------------------------------------------------
\subsection{Volume contribution of stable graphs with single vertex}
\label{ss:sum:over:single:vertex:graphs}

\begin{CondTheorem}
\label{cond:th:sum:over:one:cylinder:graphs}
Suppose that
both Conjectures~\ref{conj:introduction:sum:of:correlators}
and~\ref{conj:uniform:bound:for:error:term}
are valid for some $C\in]\tfrac{1}{2};2[$. Then
\begin{equation}
\label{eq:sum:of:Gamma:gives:all}
\sum_{k=1}^{[C\log(g)]} \Vol(\Graph_k(g))
\sim \frac{4}{\pi}
\cdot\left(\frac{8}{3}\right)^{4g-4}
\quad\text{as }g\to+\infty
\,.
\end{equation}
\end{CondTheorem}
\begin{proof}
Applying~\eqref{eq:Vol:g:k:uniform} from
Corollary~\ref{cor:V:over:Gamma:uniform}, passing to
notations~\eqref{eq:S:m:harmonic} and
applying~\eqref{eq:volume:coefficient:Z} from Conditional
Theorem~\ref{cond:th:volume:coefficient:total:sum} we get
\begin{multline*}
\sum_{k=1}^{[C\log(g)]} \Vol(\Graph_k(g))
\sim
\sum_{k=1}^{[C\log(g)]} V_k(g)
\sim\\
\sim\sqrt{\frac{2}{\pi}}
\cdot\left(\frac{8}{3}\right)^{4g-4}
\cdot
\sqrt{3g-3}\cdot
\sum_{k=1}^{[C\log(g)]}
\frac{Z_k(3g-3)}{2^{k-1}\cdot k!}
\sim\\
\sim\sqrt{\frac{2}{\pi}}\cdot\left(\frac{8}{3}\right)^{4g-4}
\cdot\sqrt{3g-3}\cdot \VSumZ(C,3g-3)
\sim\\
\sim\sqrt{\frac{2}{\pi}}
\cdot\left(\frac{8}{3}\right)^{4g-4}
\cdot2\sqrt{\frac{2}{\pi}}
=\frac{4}{\pi}
\cdot\left(\frac{8}{3}\right)^{4g-4}
\,.
\end{multline*}
\end{proof}

%------------------------------------------------------------
\subsection{Approximation by Poisson distribution}
\label{ss:Approximation:by:Poisson}

The distribution, where
\begin{equation}
\label{eq:Poisson:def}
p(n; \lambda)=
\operatorname{Probability}(X=n)=e^{-\lambda}\cdot\frac{\lambda^n}{n!}\ \text{ for } n=0,1,2,\dots
\end{equation}
is the Poisson distribution. It is known that the expected
value and the variance of Poisson distribution are equal to
$\lambda$:
$$
\operatorname{E}(X)=\operatorname{Var}(X)=\lambda\,.
$$
and that the Poisson distribution tends to normal
distribution for large values of $\lambda$.

Recall that by $\Vol_{k\textit{-cyl}}(\cQ_g)$
we denote the contribution of square-tiled surfaced
with exactly $k$ maximal horizontal
cylinders to the Masur--Veech volume $\Vol\cQ_g$.
The relations
$$
\sum_{k=1}^{3g-3}\Vol_{k\textit{-cyl}}=\Vol\cQ_g\,,
\qquad
\sum_{k=1}^{\log(g)}\Vol(\Petal_k(g))\overset{?}{\sim}\Vol\cQ_g
$$
(where the first one is an identity and the second one is
conjectural) imply that the collections of positive numbers
$\frac{\Vol(\Petal_k(g))}{\Vol\cQ_g}$ and
$\frac{\Vol_{k\textit{-cyl}}}{\Vol\cQ_g}$ can be
interpreted as probability distributions, see
Section~\ref{ss:large:genera} for details.
It would be convenient to complete every such finite
sequence by zeroes to work only with probability distributions
indexed by indices $k\in\N$.

We study large genus asymptotics of these distributions.
Conditional
Theorem~\ref{cond:th:sum:over:one:cylinder:graphs} and
Conjecture~\ref{conj:Vol:Qg} imply that the two
distributions converge in total variation, so we can
restrict our considerations to the large genus asymptotics
of the distribution $\frac{\Vol(\Petal_k(g))}{\Vol\cQ_g}$.

We study it in two regimes. In the first regime we consider
the ``left tail'' of the distribution of a bounded length
$K$. In plain terms, we consider the large genus
asymptotics of the finite collection of numbers
$\frac{\Vol(\Petal_1(g))}{\Vol\cQ_g},\dots,
\frac{\Vol(\Petal_K(g))}{\Vol\cQ_g}$.

In the second regime we consider $k$ in the full range
and let $g$ grow. We show that conditionally to
our Conjectures both distributions are asymptotically
well-approximated by Poisson distribution with parameter
$\frac{\log(g)}{2}+const$. We evaluate the constant $const$
and show that it changes when passing from the first regime
to the second.
\smallskip

%------------------------------------------------------------
\noindent\textbf{Best approximation for the
left tail of any fixed size.~}
In the first regime we assume that $k$ belongs to the
range $k=1,\dots,K$ of any fixed size $K$. Having fixed $K$
we let $g$ grow. In this regime, we are interested only in
the leading term of asymptotics of $\Vol(\Petal_k(g))$ as
$g\to+\infty$.

From now on $\gamma= 0.5772...$ denotes the Euler--Mascheroni
constant.

\begin{CondTheorem}
Define
\begin{equation}
\label{eq:total:mass:for:tail:distribution}
V_{tail}(g)=
\frac{2\cdot e^{\frac{\gamma}{2}}}{\sqrt{\pi}}
\cdot\left(\frac{8}{3}\right)^{4g-4}\,.
\end{equation}
Conjecture~\ref{conj:sum:of:correlators:fixed:k} implies
that the tail distribution
$\frac{\Vol(\Petal_k(g))}{V_{\mathit{tail}}(g)}$ corresponding
to any fixed range $[1,K]$ of $k$ tends to the left tail of
the Poisson distribution with parameter
\begin{equation}
\label{eq:lambda:tail}
\lambda_{tail}(g)=\frac{\log(6g-6)+\gamma}{2}\,,
\end{equation}
when $g\to\infty$ in the following sense:
$$
\lim_{g\to+\infty}
\frac{\Vol(\Petal_k(g))}{V_{\mathit{tail}}(g)}
\cdot\frac{1}{p(k-1; \lambda_{\mathit{tail}}(g))}
=1
\quad\text{for }k=1,\dots,K\,.
$$
\end{CondTheorem}
Note that we have to shift the index of
$p(\ast; \lambda_{\mathit{tail}}(g))$
by $1$: our count starts from $k=1$, while the Poisson
distribution starts from
$p(0; \lambda_{\mathit{tail}}(g))$.
\begin{proof}
Conditional to
Conjecture~\ref{conj:sum:of:correlators:fixed:k},
the leading term of the volume contribution
$\Vol(\Petal_k(g))$
is given by
the last line in~\eqref{eq:Vol:g:k}
in Corollary~\ref{cor:Vol:g:k}. Thus,
$$
\frac{\Vol(\Petal_k(g))}{V_{\mathit{tail}}(g)}
\sim
\frac{1}{\sqrt{2}\cdot e^{\frac{\gamma}{2}}}
\cdot\frac{1}{\sqrt{3g-3}}
\cdot\frac{1}{(k-1)!}
\left(\frac{\log(6g-6)+ \gamma}{2}\right)^{k-1}
\!\!\!
=\frac{1}{e^{\lambda_{\mathit{tail}}}}
\cdot\frac{\lambda_{\mathit{tail}}^{k-1}}{(k-1)!}
\,.
$$
\end{proof}

\medskip

%------------------------------------------------------------
\noindent\textbf{Best approximation near maximal values.~}
Let us study now the large genus asymptotics of the
distribution $\frac{\Vol(\Petal_k(g))}{\Vol\cQ_g}$ in the
full range range of $k$. We transform finite collections of
numbers into sequences completing any finite collection by
infinite sequence of zeroes. Speaking of
\textit{convergence} of sequences we always mean the
$\ell^1$-convergence (equivalently, convergence \textit{in
total variation}).
Using technical Lemma~\ref{lm:v:p} we prove in
Lemma~\ref{lm:convergence:to:Poisson} that certain
convenient auxiliary sequence $\{v_k(g)\}_{k\in\N}$ defined
below converges to the Poisson distribution. Finally, we
prove in Conditional Theorem~\ref{th:Poisson} that the
sequence $\{v_k(g)\}$ converges to
$\{\frac{\Vol(\Petal_k(g))}{\Vol\cQ_g}\}$ and that the
latter sequence converges to the sequence
$\left\{\frac{\Vol_{k\textit{-cyl}}\cQ_g}{\Vol\cQ_g}\right\}$.

Define the following function $c(x)$ on the interval
$C\in[0;2]$:
\begin{equation}
\label{eq:c:of:C}
c(C)=
\begin{cases}
\frac{1}{C-\tfrac{1}{2}}\cdot
\left(
\frac{\log(\pi)-3\log(2)}{2}
+C\log(2)-\log\Gamma(1+C)
\right)\,,
&\text{when }C\neq\tfrac{1}{2}\,,
\\
3\log(2)+\gamma-2\,,
&\text{when }C=\tfrac{1}{2}\,.
\end{cases}
\end{equation}
Note that
\begin{multline*}
\lim_{C\to\tfrac{1}{2}} c(C)=
\left(
\frac{\log(\pi)-3\log(2)}{2}
+C\log(2)-\log\Gamma(1+C)
\right)'\Big|_{C=\tfrac{1}{2}}
=\\=
\log(2)-\psi\left(\tfrac{3}{2}\right)
=\log(2)-(-\gamma-2\log(2)+2)=3\log(2)+\gamma-2
\,,
\end{multline*}
where we used~\eqref{eq:psi:3:2} for the value
$\psi\left(\tfrac{3}{2}\right)$. Thus, the function $c$ is
continuous on the interval $[0;2]$. It is monotonously
decreasing on this interval from
$c(0)=3\log(2)-\log(\pi)\approx 0.934712$ to
$c(2)=\frac{1}{3}(\log(\pi)-\log(2))\approx 0.150528$.

\begin{Lemma}
\label{lm:v:p}
Suppose that for some $C\in]0;2[$
Conjectures~\ref{conj:introduction:sum:of:correlators}
and~\ref{conj:uniform:bound:for:error:term} are valid.
Let
\begin{equation}
\label{eq:v:k:g}
v_k(g)=
\sqrt{\frac{\pi}{8}}
\cdot\frac{1}{\sqrt{3g-3}}
\,
\cdot\frac{1}{2^{k-1}}
\,\cdot
\left(\sum_{j=0}^{k-1}
\frac{B_j}{(k-1-j)!}
\cdot\big(\log(3g-3)\big)^{k-1-j}\right)
\end{equation}
for $k=1,\dots,[C\log(g)]$ and let $v_k(g)=0$ for
$k>[C\log(g)]$.
Let
\begin{equation}
\label{eq:lambda:C}
\lambda(C)=\frac{\log(3g-3)+c(C)}{2}\,,
\end{equation}
and let $K(C,g)=[C\log(3g-3)]$.
Then
\begin{equation}
\label{eq:v:p}
\lim_{g\to+\infty}\frac{v_{K(C,g)}(g)}{p(K(C,g)-1;\lambda(C))}
=1\,,
\end{equation}
where $p(n,\lambda)$ is defined in~\eqref{eq:Poisson:def}.
\end{Lemma}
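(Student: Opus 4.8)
The plan is to prove the asymptotic identity~\eqref{eq:v:p} by comparing the two explicit expressions for $v_{K}(g)$ and for the Poisson weight $p(K-1;\lambda(C))$ along the diagonal $K = K(C,g) = [C\log(3g-3)]$, and showing their ratio tends to $1$. Write $m = 3g-3$ so that $K \approx C\log m$. The quantity $v_K(g)$ is governed by the bracketed sum $\sum_{j=0}^{K-1} \frac{B_j}{(K-1-j)!}(\log m)^{K-1-j}$; the essential point is that for large $m$ this sum is dominated by its behaviour near $j \approx K$, and that the generating function~\eqref{eq:sum:B:j:x:power:j} from Proposition~\ref{prop:generating:function:for:A:and:B} controls the tail. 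First I would rewrite the bracketed sum as $(\log m)^{K-1}\sum_{j=0}^{K-1}\frac{B_j}{(K-1-j)!}(\log m)^{-j}$ and recognize that, because $\limsup_j\sqrt[j]{|B_j|}=\tfrac12$ by~\eqref{eq:A:B:j:tend:fast:to:zero}, the dominant contribution comes from $j$ near $K-1$ where $(K-1-j)!$ is small.

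The key step is an asymptotic (Laplace-type / saddle-point) evaluation of the bracketed sum when $K = C\log m$ grows logarithmically in $m$. I would substitute $i = K-1-j$ so the sum becomes $\sum_{i=0}^{K-1}\frac{B_{K-1-i}}{i!}(\log m)^{i}$, and use the growth rate of $B_{K-1-i}$ together with Stirling's approximation for $K!$ to match against the Poisson weight $p(K-1;\lambda) = e^{-\lambda}\lambda^{K-1}/(K-1)!$. The natural parameter that emerges is $\lambda = (\log m)/2$ shifted by a constant, and the precise value of that additive constant $c(C)$ is exactly what formula~\eqref{eq:c:of:C} encodes. I expect the constant to be computed by evaluating the generating function $\sum_j B_j x^j = 2^x/\Gamma(1+x)$ at the relevant saddle point $x = 1/2$ (when $C = 1/2$) or, more generally, by extracting the exponential rate $C\log(2) - \log\Gamma(1+C)$ plus the Gaussian normalization $\tfrac12(\log\pi - 3\log 2)$ from~\eqref{eq:sum:B:j:over:2:power:j} and~\eqref{eq:sum:j:B:j:over:2:power:j} in Corollary~\ref{cor:properties:of:B}. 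The two-case structure of $c(C)$, with the limiting value $3\log(2)+\gamma-2$ at $C = 1/2$ matching $\log(2) - \psi(3/2)$ via~\eqref{eq:psi:3:2}, strongly suggests that the computation reduces to a single derivative/L'Hôpital evaluation at $C = 1/2$ of the generic formula, which I would verify directly.

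Concretely, I would carry out the following steps in order. (i) Collect the prefactor: combine $\sqrt{\pi/8}\cdot(3g-3)^{-1/2}\cdot 2^{-(K-1)}$ in~\eqref{eq:v:k:g} with the leading term of the bracketed sum. (ii) Approximate the bracketed sum: show that $\sum_{j=0}^{K-1}\frac{B_j}{(K-1-j)!}(\log m)^{K-1-j} \sim \frac{(\lambda')^{K-1}}{(K-1)!}\cdot(\text{correction})$ for an appropriate $\lambda'$, where the correction absorbs the deviation of the finite-sum generating function from $2^x/\Gamma(1+x)$ evaluated at the saddle. (iii) Compare with $p(K-1;\lambda(C)) = e^{-\lambda(C)}\lambda(C)^{K-1}/(K-1)!$, solving for the normalization constant $c(C)$ so that $e^{-\lambda(C)}$ and the $2^{-(K-1)}$, $(3g-3)^{-1/2}$ factors cancel against the sum's leading behaviour. (iv) Verify that the resulting $c(C)$ agrees with~\eqref{eq:c:of:C}, treating $C = 1/2$ separately by the continuity argument already indicated in the excerpt. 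The hard part will be step (ii): making the saddle-point estimate of the bracketed sum rigorous and uniform enough to yield a clean ratio $\to 1$, since $K$ grows with $m$ and one cannot simply truncate to finitely many terms. I would control this by splitting the sum at a suitable cutoff (using that $|B_{K-1-j}|$ decays geometrically at rate $2^{-(K-1-j)}$, so the tail beyond the saddle is exponentially negligible) and applying Stirling uniformly, leaning on the convergence rate in~\eqref{eq:A:B:j:tend:fast:to:zero} to bound the error terms.
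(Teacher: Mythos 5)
Your plan contains a genuine error that would derail it at the crucial step (your step (ii)): the localization of the sum is backwards. Write $m=3g-3$, $L=\log m$, $K-1\approx CL$. In the sum $\sum_{j=0}^{K-1}\frac{B_j}{(K-1-j)!}L^{K-1-j}$ the ratio of consecutive terms is
\[
\frac{B_{j+1}}{B_j}\cdot\frac{K-1-j}{L}\ \lesssim\ \frac{C}{2}\ <\ 1
\]
(since $\limsup_j|B_j|^{1/j}=\tfrac12$ by~\eqref{eq:A:B:j:tend:fast:to:zero} and $C<2$), so the terms decay geometrically \emph{from $j=0$}: the mass sits at \emph{bounded} $j$, not at $j$ near $K-1$ as you claim. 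Quantitatively, the $j=0$ term is $\frac{L^{K-1}}{(K-1)!}\approx (e/C)^{CL}=m^{C(1-\log C)}$, polynomially large in $m$, whereas the terms with $j$ near $K-1$ are $O(2^{-K}\operatorname{poly}(L))=O(m^{-C\log 2}\operatorname{poly}(L))$, polynomially small. If you discard the small-$j$ terms as an ``exponentially negligible tail beyond the saddle,'' you throw away essentially the entire sum. Relatedly, there is no saddle point and no single dominating term, and the generating function is not evaluated at $x=\tfrac12$: the correct statement is that, after dividing by $\frac{(\log m+c)^{K-1}}{(K-1)!}$, the term of each fixed index $j$ converges to $B_jC^je^{-cC}$, the tail is bounded by $\sum_{j>n}|B_j|C^j$ (convergent precisely because $C<2$ equals the radius of convergence of~\eqref{eq:sum:B:j:x:power:j} --- this is the only place the hypothesis $C<2$ is used, and your plan never invokes it correctly), and the whole normalized sum converges to $e^{-cC}\cdot\frac{2^{C}}{\Gamma(1+C)}$, i.e.\ the full generating function evaluated at $x=C$.

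The paper's actual argument is considerably more elementary than your saddle-point/Stirling machinery: it simply computes $p(K-1;\lambda(C))=\frac{e^{-c/2}}{\sqrt{m}}\cdot\frac{(\log m+c)^{K-1}}{(K-1)!\,2^{K-1}}$, reduces~\eqref{eq:v:p} to the scalar identity
\[
e^{c(C-\frac12)}=\sqrt{\tfrac{\pi}{8}}\cdot\frac{2^{C}}{\Gamma(1+C)}\,,
\]
and observes that solving this for $c$ gives exactly~\eqref{eq:c:of:C}, with the case $C=\tfrac12$ degenerate (both sides equal $1$ there, since $\Gamma(\tfrac32)=\tfrac{\sqrt\pi}{2}$) and resolved by the derivative/continuity computation using~\eqref{eq:psi:3:2}. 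To your credit, you correctly anticipated the final ingredients --- the role of $2^x/\Gamma(1+x)$, the appearance of $C\log 2-\log\Gamma(1+C)$ and of $\log\sqrt{\pi/8}$ in $c(C)$, and the L'H\^opital treatment of $C=\tfrac12$ --- but the analytic route you propose for reaching them would fail as stated, because it concentrates the sum in the wrong place.
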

\begin{proof}
Plugging the definition~\eqref{eq:lambda:C}
of $\lambda(C)$
into the definition~\eqref{eq:Poisson:def}
of $p(K-1,\lambda)$ we get
$$
p(K-1;\lambda)
=\frac{e^{-\tfrac{c}{2}}}{\sqrt{3g-3}}
\cdot\frac{(\log(3g-3)+c)^{K-1}}{(K-1)!\cdot 2^{K-1}}\,,
$$
where $K=K(C,g)$ and $c=c(C)$.
Thus, proving~\eqref{eq:v:p} is equivalent
to proving the following relation:
$$
\lim_{g\to+\infty}
\frac{(K-1)!}{(\log(3g-3)+c)^{K-1}}
\cdot
\left(\sum_{j=0}^{K-1}
\frac{B_j}{(K-1-j)!}
\cdot\big(\log(3g-3)\big)^{K-1-j}\right)
\overset{?}{=}
\frac{e^{-\tfrac{c}{2}}}{\sqrt{\tfrac{\pi}{8}}}\,,
$$
where, for brevity, we denote $K=K(C,g)$ and $c=c(C)$.
We can rewrite the expression in the left-hand side
of the above relation as
\begin{multline*}
%\label{eq:poisson:approx:start}
B_0\cdot \left(\frac{\log(3g-3)}{\log(3g-3)+c}\right)^{K-1}
+B_1\cdot\frac{(K-1)}{\big(\log(3g-3)+c\big)}
\cdot\left(\frac{\log(3g-3)}{\log(3g-3)+c}\right)^{K-2}
+\\+
B_2\frac{(K-1)(K-2)}{\big(\log(3g-3)+c\big)^2}
\left(\frac{\log(3g-3)}{\log(3g-3)+c}\right)^{K-3}
\!\!\!\!+\dots
+B_{K-1}\frac{(K-1)!}{\big(\log(3g-3)+c\big)^{K-1}}
\,.
\end{multline*}
We have seen that $c(C)\in]0;1[$ for any $C\in]0;2[$.
Thus, for any fixed $n$ we get
$$
\left(\frac{K-n}{\log(3g-3)+c}\right)\sim C
\quad\text{as }g\to+\infty\,,
$$
and
$$
\left(\frac{\log(3g-3)}{\log(3g-3)+c}\right)^{K-n}
\sim
\left(1-\frac{c}{\log(3g-3)}\right)^{[C\log(3g-3)]}
\sim
e^{-c\cdot C}
\quad\text{as }g\to+\infty
\,.
$$
Hence, for any fixed $n\in\N$,
we get the following asymptotic expression
for the first $n+1$ terms of the above sum:
\begin{multline*}
B_0\cdot \left(\frac{\log(3g-3)}{\log(3g-3)+c}\right)^{K-1}
+B_1\cdot\frac{(K-1)}{\big(\log(3g-3)+c\big)}
\cdot\left(\frac{\log(3g-3)}{\log(3g-3)+c}\right)^{K-2}
+\dots\\+\dots
+B_n\cdot\frac{(K-1)\dots(K-n)}{\big(\log(3g-3)+c\big)^n}
\cdot\left(\frac{\log(3g-3)}{\log(3g-3)+c}\right)^{K-n+1}
\sim\\\sim
e^{-c\cdot C}
\cdot\left(
B_0+B_1\cdot C+ B_2\cdot C^2+\dots+B_n\cdot C^n
\right)
\quad\text{as }g\to+\infty
\,.
\end{multline*}

The absolute value of the remaining part of the sum
is trivially bounded from above by
$$
\sum_{j=n+1}^{+\infty} |B_j|\cdot C^j\,.
$$
By~\eqref{eq:A:B:j:tend:fast:to:zero} the latter expression
rapidly converges to $0$ as $n$ grows.
Hence the desired relation~\eqref{eq:v:p}
is equivalent to the following relation
\begin{equation*}
\frac{e^{-\tfrac{c}{2}}}{\sqrt{\tfrac{\pi}{8}}}
\overset{?}{=}
e^{-c\cdot C}
\cdot
\sum_{j=0}^{+\infty} B_j\cdot C^j\,.
\end{equation*}

Applying~\eqref{eq:sum:B:j:x:power:j}
for the sum on the right-hand side we
can rewrite the latter equation as:
$$
e^{c\cdot(C-\tfrac{1}{2})}
\overset{?}{=}
\sqrt{\tfrac{\pi}{8}}
\cdot\frac{2^C}{\Gamma(1+C)}\,.
$$

Note that
$$
\Gamma\left(1+\tfrac{1}{2}\right)=\frac{\sqrt{\pi}}{2}
$$
so for $C=\tfrac{1}{2}$ the relation is satisfied. Suppose
now that $C\neq\tfrac{1}{2}$. The above equation implicitly
defines $c$ as a function of $C$. Taking logarithms of both
sides and dividing the resulting expression by
$(C-\frac{1}{2})$ we get exactly the expression for $c(C)$
from~\eqref{eq:c:of:C}. Lemma~\ref{lm:v:p} is proved.
\end{proof}

\begin{Lemma}
\label{lm:convergence:to:Poisson}
Suppose that both
Conjectures~\ref{conj:introduction:sum:of:correlators}
and~\ref{conj:uniform:bound:for:error:term} are valid for
some $C$ in the interval $]\frac{1}{2};2[$. Then, the
sequence $\{v_k(g)\}_{k\in\N}$
converges in total variation to the Poisson
distribution $\{p(k-1,\lambda)\}_{k\in\N}$ with parameter
\begin{equation*}
\lambda(g)=\frac{\log(6g-6)+\gamma}{2}+(\log2-1)\,,
\end{equation*}
when $g\to\infty$.
\end{Lemma}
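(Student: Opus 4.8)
The plan is to reduce the claimed total-variation convergence of the sequence $\{v_k(g)\}_{k\in\N}$ to the already-established pointwise asymptotics of Lemma~\ref{lm:v:p} together with the fast decay of the coefficients $B_j$ recorded in~\eqref{eq:A:B:j:tend:fast:to:zero}. First I would specialize the parameter $c(C)$ from~\eqref{eq:c:of:C} to the value $C=\tfrac{1}{2}$, which by the continuity computation already carried out just before Lemma~\ref{lm:v:p} gives $c(\tfrac{1}{2})=3\log(2)+\gamma-2$. Plugging this into $\lambda(C)=\tfrac{1}{2}(\log(3g-3)+c(C))$ from~\eqref{eq:lambda:C} and simplifying $\log(3g-3)+3\log 2 = \log(8(3g-3))=\log(24g-24)$ should produce exactly the asserted parameter
\[
\lambda(g)=\frac{\log(6g-6)+\gamma}{2}+(\log 2-1)\,,
\]
after rewriting $\tfrac{1}{2}\log(24g-24)=\tfrac{1}{2}\log(6g-6)+\log 2$. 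So the identification of $\lambda(g)$ is a direct algebraic consequence of the definitions, not a new estimate.

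The substantive step is upgrading pointwise (ratio) convergence to $\ell^1$ convergence. Lemma~\ref{lm:v:p} controls only the single ``boundary'' index $K(C,g)=[C\log(3g-3)]$, whereas total-variation convergence requires summing $|v_k(g)-p(k-1;\lambda)|$ over all $k\in\N$. I would organize the sum into three ranges. For the central range, say $|k-1-\lambda|\le M\sqrt{\lambda}$ for fixed $M$, I would run the argument inside the proof of Lemma~\ref{lm:v:p} uniformly rather than at the single point $K(C,g)$: the decomposition of $v_k(g)$ into a finite head plus a tail bounded by $\sum_{j>n}|B_j|C^j$ is valid for every $k$ in this window, and~\eqref{eq:A:B:j:tend:fast:to:zero} makes that tail arbitrarily small uniformly. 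This yields $v_k(g)/p(k-1;\lambda)\to 1$ uniformly over the central window, hence $\sum_{\text{central}}|v_k(g)-p(k-1;\lambda)|\to 0$ since the total Poisson mass in the window is bounded. For the upper tail $k>[C\log(g)]$ the sequence $v_k(g)$ vanishes identically by definition~\eqref{eq:v:k:g}, so there I only need that the Poisson tail beyond $[C\log(g)]$ is negligible; since $\lambda(g)\sim\tfrac{1}{2}\log(3g-3)$ and $C>\tfrac{1}{2}$, the index $[C\log(g)]$ sits strictly above the Poisson mean, and a Chernoff-type tail bound for $p(\,\cdot\,;\lambda)$ gives exponential smallness. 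For the remaining ``large-deviation'' range (indices between the central window and the cutoff, and the lower tail near $k=1$) I would again invoke the Poisson tail decay together with the explicit polynomial-times-$2^{-(k-1)}$ structure of $v_k(g)$.

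The main obstacle I anticipate is establishing the needed \emph{uniformity} in $k$ across the central window, because Lemma~\ref{lm:v:p} is stated only for the endpoint $K(C,g)$ and its proof uses the approximations $(K-n)/(\log(3g-3)+c)\sim C$ and $(\log(3g-3)/(\log(3g-3)+c))^{K-n}\sim e^{-cC}$, which are tuned to $k\approx C\log(3g-3)$. For $k$ in a $\sqrt{\lambda}$-window about the mean $\lambda\sim\tfrac12\log(3g-3)$ one has instead $(k-1)/\lambda\to 1$, so the correct uniform statement is $v_k(g)/p(k-1;\lambda)\to1$ with the ratio $(k-1)/(\log(3g-3)+c)\to\tfrac12$, and I must verify that the generating-function identity~\eqref{eq:sum:B:j:x:power:j} evaluated now at the \emph{effective} argument (which varies with $k$) still collapses to the Poisson normalization uniformly. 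Concretely, I expect to show that the error in replacing the finite sum $\sum_{j=0}^{k-1}\tfrac{B_j}{(k-1-j)!}(\log(3g-3))^{k-1-j}$ by its Poisson counterpart is $o(1)$ uniformly over the window, using the rapid convergence of $\sum_j B_j x^j$ for $|x|<2$ to absorb the $k$-dependence into a tail that~\eqref{eq:A:B:j:tend:fast:to:zero} kills. Once this uniform central estimate and the two tail estimates are in place, summing them gives the $\ell^1$ convergence, completing the proof of Lemma~\ref{lm:convergence:to:Poisson}.
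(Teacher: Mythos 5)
Your proposal is correct and follows essentially the same route as the paper's own proof: the paper likewise identifies $\lambda(g)$ as $\lambda(C)$ from~\eqref{eq:lambda:C} evaluated at $C=\tfrac{1}{2}$, splits $\N$ into a central window of width $M\sqrt{\log(g)}$ about the mean together with intermediate ranges, a short left tail, and the region beyond the cutoff $[C\log(g)]$ (where $v_k(g)=0$), and then combines a uniform version of Lemma~\ref{lm:v:p} (ratio $v_k/p(k-1,\lambda)\to 1$ in the window, domination $v_k\le T\cdot p(k-1,\lambda)$ on the intermediate ranges) with Poisson tail bounds. Your explicit handling of the uniformity issue — re-running the generating-function argument with the effective ratio $(k-1)/\log(3g-3)\to\tfrac{1}{2}$ and using~\eqref{eq:A:B:j:tend:fast:to:zero} to kill the $k$-dependent tail — is precisely what the paper's assertions of uniformity on its intervals $I_2$, $I_3$, $I_4$ implicitly require.
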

\begin{proof}
Choose any $\epsilon$ satisfying $0<\epsilon\ll\frac{1}{2}$
and any $M\gg 1$. Consider the following subsets
of $\N$:
\begin{align*}
I_1&=\{1,2,\dots,[\epsilon\log(g)]\}
\,,\\
I_2&=\left\{[\epsilon\log(g)]+1,[\epsilon\log(g)]+2,
\dots,[\lambda(g)-M\sqrt{\log(g)}]\right\}
\,,\\
I_3&=\left\{[\lambda(g)-M\sqrt{\log(g)}],
\dots, [\lambda(g)+M\sqrt{\log(g)}]\right\}
\,,\\
I_4&=\left\{[\lambda(g)+M\sqrt{\log(g)}]+1,
[\lambda(g)+M\sqrt{\log(g)}]+2,\dots,[C\log(g)]\right\}
\,,\\
I_5&=\big\{[C\log(g)]+1,[C\log(g)]+2,\dots\big\}
\,.
\end{align*}
We assume that $g$ is sufficiently large so that
all these intervals are nonempty.
We bound the sum $\sum|v_k(g)-p(k-1,\lambda(g))|$
from above separately on each of these intervals.

The contributions of the
left tails of both sequences tend to zero
as $g\to+\infty$:
$$
\lim_{g\to+\infty} \sum_{k\in I_1} v_k
=
\lim_{g\to+\infty} \sum_{k\in I_1} p(k-1,\lambda(g))
=0\,,
$$
so
$$
\lim_{g\to+\infty} \sum_{k\in I_1} |v_k-p(k-1,\lambda(g))|
= 0\,.
$$
Choosing $M$ sufficiently large, we can make the
contribution
$$
\lim_{g\to+\infty} \sum_{k\in I_2\sqcup I_4} p(k-1,\lambda(g))
$$
arbitrary small. It follows from Lemma~\ref{lm:v:p}
that for all sufficiently large $g$ we have
$$
v_k(g) \le T\cdot p(k-1,\lambda(g))
\quad\text{for }k\in I_2\sqcup I_4\,,
$$
with universal constant $T$ depending only on $\epsilon$.
Thus,
choosing $M$ sufficiently large, we can make the
variation
$$
\lim_{g\to+\infty} \sum_{k\in I_2\sqcup I_4}
|v_k(g)-p(k-1,\lambda(g))|
$$
on these two intervals arbitrary small.

For any fixed $M$ the ratio
$\frac{v_k(g)}{p(k-1,\lambda(g))}$
tends to $1$ uniformly for all $k\in I_3$.
Thus, we get
$$
\lim_{g\to+\infty} \sum_{k\in I_3}
|v_k(g)-p(k-1,\lambda(g))| =0\,.
$$
Finally
$$
\lim_{g\to+\infty} \sum_{k\in I_5}
|v_k(g)-p(k-1,\lambda(g))|
=\lim_{g\to+\infty} \sum_{k\in I_5}
|p(k-1,\lambda(g))|
=0\,.
$$
which completes the proof of
Lemma~\ref{lm:convergence:to:Poisson}.
\end{proof}

\begin{CondTheorem}
\label{th:Poisson}
Conjectures~\ref{conj:Vol:Qg},
\ref{conj:introduction:sum:of:correlators},
\ref{conj:uniform:bound:for:error:term}
together imply
that the distribution
$$
\frac{\Vol_{1\textit{-cyl}}\cQ_g}{\Vol\cQ_g}, \dots,
\frac{\Vol_{(3g-3)\textit{-cyl}}\cQ_g}{\Vol\cQ_g}
$$
tends to Poisson distribution $p(k-1,\lambda(g))$
with parameter
\begin{equation}
\label{eq:lambda:bis}
\lambda(g)=\frac{\log(6g-6)+\gamma}{2}+(\log2-1)\,,
\end{equation}
when $g\to\infty$. Note that we have to shift indices by
$1$: our count starts from $k=1$, while the Poisson
distribution starts from $n=0$.
\end{CondTheorem}
\begin{proof}
Let
$$
V(g)=\frac{4}{\pi}
\cdot\left(\frac{8}{3}\right)^{4g-4}\,.
$$
By Conjecture~\ref{conj:Vol:Qg} and
by~\eqref{eq:sum:of:Gamma:gives:all} in Conditional
Theorem~\ref{cond:th:sum:over:one:cylinder:graphs} we have
$$
\Vol\cQ_g\sim V(g)
\sim\sum_{k=1}^{[C\log(g)]} \Vol(\Graph_k(g))
\quad \text{ as } g\to+\infty\,.
$$
Since $\Vol_{k\textit{-cyl}}\cQ_g\ge \Vol(\Petal_k(g))\ge
0$ for all $k$, this implies $\ell^1$-convergence of
the sequences
\begin{multline}
\label{eq:cyl:2:Gamma}
\left(
\frac{\Vol_{1\textit{-cyl}}}{\Vol\cQ_g},
\frac{\Vol_{2\textit{-cyl}}}{\Vol\cQ_g},
\dots,
\frac{\Vol_{(3g-3)\textit{-cyl}}}{\Vol\cQ_g},
0,0,\dots
\right)
\to \\ \to
\left(
\frac{\Vol(\Petal_1(g))}{V(g)},
\frac{\Vol(\Petal_2(g))}{V(g)},
\dots,
\frac{\Vol(\Petal_{[C\log(g)]}(g))}{V(g)},
0,0,\dots
\right)
\quad\text{as } g\to+\infty
\,.
\end{multline}

Equation~\eqref{eq:Vol:g:k:uniform} from
Corollary~\ref{cor:V:over:Gamma:uniform} implies uniform
convergence and, hence, $\ell^1$-convergence of the
following sequences:
\begin{multline}
\label{eq:Gamma:2:V}
\left(
\frac{\Vol(\Petal_1(g))}{V(g)},
\frac{\Vol(\Petal_2(g))}{V(g)},
\dots,
\frac{\Vol(\Petal_{[C\log(g)]}(g))}{V(g)},
0,0,\dots
\right)
\to \\ \to
\left(
\frac{V_1(g)}{V(g)},
\frac{V_2(g)}{V(g)},
\dots,
\frac{V_{[C\log(g)]}(g)}{V(g)},
0,0,\dots
\right)
\quad\text{as } g\to+\infty
\,.
\end{multline}
where $V_k(g)$ is defined in~\eqref{eq:V:k:g:with:epsilon}.

Equation~\eqref{eq:max:epsilon:Z} from
Conjecture~\ref{conj:uniform:bound:for:error:term}
claims $\ell^1$-convergence
\begin{equation}
\label{eq:Gamma:2:v}
\left\{\frac{\Vol(\Petal_k(g))}{V}\right\}_{k\in\N}
\to \{v_k(g)\}_{k\in\N}
\quad\text{ as }g\to+\infty\,.
\end{equation}
Convergence of $\{v_k(g)\}$ to the Poisson distribution
with parameter $\lambda(g)$ is proved in
Lemma~\ref{lm:convergence:to:Poisson}.
\end{proof}

\begin{Remark}
Note that the parameter $\lambda_{tail}$ which provides
the best approximation by the Poisson distribution
for the left tail of the the distribution of
any fixed finite size differs from
the parameter $\lambda$ which provides
the best approximation by the Poisson distribution
globally for the entire distribution
$\frac{\Vol_{k\textit{-cyl}}\cQ_g}{\Vol\cQ_g}$, where
$k=1,\dots,3g-3$. Comparing~\eqref{eq:lambda:tail}
and~\eqref{eq:lambda:bis} we get
$$
\lambda(g)=\lambda_{tail}(g)+(\log2-1)\approx\lambda_{tail}(g)-0.306853\,.
$$
\end{Remark}

\begin{figure}[htb]
\includegraphics{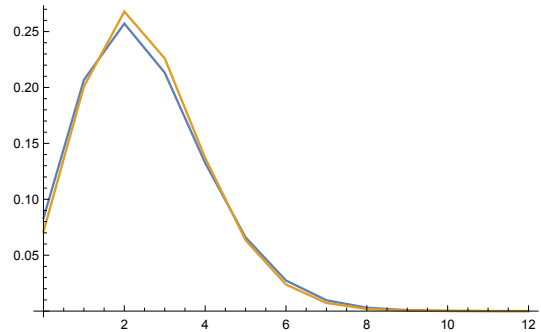}
\vspace*{120pt}
\caption{
\label{fig:poisson:versus:experimental}
Experimental distribution of contribution of $k$-cylinder
surfaces (in orange)
and the Poisson distribution
with $\lambda$ as in~\eqref{eq:lambda:bis} (in blue)}
\end{figure}

Figure~\ref{fig:poisson:versus:experimental} presents the
graph of the experimental statistics of frequencies of
$k$-cylinder surfaces in $g=26$
and the Poisson distribution with parameter $\lambda$
evaluated by~\eqref{eq:lambda:bis}. We joined the
neighboring points of the discrete graph by segments for
better visibility. The experimental statistics has slightly
higher maximum compared to Poisson distribution and its
left slope is shifted a little bit to the right compared to
the Poisson distribution. Already for $g=26$ we observe
that we would get slightly better approximation of the left
tail of the experimental statistics by Poisson distribution
choosing $\lambda(26)$ slighly larger. However, the optimal
approximation of the experimental data by all possible
Poisson distribution gives
$$
\lambda_{\textit{optimal}}(26)\approx 2.528\,,
$$
so the parameter $\lambda(26)\approx 2.487$
calculated using~\eqref{eq:lambda:bis}
and used for the graph in
Figure~\ref{fig:poisson:versus:experimental}
provides much better fit than
$\lambda_{tail}(26)\approx 2.794$
given by~\eqref{eq:lambda:tail}.

%------------------------------------------------------------
\subsection{Numerical evidence}

We have two independent sources of numerical evidence for
our Conjectures. The first source is exact computation of
all $\Vol\Petal_k(g)$ for $g\le 9$. Table~\ref{tab:Vol:k:g}
provides the rational coefficients $\frac{p}{q}$ in the
contribution $\frac{p}{q}\cdot\pi^{6g-6}$ of
$\Vol(\Graph_k(g))$ to $\Vol\cQ(1^{4g-4})$. We have all
exact values of analogous quantities for $g\le 9$.
\begin{table}[bht]
\small
$$
\hspace*{-20pt}
{
\begin{array}{|c|c|c|c|c|}
\hline &&&& \\ [-\halfbls]
g&k=1&k=2&k=3&k=4\\
&&&& \\ [-\halfbls] \hline &&&&\\ [-\halfbls]
2& \frac{16}{945}& \frac{8}{225}&&\\
&&&& \\ [-\halfbls] \hline &&&&\\ [-\halfbls]
3& \frac{204536}{273648375} & \frac{2206912}{1620840375} & \frac{2704}{3189375}&\\
&&&& \\ [-\halfbls] \hline &&&&\\ [-\halfbls]
4&
\frac{80320477}{2362381544250}&
\frac{16548755563}{251883751494375}&
\frac{18410248}{368225463375}&
\frac{16128416}{987779417625}\\
&&&& \\ [-\halfbls] \hline &&&&\\ [-\halfbls]
5&
\frac{10303583454872}{6451867979907013125}&
\frac{19854998108336976488}{6136611136849420367765625}&
\frac{2276745597432209408}{827792583677569525640625}&
\frac{1412757290717388688}{1158909617148597335896875}
\\
&&&& \\ \hline
\end{array}
}
$$
\caption{
\label{tab:Vol:k:g}
Exact values of $\cfrac{\Vol(\Petal_k(g))}{\pi^{6g-6}}$}
\end{table}

% One has
%    %
% $$
% \Vol(\Graph_5(5))=
% \cfrac{9591423071056}{37721238718503965625}\cdot\pi^{24}\,.
% $$
    %

We have also explicitly computed $\Vol\cQ_g$ for
$g=2,\dots,6$, see Table~\ref{tab:Vol:Q:g:n}.
Table~\ref{tab:Vol:from:one:cyl:and:Vol:Q:g} gives the
following approximate values of the
ratios
$\cfrac{\sum_{k=1}^g\Vol(\Petal_k(g))}{\Vol\cQ(1^{4g-4})}$
for these genera:
\begin{table}[hbt]
$$
\begin{array}{|c|c|c|c|c|}
\hline
&&&& \\ [-\halfbls]
g=2 & g=3 & g=4 & g=5 & g=6\\
&&&& \\ [-\halfbls] \hline &&&&\\ [-\halfbls]
0.787 & 0.855 & 0.909 & 0.940 & 0.959\\
[-\halfbls]
&&&& \\
\hline
\end{array}
$$
\caption{
\label{tab:Vol:from:one:cyl:and:Vol:Q:g}
Approximate values of $\cfrac{\sum_{k=1}^g\Vol(\Petal_k(g))}{\Vol\cQ(1^{4g-4})}$.}
\end{table}

One can argue that genus $6$ is still a bit far from
infinity. However, $\dim_{\C}\cQ_6=30$ and in the case of
Abelian differentials, the Masur--Veech volumes of all
strata are already quite close to their asymptotic values
for this range of dimensions.

Another source of numerical evidence is the following
experimental data. We can quite efficiently collect
experimental statistics
$\cfrac{\Vol_{k\textit{-cyl}}\cQ_g}{\Vol\cQ_g}$ of the
number $k=1,\dots,3g-3$ of maximal horizontal cylinders in
a ``random'' square-tiled surface in $\cQ_g$. We compare it
to statistics $\cfrac{v_k(g)}{V(g)}$. The resulting
normalized probability distributions for $g=26$ are
presented at
Figure~\ref{fig:frequencies:versus:single:vertex:formula}.

Recall that for $k=1$ and for sufficiently large genus,
$\Vol\Petal_1(g)$ gives practically all contribution of
$1$-cylinder square-tiled surfaces to $\Vol\cQ_g$, that is
$$
\frac{\Vol\Petal_1(g)}{\Vol_{1\textit{-cyl}}\cQ_{g}}
\sim 1-\sqrt{\frac{2}{3\pi g}}\cdot\frac{1}{4^g}
\quad\text{as }g\to+\infty\,.
$$
On the other hand, for $g=26$ the experimental value
of the first term of statistics is still reasonably large,
namely
$$
\frac{\Vol_{1\textit{-cyl}}\cQ_{26}}{\Vol\cQ_{26}}
\approx 0.071\,.
$$
This implies that the two distributions are still
reasonably close before normalizing the total mass to $1$.

\begin{figure}[htb]
\includegraphics{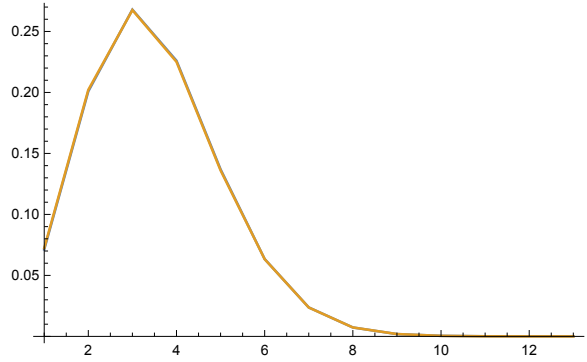}
\vspace*{130pt}
\caption{
\label{fig:frequencies:versus:single:vertex:formula}
Experimental distribution of frequencies of $k$-cylinder
square-tiled surfaces and the normalized distribution given
by expression~\eqref{eq:Vol:g:k} for $\Vol\Petal_k(g)$
in genus $g=26$}.
\end{figure}

The experimental numerical data for
frequencies of $k$-cylinder
square-tiled surfaces and the normalized distribution given
by expression~\eqref{eq:Vol:g:k} for $\Vol\Petal_k(g)$
given below for genus $g=26$,
\begin{align*}
0.0713,
0.2009,
0.2679,
0.2260,
0.1369,
0.0634,
0.0237,
0.0073,\dots
\\
0.0724,
0.2022,
0.2675,
0.2251,
0.1361,
0.0633,
0.0237,
0.0073,
\dots
\end{align*}
are already, basically, indistinguishable graphically,
see Figure~\ref{fig:frequencies:versus:single:vertex:formula}
(where we joined the neighboring points of the discrete
graph by segments for better visibility).

%------------------------------------------------------------

\end{document}